\theoremstyle{definition}
\newtheorem{Theorem}{Theorem}[section]
\newtheorem{Proposition}[Theorem]{Proposition}
\newtheorem{Lemma}[Theorem]{Lemma}
\newtheorem{Definition}[Theorem]{Definition}
\newtheorem{Corollary}[Theorem]{Corollary}
\newtheorem{Remark}[Theorem]{Remark}
\numberwithin{equation}{section}
\title{Basic techniques\\ in two-dimensional critical Ising percolation\\ with investigation of scaling relations}
\author{Yasunari Higuchi, Masato Takei, and Yu Zhang\\
{\footnotesize \it Kobe University, Osaka Electro-Communication University, and University of Colorado}}
\begin{document}
%\frontmatter
\maketitle

\begin{abstract}
We consider the percolation problem in the high-temperature Ising model on the two-dimensional square lattice at/near critical external fields. We show that all scaling relations,
except a single hyperscaling relation, hold under the power law assumptions
for the one-arm path and four-arm paths. %In addition, we show that
%the power laws hold for two- and three-arm paths in the half space.
%and five-arm paths in the whole space.
\end{abstract}

\tableofcontents
\newpage
\section{Introduction}

Consider the $\mathbf{Z}^2$ lattice and the sample space $\Omega = \{ -1,+1\}^{\mathbf{Z}^2}$ of spin configurations on $\mathbf{Z}^2$. Given a sample $\omega \in \Omega$ and $x \in \mathbf{Z}^2$, $\omega(x)$ denotes the spin value at $x$ in the configuration $\omega$. For any set $V \subset \mathbf{Z}^2$, denote by $\mathcal{F}_V$ the $\sigma$-algebra generated by $\{ \omega(x) : x \in V \}$, and we simply write $\mathcal{F}$ for $\mathcal{F}_{\mathbf{Z}^2}$.  Let $|x|$ and $|x|_{\infty}$ denote the $\ell^1$-norm and the $\ell^{\infty}$-norm of $x \in \mathbf{Z}^2$, respectively:
\[ |x| := |x^1| + |x^2| \mbox{ and } |x|_{\infty} := \max \{|x^1|,|x^2|\} \mbox{ for } x= (x^1,x^2) \in \mathbf{Z}^2. \]
% $d(x,y)$ be the Euclidean distance for $x,y \in  \mathbf{Z}^2$. 
For any finite $V$, we define the Hamiltonian for a configuration $\sigma \in \Omega_V =  \{ -1,+1\}^{V}$ by
\[ H_{V,h}^{\omega} (\sigma) = -\frac{1}{2} \sum_{x,y \in V,\,|x-y|=1} \sigma(x)\sigma(y) - \sum_{x \in V} \left(h + \sum_{y \notin V,\,|x-y|=1} \omega(y) \right) \sigma (x), \]
where $h$ is a real number called the external field. We then define the finite Gibbs measure on $\Omega_V$ by
\[ q_{V,T,h}^{\omega} (\sigma) = \left[ \sum_{\sigma' \in \Omega_V} \exp \{ -(\mathfrak{K}T)^{-1} H_{V,h}^{\omega} (\sigma') \} \right]^{-1} \exp \{ -(\mathfrak{K}T)^{-1} H_{V,h}^{\omega} (\sigma) \}. \]
Here $T$ is a positive number called the temperature, and $\mathfrak{K}$ is the Boltzmann constant. For each $T > 0$ and $h \in \mathbf{R}^1$, a Gibbs measure is a probability measure $\mu_{T,h} $ on $\Omega$ in the sense of the following DLR equation:
\[ \mu_{T,h} (\;\cdot\,|\,\mathcal{F}_{V^c})(\omega) = q_{V,T,h}^{\omega} (\;\cdot\;) \quad \mbox{$\mu_{T,h}$-almost every $\omega$}, \]
where $V^c = \mathbf{Z}^2 \setminus V$. Let $T_c$ be the critical value such that if $T > T_c$ or $h \neq 0$, the Gibbs measure is unique for $(T, h)$.

For two vertices $x$ and $y$ of $\mathbf{Z}^2$, 
we say they are adjacent if 
\[ |x-y| = 1.\]
Moreover, we say they are $(*)$-adjacent
if 
\[ |x-y|_{\infty} =1. \]
In words, for each vertex, its adjacent vertices are its four vertical and horizontal neighbors, while its $(*)$-adjacent vertices are its four vertical and horizontal neighbors together with the other four diagonal neighbors. 
A path [$(*)$-path]
\[\gamma=\{ x_1,x_2,\ldots, x_s\}\]
is a sequence of vertices such that $x_{i-1}$ and $x_{i}$
are adjacent [$(*)$-adjacent]. A path is called a $(+)$-path in if the spin value is $+$ for every point of this path. Similarly, an $(*)$-path is called a $(-*)$-path in if the spin value is $-$ for every point of this path. A  $(+)$-cluster [$(-*)$-cluster] is the set of vertices
connected by $(+)$-paths [$(-*)$-paths].
Let ${\bf C}_u^+$ be
the $(+)$-cluster that contains $u$.
In particular, let Let ${\bf C}_0^+$ be
the $(+)$-cluster that contains the origin.
For $T >0$, we define $h_c(T)$ by
\[ h_c(T)= \inf \{ h: \mu_{T, h}( \#{\bf C}_0^+=\infty)>0\}. \]
It follows from our definition that there exists an infinite
$(+)$-cluster with probability one when $h > h_c(T)$. In this case, percolation
occurs.
It has been proved that (see \cite{Hig93b}) that if $T \leq T_c$,
then 
\[ h_c(T)= 0.\]
On the other hand, if $T > T_c$,
then 
\begin{equation} \label{Z1.1}
h_c(T)> 0. %\eqno(1.1)
\end{equation}
In this paper, we would like to focus on the high-temperature case.
%(\ref{Z1.1}). 

The most interesting problem in statistical physics
is to understand the behaviors of various quantities
in percolation when $h$ is near $h_c(T)$.
Indeed, it is widely believed (see e.g. Grimmett \cite{Gri99}) that the critical exponents of various quantities in percolation behave like power laws of
$|h-h_c(T)|$ as $h$ approaches $h_c(T)$. To express these
conjectures precisely, we would like to define a few quantities.
Let 
\[ B(r,R)= [-R, R]^2\setminus (-r,r)^2.\]
We first define the probability of $k$-arm paths in $B(r, R)$.
For $k_1 \geq k_2$ with $k_1+k_2=k$,
let $\mathcal{B}(k_1, k_2, r, R)$ be the event that there exist $k_1$ disjoint $(+)$-paths   
from $\partial [-r, r]^2$ to $\partial [-R,R]^2$,
and 
there exist $k_2$ disjoint
$(-*)$-paths   
from $\partial [-r, r]^2$ to $\partial [-R,R]^2$,
and all $(-*)$-paths are separated by $(+)$-paths.

In addition to these $k$-arm paths in $B(r, R)$, we may also consider 
$k$-arm paths in the half space. 
Let 
\[ B^+(r,R)= [0, R]\times [-R, R]\setminus (0, r)\times (-r, r).\]
For $k_1 \geq k_2$ with $k_1+k_2=k$,
let $\mathcal{B}^+(k_1, k_2, r, R)$ be the event that there exist $k_1$ disjoint
$(+)$-paths   
from $\partial [-r, r]^2$ to $\partial [-R,R]^2$,
and 
there exist $k_2$ disjoint
$(-*)$-paths   
from $\partial [-r, r]^2$ to $\partial [-R,R]^2$,
all paths stay in the half
space $(0, \infty)\times (-\infty, \infty)$,
and all $(-*)$-paths are separated by $(+)$-paths.

With these definitions, it is believed (see e.g. \cite{ADA99}) that
\begin{align} 
\mu_{T, h_c(T)} (\mathcal{B}(1, 0, r, R)) &\asymp \left( {\dfrac{r}{R}}\right)^{5/48}, \notag \\
\mu_{T, h_c(T)} (\mathcal{B}(k_1, k_2, r, R)) &\asymp \left( {\dfrac{r}{R}}\right)^{(k^2-1)/12}\mbox{ for } k_1+k_2=k \geq 2,\label{Z1.2} %\eqno{(1.2)}
\end{align}
where $f(n) \asymp g(n)$ means that $C_1 g(n) \leq f(n) \leq C_2 g(n)$.
For $k$-arm paths in the half space, it is also believed that
\begin{equation} \label{Z1.3}
\mu_{T, h_c(T)} (\mathcal{B}^+(k_1, k_2,r, R)) \asymp \left( {\dfrac{r}{R}}\right)^{(k(k+1))/6}\mbox{ for } k_1+k_2=k.
%\eqno{(1.3)}
\end{equation}
The conjectures (\ref{Z1.2}) and (\ref{Z1.3}) have been proved for the
independent percolation model, $T=\infty$, in the triangular lattice 
(see Smirnov-Werner \cite{SW01}). In addition, for $k_1=3$ and $k_2=2$, the conjecture
of (\ref{Z1.2}) was proved to be true (see Kesten-Sidoravicius-Zhang \cite{KSZ98}) for all two-dimensional periodic lattice. For $k_1=1$ and $k_2=1$, or $k_1=2$ and $k_2=1$ the conjecture of (\ref{Z1.3}) is also true (Zhang \cite{Zpower}) for all two-dimensional periodic lattice. In fact, 
the conjecture of (1.2) for $k_1=1$ and $k_2=0$ (one-arm path)
and $k_1=2$ and $k_2=2$ (four-arm paths)
play the most important roles. Kesten \cite{K87scaling} showed that
if one-arm and four-arm conjectures in (\ref{Z1.2}) hold, then
almost all critical exponents exist and satisfies the scaling relations (see the definitions below) for the
independent percolation model. 
More precisely, it is believed that
\[\mu_{T, h_c(T)} (\mathcal{B}(1, 0,1, R)) \asymp \left( \dfrac{1}{R}\right)^{1/\delta_{\text r}}, \]
and
\[\mu_{T, h_c(T)} (\mathcal{B}(2,2,1, R)) \asymp \left( \dfrac{1}{R}\right)^{2-1/\nu} \]
for some constants $\delta_{\text r}$ and $\nu$. The simulations 
indicate that
\[ \delta_{\text r} =48/5 \mbox{ and } \nu=4/3.\]

%In this paper, we show that some special cases of (\ref{Z1.2}) and (\ref{Z1.3}) hold.

%\begin{Theorem} For $T > T_c$,  
%\begin{align*}
%\mu_{T, h_c(T)} (\mathcal{B}^+(1, 1,r, R)) &\asymp {\dfrac{r}{R}}, \\
%\mu_{T, h_c(T)} (\mathcal{B}^+(2, 1,r, R)) &\asymp \left( {\dfrac{r}{R}}\right)^{2}, \\
%\mu_{T, h_c(T)} (\mathcal{B}(3, 2, r, R)) &\asymp \left( {\dfrac{r}{R}}\right)^{2}.
%\end{align*}
%\end{Theorem}

We next want to introduce more critical exponents for a given
$T > T_c$.
\begin{itemize}
\item Percolation probability:
\[ \theta(h)=\mu_{T, h}(\# {\bf C}_0^+=\infty).\]
\item Average number of clusters per site:
\[ \kappa(h)= \sum_{n=1}^\infty  \dfrac{1}{n} \mu_{T, h}( \# {\bf C}_0^+=n) =\mu_{T, h} [(\# {\bf C}_0^+)^{-1}:\# {\bf C}_0^+>0].\]
\item Mean cluster size:
\[\chi(h)=\mu_{T, h} [\# {\bf C}_0^+ : \# {\bf C}_0^+ <\infty].\]
\item Correlation length:
\[L(h)=
\begin{cases}
\min\{ n: \mu_{T, h}(A^+(n,n)) \geq 1-\varepsilon\} &\mbox{if } h > h_c(T),\\
\min\{ n: \mu_{T, h}(A^+(n,n)) \leq \varepsilon \} &\mbox{if } h < h_c(T),\\
\end{cases}
\]
where $A^+(n,n) := \{ \mbox{there exists a horizontal $(+)$-crossing of $[-n,n]^2$} \}$.
\end{itemize}

With these definitions, it is widely believed (see e.g. Grimmett \cite{Gri99}) 
that the following power laws hold with the exponents:
\begin{align*}
L(h) &\asymp |h -h_c(T)|^{-\nu}, \\
|\kappa'''(h)| &\asymp |h -h_c(T)|^{-1-\alpha }, \\
\theta(h) &\asymp (h -h_c(T))^\beta \quad \mbox{for $h > h_c(T)$,} \\
\chi (h) &\asymp |h -h_c(T)|^{-\gamma},\\
\dfrac{\mu_{T, h} [(\# {\bf C}_0^+)^{k} : \# {\bf C}_0^+ <\infty]}
{\mu_{T, h} [(\# {\bf C}_0^+)^{k-1} : \# {\bf C}_0^+ <\infty]} &\asymp 
|h-h_c(T)|^{-\Delta_k} \quad \mbox{for $k \geq 1$.}
\end{align*}
In addition to the power laws, it is also believed that the 
exponents satisfy the following scaling relations for low enough $d$.
\begin{align*}
\alpha&=2-d\nu, \\
\beta&=\dfrac{d\nu}{\delta+1}, \\
\gamma&=d\nu \dfrac{\delta-1}{\delta +1}, \\
\Delta_k&=d\nu \dfrac{\delta}{\delta+1} \quad \mbox{for $k \geq 1$.}\\
\eta&=2-d\dfrac{\delta-1}{\delta+1}. 
\end{align*}
\newpage

\section{Preliminaries}

The aim of this paper is to establish scaling relations for the Ising
percolation in two dimensions,
parallel to the ones 
obtained in  \cite{K87scaling} for independent percolation.
We know that the critical value of the external field $h_c(T)$ is positive
if $T>T_c$, and hereafter we fix such a $T$.
Since we are looking at the behaviors of quantities like percolation 
probability as the external field $h$ approaches to $h_c(T)$, 
we may assume that $h$ is near $h_c(T)$.
Further, we need several $h$-independent estimates later to
obtain desired relations, but often such estimates do not hold
for all $h$'s.
Therefore we have to restrict the range of $h$'s at the very beginning,
and throughout this paper we assume that the value $h$ of the
external field of our model satisfies
\begin{equation}\label{bound for the external field}
0 \leq h \leq 2h_c(T).
\end{equation}
In this section, we summarize known results.
\subsection{Mixing property}

For $V_1,\,V_2 \subset \mathbf{Z}^2$, $d(V_1,V_2)$ denotes the $\ell^1$-distance between $V_1$ and $V_2$; that is,
\[ d(V_1,V_2) = \inf\{ |x-y| : x \in V_1,\,y \in V_2 \}. \]
We also use the $\ell^\infty $-distance
\[ d_\infty (V_1,V_2)= \inf \{ |x-y|_\infty : x\in V_1, y\in V_2 \} . \]

The following is a refinement of Theorem 2 (ii) of \cite{Hig93a}, which can be obtained without changing the proof given in \cite{Hig93a}.
\begin{Theorem}\label{th:mixing} 
Let $V\subset \Lambda $ be finite subsets of ${\mathbf Z}^2$,
$T > T_c$ and $h\geq 0$. Let $\ell $ be a positive integer. Assume that $A \in {\mathcal F}_V$ and $\omega_1, \omega_2 \in \Omega $
satisfy
\[ \omega_1(x) = \omega _2(x)=+1 \]
 for every $x\in \partial\Lambda $ with $d(x,V)<\ell $.
Then there exist constants $C>0$ and $\alpha >0$ such that
\[ | q_{\Lambda ,T ,h}^{\omega_1}(A)- 
       q_{\Lambda ,T ,h}^{\omega_2}(A)|
       \leq C \sum_{x\in V}
           \sum_{y\in \partial \Lambda, d(y,V)\geq \ell }
             e^{-\alpha |x-y|}.
\]
\end{Theorem}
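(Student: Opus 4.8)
The plan is to derive the estimate from the corresponding statement in \cite{Hig93a} (Theorem 2 (ii)) by a finite-energy / coupling argument, tracking carefully where the hypothesis that $\omega_1 \equiv \omega_2 \equiv +1$ near $V$ enters. First I would recall that the Gibbs specification for the high-temperature Ising model satisfies a Dobrushin-type exponential mixing bound: for a single site $x$, changing the boundary condition at one boundary vertex $y$ changes $q^{\omega}_{\Lambda,T,h}(\cdot)$ restricted to $\mathcal{F}_{\{x\}}$ by at most a constant times $e^{-\alpha|x-y|}$, uniformly for $T > T_c$ and $h$ in the admissible range. This is precisely the disagreement-percolation or high-temperature cluster-expansion input; in \cite{Hig93a} it is stated for events $A$ with a sum over the vertices of $V$ on the left. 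The refinement claimed here is only that the sum on the right can be restricted to those boundary vertices $y$ with $d(y,V) \ge \ell$, because the remaining boundary vertices — those with $d(y,V) < \ell$ — carry the \emph{same} spin value ($+1$) in $\omega_1$ and $\omega_2$ and hence contribute nothing to the difference of the two measures.

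Concretely, I would interpolate between $\omega_1$ and $\omega_2$ by flipping, one at a time, the boundary spins on which they differ; by hypothesis every such vertex $y$ satisfies $d(y,V) \ge \ell$. Each flip changes $q^{\omega}_{\Lambda,T,h}(A)$ by an amount bounded, via the single-site exponential-decay estimate applied to each $x \in V$ and summed over $V$ (since $A \in \mathcal{F}_V$), by $C\sum_{x \in V} e^{-\alpha|x-y|}$. Summing over all the flipped vertices $y$, which all lie in $\{y \in \partial\Lambda : d(y,V) \ge \ell\}$, gives exactly the asserted bound. The point to emphasize is that this is \emph{literally the proof of} \cite{Hig93a} with the bookkeeping of the right-hand sum made explicit, so no new idea is needed — only the observation that the terms with $d(y,V) < \ell$ may be dropped.

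The one technical point requiring care — the main (mild) obstacle — is the uniformity of the constants $C$ and $\alpha$ over the range of parameters: they must not depend on $\Lambda$, $V$, $A$, the integer $\ell$, or the particular $\omega_1,\omega_2$, and must be uniform in $h \in [0, 2h_c(T)]$ for the fixed $T > T_c$. Uniformity in $\Lambda, V, A, \ell, \omega_i$ is automatic from the single-site formulation, since the bound is obtained term by term. Uniformity in $h$ follows because the relevant Dobrushin contraction coefficient for the high-temperature Ising specification depends continuously on $(T,h)$ and stays strictly below $1$ on the compact set $\{T\} \times [0,2h_c(T)]$; one then passes from a uniform contraction coefficient $<1$ to the exponential rate $\alpha>0$ in the standard way. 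Since \cite{Hig93a} already establishes this for $h \ge 0$ and the argument there is insensitive to an upper bound on $h$, invoking it directly suffices, and the restriction $h \le 2h_c(T)$ in \eqref{bound for the external field} is in fact not needed for this particular estimate.
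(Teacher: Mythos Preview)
Your proposal is correct and matches the paper's treatment: the paper does not give a proof but simply states that this is a refinement of Theorem~2~(ii) of \cite{Hig93a} ``which can be obtained without changing the proof given in \cite{Hig93a}.'' Your interpolation sketch and the observation that boundary vertices with $d(y,V)<\ell$ drop out because $\omega_1$ and $\omega_2$ agree there is exactly the reason the original proof carries over unchanged, so you have supplied precisely the elaboration the paper omits.
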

In particular, %there exist
%%constants $C>0$ and $\alpha > 0$ such that 
for every pair of finite subsets $V$ and $W$ of $\mathbf{Z}^2$ with $V \subset W$, 
\begin{align} \label{HZ(1.1)}
&\sup_{\omega \in \Omega,\,A \in \mathcal{F}_V} \left| \mu_{T,h}(A) - \mu_{T,h}(A\,|\,\mathcal{F}_{W^c} )(\omega)\right| \\
&\leq C|V| d(V,W^c)\exp\{-\alpha d(V,W^c)\}. \notag
\end{align}

Sometimes we use the mixing property (\ref{HZ(1.1)}) in the following form: 
if $V_1,\,V_2 \subset \mathbf{Z}^2$ are finite sets, $V_1 \cap V_2 = \emptyset$ and $A,B$ are cylinder sets such that $A \in \mathcal{F}_{V_1}$ and $B \in \mathcal{F}_{V_2}$, then
\begin{align} \label{HZ(1.3)}
&\left|\mu_{T,h} (A \cap B) - \mu_{T,h} (A) \mu_{T,h} (B) \right| \\
&\leq C|V_1| d(V_1,V_2)\exp\{-\alpha d(V_1,V_2)\} \mu_{T,h} (B). \notag
\end{align}

\subsection{Gibbs measures with periodic boundary condition}
As in \cite{K87scaling}, we will have to take derivative in $h$ of
$\mu_{T,h}$-probability of some event in a finite box.
However, this will cause a new problem, since $\mu_{T,h}$ itself is a limit
of finite Gibbs measures.
Here we use the fact that $\mu_{T,h}$ is a limit of Gibbs measures 
$\mu_{T,h}^N$ on $S(N)$ with the periodic boundary condition, i.e.,
we identify the left side of $S(N)$ with the right side of it, and the
top side with the bottom side.

If $2n<N$, then for any $A\in {\mathcal F}_{S(n)}$, by (\ref{HZ(1.1)}), we have
\begin{align}\label{eq:periodic-1}
| \mu_{T,h}^N(A)-\mu_{T,h}(A)|&\leq 
  \sup_{\omega \in \Omega }|q^\omega_{S(2n),T,h}(A)-\mu_{T,h}(A)|\\
 &\leq 4Cn^3e^{-\alpha n}\nonumber
\end{align}
Also, fixing an $h\in (0,2h_c(T))$, we introduce $h(t)$ as
\begin{equation}\label{eq:peridic-2}
h(t)=\begin{cases}
     h_c(T)+t(h-h_c(T)), & \mbox{ if } h>h_c(T),\\
     h+t(h_c(T)-h), &\mbox{ if }  h<h_c(T),
    \end{cases}
\end{equation}                   
and $\mu_t^N$ as
\begin{equation}\label{eq:peridic-3}
\mu_t^N=\mu_{T,h(t)}^N
\end{equation}
for $t\in [0,1]$.
We will have to investigate $\displaystyle \frac{d}{dt}\mu_t^N(A)$ for
events in $S(n)$ with $2n<N$.

\subsection{Correlation length and crossing probabilities}
The definition of the correlation length $L(h)$ depends on the choice of 
threshold $\varepsilon $.
So, to emphasize the dependence on $\varepsilon$, we write
$L(h,\varepsilon )$ for the correlation length throughout the paper.
A path [resp. A $(*)$-path] is a sequence of points $x_1, x_2, \ldots , x_s$ 
in ${\mathbf Z}^2$
such that $|x_i-x_{i-1}|=1 $ [resp. $|x_i-x_{i-1}|_\infty =1$] for every $i\leq s$.
If every spin value on a given path $\gamma $ is $+1$, 
then we call this $\gamma $
a $(+)$-path. $(-)$-paths, $(+*)$-paths, $(-*)$-paths are defined similarly.
A circuit is a sequence of points 
$\gamma = \{ x_1,x_2,\ldots,x_s \}$ such that 
$\gamma$ is a path except that $x_s$ and $x_1$ satisfy $|x_s-x_1|=1$. 
Similarly, we define a $(*)$-circuit  by replacing 
``path''  with ``$(*)$-path'', and $|\cdot |$ with $|\cdot |_\infty $.
A circuit $\gamma$ is called a $(+)$-circuit in the configuration 
$\omega$ if $\omega(x)=+1$ for every $x \in \gamma$. 
A $(-*)$-circuit is defined in the 
same way.
Let
\[ A^+(n,m) := \{ \mbox{there exists a horizontal $(+)$-crossing of $[-n,n] \times [-m,m]$} \}, \]
where a horizontal $(+)$-crossing of a rectangle is a $(+)$-path connecting the left and the right sides of that rectangle. We define $A^{-*}(n,m)$ in the same way. 

Further, for $n\geq 1$, let $S(n)$ denote the square $[-n,n]^2$.
We use the notation $S(x,n)$ for the shifted square $S(n)+x$. 
We are also interested in the event that there is a $(+)$- or $(-*)$-path
in $S(n)$ which connects the origin with the inner boundary
$$
\partial_{in}S(n)= \{ x \in S(n) : |x|_\infty =n \} ,
$$
Let $\mathcal{A},\mathcal{B}, V $ be subsets of ${\mathbf Z}^2$,
such that $\mathcal{A}\cap \mathcal{B}=\emptyset,
\mathcal{A}\cup \mathcal{B}\subset V $.
Then let
\begin{align*}
&\{ \mathcal{A} \stackrel{+}{\leftrightarrow }\mathcal{B}
   \mbox{ in } V \} \\
&:=\left\{ \omega \in \Omega :
\begin{array}{@{\,}c@{\,}}
\mbox{there exists a $(+)$-path in $V$ connecting}\\
\mbox{some point $x\in \mathcal{A}$ with some point $y\in \mathcal{B}$}
\end{array}
\right\} .
\end{align*}
If $V ={\mathbf Z}^2$, then we simply write it 
$\displaystyle \{ \mathcal{A} \stackrel{+}{\leftrightarrow }\mathcal{B}\} $.
We also use the notation 
$\displaystyle \{ {\mathbf O}\stackrel{+}{\leftrightarrow }
      \partial_{in}S(n) \} $
for
$\displaystyle \{ {\mathbf O}\stackrel{+}{\leftrightarrow }
      \partial_{in}S(n) \mbox{ in } V \} $
if $S(n)\subset V $.
Events $\displaystyle
\{ \mathcal{A}\stackrel{-*}{\leftrightarrow}\mathcal{B} 
   \mbox{ in } V \} $
and $ \displaystyle 
\{ \mathbf{O}\stackrel{-*}{\leftrightarrow} \partial_{in}S(n)\} $ 
are defined similarly.
\begin{Lemma}[an ACCFR-type rescaling lemma]\label{lem:accfr}
Let $\lambda =1/64$, and $0<\theta < 1$.
We put $n_0$ to be the integer such that
\begin{equation}\label{eq:1-st bound for n}
\max\{ 1,C\} (3n)^3 e^{-\alpha n} < 
 \lambda \theta \quad  \mbox{ for every } n\geq n_0,
\end{equation}
where $C$ and $\alpha $ are constants appearing in the mixing properties (\ref{HZ(1.1)}) and (\ref{HZ(1.3)}).
Then for any $L \geq n_0$,
if we have
$$
\mu_{T,h} \bigl( A^+(3L,L) \bigr) \geq 1-\lambda \theta ,
$$
then we have for every $k\geq 1$,
$$
\mu_{T,h} \bigl( A^+(3^{k+1} L,3^k L) \bigr) \geq 1 - \lambda \theta^{2^k}. 
$$
The same statement holds for $(-*)$-connection, too.
\end{Lemma}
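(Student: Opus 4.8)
The plan is to argue by induction on $k$; the inductive step is a single ``tripling'' of scale. Write $\mu=\mu_{T,h}$. The hypothesis $\mu(A^+(3L,L))\ge 1-\lambda\theta$ is the ``level $0$'' case of $\mu(A^+(3^{k+1}L,3^kL))\ge 1-\lambda\theta^{2^k}$; assuming this at level $k$, set $M:=3^kL$ (so $M\ge L\ge n_0$) and $q:=\lambda\theta^{2^k}$. The level-$k$ statement says that a left--right $(+)$-crossing of $[-3M,3M]\times[-M,M]$ fails with probability $\le q$, and I must show that a left--right $(+)$-crossing of $[-9M,9M]\times[-3M,3M]$ fails with probability $\le\lambda\theta^{2^{k+1}}=q^2/\lambda$. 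Since $T>T_c$ is fixed and $0\le h\le 2h_c(T)$, the Gibbs measure $\mu$ is unique, hence invariant under all lattice translations, rotations and reflections; combining this with the level-$k$ bound and elementary monotonicity/restriction arguments, a long-way $(+)$-crossing of any axis-parallel $6M\times 2M$ rectangle, and a $(+)$-crossing in either direction of any axis-parallel square of side $\le 6M$, each fails with probability $\le q$ (and the same with $(+)$ replaced by $(-*)$).

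The geometric core is a ``two disjoint routes'' construction. In the band $V_1:=[-9M,9M]\times[M,3M]$ take four $6M\times 2M$ rectangles $R_1,\dots,R_4$ with $x$-ranges $[-9M,-3M],[-5M,M],[-M,5M],[3M,9M]$, so that consecutive ones overlap in $2M\times 2M$ squares $O_1,O_2,O_3$ and $\bigcup_i R_i=V_1$; let $E_1$ be the intersection of the seven events $\{R_i\text{ has a long-way }(+)\text{-crossing}\}$ ($i\le 4$) and $\{O_m\text{ has a short-way }(+)\text{-crossing}\}$ ($m\le 3$). Each $R_i$'s long-way crossing runs all the way across each overlap square it contains (giving a long-way crossing of that square), and a long-way and a short-way crossing of a square are connected inside it (for $(+)$-paths they even share a vertex); hence on $E_1$ the seven crossings concatenate into a single left--right $(+)$-crossing of $V_1$, and so of $[-9M,9M]\times[-3M,3M]$. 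Thus $E_1\subseteq A^+(9M,3M)$ and $E_1\in\mathcal F_{V_1}$. Let $E_2\in\mathcal F_{V_2}$ be the reflected construction in the band $V_2:=[-9M,9M]\times[-3M,-M]$, so also $E_2\subseteq A^+(9M,3M)$. The point is that $V_1\cap V_2=\varnothing$ with $d(V_1,V_2)=2M$.

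Now combine. From $E_1\cup E_2\subseteq A^+(9M,3M)$ I get $A^+(9M,3M)^c\subseteq E_1^c\cap E_2^c$, so (\ref{HZ(1.3)}) gives
\[ \mu\bigl(A^+(9M,3M)^c\bigr)\le \mu(E_1^c)\,\mu(E_2^c)+C\,|V_1|\,d(V_1,V_2)\,e^{-\alpha d(V_1,V_2)}\,\mu(E_2^c). \]
A union bound over the seven events defining $E_i$ gives $\mu(E_i^c)\le 7q$, so the first term is at most $49q^2=\tfrac{49}{64}\lambda\theta^{2^{k+1}}$ --- here $\lambda=1/64=1/8^2$ is exactly what makes $(\#\text{crossings per route})^2\lambda=49/64<1$, leaving room to spare. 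For the second term, $|V_1|\le 57M^2$ and $d(V_1,V_2)=2M$, so it is at most $114\,CM^3e^{-2\alpha M}\cdot 7q$; since (\ref{eq:1-st bound for n}) gives $\max\{1,C\}(3M)^3e^{-\alpha M}<\lambda\theta$ (hence $CM^3e^{-\alpha M}<\lambda\theta/27$, and in particular $e^{-\alpha n_0}<\theta/64$), while $M=3^kL\ge 3^kn_0$, one gets $e^{-\alpha M}\le(\theta/64)^{3^k}\le\tfrac12\theta^{2^k-1}$ for every $k\ge 0$ (using $3^k\ge 2^k$), which forces the second term below $\tfrac{15}{64}\lambda\theta^{2^{k+1}}$; the two terms sum to at most $\lambda\theta^{2^{k+1}}$, closing the induction. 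The $(-*)$-statement follows by running the whole argument with $+$ replaced by $-*$: the only change is that for the gluing one invokes that perpendicular $(-*)$-crossings of a box are $(-*)$-connected inside it, which holds because the contrary would force a self-contradictory $2\times 2$ spin pattern.

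The part needing the most care is the bookkeeping in the third paragraph: the mixing error is of order $e^{-\alpha M}$ with $M=3^kL$, whereas the target $\lambda\theta^{2^{k+1}}$ shrinks \emph{doubly} exponentially in $k$, so the absorption works only because the scales grow geometrically with ratio $3>2$ and because $n_0$ has been taken large enough --- this is the precise purpose of (\ref{eq:1-st bound for n}), and it in turn dictates how small $\lambda$ must be. The geometric construction itself, including the $(-*)$-gluing, is routine once one keeps the aspect ratios and the scale-factor $3$ calibrated so that each route uses at most $8$ crossings.
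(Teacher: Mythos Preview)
Your proof is correct and follows essentially the same approach as the paper: both argue by induction, place two disjoint horizontal sub-bands of height $2M$ inside the $18M\times 6M$ rectangle, build a left--right crossing of each band out of seven crossing events, and use the mixing property to control the probability that both bands fail. The only differences are bookkeeping: the paper obtains the bound $7q$ for each band via the FKG inequality ($\mu(A^+(9M,M))\ge\mu(A^+(3M,M))^7\ge(1-q)^7\ge 1-7q$) and then writes the combined failure as $(7q+\text{mixing})^2\le(8\lambda\theta^{2^k})^2=\lambda\theta^{2^{k+1}}$, running a parallel induction to keep the mixing error below $\lambda\theta^{2^k}$; you obtain the same $7q$ via a union bound on the seven complements and split the final estimate as $49q^2+\text{(mixing)}\cdot 7q$, bounding the mixing term directly from $e^{-\alpha M}\le(e^{-\alpha n_0})^{3^k}$ rather than by a separate induction.
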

\begin{proof} We use a rescaling argument in \cite{ACCFR}.  
%%  It is easy to see that the mixing property in Theorem \ref{th:mixing} 
%%  is sufficient to prove the lemma.
%%
%% \begin{quote}
%% ---------------------------------------------
%%  \noindent \framebox{\bf INTERMISSION}
Assume that the inequality holds for $k$;
\[ \mu_{T,h} \bigl( A^+ (3^{k+1} L,3^{k} L) \bigr)
 \geq 1 - \lambda \theta^{2^k}. \]
By the mixing property and the translation-invariance, we have
\begin{align*} 
&\mu_{T,h} \bigl( A^+(3^{k+2}L, 3^{k+1}L) \bigr) \\
&\geq \mu_{T,h} \left( 
\begin{array}{@{\,}c@{\,}} 
\mbox{there exists a horizontal $(+)$-crossing of}\\
\mbox{$[-3^{k+2}L,3^{k+2}L] 
\times [-3^{k+1}L,-3^{k}L]$, or there exists} \\
\mbox{a horizontal $(+)$-crossing of $[-3^{k+2}L,3^{k+2}L] 
\times [3^{k}L,3^{k+1}L]$} \\
\end{array} 
\right) \\
&\geq 1 - \bigl\{1 - \mu_{T,h}\bigl(A^+(3^{k+2} L,3^{k} L)\bigr) 
+ C (4\cdot 3^{k+2} L \cdot 3^k L)\times 2\cdot 3^k L 
   e^{-\alpha 2\cdot 3^k L} \bigr\}^2 \\
&= 1-\bigl\{1 - \mu_{T,h}\bigl(A^+ (3^{k+2} L,3^{k} L)\bigr) 
    + 8C 3^{3k+2} L^3 e^{-\alpha 2\cdot 3^k L} \bigr\}^2.
\end{align*}
By the FKG inequality it is easy to see that
\begin{align*}
\mu_{T,h}\bigl(A^+ (3^{k+2} L,3^{k} L)\bigr) 
   &\geq \mu_{T,h}\bigl(A^+ (3^{k+1} L,3^{k} L)\bigr)^7 \\
&\geq (1-\lambda \theta^{2^k})^7 \geq 1-7\lambda \theta^{2^k}.
\end{align*}
Thus we have
\begin{align*} 
\mu_{T,h} \bigl(A^+ (3^{k+2}L,3^{k+1}L)\bigr) 
\geq 1-( 7\lambda \theta^{2^k} + 8C 3^{3k+2} L^3 e^{-2\alpha 3^k L} )^2.
\end{align*}

Now we go back to the case where $k=0$. 
By the above calculation, and then from (\ref{eq:1-st bound for n}),
\begin{align*} 
\mu_{T,h}\bigl(A^+(3^2 L,3L)\bigr) 
&\geq 1-( 7\lambda \theta + 8C 3^2 L^3 e^{-2\alpha  L} )^2 \\
&\geq 1- (8\lambda \theta )^2\\
&= 1- \lambda \theta^2,
\end{align*}
since $\lambda = 1/64$.

In general, if
\[ \max\{ 1, C\}8\cdot 3^{3k+2} L^3 e^{-2\alpha 3^k L} \leq \lambda 
\theta^{2^k}, \]
then
\begin{align*}
\max\{ 1, C\} 8\cdot  3^{3(k+1)+2} L^3 e^{-2\alpha 3^{k+1} L} 
\leq \lambda \theta^{2^k} \cdot 3^3 e^{-\alpha 4 \cdot 3^k L} 
\leq  \lambda \theta^{2^k} \cdot \theta^{2^k} =  \lambda \theta^{2^{k+1}},
\end{align*}
and
\begin{align*} 
\mu_{T,h} \bigl(A^+(3^{k+2}L,3^{k+1}L)\bigr) 
\geq 1-( 8\lambda \theta^{2^k})^2 \geq 1-64\lambda^2 \theta^{2^{k+1}} \geq 1-\lambda \theta^{2^{k+1}}.
\end{align*}

%%  ---------------------------------------------------
%%  \end{quote}
\end{proof}

The RSW-type theorem (Lemmas 2.4--2.6 in \cite{Hig93b}) ensures 
that the following statements are true.
\begin{enumerate}
\item For every $\varepsilon >0$, there exists an integer ${\tilde n}_1=
{\tilde n}_1(T, \varepsilon )$ such that if for some $n\geq {\tilde n}_1$,
\begin{equation}\label{rsw-c1}
  \mu_{T,h} \bigl(A^+(m,m) \bigr)  \geq \varepsilon
\end{equation} 
for every $1\leq m\leq n$, then for every integer $k$ there exists a
constant $\delta_k=\delta_k(\varepsilon)$ such that
\begin{equation}\label{rsw-1}
\mu_{T,h} \bigl(A^+(kn,n)\bigr) \geq \delta_k
\end{equation}
for the same $n$.
The same statement holds true for $(-*)$-connection, too.
\item If there exists a constant $\eta >0$ and
an integer $\tilde{n}_2\geq 1$
such that for every $n\geq \tilde{n}_2$ 
\begin{equation}\label{rsw-c2}
 \mu_{T,h} \bigl(A^+(n,n)\bigr)\geq \eta  \quad \mbox{ or } \quad
    \mu_{T,h}\bigl(A^{-*}(n,n)\bigr) \geq \eta
\end{equation}
holds, then for every $\varepsilon >0$,
there exists an integer 
$\tilde{n}_3=\tilde{n}_3(T,\varepsilon , \eta )\geq \tilde{n}_2$
such that if
\begin{equation}\label{rsw-c3}
 \mu_{T,h}\bigl(A^+(n,n)\bigr)\geq 1-\varepsilon ,
\end{equation}
for some $n\geq \tilde{n}_3$, then
\begin{equation}\label{rsw-2}
 \mu_{T,h}\bigl(A^+(3n,n)\bigr)\geq (1-\varepsilon )^3(1-{\root 8\of \varepsilon})^{36}.
\end{equation}
If (\ref{rsw-c3}) holds for $(-*)$-connection for some $n\geq \tilde{n}_3$,
then so does (\ref{rsw-2})
for $(-*)$-connection for the same $n$, too.
\end{enumerate}

%%%%We take an $\varepsilon_0>0$ sufficiently small so that the right hand side
%%%%of (\ref{rsw-2}) is larger than $1-\lambda \theta $, where $\lambda , \theta $
%%%%are given in  Lemma \ref{lem:accfr}, and hereafter fix it.
%%%%We then choose $\eta = \varepsilon_0$, $\tilde{n}_2=\tilde{n}_1$, and we write $n_1$ for the resulting $\tilde{n}_3(T,\varepsilon_0 )$.

%\begin{quote}
%---------------------------------------------
%
%\noindent \framebox{\bf INTERMISSION}

The following lemma states the above result and we present 
a proof here for the sake of completeness.
%%% in this note, but in the final version, we may skip this part.

%For our purpose, we formulate the above result in the following way.
\begin{Lemma}\label{RSW-gen} 
\noindent
(i) Let $\mu = \mu_{T,h}$ or $\mu_t^N$ for $0\leq t \leq 1$.
For every $\varepsilon >0$, there is an $n_1^*=n_1^*(\varepsilon )\geq 1, $
and also there are positive numbers $\{ \delta_k(\varepsilon ) : k \in {\mathbf N} \} $
such that if
\begin{equation}\label{wrsw-cond}
\textstyle
\mu \bigl(A^{+} (n,n)\bigr)\geq \varepsilon , \ \mbox{ and } \  
\mu \bigl(A^{+} (\frac{2n}{5},\frac{2n}{5}) \bigr)\geq \varepsilon ,
\end{equation}
for some $n\geq n_1^*$, then we have
\begin{equation}\label{wrsw-result}
\mu \bigl(A^{+ }(kn,n)\bigr)\geq \delta_k(\varepsilon )
\end{equation}
for the same $n$, and for every $N\geq kn$ if $\mu =\mu_t^N$.
Moreover for $k\geq 2$, $\delta_{k+1}(\varepsilon )
 \geq \delta_k(\varepsilon )\cdot (\varepsilon \delta_2(\varepsilon ))$,
therefore we have $\delta_{k+1}(\varepsilon )
  \geq \delta_{2}(\varepsilon)\cdot (\varepsilon\delta_2(\varepsilon ))^{k-1}$.

\noindent
(ii) There exists an $\varepsilon_0>0$ and an 
$n_1= n_1(\varepsilon_0)\geq \max\{ n_0, n_1^*(\varepsilon_0)\} $ 
such that the following statement holds.
Let $M$ be an integer satisfying
\begin{equation}\label{wrsw-cond2}
M \geq \frac{\log \varepsilon_0}{\log (1-\delta_8(\varepsilon_0)^4/2)},
\end{equation}
and take a large $m\geq n_1$ such that $m\geq 2\cdot 4^{M+1}(\log m)^2$.
Assume that the inequality (\ref{wrsw-cond}) holds for 
$\mu = \mu_{T,h}, \varepsilon = \varepsilon_0$ and
for every $n$ with $n_1^*(\varepsilon_0)\leq n\leq m$, and that
\begin{equation}\label{wrsw-cond3}
\mu_{T,h} \bigl( A^{+ } (m,m)\bigr) \geq 1-\varepsilon_0.
\end{equation}
Then we have
$$
\mu_{T,h} \bigl(A^{+ }(3m,m)\bigr)\geq 1-\lambda \theta ,
$$
where $\lambda $ and $\theta $ are the constants given in 
Lemma \ref{lem:accfr}.
The same statements hold for $(-*)$-connection, too.
\end{Lemma}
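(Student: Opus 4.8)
The plan is to prove part (i) by the classical RSW bootstrap and to deduce part (ii) by feeding the rectangle‑crossing bounds of part (i) into the RSW boosting step listed just before the lemma. For part (i) I would first record that $\mu$ — whether $\mu_{T,h}$ with $h\ge 0$ or the periodic finite‑volume measure $\mu_t^N$ — satisfies the FKG inequality and is invariant under translations, axis reflections and the rotation by $\pi/2$ (the periodic measure inherits all of these from the torus), and that no mixing correction is needed in (i). From the two hypotheses in (\ref{wrsw-cond}) I would derive a base estimate $\mu(A^+(2n,n))\ge \delta_2(\varepsilon)>0$ with $\delta_2$ depending only on $\varepsilon$: this is the RSW corner‑turning step, done by a lowest‑crossing/reflection argument inside $[-n,n]^2$, where the crossing at scale $\tfrac{2n}{5}$ is used to splice the lowest $(+)$-crossing of the square to its mirror image (this uses only the reflection symmetry and the strong Markov property, so nothing $n$‑ or $\mu$‑dependent enters; cf.\ \cite{Hig93b}, \cite{ACCFR}). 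Then I would extend to all aspect ratios by FKG gluing: a horizontal $(+)$-crossing of $[0,kn]\times[0,n]$, a horizontal $(+)$-crossing of $[(k-1)n,(k+1)n]\times[0,n]$, and a vertical $(+)$-crossing of the overlapping square $[(k-1)n,kn]\times[0,n]$ (probability $\ge\varepsilon$ by the $\pi/2$-symmetry) together force a horizontal $(+)$-crossing of $[0,(k+1)n]\times[0,n]$, so $\delta_{k+1}(\varepsilon)\ge \varepsilon\,\delta_2(\varepsilon)\,\delta_k(\varepsilon)$ by FKG, which is the asserted product bound; for $\mu=\mu_t^N$ the same construction works as long as the rectangle fits in $S(N)$, i.e.\ for $N\ge kn$.

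For part (ii) I would first fix $\varepsilon_0>0$ so small that $(1-\varepsilon_0)^3(1-\varepsilon_0^{1/8})^{36}\ge 1-\lambda\theta$ — possible since the left side tends to $1$ as $\varepsilon_0\downarrow 0$ and $\lambda,\theta$ are the fixed constants of Lemma \ref{lem:accfr} — and then take $n_1$ dominating $n_0$, $n_1^*(\varepsilon_0)$ and the threshold of the RSW boosting step. Applying part (i) with $\varepsilon=\varepsilon_0$ at scale $m$ (legitimate by (\ref{wrsw-cond}) at that scale) gives $\mu_{T,h}(A^+(km,m))\ge \delta_k(\varepsilon_0)$ for all $k$, and since $A^+(km,m)\subseteq A^+(n,n)$ whenever $m\le n\le km$, also $\mu_{T,h}(A^+(n,n))\ge \delta_{\lceil n/m\rceil}(\varepsilon_0)>0$ for every $n\ge m$; together with $\mu_{T,h}(A^+(n,n))\ge\varepsilon_0$ for $n_1^*(\varepsilon_0)\le n\le m$ from (\ref{wrsw-cond}), this verifies the lower‑bound hypothesis of the boosting step across the whole range of scales it touches. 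The boosting construction then builds a horizontal $(+)$-crossing of $[-3m,3m]\times[-m,m]$ out of the near‑certain crossing (\ref{wrsw-cond3}) of the central $m$‑square together with these guaranteed crossings of smaller sub‑rectangles, reinforcing the weak links by $\sim M$ quasi‑independent attempts spread over a horizontal window of width $\sim 4^{M}m$; the mixing bounds (\ref{HZ(1.1)})–(\ref{HZ(1.3)}) then contribute only errors of order $(\mathrm{size})^3 e^{-\alpha\,(\mathrm{gap})}$, negligible once $m\ge 2\cdot 4^{M+1}(\log m)^2$, so that altogether $\mu_{T,h}(A^+(3m,m))\ge (1-\varepsilon_0)^3(1-\varepsilon_0^{1/8})^{36}\ge 1-\lambda\theta$; the condition (\ref{wrsw-cond2}) on $M$ is exactly the requirement that $M$ attempts, each succeeding with probability $\ge \tfrac12\delta_8(\varepsilon_0)^4$, bring the overall failure probability below $\varepsilon_0$. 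The $(-*)$-versions run identically, using the crossing duality between $(+)$- and $(-*)$-paths and the same symmetries.

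I expect part (i) to be routine. The main obstacle is the bookkeeping in part (ii): one has to run the boosting construction so that the number $M$ of attempts, the width $\sim 4^{M}m$ of the region they occupy, and the distances over which (\ref{HZ(1.1)})–(\ref{HZ(1.3)}) are invoked all stay in balance — i.e.\ the mixing errors must be shown to remain below the per‑attempt gain \emph{uniformly} in the scales $n_1^*\le n\le 3m$, which is precisely what (\ref{wrsw-cond2}) and $m\ge 2\cdot 4^{M+1}(\log m)^2$ encode — and one must arrange the ``$M$ attempts'' to be genuinely quasi‑independent so that their failure probabilities multiply up to the mixing error.
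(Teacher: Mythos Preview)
Your outline for part (ii) has the right shape (many quasi-independent attempts, mixing controls the dependence, condition (\ref{wrsw-cond2}) sets the number of attempts), though you have the geometry inverted: in the paper the $M$ attempts live in concentric annuli of radii $r(j)=2\cdot 4^j(\log m)^2$ around the junction point of the lowest crossing with its reflection, so the constraint $m\ge 2\cdot 4^{M+1}(\log m)^2$ says the \emph{largest} annulus still fits inside a box of side $m$ --- the factor $4^M$ multiplies $(\log m)^2$, not $m$. The $(\log m)^2$ spacing is what makes the mixing errors summable. Your plan to simply invoke the pre-stated boosting bound (\ref{rsw-2}) hides exactly this construction, which is the actual content of the proof.

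The real gap is in part (i). You assert that ``no mixing correction is needed in (i)'' and that the splicing step ``uses only the reflection symmetry and the strong Markov property, so nothing $n$- or $\mu$-dependent enters''. This is false for the Ising model and is precisely why $n_1^*(\varepsilon)$ appears in the statement. When you condition on the lowest $(+)$-crossing $\mathcal R(\omega)=R$, the configuration above $R$ is \emph{not} independent of what happens below (unlike Bernoulli percolation): domain Markov only tells you that the law above $R$ is the Gibbs measure in that region with $+$ boundary on $R$ and unspecified boundary further out. To lower-bound the conditional probability that a $(+)$-path in the small box $S(x^*,\lfloor 2n/5\rfloor)$ reaches $R$ by the \emph{unconditional} probability of a crossing at scale $2n/5$, the paper first uses FKG to pass to $-$ boundary on an enlarged box $V$, and then needs the mixing bound
\[
\mu(G\mid \omega=-1\text{ on }\partial V)\ \ge\ \mu(G)-Cn^3e^{-\alpha n/5},
\]
which is exactly (\ref{eq:wrsw-mix}); the threshold $n_1^*$ is chosen so that this error is dominated by $(1-\sqrt{1-\varepsilon})^4$. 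Without this step your ``classical RSW bootstrap'' does not go through: FKG alone gives you a lower bound under $-$ boundary conditions, not under $\mu$, and those can differ by a factor you do not control. So the hypothesis on the $\tfrac{2n}{5}$-box in (\ref{wrsw-cond}) is fed in via mixing, not via pure Markov/FKG, and your claim that the constants are $n$-independent for free is where the argument breaks.
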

\begin{proof}
We only prove the above statements for $(+)$-connection.
The proof goes parallel for $(-*)$-connection, too.

\noindent
$1^\circ )$ About the first statement:
Let $\ell $ be the segment connecting
$(-n,\lfloor\frac{4n}{5}\rfloor )$ with $(n,\lfloor \frac{4n}{5}\rfloor )$,
where $\lfloor u\rfloor $ is the largest integer not more than $u$. 
We divide $\ell $ into three segments by 
$a_L=(-\lfloor \frac{2n}{5}\rfloor, \lfloor \frac{4n}{5}\rfloor )$
and $a_R=(\lfloor \frac{2n}{5}\rfloor ,\lfloor \frac{4n}{5}\rfloor )$, 
and call by $\ell_1 $ the segment
connecting $(-n,\lfloor \frac{4n}{5}\rfloor )$ with $a_L$, $\ell_2 $ 
the segment connecting
$a_L$ with $a_R$, and by $\ell_3$ the segment connecting $a_R$ with
$(n,\lfloor \frac{4n}{5}\rfloor )$.
Let $R(\omega )$ denote the lowest horizontal $(+)$-crossing of $S(n)$,
and let the event $E$ be the subset of $A^+(n,n)$ given by
$$
E=\left\{ 
\begin{array}{@{\,}c@{\,}}
 \mbox{there is a point $x\in \ell_2 $ such that $x$ is} \\
 \mbox{below the lowest $(+)$-crossing $R(\omega )$ of $S(n)$}
\end{array}
\right\} .
$$
Either $\mu (E)\geq \varepsilon /2$ or 
$\mu \bigl(A^+(n,n)\setminus E\bigr)\geq \varepsilon /2$.

Assume first the latter case;
$\mu \bigl(A^+(n,n)\setminus E\bigr)\geq \varepsilon /2$.
Define also the following events:
\begin{align*}
F_0&=\left\{
\begin{array}{@{\,}c@{\,}}
 \mbox{there exists a horizontal $(+)$-crossing of $S(n)$}\\
 \mbox{below the segment  $\ell $}
 \end{array}
 \right\} , \\
F_1&=\left\{
 \begin{array}{@{\,}c@{\,}}
 \mbox{there exists a $(+)$-path in $S(n)$ below $\ell $}\\
 \mbox{which connects $\ell_1$ with the segment} \\
  \{ (n, j) : -n\leq \ j\leq \frac{4n}{5}\}
 \end{array}
 \right\} , \\
F_2&=\left\{ 
\begin{array}{@{\,}c@{\,}}
 \mbox{there exists a $(+)$-path in $S(n)$ below $\ell $}\\
 \mbox{connecting $\ell_1$ with $\ell_3$}
\end{array}
\right\} , \\
F_3&=\left\{
 \begin{array}{@{\,}c@{\,}}
  \mbox{there exists a $(+)$-path in $S(n)$ below $\ell $}\\
  \mbox{which connects $\ell_3$ with the segment} \\
  \{ (-n, j) : -n\leq \ j\leq \frac{4n}{5} \}
 \end{array}
 \right\}.
\end{align*}
Then, it is easy to see that 
$ \bigl( A^+(n,n)\setminus E \bigr) \subset \cup_{0\leq i\leq 3}F_i $.  
Since $F_i, 0\leq i\leq 3$ are increasing, by the FKG inequality,
we have
\begin{equation}\label{eq:wrsw-1}
\mu (F_i)\geq 1-\root 4 \of {1-\varepsilon /2}
\end{equation}
for some $0\leq i\leq 3$.
If (\ref{eq:wrsw-1}) holds for $i=0$, we have already
$$
\textstyle
\mu \bigl(A^+ (n,\frac{9n}{10} ) \bigr)\geq  1-\root 4 \of {1-\varepsilon /2},
$$
and by the FKG inequality we have
\begin{equation}\label{eq:wrsw-2}
\textstyle
\mu \bigl(A^+ (\frac{11n}{10},n )\bigr) \geq 
{\bigl(  1-\root 4 \of {1-\varepsilon /2} \bigr)}^3.
\end{equation}
If (\ref{eq:wrsw-1}) is true for $i=1$ or $3$, then by the
symmetry of $\mu $ with respect to the line 
$\{ x^1= -\lfloor \frac{2n}{5}\rfloor \}$
or the line $\{ x^1=\lfloor \frac{2n}{5}\rfloor \} $, 
and by the FKG inequality we have
\begin{equation}\label{eq:wrws-3}
\textstyle
\mu \bigl(A^+ (\frac{14n}{10},n )\bigr)\geq 
{\bigl(  1-\root 4 \of {1-\varepsilon /2} \bigr)}^2.
\end{equation}
Also, if (\ref{eq:wrsw-1}) is true for $i=2$, then by the symmetry of 
$\mu $ with respect to the line $\{ x^1= -\lfloor \frac{2n}{5}\rfloor \}$
and the line $\{ x^1=\lfloor \frac{2n}{5}\rfloor \} $, and by the FKG 
inequality we have
\begin{equation}\label{eq:wrsw-4}
\textstyle
\mu \bigl(A^+ (\frac{12n}{10},n )\bigr)\geq 
 {\bigl(  1-\root 4 \of {1-\varepsilon /2} \bigr)}^3.
\end{equation}
Combining (\ref{eq:wrsw-1})--(\ref{eq:wrsw-4}), we have
\begin{equation}\label{eq:wrsw-5}
\textstyle
\mu \bigl(A^+ (\frac{11n}{10},n ) \bigr)\geq 
{\bigl(  1-\root 4 \of {1-\varepsilon /2} \bigr)}^3.
\end{equation}

Finally assume that $\mu (E)\geq \varepsilon /2$.
Let $R$ be a horizontal crossing of $S(n)$ such that there exists a 
point $x\in \ell_2$ below $R$.
Take arbitrarily a point $x^*\in\ell_2$ which lies below $R$.
Let $S(x^*,\lfloor \frac{2n}{5}\rfloor )$ be the square centered at
$x^*$ with radius
$\lfloor \frac{2n}{5}\rfloor $, i.e., its side length is
$2\lfloor \frac{2n}{5}\rfloor $.
Then consider the conditional probability
\begin{equation}\label{eq:wrsw-6}
\mu \left(
\begin{array}{@{\,}c@{\,}}
\mbox{there exists a $(+)$-path in $S(x^*,\lfloor \frac{2n}{5}\rfloor )$}\\
\mbox{connecting its top side with $R$}
 \end{array}
\, \bigg\vert \,
 R(\omega ) = R 
       \right) .
\end{equation}
For simplicity let us write $G$ for the event in the above
conditional probability.
Since $G$ depends on the configurations in the region
on and above $R$, by the Markov property of $\mu $
and by the FKG inequality, the above conditional probability
is not less than
\begin{equation}\label{eq:wrsw-7}
q^{\omega^-}_{V ,T,h}
\left( G \mid \omega (x)=+1, \  x\in R  \right) ,
\end{equation}
where $V $ is given by
$$
\textstyle
V = [-\frac{7n}{5},n]\times \left[-n,\frac{7n}{5} \right],
$$
and $\omega^-(x)= -1$ for every $x\in \mathbf{Z}^2$.
Then again by the FKG inequality the above conditional probability 
is not less than
$$
\mu (G \mid \omega (x) = -1, \ x \in \partial V ).
$$
Now, $G$ is the event in $S(x^*,\lfloor \frac{2n}{5}\rfloor )$ whose distance
from $\partial V $ is not less than $\frac{n}{5}$.
Therefore by the mixing property, we have
\begin{equation}\label{eq:wrsw-mix}
\mu (G \mid \omega (x) = -1, \ x \in \partial V )
\geq \mu (G) - Cn^3e^{-\alpha \frac{n}{5}}.
\end{equation}
We also note that by the symmetry of $\mu $ with respect to the line
$\{ x^2=\lfloor \frac{4n}{5}\rfloor \} $ and by the FKG inequality
$$
\mu \left( 
 \begin{array}{@{\,}c@{\,}}
\mbox{there is a horizontal $(+)$-crossing of} \\
\mbox{$S(x^*,\frac{2n}{5})$ passing below $x^*$}
\end{array}
 \right) \geq 1- \sqrt{1-\varepsilon }.
$$
Therefore by the FKG inequality and the rotation symmetry we have
$$
\mu (G) \geq {\bigl( 1-\sqrt{1-\varepsilon }\bigr) }^4,
$$
since $R$ passes above $x^*$.
Therefore if we take $n^*_1=n_1^*(\varepsilon )$ sufficiently large so that
\begin{equation}\label{eq:2nd bound for n}
Cn^3e^{-\alpha \frac{n}{5}} \leq \sqrt{1-\varepsilon }
 {\bigl( 1-\sqrt{1-\varepsilon }\bigr) }^4, \quad
             \mbox{ for } n\geq n_1^*,
\end{equation}
from (\ref{eq:wrsw-mix}) we have
$$
\mu (G \mid \omega (x) = -1, \ x \in \partial V )
\geq   {\bigl( 1-\sqrt{1-\varepsilon }\bigr) }^5.
$$       
This implies that the conditional probability in (\ref{eq:wrsw-6})
is not less than $(1-\sqrt{1-\varepsilon })^5$.
First multiplying this inequality by $\mu (R(\omega )=R)$
and then summing up the resulting inequality over $R$'s, we have 
$$
\mu \left( 
  \begin{array}{@{\,}c@{\,}}
      \mbox{there is a horizontal $(+)$-crossing of $S(n)$}\\
      \mbox{which is $(+)$-connected to the line}\\
      \{ x^2=\lfloor \frac{6n}{5}\rfloor \} \mbox{ in } [-n,n]\times 
                                    [-n,\lfloor \frac{6n}{5}\rfloor ]
   \end{array}
  \right) \geq \frac{\varepsilon }{2}
              {\bigl( 1-\sqrt{1-\varepsilon }\bigr) }^5.
$$
Finally, by the FKG inequality and the rotation invariance of
$\mu $, this implies that
\begin{equation}\label{eq:wrsw-8}
\textstyle
\mu \bigl(A^+ (\frac{11n}{10},n )\bigr)\geq 
   \frac{\varepsilon^2}{2}
              {\bigl( 1-\sqrt{1-\varepsilon }\bigr) }^5.
\end{equation}

Combining (\ref{eq:wrsw-5}) with (\ref{eq:wrsw-8}), we obtain
$$
\textstyle
\mu \bigl(A^+ (\frac{11n}{10},n )\bigr)
\geq \min \left\{ \frac{\varepsilon^2}{2}
              {\bigl( 1-\sqrt{1-\varepsilon }\bigr) }^5,
              {\bigl( 1-\root 4 \of {1-\varepsilon /2} \bigr) }^3
\right\} .
$$
The rest of the proof of the first statement of the lemma is obvious.

\medskip
\noindent
$2^\circ $) About the second statement:
By (\ref{wrsw-cond3}) and the FKG inequality we have
$$
\mu_{T,h} \left(
 \begin{array}{@{\,}c@{\,}}
 \mbox{there is a horizontal $(+)$-crossing of $S(m)$}\\
 \mbox{passing below the origin}
  \end{array}
       \right) \geq 1-\sqrt{\varepsilon_0}.
$$
But we have to take care of the mixing property.
So, let $V'=[-m,m-(\log m)^2]\times [-m,m]$ and let $R'(\omega )$
denotes the lowest horizontal $(+)$-crossing of $V'$.
As above, we have
\begin{equation}\label{eq:srsw-1}
\mu_{T,h} \left(
 \begin{array}{@{\,}c@{\,}}
 \mbox{there is a horizontal $(+)$-crossing of $V'$}\\
 \mbox{passing below the origin}
  \end{array}
       \right) \geq 1-\sqrt{\varepsilon_0}.
\end{equation}
Let $R'$ be a horizontal crossing of $V'$ passing below the origin,
and let $R''$ be its reflection with respect to the line $\{ x^1=m\} $.
If we write the right endpoint of $R'$ by $b_L$, then 
$b_R=b_L+(2(\log m)^2,0)$ is the left endpoint of $R''$.
Let $\xi $ be the line segment connecting $b_L$ with $b_R$,
and we write $R^*$ for the path $R'\cup \xi \cup R''$ which connects
the left and the right sides of $[-m,3m]\times [-m,m]$.
Then we look at the region $\Theta $ in $S((m,0),m)$ above $R^*$
and outside the square $S(b_L+((\log m)^2,0), 2(\log m)^2)$.
Let $H$ be the event defined by
$$
H=\left\{
 \begin{array}{@{\,}c@{\,}}
 \mbox{there is a $(+)$-path in $\Theta $ connecting}\\
 \mbox{the top side of  $S((m,0),m)$ with
          $\partial^+ \Theta $}
  \end{array}
       \right\} ,
$$
where $\partial^+ \Theta $ denotes the left half of the lower
boundary of $\Theta $.
Namely, it starts from the last intersection with the left side 
of $S((m,0),m)$ and $R'$, and then it goes along $\partial \Theta $
until it reaches the middle point $b_L+((\log m)^2,2(\log m)^2)$ 
in the lower side of $ \Theta $.
Let $1\leq j \leq M$ and let $r(j)=2\cdot 4^j(\log m)^2$.
Then consider the annulus 
$$
A(j) = b_L +((\log m)^2, 0)
  + [-r(j),r(j)]^2\setminus [-3r(j-1),3r(j-1)]^2.
$$
Then $ \Theta \cap A(j)$ is divided into some connected components.
Among them there is a component connecting $R'$ with $R''$, which 
we call by the main component of  $ \Theta \cap A(j)$.
The boundary of the main component consists of two paths belonging 
to opposite boundaries of $A(j)$, a path belonging to $R'$, and  
a path belonging to $R''$.
Let $K_j$ be the event defined by
$$
K_j=\left\{
\begin{array}{@{\,}c@{\,}}
\mbox{in the main component of $\Theta \cap A(j)$, there is a $(+)$-path}\\
\mbox{in this component such that it connects $R'$ with $R''$}\\
   \end{array}
\right\}.
$$
Then $K_j$ is an event occurring in $\Theta $, and 
by the FKG inequality and the Markov property of $\mu_{T,h}$,
we have
\begin{align}\label{eq:srsw-2}
&\mu_{T,h} \left( \left. H\cap \bigcup_{1\leq j\leq M}K_j \,\right|\, R'(\omega )=R' \right)\\
&\geq
\mu_{T,h} \left( \left. H \cap \bigcup_{1\leq j\leq M}K_j \,\right|\, 
 \begin{array}{@{\,}c@{\,}}
    \omega (x) = +1,\, x\in R'\\
    \omega (x) = -1,\, x\in L_1\cup L_2 \cup L_3\cup L_4
 \end{array}
   \right) , \nonumber
\end{align}
where $L_i, i=1,2,3,4$ are line segments defined below.
\begin{align*}
L_1&= \mbox{the segment connecting the left endpoint of $R'$ with
            $(-m,-m-(\log m)^2)$},\\
L_2&= \mbox{the segment connecting $(-m,-m-(\log m)^2)$ with 
$(m,-m-(\log m)^2 )$},\\
L_3&= \mbox{the segment connecting $(m,-m-(\log m)^2 )$ 
            with $b_L+((\log m)^2,0)$},\\
L_4&= \mbox{the segment connecting $b_L$ with $b_L+((\log m)^2,0)$}.
\end{align*}
Again, by the FKG inequality, the above conditional probability 
is not less than
\begin{equation}\label{eq:srsw-3}
\mu_{T,h}  \left( \left. H \cap \bigcup_{1\leq j\leq M}K_j
\,\right|\,  \omega (x) = -1 ,\, x \in L_1\cup L_2\cup L_3\cup L_4 \right).
\end{equation}
Since the distance between $\Theta $ and $L_1\cup L_2\cup L_3\cup L_4$ is
not less than $(\log m)^2$, by the mixing property (\ref{HZ(1.3)}) we have
\begin{align}\label{eq:srsw-4}
&\mu_{T,h} \left( \left.  H \cap \bigcup_{1\leq j\leq M}K_j
\,\right|\, \omega (x) = -1 ,\, x \in L_1\cup L_2\cup L_3\cup L_4 \right)\\
&\geq  \mu_{T,h} \left( H \cap \bigcup_{1\leq j\leq M}K_j \right) 
- 4Cm^2(\log m)^2 e^{-\alpha (\log m)^2}. \nonumber
\end{align} 
Also, since $A(j)$ surrounds the box
$S(b_L+((\log m)^2,0),2(\log m)^2)$, if $H\cap K_j$ occurs 
for some $j$, then there exists a $(+)$-path in $S((m,0),m)$
connecting the top side of $S((m,0),m)$ with $R'$.
By the FKG inequality
\begin{equation}\label{eq:srsw-5}
\mu_{T,h} \left( 
H \cap \bigcup_{1\leq j \leq M}K_j \right) 
\geq \mu_{T,h} (H)\mu_{T,h} \left( 
\bigcup_{1\leq j \leq M}K_j \right).
\end{equation}
By the symmetry of $\mu_{T,h} $ with respect to the reflection and
invariance under rotations by right angles, from (\ref{wrsw-cond3})
we have
$$
\mu_{T,h} (H)\geq 1-\sqrt{\varepsilon_0}.
$$
Also, by the FKG inequality it is easy to see that
$$
\mu_{T,h} ( K_j) \geq \delta_8(\varepsilon_0)^4.
$$
By the mixing property, for any event $E$ occurring on
$\cup_{1\leq i\leq j-1}A(i)$, 
$$
\mu_{T,h} (K_j \mid E )\geq \mu_{T,h} (K_j)-
  8Cm^2(\log m )^2e^{-2\alpha (\log m)^2}.
$$ 
By (\ref{eq:1-st bound for n}),   
the right hand side is not less than
$$
\mu_{T,h}(K_j)-4Cm^2(\log m)^2e^{-\alpha (\log m)^2}
$$
provided that
\begin{equation}\label{eq:new bound for n-1}
(\log m)^2 \geq n_0.
\end{equation}
Therefore if we take $n_2^*=n_2^*( \varepsilon_0)\geq 1 $ sufficiently large so that
(\ref{eq:new bound for n-1}) and 
$$
4Cm^2(\log m)^2e^{-\alpha (\log m)^2}<\frac{1}{2}\delta_8(\varepsilon_0)^4
$$
hold for $m\geq n_2^*$, 
then for such an $m$ with $m \geq 2\cdot 4^{M+1}(\log m)^2$, we have
\begin{equation}\label{eq:srsw-6}
\mu_{T,h} \left(\bigcup_{1\leq j\leq M}K_j \right) \geq
    1-(1-\delta_8(\varepsilon_0)^4/2)^M
\geq 1- \varepsilon_0
\end{equation} 
by our assumption on $M$.
Also, from (i) of the lemma,
$\delta_8(\varepsilon_0)$ is much smaller than $\varepsilon_0$,
and we have
\begin{equation}\label{4th bound for n}
4Cn^2(\log n)^2 e^{-\alpha (\log n)^2} \leq \varepsilon_0. 
\end{equation}
Thus, from (\ref{eq:srsw-2})--(\ref{4th bound for n}), we have
\begin{align}\label{eq:srsw-7}
&\mu_{T,h} \left( \left.
\begin{array}{@{\,}c@{\,}}
\mbox{there is a $(+)$-path in $S((m,0),m)$ which}\\
\mbox{connects the top side of $S((m,0),m)$ with $R'$}
\end{array}
\,\right|\,R'(\omega )=R'
 \right) \\
&\geq  (1-\sqrt{\varepsilon_0})(1-\varepsilon_0)-\varepsilon_0.
\nonumber
\end{align}
We multiply both sides of the above inequality by 
$\mu_{T,h}(R'(\omega )=R')$ and then summing up over 
horizontal crossings $R'$ of $V'$ such that $R'$ passes below
the origin, from (\ref{eq:srsw-1}) and (\ref{eq:srsw-7}) we obtain
\begin{align}\label{srsw-8}
&\mu_{T,h}\left(
 \begin{array}{@{\,}c@{\,}}
 \mbox{there exists a $(+)$-path in $[-m,2m]\times [-m,m]$}\\
 \mbox{such that it connects the left side of this box}\\
 \mbox{with $\{ x^2=m\} $, and it separates the line segment }\\
 \ell = \{ (0,t) : 0\leq t \leq m \} \mbox{ from the bottom side }\\
 \mbox{ of } [-m, 2m ]\times [-m,m ] 
 \end{array}
 \right) \\
&\geq (1-\sqrt{\varepsilon_0})[(1-\sqrt{\varepsilon_0})(1-\varepsilon_0)
                               -\varepsilon_0].\nonumber
\end{align}
Then forcing this event to have a horizontal $(+)$-crossing in
$S((m,0),m)$ starting from the left side of it above the origin,
we have by the FKG inequality that
$$
\mu_{T,h} \left( \begin{array}{@{\,}c@{\,}}
\mbox{there is a horizontal $(+)$-crossing in}\\
\mbox{the rectangle } [-m,2m]\times [-m,m]
\end{array}
\right)\geq g(\varepsilon_0),
$$
where we put
$$
g(x)= (1-\sqrt{x})^2\left[ (1-\sqrt{x})(1-x)-x\right] .
$$

From this it is the same argument as the original RSW theorem to obtain
$$
\mu_{T,h} \bigl( A^+(3m,m) \bigr) \geq g(\varepsilon_0 )^4(1-\varepsilon_0 )^3.
$$
Thus, if we take $\varepsilon_0>0$ such that
\begin{equation}\label{cond-for-epsilon}
g(\varepsilon_0 )^4(1-\varepsilon_0 )^3\geq 1-\lambda \theta ,
\end{equation}
then we have desired estimate for every 
$m\geq \max\{n_0, n_1^*(\varepsilon_0),n_2^*(\varepsilon_0)\} $
with $m\geq 2\cdot 4^{M+1}(\log m)^2$.
We can take 
$n_1(\varepsilon_0)=\max\{n_0, n_1^*(\varepsilon_0),n_2^*(\varepsilon_0)\} $.

\end{proof}

%---------------------------------------------
%\end{quote}
For the Gibbs measure $\mu_{T,h_c(T)}$ at the percolation threshold,
we abbreviate it to $\mu_{\text{cr}}$.
\begin{Lemma}[cf. \cite{K87scaling} (2.15)] \label{RSW-cr}
For any integer $k > 0$, there exists a constant $C(k)$ such that for all $n$,
\[ \mu_{\text{cr}} \bigl( A^+(kn,n) \bigr) \geq C(k)
\mbox{ and }
\mu_{\text{cr}} \bigl( A^{-*} (kn,n) \bigr)  \geq C(k). \]
\end{Lemma}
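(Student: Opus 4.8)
The plan is to deduce the lemma from the conditional Russo--Seymour--Welsh estimate of Lemma~\ref{RSW-gen}(i), once we know that at the percolation threshold the square-crossing probabilities stay bounded away from both $0$ and $1$:
\begin{equation}\label{eq:RSWcr-key}
\varepsilon_1 \;\le\; \mu_{\text{cr}}\bigl(A^+(n,n)\bigr) \;\le\; 1-\varepsilon_1 \qquad \text{for all } n\ge 1,
\end{equation}
for some $\varepsilon_1>0$. The planar input is the self-matching duality for site percolation on $\mathbf{Z}^2$ together with the rotation invariance of the (unique, since $h_c(T)\ne 0$) Gibbs measure $\mu_{\text{cr}}=\mu_{T,h_c(T)}$: for every $n$, exactly one of the events ``$[-n,n]^2$ has a horizontal $(+)$-crossing'' and ``$[-n,n]^2$ has a vertical $(-*)$-crossing'' occurs, and a $90^\circ$ rotation identifies the probability of the latter with $\mu_{\text{cr}}(A^{-*}(n,n))$, whence
\[
\mu_{\text{cr}}\bigl(A^+(n,n)\bigr)+\mu_{\text{cr}}\bigl(A^{-*}(n,n)\bigr)=1 \qquad \text{for all } n.
\]
Thus \eqref{eq:RSWcr-key} is equivalent to the same two-sided bound for $A^{-*}$, and to prove it it suffices to rule out that $\mu_{\text{cr}}(A^+(n,n))$ or $\mu_{\text{cr}}(A^{-*}(n,n))$ comes arbitrarily close to $1$ (each is $<1$ for every finite $n$ by finite energy, so any such degeneration must occur along a subsequence of scales tending to infinity).

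Suppose $\mu_{\text{cr}}(A^+(n_j,n_j))\to 1$ along $n_j\to\infty$. Since one of $\mu_{\text{cr}}(A^+(n,n)),\mu_{\text{cr}}(A^{-*}(n,n))$ is $\ge 1/2$ for every $n$, the RSW-type statement in item~(2) above applies with $\eta=1/2$ and upgrades this to $\mu_{\text{cr}}(A^+(3n_j,n_j))\ge 1-\lambda\theta/2$ for $j$ large, $\lambda,\theta$ being the constants of Lemma~\ref{lem:accfr}. As $A^+(3n_j,n_j)$ is a cylinder event and $h\mapsto\mu_{T,h}$ is continuous on cylinder events near $h_c(T)$ (we are in the uniqueness regime, $h_c(T)>0$), there is $h'<h_c(T)$ with $\mu_{T,h'}(A^+(3n_j,n_j))\ge 1-\lambda\theta$. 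Lemma~\ref{lem:accfr} with $L=n_j$ then yields $\mu_{T,h'}(A^+(3^{k+1}n_j,3^kn_j))\ge 1-\lambda\theta^{2^k}$ for all $k$; combining four such rectangle crossings by FKG gives a $(+)$-circuit in the annulus $[-3^{k+1}n_j,3^{k+1}n_j]^2\setminus(-3^kn_j,3^kn_j)^2$ with probability $\ge 1-4\lambda\theta^{2^k}$, which is summable in $k$, so by the standard renormalization/Borel--Cantelli argument $\mu_{T,h'}(\#\mathbf{C}_0^+=\infty)>0$, contradicting $h'<h_c(T)$. Symmetrically, if $\mu_{\text{cr}}(A^{-*}(n_j,n_j))\to 1$, the same reasoning applied to $(-*)$-connections and a value $h''>h_c(T)$ (obtained this time by continuity from the right) produces, $\mu_{T,h''}$-almost surely, infinitely many nested $(-*)$-circuits around the origin, forcing every $(+)$-cluster to be finite and hence $\theta(h'')=0$; but $h''>h_c(T)$ gives $\theta(h'')>0$, again a contradiction. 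This proves \eqref{eq:RSWcr-key}.

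Granting \eqref{eq:RSWcr-key}, for every $n\ge n_1^*(\varepsilon_1)$ the hypothesis \eqref{wrsw-cond} of Lemma~\ref{RSW-gen}(i) holds for $\mu=\mu_{\text{cr}}$ and $\varepsilon=\varepsilon_1$ (apply \eqref{eq:RSWcr-key} at scales $n$ and $\lfloor 2n/5\rfloor$), so $\mu_{\text{cr}}(A^+(kn,n))\ge\delta_k(\varepsilon_1)$; the same argument works for $A^{-*}$, since $\mu_{\text{cr}}(A^{-*}(m,m))\ge\varepsilon_1$ for all $m$ by \eqref{eq:RSWcr-key}. For the finitely many remaining scales $n<n_1^*(\varepsilon_1)$ the probabilities $\mu_{\text{cr}}(A^+(kn,n))$ and $\mu_{\text{cr}}(A^{-*}(kn,n))$ are strictly positive by finite energy (force the middle row of $[-kn,kn]\times[-n,n]$ to carry the crossing), so one takes $C(k)$ to be the minimum of $\delta_k(\varepsilon_1)$ and these finitely many positive numbers.

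The main obstacle is \eqref{eq:RSWcr-key}, and within it the step forbidding crossing probabilities from tending to $1$. The delicate point is the transfer from a single near-$1$ crossing at some large fixed scale under $\mu_{\text{cr}}$ to genuine percolation at a value of $h$ on one side of $h_c(T)$: one must freeze the scale $n_j$ \emph{before} perturbing $h$, so that the exponential error terms in Lemma~\ref{lem:accfr} are controlled by that fixed $n_j$ alone and are insensitive to the small change in $h$; and one needs continuity in $h$ of finite-volume probabilities, which is available because $h_c(T)>0$ puts us in the uniqueness phase. The remaining ingredients --- the duality identity, the FKG circuit construction, the Borel--Cantelli step, and the interpolation over small scales --- are routine.
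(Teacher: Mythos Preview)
Your argument is correct and more self-contained than the paper's, which simply quotes the result
\[
0<\liminf_{n\to\infty}\mu_{\text{cr}}\bigl(A^+(kn,\ell n)\bigr)\le \limsup_{n\to\infty}\mu_{\text{cr}}\bigl(A^+(kn,\ell n)\bigr)<1
\]
from \cite{Hig93b} and then uses the duality $\bigl(A^+(kn,\ell n)\bigr)^c=A^{-*}(\ell n,kn)$ to obtain the $(-*)$-bound. You instead reprove the square case \eqref{eq:RSWcr-key} from scratch, using duality, the RSW statement~(2) (i.e.\ (\ref{rsw-c3})$\Rightarrow$(\ref{rsw-2})), continuity in $h$ of cylinder probabilities, and Lemma~\ref{lem:accfr} to derive a finite-size percolation criterion that contradicts the definition of $h_c(T)$; then you upgrade squares to rectangles via Lemma~\ref{RSW-gen}(i). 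This is essentially a reconstruction, within the paper's own toolkit, of the relevant part of \cite{Hig93b}, so nothing is lost and the proof becomes internal to the paper.

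One small wording issue: in your subcritical contradiction you conclude percolation from the existence of $(+)$-circuits in disjoint annuli $S(3^{k+1}n_j)\setminus S(3^kn_j)$. Circuits in disjoint annuli need not be connected to one another, so by themselves they do not force an infinite $(+)$-cluster. The ``standard'' argument you invoke actually uses \emph{overlapping} long rectangles (e.g.\ alternately horizontal and vertical $3{:}1$ rectangles anchored at the origin) whose crossings must intersect; FKG together with the summable failure bounds $\lambda\theta^{2^k}$ from Lemma~\ref{lem:accfr} then produces, with positive probability, a single infinite $(+)$-path. Your $(-*)$-side contradiction is fine as stated, since nested $(-*)$-circuits around the origin genuinely force $\#\mathbf{C}_0^+<\infty$.
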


\begin{proof}
It is shown in \cite{Hig93b} that 
$$
0<\liminf_{n\rightarrow \infty }\mu_{\text{cr}} \bigl( A^+(kn,\ell n) \bigr)
\leq \limsup_{n\rightarrow \infty }\mu_{\text{cr}} \bigl( A^+(kn,\ell n) \bigr)
<1
$$
for any fixed pair of integers $k,\ell \geq 1$.
This proves the lemma for the $(+)$-connection, and since
$${\bigl( A^+(kn, \ell n)\bigr) }^c = A^{-*}(\ell n,kn),$$
the assertion of the lemma is proved.
\end{proof}
For later use, we choose $0<\varepsilon_0 <\frac{C(1)}{2}$, and then from 
Lemma \ref{RSW-cr}
we can regard $L(h_c(T),\varepsilon_0)$ as $\infty$.

By Lemma \ref{RSW-gen}, we can see that if $h<h_c(T)$, then for every $n$ with 
$n_1(\varepsilon_0) \leq n<L(h,\varepsilon_0)$ we have $\mu_{T,h}\bigl(A^+(n,n)\bigr) >\varepsilon_0$ and
$\mu_{T,h}\bigl(A^+(kn,n)\bigr)\geq \delta_k(\varepsilon_0)$.
If $N>2n$, then by (\ref{eq:periodic-1}) 
\begin{equation}\label{eq:periodic-2}
\mu_t^N\bigl( A^+(n,n)\bigr) \geq \varepsilon_0-4Cn^3e^{-\alpha n}
\end{equation}
for $t\in [0,1]$.
The right side of the above inequality is not less than $\varepsilon_0/2$
if $n$ is sufficiently large.
Let
\begin{equation}\label{eq:bound n_2}
 n_2 = n_2(\varepsilon_0):= \begin{cases}
  \max\{ n\geq n_1 : 4Cn^3e^{-\alpha n}
               \geq \frac{\varepsilon_0}{2} \} +1, \\
  \hspace{10mm}n_1, \mbox{ if the above set is empty. } \\
 \end{cases}
\end{equation}
 Now if $n_2\leq n<\min\{ L(h,\varepsilon_0), \frac{N}{2}\}$, then
 \begin{equation}\label{eq:periodic-3}
 \mu_t^N\bigl( A^+(n,n)\bigr) \geq \frac{\varepsilon_0}{2}.
 \end{equation}
 Therefore by Lemma \ref{RSW-gen} (i), we have both for $\mu =\mu_{T,h}$ and
 $\mu = \mu_t^N$ 
 \begin{equation}\label{wrsw-result-2}
 \mu \bigl( A^+(kn,n)\bigr) \geq \delta_k(\varepsilon_0/2)
\end{equation}
provided that $N>kn$ when $\mu = \mu_t^N$.
On the other hand, if $h>h_c(T)$, then by Lemma \ref{RSW-cr} and the FKG inequality
we have 
$$
\mu_{T,h} \bigl( A^+(kn,n)\bigr) \geq \mu_{\text{cr}}\bigl( A^+(kn,n)\bigr)
  \geq C(k).
$$
By Lemma \ref{RSW-gen}, we can take $C(k)$ as $\delta_k(C(1))$ if $n\geq n_1(\varepsilon_0)$.
If $N>2n$, then as above we have 
$$
\mu_t^N\bigl( A^+(n,n)\bigr) \geq C(1)-4Cn^2e^{-\alpha n}.
$$
Thus, if $n_2\leq n<\frac{N}{2}$, then we have 
$$
\mu_t^N\bigl( A^+(n,n)\bigr) \geq C(1)-\frac{\varepsilon_0}{2}>\frac{3}{4}C(1)
\geq \frac{\varepsilon_0}{2}.
$$
Therefore anyway we have the following; the estimate for $(-*)$ connection can be obtained by the same reason.
\begin{Lemma} \label{RSWbound}
Let $\mu = \mu_{T,h}$ or $\mu_t^N$. If $kn<N$ and $n_2\leq n< L(h,\varepsilon_0)$, then
\begin{align}
\label{eq:crossing}
\textstyle
\mu\bigl( A^+(kn,n)\bigr)\geq  
    \delta_k(\frac{\varepsilon_0}{2}), \\
\label{eq:*crossing}
\textstyle
    \mu\bigl( A^{-*}(kn,n)\bigr)\geq 
    \delta_k(\frac{\varepsilon_0}{2}).
\end{align}
\end{Lemma}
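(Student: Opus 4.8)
The statement is, in effect, a clean repackaging of the estimates assembled in the paragraphs immediately above it, so the plan is to organize those observations into a case analysis rather than prove anything new. The argument splits on the sign of $h-h_c(T)$, and within each case one treats the two measures $\mu_{T,h}$ and $\mu_t^N$ separately; the common engine is the RSW bootstrap of Lemma~\ref{RSW-gen}(i), which promotes lower bounds on the aspect-ratio-one crossing probabilities $\mu(A^+(m,m))$ at the two scales $m=n$ and $m=\lfloor 2n/5\rfloor$ to the bound $\mu(A^+(kn,n))\geq\delta_k(\cdot)$.

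First I would verify the seed hypothesis, namely $\mu(A^+(m,m))\geq\varepsilon_0/2$ at both relevant scales, in the range $n_2\leq n<L(h,\varepsilon_0)$. For $\mu=\mu_{T,h}$: when $h<h_c(T)$, every scale $m<L(h,\varepsilon_0)$ satisfies $\mu_{T,h}(A^+(m,m))>\varepsilon_0$ by the definition of the correlation length, so $m=n$ and $m=\lfloor 2n/5\rfloor$ both qualify; when $h\geq h_c(T)$, monotonicity in $h$ (the FKG comparison, $A^+(\cdot)$ being increasing) together with Lemma~\ref{RSW-cr} gives $\mu_{T,h}(A^+(m,m))\geq\mu_{\text{cr}}(A^+(m,m))\geq C(1)>\varepsilon_0$ for every $m$, and $L(h_c(T),\varepsilon_0)$ is by convention $\infty$. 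For $\mu=\mu_t^N$ I would pass from $\mu_{T,h(t)}$ to $\mu_t^N$ using the periodic-boundary mixing estimate~(\ref{eq:periodic-1}), $|\mu_t^N(A^+(m,m))-\mu_{T,h(t)}(A^+(m,m))|\leq 4Cm^3e^{-\alpha m}$ (valid for $2m<N$); since $h(t)$ always lies between $h$ and $h_c(T)$, $\mu_{T,h(t)}(A^+(m,m))$ inherits the lower bound just described by monotonicity, and the choice of $n_2$ in~(\ref{eq:bound n_2}) makes the mixing error at most $\varepsilon_0/2$ once $m\geq n_2$.

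Then I would feed these seed bounds into Lemma~\ref{RSW-gen}(i) with $\varepsilon=\varepsilon_0/2$, which is legitimate since $n\geq n_2\geq n_1(\varepsilon_0)\geq n_1^*(\varepsilon_0)$, obtaining $\mu(A^+(kn,n))\geq\delta_k(\varepsilon_0/2)$, the hypothesis $N>kn$ of that lemma being exactly the requirement $kn<N$ in the $\mu_t^N$ case. The bound~(\ref{eq:*crossing}) for $(-*)$-connections follows by the same chain, with one extra observation: for a square, self-duality and the rotation symmetry of $\mu$ give $\mu(A^{-*}(m,m))=1-\mu(A^+(m,m))$, so the roles of ``$h<h_c(T)$'' and ``$h\geq h_c(T)$'' are interchanged --- for $(-*)$-crossings the bound $\geq C(1)$ comes from $\mu_{\text{cr}}$ when $h\leq h_c(T)$ (now $A^{-*}(\cdot)$ is decreasing, so monotonicity runs the other way) and the bound $>\varepsilon_0$ comes from the correlation-length definition when $h>h_c(T)$; Lemma~\ref{RSW-cr} and Lemma~\ref{RSW-gen} both carry their $(-*)$-versions, and FKG and the mixing estimates are colour-insensitive.

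The step needing genuine care is the finite-volume measure $\mu_t^N$: one must control the mixing error not only at scale $n$ but also at the smaller scale $\lfloor 2n/5\rfloor$ demanded by~(\ref{wrsw-cond}), and for the small scales $m<n_2$ (which can still arise as $\lfloor 2n/5\rfloor$ when $n$ is just above $n_2$) one should supply a crude uniform lower bound on $\mu_t^N(A^+(m,m))$ by hand --- forcing a single column of the box to be $(+)$, whose probability is bounded below uniformly in the boundary condition because $h\geq 0$ --- and absorb the resulting constant into the choice of $\varepsilon_0$. Everything else is a matter of chaining monotonicity, FKG, and the RSW and mixing inputs already in place.
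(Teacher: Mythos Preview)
Your proposal is correct and follows essentially the same approach as the paper: the lemma is indeed just a tidy restatement of the case analysis carried out in the paragraphs preceding it, combining the definition of $L(h,\varepsilon_0)$, FKG monotonicity and Lemma~\ref{RSW-cr} for the seed bounds, the periodic-boundary mixing estimate~(\ref{eq:periodic-1}) to pass to $\mu_t^N$, and then Lemma~\ref{RSW-gen}(i) to obtain the $k$-by-$1$ crossing. Your treatment of the auxiliary scale $\lfloor 2n/5\rfloor$ required by~(\ref{wrsw-cond}) is in fact more explicit than what the paper writes; the paper's text glosses over the case where $\lfloor 2n/5\rfloor<n_2$ for the periodic measure, and your finite-energy patch is an acceptable way to close that gap.
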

We take $\delta_k$ as $\delta_k(\frac{\varepsilon_0}{2})$ hereafter
so that both (\ref{eq:crossing}) and (\ref{eq:*crossing}) hold
for $\mu\in \{ \mu_{T,h}\}\cup\{ \mu_t^N : t\in [0,1]\} $
if $kn<N$ and $n_2\leq n< L(h,\varepsilon_0)$. 

\begin{Lemma}[cf. \cite{K87scaling} (2.19), (2.20)] \label{lem:circuits}
Let $\mu =\mu_{T,h} $ or $\mu_t^N$. Then for any  $k > 2$, and any $n$ with
$4n_2\leq n <  L(h,\varepsilon_0) $, 
the following hold:
\begin{align}
&\mu \left( 
\begin{array}{@{\,}c@{\,}} 
\mbox{there exists a $(+)$-circuit surrounding 
} \\
\mbox{$S((k-2)n-1)$ in $S(kn) \setminus S((k-2)n-1)$} \\
\end{array} 
\right) \geq \delta_k^4, \label{K(2.19)}\\
&\mu \bigl( \mathbf{O}  \stackrel{+}{\leftrightarrow} \partial_{in} S(2n) \bigr)
\geq \delta_4^4 \delta_2 \mu \bigl(
  \mathbf{O}  \stackrel{+}{\leftrightarrow} \partial_{in} S(n) \bigr).
\label{K(2.20)}
\end{align}
If $\mu =\mu_t^N$, then we also assume that $N>kn$ in (\ref{K(2.19)}),
and $N>2n$ in (\ref{K(2.20)}).
%Here, $K_4$ depends only on $T,n_1$, and $\varepsilon_0$.
The same statement holds true when $(+)$-connection is replaced with 
$(-*)$-connection.
\end{Lemma}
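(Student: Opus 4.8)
The plan is to deduce both inequalities from the RSW lower bounds of Lemma~\ref{RSWbound} together with the FKG inequality (available for every $\mu\in\{\mu_{T,h}\}\cup\{\mu_t^N:t\in[0,1]\}$), the only other ingredient being the elementary planar fact that a nearest-neighbour lattice path running from the bounded face of a lattice circuit to its unbounded face must pass through a vertex of that circuit.

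For \eqref{K(2.19)} I would realise the $(+)$-circuit as a union of four overlapping $(+)$-crossings. Cover the annulus $S(kn)\setminus S((k-2)n-1)$ by the rectangles $R_1=[-kn,kn]\times[(k-2)n,kn]$ and $R_3=[(k-2)n,kn]\times[-kn,kn]$ together with their reflections $R_2,R_4$ through the origin, and let $E$ be the event that $R_1,R_2$ each carry a horizontal $(+)$-crossing and $R_3,R_4$ each carry a vertical $(+)$-crossing. Up to a translation and a right-angle rotation each of these four crossing events is $A^+(kn,n)$, so Lemma~\ref{RSWbound} gives each of them probability at least $\delta_k$ — this uses that $n$ lies in the window $[n_2,L(h,\varepsilon_0))$, which is implied by $4n_2\le n<L(h,\varepsilon_0)$, and that $kn<N$ when $\mu=\mu_t^N$; since all four events are increasing, FKG gives $\mu(E)\ge\delta_k^4$. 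On $E$ consecutive rectangles overlap in a $2n\times 2n$ square, inside which a horizontal and a vertical $(+)$-crossing are forced to meet, so the four crossings link up into a $(+)$-circuit surrounding $S((k-2)n-1)$; this proves \eqref{K(2.19)}.

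For \eqref{K(2.20)} I would fix an auxiliary radius $m$ of order $n$ with $2n/5\le m\le n/2$ (say $m=\lceil 2n/5\rceil$; this is possible once $n$ exceeds a fixed absolute bound, and the handful of smaller admissible $n$ are handled by a harmless variant) and work with three increasing events: $\sigma_1=\{\mathbf O\stackrel{+}{\leftrightarrow}\partial_{in}S(n)\}$; the event $\sigma_2$ that there is a $(+)$-circuit surrounding $S(m)$ and contained in $S(n)$; and the event $\sigma_3$ that there is a $(+)$-path in $S(2n)$ joining $\partial_{in}S(m)$ to $\partial_{in}S(2n)$. Running the construction of \eqref{K(2.19)} in an annulus $S(n)\setminus S(a-1)$ with $m<a\le n/2$ produces, via four rectangles of aspect ratio at most $4$ whose half-heights are of order $n$ (hence above $n_2$ since $4n_2\le n$), a circuit as in $\sigma_2$, so $\mu(\sigma_2)\ge\delta_4^4$; and $\sigma_3$ is implied by a vertical $(+)$-crossing of $[-m,m]\times[m,2n]$, a rectangle of aspect ratio at most $2$, so Lemma~\ref{RSWbound} (with $N>2n$ when $\mu=\mu_t^N$) gives $\mu(\sigma_3)\ge\delta_2$. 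The heart of the matter is that $\sigma_1\cap\sigma_2\cap\sigma_3\subset\{\mathbf O\stackrel{+}{\leftrightarrow}\partial_{in}S(2n)\}$: if $\mathcal C$ is a circuit as in $\sigma_2$, then $\mathbf O$ and all of $\partial_{in}S(m)$ lie in the bounded face of $\mathcal C$ (since $\mathcal C$ surrounds $S(m)$), whereas every vertex of $\partial_{in}S(n)\cup\partial_{in}S(2n)$ lies on or in the unbounded face of $\mathcal C$ (since $\mathcal C\subset S(n)$, such a vertex either sits on $\mathcal C$ or escapes to infinity along a coordinate ray that misses $\mathcal C$); hence the $(+)$-cluster of $\mathbf O$, which reaches $\partial_{in}S(n)$, must meet $\mathcal C$, the $(+)$-path of $\sigma_3$, which joins $\partial_{in}S(m)$ to $\partial_{in}S(2n)$, must meet $\mathcal C$, and concatenating along $\mathcal C$ $(+)$-connects $\mathbf O$ to $\partial_{in}S(2n)$. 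By FKG,
\[
\mu\bigl(\mathbf O\stackrel{+}{\leftrightarrow}\partial_{in}S(2n)\bigr)\ \ge\ \mu(\sigma_1)\,\mu(\sigma_2)\,\mu(\sigma_3)\ \ge\ \delta_4^4\,\delta_2\,\mu\bigl(\mathbf O\stackrel{+}{\leftrightarrow}\partial_{in}S(n)\bigr),
\]
which is \eqref{K(2.20)}.

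Given Lemma~\ref{RSWbound} the individual probability estimates are immediate, so the real work is the bookkeeping: one must choose the nested boxes and auxiliary rectangles so that every $A^+$-event invoked has aspect ratio at most the advertised value ($k$, resp.\ $4$ or $2$) and side lengths inside $[n_2,L(h,\varepsilon_0))$ — which is exactly why the hypothesis reads $4n_2\le n<L(h,\varepsilon_0)$, together with $N>kn$ (resp.\ $N>2n$) when $\mu=\mu_t^N$ — and one must check the two planar-topology statements used above (a horizontal and a vertical crossing of a square meet; a path from inside to outside a circuit meets it). The $(-*)$-versions follow by repeating the argument with $A^+$ replaced by $A^{-*}$ everywhere and with the $(*)$-analogues of those planar facts, using that in an all-$(-)$ configuration a $(-*)$-path and a $(-*)$-circuit which cross as curves are automatically $(-*)$-connected.
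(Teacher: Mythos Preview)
Your proof is correct and follows essentially the same approach as the paper: for \eqref{K(2.19)} the paper simply invokes Lemma~\ref{RSWbound} and FKG with four overlapping long-way crossings, and for \eqref{K(2.20)} it uses the same three increasing events (arm to $\partial_{in}S(n)$, a $(+)$-circuit in $S(n)\setminus S(n/2)$, and a $(+)$-crossing of $[n/2,2n]\times[-n/2,n/2]$) combined via FKG. Your introduction of the auxiliary scale $m=\lceil 2n/5\rceil$ is an unnecessary complication---the paper works directly with $n/2$, which keeps the aspect-ratio and side-length checks cleaner---but the argument is otherwise the same.
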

\begin{proof}
(\ref{K(2.19)}) follows from the (\ref{eq:crossing})
together with the FKG inequality.
%%(\ref{K(2.20)}) then follows from (\ref{K(2.19)}) and the FKG
%% inequality. 
%%
%%\begin{quote}
%% ---------------------------------------------
%%
%% \noindent \framebox{\bf INTERMISSION}
%%
%% We use a similar argument as in the proof of (7) in \cite{K86i}:
As for (\ref{K(2.20)}), we use a similar argument as in the proof of (7) in \cite{K86i}: 
\begin{align*}
&\mu \bigl( \mathbf{O}  \stackrel{+}{\leftrightarrow} \partial_{in} S(2n) \bigr) \\
&\geq \mu
\left( 
\begin{array}{@{\,}c@{\,}} 
\mathbf{O}   \stackrel{+}{\leftrightarrow} \partial_{in} S(n), \mbox{ there exists a $(+)$-circuit} \\
\mbox{in $S(n) \setminus S(n/2)$ surrounding $S(n/2)$, and } \\
\mbox{there exists a horizontal $(+)$-crossing} \\
\mbox{of $[n/2,2n] \times [-n/2,n/2]$} \\
\end{array} 
\right) \\
 &\geq \mu \bigl( \mathbf{O}  \stackrel{+}{\leftrightarrow} \partial_{in} S(n) \bigr) \cdot \delta_4^4 \cdot \delta_2.
\end{align*}
%% ---------------------------------------------
%% \end{quote}
\end{proof}

\begin{Lemma}[cf. \cite{K87scaling} p.121] \label{Kp121}
Let $\lambda =1/64$, $\theta \in (0,1)$ and $n_0$ be the same as in
Lemma \ref{lem:accfr}. If there is an 
\begin{equation}\label{eq:low-bd-L}
L\geq \max\{ n_0, 8C(8e^{-1}\alpha^{-1})^4 \}
\end{equation} 
such that
\begin{equation}\label{K(2.23)}
\mu_{T,h}\bigl( A^+(L,3L)\bigr)< \lambda \theta ,
\end{equation}
then we can find constants $K_6, K_7>0$ depending only on $\theta $,
such that
\begin{equation} \label{K(2.24)}
\mu_{T,h} \bigl( S(L)  \stackrel{+}{\leftrightarrow} \partial_{in} S(kL) \bigr) \leq
K_6 \exp (-K_7 k).
\end{equation}
The same statement holds for $(-*)$-connection, too.
\end{Lemma}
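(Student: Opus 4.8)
The plan is a block‑renormalisation (Peierls) argument: the hypothesis (\ref{K(2.23)}), amplified by Lemma~\ref{lem:accfr}, will force any percolating $(+)$‑path from $S(L)$ to reach $\partial_{in}S(kL)$ only by threading through a long self‑avoiding chain of ``bad'' $L$‑blocks, and such a chain is exponentially unlikely in its length. I would first rewrite (\ref{K(2.23)}): since $(A^+(L,3L))^c=A^{-*}(3L,L)$ and $\mu_{T,h}$ is symmetric under the lattice automorphisms, (\ref{K(2.23)}) says $\mu_{T,h}(A^{-*}(3L,L))>1-\lambda\theta$, and, as $L\ge n_0$, the $(-*)$‑version of Lemma~\ref{lem:accfr} gives $\mu_{T,h}(A^{-*}(3^{j+1}L,3^{j}L))\ge1-\lambda\theta^{2^{j}}$ for every $j\ge1$. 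Fixing an integer $i\ge1$ (to be chosen at the end in terms of $\theta$) and gluing $O(1)$ hard‑way $(-*)$‑crossings at scale $3^{\,i-1}L$, together with the corresponding easy‑way crossings obtained from the trivial inclusions among the events $A^{-*}(\cdot,\cdot)$, by the FKG inequality produces a $(-*)$‑circuit inside $S(3^{i}L)\setminus S(L)$ with probability at least $1-c_1\lambda\theta^{2^{\,i-1}}$, where $c_1$ is a universal constant. (A purely triadic version of this — intersecting over dual circuits in the $O(\log k)$ annuli $S(3^{j+1}L)\setminus S(3^{j}L)$ lying between $S(L)$ and $S(kL)$ — would only yield a stretched‑exponential bound $\exp(-ck^{\log_32})$, which is why the renormalisation below is carried out on the scale $L$.)

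Next, put $\rho=3^{i}L$, tile $\mathbf{Z}^2$ by the blocks $B_v=vL+([0,L)\cap\mathbf{Z})^2$, set $\widehat B_v=vL+[-\rho,\rho]^2$, and call $B_v$ \emph{bad} if there is no $(-*)$‑circuit surrounding $B_v$ in $\widehat B_v\setminus B_v$; then $\mu_{T,h}(B_v\text{ bad})\le p:=c_1\lambda\theta^{2^{\,i-1}}$ for every $v$. The deterministic input is planar duality: a $(+)$‑path cannot cross a $(-*)$‑circuit enclosing one of its endpoints, so if $B_v$ is good then every $(+)$‑path meeting $B_v$ is confined to $\widehat B_v$. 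Hence, on $\{S(L)\stackrel{+}{\leftrightarrow}\partial_{in}S(kL)\}$, picking a realising $(+)$‑path $\pi$ and using that $|\cdot|_\infty$ is $1$‑Lipschitz along $\pi$, the blocks $\pi$ visits before it first reaches $\ell^{\infty}$‑radius $kL-2\rho$ contain a self‑avoiding block‑path $(u_0,\dots,u_m)$ with $u_0$ within $O(1)$ blocks of $\mathbf{O}$, with $m\ge k/3$ provided $k\ge C'3^{i}$, and with $B_{u_\ell}$ bad for every $\ell$ (if some $B_{u_\ell}$ were good, $\pi$ would be trapped in $\widehat B_{u_\ell}$, which has outer radius $<kL$). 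Thus $\mu_{T,h}(S(L)\stackrel{+}{\leftrightarrow}\partial_{in}S(kL))$ is at most the probability of a bad‑block self‑avoiding path of length $\ge k/3$ issuing from a bounded neighbourhood of $\mathbf{O}$.

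It remains to bound this last probability by $K_6e^{-K_7k}$, and this is the crux. The bad‑block field is not independent, but by the mixing property (\ref{HZ(1.3)}) its correlations decay exponentially on the scale $\rho\ge L$, with errors of size $O(\rho^{3}e^{-\alpha\rho})$ that are harmless because $L\ge\max\{n_0,8C(8e^{-1}\alpha^{-1})^{4}\}$. One then dominates the bad‑block field by a subcritical independent site percolation on $\mathbf{Z}^2$ (a standard stochastic‑domination argument for weakly dependent fields, in the spirit of \cite{K87scaling}), whose radial connection probabilities decay exponentially; the domination is legitimate as soon as the dominating density is below $p_c^{\mathrm{site}}(\mathbf{Z}^2)$, i.e.\ as soon as $p$ is small enough. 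The observation that makes this work for \emph{every} $\theta\in(0,1)$ is that $p=c_1\lambda\theta^{2^{\,i-1}}$ decreases doubly exponentially in $i$, whereas the volume cost of the renormalisation grows only like $9^{i}$, so $i=i(\theta)$ can always be taken large enough. Assembling these estimates, absorbing a factor $4$ for the choice of face of $\partial_{in}S(kL)$, and covering bounded $k$ by enlarging $K_6$, yields (\ref{K(2.24)}) with $K_6,K_7>0$ depending only on $\theta$; the $(-*)$‑statement follows by exchanging the roles of $(+)$‑ and $(-*)$‑connection throughout. The principal obstacle is exactly this reconciliation of block‑path entropy with the dependence of $\mu_{T,h}$, which is what pins the renormalisation scale $\rho=3^{i(\theta)}L$, and hence $K_6$ and $K_7$, to $\theta$.
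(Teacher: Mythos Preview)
Your block-renormalisation route is quite different from the paper's, which bypasses Peierls counting and stochastic domination entirely. The paper proves a direct recursion: writing $g(k)=\mu_{T,h}\bigl(S(L)\stackrel{+}{\leftrightarrow}\partial_{in}S(kL)\bigr)$, it shows $g(k)\le\frac12\,g(k-m)$ for a fixed $m=2\cdot3^{j}$ depending only on $\theta$. The factorisation through $S(mL)$ uses mixing to split
\[
g(k)\le\Bigl[g(m/2)+C\tfrac{(mL)^3}{2}e^{-\alpha mL/2}\Bigr]\cdot\mu_{T,h}\bigl(S(mL)\stackrel{+}{\leftrightarrow}\partial_{in}S(kL)\bigr),
\]
and then a union bound over the $4m$ segments of length $2L$ tiling $\partial_{in}S(mL)$, together with translation invariance, gives the second factor $\le 4m\,g(k-m)$. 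Finally $g(m/2)\le 4\lambda\theta^{2^{j-1}}$ by producing a $(-*)$-circuit in $S(mL/2)\setminus S(mL/6)$ from Lemma~\ref{lem:accfr}, and since $3^{j}\theta^{2^{j-1}}\to0$ one may choose $j$ so that $4m\bigl[g(m/2)+\text{error}\bigr]\le\frac12$; the second error condition in the paper is exactly where the lower bound $L\ge 8C(8e^{-1}\alpha^{-1})^4$ enters. The entropy cost here is only \emph{linear} in $m$, so the doubly-exponential gain from Lemma~\ref{lem:accfr} is never stressed.

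Your argument can also be pushed through, but two points deserve more care than you give them. First, you tile by $L$-blocks while each bad event lives in a box of radius $\rho=3^{i}L$, so in block units the dependence range is $\sim3^{i}$; a naive Peierls over self-avoiding $L$-block paths would then need $p$ smaller than $8^{-c\cdot 9^{i}}$, which $\theta^{2^{i-1}}$ does \emph{not} achieve for large $i$. What rescues the scheme is a further coarse-graining to $\rho$-blocks---paying exactly your ``$9^{i}$ volume cost'' in the marginal---after which the range is $O(1)$ and one needs only $9^{i}p$ small, which does hold. This step should be spelled out rather than folded into ``domination is legitimate as soon as $p$ is small enough''. Second, the Gibbs measure is not finite-range dependent but only exponentially mixing, so the literal Liggett--Schonmann--Stacey theorem does not apply; you need either an LSS variant that tolerates the $O(\rho^{3}e^{-\alpha\rho})$ errors, or a direct Peierls on the $\rho$-scale using (\ref{HZ(1.3)}) iteratively in place of independence. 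Both are feasible, but neither is in \cite{K87scaling} (which treats Bernoulli percolation), and the paper's three-line iteration avoids all of it.
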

%%%%%%%%%%%%%%
%%%% Hereafter we fix the integers $n_0, n_1=n_1(\varepsilon_0)$ and $n_2=n_2(T, \varepsilon_0)$ as above.
%%%%%%%%%%%%%
\begin{proof}
$1^\circ)$ We write $g(k)$ for 
$\displaystyle \mu_{T,h}\bigl( 
  S(L) \stackrel{+}{\leftrightarrow } \partial_{in} S(kL) \bigr)$.
It suffices to show that there exists an $m$ such that
\begin{equation}\label{K(2.24)-1}
g(k)\leq \frac{1}{2}g(k-m)
\end{equation}
if $k>m$.
Let $m=2\cdot 3^j\leq k$ for some $j\geq 1$. By the mixing property we have 
\begin{equation}\label{K(2.24)-2}
g(k) \leq \left[ g(m/2) + C \frac{(mL)^3}{2}e^{-\alpha mL/2} \right]
        \times \mu_{T,h}\bigl( S(mL)\stackrel{+}{\leftrightarrow }
                  \partial_{in} S(kL)\bigr).
\end{equation}
Now, we break $\partial_{in} S(mL)$ into pieces each of which has length
$2L$. 
Each of these pieces belongs to the boundary of $S({\mathbf n}L,L)$
for some ${\mathbf n}= (n_1,n_2)\in {\mathcal D}_m$, where
$$
{\mathcal D}_m := \left\{ (\pm (m-1),2i-1), (2i-1,\pm (m-1)) : -\frac{m}{2}+1
        \leq i \leq \frac{m}{2} \right\}.
$$
Since
\begin{align*}
\{ S(mL)\stackrel{+}{\leftrightarrow } \partial_{in} S(kL) \}
& \subset \bigcup_{{\mathbf n}\in {\mathcal D}_m}
  \{ S({\mathbf n}L,L)\stackrel{+}{\leftrightarrow }
      \partial_{in} S(kL)\} \\
& \subset \bigcup_{{\mathbf n}\in {\mathcal D}_m}
  \{  S({\mathbf n}L,L)\stackrel{+}{\leftrightarrow }
      \partial_{in} S({\mathbf n}L,(k-m)L)\} ,
\end{align*}
we have by (\ref{K(2.24)-2}) and by the translation invariance
\begin{align*}
g(k) &\leq  \left[ g(m/2)+ C\frac{(mL)^3}{2}e^{-\alpha mL/2} \right]
    \times (\# {\mathcal D}_m) g(k-m) \\
   &\leq 4m \left[ g(m/2)+ C\frac{(mL)^3}{2}e^{-\alpha mL/2} \right] g(k-m).
\end{align*}
Therefore to show (\ref{K(2.24)-1}),
we have only to show the existence of $m$ such that
\begin{equation}\label{K(2.24)-3}
4mg(m/2) < \frac{1}{4} \quad \mbox{and} \quad 
 2Cm(mL)^3e^{-\alpha mL/2}<\frac{1}{4}.
\end{equation}
Note that the latter inequality is satisfied when $L\geq 8C(8e^{-1}\alpha^{-1})^4$. 

\noindent $2^\circ)$ Consider an annulus
$$
\textstyle
A_m = %\left[ -\frac{m}{2}L, \frac{m}{2}L \right]^2 \setminus 
     % \left[-\frac{m}{6}L+1, \frac{m}{6}L-1 \right]^2.
     S(\frac{mL}{2})\setminus S(\frac{mL}{6}-1).
$$
Then by the FKG inequality we have
$$
\mu_{T,h}\left(
\begin{array}{@{\,}c@{\,}} 
 \mbox{there exists a $(-*)$-circuit} \\
 \mbox{surrounding the origin in $A_m$}
\end{array}
      \right)
\geq \mu_{T,h}\bigl(A^{-*}(mL/2,mL/6)\bigr)^4.
$$
Then by Lemma \ref{lem:accfr}, we have for $mL/6\geq n_0$,
$$g(m/2)\leq 1-(1-\lambda \theta^{2^{j-1}})^4 \leq 4\lambda \theta^{2^{j-1}}, $$
since $m=2\cdot 3^j$.
This implies the existence of $m$ satisfying (\ref{K(2.24)-3}).
Note that this $m$ depends only on $\theta $.
\end{proof}
From the above lemma, we can easily obtain the following corollary.

\begin{Corollary}[cf. \cite{K87scaling} p.121]
Let $h$ be taken close to $h_c(T)$ so that 
$L(h,\varepsilon_0)>\max\{ n_1, 8C(8e^{-1}\alpha^{-1})^4\} $.
If $h<h_c(T)$, then (\ref{K(2.24)}) for $L=L(h,\varepsilon_0)$ holds. If
$h>h_c(T)$, then (\ref{K(2.24)})
with $(+)$ being replaced by $(-*)$ for $L=L(h,\varepsilon_0)$ holds.
\end{Corollary}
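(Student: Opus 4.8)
The plan is to deduce this corollary directly from Lemma~\ref{Kp121}, whose only hypothesis---apart from the lower bound on $L$, which is exactly the one assumed here---is the crossing estimate (\ref{K(2.23)}). So it suffices to check, with $L=L(h,\varepsilon_0)$, that $\mu_{T,h}(A^+(L,3L))<\lambda\theta$ when $h<h_c(T)$, and that $\mu_{T,h}(A^{-*}(L,3L))<\lambda\theta$ when $h>h_c(T)$. First I would translate these into statements about the complementary colour via the planar duality identity already used in the proof of Lemma~\ref{RSW-cr}, $\bigl(A^+(kn,\ell n)\bigr)^c=A^{-*}(\ell n,kn)$ (and, by complementation, $\bigl(A^{-*}(a,b)\bigr)^c=A^+(b,a)$): this gives $\mu_{T,h}(A^+(L,3L))=1-\mu_{T,h}(A^{-*}(3L,L))$ and $\mu_{T,h}(A^{-*}(L,3L))=1-\mu_{T,h}(A^+(3L,L))$, as well as $\mu_{T,h}(A^{-*}(L,L))=1-\mu_{T,h}(A^+(L,L))$ and $\mu_{T,h}(A^+(L,L))=1-\mu_{T,h}(A^{-*}(L,L))$. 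By the definition of $L(h,\varepsilon_0)$ this means: for $h<h_c(T)$ the $(-*)$-colour crosses the square $S(L)$ with probability $\geq 1-\varepsilon_0$, and for $h>h_c(T)$ the $(+)$-colour does so. Thus the whole problem is reduced to upgrading an almost-sure square crossing for one colour to an almost-sure hard-way crossing of the $3:1$ rectangle for the same colour.

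That upgrade is exactly Lemma~\ref{RSW-gen}(ii) with $m=L$: applied to $(-*)$-connection when $h<h_c(T)$ and to $(+)$-connection when $h>h_c(T)$, its conclusion ``condition (\ref{wrsw-cond3}) forces the crossing probability to be at least $1-\lambda\theta$'' is precisely what is needed. To invoke it I must (a) know $L=L(h,\varepsilon_0)$ is large enough: $L\geq n_1$ and $L\geq 2\cdot 4^{M+1}(\log L)^2$, as well as the threshold in Lemma~\ref{Kp121}; these are all absorbed into the clause ``$h$ close to $h_c(T)$'', since $L(h,\varepsilon_0)\to\infty$. And (b) verify the RSW-input condition (\ref{wrsw-cond}) for the relevant colour at every scale $n$ with $n_1^*(\varepsilon_0)\leq n\leq L$. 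For (b) I would use stochastic monotonicity of $\mu_{T,h}$ in the external field $h$: $A^{-*}(n,n)$ is decreasing and $A^+(n,n)$ increasing, so for $h<h_c(T)$ one has $\mu_{T,h}(A^{-*}(n,n))\geq \mu_{\text{cr}}(A^{-*}(n,n))\geq C(1)>2\varepsilon_0$ for \emph{all} $n$ by Lemma~\ref{RSW-cr} and the choice $\varepsilon_0<C(1)/2$, and symmetrically $\mu_{T,h}(A^+(n,n))\geq\mu_{\text{cr}}(A^+(n,n))\geq C(1)>2\varepsilon_0$ for all $n$ when $h>h_c(T)$; since this holds at both scales $n$ and $\tfrac{2n}{5}$, condition (\ref{wrsw-cond}) holds. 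Lemma~\ref{RSW-gen}(ii) then gives $\mu_{T,h}(A^{-*}(3L,L))\geq 1-\lambda\theta$ (resp.\ $\mu_{T,h}(A^+(3L,L))\geq 1-\lambda\theta$), hence by the first paragraph $\mu_{T,h}(A^+(L,3L))\leq\lambda\theta$ (resp.\ $\mu_{T,h}(A^{-*}(L,3L))\leq\lambda\theta$). To get the strict inequality in (\ref{K(2.23)}) I would simply run the same argument with a target slightly below $1-\lambda\theta$ (the RSW bound $g(\varepsilon_0)^4(1-\varepsilon_0)^3$ in (\ref{cond-for-epsilon}) can be taken strictly above $1-\lambda\theta$). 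Then Lemma~\ref{Kp121}, in its $(+)$ form for $h<h_c(T)$ and its $(-*)$ form for $h>h_c(T)$, yields (\ref{K(2.24)}).

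The main obstacle, I expect, is step (b): below the correlation length the definition of $L(h,\varepsilon_0)$ controls only the \emph{majority} colour's square-crossing probability from below, so one needs an external lower bound on the minority colour's crossing probabilities all the way up to scale $L$, and the monotonicity-in-$h$ comparison with $\mu_{\text{cr}}$ (together with the already-fixed $\varepsilon_0<C(1)/2$) is the clean way to supply it. Everything else is bookkeeping---matching the size requirements on $L$ to ``$h$ close to $h_c(T)$'' and tracking which colour plays which role on the two sides of $h_c(T)$.
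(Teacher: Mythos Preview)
Your proposal is correct and is exactly the natural argument the paper has in mind; the paper gives no proof beyond ``From the above lemma, we can easily obtain the following corollary,'' and your route---verify hypothesis (\ref{K(2.23)}) of Lemma~\ref{Kp121} by combining the definition of $L(h,\varepsilon_0)$ with Lemma~\ref{RSW-gen}(ii), using monotonicity in $h$ and Lemma~\ref{RSW-cr} to supply (\ref{wrsw-cond}) for the complementary colour at all intermediate scales---is precisely how one ``easily obtains'' it. Your handling of the strict inequality via the slack in (\ref{cond-for-epsilon}) is also the right observation.
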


\begin{Lemma}[cf. \cite{Hig93a} Lemma 5.3] \label{Hig93aL5.3} 
Let $\mu = \mu_{T,h}$ or $\mu_t^N$ for $t\in [0,1]$.
Then for every $\varepsilon > 0$, there exists a positive integer 
$M_0=M_0(\varepsilon)$ 
such that if $n_2 \leq n < 2^Mn < L(h,\varepsilon_0)$ for some $M \geq M_0$, we have
\begin{align} 
&\mu \left(  \begin{array}{@{\,}c@{\,}} 
\mbox{there exists a $(+)$-circuit surrounding the origin} \\
\mbox{in $S(2^Mn) \setminus S(n)$} \\
\end{array} \right) \geq 1 - \varepsilon,  \label{Hig93a5.8+}
\intertext{and}
&\mu \left(  \begin{array}{@{\,}c@{\,}} 
\mbox{there exists a $(-*)$-circuit surrounding the origin} \\
\mbox{in $S(2^Mn) \setminus S(n)$} \\
\end{array} \right) \geq 1 - \varepsilon. \label{Hig93a5.8-}
\end{align}
\end{Lemma}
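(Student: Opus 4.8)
\emph{Plan of proof.} The plan is to cover the annulus $S(2^{M}n)\setminus S(n)$ by a family of pairwise disjoint ``fat'' sub-annuli at geometrically growing scales, to observe that each one carries a $(+)$-circuit surrounding the origin with probability bounded below by a fixed constant $p_{0}>0$ (by RSW together with FKG), and then to decouple the corresponding events using the exponential mixing property, so that the probability that none of the sub-annuli contains such a circuit decays geometrically in their number, hence in $M$.

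Concretely, I would put $r_{j}=4^{j}n$ and $A_{j}=S(4r_{j})\setminus S(2r_{j})$ for $j\geq 0$. These annuli are pairwise disjoint and, for $0\leq j\leq \lfloor M/2\rfloor-1$, they are contained in $S(2^{M}n)\setminus S(n)$; moreover $\bigcup_{i<j}A_{i}\subset S(r_{j})$ while $d(A_{j},S(r_{j}))\geq r_{j}$. Since $A_{j}$ has inner-to-outer ratio $2$ and $r_{j}\geq n\geq n_{2}$, a $(+)$-circuit surrounding the origin in $A_{j}$ is obtained, by planarity and the FKG inequality, from four ``hard'' $(+)$-crossings of rectangles of bounded aspect ratio at scale comparable to $r_{j}$, each of which has probability at least $\delta_{K}$ for a fixed $K$ by the crossing estimate (\ref{eq:crossing}) in Lemma \ref{RSWbound}. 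Hence the event $G_{j}$ that there is a $(+)$-circuit surrounding the origin in $A_{j}$ satisfies $\mu(G_{j})\geq \delta_{K}^{4}=:p_{0}>0$, uniformly over $\mu\in\{\mu_{T,h}\}\cup\{\mu_{t}^{N}:t\in[0,1]\}$ (for the periodic measures one also uses that all scales involved are $<N$). Note that $G_{j}\in\mathcal{F}_{A_{j}}$ and that $\bar G_{j}$ is a decreasing event.

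Since each $G_{j}$ is increasing, the absence of a $(+)$-circuit surrounding the origin in $S(2^{M}n)\setminus S(n)$ entails $\bigcap_{0\leq j\leq m}\bar G_{j}$ with $m=\lfloor M/2\rfloor-1$, so it suffices to bound the probability of this intersection. Writing it as $\prod_{j}\mu(\bar G_{j}\mid \bigcap_{i<j}\bar G_{i})$ and applying the mixing property (\ref{HZ(1.3)}) with $V_{1}=A_{j}$ and $V_{2}=\bigcup_{i<j}A_{i}\subset S(r_{j})$ (and its analogue for $\mu_{t}^{N}$, obtained via (\ref{eq:periodic-1})), each conditional probability is at most $\mu(\bar G_{j})+C\,|A_{j}|\,r_{j}\,e^{-\alpha r_{j}}\leq 1-p_{0}+C'r_{j}^{3}e^{-\alpha r_{j}}$. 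The key point is that this error depends only on $r_{j}=4^{j}n\geq 4^{j}n_{2}$, so there is a fixed integer $j_{1}$, depending only on $p_{0},C',\alpha,n_{2}$ and not on $M$, $n$, $\varepsilon$ or $\mu$, such that $C'r_{j}^{3}e^{-\alpha r_{j}}<p_{0}/2$ for every $j\geq j_{1}$. Bounding the first $j_{1}$ factors by $1$ then gives $\mu(\bigcap_{j}\bar G_{j})\leq (1-p_{0}/2)^{\,m-j_{1}}$, which is $<\varepsilon$ once $M$ exceeds a suitable $M_{0}=M_{0}(\varepsilon)$. This proves (\ref{Hig93a5.8+}), and (\ref{Hig93a5.8-}) follows in the same way with the crossing estimate (\ref{eq:*crossing}) for $(-*)$-crossings in place of (\ref{eq:crossing}).

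The step I expect to be the main obstacle is the decoupling. FKG is of no help here, since the events $\bar G_{j}$ are decreasing and one needs an \emph{upper} bound on $\mu(\bigcap_{j}\bar G_{j})$; the whole argument therefore rests on exponential mixing. The delicate feature is that the mixing errors at the smallest scales are fixed positive numbers that do not shrink as one adds more annuli, since the hypothesis only furnishes $n\geq n_{2}$ rather than $n$ large in terms of $\varepsilon$. This is exactly why the sub-annuli must be taken ``fat'', with a buffer proportional to their own scale $r_{j}$: then the per-step mixing error depends only on $r_{j}$, lies below $p_{0}/2$ for all but a bounded number $j_{1}$ of the smallest scales, and that finite loss is absorbed harmlessly into the choice of $M_{0}$.
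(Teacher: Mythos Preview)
Your proof is correct and follows essentially the same approach as the paper, which merely remarks that the lemma follows from Lemma~\ref{lem:circuits} and the mixing property, with the argument being the same as the derivation of~(\ref{eq:srsw-6}). Your explicit handling of the first $j_{1}$ scales (bounding those factors by $1$ rather than relying on $n_{2}$ alone to make the mixing error smaller than $p_{0}/2$) is a clean way to make the argument uniform in $n\geq n_{2}$, and is exactly the kind of detail the paper leaves implicit.
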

This follows from Lemma \ref{lem:circuits} and the mixing property.
The proof is quite the same as the one to derive (\ref{eq:srsw-6}).

\newpage
\section{Connection lemma}
For $n \geq 1$, we define
\[
\begin{cases}
\ell(n) := \lfloor \log_4 n \rfloor = \min \{ j \in \mathbf{N} : j \geq \log_4 n \}, \\
\ell_2(n) := \ell(\ell(n)), \\
\ell_3(n) := \ell(\ell_2(n)).
\end{cases}
\]
Throughout this section we assume that $\ell_3(n)> n_2$,
and $n<\min\{ L(h,\varepsilon_0), \frac{N}{2} \} $.
Therefore we assume that $h$ is very close to $h_c(T)$.
Also, let $\mu $ be $\mu_{T,h}$ or $\mu_t^N$ with $0\leq t\leq 1$.
Let $k>1$ be given. 
For our purpose it is sufficient to assume that $k=p/4$ with an integer
$p\geq 5$.
Let 
$$
V(n)=[-n,n]\times [-kn,kn].
$$
For a horizontal $(*)$-crossing $\gamma $ of $V(n)$, let $L_n(\gamma )$
be the region in $V(n)$ below $\gamma $, and $U_n(\gamma )$ be the region
of $V(n)$ above $\gamma $.
Also, for every $\ell \geq 1$, let $U_n(\gamma ,\ell )$ denote the
connected component of the set
$$
\{ x \in U_n(\gamma ) : d(x,\gamma ) \geq \ell \}
$$
which contains the top side of $V(n)$.
If such a component does not exist, then we put 
$U_n(\gamma ,\ell )=\emptyset $.
In the same way we define $L_n(\gamma , \ell )$ for $L_n(\gamma )$.
Let 
$\varDelta U_n(\gamma ,\ell )= U_n(\gamma ,\ell )\setminus U_n(\gamma ,\ell -1)$,
which consists of points in $U_n(\gamma )$ such that the distance from 
$\gamma $ is exactly equal to $\ell $.
Similarly, we put
$$\varDelta L_n(\gamma , \ell ) =
 \{ x \in L_n(\gamma ) : d(x,\gamma ) = \ell \} .
$$
\begin{Lemma}[Connection lemma] \label{connection lemma}
Let $\gamma_1,\gamma_2$ be horizontal $(*)$-crossings of $V(n)$ such that
\[ \textstyle
\gamma_1 \subset [-n,n]\times [-kn,-(k-\frac{1}{2})n] , \ \mbox{ and }\ 
\gamma_2 \subset [-n,n]\times [(k-\frac{1}{2})n, kn].
\]
Let also $\mu = \mu_{T,h}$ or $\mu_t^N$ for $t\in [0,1]$.
There exists an $n_3=n_3(k,\varepsilon_0)\geq n_2$, such that for every
$E\in {\mathcal F}_{V(n)^c}$ and every 
$F\in {\mathcal F}_{L_n(\gamma_1)\cup U_n(\gamma_2)\cup \{ \gamma_1 \}
\cup \{ \gamma_2\} }$,
\begin{equation}\label{eq:connection lemma} 
\mu \left( \left.
 \begin{array}{@{\,}c@{\,}}
\mbox{there exists a $(+)$-path in $U_n(\gamma_1)\cap L_n(\gamma_2)$,}\\
\mbox{connecting $\varDelta U_n(\gamma_1,1) $ with $\varDelta L_n(\gamma_2,1) $}
 \end{array}
\, \right| \, E\cap F \right) \geq \frac{\delta_{8k}}{4}
\end{equation}
for $n_3\leq n<L(h,\varepsilon_0)$ and $N>kn$ when $\mu = \mu_t^N$.
The same estimate holds for $(-*)$-path, too.
\end{Lemma}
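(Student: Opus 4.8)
\section*{Proof proposal for the Connection lemma}

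The plan is to pass, via the DLR equation and FKG, to the finite Gibbs measure on the middle region $D := U_n(\gamma_1)\cap L_n(\gamma_2)$ with the least favourable boundary condition, and then to construct the required $(+)$-path as an FKG-intersection of a ``safe'' crossing in the bulk of $D$ together with two short ``escape'' pieces attached near $\gamma_1$ and near $\gamma_2$. Write $A$ for the event in (\ref{eq:connection lemma}); it is increasing and $\mathcal{F}_D$-measurable. Once $\gamma_1,\gamma_2$ are fixed (they are measurable for the conditioning), every site of $\mathbf Z^2$ adjacent to $D$ lies in $V(n)^c\cup L_n(\gamma_1)\cup U_n(\gamma_2)\cup\gamma_1\cup\gamma_2$, so $E\cap F$ determines the entire outer boundary of $D$. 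Hence $\mu(A\mid E\cap F)$ is an average of $q^{\omega}_{D,T,h}(A)$ over boundary configurations consistent with $E\cap F$, and by the monotonicity-in-boundary-condition form of FKG for the ferromagnetic Ising model with $h\ge0$ each such term is $\ge q^{\omega^-}_{D,T,h}(A)$ with $\omega^-\equiv-1$; the bookkeeping for the random domain $D$ is the same conditioning-on-a-fixed-crossing device used in the proof of Lemma \ref{RSW-gen}. So it suffices to show $q^{\omega^-}_{D,T,h}(A)\ge \delta_{8k}/4$ for $n$ large, uniformly in the shapes of $\gamma_1,\gamma_2$.

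Next I would isolate the bulk. Let $M$ be the set of points of $D$ at $\ell^1$-distance $\ge\ell(n)$ from the outer boundary of $D$; since $\gamma_1,\gamma_2$ are confined to the two strips of height $n/2$ at the ends of $V(n)$, $M$ is a topological rectangle of width $2n-2\ell(n)$ and height between $(2k-1)n-2\ell(n)$ and $2kn-2\ell(n)$, and a vertical $(+)$-crossing of $M$ automatically connects $\varDelta U_n(\gamma_1,\ell(n))$ to $\varDelta L_n(\gamma_2,\ell(n))$. Stacking $O(k)$ squares of aspect ratio $O(1)$, gluing them with horizontal crossings, and appending a $(+)$-circuit around a designated box of side $\ell_2(n)$ near each end (exactly as in the derivations of (\ref{K(2.19)}) and (\ref{K(2.20)})) shows that under $\mu_{T,h}$ this ``core'' event has probability $\ge\delta_{8k}$ once $\ell_3(n)>n_2$ and $n<L(h,\varepsilon_0)$ (so Lemmas \ref{RSWbound} and \ref{lem:circuits} apply at all scales used). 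Since $d(M,D^c)\ge\ell(n)$, the mixing property (\ref{HZ(1.1)}) gives $|q^{\omega^-}_{D,T,h}(\cdot)-\mu_{T,h}(\cdot)|\le C|M|e^{-\alpha\ell(n)}$ on $\mathcal{F}_M$, which is $\le\delta_{8k}/8$ for $n\ge n_3$; hence the core event has $q^{\omega^-}_{D,T,h}$-probability $\ge\tfrac78\delta_{8k}$.

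The hard step is to attach, inside the $\ell(n)$-neighbourhood of $\gamma_1$ (and of $\gamma_2$), a $(+)$-path from $\varDelta U_n(\gamma_1,1)$ to the circuit built in $M$, even though under $\omega^-$ the boundary condition along $\gamma_1$ is the worst possible for $(+)$-connections and such a path must come within distance $1$ of it, where (\ref{HZ(1.1)}) gives no help. For this I would work in a single box $B$ of side $\ell(n)$ sitting on $\gamma_1$: inside $B$ the all-minus influence penetrates only a distance $\asymp\ell_2(n)$ by (\ref{HZ(1.1)}), so the annulus of $B$ between distances $\ell_2(n)$ and $\ell(n)$ from $\gamma_1$ is governed by $\mu_{T,h}$ up to a negligible error and RSW (Lemmas \ref{RSW-gen}, \ref{RSWbound}, \ref{lem:circuits}, valid since $\ell_3(n)>n_2$) produces a $(+)$-path across it with probability $\ge$ const; inside the innermost layer (distances $1$ to $\ell_2(n)$ from $\gamma_1$) one repeats the argument at scale $\ell_2(n)$, with the clean part now governed by $\ell_3(n)$, using the $\asymp\ell(n)/\ell_2(n)$ essentially disjoint sub-trials of length $\ell_2(n)$ along $\gamma_1\cap B$ and (\ref{HZ(1.3)}) for near-independence, so that the probability that \emph{some} of them escapes to distance $\ell_2(n)$ is $\ge 1-\exp(-c\,\ell(n)/\ell_2(n)^{C})\ge\tfrac{15}{16}$ for $n\ge n_3$. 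Arranging the designated box of side $\ell_2(n)$ in $M$ to lie over $B$, the escape piece connects to the core circuit by FKG. Doing the same at $\gamma_2$ and intersecting the core event, the two escape events and the two circuit events — all increasing — FKG yields $q^{\omega^-}_{D,T,h}(A)\ge(\tfrac{15}{16})^2\cdot\tfrac78\delta_{8k}\ge\delta_{8k}/4$, which is (\ref{eq:connection lemma}); the $(-*)$-version is identical by the duality between $(+)$- and $(-*)$-connections. I expect the main obstacle to be precisely this last step: forcing the escape to the worst-case boundary layer to succeed with probability bounded away from $0$ uniformly in $n$ and in the unknown shapes of $\gamma_1,\gamma_2$, which is what makes the three-scale hierarchy $\ell(n)\gg\ell_2(n)\gg\ell_3(n)>n_2$ and the Kesten-lemma-type many-trials argument unavoidable; everything else is RSW, FKG and mixing of the kind already developed in Section 2.
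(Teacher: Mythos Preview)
Your architecture (core crossing in a mixing-buffered bulk, then two short ``escape'' pieces near $\gamma_1,\gamma_2$, driven by the three-scale hierarchy) matches the paper's, and you correctly locate the difficulty in the escape step. But two points in your escape argument are genuine gaps, and your route differs from the paper's in a way that makes the gaps hard to close as written.

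First, a small but real error: the buffer $\ell(n)$ is too thin. The mixing bound (\ref{HZ(1.1)}) gives an error of order $|M|\,e^{-\alpha\,\ell(n)}\asymp kn^{2}n^{-\alpha/\log 4}$, which need not tend to $0$. The paper uses $\ell(n)^{2}$ throughout (so that $kn^{2}e^{-\alpha\ell(n)^{2}}\to 0$), and the whole three-scale calculus is in terms of $\ell(\cdot)^{2}$, not $\ell(\cdot)$.

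Second, and more important, your recursion does not bottom out. Under $q^{\omega^-}_{D,T,h}$ the layer at distance $1$ from $\gamma_1$ sees an all-minus boundary, and no amount of RSW/mixing produces a uniform lower bound for a $(+)$-path reaching that layer: at every scale your ``clean part'' stops short, and you never explain how the last step to $\varDelta U_n(\gamma_1,1)$ is made. The paper closes this gap by \emph{finite energy}: after a three-step RSW descent it reaches $\varDelta U_n(\gamma_1,\ell(L_2)^{2})$ with $\ell(L_2)=\ell_3(n)+O(1)$, and then flips the remaining $\ell(L_2)^{2}$ sites at cost $c(T)^{\ell(L_2)^{2}}$. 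Only then does the many-trials argument work, because $\ell(n)\cdot c(T)^{\ell_3(n)^{2}}\to\infty$. Your bound $1-\exp(-c\,\ell(n)/\ell_2(n)^{C})$ presupposes a per-trial success probability $\ge \ell_2(n)^{-C}$ that you have not established.

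There is also a structural difference. You reduce globally to the all-minus boundary on $D$ and then try to escape against it. The paper does \emph{not} do this: it conditions on the leftmost $(+)$-path $\gamma(\omega)$ realizing the core event $G_n^+$, and works to the right of $\gamma$ in $\Theta_+(\gamma)$, where one side of the boundary is $(+)$. The escape then proceeds via disjoint \emph{annuli} $A_{1,j}$ at dyadic scales $4^{j}$, $\sqrt{n}\le 2\cdot 4^{j}<4^{j+1}\le n/2$, centred at the endpoint $v_1(\gamma)$; each annulus gives, via Markov/FKG and mixing at distance $\ell(n)^{2}$, probability $\ge\tfrac12\delta_8^{4}$ of connecting $\gamma$ to $\varDelta U_n(\gamma_1,\ell(n)^{2})$, and two further annular steps (Lemma \ref{Hig93aL5.3} with $M_0(\tfrac14)$) bring this down to $\varDelta U_n(\gamma_1,\ell(L_2)^{2})$ before the finite-energy step. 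The ``many trials'' are thus over scales $j$ around a single point, not over spatial boxes along the unknown curve $\gamma_1$; this sidesteps the geometric awkwardness of tiling $\gamma_1\cap B$ by boxes and gives conditional independence directly, since each $\tilde H_{1,j}$ lives in $S(v_1(\gamma),2\cdot 4^{j+1})$.
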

\begin{proof}
We will prove (\ref{eq:connection lemma}). The argument is quite parallel
for the $(-*)$-path.

\noindent  $1^{\circ})$ Set 
$T(n) := [-\frac{n}{8}, \frac{n}{8}] \times [-kn, kn]$
and 
\[ G_n^+ := \left\{ \begin{array}{@{\,}c@{\,}} 
\mbox{there exists a $(+)$-path from $\varDelta U_n(\gamma_1,\ell(n)^2)$ to 
$\varDelta L_n(\gamma_2,\ell(n)^2)$} \\
\mbox{in $U_n(\gamma_1,\ell(n)^2) \cap L_n(\gamma_2,\ell(n)^2) \cap T(n)$}
\end{array} \right\}. \]
By (\ref{eq:crossing})
we have $\mu( G_n^+)\geq \delta_{8k}$ if $8n_2\leq n<L(h,\varepsilon_0)$.
Using the mixing property (\ref{HZ(1.1)})
$$
\mu (G_n^+ \mid E\cap F) \geq 
      \mu (G_n^+) -4Ckn^2\ell (n)^2e^{-\alpha \ell (n)^2}.
$$
Thus, we can find an $N_1=N_1(k, \varepsilon_0)\geq 8n_2$ such that
$$
4Ckn^2\ell (n)^2e^{-\alpha \ell (n)^2}<\frac{1}{2}\delta_{8k}
$$
and hence
\begin{equation}\label{eq:3.1}
\mu (G_n^+ \mid E\cap F) \geq \frac{1}{2}\delta_{8k}
\end{equation}
for $N_1\leq n<\min\{ L(h,\varepsilon_0), \frac{N}{k}\} $.

\noindent $2^{\circ})$ Let $n \geq N_1$. For $\omega \in G_n^+$, let $\gamma(\omega)$ be the leftmost (+)-path in 
\[ \mbox{$U_n(\gamma_1,\ell(n)^2) \cap L_n(\gamma_2,\ell(n)^2) \cap T(n)$} \]
connecting $\varDelta U_n(\gamma_1,\ell(n)^2)$ and $\varDelta L_n(\gamma_2,\ell(n)^2)$.
Let $\gamma $ be a realization of $\gamma (\omega )$ for some
$\omega \in G_n^+$, and let $v_1(\gamma ), v_2(\gamma )$ be the intersection
points of $\gamma $ with $\varDelta U_n(\gamma_1,\ell(n)^2)$ and
$\varDelta L_n(\gamma_2,\ell(n)^2)$, respectively.
Consider the following annuli around $v_1(\gamma )$:
\begin{equation}\label{eq:3.2}
A_{1,j}= v_1(\gamma )+ S(4^{j+1})\setminus S(3\cdot 4^j-1)
\end{equation}
for 
\begin{equation}\label{Hig93a5.21}
\sqrt{n} \leq 2 \cdot 4^j < 4^{j+1} \leq \frac{n}{2}. 
\end{equation}
Then $A_{1,j}$'s are disjoint from each other, and
are subsets of $V(n)$.
Let $\psi_i, i=1,2$ be paths of length $\ell (n)^2$, such that 
$\psi_i $ connects $v_i(\gamma )$ with $\gamma_i$, respectively
for $i=1,2$.
Then any point $x\in \psi_i$ satisfies that
$$
d(x, \gamma_i)\leq \ell (n)^2.
$$
Therefore 
$$(\psi_1 \cup \psi_2) \cap \bigl( U_n(\gamma_1, \ell (n)^2) \cap
                              L_n(\gamma_2, \ell(n)^2 ) \bigr)
 = \emptyset .
$$
Further, (\ref{Hig93a5.21}) implies that
$$
d(A_{1,j},\psi_1\cup \psi_2)\geq \sqrt{n}-\ell (n)^2
     \geq \ell (n)^2
$$
if 
\begin{equation}\label{eq:3.4}
\sqrt{n}\geq 2\ell (n)^2.
\end{equation}
The path $\gamma $ divides 
 $U_n(\gamma_1, \ell (n)^2)\cap L_n(\gamma_2, \ell (n)^2)$ 
into two regions.
One is to the right of $\gamma $, and the other is
to the left of $\gamma $.
Let $\Theta_+(\gamma )$ denote the region to the right of $\gamma $.
Then we extend $\gamma $ to ${\tilde \gamma }=\gamma \cup \psi_1\cup\psi_2$
which separates $U_n(\gamma_1)\cap L_n(\gamma_2)$ into two parts.
Let ${\tilde \Theta }_+({\tilde \gamma })$ be the connected component
of $\{ U_n(\gamma_1)\cap L_n(\gamma_2)\} \setminus \{ {\tilde \gamma }\} $,
containing $\Theta_+(\gamma )$. 

Let $F_1$ be an event occurring in $S(v_1(\gamma ), 2\cdot 4^j)$, and
let
$$
H_{1,j}:=\left\{  \begin{array}{@{\,}c@{\,}} 
\mbox{there exists a $(+)$-path in $A_{1,j}\cap \Theta_+(\gamma )$,}\\
\mbox{connecting $\gamma $ with $\varDelta U_n(\gamma_1,\ell (n)^2)$}
\end{array}
\right\} .
$$
Then by the Markov property and the FKG inequality
\begin{align}
&\mu \left( H_{1,j} \,\bigg\vert\, \{ \gamma (\omega )=\gamma \}\cap E\cap F\cap F_1 
   \right) \label{eq:3.5} \\
&\geq  \mu \left( H_{1,j} \,\bigg\vert\, \begin{array}{@{\,}c@{\,}} 
 \omega (x) = 1 \mbox{ for every } x \in \gamma \\
 \omega (x) =-1 \mbox{ for every } x \in \partial 
   \Xi_j(\gamma )\setminus \gamma 
 \end{array} \right) , \nonumber
\end{align}
where we put 
$$
\Xi_j(\gamma )=  {\tilde \Theta }_+({\tilde \gamma })\setminus
                 S(v_1(\gamma ),2\cdot 4^j ).
$$
Again by the FKG inequality the right hand side of (\ref{eq:3.5}) is
not less than
$$
\mu \left( H_{1,j} \,\bigg\vert\, \omega (x) = -1
   \mbox{ for every } x \in \partial \Xi_j(\gamma )\setminus \gamma
 \right) .
$$
Note that 
$ d( \Theta_+(\gamma ), \partial   \Xi_j(\gamma )\setminus \gamma ) $
is not less than $\ell (n)^2$, under the conditions (\ref{Hig93a5.21}) and 
(\ref{eq:3.4}).
Therefore by the mixing property and the FKG inequality we have
\begin{align*}
\mu \left( H_{1,j} \,\bigg\vert\, \{ \gamma (\omega )=\gamma \}\cap E\cap F\cap F_1
\right) &\geq  \mu ( H_{1,j}) -
     4Ckn^2\ell (n)^2e^{-\alpha \ell (n)^2 } \\
&\geq  \delta_8^4 -  4Ckn^2\ell (n)^2e^{-\alpha \ell (n)^2 }
\end{align*}
if $8n_2\leq n <\min \{ L(h,\varepsilon_0), \frac{N}{k}\} $.
So we can find $N_2=N_2(k,\varepsilon_0)\geq N_1$ such that
(\ref{eq:3.4}) holds for $n\geq N_2$, and
$$
4Ckn^2\ell (n)^2e^{-\alpha \ell (n)^2 } <\frac{\delta_8^4}{2},
$$
and hence
\begin{equation}\label{eq:3.6}
\mu \left( H_{1,j} \,\bigg\vert\, \{ \gamma (\omega )=\gamma \}\cap E\cap F\cap F_1
\right) \geq \frac{1}{2}\delta_8^4
\end{equation}
for every $N_2\leq n < \min \{ L(h,\varepsilon_0), \frac{N}{k}\} $.

If $\omega \in H_{1,j}$, then there exists a $(+)$-path which connects 
$\gamma $ with $\varDelta U_n(\gamma_1,\ell (n)^2)$ in 
$\Theta_+(\gamma )\cap A_{1,j}$.
Among such $(+)$-paths, let $\gamma_{1,j}(\omega )$ be the ``minimal'' path
in the following sense.
For every self-avoiding path $\xi $ in $\Theta_+(\gamma )\cap A_{1,j}$,
connecting $\gamma $ with $\varDelta U_n(\gamma_1,\ell (n)^2)$, 
$\Theta_+(\gamma_1,\ell (n)^2)\setminus \{ \xi \} $ separates into 
connected components.
Let $C_\xi $ denote the component which contains a $(*)$-nearest neighbor
of $v_1(\gamma )$.
$\gamma_{1,j}(\omega )$ has the minimal region $C_{\gamma_{1,j}(\omega )}$.
Namely, for every $(+)$-path $\zeta $ in  $\Theta_+(\gamma )\cap A_{1,j}$,
connecting $\gamma $ with $\varDelta U_n(\gamma_1,\ell (n)^2)$,
$C_{\gamma_{1,j}(\omega )}\subset C_\zeta $.
Note that $\{ \gamma_{1,j}(\omega )=\zeta \} $ depends on configurations in 
$\zeta \cup ( C_\zeta \cap A_{1,j})$.
Let $\gamma_{1,j}$ be a realization of $\gamma_{1,j}(\omega )$ for some
$\omega \in H_{1,j}$, and let $v_1(\gamma_{1,j})$ be its intersection 
with $\varDelta U_n(\gamma_1,\ell (n)^2)$.
Let also $\gamma \circ \gamma_{1,j}$ be the path which starts at
$v_2(\gamma )$, goes along $\gamma $ until it meets $\gamma_{1,j}$, and
then changes to go along $\gamma_{1,j}$ ending at $v_1(\gamma_{1,j})$.

Take $M_0=M_0(\frac{1}{4})$ in Lemma \ref{Hig93aL5.3},
and let $A'(\gamma_{1,j})$ be the annulus given by
$$
A'(\gamma_{1,j})=v_1(\gamma_{1,j})+
     S(2\cdot4^{M_0}\ell (n)^2)\setminus S(2\cdot \ell (n)^2).
$$
Consider the following event.
$$
H'(\gamma \circ\gamma_{1,j})=\left\{ \begin{array}{@{\,}c@{\,}} 
\mbox{there exists a $(+)$-path in $A'(\gamma_{1,j})$ which}\\
\mbox{connects $\gamma\circ\gamma_{1,j}$ with $\varDelta U_n(\gamma_1,\ell (L_1)^2)$}
\end{array}
\right\},
$$
where we put $L_1 = 2\cdot 4^{M_0}\ell (n)^2$.
Then as before we have
\begin{align}
&\mu \left( H'(\gamma \circ\gamma_{1,j}) \,\bigg\vert\,
  \{ \gamma (\omega ) = \gamma \} \cap 
    \{ \gamma_{1,j}(\omega ) = \gamma_{1,j} \} \cap
  E\cap F \cap F_1 
\right) \label{eq:3.7} \\
&\geq \mu\bigl(  H'(\gamma \circ\gamma_{1,j}) \bigr) -
  4CL_1^2\ell(L_1)^2e^{-\alpha \ell (L_1)^2}\nonumber  \\
& \geq \frac{3}{4} - 4CL_1^2\ell(L_1)^2e^{-\alpha \ell (L_1)^2},\nonumber
\end{align}
by Lemma \ref{Hig93aL5.3}
if 
\begin{equation}\label{eq:3.8}
\ell (L_1)^2\geq n_2.
\end{equation}
So we can take $N_3=N_3(k,\varepsilon_0)\geq N_2$ such that (\ref{eq:3.8})
and 
\begin{equation}\label{eq:3.9}
4CL_1^2\ell(L_1)^2e^{-\alpha \ell (L_1)^2}\leq \frac{1}{4}
\end{equation}
for $n\geq N_3$, so that
\begin{equation}\label{eq:3.10}
\mu\left( H'(\gamma \circ\gamma_{1,j}) \,\bigg\vert\,
\{ \gamma (\omega ) = \gamma \} \cap 
    \{ \gamma_{1,j}(\omega ) = \gamma_{1,j} \} \cap
  E\cap F \cap F_1 
\right) \geq \frac{1}{2}.
\end{equation}

For $\omega \in H'(\gamma \circ \gamma_{1,j})$, let $\varphi (\omega )$ be
the ``minimal'' $(+)$-path connecting $\gamma \circ \gamma_{1,j}$ with
$ \varDelta U_n(\gamma_1,\ell (L_1)^2)$.
Fixing a realization $\varphi $ of $\varphi (\omega )$ for some 
$\omega \in H'(\gamma \circ \gamma_{1,j})$, we do the same thing again.
Namely, let $v'_1(\varphi )$ be the endpoint of $\varphi $ in 
$ \varDelta U_n(\gamma_1,\ell (L_1)^2)$ and let
$$
H''((\gamma \circ \gamma_{1,j})\circ \varphi ) =\left\{
 \begin{array}{@{\,}c@{\,}} 
\mbox{there exists a $(+)$-path in the annulus}\\
 v'_1(\varphi ) + S(2\cdot 4^{M_0}\ell (L_1)^2)\setminus S(2\ell (L_1)^2)\\
 \mbox{such that it connects $(\gamma \circ \gamma_{1,j})\circ \varphi $}\\
 \mbox{with $\varDelta U_n(\gamma_1,\ell (L_2)^2)$}
\end{array}
\right\}
$$
where $L_2= 2\cdot 4^{M_0}\ell(L_1)^2$.
Then
\begin{equation}\label{eq:3.11}
\mu \left( H''((\gamma \circ \gamma_{1,j})\circ \varphi ) \,\bigg\vert\,
 \begin{array}{@{\,}c@{\,}} 
\{ \gamma (\omega )=\gamma \} \cap \{ \gamma_{1,j}(\omega )=\gamma_{1,j} \} 
 \\ 
 \cap \{\varphi (\omega )=\varphi \} \cap E\cap F\cap F_1
\end{array}
\right)   \geq \frac{1}{2},
\end{equation}
provided that 
\begin{equation}\label{eq:3.12}
\ell (L_2)^2\geq n_2, \quad \mbox{and }\quad 
 4CL_2^2\ell (L_2)^2 e^{-\alpha \ell (L_2)^2 }\leq \frac{1}{4},
\end{equation}
which is possible if $N_4=N_4(k,\varepsilon_0)$ is sufficiently large
and $N_4\leq n <\min\{ L(h, \varepsilon_0), \frac{N}{k}\} $.

Let ${\overline A}_{1,j}= v_1(\gamma ) + 
 S(4^{j+1}+L_1+L_2)\setminus S(2\cdot 4^j)$
with $\sqrt{N}<2\cdot 4^j < 4^{j+1}< \frac{n}{2}$.
If $L_1+L_2<\frac{\sqrt{n}}{2}$, then 
$$4^{j+1}+L_1+L_2<4^{j+1}+4^j=
\frac{5}{4}4^{j+1}<\min \left\{ \frac{5}{8}n, 2\cdot 4^{j+1} \right\} .
$$
So, $H_{1,j}(\gamma ), H'_{1,j}(\gamma_{1,j})$ and
$H''((\gamma \circ \gamma_{1,j})\circ \varphi )$ are occurring in 
$S(v_1(\gamma ), 2\cdot 4^{j+1})\cap V(n)\cap \{ x^2<0\} $.
Thus, for every $j$ with $\sqrt{n}<2\cdot 4^j <4^{j+1}<\frac{n}{2}$,
we have
\begin{equation}\label{eq:3.13}
\mu\left( \begin{array}{@{\,}c@{\,}}
\mbox{there exists a $(+)$-path in ${\overline A}_{1,j}$}\\
\mbox{connecting $\gamma $ with $\varDelta U_n(\gamma_1, \ell (L_2)^2)$}
\end{array}
\,\bigg\vert\,
  \begin{array}{@{\,}c@{\,}}
 \{ \gamma (\omega )=\gamma \} \\
 \cap E\cap F\cap F_1 
  \end{array}
\right)
\geq \frac{1}{8}\delta_8^4.
\end{equation}

\medskip
\noindent
$3^\circ $) Let
$$
{\tilde H}_{1,j}(\gamma ) =\left\{\begin{array}{@{\,}c@{\,}}
\mbox{ there exists a $(+)$-path in ${\overline A}_{1,j}$}\\
\mbox{which connects $\gamma $ with $\varDelta U_n(\gamma_1, 1)$}
\end{array}
\right\} .
$$
Then by (\ref{eq:3.13}) and by the finite energy property
\begin{equation}\label{eq:3.14}
\mu \left( {\tilde H}_{1,j}(\gamma ) \,\bigg\vert\, \{ \gamma (\omega ) = \gamma \}
  \cap E \cap F\cap F_1 \right) \geq \frac{\delta_8^4}{8}c(T)^{\ell (L_2)^2},
\end{equation}
where $c(T)= [1+ e^{-8/(\mathfrak{K}T)}]^{-1}$.
Let $j_*$ and $j^*$ be minimum and maximum of $j$'s such that
(\ref{Hig93a5.21}) holds, respectively.
Then for every $j$ with $j_*\leq j\leq j^*$, we have by (\ref{eq:3.14})
\begin{align*}
&\mu\left( \bigcap_{p=j_*}^j {\tilde H}_{1,p}^c \,\bigg\vert \,
   \{ \gamma (\omega ) = \gamma \} \cap E\cap F \right) \\
&= \mu\left( \bigcap_{p=j_*}^{j-1} {\tilde H}_{1,p}^c \,\bigg\vert \,
  \{ \gamma (\omega ) = \gamma \} \cap E\cap F \right) \\
&\quad \times 
\mu\left( 
{\tilde H}_{1,j}^c \,\bigg\vert\,  \{ \gamma (\omega ) = \gamma \} 
  \cap E\cap F \cap  \bigcap_{p=j_*}^{j-1} {\tilde H}_{1,p}^c \right) \\
&\leq  \mu\left( \bigcap_{p=j_*}^{j-1} {\tilde H}_{1,p}^c \,\bigg\vert \,
  \{ \gamma (\omega ) = \gamma \} \cap E\cap F \right) 
  \left( 1 - \frac{\delta_8^4}{8}c(T)^{\ell (L_2)^2}\right) \\
&\leq  {\left( 1 - \frac{\delta_8^4}{8}c(T)^{\ell (L_2)^2}\right) }^{j-j_*+1}.
\end{align*}

Now, $j^*-j_* = \frac{\ell(n)}{2} + O(1)$ and $\ell (L_2)= \ell_3(n) +O(1)$.
Therefore we know that
$$ {\left( 1 - \frac{\delta_8^4}{8}c(T)^{\ell (L_2)^2}\right) }^{j-j_*+1}
  \rightarrow 0 \ \mbox{ as } \ n\rightarrow \infty .
$$
Thus, we can take $N_5= N_5(k,\varepsilon_0)\geq N_4$, such that 
\begin{equation}\label{eq:3.15}
\mu \left( \left.
\begin{array}{@{\,}c@{\,}}
\mbox{there exists a $(+)$-path }\\
\mbox{in $S(v_1(\gamma ), \frac{n}{2})$, connecting}\\
\mbox{$\gamma $ with $\varDelta U_n(\gamma_1,1)$}
\end{array}
\,\right|\, \{\gamma (\omega )=\gamma \}\cap E\cap F \right) \geq \frac{3}{4}
\end{equation}
for $N_5\leq n<\min\{ L(h,\varepsilon_0), \frac{N}{k}\} $.

We can apply the same argument for $\gamma_2$ and $v_2(\gamma )$,
and obtain
\begin{equation}\label{eq:3.16}
\mu \left(\left.
\begin{array}{@{\,}c@{\,}}
\mbox{there exists a $(+)$-path }\\
\mbox{in $S(v_2(\gamma ), \frac{n}{2})$, connecting}\\
\mbox{$\gamma $ with $\varDelta L_n(\gamma_2,1)$}
\end{array}
\,\right|\, \{\gamma (\omega )=\gamma \}\cap E\cap F \right) \geq \frac{3}{4}
\end{equation}
for $N_5\leq n<\min\{ L(h,\varepsilon_0), \frac{N}{k}\} $.
Combining (\ref{eq:connection lemma}), (\ref{eq:3.15}) and (\ref{eq:3.16}),
we obtain
$$
\mu \left(  \begin{array}{@{\,}c@{\,}}
\mbox{there exists a $(+)$-path in $U_n(\gamma_1)\cap L_n(\gamma_2)$}\\
\mbox{connecting $\varDelta U_n(\gamma_1,1)$ with $\varDelta L_n(\gamma_2,1)$}
\end{array} \,\bigg\vert\, E\cap F \right) \geq \frac{1}{4}\delta_{8k}.
$$
It suffices to take $n_3=N_5$ to see that Lemma \ref{connection lemma} is proved.

\end{proof}

\begin{Remark}\label{box-size-bound}
As we saw in the proof, since we used rectangles of width not exceeding 
$n/4$, we can apply Lemma \ref{RSWbound} as long as $n<L(h,\varepsilon_0)$.
Therefore Lemma \ref{connection lemma} holds true for every 
$n_3\leq n<8L(h,\varepsilon_0)$.
\end{Remark}
\newpage

\section{Fence argument}

In this section, we give an Ising version of the basic result in 
Kesten \cite{K87scaling}, concerning the notion of fences.
Although the argument is applicable to $S(n)$ for any large $n$, but 
as in \cite{K87scaling}, we restrict ourselves to the cases where
$n=2^k$.

Let $r$ be a path from $\mathbf{O}$ to $\partial_{in} S(2^k)$ in $S(2^k)$,
for some $k \geq 2$; assume that all spins on $r$ other than $\mathbf{O}$ 
are $+$. 
For the sake of argument, assume that the endpoint of $r$ lies on 
$\{ -2^k \} \times [-2^k,2^k]$, the left side of $S(2^k)$. 
Let $r'$ be the piece of $r$ from the last intersection with the line 
$\{ x^1= -2^k+2^{k-2} \} $ to the left side of $S(2^k)$; 
$r'$ is a horizontal crossing of the rectangle
$$
\mathcal{B}_1:=[-2^k, -2^k+2^{k-2}] \times [-2^k,2^k].
$$ 
Let $\mathcal{C}=\mathcal{C}(r,k)$ be the $(+)$-cluster in 
$\mathcal{B}_1$ which contains $r'$.
We call $\mathcal{C}$ the crossing $(+)$-cluster in $\mathcal{B}_1$
containing $r'$. Namely a crossing $(+)$-cluster in $\mathcal{B}_1$
is a $(+)$-cluster in $\mathcal{B}_1$ such that it contains a horizontal
$(+)$-crossing of $\mathcal{B}_1$.
The lowest point of $\mathcal{C}$ on the left side of $S(2^k)$ is denoted 
by $a=a(\mathcal{C})$. 
Let $\mathcal{B}_1^+=\mathcal{B}_1^+(r)$ denote the region in $\mathcal{B}_1$ above $r$, 
and $\mathcal{B}_1^-=\mathcal{B}_1^-(r)$ denote the region below $r$. 

We say that $r$ (or $\mathcal{C}$) has an $(\eta,k)$-fence if all three
of the following conditions hold:
\begin{align}
&\mbox{If $t$ is any path from $\mathbf{O}$ to $\partial_{in} S(2^k)$ 
which lies in ${S}(2^k-1)$, } \label{K(2.26)} \\
&\mbox{except for its endpoint, and on which all spins except for 
$\mathbf{O}$ are $+$,} \notag \\
&\mbox{and its corresponding component $\mathcal{C}(t,k)$ satisfies that} 
\notag \\
&\mbox{$\mathcal{C}(t,k) \cap \mathcal{C} = \emptyset$, then $| a(\mathcal{C}(t,k)) - a(\mathcal{C})| > 2\sqrt{\eta}2^k$.} \notag \\
&\mbox{If $r^*$ is any $(*)$-path from $\mathbf{O}$ to 
$\partial_{in} S(2^k)$ which lies in ${S}(2^k-1)$, } 
\label{K(2.27)} \\
&\mbox{except for its endpoint, and on which all spins except for $\mathbf{O}$ are $-$, } \notag \\
&\mbox{and its corresponding $(*)$-component is $\mathcal{C}^*(r^*,k)$, then} \notag \\
&\mbox{$| a^* (\mathcal{C}^*(r^*,k)) - a(\mathcal{C})| > 2\sqrt{\eta}2^k$.} \notag \\
&\mbox{There exists a vertical $(+)$-crossing  of the rectangle} 
\label{K(2.28)} \\
&\mbox{$[a^1-\sqrt{\eta}2^k,a^1-1] \times [a^2-\eta2^k,a^2+\eta2^k]$,} \notag \\
&\mbox{which is $(+)$-connected to $\mathcal{C}$ in $S(a,\sqrt{\eta}2^k)$. }\notag \\
&\mbox{(Here $a=(a^1,a^2)$.)} \notag
\end{align}

We can also define an $(\eta,k)$-fence for a $(-*)$-cluster $\mathcal{C}^*$ of 
$\mathcal{B}_1$ by interchanging $(+)$ and $(-*)$ everywhere in the 
above.
Similarly, let $\mathcal{B}_2,\, \mathcal{B}_3$ and 
$\mathcal{B}_4$ be rectangles such that $\mathcal{B}_{i+1}$ is 
clock-wise rotation by a right angle of $\mathcal{B}_i$ for each $i=1,2,3$.
Note that we have to consider vertical $(+)$-crossings or $(-*)$-crossings
in ${\mathcal B}_2$ and ${\mathcal B}_4$.
\begin{Lemma}[cf. \cite{K87scaling} Lemma 2] \label{lem:fence}
Let $\mu=\mu_{T,h}$ or $\mu_t^N$. For each $\delta > 0$, there
exists an $\eta=\eta(\varepsilon_0, \delta)>0$ and $n_4=
n_4(\eta ,\varepsilon_0)$
such that if
$n_4<2^k<\min\{ 4L(h,\varepsilon_0), \frac{N}{2}\} $, then
\[ \mu \left(  \begin{array}{@{\,}c@{\,}} 
\mbox{there exists a horizontal $(+)$-crossing of $\mathcal{B}_1$ } \\
\mbox{whose $(+)$-cluster $\mathcal{C}$ in $\mathcal{B}_1$ 
does not have an $(\eta,k)$-fence} \\
\end{array} \right) \leq \delta.  \]
The same inequality holds for each $i$ and $(-*)$-connection,
with obvious modifications; the word ``horizontal'' is replaced with
``vertical'' when $i=2,4$, and ``$(+)$'' is replaced with ``$(-*)$'',
respectively.
\end{Lemma}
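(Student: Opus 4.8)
The plan is to bound the probability that \emph{some} horizontal $(+)$-crossing cluster of $\mathcal{B}_1$ violates one of the three defining properties of an $(\eta,k)$-fence, treating (\ref{K(2.26)}), (\ref{K(2.27)}) and (\ref{K(2.28)}) separately and then choosing $\eta$ small and $n_4$ large at the very end; the case of a general $\mathcal{B}_i$ and of $(-*)$-connections follows by the lattice symmetries exactly as in the statement. Two preliminary facts will be used throughout. First, the expected number of pairwise disjoint horizontal $(+)$-crossing clusters of $\mathcal{B}_1$ is bounded by a constant $C_*=C_*(\varepsilon_0)$, uniformly for $2^{k-2}<L(h,\varepsilon_0)$ — a standard consequence of Lemma \ref{RSWbound} and Lemma \ref{lem:circuits}, using that $\mathcal{B}_1$ has bounded aspect ratio — so that the bad event can be charged to individual crossing clusters. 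Second, the exponential decay of crossings in the hard direction below the correlation length: $\mu(\text{a hard-direction }(+)\text{- or }(-*)\text{-crossing of an }a\times b\text{ rectangle})\le e^{-c(a/b)}$ for $b\ge n_2$ and $a\le L(h,\varepsilon_0)$, which one gets by subdividing the long side into $\asymp a/b$ squares and combining Lemma \ref{RSWbound} with the mixing property.

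\textbf{Properties (\ref{K(2.26)}) and (\ref{K(2.27)}).} These turn out not to require $\eta$ small, only $2^k$ large. If some crossing cluster $\mathcal{C}$ fails (\ref{K(2.26)}) then there is a path $t$ from $\mathbf{O}$ to $\partial_{in}S(2^k)$ all of whose spins but $\mathbf{O}$ are $+$, hence in particular a $(+)$-path from a neighbour of $\mathbf{O}$ to $\partial_{in}S(\mathbf{O},2^k-1)$, so
\[ \mu\bigl(\exists\text{ crossing cluster failing (\ref{K(2.26)})}\bigr)\le \mu\bigl(\exists\, y,\ |y|\le 1,\ y\stackrel{+}{\leftrightarrow}\partial_{in}S(\mathbf{O},2^k-1)\bigr). \]
By Lemma \ref{lem:circuits} a $(-*)$-circuit in any of the $\asymp\log_2\bigl(\min\{2^k,L(h,\varepsilon_0)\}\bigr)$ dyadic sub-annuli around $\mathbf{O}$ blocks this $(+)$-connection, and by the mixing property these circuit events decouple, so the right-hand side is $\le\varepsilon$ once $\min\{2^k,L(h,\varepsilon_0)\}\ge r_0\,2^{M_0(\varepsilon)}$; since the hypotheses of the lemma force $L(h,\varepsilon_0)>n_4/4$, this holds as soon as $n_4$ is large. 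The same argument with $(+)$ and $(-*)$ interchanged handles (\ref{K(2.27)}), now using that failure forces a $(-*)$-path from $\mathbf{O}$ reaching $\partial_{in}S(2^k)$.

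\textbf{Property (\ref{K(2.28)}) — the fence proper.} This is where $\eta$ enters. Fix a realised crossing cluster $\mathcal{C}$ with lowest left-side point $a=(-2^k,a^2)$. The rectangle in (\ref{K(2.28)}) has $x^1$-extent $\asymp\sqrt{\eta}\,2^k$ and $x^2$-extent $2\eta 2^k$, so a vertical $(+)$-crossing of it runs in the \emph{easy} direction with aspect ratio $\asymp\eta^{-1/2}$; by the hard-direction estimate the probability that no such crossing exists is at most $e^{-c\eta^{-1/2}}$. Moreover a $(+)$-circuit in the annulus $S(a,\tfrac12\sqrt{\eta}\,2^k)\setminus S(a,\tfrac32\eta 2^k)$, whose modulus is $\asymp\eta^{-1/2}$ and which therefore contains $\asymp\log_2(\eta^{-1/2})$ dyadic layers, has a left arc that is a vertical $(+)$-crossing of the rectangle of (\ref{K(2.28)}) and that necessarily meets $\mathcal{C}$, because $\mathcal{C}$ leaves $S(a,\tfrac12\sqrt{\eta}\,2^k)$ on the right (it reaches $x^1=-2^k+2^{k-2}$); this supplies the required $(+)$-connection inside $S(a,\sqrt{\eta}\,2^k)$. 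By Lemma \ref{Hig93aL5.3} the probability that no such circuit exists is at most $\varepsilon_1(\eta)\to0$ as $\eta\to0$. The conditioning on $\mathcal{C}$ is handled by revealing $\mathcal{C}$ together with the $(-*)$-boundary it induces and running the circuit construction in the resulting slit domain, much as in the proof of Lemma \ref{connection lemma} (FKG comparison with $(-)$-boundary conditions, then the mixing property), giving a conditional failure probability $\le\varepsilon_1'(\eta)\to0$. Summing over crossing clusters and using the first-moment bound,
\[ \mu\bigl(\exists\text{ crossing cluster failing (\ref{K(2.28)})}\bigr)\le C_*\,\varepsilon_1'(\eta). \]
Choosing first $\eta$ with $C_*\varepsilon_1'(\eta)\le\delta/2$, then $n_4\ge n_2/\eta$ large enough that the contributions of (\ref{K(2.26)}) and (\ref{K(2.27)}) together are $\le\delta/2$ and that every box used above has side length in $[n_2,L(h,\varepsilon_0))$, yields the claim; the other $\mathcal{B}_i$ and the $(-*)$-version are identical after rotating and recolouring.

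\textbf{Main obstacle.} The delicate step is the slit-domain estimate for (\ref{K(2.28)}): after revealing a crossing cluster $\mathcal{C}$ — an object passing through the very centre $a$ of the annulus in which we want the blocking circuit — one must show the conditional probability of the circuit (equivalently, of the fence) still tends to $1$ as $\eta\to0$, uniformly over $\mathcal{C}$ and over $h$ with $2^k<4L(h,\varepsilon_0)$; this means combining the FKG/revealing technique behind Lemma \ref{connection lemma} with Lemma \ref{Hig93aL5.3} and the mixing property while keeping every auxiliary box below the correlation length so that Lemma \ref{RSWbound} applies. One must also rule out the case ``a vertical crossing exists but is disconnected from $\mathcal{C}$'': here the crossing reaches $\partial S(a,\tfrac12\sqrt{\eta}\,2^k)$ on the left while $\mathcal{C}$ reaches it on the right, so any $(-*)$-separator must thread the $O(1)$-wide gap between the rectangle and $a$, which is again controlled by the hard-direction estimate. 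Finally the first-moment bound $C_*(\varepsilon_0)<\infty$, though standard, has to be recorded with the right uniformity in $h$.
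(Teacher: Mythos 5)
Your treatment of (\ref{K(2.28)}) contains a genuine gap. You propose to produce the fence from a full $(+)$-circuit surrounding $a$ in the annulus $S(a,\tfrac12\sqrt{\eta}\,2^k)\setminus S(a,\tfrac32\eta 2^k)$, conditionally on the revealed crossing cluster. But such a circuit must enclose $a$ and therefore must return to the far side of the revealed object. If you condition on the full cluster $\mathcal{C}$ together with its induced $(-*)$-boundary, every site of $\mathcal{B}_1\setminus\mathcal{C}$ that is $(*)$-adjacent to $\mathcal{C}$ is $-$, so a $(+)$-circuit can leave $\mathcal{C}$ only through $\partial\mathcal{B}_1$; since the circuit must cross the path in $\mathcal{C}$ joining $a$ to $\{x^1=-2^k+2^{k-2}\}$ and then continue on its other side, no such circuit exists in the slit domain at all. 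If instead you condition only on the lowest crossing $\mathcal{R}_1=r_1$, the lower-right portion of your circuit --- the arc joining its intersection with $r_1$ to the left edge of $S(2^k)$ below $a$ --- lies in $\mathcal{B}_1$ below $r_1$, exactly the region on which the event $\{\mathcal{R}_1=r_1\}$ is measurable; the Markov/FKG comparison behind the Connection lemma controls only the region on and above $r_1$ (plus the exterior of $\mathcal{B}_1$) and gives no lower bound for $(+)$-connections below the lowest crossing, which is conditioned to contain no crossing. This is precisely why the paper avoids a circuit and instead builds a one-sided arc $\xi_1$ in $S(a,\sqrt{\eta}2^k)\setminus S(a,\eta 2^k)$ starting on $\mathcal{R}_1$, going anti-clockwise over the top of $a$ and down the outside of $S(2^k)$ to the half-line $\{(x^1,a^2-\eta 2^k):x^1<a^1\}$; that arc stays entirely in the unrevealed region and already contains the vertical crossing of the rectangle in (\ref{K(2.28)}) connected to $\mathcal{C}$.

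Two further remarks. Your disposal of (\ref{K(2.26)})--(\ref{K(2.27)}) by noting that their failure forces a one-arm path from $\mathbf{O}$ to $\partial_{in}S(2^k)$, whose probability is $o(1)$ by blocking circuits, is logically valid for the statement as literally written, but it is not the mechanism the paper uses: there, a second, larger arc $\xi_2$ in $S(a,\root 4\of{\eta}\,2^k)\setminus S(a,2\sqrt{\eta}2^k)$, together with $\mathcal{R}_1$, locally shields the $2\sqrt{\eta}2^k$-neighbourhood of $a$ from any disjoint crossing cluster, and it is this localized form that survives the mixing step in Lemmas \ref{lambda-to-delta} and \ref{comparison of gamma and delta}. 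Also, the control on the number of crossing clusters is obtained in the paper not as a first-moment bound but as the geometric tail $(1-\delta_{64}/4)^{\rho+1}$ from the Connection lemma, by conditioning on the clusters from the bottom up; either form would serve once the conditional fence estimate is repaired.
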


\begin{proof} This can be proved along the same line as Lemma 2 of Kesten \cite{K87scaling}. We need to avoid using the BK inequality; this can be done by conditioning step by step.

%% \begin{quote}
%% ---------------------------------------------
%%
%% \noindent \framebox{\bf INTERMISSION}

Assume that there exists a horizontal $(+)$-crossing of $\mathcal{B}_1$. 
Let $\mathcal{R}_1$ be the lowest of such crossings and $\mathcal{C}_1$ its 
$(+)$-cluster in $\mathcal{B}_1$; $a=(a^1,a^2)$ denotes the left
endpoint of  $\mathcal{R}_1$.
As was done in \cite{K87scaling}, in order that the conditions 
(\ref{K(2.26)})--(\ref{K(2.28)}) are satisfied for ${\mathcal R}_1$,
and $\mathcal{C}_1$, it is sufficient to have a $(+)$-path $\xi_1$
in $S(a,\sqrt{\eta }2^k)\setminus S(a,\eta 2^k)$ and a $(+)$-path
$\xi_2$ in $S(a,{\root 4 \of \eta }2^k)\setminus S(a,2\sqrt{\eta }2^k)$ 
both connecting ${\mathcal R}_1$ with the half line 
$\{ (x^1,a^2-\eta 2^k); x^1< a^1\} $ in the anti-clockwise direction.
The existence of $\xi_1$ assures the condition (\ref{K(2.28)}), and 
the existence of $\xi_2$ assures the conditions (\ref{K(2.26)}) and
(\ref{K(2.27)}).

We first take a small $\varepsilon > 0$, and consider the annuli
\begin{align*}
A_1(\mathcal{R}_1) &= S(a,\sqrt{\eta}2^k) \setminus S(a,\eta 2^k), \\ 
A_2(\mathcal{R}_1) &= S(a,\sqrt[4]{\eta}2^k) \setminus S(a,2\sqrt{\eta} 2^k).
\end{align*}
By Lemma \ref{Hig93aL5.3}, we take $M_0 = M_0 (\varepsilon)$ so that
as in the derivation of (\ref{eq:3.10}), we have
\begin{align*}
&\mu \left( \left.
\begin{array}{@{\,}c@{\,}} 
\mbox{there exists a $(+)$-path $\xi_1$ in $A_1(\mathcal{R}_1)$,} \\
\mbox{connecting $r_1$ with $\{(x^1,a^2-\eta 2^k) ; x^1 < a^1 \}$} \\
\mbox{in the anti-clockwise direction}
\end{array}
\,\right|\, \mathcal{R}_1(\omega)=r_1 \right) \\
&\geq 1- \varepsilon - C\eta^2 2^{3k} e^{-\alpha \eta 2^k}
\end{align*}
if $n_2 \leq \eta 2^k < 2^{M_0} \eta 2^k < \sqrt{\eta} 2^k < L(h,\varepsilon_0)$.
Let $n^*$ be the number such that
\[ C\eta^{-1}n^3 e^{-\alpha n} < \varepsilon \quad \mbox{for all $n \geq n^*$}, \]
and we assume that $\eta 2^k \geq n^*$, so that
\begin{align*}
&\mu \left( \left.
\begin{array}{@{\,}c@{\,}} 
\mbox{there exists a $(+)$-path $\xi_1$ in $A_1(\mathcal{C}_1)$,} \\
\mbox{connecting $r_1$ with $\{(x^1,a^2-\eta 2^k) ; x^1 < a^1 \}$} \\
\mbox{in the anti-clockwise direction}
\end{array}
\,\right|\, \mathcal{R}_1(\omega)=r_1 \right) \\
&\geq 1- 2\varepsilon.
\end{align*}
In the same way, for every $E \in \mathcal{F}_{S(a,2\sqrt{\eta} 2^k)}$,
\begin{align*}
&\mu \left( \left.
\begin{array}{@{\,}c@{\,}} 
\mbox{there exists a $(+)$-path $\xi_2$ in $A_2(\mathcal{C}_1)$,} \\
\mbox{connecting $r_1$ with $\{(x^1,a^2-\eta 2^k) ; x^1 < a^1 \}$} \\
\mbox{in the anti-clockwise direction}
\end{array}
\,\right|\, \{\mathcal{R}_1(\omega)=r_1\} \cap E \right) \\
&\geq 1- 2\varepsilon
\end{align*}
provided that
\begin{equation}\label{condition for eta}
C\sqrt{\eta}2^{2k} \cdot \sqrt{\eta} 2^k e^{-\alpha\sqrt{\eta} 2^k}
  <\varepsilon, \ \mbox{ and } \ 
2^{M_0}2\sqrt{\eta }2^k < \root 4 \of {\eta }2^k.
\end{equation}
The first inequality is satisfied if $\eta2^k>n^*$, since the left hand
side is equal to 
$\eta^{-1/2} C(\sqrt{\eta} 2^k)^3 e^{-\alpha \sqrt{\eta} 2^k}$.
The second inequality reduces to 
\begin{equation}\label{eq:bound for eta}
2^{M_0+1}\root 4\of \eta <1
\end{equation}
Therefore we have
\begin{align}\label{K(2.30)}
&\mu \left(  \begin{array}{@{\,}c@{\,}} 
\mbox{there exists the lowest horizontal $(+)$-crossing $\mathcal{R}_1$
of $\mathcal{B}_1$,} \\
\mbox{but one of the conditions (\ref{K(2.26)})--(\ref{K(2.28)}) fails for 
its cluster $\mathcal{C}_1$} \\
\end{array} \right) \\
&\leq 4\varepsilon \notag
\end{align}
under the conditions (\ref{eq:bound for eta}), 
$\eta 2^k \geq \max\{ n^*, n_2\} $, and 
$ \root 4\of \eta 2^k<L(h,\varepsilon_0)$.

Now assume that $\{ \mathcal{R}_i,\,\mathcal{C}_i \}_{1 \leq i \leq \sigma}$ 
are given horizontal $(+)$-crossings of $\mathcal{B}_1$, 
with their corresponding $(+)$-clusters in $\mathcal{B}_1$. 
Assume further the following:
\begin{itemize}
\item[(i)] $\mathcal{R}_i$ is disjoint from $\mathcal{C}_j$ for $i \neq j$. 
\item[(ii)] the $\mathcal{R}_i$ are ordered such that 
$\mathcal{C}_i \subset \mathcal{B}_1^-(\mathcal{R}_j)$ for $i < j$. 
($\mathcal{R}_{\sigma}$ is the highest crossing among 
$\{ \mathcal{R}_i \}_{1 \leq i \leq \sigma}$.)
\end{itemize}
If there exists still another $(+)$-crossing of $\mathcal{B}_1$  
above $\bigcup_{1 \leq i \leq \sigma} \mathcal{C}_i$, then let 
$\mathcal{R}_{\sigma+1}$ be the lowest such crossing. 
Denote its endpoint on the left side by $a_{\sigma+1}$, and its $(+)$-cluster
in $\mathcal{B}_1$ by $\mathcal{C}_{\sigma + 1}$. 
(In this case, 
$\mathcal{C}_i \subset {\mathcal{B}_1^-(\mathcal{R}_{\sigma+1})}$ 
for $1 \leq i \leq \sigma$.) 
We can repeat the above argument and obtain the following:
\begin{align}\label{K(2.31)}
&\mu \left( \left. \begin{array}{@{\,}c@{\,}} 
\mbox{$\mathcal{R}_{\sigma+1}$ exists in $\mathcal{B}_1$, above
$\cup_{1\leq i\leq \sigma}{\mathcal C}_i$,} \\
\mbox{but one of the conditions (\ref{K(2.26)})--(\ref{K(2.28)}) fails}\\
\mbox{for its cluster $\mathcal{C}_{\sigma+1}$} \\
\end{array} 
\,\right|\, \{ \mathcal{R}_i,\,\mathcal{C}_i \}_{1 \leq i \leq \sigma} \right) \\
&\leq 4\varepsilon. \notag
\end{align}

Consequently, we have for every integer $\rho >0$,
\begin{align} \label{K(2.32)}
&\mu \left(  \begin{array}{@{\,}c@{\,}} 
\mbox{there exists a horizontal $(+)$-crossing $\mathcal{R}$ of 
$\mathcal{B}_1$, such that} \\
\mbox{one of the conditions (\ref{K(2.26)})--(\ref{K(2.28)}) fails for 
its cluster $\mathcal{C}$} \\
\end{array} \right) \\
&\leq \mu \bigl(  
\mbox{there exist more than $\rho$ disjoint crossing $(+)$-clusters of
$\mathcal{B}_1$} \bigr) \notag \\
&\quad + 4\rho \varepsilon. \notag
\end{align}
Using the Connection lemma, we can see that
\begin{align*} 
&\mu \bigl(  
\mbox{there exist more than $\rho$ disjoint crossing $(+)$-clusters of
$\mathcal{B}_1$} \bigr) \\
&\leq \prod_{k=1}^{\rho} \mu \left( \left. \begin{array}{@{\,}c@{\,}} 
\mbox{there exists a horizontal $(+)$-crossing of $\mathcal{B}_1$} \\
\mbox{above $\mathcal{C}_k$} \\
\end{array} 
\,\right|\, \{ \mathcal{R}_i,\,\mathcal{C}_i \}_{1 \leq i \leq k} \right) \\
&\quad \times \mu \bigl( \mbox{there exists a horizontal 
$(+)$-crossing of $\mathcal{B}_1$} \bigr) \\
& \leq (1-\delta_{64}/4)^{\rho +1},
\end{align*}
if in addition  $n_3(8,\varepsilon_0)\leq 2^{k-2}<L(h,\varepsilon_0)$,
where $n_3$ is given in section 3.
Combining this with (\ref{K(2.32)}), and taking first $\rho$ large, then 
$\varepsilon$ small so that 
\begin{equation}\label{condition for rho and epsilon}
(1-\delta_{64}/4)^{\rho}+4\rho \varepsilon \leq \delta ,
\end{equation}
we can obtain
\begin{align} \label{K(2.33)}
&\mu \left(  \begin{array}{@{\,}c@{\,}} 
\mbox{there exists any horizontal $(+)$-crossing $\mathcal{R}$ of 
$\mathcal{B}_1$, such that} \\
\mbox{one of the conditions (\ref{K(2.26)})--(\ref{K(2.28)}) fails for 
its cluster $\mathcal{C}$} \\
\end{array} \right) \\
&\leq \delta . \notag
\end{align}

To this $\varepsilon>0$, we choose $M_0(\varepsilon)$ by Lemma \ref{Hig93aL5.3}, and choose $\eta$ satisfying (\ref{eq:bound for eta}). 
After that we take $k$ so that
\[ 2^k \geq \max \bigl\{ \eta^{-1} n_2,\,\eta^{-1} n^*,\, 4 n_3(8,\varepsilon_0) \bigr\}. \]
Then $h$ should satisfy that $L(h,\varepsilon_0)>2^{k-2}$.
%%  \end{quote}
\end{proof}
\begin{Remark}\label{box-size-bound2}
1) In the proof, we used the Connection lemma for rectangles with widths
not exceeding $2^{k-1}$, therefore from Remark \ref{box-size-bound}, 
Lemma \ref{lem:fence} holds still true when $n_4< 2^k< 2^{5}L(h, \varepsilon_0)$ provided that $M_0\geq 4$ (see (\ref{eq:bound for eta})).

2) We will require a stronger condition than (\ref{condition for rho and epsilon})
for $\rho $ and $\eta $  in the later 
discussion, but we will not have to change 
the statement of Lemma \ref{lem:fence}.
See the discussion in the next section 5.3. 
\end{Remark}

\newpage

\section{Extension argument}

Here we present Ising version of Lemmas 4 and 5 of \cite{K87scaling}.

\subsection{Blocks}
The main idea in the subsequent sections is to divide $S(2^k)$ into
suitable blocks. 
The sizes of blocks differ according to problems and also the relative 
location of these blocks in $S(2^k)$.
Let us begin with definition of such blocks.

\begin{Definition}
Let $1<j<k$ be integers and for every  $v\in S(2^k)$, let $Q_j(v)$
denote the square 
$$
( \ell_12^j,(\ell_1+1)2^j]\times (\ell_22^j,(\ell_2+1)2^j],
$$
which contains $v$, unless $\ell_12^j=-2^k$ or $\ell_22^j=-2^k$.
If $\ell_12^j=-2^k$ but $\ell_22^j\not=-2^k$, then we put
$$
Q_j(v) = [-2^k,-2^k+2^j]\times (\ell_22^j,(\ell_2+1)2^j].
$$
If  $\ell_22^j=-2^k$ but $\ell_12^j\not= -2^k$, then we put
$$
Q_j(v) = (\ell_12^j,(\ell_1+1)2^j]\times [-2^k,-2^k+2^j].
$$ 
Finally, if $\ell_12^j=\ell_22^j=-2^k$, then we put
$$
Q_j(v) = [-2^k,-2^k+2^j]\times [-2^k,-2^k+2^j].
$$
\end{Definition}
Thus, the totality of distinct $Q_j(v)$'s form a partition of $S(2^k)$.
Hereafter in this section we fix $v\in S(2^k)$ with $v=(v^1,v^2)$ such that
$0\leq v^2\leq v^1$.
The following argument can be easily modified to $v\in S(2^k)$ in other
cases.

Let $x_j(v)$ be the lower left corner of $Q_j(v)$, i.e.,
\begin{equation}\label{lower left corner of Q_j(v)}
x_j(v) = (x_j^1(v),x_j^2(v)) = (\ell_12^j, \ell_22^j ) .
\end{equation}
Then for $m\geq 0$, let
\begin{equation}\label{mth box }
S_j^m(v)= x_j(v) + S(2^{j+m}).
\end{equation}
If $v$ is near the boundary of $S(2^k)$, then 
$S_j^m(v)$ may not be inside of $S(2^k)$.
In this case, we  consider the following box $T_j^m(v)$ instead
of $S_j^m(v)$,
\begin{equation}\label{boundary mth box}
T_j^m(v) = [2^k-2^{j+m+1},2^k ]\times [x_{j}^2(v)-2^{j+m}, x_j^2(v)+2^{j+m}]
\end{equation}
if $x_j^2(v)+2^{j+m}\leq 2^k$, and 
\begin{equation}\label{corner mth box}
T_j^m(v)={[2^k-2^{j+m+1},2^k ]}^2,
\end{equation}
if $x_j^2(v)+2^{j+m}>2^k$.
\begin{Lemma}\label{S_j and T_j}
\begin{enumerate}
\item $T_j^{m+1}(v) \supset T_j^m(v)$.
\item If $S_j^m(v) \subset S(2^k)$ and $S_j^{m+1}(v)\not\subset S(2^k)$, then
$S_j^m(v) \subset T_j^{m+1}(v)$.
\end{enumerate}
\end{Lemma}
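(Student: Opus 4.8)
The plan is to verify both inclusions directly from the definitions of $S_j^m(v)$, $T_j^m(v)$, and $x_j(v)$, treating the two boundary regimes (\ref{boundary mth box}) and (\ref{corner mth box}) of $T_j^m$ separately. Throughout I keep the standing assumption $0\le v^2\le v^1$, so the relevant corner of $S(2^k)$ is the right side (and possibly the top-right corner).

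For part (1), I would argue by cases on whether $x_j^2(v)+2^{j+m+1}\le 2^k$. If it is, then $T_j^{m+1}(v)=[2^k-2^{j+m+2},2^k]\times[x_j^2(v)-2^{j+m+1},x_j^2(v)+2^{j+m+1}]$, which visibly contains $T_j^m(v)=[2^k-2^{j+m+1},2^k]\times[x_j^2(v)-2^{j+m},x_j^2(v)+2^{j+m}]$ since $2^k-2^{j+m+2}\le 2^k-2^{j+m+1}$ and the vertical interval only grows. If instead $x_j^2(v)+2^{j+m+1}>2^k$, then $T_j^{m+1}(v)=[2^k-2^{j+m+2},2^k]^2$; I then need $T_j^m(v)\subset[2^k-2^{j+m+2},2^k]^2$. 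The horizontal inclusion is clear as before; for the vertical inclusion, whichever form $T_j^m(v)$ takes, its second coordinate ranges over an interval contained in $[x_j^2(v)-2^{j+m},x_j^2(v)+2^{j+m}]$ (or in $[2^k-2^{j+m+1},2^k]$ in the corner case), and in either case this sits inside $[2^k-2^{j+m+2},2^k]$: the upper end is at most $2^k$ since $Q_j(v)\subset S(2^k)$ forces $x_j^2(v)\le 2^k-2^j\le 2^k-2^{j+m}$ is false in general, so here I should instead use that in this regime $x_j^2(v)+2^{j+m}\le x_j^2(v)+2^{j+m+1}$ combined with $x_j^2(v)\ge -2^k$ wait — more carefully, the point is $x_j^2(v)+2^{j+m} \le 2^k$ need not hold, but if $x_j^2(v)+2^{j+m}>2^k$ then already $T_j^m(v)=[2^k-2^{j+m+1},2^k]^2$ and the inclusion into $[2^k-2^{j+m+2},2^k]^2$ is immediate; and if $x_j^2(v)+2^{j+m}\le 2^k$ then the vertical range of $T_j^m(v)$ is $[x_j^2(v)-2^{j+m},x_j^2(v)+2^{j+m}]$ with upper end $\le 2^k$ and lower end $\ge x_j^2(v)-2^{j+m}\ge (2^k-2^{j+m+1})-2^{j+m}\ge 2^k-2^{j+m+2}$ using $x_j^2(v)+2^{j+m+1}>2^k$. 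So all subcases close.

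For part (2), I assume $S_j^m(v)\subset S(2^k)$ but $S_j^{m+1}(v)\not\subset S(2^k)$. Recall $S_j^m(v)=x_j(v)+S(2^{j+m})=[x_j^1(v)-2^{j+m},x_j^1(v)+2^{j+m}]\times[x_j^2(v)-2^{j+m},x_j^2(v)+2^{j+m}]$. From $S_j^{m+1}(v)\not\subset S(2^k)$ and the assumption $v^2\le v^1$ (so the square hits the right or top-right part of the boundary first), the violated constraint is on the first coordinate, i.e. $x_j^1(v)+2^{j+m+1}>2^k$, hence $x_j^1(v)+2^{j+m}>2^k-2^{j+m+1}$ — wait, I want a lower bound on $x_j^1(v)-2^{j+m}$: from $x_j^1(v)+2^{j+m+1}>2^k$ I get $x_j^1(v)>2^k-2^{j+m+1}$, so $x_j^1(v)-2^{j+m}>2^k-2^{j+m+1}-2^{j+m}=2^k-3\cdot 2^{j+m}\ge 2^k-2^{j+m+2}$. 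Also the horizontal upper end $x_j^1(v)+2^{j+m}\le 2^k$ since $S_j^m(v)\subset S(2^k)$. Thus the first-coordinate range of $S_j^m(v)$ lies in $[2^k-2^{j+m+2},2^k]$, matching the first factor of $T_j^{m+1}(v)$ in both its forms. For the second coordinate: if $x_j^2(v)+2^{j+m+1}\le 2^k$, then $T_j^{m+1}(v)$ has vertical factor $[x_j^2(v)-2^{j+m+1},x_j^2(v)+2^{j+m+1}]\supset[x_j^2(v)-2^{j+m},x_j^2(v)+2^{j+m}]$, done; if $x_j^2(v)+2^{j+m+1}>2^k$, then $T_j^{m+1}(v)=[2^k-2^{j+m+2},2^k]^2$ and I must check the vertical range $[x_j^2(v)-2^{j+m},x_j^2(v)+2^{j+m}]$ of $S_j^m(v)$ lies in $[2^k-2^{j+m+2},2^k]$: the upper end is $\le 2^k$ by $S_j^m(v)\subset S(2^k)$, and the lower end satisfies $x_j^2(v)-2^{j+m}>(2^k-2^{j+m+1})-2^{j+m}=2^k-3\cdot 2^{j+m}\ge 2^k-2^{j+m+2}$ using $x_j^2(v)+2^{j+m+1}>2^k$. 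This completes the proof.

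The only genuinely delicate point — and the part I would write out most carefully — is bookkeeping the boundary cases in the definition of $x_j(v)$: when $\ell_1 2^j=-2^k$ or $\ell_2 2^j=-2^k$ the square $Q_j(v)$ is reflected to stay inside $S(2^k)$, so $x_j(v)$ as defined in (\ref{lower left corner of Q_j(v)}) is $(\ell_1 2^j,\ell_2 2^j)$ but one should double-check that the above coordinate estimates, which only ever used $x_j^1(v)+2^{j+m}\le 2^k$ (from $S_j^m(v)\subset S(2^k)$) and the triggering inequality $x_j^1(v)+2^{j+m+1}>2^k$, remain valid regardless of which boundary case we are in. Since those two facts are exactly the hypotheses of part (2), no case analysis on $x_j(v)$ itself is actually needed; similarly part (1) never touches the first coordinate of $x_j(v)$ beyond knowing $Q_j(v)\subset S(2^k)$. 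I expect this to be a short, entirely elementary argument once the inequalities $2^{j+m}+2^{j+m+1}\le 2^{j+m+2}$ are used in the right places.
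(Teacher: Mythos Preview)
The paper omits the proof entirely (``The proof is straightforward, so we omit it''), so there is nothing to compare against; your case-by-case verification is the natural approach, and Part~(1) is handled correctly despite the stream-of-consciousness writeup. There is, however, a small but real gap in Part~(2): the assertion that the failure of $S_j^{m+1}(v)\subset S(2^k)$ must be witnessed by $x_j^1(v)+2^{j+m+1}>2^k$ is not always true under the standing hypothesis $0\le v^2\le v^1$. Take for instance $v=(1,0)$ with any $1<j<k-1$ and set $m=k-j-1$; then $x_j(v)=(0,-2^j)$, so $x_j^1(v)+2^{j+m+1}=2^k$ (equality, not strict), while $S_j^{m+1}(v)=[-2^k,2^k]\times[-2^j-2^k,\,-2^j+2^k]$ exits $S(2^k)$ through the \emph{bottom} side. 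Your derivation of the horizontal inclusion, which rests entirely on $x_j^1(v)>2^k-2^{j+m+1}$, then does not apply.

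The fix is short and splits cleanly on the size of $m$. If $m\le k-j-2$ your claim does hold: then $2^{j+m+1}\le 2^{k-1}$, and since $v^i\ge 0$ forces $x_j^i(v)\ge -2^j$, we get $x_j^i(v)-2^{j+m+1}\ge -2^j-2^{k-1}\ge -2^k$ (using $j<k$), so the lower-bound constraints cannot fail; together with $x_j^2(v)\le x_j^1(v)$ this leaves the first-coordinate upper bound as the only possible violation, and your argument goes through. If instead $m\ge k-j-1$, then $2^{j+m+2}\ge 2^{k+1}$, so the horizontal factor $[2^k-2^{j+m+2},2^k]$ of $T_j^{m+1}(v)$ already contains $[-2^k,2^k]$, and the horizontal inclusion is immediate from $S_j^m(v)\subset S(2^k)$; your vertical argument (which only uses $S_j^m(v)\subset S(2^k)$ and the defining dichotomy for $T_j^{m+1}(v)$) then finishes the proof unchanged.
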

The proof is straightforward, so we omit it.
Let 
\begin{equation}\label{m_1^*}
m_1^*=m_1^*(v) = \max\{ m\geq 0 : S_{j_1}^m(v) \subset S(2^k) \} .
\end{equation}

%Ising version of Kesten-Lemma4
%\renewcommand{\labelenumi}{\theenumi)}

\subsection{Block events: inwards}
We put 
\begin{equation}\label{eq:the constant C1}
C_1:= {\left( \frac{\delta_{80}\delta_{16}}{16}
              \right) }^{-4}.
\end{equation}
Let $\delta =\delta (\varepsilon_0)>0$ be given by 
\begin{equation}\label{eq:delta-1}
\delta = \frac{1}{18}C_1^{-1}
\end{equation}
By Lemma \ref{lem:fence},
we can choose $\eta = \eta (\varepsilon_0, \delta )>0$ and
$n_4=n_4(\eta ,\varepsilon_0)$ such that
\[ \mu \left(  \begin{array}{@{\,}c@{\,}} 
\mbox{there exists a crossing $(+)$-cluster in $\mathcal{B}_1$} \\
\mbox{which does not have an $(\eta ,k)$-fence} 
\end{array} \right) \geq 1- \delta.  \]
for every $k$ with $n_4\leq 2^k< \min\{ L(h,\varepsilon_0), \frac{N}{2}\} $,
where $\mu = \mu_{T,h}$ or $\mu_t^N$ for $0\leq t\leq 1$.
The above inequality is valid for crossing $(-*)$-clusters, too.
Let $j_1$ be sufficiently large such that 
\begin{equation}\label{eq:bound for j_1}
\max \bigl\{ \eta^{-1} n_3 (\lceil 8\eta^{-1} \rceil, \varepsilon_0) ,\, n_4(\eta ,\varepsilon_0) \bigr\} \leq 2^{j_1},
\end{equation}
and we assume that $2^{j_1}<2^k< \min\{ L(h,\varepsilon_0), \frac{N}{2}\} $.
Here, for a real value $x$, $\lceil x\rceil $ denotes the smallest integer
not less than $x$.

Let $v=(v^1,v^2)\in S(2^k)$. 
As before we assume that $0\leq v^2\leq v^1$ for the sake of argument.  
We will first define three events on $S_{j_1}^m(v)$, namely
$\Gamma (v, S_{j_1}^m(v)), \Lambda (v, S_{j_1}^m(v))$ and 
$\Delta (v, S_{j_1}^m(v))$  for $1\leq m$, 
in the same way as in \cite{K87scaling}.
These events are prototypes of events we introduce later.
After that we have to modify them as
$\Gamma (v, T_{j_1}^m(v)), \Lambda (v, T_{j_1}^m(v))$ 
and ${\Delta }(v, T_{j_1}^m(v))$ for $m>m_1^*$.

Let $\Gamma (v, S_{j_1}^m(v))$ be the event such that all the following occur.
\begin{enumerate}
\item There exist two $(+)$-paths $r_1,r_3$ in $S_{j_1}^m(v)$, connecting $v$
with the inner boundary $\partial_{in}S_{j_1}^m(v) $ of $S_{j_1}^m(v)$
such that $r_1\setminus \{ v\} $ and $r_3\setminus \{ v\} $ are disjoint. 
\item There exist two disjoint $(-*)$-paths $r_2^*, r_4^*$ in $S_{j_1}^m(v)$ 
connecting 
$(*)$-neighbor of $v$ with $\partial_{in}S_{j_1}^m(v)$.
\item $r_1\cup r_3 $ separates $r_2^*$ and $r_4^*$ in $S_{j_1}^m(v)$.
\end{enumerate}
As in the previous section, for each $S(2^j)$ let ${\mathcal B}_i$, 
for $i=1,2,3,4$ be given by
\begin{align*}
{\mathcal B}_1&= [-2^j, -2^j +2^{j-2}]\times [-2^j, 2^j], \\
{\mathcal B}_2&= [-2^j,2^j]\times [2^j-2^{j-2}, 2^j], \\
{\mathcal B}_3&= [2^j-2^{j-2}, 2^j]\times [-2^j,2^j], \\
{\mathcal B}_4&= [-2^j,2^j]\times [-2^j, -2^j+2^{j-2}].
\end{align*}
Note that 
$$
\bigcup_{i=1}^4 {\mathcal B}_i = S(2^j)\setminus S(2^{j-1}+2^{j-2}-1).
$$
${\mathcal B}_i$'s are defined for each $S(2^j)$ by their relative location
to the square in consideration. 
So, we will define ${\mathcal B}_i$, $i=1,2,3,4$ for $S_{j_1}^m(v)$ as shifts
of ${\mathcal B}_i$'s which are originally defined for $S(2^{j_1+m})$, by
$x_{j_1}(v)$.

On the event $\Gamma (v, S_{j_1}^m(v))$, $r_1$ crosses one of ${\mathcal B}_i$'s of $S_{j_1}^m(v)$.
Assume that the endpoint of $r_1$ is in ${\mathcal B}_1$. 
Then $r_1$ surely crosses ${\mathcal B}_1$.
Let ${\mathcal C}_1$ be the $(+)$-connected component in ${\mathcal B}_1$ containing 
the endpoint of $r_1$.
Similarly, we define $(+)$-connected component ${\mathcal C}_3$ of some 
${\mathcal B}_i$ which contains the endpoint of $r_3$,
and $(-*)$-components ${\mathcal C}_2^*, {\mathcal C}_4^*$ of some
of ${\mathcal B}_i$'s containing endpoints of $r_2^*$, and $r_4^*$,
respectively.
Then we define 
\begin{equation}\label{lambdaevent}
\Lambda (v, S_{j_1}^m(v), \eta ) =  \left\{
\omega \in \Gamma (v, S_{j_1}^m(v))  :
 \begin{array}{@{\,}l@{\,}}
 \mbox{any of ${\mathcal C}_1,{\mathcal C}_2^*,{\mathcal C}_3, {\mathcal C}_4^*$}\\
 \mbox{has an $(\eta , j_1+m)$-fence} \\
 \end{array} \right\} . 
\end{equation}

To define $\Delta (v,S_{j_1}^m(v))$, we introduce other rectangles in $S(2^j)$.  
Let ${\mathcal A}_i, i=1,2,3,4$ be given by
\begin{equation}\label{eq:Ai-1}
\begin{cases}
{\mathcal A}_1 =  [-2^j, -2^j+2^{j-2}]\times [-2^{j-2},2^{j-2}], \\
{\mathcal A}_2 =  [-2^{j-2},2^{j-2}]\times [2^j-2^{j-2}, 2^j],  \\
{\mathcal A}_3 =  [2^j-2^{j-2},2^j]\times [-2^{j-2},2^{j-2}], \\
{\mathcal A}_4 = [-2^{j-2},2^{j-2}]\times [-2^j, -2^j+2^{j-2}].
\end{cases}
\end{equation}
We also define ${\mathcal A}_i$ by their relative locations to $S(2^j)$,
and we can define them for $S_{j_1}^m(v)$ as shifts of ${\mathcal A}_i$'s, 
which are 
originally defined for $S(2^{j_1+m})$, by $x_{j_1}(v)$.

Now, define the event $\Delta (v, S_{j_1}^m(v))$ as the subset of 
$\Gamma (v, S_{j_1}^m(v))$ such that all the following occur.
\begin{enumerate}
\item $r_1$ and $r_3$ connect $v$ with the left and the right
     sides of $S_{j_1}^m(v)$, respectively; $r_2^*$ and $r_4^*$
     connect $(*)$-neighbors of $v$ with the top and the bottom
     sides of $S_{j_1}^m(v)$, respectively.
\item $r_i \cap (\cup_{1\leq i\leq 4}\mathcal{B}_i) \subset {\mathcal A}_i$ 
and $r_{i+1}^* \cap (\cup_{1\leq i\leq 4}\mathcal{B}_i) \subset {\mathcal A}_{i+1}$,
     for $i=1,3$.
\item There are vertical $(+)$-crossings in ${\mathcal A}_1$ and ${\mathcal A}_3$,
     and horizontal $(-*)$-crossings in ${\mathcal A}_2$ and ${\mathcal A}_4$.
\end{enumerate}
These events are not occurring inside $S(2^k)$ if $m_1^*<m$, and in this case, we
have to modify the definition of them. 

Let 
$\Gamma (v,T_{j_1}^m(v))$ be the event such that all of the following occur.
\begin{enumerate}
\item There exists a $(+)$-path $r_1$ in $T_{j_1}^m(v)$, connecting $v$ with
     $\partial_{in} T_{j_1}^m(v)\setminus \partial_{in}S(2^k)$,
     and a $(+)$-path $r_3$ connecting $v$ with
     $\partial_{in} T_{j_1}^m(v)$, such that $r_1\setminus \{ v\} $ and 
     $r_3\setminus \{ v\} $ are disjoint. 
\item There exists a $(-*)$-path $r_2^*$ in $T_{j_1}^m(v)$,
     connecting $(*)$-neighbor of $v$ with 
     $\partial_{in} T_{j_1}^m(v)\setminus \{ x^1=2^k\} $,
     and  a $(-*)$-path $r_4^*$ connecting $(*)$-neighbor of
     $v$ with $\partial_{in}T_{j_1}^m(v)\setminus \partial_{in}S(2^k)$ 
     in $T_{j_1}^m(v)$.
\item $r_1\cup r_3$ separates $r_2^*$ and $r_4^*$ in $T_{j_1}^m(v)$.
\end{enumerate}
Then we have to define ${\mathcal B}_i$'s for $T_{j_1}^m(v)$.
We take the same relative location to $T_{j_1}^m(v)$ as before.
For example,
$$ 
{\mathcal B}_1= [2^k-2^{j_1+m+1}, 2^k-2^{j_1+m+1}+2^{j_1+m-2}]\times 
 [t^2-2^{j_1+m}, t^2+2^{j_1+m}],
$$
where $t=(t^1,t^2)$ denotes the center of $T_{j_1}^m(v)$.
Note that $t^1= 2^k-2^{j_1+m}$, and $t^2=x_{j_1}^2(v)$ if 
$x_{j_1}^2(v)+2^{j_1+m}\leq 2^k$, $t^2=2^k-2^{j_1+m}$ if 
$x_{j_1}^2(v)+2^{j_1+m}> 2^k$.
The left side of ${\mathcal B}_1$ is the same as the left side of
$T_{j_1}^m(v)$.
In the same way, we can define ${\mathcal B}_2, {\mathcal B}_3$ and ${\mathcal B}_4$.
Then we also define connected components ${\mathcal C_i}, {\mathcal C}_{i+1}^* $ 
corresponding to $r_i$ and $r_{i+1}^*$ for $i=1,3$ as before.
Let
\begin{equation}\label{lambda*event}
\Lambda (v, T_{j_1}^m(v)) =\left\{
\begin{array}{@{\,}c@{\,}} 
\mbox{each of ${\mathcal C}_1, {\mathcal C}_{2}^*,{\mathcal C}_3,
{\mathcal C}_{4}^* $ has an $(\eta ,j_1+m)$-fence}\\
\mbox{if its endpoint is  not in $\partial_{in}S(2^k)$}\\
\end{array}
\right\}
\end{equation} 
Let ${\mathcal A}_i$ be defined for $T_{j_1}^m(v)$ so that
their relative locations for $T_{j_1}^m(v)$ are the same as those for $S(2^{j_1+m})$.

Now we define $\Delta (v,T_{j_1}^m(v))$.
First, in the case where $T_{j_1}^m(v)\not\ni (2^k,2^k)$,we define
$\Delta (v,T_{j_1}^m(v))$ as a subset of  
$\Gamma (v,T_{j_1}^m(v))$ such that all the following occur.
\begin{enumerate}
\item $r_1$ and $r_3$ connect $v$ with the left side and the right side of
    $T_{j_1}^m(v)$, respectively.
\item $r_2^*$ and $r_4^*$ connect $(*)$-neighbor of $v$ with the top side 
    and the bottom side of $T_{j_1}^m(v)$, respectively.  
\item $r_1\cap (\cup_{i=1,2,4}\mathcal{B}_i) \subset {\mathcal A}_1$, and
    $r_{i+1}^*\cap (\cup_{i=1,2,4}\mathcal{B}_i) \subset {\mathcal A}_{i+1}$,
    for $i=1,3$.
\item There exist a vertical $(+)$-crossing in ${\mathcal A}_1$, and
    horizontal $(-*)$-crossings in both ${\mathcal A}_2$ and ${\mathcal A}_4$.
\item $r_3 \cap (\cup_{i=1,2,4}\mathcal{B}_i)=\emptyset$.
\end{enumerate}

Next, when $T_{j_1}^m(v)\ni (2^k,2^k)$, we define
$\Delta (v,T_{j_1}^m(v))$ as the subset of 
$\Gamma (v,T_{j_1}^m(v))$
such that all the following occur.
\begin{enumerate}
\item $r_1$ and $r_3$ connect $v$ with the left side and the right side of
    $T_{j_1}^m(v)$, respectively.
\item $r_2^*$ and $r_4^*$ connect $(*)$-neighbor of $v$ with the top side 
     and the bottom side of $T_{j_1}^m(v)$, respectively.  
\item $r_1\cap (\mathcal{B}_1\cup\mathcal{B}_4) \subset {\mathcal A}_1$, and
    $r_4^* \cap (\mathcal{B}_1\cup\mathcal{B}_4)\subset {\mathcal A}_4$.
\item There exist a vertical $(+)$-crossing in ${\mathcal A}_1$, and 
    a horizontal $(-*)$-crossing in ${\mathcal A}_4$.
\item $(r_2^*\cup r_3) \cap (\mathcal{B}_1\cup\mathcal{B}_4)=\emptyset $.
\end{enumerate}
For later use let us introduce the notation $R_{j_1}^m(v)$ to denote
$S_{j_1}^m(v)$ if $m\leq m_1^*$, and $T_{j_1}^m(v)$ if $m>m_1^*$.
\begin{Lemma}[cf. (2.43) in \cite{K87scaling}] \label{relation of deltas}
Let $2^{j_1}<2^k<\min\{ L(h,\varepsilon_0), \frac{N}{2}\} $.
Then for $t\in [0,1]$ and every $m\geq 1$, we have
\begin{equation}\label{eq:relation of deltas}
\mu_t^N\bigl( \Delta (v,R_{j_1}^m(v))\bigr) 
\leq C_1 \mu_t^N\bigl( \Delta (v,R_{j_1}^{m+1}(v))\bigr) ,
\end{equation}
where $C_1$ is the constant given by (\ref{eq:the constant C1}).

As a result we can find some constant $C_2>0$ which depends only on
$\varepsilon_0$ and $j_1$, such that
$$
\mu_t^N\bigl( \Delta (v,R_{j_1}^m(v))\bigr) \geq C_2C_1^{-m} 
$$
for every $ m\geq 1$.
\end{Lemma}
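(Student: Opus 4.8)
The plan is to establish the one-step inequality (\ref{eq:relation of deltas}) by an extension argument, and then to get $\mu_t^N\bigl(\Delta(v,R_{j_1}^m(v))\bigr)\geq C_2C_1^{-m}$ by iterating it down to $m=1$. For the iteration I would check that $\mu_t^N\bigl(\Delta(v,R_{j_1}^1(v))\bigr)$ is bounded below by a strictly positive constant depending only on $\varepsilon_0$ and $j_1$ (and not on $k,h,t,N$): the box $R_{j_1}^1(v)$ has the fixed side length $2^{j_1+1}$, and $\Delta(v,R_{j_1}^1(v))$ is built from a bounded number of crossings, four short arms issuing from $v$, and one round of fences, each of which has probability bounded below by Lemma~\ref{RSWbound} (whose estimates are uniform in $t\in[0,1]$ and in $N$ for boxes of side $<L(h,\varepsilon_0)$), by the finite-energy property, and by Lemma~\ref{lem:fence}; one then takes $C_2$ to be $C_1$ times this lower bound, so that $C_2C_1^{-m}\leq C_1^{-(m-1)}\mu_t^N\bigl(\Delta(v,R_{j_1}^1(v))\bigr)$ and (\ref{eq:relation of deltas}) closes the induction. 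Thus the substance is (\ref{eq:relation of deltas}), which is the Ising analogue of Kesten's (2.43) in \cite{K87scaling} (the same argument applies to $\mu_{T,h}$).

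For (\ref{eq:relation of deltas}) I would first record that on $\Delta(v,R_{j_1}^m(v))$ the four arms $r_1,r_2^*,r_3,r_4^*$ reach the four sides of $R_{j_1}^m(v)$, landing inside $\mathcal A_1,\dots,\mathcal A_4$, with the prescribed vertical $(+)$-crossings of $\mathcal A_1,\mathcal A_3$, horizontal $(-*)$-crossings of $\mathcal A_2,\mathcal A_4$, and $(\eta,j_1+m)$-fences on the clusters $\mathcal C_1,\mathcal C_2^*,\mathcal C_3,\mathcal C_4^*$. Then, for each of the (at most four) sides of $R_{j_1}^m(v)$ facing the interior of $S(2^k)$, I would introduce an extension event supported in a rectangular sub-region of the annulus $R_{j_1}^{m+1}(v)\setminus R_{j_1}^m(v)$ adjacent to that side: for a $(+)$-side it requires a $(+)$-path in that sub-region joining the landing rectangle $\mathcal A_i$ of $R_{j_1}^m(v)$ to the rectangle $\mathcal A_i$ of $R_{j_1}^{m+1}(v)$ (entered correctly), the prescribed vertical $(+)$-crossing of that $\mathcal A_i$ of $R_{j_1}^{m+1}(v)$, and the $(\eta,j_1+m+1)$-fence of the resulting crossing $(+)$-cluster of the larger box; for a $(-*)$-side the same with $(+)$ and $(-*)$ interchanged. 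Since the annulus has width of order $2^{j_1+m}$, the $(+)$-extensions can be taken near the left and right sides and the $(-*)$-extensions near the top and bottom, so on the intersection with $\Delta(v,R_{j_1}^m(v))$ the extended $(+)$-arms still separate the extended $(-*)$-arms inside $R_{j_1}^{m+1}(v)$, and that intersection is contained in $\Delta(v,R_{j_1}^{m+1}(v))$. When $m\geq m_1^*$ the box is $T_{j_1}^m(v)$, abutting $\partial S(2^k)$, and only the interior-facing sides are extended; Lemma~\ref{S_j and T_j} ($T_{j_1}^{m+1}(v)\supset T_{j_1}^m(v)$, and $S_{j_1}^{m_1^*}(v)\subset T_{j_1}^{m_1^*+1}(v)$) guarantees that the transition $m=m_1^*$ and all later steps fit the same scheme.

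The main step is to show that, on $\Delta(v,R_{j_1}^m(v))$, the conditional $\mu_t^N$-probability of the intersection of these extension events is at least $C_1^{-1}$. Because the Ising measure does not satisfy the BK inequality, I would reveal the configuration step by step, as in the proofs of Lemmas~\ref{connection lemma} and \ref{lem:fence}: first expose the lowest/leftmost crossings and arms realizing $\Delta(v,R_{j_1}^m(v))$, then treat the four sides one after another, for each side revealing in turn the extension path, the crossing of the relevant $\mathcal A_i$ of $R_{j_1}^{m+1}(v)$, and the fence. At each stage the sites already exposed lie in a region $W$ whose distance to the support of the next piece is at least of order $(j_1+m)^2$, so after using the FKG inequality to replace the conditioning by the extreme monochromatic boundary condition ($-1$ for $(+)$-pieces, $+1$ for $(-*)$-pieces) the mixing property (\ref{HZ(1.1)})--(\ref{HZ(1.3)}) decouples at a cost that is absorbed because the scale $2^{j_1+m}\geq 2^{j_1}$ is large --- this is precisely the role of (\ref{eq:bound for j_1}), which puts $2^{j_1}$ above the thresholds of Lemmas~\ref{RSWbound}, \ref{connection lemma} and \ref{lem:fence}. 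For one side, Lemma~\ref{RSWbound}, the Connection lemma, and the FKG inequality give a conditional lower bound $\delta_{80}\delta_{16}/4$ for the extension path together with the prescribed $\mathcal A_i$-crossing, and deleting from it the configurations on which that side's fence fails costs at most $\delta=\tfrac1{18}C_1^{-1}$ by Lemma~\ref{lem:fence} and (\ref{eq:delta-1}); with the negligible mixing error this leaves at least $\delta_{80}\delta_{16}/16$ per side, and multiplying the four conditional bounds gives $C_1^{-1}=(\delta_{80}\delta_{16}/16)^4$. Multiplying by $\mu_t^N\bigl(\Delta(v,R_{j_1}^m(v))\bigr)$ and using the event inclusion then proves (\ref{eq:relation of deltas}); the numerical choices (\ref{eq:the constant C1}) and (\ref{eq:delta-1}) are made so that this bookkeeping closes.

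The step I expect to be most delicate is exactly this step-by-step conditioning that substitutes for independence and the BK inequality: one must order the revealing of the lowest crossings, leftmost arms, extension paths and fences so that at every stage the next piece sits at distance $\gtrsim(j_1+m)^2$ from all previously exposed sites, so that FKG applies each time by passing to the extreme monochromatic boundary condition, and so that the four arms stay correctly separated --- this mirrors the proofs of Lemmas~\ref{connection lemma} and \ref{lem:fence} but must be arranged with care, most of all in the boundary cases $m\geq m_1^*$ where $T_{j_1}^m(v)$ touches $\partial S(2^k)$ and the four-arm picture degenerates on the two boundary-facing sides. A secondary point is to confirm that every box used in the extension has side $<L(h,\varepsilon_0)$ up to the slack of Remarks~\ref{box-size-bound} and \ref{box-size-bound2}; this holds because the largest box involved, $R_{j_1}^{m+1}(v)$, has side $2^{j_1+m+1}\leq 2^{k+1}<2L(h,\varepsilon_0)$.
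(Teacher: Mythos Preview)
Your overall strategy---extend each arm across the annulus $R_{j_1}^{m+1}(v)\setminus R_{j_1}^m(v)$ via the Connection lemma and multiply four conditional lower bounds---is the paper's strategy, and your iteration down to $m=1$ with a finite-energy bound on $\mu_t^N\bigl(\Delta(v,R_{j_1}^1(v))\bigr)$ matches the paper's derivation of $C_2$. However, two points diverge from the paper and the first is a genuine misreading.

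First, the event $\Delta(v,R_{j_1}^m(v))$ does \emph{not} require fences. Reread its definition: it is the subset of $\Gamma(v,R_{j_1}^m(v))$ on which the arms land in the $\mathcal A_i$, together with the prescribed crossings of the $\mathcal A_i$; fences appear only in $\Lambda(v,R_{j_1}^m(v))$. Your extension events therefore need only produce a connection to $\partial_{in}R_{j_1}^{m+1}(v)$ through the corridor $\mathcal D_i$ and a crossing of the new $\mathcal A'_i$---no fence at scale $j_1+m+1$, no subtraction of $\delta$, and hence no appeal to Lemma~\ref{lem:fence}. This simplifies your bookkeeping and explains why the paper's per-side constant is simply a product of two Connection-lemma/RSW factors such as $\delta_{16}\delta_{20}/16$ (with $\delta_{80}$ appearing only in the worst boundary case, which fixes $C_1$).

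Second, and more substantively, your ``expose the lowest/leftmost crossings and arms'' is too vague to carry the conditional bound, because $\Delta$ mixes increasing and decreasing information and there is no canonical lowest arm once separation constraints are imposed. The paper's device is different and cleaner: it partitions $\Delta(v,S_{j_1}^m(v))$ according to the \emph{innermost} crossing in each $\mathcal A_i$---the rightmost vertical $(+)$-crossing $\tau_1$ of $\mathcal A_1$, the lowest horizontal $(-*)$-crossing $\tau_2^*$ of $\mathcal A_2$, etc. Each $\tau_i$ is determined by the configuration in $\mathcal A_i$ alone, so fixing $(\tau_1,\tau_2^*,\tau_3,\tau_4^*)$ gives a genuine partition; the residual event $\tilde\Delta(v,S_{j_1}^m(v);\tau_1,\tau_2^*,\tau_3,\tau_4^*)$ lives in the region enclosed by the $\tau$'s, while each extension event $\tilde D_i(\tau_i)$ lives in the complementary corridor $\mathcal D_i\setminus\Theta(\tau_i)$. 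The Connection lemma (Lemma~\ref{connection lemma}) then applies directly, uniformly over the conditioning, to give $\mu_t^N\bigl(\tilde D_i(\tau_i)\mid\mathcal F_{[\mathcal D_i\setminus\Theta(\tau_i)]^c}\bigr)\geq \delta_{16}\delta_{20}/16$. No separate mixing step at distance ``$\gtrsim(j_1+m)^2$'' is needed here---the mixing is already absorbed inside the Connection lemma. The boundary cases $m=m_1^*$ and $m>m_1^*$ are handled by replacing some $\mathcal D_i$ by L-shaped corridors $U_i$ of bounded aspect ratio and dropping $\mathcal A_3$ (and $\mathcal A_2$ when the corner is hit), exactly as you anticipated, but again via the same $\tau$-partition rather than an arm-by-arm exploration.
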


\begin{proof}
This lemma can be proved essentially in the same way as in \cite{K87scaling}
by using the connection lemma in place of independence.

%------------------------------------------------------------------------------
%%%%%%%%%%%%%%%%%%%%%% The following part can be erased after we have checked
%%%%%%%%%%%%%%%%%%%%%% all the details
\noindent
First, we consider the case where $R_{j_1}^m(v)=S_{j_1}^m(v)$ and
$R_{j_1}^{m+1}(v)=S_{j_1}^{m+1}(v)$.
Let ${\mathcal A}_i$ and ${\mathcal A}'_i$ denote ${\mathcal A}_i$'s
corresponding to $S_{j_1}^m(v)$ and $S_{j_1}^{m+1}(v)$, respectively.
Then take a new rectangle ${\mathcal D}_1$ which intersects both 
${\mathcal A}_1$
and ${\mathcal A}'_1$. To be precise, we define ${\mathcal D}_1$ as
the rectangle
$$
x_{j_1}(v)+ [ -2^{j_1+m+1}, -2^{j_1+m}+2^{j_1+m-2}]
               \times [-2^{j_1+m-2}, 2^{j_1+m-2}].
$$
Also we write ${\mathcal D}_2, {\mathcal D}_3, {\mathcal D}_4$ for rectangles 
obtained by rotating ${\mathcal D}_1$ successively by right angles in
the clockwise direction around $x_{j_1}(v)$,
so that ${\mathcal D}_i$ intersects ${\mathcal A}_i$ and ${\mathcal A}'_i$
for $i=1,2,3,4$.
Let $D_i$ be the event given by
$$
D_i=\left\{
\begin{array}{@{\,}c@{\,}} 
\mbox{there are a vertical $(+)$-crossing in ${\mathcal A}'_i$ and}\\
\mbox{a horizontal $(+)$-crossing in ${\mathcal D}_i$}
\end{array}
  \right\}
$$
for $i=1,3$ and
$$
D_i=\left\{
\begin{array}{@{\,}c@{\,}} 
  \mbox{there are a horizontal $(-*)$-crossing in ${\mathcal A}'_i$ and} \\
  \mbox{a vertical $(-*)$-crossing in ${\mathcal D}_i$}
\end{array}
 \right\} 
$$
for $i=2,4$.
Then we have
$$
\Delta (v, S_{j_1}^m(v)) \cap \bigcap_{i=1}^4 D_i  \subset \Delta (v,S_{j_1}^{m+1}(v)).
$$
Therefore we have to estimate $\mu_t^N$-probability of  
$\Delta (v, S_{j_1}^m(v)) \cap D_1 \cap \cdots \cap D_4$.
To do this, for $\omega \in \Delta (v,S_{j_1}^m(v))$ let 
$\tau_1(\omega ), \tau_2^*(\omega ), \tau_3(\omega ), \tau_4^*(\omega )$
be given as follows.
\begin{itemize}
\item $\tau_1(\omega ) $ is the rightmost vertical $(+)$-crossing in ${\mathcal A}_1$,
\item $\tau_2^* (\omega )$ is the lowest horizontal $(-*)$-crossing in ${\mathcal A}_2$,
\item $\tau_3 (\omega )$ is the leftmost vertical $(+)$-crossing in ${\mathcal A}_3$, and
\item $\tau_4^*(\omega ) $ is the highest horizontal $(-*)$-crossing in ${\mathcal A}_4$.
\end{itemize}
So we divide $\Delta (v,S_{j_1}^m(v))$ into disjoint subsets according to the
shape of $\tau_i, \tau_{i+1}^*$'s. 
We write it simply by
$$
\Delta (v,S_{j_1}^m(v) )= \bigcup_{\tau_1,\tau_2^*,\tau_3,\tau_4^*}
          \Delta (v,S_{j_1}^m(v); \tau_1,\tau_2^*,\tau_3,\tau_4^*).
$$
$\tau_i$ ($\tau_{i+1}^*$) divides ${\mathcal A}_i$ (${\mathcal A}_{i+1}$) into
two parts. 
We denote by $\Theta (\tau_i)$ ($\Theta (\tau_{i+1}^*)$) the 
part of ${\mathcal A}_i$ (${\mathcal A}_{i+1}$) adjacent
to $S(x_{j_1}(v), 2^{j_1+m}-2^{j_1+m-2}-1)$.
Then we define for each realization $\tau_1,\tau_2^*,\tau_3,\tau_4^*$,
${\tilde \Delta }(v,S_{j_1}^m(v); \tau_1,\tau_2^*,\tau_3,\tau_4^*)$
as the event occurring in the region 
$$
\cup_{i=1,3} \Theta(\tau_i)\cup 
\Theta (\tau_{i+1}^*) \cup S(x_{j_1}(v), 2^{j_1+m}-2^{j_1+m-2}-1)
$$
such that all the following occur.
\begin{enumerate}
\item $\tau_i(\omega ) =\tau_i, \tau^*_{i+1}(\omega )=\tau^*_{i+1}, i=1,3$.
\item There exists a $(+)$-path ${\tilde r}_i$ in 
$\Theta (\tau_i)\cup S(x_{j_1}(v), 2^{j_1+m}-2^{j_1+m-2}-1)$ connecting
$ \tau_i$ with $v$, for $i=1,3$. ${\tilde r}_1\setminus \{ v\} $ and 
${\tilde r}_3\setminus \{ v\} $ are disjoint.
\item There exists a $(-*)$-path ${\tilde r}_{i+1}^*$ in 
$\Theta (\tau_{i+1}^*)\cup S(x_{j_1}(v), 2^{j_1+m}-2^{j_1+m-2}-1)$ connecting
$\tau_{i+1}^*$ with a $(*)$-neighbor point of $v$. ${\tilde r}_2^*$ and
${\tilde r}_4^*$ are disjoint.
\item ${\tilde r}_1\cup {\tilde r}_3$ separates ${\tilde r}_2^*$ and ${\tilde r}_4^*$
in the region 
$$
\bigcup_{i=1,3}\left[ \Theta(\tau_i)\cup 
\Theta (\tau_{i+1}^*)\right] \cup S(x_{j_1}(v), 2^{j_1+m}-2^{j_1+m-2}-1).
$$
\end{enumerate}
Further, let
$$
{\tilde D}_i(\tau_i) =\left\{
\begin{array}{@{\,}c@{\,}} 
\mbox{there exists a vertical $(+)$-crossing in ${\mathcal A}'_i$, and} \\
\mbox{$\tau_i$ is connected by a $(+)$-path with }\partial_{in}S_{j_1}^{m+1}(v) 
 \mbox{ in } {\mathcal D}_i
\end{array}
\right\}
$$
and 
$$
{\tilde D}_{i+1}(\tau_{i+1}^*) =\left\{
\begin{array}{@{\,}c@{\,}}
\mbox{there exists a horizontal $(-*)$-crossing in } {\mathcal A}_{i+1}',\\ 
\mbox{and  $\tau_{i+1}^*$ is connected by a $(-*)$-path with}\\
\mbox{$ \partial_{in}S_{j_1}^{m+1}(v)$ in ${\mathcal D_{i+1}}$}
\end{array}
\right\}
$$
for $i=1,3$.
${\tilde D}_i(\tau_i)$ is an event occurring in ${\mathcal D}_i\setminus \Theta (\tau_i )$,
and ${\tilde D}_{i+1}(\tau_{i+1}^*)$ is an event occurring in 
${\mathcal D}_{i+1} \setminus \Theta (\tau_{i+1}^*)$.
By this  notation, we have
\begin{align*}
&\Delta (v, S_{j_1}^m(v) ) \cap \bigcap_{i=1}^4 D_i \\
&= \bigcup_{\tau_1,\tau_2^*,\tau_3,\tau_4^*}
   {\tilde \Delta }(v, S_{j_1}^m(v); \tau_1,\tau_2^*,\tau_3,\tau_4^*)
    \cap \bigcap_{i=1,3}{\tilde D}_i(\tau_i)\cap {\tilde D}_{i+1}(\tau_{i+1}^*)
\end{align*}
Note that the union in the right hand side is disjoint.
By the Connection lemma and the FKG inequality, we have for $i=1,3$,
$$
\mu_t^N\bigl( {\tilde D}_i(\tau_i) \mid 
   {\mathcal F}_{[ {\mathcal D}_i\setminus \Theta (\tau_i) ]^c} \bigr)
\geq \frac{ \delta_{16}\delta_{20}}{16}.
$$      
The same estimate is valid for ${\tilde D}_{i+1}(\tau_{i+1}^*)$ for $i=1,3$, too.
Therefore we have
\begin{align*}
&\mu_t^N\bigl( \Delta (v, S_{j_1}^m(v))\cap \bigcap_{i=1}^4 D_i \bigr) \\
&\geq { \biggl( \frac{\delta_{16}\delta_{20}}{16} \biggr) }^4
\sum_{\tau_1,\tau_2^*,\tau_3,\tau_4^*}
 \mu_t^N\bigl( 
     {\tilde \Delta }(v, S_{j_1}^m(v); \tau_1,\tau_2^*,\tau_3,\tau_4^*) 
        \bigr) \\
&\geq { \biggl( \frac{\delta_{16}\delta_{20}}{16} \biggr) }^4
   \mu_t^N\bigl( \Delta (v,S_{j_1}^m(v)) \bigr) .
\end{align*}

Thus, we obtain 
$$
\mu_t^N\bigl( \Delta (v,S_{j_1}^m(v)) \bigr)
  \leq {\biggl( \frac{\delta_{16}\delta_{20}}{16}\biggr) }^{-4}
\mu_t^N\bigl( \Delta (v, S_{j_1}^{m+1}(v)) \bigr) .
$$

Next, we consider the case where $m=m_1^*$. So, $R_{j_1}^m(v)=
S_{j_1}^{m_1^*}(v)$ and $R_{j_1}^{m+1}(v)=T_{j_1}^{m_1^*+1}(v)$.
In this case we do not use ${\mathcal A}'_3$.
There are two possible cases.

\noindent
\underline{Case 1} $T_{j_1}^{m_1^*+1}(v)$ does not contain the upper right 
corner  $(2^k,2^k)$ of $S(2^k)$.
In this case, we take  rectangles ${\mathcal D}_1, {\mathcal D}_3$ 
and corridors $U_2, U_4$ 
to connect ${\mathcal A}_i$ with ${\mathcal A}'_i$ for each $i$.
Namely,
$U_2$ is a corridor of side length $2^{j_1+m_1^*-1}$ and it connects
${\mathcal A}_2$ with ${\mathcal A}'_2$ in the following way:
\begin{align*}
U_2&={\mathcal D}_{2,1}\cup {\mathcal D}_{2,2}\cup {\mathcal D}_{2,3}\\
{\mathcal D}_{2,1} &=
 [x^1_{j_1}(v)-2^{j_1+m_1^*-2}, x^1_{j_1}(v) +2^{j_1+m_1^*-2}]\\
&\quad \times [x^2_{j_1}(v)+2^{j_1+m_1^*}-2^{j_1+m_1^*-2},
           x^2_{j_1}(v)+2^{j_1+m_1^*} +2^{j_1+m_1^*-1}], \\
{\mathcal D}_{2,2} &=
 [ 2^k-2^{j_1+m_1^*+1}, x^1_{j_1}(v)+2^{j_1+m_1^*-2}]\\
&\quad \times [x^2_{j_1}(v)+2^{j_1+m_1^*}, 
     x^2_{j_1}(v)+2^{j_1+m_1^*}+2^{j_1+m_1^*-1}], \\
{\mathcal D}_{2,3} &=
 [2^k-2^{j_1+m_1^*+1},2^k-2^{j_1+m_1^*+1}+2^{j_1+m_1^*-1} ]\\
&\quad\times [x^2_{j_1}(v)+2^{j_1+m_1^*}, x^2_{j_1}(v)+2^{j_1+m_1^*+1}],
\end{align*}
As the length of $U_2$, we take the sum of lengths of $\{ {\mathcal D}_{2,j}\}
{}_{j=1,2,3}$, so it is at most $(\frac{3}{2}+\frac{5}{2}+2)2^{j_1+m_1^*-1}
=6\cdot 2^{j_1+m_1^*-1}$.
We take $U_4$ as a symmetric image of $U_2$ with respect to the line 
$\{ x^2= x^2_{j_1}(v)\} $.
The rectangles ${\mathcal D}_1, {\mathcal D}_3$ are given by
\begin{align*}
{\mathcal D}_1&=[ 2^k-2^{j_1+m^*_1+2}, x_{j_1}^1(v)-2^{j_1+m_1^*}+2^{j_1+m_1^*-2}] \\
&\quad \times [ x^2_{j_1}(v)- 2^{j_1+m_1^*-2}, x^2_{j_1}(v)+ 2^{j_1+m_1^*-2}] \\
{\mathcal D}_3&= [ x_{j_1}^1(v)+2^{j_1+m_1^*}-2^{j_1+m_1^*-2}, 2^k]\\
&\quad \times [ x^2_{j_1}(v)- 2^{j_1+m_1^*-2}, x^2_{j_1}(v)+ 2^{j_1+m_1^*-2}].
\end{align*}
Their lengths are at most $(4+\frac{1}{2})2^{j_1+m_1^*-1}$ and 
$(2+\frac{1}{2})2^{j_1+m_1^*-1}$.

Now we do the same thing as before. 
We use the Connection lemma to connect $\tau_i$ with 
$\partial_{in}T_{j_1}^{m_1^*+1}(v)$ in ${\mathcal D}_i$,
and $\tau^*_{i+1}$ with $\partial_{in}T_{j_1}^{m_1^*+1}(v)$ in $U_{i+1}$,
for $i=1,3$.
Also, we require the existence of a vertical $(+)$-crossing in ${\mathcal A}_1$,
and a horizontal $(-*)$-crossing in each of ${\mathcal A}_2$ and ${\mathcal A}_4$.

Then by the connection lemma, we have
$$
\mu_t^N\bigl( \Delta(v,S_{j_1}^{m_1^*}(v)) \bigr)
\leq {\biggl( \frac{\delta_{48}\delta_{16}}{16}\biggr) }^{-2}
    {\biggl( \frac{\delta_{36}\delta_{16}}{16} \biggr) }^{-1}
    {\biggl( \frac{\delta_{20}}{4}\biggr) }^{-1}
     \mu_t^N\bigl( \Delta (v,T_{j_1}^{m_1^*+1}(v))\bigr) .
$$

\noindent
\underline{Case 2} $T_{j_1}^{m_1^*+1}(v)$ contains the upper right corner
$(2^k,2^k)$ of $S(2^k)$.
In this case, we do not use ${\mathcal A}'_2$, either.
We use ${\mathcal D}_2$ to connect straightly ${\mathcal A}_2$ with the
top side of $T_{j_1}^{m_1^*+1}(v)$ and ${\mathcal D}_3$ as above.
We need corridors $U_1$ and $U_4$ to connect ${\mathcal A}_i$ with 
${\mathcal A}'_i$ for $i=1,4$, whose lengths do not exceed $(\frac{3}{2}+
\frac{5}{2}+4)2^{j_1+m_1^*-1}$.
Then arguing as above, we have
$$
\mu_t^N\bigl( \Delta (v,S_{j_1}^{m_1^*}(v)) \bigr)
\leq {\biggl( \frac{\delta_{64}\delta_{16}}{16}
              \frac{\delta_{20}}{4}
      \biggr) }^{-2}\mu_t^N\bigl(
         \Delta (v,T_{j_1}^{m_1^*+1}(v)) \bigr) .
$$

Finally, we consider the case where $m>m_1^*$.
In this case, also we have to consider whether $T_j^{m+1}$ contains
the upper right corner of $S(2^k)$ or not.
But the essential changes are:
\begin{itemize}
\item We do not need ${\mathcal D}_3$ or ${\mathcal A}_3$.
For, on the event $\Delta (v,T_{j_1}^m(v))$ we have already a $(+)$-path
connecting $v$ with the right boundary of $T_{j_1}^{m+1}(v)$ in
the smaller box $T_{j_1}^m(v)$.
\end{itemize}
As a result, we have
$$
\mu_t^N\bigl( \Delta (v,T_{j_1}^m(v)) \bigr)
\leq \biggl(\frac{\delta_{48}\delta_{16}}{16}\biggr)^{-2}
 {\biggl(\frac{\delta_{36}\delta_{16}}{16}\biggr) }^{-1}
 \mu_t^N\bigl( \Delta (v,T_{j_1}^{m+1}(v))\bigr) 
$$
when $T_{j_1}^{m+1}(v)$ does not contain $(2^k,2^k)$, and
$$
\mu_t^N\bigl(\Delta (v,T_{j_1}^m(v)) \bigr)
\leq {\biggl(\frac{\delta_{64}\delta_{16}}{16}\biggr) }^{-2}
{\biggl(\frac{\delta_{20}}{4}\biggr) }^{-1}
 \mu_t^N\bigl( \Delta (v,T_{j_1}^{m+1}(v))\bigr) 
$$
if $T_{j_1}^{m+1}(v)$ contains $(2^k,2^k)$,
since we use ${\mathcal D}_2$ to connect ${\mathcal A}_2$
straightly with the top side of $S(2^k)$.

Finally, for $m\geq 1$, (\ref{eq:relation of deltas}) implies that
$$
\mu_t^N\bigl( \Delta(v,R_{j_1}^m(v)) \bigr) \geq 
 C_1^{-(m-1)}\mu_t^N\bigl( \Delta (v,R_{j_1}^1(v)) \bigr)
 \geq C_1^{-m}C_2,
$$
where we can take
$$
C_2=C_1\cdot {\bigl( 1 + e^{(4h_c + 8)/(\mathfrak{K}T)} \bigr) }^{-\# S_{j_1}^1(v)}.
$$
\end{proof}

\begin{Remark}\label{box-size-bound3}
Since we use Connection lemma in the proof,
and by Remarks \ref{box-size-bound} and \ref{box-size-bound2},
the restriction that $2^k<L(h,\varepsilon_0)$ can be relaxed to 
$2^k<2^6L(h,\varepsilon_0)$.
\end{Remark}

\begin{Lemma}[cf. (2.38) in \cite{K87scaling}] \label{lambda-to-delta}
Let $2^{j_1}<2^k<\min\{ 2^5L(h,\varepsilon_0),\frac{N}{2}\} $, and $v\in S(2^k)$.
Then there exists a constant $C_3(\eta )>0$ depending only on $\varepsilon_0$, and $\eta $,
such that the following statements
hold.

\noindent
(i) For $1\leq m\leq k-j_1$, we have
$$ 
\mu_t^N\bigl( \Lambda (v,S_{j_1}^m(v))\bigr) \leq C_3(\eta )
                    \mu_t^N\bigl( \Delta (v,S_{j_1}^{m+1}(v))\bigr) .
$$
(ii) For $m_1^*+1\leq m \leq k$, we have
$$
\mu_t^N\bigl( \Lambda (v,T_{j_1}^m(v))\bigr)
     \leq C_3(\eta )\mu_t^N\bigl( \Delta (v,T_{j_1}^{m+1}(v))\bigr) .
$$
\end{Lemma}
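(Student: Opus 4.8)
The plan is to mimic Kesten's proof of (2.38) in \cite{K87scaling}, with independence and the BK inequality replaced by step-by-step conditioning together with the Connection lemma (Lemma \ref{connection lemma}), the FKG inequality, and the RSW bounds of Lemma \ref{RSWbound}, exactly in the spirit of the proof of Lemma \ref{relation of deltas}. I will carry out case (i) in detail and only indicate the changes for case (ii). Fix $1\le m\le k-j_1$, put $j=j_1+m$, and abbreviate $R=S_{j_1}^m(v)$, $R'=S_{j_1}^{m+1}(v)$; on $\Lambda(v,R)$ we are handed the four arms $r_1,r_3$ ($(+)$) and $r_2^*,r_4^*$ ($(-*)$) from $v$ to $\partial_{in}R$, their crossing clusters $\mathcal{C}_1,\mathcal{C}_3,\mathcal{C}_2^*,\mathcal{C}_4^*$ in the strips $\mathcal{B}_i$ attached to $R$, and an $(\eta,j)$-fence for each of these clusters. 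The goal is to exhibit an ``extension event'' $\mathcal{E}$, occurring inside $R'\setminus R$ together with small squares near $\partial_{in}R$, such that $\Lambda(v,R)\cap\mathcal{E}\subset\Delta(v,R')$ and such that the conditional probability of $\mathcal{E}$, given the data described below, is at least a constant $c(\eta)>0$ depending only on $\varepsilon_0$ and $\eta$; then $C_3(\eta)=c(\eta)^{-1}$ works.

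First I would decompose $\Lambda(v,R)$ into disjoint subevents by revealing, one after another as in the proof of Lemma \ref{relation of deltas}, the canonical (lowest, leftmost, $\dots$) realizations of the arms, of the crossing clusters $\mathcal{C}_i,\mathcal{C}_{i+1}^*$, and of the $(+)$- (resp.\ $(-*)$-) paths witnessing conditions (\ref{K(2.26)})--(\ref{K(2.28)}) of their fences. After this revealing, every subevent is measurable with respect to a set of sites whose complement still contains $R'\setminus R$ together with the squares $S(a_i,\sqrt{\eta}2^{j})$ straddling $\partial_{in}R$ near the exit points $a_i$ of the arms; this is the fresh region in which $\mathcal{E}$ will be built.

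Now for the construction of $\mathcal{E}$. Condition (\ref{K(2.28)}) of the fence of $\mathcal{C}_i$ provides, near $a_i$, a $(+)$-crossing of a rectangle of linear size $\sim\sqrt{\eta}2^{j}$ (a $(-*)$-crossing for $i=2,4$) that is $(+)$-connected (resp.\ $(-*)$-connected) to $\mathcal{C}_i$, hence to $v$. Working inside $R'\setminus R$ and conditioning on everything revealed so far, I would extend this crossing by a bounded number of applications of the Connection lemma and of Lemma \ref{RSWbound} through rectangles and corridors of aspect ratio $O(\eta^{-1})$, so that it (a) reaches the prescribed side of $R'$, (b) stays inside the rectangle $\mathcal{A}_i'$ attached to $R'$, and (c) simultaneously produces the required vertical $(+)$-crossings of $\mathcal{A}_1',\mathcal{A}_3'$ and horizontal $(-*)$-crossings of $\mathcal{A}_2',\mathcal{A}_4'$; the lower bound (\ref{eq:bound for j_1}) on $2^{j_1}$ is exactly what makes the Connection lemma applicable at aspect ratio $O(\eta^{-1})$ down to scale $2^{j_1}$. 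By (\ref{K(2.26)})--(\ref{K(2.27)}) the exit points $a_i$ are pairwise more than $2\sqrt{\eta}2^{j}$ apart, so the four extension regions can be chosen disjoint; together with the separation of $r_2^*,r_4^*$ by $r_1\cup r_3$ already present inside $R$ on $\Gamma(v,R)\supset\Lambda(v,R)$, this forces $\Delta(v,R')$ to occur on $\Lambda(v,R)\cap\mathcal{E}$. Each move is a Connection-lemma step (which already absorbs conditioning on its complement) or an RSW crossing (made conditional through the mixing property (\ref{HZ(1.1)})), assembled by the FKG inequality; since all boxes involved have width at most a fixed multiple of $2^{j+1}<2^{6}L(h,\varepsilon_0)$, Remarks \ref{box-size-bound}, \ref{box-size-bound2} and \ref{box-size-bound3} guarantee the relevant estimates, and the conditional probability of $\mathcal{E}$ is bounded below by a product of finitely many factors of the form $\delta_K/4$ and $\delta_K^4$ with $K=O(\eta^{-1})$, hence by some $c(\eta)>0$. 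Summing over the disjoint subevents gives
\[
\mu_t^N\bigl(\Lambda(v,R)\bigr)\le c(\eta)^{-1}\,\mu_t^N\bigl(\Delta(v,R')\bigr),
\]
which is (i). For (ii) one repeats the argument for $T_{j_1}^m(v)$ and $T_{j_1}^{m+1}(v)$, using the asymmetric families $\mathcal{B}_i,\mathcal{A}_i$ and, when $T_{j_1}^{m+1}(v)\ni(2^k,2^k)$, the corridors $U_i$ of Lemma \ref{relation of deltas}; since there some arms already end on $\partial_{in}S(2^k)$ and need no extension, the constant is no larger.

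The step I expect to be the main obstacle is (b)--(c) of the construction of $\mathcal{E}$: rerouting each arm so that it meets the correct side of $R'$ within the narrow rectangle $\mathcal{A}_i'$, while keeping the four extensions in disjoint regions and preserving the global separation of the $(+)$- and $(-*)$-arms, all done by conditioning rather than by the BK inequality. Near the boundary of $S(2^k)$, and especially in the sub-case where $T_{j_1}^{m+1}(v)$ contains the corner $(2^k,2^k)$, this bookkeeping has to be redone with the asymmetric rectangles and corridors; as in Lemma \ref{relation of deltas}, the price is again only a constant depending on $\varepsilon_0$ and $\eta$.
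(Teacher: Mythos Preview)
Your proposal is correct and follows essentially the same approach as the paper: use the fence condition (\ref{K(2.28)}) as a seed near each exit point $a_i$, route a corridor of width $2\eta\,2^{j_1+m}$ and length at most $2^{j_1+m+4}$ (hence aspect ratio $O(\eta^{-1})$) from this seed to the appropriate $\mathcal{A}_i'$ of $R'$, condition on the lowest fence path and apply the Connection lemma to push a crossing through each corridor, with disjointness of the four corridors guaranteed by the $2\sqrt{\eta}\,2^{j_1+m}$ separation from (\ref{K(2.26)})--(\ref{K(2.27)}). The paper obtains $C_3(\eta)=(C_3^*)^{-4}(\delta_{16}/4)^{-4}$ with $C_3^*\ge \delta_{8p}\delta_{16}/16$ for an integer $p\ge 2^3\eta^{-1}$, and for (ii) simply omits the corridor for any arm already ending on $\partial_{in}S(2^k)$; one small correction to your phrasing is that in (b) the extended arm need not stay inside $\mathcal{A}_i'$ throughout, only satisfy $r_i\cap(\cup_j\mathcal{B}_j')\subset\mathcal{A}_i'$, which is exactly what the corridor routing achieves.
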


\begin{proof}
The proof of this lemma is the same as in \cite{K87scaling}. We use corridors
to connect each of $r_1,r_2^*,r_3$ and $r_4^*$ to the boundary of 
$S_{j_1}^{m+1}(v)$ or $T_{j_1}^{m+1}(v)$ by using the connection lemma.

%------------------------------------------------------------------------
%%%%%%%%%%%%%%%%%%%%%% The following part can be erased after we have checked
%%%%%%%%%%%%%%%%%%%%%% all the details

\noindent (i) Assume that $\Lambda (v,S_{j_1}^m(v))$ occurs. Then let $r_1,r_3$
be  $(+)$-paths connecting $v$ with $\partial_{in} S_{j_1}^m(v)$, and let 
$r_2^*,r_4^*$ be $(-*)$-paths connecting $*$-neighbor of $v$ with 
$\partial_{in} S_{j_1}^m(v)$, such that $r_1\setminus \{ v\} $ and 
$r_3\setminus \{ v\} $ are disjoint, and $r_1\cup r_3$ separates
$r_2^*$ from $r_4^*$.
$r_1$ crosses one of ${\mathcal B}_i$'s, say ${\mathcal B}_1$ for
simplicity. Then let ${\mathcal C_1}$ be the $(+)$-connected component
in ${\mathcal B}_1$, containing the endpoint of $r_1$ in 
$\partial_{in}S_{j_1}^m(v)$. 
We assume that $r_1$ contains the left endpoint $a(1)=(a^1(1),a^2(1))$ 
of the lowest crossing
of ${\mathcal B}_1$ in ${\mathcal C}_1$.
Corresponding to this $r_1$, we prepare a corridor of width 
$2\eta 2^{j_1+m}$ connecting $\partial S_{j_1}^m(v)$ with
the left side of $S_{j_1}^{m+1}(v)$ such that it contains the rectangle
$$ 
[a^1(1)-\sqrt{\eta }2^{j_1+m},a^1(1)-1]\times 
 [a^2(1)-\eta 2^{j_1+m}, a^2(1)+\eta 2^{j_1+m}].
$$  
The length of the corridor can be made less than 
$8\cdot 2^{j_1+m+1} =2^{j_1+m+4}$
Further, this corridor crosses ${\mathcal A}_1'$ horizontally.
Here, as before $\mathcal{A}_i$ corresponds to $S_{j_1}^m(v)$ and
$\mathcal{A}'_i$ corresponds to $S_{j_1}^{m+1}(v)$.
In the same way, the corridor corresponding to $r_2^*$ contains the
rectangle of longer side length $\sqrt{\eta }2^{j_1+m}$ and the shorter 
side length $2\eta 2^{j_1+m}$, one of the shorter side of which 
is neighbor to $\partial_{in}S_{j_1}^m(v)$, and this rectangle is located
outside of $S_{j_1}^m(v)$. Further this corridor connects the endpoint of
$r_2^*$ with the top side of $\partial_{in}S_{j_1}^{m+1}(v)$ crossing
${\mathcal A}_2'$ vertically.
In this way we can prepare corridors corresponding to $r_1,r_2^*,r_3,r_4^*$.
Since $\Lambda (v,S_{j_1}^m(v))$ occurs, these corridors can be chosen
to be pairwise disjoint, if $\eta $ is sufficiently small.
By (\ref{eq:bound for eta}),
and $M_0\geq 1$, this is possible.

Since ${\mathcal C}_1$ has an $(\eta , j_1+m)$-fence, ${\mathcal C}_1$
is connected by a $(+)$-path with the lower side of the rectangle
$$ 
[a^1(1)-\sqrt{\eta }2^{j_1+m},a^1(1)-1]\times 
 [a^2(1)-\eta 2^{j_1+m}, a^2(1)+\eta 2^{j_1+m}],
$$
going above $a(1)$ and in the region 
$S(a(1), \sqrt{\eta }2^{j_1+m})\setminus S(a(1),\eta 2^{j_1+m})$.
Conditioning on the lowest of such $(+)$-paths, and using the
Connection lemma, we see that there is a constant $C_3^*>0$ depending on 
$ \varepsilon_0$ and $\eta $, such that
the conditional probability of the event that there is a $(+)$-path in this
corridor connecting the above lowest $(+)$-path with the left side of
$S_{j_1}^{m+1}(v)$ is not less than $C_3^*$.
We do the same thing for each corridors, except that we have to consider
$(-*)$-paths in the corridors  corresponding to $r_2^*$ and $r_4^*$.
Note that we can choose a common constant for the above constant $C_3^*$ 
for each of corridors.
Thus, we have 
$$
\mu_t^N\left( \Lambda (v, S_{j_1}^m(v))\right)
 \leq (C_3^*)^{-4} {\biggl( \frac{\delta_{16}}{4}\biggr) }^{-4}\mu_t^N\left(
       \Delta (v,S_{j_1}^{m+1}(v))\right) .
$$
Note that $C_3^*$ can be taken to satisfy
\begin{equation}\label{eq:the constant C3(eta)}
C_3^*\geq \frac{\delta_{8p}\delta_{16}}{16}
\end{equation}
with an integer $p \geq 2^3\eta^{-1}$.

\noindent 
(ii) The argument is the same as above, but we have to take care of 
two cases.
In $\Lambda (v, T_{j_1}^m(v))$, each of the paths $r_1, r_2^*, r_4^*$ ends
one of three sides of $T_{j_1}^m(v)$, namely the left, top or
bottom side. 
Only $r_3$ can end at the right side.

If  $r_3$ does not end at the right side
of $T_{j_1}^m(v)$, then by definition of $\Lambda (v,T_{j_1}^m(v))$
each of the $(+)$-clusters ${\mathcal C}_i$'s ($(-*)$-clusters
${\mathcal C}_{i+1}^*$'s)
containing the endpoints 
$a(i)$ of $r_i$ ($a^*(i+1)$ of $r_{i+1}^*$) for $i=1,3$, has an 
$(\eta , j_1+m)$-fence.
Then we choose corridors corresponding to $r_1,r_2^*$ and $r_4^*$
as before.
We choose the corridor corresponding to $r_3$ in 
$$
T_{j_1}^{m+1}(v)\setminus ({\mathcal B}_1\cup {\mathcal B}_2
\cup {\mathcal B}_4) 
$$
so that it connects $S(a(3),\sqrt{\eta }2^{j_1+m})$ with the
right side of $T_{j_1}^{m+1}(v)$.

If $r_3$ ends at the right side of $T_{j_1}^m(v)$, then
we only take care of corridors corresponding to $r_1,r_2^*$ and $r_4^*$.

Thus, we can choose $C_3(\eta )$ in the statement of the lemma
as the above $(C_3^*)^{-4}(\delta_{16}/4)^{-4}$.

Finally, the bound that $2^k<2^5L(h,\varepsilon_0)$ comes from 
the fence argument, Lemma \ref{lem:fence}.
\end{proof}

\begin{Lemma}[cf. Lemma 4, (2.37) in \cite{K87scaling}] \label{comparison of gamma and delta}
Besides the condition (\ref{eq:bound for j_1}), we assume further that
\begin{equation}\label{eq:cond-for-j1-2}
C(4n)^3e^{-\alpha n}\leq \delta \quad \mbox {for every } n\geq 2^{j_1}. 
\end{equation}
Then there exists a constant $K>0$ depending only on $\varepsilon_0, \eta $,
and $j_1$, 
such that for $2^{j_1}<2^k<\min\{2^5 L(h,\varepsilon_0), \frac{N}{2}\} ,$
the following statements hold for $t\in [0,1]$.

\noindent
(i) For $3\leq m \leq k-j_1 $, we have
$$
\mu_t^N\bigl( \Gamma (v,S_{j_1}^m(v))\bigr) 
  \leq K \mu_t^N\bigl( \Delta (v,S_{j_1}^m(v))\bigr) .
$$

\noindent (ii)
$\displaystyle
\mu_t^N\bigl( \Gamma (v,T_{j_1}^{m_1^*+1}(v))\bigr)
  \leq K \mu_t^N\bigl( \Delta (v,T_{j_1}^{m^*_1+1}(v))\bigr) ,
$
if $m^*+1\geq 3$.

\noindent
(iii) For $\max\{ 3, m^*_1+1\} \leq m \leq k-j_1$, we have
$$
\mu_t^N\bigl( \Gamma (v,T_{j_1}^m(v))\bigr) \leq 
   K \mu_t^N\bigl( \Delta (v,T_{j_1}^m(v))\bigr) .
$$
\end{Lemma}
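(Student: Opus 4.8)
The plan is to prove (i)--(iii) by induction on $m$, running the two-step scheme of Kesten~\cite{K87scaling} (Lemma 4): first replace $\Gamma$ by the sub-event $\Lambda$ in which all four crossing clusters carry $(\eta,j_1+m)$-fences, controlling the discrepancy $\Gamma\setminus\Lambda$ via the fence lemma (Lemma~\ref{lem:fence}) together with the inductive hypothesis at the previous scale; then pass from $\Lambda$ to $\Delta$ at the \emph{same} scale by a surgery built on the Connection lemma (Lemma~\ref{connection lemma}) in place of the BK inequality of~\cite{K87scaling}. For the base case, when $m$ is as small as allowed ($m=3$ in (i)) the relevant box has bounded size $2^{j_1+3}$, so $\mu_t^N(\Gamma)\le 1$ while $\mu_t^N(\Delta)\ge C_2C_1^{-3}$ by Lemma~\ref{relation of deltas}; this settles the base case with some $K_0=K_0(\varepsilon_0,j_1)$.

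For the first step, fix $m\ge 4$. The four crossing clusters $\mathcal C_1,\mathcal C_2^*,\mathcal C_3,\mathcal C_4^*$ attached to the arms live in the outer annulus $\bigcup_i\mathcal B_i$ of $S_{j_1}^m(v)$, whereas $\Gamma(v,S_{j_1}^m(v))$ forces $\Gamma(v,S_{j_1}^{m-1}(v))$ in the interior (truncating the four arms at $\partial_{in}S_{j_1}^{m-1}(v)$ preserves their disjointness and cyclic order). Decompose $\Gamma(v,S_{j_1}^m(v))$ over a minimal realization $\rho$ of this interior four-arm configuration. On $\Gamma\setminus\Lambda$ at least one $\mathcal C_i$ lacks an $(\eta,j_1+m)$-fence, an event contained in ``some crossing cluster of the corresponding $\mathcal B_i$ lacks a fence''; conditionally on $\rho$ this whole event has probability at most $8\delta$ --- namely $2\delta$ for each of the four clusters, by Lemma~\ref{lem:fence} and the mixing property, the conditioning error being $\le\delta$ by hypothesis (\ref{eq:cond-for-j1-2}) applied with $n=2^{j_1+m}/4$. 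Summing over $\rho$ gives $\mu_t^N(\Gamma(v,S_{j_1}^m(v))\setminus\Lambda(v,S_{j_1}^m(v)))\le 8\delta\,\mu_t^N(\Gamma(v,S_{j_1}^{m-1}(v)))$; by the inductive hypothesis at scale $m-1$ and then Lemma~\ref{relation of deltas} this is $\le 8\delta C_1K\,\mu_t^N(\Delta(v,S_{j_1}^m(v)))$, and since $\delta=\frac{1}{18}C_1^{-1}$ we have $8\delta C_1=\frac{4}{9}<1$, which is exactly what will make the induction close.

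For the second step we bound $\mu_t^N(\Lambda(v,S_{j_1}^m(v)))$ by $\mu_t^N(\Delta(v,S_{j_1}^m(v)))$ directly, with no induction. On $\Lambda$ we expose the four arms (chosen minimal, e.g.\ leftmost or lowest in turn), their crossing clusters, and the fences; conditions (\ref{K(2.26)})--(\ref{K(2.27)}) make the four landing points on $\partial S_{j_1}^m(v)$ pairwise more than $2\sqrt\eta\,2^{j_1+m}$ apart with no opposite-sign arm near any cluster, while (\ref{K(2.28)}) supplies, next to each landing point, a perpendicular crossing of the correct sign attached to the cluster. One then routes each arm into a crossing of its corridor $\mathcal A_i$ inside $S_{j_1}^m(v)$ through a corridor of width $O(\eta\,2^{j_1+m})$ and bounded aspect ratio $O(\eta^{-1})$, by the same kind of construction as in the proof of Lemma~\ref{lambda-to-delta} (the difference being that the target is $S_{j_1}^m(v)$ itself rather than $S_{j_1}^{m+1}(v)$). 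The smallness of $\eta$ fixed by (\ref{eq:bound for eta}) and the fence conditions let the four routing corridors be taken pairwise disjoint and disjoint from the already-exposed parts of the arms, so revealing one corridor at a time and applying Lemma~\ref{connection lemma} and the FKG inequality gives each route a conditional probability bounded below by a constant depending only on $\varepsilon_0$ and $\eta$ (the lower bound on box sizes required by Lemma~\ref{connection lemma} is guaranteed by (\ref{eq:bound for j_1})); hence $\mu_t^N(\Lambda(v,S_{j_1}^m(v)))\le K_1\,\mu_t^N(\Delta(v,S_{j_1}^m(v)))$. Combining the two steps yields $\mu_t^N(\Gamma(v,S_{j_1}^m(v)))\le(K_1+8\delta C_1K)\,\mu_t^N(\Delta(v,S_{j_1}^m(v)))$, and taking $K=\max\{K_0,\,K_1/(1-8\delta C_1)\}$ closes the induction. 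Parts (ii) and (iii) follow by the same argument with $T_{j_1}^m(v)$ in place of $S_{j_1}^m(v)$, the $S$-to-$T$ transition at $m=m_1^*+1$ and the corner case $(2^k,2^k)\in T_{j_1}^m(v)$ being handled exactly by the corridor- and corner-bookkeeping already carried out in the proofs of Lemmas~\ref{relation of deltas} and~\ref{lambda-to-delta}.

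The step I expect to be the main obstacle is the geometric bookkeeping inside the second step: showing that the four routing corridors, together with the four crossings of the $\mathcal A_i$, can be realized simultaneously --- pairwise disjoint and avoiding the opposite-sign portions of the arms and clusters exposed so far, all within $S_{j_1}^m(v)$ (and, in (ii)--(iii), within $T_{j_1}^m(v)$ with a corner removed). This is precisely where the $(\eta,j_1+m)$-fences are used, and it is the part of~\cite{K87scaling} that must be adapted with the most care, since the appeal to the BK inequality there is replaced throughout by conditioning on one exposed path at a time and invoking Lemma~\ref{connection lemma}.
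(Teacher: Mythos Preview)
Your first step and the overall inductive scheme are sound, and indeed close with ratio $8\delta C_1=\tfrac{4}{9}<1$ \emph{provided} your second step holds. The gap is precisely there: the surgery you describe from $\Lambda(v,S_{j_1}^m(v))$ to $\Delta(v,S_{j_1}^m(v))$ at the \emph{same} scale cannot be carried out with the fences as defined. Condition (\ref{K(2.28)}) attaches to each cluster $\mathcal C_i$ a short crossing in a rectangle lying \emph{outside} $S_{j_1}^m(v)$ (for $\mathcal B_1$, in the strip $[a^1-\sqrt{\eta}\,2^{j_1+m},a^1-1]$ with $a^1$ on $\partial S_{j_1}^m(v)$). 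This is exactly what makes the corridor construction of Lemma~\ref{lambda-to-delta} work \emph{outward} into $S_{j_1}^{m+1}(v)$, landing in $\Delta(v,S_{j_1}^{m+1}(v))$. It gives no handle on rerouting the arms \emph{inside} the annulus $\bigcup_i\mathcal B_i$ of $S_{j_1}^m(v)$: on $\Lambda^m$ the arm $r_1$ may end on any side and may occupy that annulus arbitrarily, and there is nothing in the fence conditions that lets you push it into $\mathcal A_1$ while staying in $S_{j_1}^m(v)$. Nor can you recover the same-scale bound from Lemma~\ref{lambda-to-delta} plus Lemma~\ref{relation of deltas}: the latter gives $\mu(\Delta^m)\le C_1\mu(\Delta^{m+1})$, the wrong direction for bounding $\mu(\Delta^{m+1})$ by $\mu(\Delta^m)$.

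The paper resolves this by a one-scale shift at the outset: it first uses the trivial inclusion $\Gamma(v,S_{j_1}^m(v))\subset\Gamma(v,S_{j_1}^{m-1}(v))$, then splits $\Gamma^{m-1}$ into $\Lambda^{m-1}$ and $\Gamma^{m-1}\setminus\Lambda^{m-1}$. Now Lemma~\ref{lambda-to-delta} applied at level $m-1$ lands exactly at $\Delta^m$, while the fence-failure part (in the $\mathcal B_i$'s of $S_{j_1}^{m-1}(v)$, separated from $S_{j_1}^{m-2}(v)$) contributes $9\delta\,\mu(\Gamma^{m-2})$. Iterating this recursion explicitly down to $\Gamma^1$ yields a geometric series with ratio $9\delta C_1=\tfrac12$, together with a boundary term $(9\delta)^{m-2}\mu(\Gamma^1)\le(9\delta)^{m-2}C_2^{-1}C_1^m\mu(\Delta^m)$, which is bounded uniformly in $m$. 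Note that your inductive scheme cannot simply absorb this shift: if you drop a scale first and then apply induction at $m-2$, the ratio becomes $9\delta C_1^2=C_1/2\gg1$. So the explicit unrolling (rather than induction with fixed $K$) is what makes the argument go through once the $\Lambda\to\Delta$ step is done at the correct scale.
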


Hereafter we always assume that $j_1$ satisfies (\ref{eq:bound for j_1}) and
(\ref{eq:cond-for-j1-2}).
\begin{proof}
(i)  This part is just the same as the proof of Lemma 4 of \cite{K87scaling}.
We start with the following inequality. 
\begin{align*}
\mu_t^N\bigl( \Gamma (v,S_{j_1}^m(v))\bigr)
&\leq \mu_t^N\bigl( \Gamma (v,S_{j_1}^{m-1}(v))\bigr)\\  
&\leq  \mu_t^N\bigl( \Lambda (v,S_{j_1}^{m-1}(v))\bigr) +
 \mu_t^N\bigl( \Gamma (v,S_{j_1}^{m-1}(v)) \setminus  
   \Lambda (v,S_{j_1}^{m-1}(v))\bigr).
\end{align*}
If $\Gamma (v,S_{j_1}^{m-1}(v))$ occurs but $ \Lambda (v,S_{j_1}^{m-1}(v))$
does not occur, then we can see that 
\begin{itemize}
\item $\Gamma (v,S_{j_1}^{m-2}(v)) $ occurs, and
\item for at least one of rectangles ${\mathcal B}_i, i=1,2,3,4$
     which correspond to $S_{j_1}^{m-1}(v)$,
    there is a $(+)$-crossing cluster or $(-*)$-crossing cluster,
    connecting longer sides of the rectangle, such that this
    $(+)$-cluster (or $(-*)$-cluster) does not have an $(\eta, j+m-1)$-fence.
\end{itemize}
Since $S_{j_1}^{m-2}(v)$ and ${\mathcal B}_1\cup \cdots \cup {\mathcal B}_4$
are of $\ell^\infty$-distance $2^{j_1+m-3}$, we have
\begin{align*}
&\mu_t^N \bigl(  \Gamma (v,S_{j_1}^{m-1}(v)) \setminus  
   \Lambda (v,S_{j_1}^{m-1}(v))\bigr) \\
& \leq  ( 8\delta + C2^{3(m+j_1-1)}e^{-\alpha 2^{m+j_1-3}})
     \mu_t^N \bigl( \Gamma (v, S_{j_1}^{m-2}(v))\bigr) .
\end{align*}
By (\ref{eq:cond-for-j1-2}) we have
\begin{align*}
&\mu_t^N\bigl( \Gamma (v,S_{j_1}^{m-1}(v))\bigr)  \\
&\leq  \mu_t^N\bigl( \Lambda (v,S_{j_1}^{m-1}(v))\bigr) +
 9\delta \mu_t^N\bigl( \Gamma (v,S_{j_1}^{m-2}(v))\bigr) .
\end{align*}
Iterating this until we get to $S_{j_1}^1(v)$, we have
\begin{align*}
& \mu_t^N\bigl( \Gamma (v,S_{j_1}^{m-1}(v))\bigr) \\
&\leq  \sum_{\ell =0}^{m-3}(9\delta )^\ell \mu_t^N\bigl( 
            \Lambda (v,S_{j_1}^{m-1-\ell }(v)) \bigr) 
            +(9\delta )^{m-2}\mu_t^N\bigl( 
                    \Gamma (v,S^1_{j_1}(v)) \bigr).
\end{align*}
By Lemma \ref{lambda-to-delta}, 
$$
\mu_t^N\bigl( \Lambda (v,S_{j_1}^{m-1-\ell }(v))\bigr)
\leq C_3(\eta )\mu_t^N\bigl( \Delta (v,S_{j_1}^{m-\ell }(v)) \bigr),
$$
and since by Lemma \ref{relation of deltas}, 
$$
\mu_t^N\bigl(  \Gamma (v,S^1_{j_1}(v)) \bigr)
\leq 1\leq C_2^{-1}C_1^m \mu_t^N\bigl( \Delta (v,S_{j_1}^m(v)) \bigr) ,
$$
and 
$$
\mu_t^N\big( \Delta (v,S_{j_1}^{m-\ell}(v))\bigr) \leq
C_1^\ell \mu_t^N\bigl( \Delta (v,S_{j_1}^m(v)) \bigr) ,
$$
we obtain 
$$
\mu_t^N\bigl( \Gamma (v,S_{j_1}^m(v))\bigr) 
  \leq K_1 \mu_t^N\bigl( \Delta (v,S_{j_1}^m(v))\bigr)
$$
for some positive constant $K_1$ which depends only on $\varepsilon_0, j_1$ and $\eta $.

\noindent
(ii) By the above argument we have
$$
\mu_t^N\bigl( \Gamma (v,S_{j_1}^{m_1^*}(v))\bigr)
  \leq K_1 \mu_t^N\bigl( \Delta (v,S_{j_1}^{m_1^*}(v)) \bigr).
$$
and by Lemma \ref{relation of deltas}, we have
\begin{align*}
\mu_t^N\bigl( \Gamma (v, T_{j_1}^{m_1^*+1}(v)) \bigr) 
&\leq  \mu_t^N\bigl( \Gamma (v, S_{j_1}^{m_1^*}(v)) \bigr) \\
&\leq  K_1 \mu_t^N\bigl( \Delta (v,S_{j_1}^{m_1^*}(v)) \bigr) \\
&\leq  K_1 C_1 \mu_t^N\bigl( \Delta \bigl( v,T_{j_1}^{m_1^*+1}(v))\bigr) .
\end{align*}

\noindent
(iii) Since we proved the inequality for $m=m_1^*+1$, we can assume 
that $m>m_1^*+1$.
The same argument as in the proof of (i) shows that
\begin{align*}
\mu_t^N\bigl( \Gamma (v,T_{j_1}^{m}(v))\bigr)& \leq 
\sum_{\ell =0}^{m-m_1^*-2}(9\delta )^\ell 
         \mu_t^N\bigl( \Lambda (v,T_{j_1}^{m-\ell }(v))\bigr) \\
  & \quad + (9\delta )^{m-m_1^*-1}\mu_t^N\bigl( 
              \Gamma (v,T_{j_1}^{m_1^*+1}(v)) \bigr)  \\
  &\leq  C_3(\eta ) \sum_{\ell =0}^{m-m_1^*-2}(9\delta C_1 )^\ell
        \mu_t^N\bigl( \Delta (v,T_{j_1}^m(v)) \bigr) \\
  & \quad +(9\delta )^{m-m_1^*-1}K_1C_1\mu_t^N\bigl( \Delta (v,T_{j_1}^{m_1^*+1}(v)) 
                     \bigr).
\end{align*} 
Here, we used the result in (ii).     
By this and Lemma \ref{relation of deltas}, we have desired inequality.
\end{proof}

\subsection{Block events: outwards}
We start with the simplest case where $0\leq m<m_1^*$. 
In this case, $S_{j_1}^{m+1}(v)$ is a subset of $S(2^k)$.

Let ${\tilde \Gamma }(S_{j_1}^m(v),S(2^k))$ be the event such that all the following occur.
\begin{enumerate}
\item There exist $(+)$-paths $r_1, r_3$ such that $r_1$ connects 
$\partial S_{j_1}^m(v)$ with the left side of $S(2^k)$, and 
$r_3$ connects 
$\partial S_{j_1}^m(v)$ with the right side of $S(2^k)$.
\item There exist $(-*)$-paths $r_2^*, r_4^*$ such that $r_2^*$ connects 
$\partial S_{j_1}^m(v)$ with the top side of $S(2^k)$, and 
$r_4^*$ connects 
$\partial S_{j_1}^m(v)$ with the bottom side of $S(2^k)$.
\end{enumerate}
Next, in order to define fences, we introduce ${\tilde {\mathcal B}}_i,
1\leq i\leq 4$. 
For $a=(a^1,a^2)$ and $r>0$, let
$$
S_1(a,r)=S(a,r)\cap \{ x^1\leq a^1\} ,
$$
and let $S_{i+1}(a,r)$ be the clockwise rotation of $S_i(a,r)$ by a right angle
around $a$ for $i=1,2,3$. 
Then we put
\begin{equation}\label{eq:bi-tilde}
{\tilde {\mathcal B}}_i:= S_i(x_{j_1}(v), 2^{j_1+m}+2^{j_1+m-2})
                    \setminus S_{j_1}^m(v)
\end{equation}
for $i=1,2,3,4$.
Note that ${\tilde {\mathcal B}}_i\subset S(2^k)$ if $m<m_1^*$.

\medskip

\noindent
{\bf Crossing clusters:}
Let 
\begin{equation}\label{fence,phi-i-1}
\varphi_i:= \partial_{in}S_i(x_{j_1}(v),2^{j_1+m}+2^{j_1+m-2})
              \setminus S_{j_1}^m(v),
\end{equation}
and $\xi_1, \xi_2,\xi_3,\xi_4$ be the left, top, right, bottom sides 
of $\partial S_{j_1}^m(v)$, respectively.
A crossing $(+)$-cluster [$(-*)$-cluster] in ${\tilde {\mathcal B}}_i$ is a
$(+)$-cluster [$(-*)$-cluster] in  ${\tilde {\mathcal B}}_i$ such that
it connects $\varphi_i$ with $\xi_i$.
Then we can define crossing $(+)$-clusters and crossing $(-*)$-clusters
in terms of $\varphi_i$ and $\xi_i$.
Let ${\mathcal C}$ be a crossing $(+)$-cluster of ${\tilde {\mathcal B}}_i$.
We define its endpoint $a({\mathcal C})$ by the lowest point of 
${\mathcal C}\cap \xi_i $ if $i=1,3$ and  the leftmost point of
${\mathcal C}\cap \xi_i$ if $i=2,4$.
(This definition corresponds to our assumption that $0\leq v^2\leq v^1$.)
The endpoint $a({\mathcal C}^*)$ of a $(-*)$-cluster ${\mathcal C}^*$ in 
${\tilde {\mathcal B}}_i$
is defined in the same way.
\medskip

\noindent
{\bf Fences:}
Let ${\mathcal C}_1$ be a crossing $(+)$-cluster in ${\tilde {\mathcal B}}_1$.
We say that ${\mathcal C}_1$ has an $(\eta ,j_1+m)$-fence if all the following
events occur:
\begin{enumerate}
\item $|a({\mathcal C}_1)-a({\mathcal C})|>2\sqrt{\eta }2^{j_1+m}$\quad 
  for every crossing $(+)$-cluster ${\mathcal C}$ of   
 ${\tilde {\mathcal B}}_1$, such that 
 ${\mathcal C}_1\cap {\mathcal C}=\emptyset $.
\item $ |a({\mathcal C}_1)-a({\mathcal C^*})|>2\sqrt{\eta }2^{j_1+m}$\quad 
 for every crossing $(-*)$-cluster ${\mathcal C^*}$ of 
  ${\tilde {\mathcal B}}_1$,
\item Put $a=a({\mathcal C}_1)=(a^1,a^2)$, and \\ 
  $$T_a:=[a^1+1,a^1+\sqrt{\eta }2^{j_1+m}]\times 
   [a^2-\eta2^{j_1+m},a^2+2^{j_1+m}].$$
  Then there exists a vertical $(+)$-crossing of $T_a\cap S_{j_1}^m(v)$ 
  which is connected by a $(+)$-path with ${\mathcal C}_1$ in 
  $ S(a,\sqrt{\eta }2^{j_1+m})\setminus S(a, \eta 2^{j_1+m})$.
\end{enumerate}
The definition of fences for a crossing $(+)$-cluster ${\mathcal C}_i$
[$(-*)$-cluster ${\mathcal C}^*_{i}$] for $1\leq i \leq 4$ will be obvious.
Now we define ${\tilde \Lambda } (S_{j_1}^m(v), S(2^k))$ as the subset of
${\tilde \Gamma }(S_{j_1}^m(v), S(2^k))$ such that any of crossing
$(+)$-clusters and $(-*)$ clusters in ${\tilde {\mathcal B}}_i$
of $S_{j_1}^m(v)$ has an $(\eta ,j_1+m)$-fence, for every $1\leq i \leq 4$.
Then let
$$
{\tilde {\mathcal A}}_1:= x_{j_1}(v)+ [-2^{j_1+m}-2^{j_1+m-2},-2^{j_1+m}]
 \times [-2^{j_1+m-2},2^{j_1+m-2}],
$$
and let ${\tilde {\mathcal A}}_{i+1}$ be the clock-wise rotation of 
${\tilde {\mathcal A}}_{i}$ by a right angle around $x_{j_1}(v)$ for $i=1,2,3$.
Finally, we define ${\tilde \Delta }( S_{j_1}^m(v), S(2^k))$ as a subset
of ${\tilde \Gamma }(S_{j_1}^m(v), S(2^k))$ such that
\begin{enumerate}
\item 
$\displaystyle r_i\cap (\cup_{1\leq j\leq 4} {\tilde {\mathcal B}}_j)
  \subset {\tilde {\mathcal A}}_i$
and 
$\displaystyle r^*_{i+1}\cap(\cup_{1\leq j\leq 4} {\tilde {\mathcal B}}_{j}) 
  \subset {\tilde {\mathcal A}}_{i+1}$
for $i=1,3$, and
\item for $i=1,3$ there is a vertical $(+)$-crossing in 
${\tilde {\mathcal A}}_i$, and a horizontal $(-*)$-crossing 
in ${\tilde {\mathcal A}}_{i+1}$.
\end{enumerate}

Next, we consider the case where $m=m_1^*$.
In this case, ${\tilde {\mathcal B}}_2$ and ${\tilde {\mathcal B}}_3$
may not be inside $S(2^k)$.

If $S_{j_1}^{m^*_1}(v) \subset 
   \bigl\{ \max \{ x^1, x^2\}\leq 2^k-2^{j_1+m_1^*-2 }
 \bigr\} $, then we can use the events 
${\tilde \Gamma }(S_{j_1}^{m_1^*}(v), S(2^k))$,
${\tilde \Lambda} (S_{j_1}^{m_1^*}(v), S(2^k))$ and
${\tilde \Delta }(S_{j_1}^{m_1^*}(v), S(2^k))$ as above. 

If $d(S_{j_1}^{m^*_1}(v),S(2^k)^c)<2^{j_1+m_1^*-2}$ and 
$S_{j_1}^{m_1^*}(v)\subset \{ x^2\leq 2^k-2^{j_1+m_1^*-2}\} $, then we put
$$
{\tilde S}_{j_1}^{m_1^*}(v)=[x_{j_1}^1(v)-2^{j_1+m_1^*},2^k]
   \times [ x_{j_1}^2(v)-2^{j_1+m_1^*}, x_{j_1}^2(v)+2^{j_1+m_1^*}],
$$
and let
$
{\tilde \Gamma }({\tilde S}_{j_1}^{m_1^*}(v), S(2^k))$ be defined in the 
same way as 
${\tilde \Gamma }(S_{j_1}^{m_1^*}(v), S(2^k))$, 
except that we do not require the  existence of
$r_3$.
Correspondingly we do not use ${\tilde {\mathcal B}}_3$ or
${\tilde {\mathcal A}}_3$ for ${\tilde S}_{j_1}^{m_1^*}(v)$, either.
Then we define 
\begin{equation}\label{eq:bi-tilde2}
{\tilde {\mathcal B}}_i:=
S(2^k)\cap \bigl[ 
S_i(x_{j_1}(v), 2^{j_1+m_1^*}+2^{j_1+m_1^*-2})
                    \setminus {\tilde S}_{j_1}^{m_1^*}(v)
         \bigr] ,
\end{equation}
and 
\begin{equation}\label{eq:phii-tilde2}
\varphi_i:= S(2^k)\cap \bigl[
  \partial_{in}S_i(x_{j_1}(v),2^{j_1+m_1^*}+2^{j_1+m_1^*-2})
              \setminus {\tilde S}_{j_1}^{m_1^*}(v) \bigr]
\end{equation}
for $i=1,2,4$.
Let $\xi_i $ denote the left, top, and bottom sides of 
$\partial {\tilde S}_{j_1}^{m_1^*}$,for $i=1,2,4$, respectively.
We define ${\tilde {\mathcal A}}_i$ as before for $i=1,2,4$.
We have to add a condition to the definition of fences in ${\mathcal B}_i$.
Namely, a crossing $(+)$-cluster ${\mathcal C}$ in ${\mathcal B}_i$
has an $(\eta ,j_1+m_1^*)$-fence if all the following occurs,
\begin{enumerate}
\item $|a({\mathcal C})-a({\mathcal C}_1)| >2\sqrt{\eta }2^{j_1+m_1^*}$
for every crossing $(+)$-cluster ${\mathcal C}_1$ in ${\mathcal B}_i$,
such that ${\mathcal C}\cap {\mathcal C}_1=\emptyset$.
\item $|a({\mathcal C})-a({\mathcal C}^*)| >2\sqrt{\eta }2^{j_1+m_1^*}$
for every crossing $(-*)$-cluster ${\mathcal C}^*$ in ${\mathcal B}_i$.
\item There exists a $(+)$-path which crosses shorter direction of 
$T_{a({\mathcal C})}$, and is connected by a $(+)$-path with 
${\mathcal C}$ in 
$$
S(2^k)\cap \bigl[ S(a({\mathcal C}),\sqrt{\eta }2^{j_1+m_1^*})
  \setminus S(a({\mathcal C}),\eta 2^{j_1+m_1^*}) \bigr] .
$$
\end{enumerate}
Fences for crossing $(-*)$-clusters in  ${\mathcal B}_i$
are defined in the same way.

Then ${\tilde \Lambda }({\tilde S}_{j_1}^{m_1^*}(v), S(2^k))$ is defined as a 
subset of
${\tilde \Gamma }({\tilde S}_{j_1}^{m_1^*}(v), S(2^k))$ such that every
crossing $(+)$-cluster and crossing $(-*)$-cluster of 
${\tilde {\mathcal B}}_i$, $(i=1,2,4)$ has an $(\eta ,j_1+m_1^*)$-fence.
The set ${\tilde \Delta }({\tilde S}_{j_1}^{m_1^*}(v), S(2^k))$ is then 
defined as a
subset of  ${\tilde \Gamma }({\tilde S}_{j_1}^m(v), S(2^k))$ such that
\begin{enumerate}
\item 
$r_1\cap (\cup_{j=1,2,4}{\tilde {\mathcal B}}_j)\subset {\tilde {\mathcal A}}_1$,
\item $r^*_{i+1}\cap(\cup_{j=1,2,4}{\tilde {\mathcal B}}_j)\subset {\tilde {\mathcal A}}_{i+1}$
for $i=1,3$, and 
\item  there exists a vertical $(+)$-crossing in ${\tilde {\mathcal A}}_1$,
and there exists a horizontal $(-*)$-crossing in ${\tilde {\mathcal A}}_{i+1}$
for $i=1,3$.
\end{enumerate}

If $d(S_{j_1}^{m_1^*}(v),S(2^k)^c)<2^{j_1+m_1^*-2}$ and 
$S_{j_1}^{m_1^*}(v)\not\subset \{ x^2\leq 2^k-2^{j_1+m_1^*-2}\} $, then we put
$$
{\tilde S}_{j_1}^{m_1^*}(v)=[x_{j_1}^1(v)-2^{j_1+m_1^*},2^k]
   \times [ x_{j_1}^2(v)-2^{j_1+m_1^*}, 2^k].
$$
and we define 
$
{\tilde \Gamma }({\tilde S}_{j_1}^{m_1^*}(v), S(2^k))$ as the event such that
\begin{enumerate}
\item there exists a $(+)$-path $r_1$ connecting 
$\partial {\tilde S}_{j_1}^{m_1^*}(v)$ with the left side of $S(2^k)$, and
\item there exists a $(-*)$-path $r^*_4$ connecting
$\partial {\tilde S}_{j_1}^{m_1^*}(v)$ with the bottom side of
$S(2^k)$.
\end{enumerate}
${\tilde {\mathcal B}}_i$ and $\varphi_i$ are given by 
(\ref{eq:bi-tilde2}) and (\ref{eq:phii-tilde2}), and $\xi_1,\xi_4$
are given as the left and the bottom sides of ${\tilde S}_{j_1}^{m_1^*}(v)$,
respectively.
${\mathcal A}_i$ are the same as before for $i=1,4$.

Then we define ${\tilde \Lambda }( {\tilde S}_{j_1}^{m_1^*}(v), S(2^k))$
and ${\tilde \Delta}( {\tilde S}_{j_1}^m(v), S(2^k))$
in terms of $r_1,r^*_4$ and  
${\tilde {\mathcal B}}_i, {\tilde {\mathcal A}}_i $
for $i=1,4$, as before.

Finally, consider the case where $m>m_1^*$.
If $T_{j_1}^m(v) \subset \{ x^2\leq 2^k-2^{j_1+m-2}\} $, then 
let ${\tilde \Gamma }(T_{j_1}^{m}(v), S(2^k))$ be the event such that
\begin{enumerate}
\item there exists a $(+)$-path $r_1$ in $S(2^k)$ connecting 
$\partial T_{j_1}^m(v)$ with the left side of $S(2^k)$, and
\item there exist $(-*)$-paths $r_2^*$ and $r_4^*$ in $S(2^k)$ 
such that $r_2^*$
connects $\partial T_{j_1}^m(v)$ with the top side of $S(2^k)$, and 
$r_4^*$ connects $\partial T_{j_1}^m(v)$ with the bottom side of $S(2^k)$.
\end{enumerate}
To define ${\tilde {\mathcal B}}_i$, recall that $t=(t^1,t^2)$ denotes the center
of $T_{j_1}^m(v)$, i.e.,
$$
t^1= 2^k-2^{j_1+m}, \quad \mbox{ and } \quad t^2=x_{j_1}^2(v),  
$$
in this case.
Then $T_{j_1}^m(v)=S(t,2^{j_1+m})$, and we put
\begin{equation}\label{eq:bi-tilde3}
{\tilde {\mathcal B}}_i:=
S(2^k)\cap \bigl[ 
S_i(t, 2^{j_1+m}+2^{j_1+m-2})
                    \setminus T_{j_1}^{m}(v)
         \bigr] ,
\end{equation}
and
\begin{equation}\label{eq:phii-tilde3}
\varphi_i:= S(2^k)\cap \bigl[
  \partial_{in}S_i(t ,2^{j_1+m}+2^{j_1+m-2})
              \setminus T_{j_1}^{m}(v)
  \bigr]
\end{equation}
for $i=1,2,4$.
We call $\xi_1$ the left side of $T_{j_1}^m$,
$\xi_2$ the top side of  $T_{j_1}^m$, and
$\xi_4$ the bottom side of  $T_{j_1}^m$.
We define ${\tilde {\mathcal A}}_i$ for $T_{j_1}^m(v)$ as the $t$-shift of 
${\tilde {\mathcal A}}_i$ for $S(2^{j_1+m})$ for $i=1,2,4$.
Then, correspondingly we can define ${\tilde \Lambda }(T_{j_1}^m(v),S(2^k))$
and ${\tilde \Delta }(T_{j_1}^m(v),S(2^k))$.

If $T_{j_1}^m(v)\not\ni (2^k,2^k)$ and 
$T_{j_1}^m(v)\not\subset \{ x^2 \leq 2^k-2^{j_1+m-2}\} $, then we
define
$${\tilde T}_{j_1}^m(v)= [2^k-2^{j_1+m+1},2^k]\times 
          [t^2 -2^{j_1+m},2^k]
$$
and define ${\tilde \Gamma }({\tilde T}_{j_1}^{m}(v), S(2^k))$ as the
event such that 
\begin{enumerate}
\item  there exists a $(+)$-path $r_1$ in $S(2^k)$ connecting 
$\partial {\tilde T}_{j_1}^m(v)$ with the left side of $S(2^k)$, and
\item  there exists a $(-*)$-path $r_4^*$ in $S(2^k)$ connecting 
$\partial {\tilde T}_{j_1}^m(v)$ with the bottom side of $S(2^k)$.
\end{enumerate}
In this case and the next case, we use ${\tilde {\mathcal B}}_i$, $\varphi_i$,
$\xi_i$ and ${\tilde {\mathcal A}}_i$ for $i=1,4$.
Then definitions of ${\tilde {\mathcal B}}_1$, $\varphi_1$, and $\xi_1$
are modified as follows:
\begin{align*}
{\tilde {\mathcal B}}_1&= \bigl[ 
 S_1(t, 2^{j_1+m}+2^{j_1+m-2})\setminus {\tilde T}_{j_1}^m(v)
  \bigr] \cap S(2^k),\\
\varphi_1&=\bigl[\partial_{in}S_1(t, 2^{j_1+m}+2^{j_1+m-2})\setminus {\tilde T}_{j_1}^m(v)
   \bigr] \cap S(2^k),
\end{align*}
where $t$ is the center of $T_{j_1}^m(v)$, and $\xi_1$ denotes the
left side of ${\tilde T}_{j_1}^m(v)$.
Definitions of ${\tilde {\mathcal B}}_4$, $\varphi_4$,
$\xi_4$ and ${\tilde {\mathcal A}}_i, i=1,4 $
are the same as in the previous case.
Then, correspondingly we can define 
${\tilde \Lambda }({\tilde T}_{j_1}^m(v),S(2^k))$
and ${\tilde \Delta }({\tilde T}_{j_1}^m(v),S(2^k))$.

If $T_{j_1}^m(v)\ni (2^k,2^k)$, then we define 
${\tilde \Gamma }( T_{j_1}^{m}(v), S(2^k))$, 
${\tilde \Lambda }( T_{j_1}^{m}(v), S(2^k))$ and
${\tilde \Delta }( T_{j_1}^{m}(v), S(2^k))$,
by using only $r_1$ and $r_4^*$
connecting $\partial T_{j_1}^m(v)$ with the left and bottom sides
of $S(2^k)$, respectively.
${\tilde {\mathcal B}}_i$ are defined by (\ref{eq:bi-tilde3}),
and $\varphi_i$ are defined by (\ref{eq:phii-tilde3}), both for $i=1,4$.
The definitions of ${\tilde {\mathcal A}}_i$ and $\xi_i $ are the same as
in the case where $(T_{j_1}^m(v)\subset  \{ x^2 \leq 2^k-2^{j_1+m-2} \}$. 

With these modifications, as in section 4 we obtain
$$
\mu \left( \begin{array}{@{\,}c@{\,}}
\mbox{there exists a crossing $(+)$-cluster in }{\tilde {\mathcal B}}_1\\
  \mbox{which does not have an $(\eta ,j_1+m)$-fence}
 \end{array} \right) \leq \delta ,
$$  
for $\mu = \mu_{T,h}$ or $\mu_t^N$, if $n_4<2^{j_1+m}\leq 2^k<\frac{N}{2}$.
But we have to choose $\rho, \varepsilon $ to satisfy
\begin{equation}\label{condition for rho and epsilon-2}
{\left( 1-\frac{\delta_{176}}{4}\right) }^\rho +4\rho\varepsilon \leq \delta
\end{equation}
instead of (\ref{condition for rho and epsilon}), 
and $\eta $ to
satisfy (\ref{condition for eta}) and (\ref{eq:bound for eta}).
Anyway, the statement of Lemma \ref{lem:fence} 
is correct in this case, too.

Let ${\tilde R}_{j_1}^m(v)$ denote one of $S_{j_1}^m(v)$, 
${\tilde S}_{j_1}^m(v)$,
$T_{j_1}^m(v)$ and ${\tilde T}_{j_1}^m(v)$ corresponding to each cases
discussed above. 

\begin{Lemma}\label{lem: relation of deltas-tilde}
Let $2^{j_1}<2^k<\min\{ 2^6L(h,\varepsilon_0), \frac{N}{2}\} $. 
Then for $t\in [0,1]$ and every $1\leq m\leq k-j_1$, we have
\begin{align}
&\mu_t^N\bigl( {\tilde \Delta }({\tilde R}_{j_1}^m(v),S(2^k))\bigr) \leq
 C_1 \mu_t^N\bigl(  {\tilde \Delta }({\tilde R}_{j_1}^{m-1}(v),S(2^k))\bigr) ,
\ \mbox{ and }\label{eq:relation of tilde-deltas}\\
& \mu_t^N\bigl( {\tilde \Delta }({\tilde R}_{j_1}^m(v),S(2^k))\bigr)
  \geq C_1^{-(k-j_1-m)}, \label{eq:bound for delta-tilde}
\end{align}
where $C_1$ is the constant given by (\ref{eq:the constant C1}).
\end{Lemma}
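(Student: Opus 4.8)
The plan is to prove (\ref{eq:relation of tilde-deltas}) by the same step‑by‑step conditioning scheme as in the proof of Lemma \ref{relation of deltas}, with the Connection lemma replacing independence, the only change being that here the arms are extended \emph{inward}, from $\partial {\tilde R}_{j_1}^m(v)$ down to $\partial {\tilde R}_{j_1}^{m-1}(v)$, rather than outward. First I would treat the bulk case, where ${\tilde R}_{j_1}^m(v)=S_{j_1}^m(v)$ and ${\tilde R}_{j_1}^{m-1}(v)=S_{j_1}^{m-1}(v)$. Write ${\tilde{\mathcal A}}_i,{\tilde{\mathcal B}}_i$ for the rectangles attached to $S_{j_1}^m(v)$ and ${\tilde{\mathcal A}}'_i,{\tilde{\mathcal B}}'_i$ for those attached to $S_{j_1}^{m-1}(v)$; the primed ones lie strictly inside $S_{j_1}^m(v)$, hence are disjoint from every arm $r_i$ realizing ${\tilde\Delta}(S_{j_1}^m(v),S(2^k))$. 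For each $i=1,3$ introduce a corridor ${\mathcal D}_i$ running from ${\tilde{\mathcal A}}_i$, through ${\tilde{\mathcal A}}'_i$, to $\partial S_{j_1}^{m-1}(v)$, chosen so that its intersection with $\cup_j{\tilde{\mathcal B}}'_j$ sits inside ${\tilde{\mathcal A}}'_i$, and let $D_i$ be the event that there is a longitudinal $(+)$-crossing of ${\mathcal D}_i$ together with a vertical $(+)$-crossing of ${\tilde{\mathcal A}}'_i$; for $i=2,4$ define $D_i$ analogously with $(-*)$ for $(+)$. Then on ${\tilde\Delta}(S_{j_1}^m(v),S(2^k))\cap\bigcap_{i=1}^4 D_i$ each $r_i$ can be spliced to the ${\tilde{\mathcal A}}_i$-crossing, then to the ${\mathcal D}_i$-path, then to the ${\tilde{\mathcal A}}'_i$-crossing, producing an arm from $\partial S_{j_1}^{m-1}(v)$ that meets the confinement and crossing requirements for the smaller box; hence that intersection is contained in ${\tilde\Delta}(S_{j_1}^{m-1}(v),S(2^k))$.

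Next I decompose ${\tilde\Delta}(S_{j_1}^m(v),S(2^k))$ according to the innermost crossings of the ${\tilde{\mathcal A}}_i$ and condition on the $D_i$ one corridor at a time. Since every corridor has width of order at most $2^{j_1+m}\le 2^k<2^6 L(h,\varepsilon_0)$, the Connection lemma (Lemma \ref{connection lemma}, see Remark \ref{box-size-bound3}) applies and, with the FKG inequality, gives a conditional lower bound of the form $(\delta_{16}\delta_{80}/16)^{4}$ for $\bigcap_{i=1}^4 D_i$ given ${\tilde\Delta}(S_{j_1}^m(v),S(2^k))$; this yields $\mu_t^N({\tilde\Delta}(S_{j_1}^m(v),S(2^k)))\le C_1\,\mu_t^N({\tilde\Delta}(S_{j_1}^{m-1}(v),S(2^k)))$ with $C_1$ as in (\ref{eq:the constant C1}). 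I then run through the boundary cases exactly as in the proof of Lemma \ref{relation of deltas}: when ${\tilde R}_{j_1}^m(v)$ is one of ${\tilde S}_{j_1}^m(v),T_{j_1}^m(v),{\tilde T}_{j_1}^m(v)$ — so the inner box touches an edge or the corner $(2^k,2^k)$ of $S(2^k)$ — one discards the unused direction (the role of ${\mathcal A}_3$, resp.\ ${\mathcal A}_2$) and replaces the straight corridors by the longer bent corridors $U_i$ used there, keeping track of which of $\{S,{\tilde S},T,{\tilde T}\}$ each of ${\tilde R}_{j_1}^m(v)$ and ${\tilde R}_{j_1}^{m-1}(v)$ is. The number of Connection-lemma applications and the arm counts change, but each loss stays a negative power of some $\delta_\ell\delta_{16}/16$ or $\delta_{20}/4$ with $\ell\le 80$; since $\delta_\ell$ is non-increasing in $\ell$, the single constant $C_1=(\delta_{80}\delta_{16}/16)^{-4}$ absorbs every case, giving (\ref{eq:relation of tilde-deltas}).

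Finally, (\ref{eq:bound for delta-tilde}) follows by iterating (\ref{eq:relation of tilde-deltas}), written as $\mu_t^N({\tilde\Delta}({\tilde R}_{j_1}^{m}(v),S(2^k)))\ge C_1^{-1}\mu_t^N({\tilde\Delta}({\tilde R}_{j_1}^{m+1}(v),S(2^k)))$, a total of $k-j_1-m$ times. This reduces matters to a lower bound at the top scale $m=k-j_1$, where the inner box fills $S(2^k)$: its relevant sides coincide with sides of $S(2^k)$, so the arms $r_i$ are trivial and the ${\tilde{\mathcal A}}$-crossing and confinement conditions are vacuous, whence $\mu_t^N({\tilde\Delta}({\tilde R}_{j_1}^{k-j_1}(v),S(2^k)))=1$; more conservatively, once $k-j_1-m$ is bounded the inner box is within a bounded ratio of $S(2^k)$ and the crossing estimates of Lemma \ref{RSWbound} give a positive constant lower bound, which equally starts the iteration. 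Either way, $\mu_t^N({\tilde\Delta}({\tilde R}_{j_1}^m(v),S(2^k)))\ge C_1^{-(k-j_1-m)}$.

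The main obstacle I expect is not probabilistic — the step-by-step conditioning plus the Connection lemma is in spirit identical to Lemma \ref{relation of deltas} — but the geometric bookkeeping: checking in each boundary configuration that the corridors ${\mathcal D}_i$ and $U_i$ can be chosen pairwise disjoint, of the stated widths, contained in $S(2^k)$, and positioned so their intersections with the ${\tilde{\mathcal B}}'_j$ fall inside ${\tilde{\mathcal A}}'_i$, and that splicing the new corridor paths onto the old arms $r_i$ really lands one in ${\tilde\Delta}({\tilde R}_{j_1}^{m-1}(v),S(2^k))$. This is the same routine-but-delicate case analysis that occupies the proofs of Lemmas \ref{relation of deltas} and \ref{lambda-to-delta}.
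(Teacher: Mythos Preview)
Your proposal is correct and follows essentially the same approach as the paper, which simply states that the proof ``goes parallel to that of Lemma \ref{relation of deltas}'' and notes that $\tilde R_{j_1}^{k-j_1}(v)=S(2^k)$ with $\tilde\Delta(S(2^k),S(2^k))=\Omega$ for the base of the iteration in (\ref{eq:bound for delta-tilde}). Your identification of the reversed (inward) direction of the corridor extensions, the decomposition by innermost crossings in the $\tilde{\mathcal A}_i$, the use of the Connection lemma under the relaxed box-size bound of Remark \ref{box-size-bound3}, and the boundary case analysis are all in line with the paper; the only detail the paper adds is that the worst corridor constant in the corner case $T_{j_1}^m(v)\ni(2^k,2^k)$ is $(\delta_{68}\delta_{16})/16$, which is indeed absorbed by $C_1$ as you argue.
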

The proof of this lemma goes parallel to that of Lemma \ref{relation of deltas}.
This time the bound $(\delta_{68}\delta_{16})/16$ appears
when $T_{j_1}^m(v)\ni (2^k,2^k)$.
As for (\ref{eq:bound for delta-tilde}), note that 
${\tilde R}_{j_1}^{k-j_1}(v)=S(2^k)$, and we understand that
${\tilde \Delta }(S(2^k),S(2^k))=\Omega $.

\begin{Lemma}\label{lem:lambda-to-delta-tilde}
Let $2^{j_1}<2^k<\min\{ 2^5L(h,\varepsilon_0), \frac{N}{2}\} $.
There exists a constant $C_4(\eta )>0$ depending only on $\varepsilon_0$
and $\eta $, such that for $t\in [0,1]$ and $0\leq m\leq k-j_1-1$,
\begin{equation}\label{eq:lambda-to-delta-tilde}
\mu_t^N\bigl( {\tilde \Lambda }({\tilde R}_{j_1}^{m+1}(v), S(2^k))\bigr)
\leq C_4(\eta )\mu_t^N\bigl(
     {\tilde \Delta }({\tilde R}_{j_1}^{m}(v), S(2^k))\bigr).
\end{equation}
\end{Lemma}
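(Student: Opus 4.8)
The plan is to adapt the argument proving Lemma~\ref{lambda-to-delta} — the ``inwards'' counterpart — to the ``outwards'' block events, replacing independence by the Connection lemma (Lemma~\ref{connection lemma}) applied through step-by-step conditioning, exactly as in Kesten~\cite{K87scaling}. Fix $t\in[0,1]$, $0\le m\le k-j_1-1$, and put $\mu=\mu_t^N$. On ${\tilde \Lambda }({\tilde R}_{j_1}^{m+1}(v),S(2^k))$ one is given the outgoing paths $r_1,r_2^*,r_3,r_4^*$ (or a subfamily of them, in the boundary cases) joining $\partial {\tilde R}_{j_1}^{m+1}(v)$ to the sides of $S(2^k)$, together with the crossing clusters ${\mathcal C}_1,{\mathcal C}_2^*,{\mathcal C}_3,{\mathcal C}_4^*$ in the annuli ${\tilde {\mathcal B}}_i$ at scale $j_1+m+1$ that contain the first segments of these paths; by hypothesis each of them carries an $(\eta,j_1+m+1)$-fence. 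The goal is to produce, with conditional probability bounded below by a positive constant depending only on $\varepsilon_0$ and $\eta$, the structure defining ${\tilde \Delta }({\tilde R}_{j_1}^{m}(v),S(2^k))$, by extending these paths \emph{inwards}, from $\partial {\tilde R}_{j_1}^{m+1}(v)$ to $\partial {\tilde R}_{j_1}^{m}(v)$.

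The first step is geometric: fixing the corridors. By conditions~(1)--(2) in the definition of a fence the endpoints $a({\mathcal C}_1),a({\mathcal C}_2^*),a({\mathcal C}_3),a({\mathcal C}_4^*)$ are pairwise more than $2\sqrt{\eta}2^{j_1+m+1}$ apart, so — as in Lemma~\ref{lambda-to-delta} — one may choose pairwise disjoint corridors $U_1,\dots,U_4$, $U_i$ attached to the $i$-th arm, each of length at most a fixed multiple of $2^{j_1+m}$, such that $U_i$ runs from the crossing of $T_{a({\mathcal C}_i)}\cap {\tilde R}_{j_1}^{m+1}(v)$ supplied by condition~(3) of the fence, inwards across the annulus $\bigcup_j{\tilde {\mathcal B}}_j$ at scale $j_1+m$, through ${\tilde {\mathcal A}}_i$ (which it crosses in the prescribed direction), down to $\partial {\tilde R}_{j_1}^{m}(v)$. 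Disjointness is available precisely because $\eta$ satisfies \eqref{eq:bound for eta} (with $M_0\ge 1$); with $\eta$ fixed, each $U_i$, reshaped into a rectangle, has aspect ratio $\le p/4$ for some integer $p\ge 2^3\eta^{-1}$, so that Lemma~\ref{connection lemma} applies inside $U_i$ whenever $2^{j_1+m}<8L(h,\varepsilon_0)$ (Remark~\ref{box-size-bound}), which is covered by the hypothesis $2^k<2^5L(h,\varepsilon_0)$.

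The second step is probabilistic, and copies the proofs of Lemmas~\ref{connection lemma}, \ref{relation of deltas}, and \ref{lambda-to-delta}: one decomposes ${\tilde \Lambda }$ according to the lowest/leftmost realizations of the relevant fence-crossings and ``minimal'' connecting sub-paths, conditions on them successively, and in each conditioned configuration invokes the Connection lemma together with the FKG inequality to create the $(+)$- [resp.\ $(-*)$-] path filling $U_i$ and a crossing of ${\tilde {\mathcal A}}_i$. Each corridor contributes a factor $\ge C_3^*\ge\frac{\delta_{8p}\delta_{16}}{16}$ (cf.\ \eqref{eq:the constant C3(eta)}) and each ${\tilde {\mathcal A}}_i$-crossing a factor $\ge\delta_{16}/4$; on the intersection of these events the resulting configuration lies in ${\tilde \Delta }({\tilde R}_{j_1}^{m}(v),S(2^k))$ — the extended arms automatically satisfy the confinement and crossing conditions defining ${\tilde \Delta }$. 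As at most four corridors are used, this yields \eqref{eq:lambda-to-delta-tilde} with $C_4(\eta)=(C_3^*)^{-4}(\delta_{16}/4)^{-4}$, depending only on $\varepsilon_0$ and $\eta$.

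Finally, the boundary and corner cases — ${\tilde R}_{j_1}^m(v)$ equal to ${\tilde S}_{j_1}^{m_1^*}(v)$ or ${\tilde T}_{j_1}^m(v)$, and the sub-case $T_{j_1}^m(v)\ni(2^k,2^k)$ — are handled exactly as in Cases~1 and~2 of the proof of Lemma~\ref{relation of deltas}: one or two of the arms is absent, and a corridor belonging to an arm that reaches $S(2^k)$ on its left or bottom side is bent around the corner, its length staying $O(2^{j_1+m})$; only the indices of the $\delta_j$'s change, not their positivity, so $C_4(\eta)$ can be chosen uniform. I expect the only real difficulty to be this first step in full generality: verifying that in every such configuration the (up to four) corridors can be laid out pairwise disjointly — routed around a corner when an arm leaves ${\tilde R}_{j_1}^{m+1}(v)$ on an inconvenient side — while each retains bounded aspect ratio so that Lemma~\ref{connection lemma} is applicable. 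This is the bookkeeping behind the phrase ``the same as in \cite{K87scaling}'', and it is where the smallness condition \eqref{eq:bound for eta} on $\eta$ is used.
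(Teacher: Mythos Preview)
Your proposal is correct and follows essentially the same route as the paper, which merely states that the proof ``goes parallel to that of Lemma~\ref{lambda-to-delta}'' and then records two remarks: (i) that half of $T_a$ may fall outside ${\tilde R}_{j_1}^{m+1}(v)$, so the corridor width can shrink to $\eta 2^{j_1+m+1}$ while the length stays $\le 8\cdot 2^{j_1+m+1}$ --- still yielding the same $C_3^*$ as in \eqref{eq:the constant C3(eta)}; and (ii) that the boundary/corner cases are handled by combining the corridor argument with the case analysis of Lemma~\ref{lem: relation of deltas-tilde} (the outwards analogue of the Lemma~\ref{relation of deltas} you cite). Your write-up already absorbs point~(i) implicitly through the aspect-ratio bound, and your treatment of the boundary cases matches point~(ii) up to citing the inwards rather than the outwards $\Delta$-comparison lemma.
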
 
The proof of this lemma goes parallel to that of 
Lemma \ref{lambda-to-delta}, but we have to remark two points.

\noindent $1^\circ $)
By definition of $T_a$ in defining fences, half of $T_a$ may not be 
inside ${\tilde R}_{j_1}^{m+1}(v)$. 
Therefore the width of corridors connecting $T_a$ and one of
${\tilde {\mathcal A}}_i$'s may be equal to $\eta 2^{j_1+m+1}$.
The length of the corridors can be made less than or equal to
$8\cdot 2^{j_1+m+1}$.
Thus, we can use the same constant $C_3^*$ in (\ref{eq:the constant C3(eta)})
in this case, too.

\noindent
$2^\circ $) 
We have to consider the case where ${\tilde S}_{j_1}^m(v)$ or
${\tilde T}_{j_1}^m(v)$ appears as ${\tilde R}_{j_1}^m(v)$, or 
the case where ${\tilde R}_{j_1}^m(v)=S_{j_1}^m(v)$ and 
${\tilde R}_{j_1}^{m+1}(v)=T_{j_1}^{m+1}(v)$.
But the proof of the lemma in these cases is a combination 
of the use of corridors and the proof of
Lemma \ref{lem: relation of deltas-tilde}. 

\begin{Lemma}\label{lem:comparison of gamma and delta-tilde}
Assume the conditions (\ref{eq:bound for j_1}) and (\ref{eq:cond-for-j1-2}).
Let $2^{j_1}<2^k<\min\{ 2^5L(h,\varepsilon_0), \frac{N}{2}\} $.
Then there exists a constant ${\tilde K}>0$ depending only on 
$j_1, \eta $ and $\varepsilon_0$, such that for 
$t\in[0,1]$ and $0\leq m\leq k-j_1-2$,
\begin{equation}\label{eq:gamma-to-delta}
\mu_t^N\bigl( {\tilde \Gamma }({\tilde R}_{j_1}^m(v), S(2^k))\bigr)
 \leq {\tilde K}
\mu_t^N\bigl( {\tilde \Delta }({\tilde R}_{j_1}^m(v), S(2^k))\bigr) .
\end{equation} 
\end{Lemma}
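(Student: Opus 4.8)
The plan is to transcribe the proof of Lemma~\ref{comparison of gamma and delta} almost verbatim, but running the scale recursion \emph{outward}: here $m$ increases towards $k-j_1$, where ${\tilde R}_{j_1}^{k-j_1}(v)=S(2^k)$ and ${\tilde \Gamma }(S(2^k),S(2^k))=\Omega $. The roles played in the inward case by Lemma~\ref{lem:fence}, Lemma~\ref{lambda-to-delta} and Lemma~\ref{relation of deltas} are now played, respectively, by the outward fence estimate established at the end of Section~5.3 (the analogue of Lemma~\ref{lem:fence}, valid under condition (\ref{condition for rho and epsilon-2}) rather than (\ref{condition for rho and epsilon})), by Lemma~\ref{lem:lambda-to-delta-tilde}, and by Lemma~\ref{lem: relation of deltas-tilde}.

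First I would record the monotonicity ${\tilde \Gamma }({\tilde R}_{j_1}^{m}(v),S(2^k))\subseteq {\tilde \Gamma }({\tilde R}_{j_1}^{m+1}(v),S(2^k))$. Indeed ${\tilde R}_{j_1}^{m}(v)\subseteq {\tilde R}_{j_1}^{m+1}(v)$ by Lemma~\ref{S_j and T_j} (together with $S_{j_1}^m(v)\subset S_{j_1}^{m+1}(v)$), and every $(+)$- or $(-*)$-arm realizing the event for ${\tilde R}_{j_1}^{m}(v)$ runs from $\partial {\tilde R}_{j_1}^{m}(v)$ to a side of $S(2^k)$, hence crosses $\partial {\tilde R}_{j_1}^{m+1}(v)$; its terminal piece realizes the corresponding arm for ${\tilde R}_{j_1}^{m+1}(v)$, while disjointness and the separation condition survive truncation. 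The only care needed is which arms are required when ${\tilde R}_{j_1}^{m+1}(v)$ is one of the clipped boxes $T_{j_1}^{m+1}$, ${\tilde S}_{j_1}^{m+1}$ or ${\tilde T}_{j_1}^{m+1}$, exactly as in the definitions of Section~5.3. Next, as in Lemma~\ref{comparison of gamma and delta}, I decompose ${\tilde \Gamma }({\tilde R}_{j_1}^{m+1}(v))={\tilde \Lambda }({\tilde R}_{j_1}^{m+1}(v))\cup[{\tilde \Gamma }({\tilde R}_{j_1}^{m+1}(v))\setminus {\tilde \Lambda }({\tilde R}_{j_1}^{m+1}(v))]$ and note that on the second set one of the crossing clusters in the shells ${\tilde {\mathcal B}}_i$ of ${\tilde R}_{j_1}^{m+1}(v)$ lacks an $(\eta ,j_1+m+1)$-fence while ${\tilde \Gamma }({\tilde R}_{j_1}^{m+2}(v))$ still occurs; these two events are supported on sets at $\ell^\infty$-distance at least $3\cdot 2^{j_1+m-1}\geq 2^{j_1}$, so the mixing property (\ref{HZ(1.3)}), the outward fence estimate, and (\ref{eq:cond-for-j1-2}) yield
\[
\mu_t^N\bigl({\tilde \Gamma }({\tilde R}_{j_1}^{m+1}(v))\setminus {\tilde \Lambda }({\tilde R}_{j_1}^{m+1}(v))\bigr)\leq 9\delta\,\mu_t^N\bigl({\tilde \Gamma }({\tilde R}_{j_1}^{m+2}(v))\bigr).
\]

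Iterating this up to ${\tilde R}_{j_1}^{k-j_1}(v)=S(2^k)$ — which is what forces the hypothesis $m\leq k-j_1-2$, so that at least one ${\tilde \Lambda }$-term appears — gives
\[
\mu_t^N\bigl({\tilde \Gamma }({\tilde R}_{j_1}^{m}(v))\bigr)\leq \sum_{\ell =0}^{k-j_1-m-2}(9\delta )^\ell\,\mu_t^N\bigl({\tilde \Lambda }({\tilde R}_{j_1}^{m+1+\ell }(v))\bigr)+(9\delta )^{k-j_1-m-1}.
\]
By Lemma~\ref{lem:lambda-to-delta-tilde}, $\mu_t^N({\tilde \Lambda }({\tilde R}_{j_1}^{m+1+\ell }(v)))\leq C_4(\eta )\,\mu_t^N({\tilde \Delta }({\tilde R}_{j_1}^{m+\ell }(v)))$; iterating (\ref{eq:relation of tilde-deltas}) gives $\mu_t^N({\tilde \Delta }({\tilde R}_{j_1}^{m+\ell }(v)))\leq C_1^\ell\,\mu_t^N({\tilde \Delta }({\tilde R}_{j_1}^{m}(v)))$; and (\ref{eq:bound for delta-tilde}), i.e.\ $\mu_t^N({\tilde \Delta }({\tilde R}_{j_1}^m(v)))\geq C_1^{-(k-j_1-m)}$, bounds the boundary term by $(9\delta C_1)^{k-j_1-m-1}C_1\,\mu_t^N({\tilde \Delta }({\tilde R}_{j_1}^m(v)))$. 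Since $\delta =\tfrac{1}{18}C_1^{-1}$ by (\ref{eq:delta-1}) we have $9\delta C_1=\tfrac12<1$, so the geometric series in $\ell $ converges and the boundary term is at most $\tfrac12 C_1\,\mu_t^N({\tilde \Delta }({\tilde R}_{j_1}^m(v)))$; collecting constants gives the claim with ${\tilde K}=2C_4(\eta )+\tfrac12 C_1$, depending only on $j_1,\eta ,\varepsilon_0$.

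The recursion itself is a line-by-line copy of Lemma~\ref{comparison of gamma and delta}, and I expect the only real work to be the geometric case analysis, which is heavier here because ${\tilde R}_{j_1}^m(v)$ ranges over the four box types $S_{j_1}^m$, ${\tilde S}_{j_1}^m$, $T_{j_1}^m$, ${\tilde T}_{j_1}^m$ according as $m<m_1^*$, $m=m_1^*$ or $m>m_1^*$ and whether the box engulfs the corner $(2^k,2^k)$. In each regime one must verify the inclusion ${\tilde \Gamma }({\tilde R}_{j_1}^{m+1}(v))\setminus {\tilde \Lambda }({\tilde R}_{j_1}^{m+1}(v))\subseteq {\tilde \Gamma }({\tilde R}_{j_1}^{m+2}(v))\cap\{\text{some shell crossing cluster lacks a fence}\}$ and the $\ell^\infty$-distance bound used in the mixing step; near $m=k-j_1$ the shells ${\tilde {\mathcal B}}_i$ get clipped by $\partial S(2^k)$, which is precisely why the outward fence estimate (condition (\ref{condition for rho and epsilon-2})) must be used in place of Lemma~\ref{lem:fence} as stated. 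All of these points have already been dealt with in the proofs of Lemmas~\ref{lem: relation of deltas-tilde} and \ref{lem:lambda-to-delta-tilde} (via the corridor constructions and the clipped-box modifications noted there), so no genuinely new obstacle arises.
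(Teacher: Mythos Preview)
Your proposal is correct and follows exactly the approach the paper intends: the paper's own proof is the single sentence ``The proof goes parallel to that of Lemma~\ref{comparison of gamma and delta},'' and you have supplied precisely that parallel, invoking the outward fence estimate (with condition~(\ref{condition for rho and epsilon-2})), Lemma~\ref{lem:lambda-to-delta-tilde}, and Lemma~\ref{lem: relation of deltas-tilde} in place of their inward counterparts. One cosmetic remark: the initial monotonicity step $\tilde\Gamma(\tilde R_{j_1}^m)\subseteq\tilde\Gamma(\tilde R_{j_1}^{m+1})$ before decomposing is not needed---as in the detailed outward argument of Lemma~\ref{lem:comparison of gamma0 and delta0-tilde}, you can decompose directly at level $m$, with the fence-failure event living in the $\tilde{\mathcal B}_i$ of $\tilde R_{j_1}^m(v)$ at distance $3\cdot 2^{j_1+m-2}$ from $S(2^k)\setminus\tilde R_{j_1}^{m+1}(v)$---but your shifted version is equally valid.
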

The proof goes parallel to that of Lemma \ref{comparison of gamma and delta}.

\newpage
\section{Extension argument II} 

In this section, we give an analogy to the argument in the previous section 
related to the one-arm event 
$$ 
\{ {\mathbf O} \stackrel{+}{\leftrightarrow } \partial_{in }S(2^k) \} . 
$$ 
The argument here is similar to that of subsection 5.3,
so we are going to use essentially the same notation as in 5.3.
Also, as in the previous section, we assume that $v\in S(2^k)$ and 
$0\leq v^2\leq v^1$ for the sake of argument. 
Throughout this section, we assume that $j_1$ satisfies 
the conditions 
(\ref{eq:bound for j_1}) and (\ref{eq:cond-for-j1-2}).

First, we  define the event 
${\tilde \Gamma }_0({\tilde R}_{j_1}^m(v), S(2^k))$ in two cases.

\noindent
(i) If ${\tilde R}_{j_1}^m(v)=S_{j_1}^m(v)$, then let 
${\tilde \Gamma }_0({\tilde R}_{j_1}^m(v), S(2^k))$ be the event such that 
all the following occur.
\begin{enumerate}
\item There exists a $(+)$-path $r_1$ in $S(2^k)$ connecting
${\mathbf O}$ with $\partial {\tilde R}_{j_1}^m(v)$.
\item There exists another $(+)$-path $r_3$ in $S(2^k)$ connecting
$\partial {\tilde R}_{j_1}^m(v)$ with $\partial_{in}S(2^k)$.
\item There exists a $(-*)$-path $t^*$  in 
$S(2^k)\setminus {\tilde R}_{j_1}^m(v)$
starting and ending at
$\partial {\tilde R}_{j_1}^m(v)$ such that it separates $r_1$ and
$r_3$ in $S(2^k)\setminus {\tilde R}_{j_1}^m(v)$.
\end{enumerate}

\noindent
(ii) If ${\tilde R}_{j_1}^m(v)$ is one of ${\tilde S}_{j_1}^m(v)$,
${\tilde T}_{j_1}^m(v)$ and $T_{j_1}^m(v)$, then we define
${\tilde \Gamma }_0({\tilde R}_{j_1}^m(v), S(2^k))$ as the event 
such that above 1 and the following 3' occur.
\begin{enumerate}
\item[3'.] There exists a $(-*)$-path $t^*$ 
in 
$S(2^k)\setminus {\tilde R}_{j_1}^m(v)$
starting and ending at
$\partial {\tilde R}_{j_1}^m(v)$ such that it separates $r_1$ and
$\partial S(2^k)$ in $S(2^k)\setminus {\tilde R}_{j_1}^m(v)$.
\end{enumerate}
We do not require the existence of $r_3$ in this case.

\medskip
Let ${\tilde \Lambda }_0({\tilde R}_{j_1}^m(v), S(2^k))$ be the subset
of ${\tilde \Gamma }_0({\tilde R}_{j_1}^m(v), S(2^k))$ such that for every $i$,
any crossing $(+)$-cluster and any crossing $(-*)$-cluster in 
${\tilde {\mathcal B}}_i$
has an $(\eta ,j_1+m)$-fence.
Here, we use ${\{ {\tilde {\mathcal B}}_i\} }_{1\leq i\leq 4} $ 
if ${\tilde R}_{j_1}^m(v)=S_{j_1}^m(v)$,
$ {\{ {\tilde {\mathcal B}}_i\} }_{ i=1,2,4}$ if 
${\tilde R}_{j_1}^m(v)\not= { S}_{j_1}^m(v)$
such that ${\tilde R}_{j_1}^m(v)\not\ni (2^k,2^k)$,
and $\{ {\tilde {\mathcal B}}_1, {\tilde {\mathcal B}}_4 \} $ if
${\tilde R}_{j_1}^m(v)\ni (2^k,2^k)$.

\medskip
In order to define $ {\tilde \Delta }_0({\tilde R}_{j_1}^m(v), S(2^k))$,
there are cases we have to modify locations of ${\tilde {\mathcal A}}_1$
and  ${\tilde {\mathcal A}}_2$.
We define  ${\tilde {\mathcal A}}_i$ as in the previous section
unless $d({\tilde R}_{j_1}^m(v), \{ x^2=2^k \} )< 3\cdot 2^{j_1+m-2}$.
If this occurs, then we put 
${\tilde {\mathcal A}}_1$
and  ${\tilde {\mathcal A}}_2$ on the left side of ${\tilde R}_{j_1}^m(v)$.
Namely, we put
\begin{align*}
{\tilde {\mathcal A}}_1&:= [y^1-2^{j_1+m-2}, y^1]
\times [y^2, y^2+2^{j_1+m-1}], \\
{\tilde {\mathcal A}}_2&:= [y^1-2^{j_1+m-2}, y^1]
\times [y^2+2^{j_1+m+1}-2^{j_1+m-1},y^2+2^{j_1+m+1}],  
\end{align*}
where $y=(y^1,y^2)$ is the lower left corner of ${\tilde R}_{j_1}^m(v)$.
${\tilde {\mathcal A}}_3, {\tilde {\mathcal A}}_4$ are
defined as in the previous section.
Then we define ${\tilde \Delta }_0({\tilde R}_{j_1}^m(v), S(2^k))$
as a subset of ${\tilde \Gamma }_0({\tilde R}_{j_1}^m(v), S(2^k))$
such that
\begin{enumerate}
\item 
$r_1 \cap \cup_i {\tilde {\mathcal B}}_i \subset {\tilde {\mathcal A}}_1$,
\item $r_3 \cap \cup_i {\tilde {\mathcal B}}_i \subset {\tilde {\mathcal A}}_3$
when ${\tilde R}_{j_1}^m(v)=S_{j_1}^m(v)$, and
\item $t^*$ starts in ${\tilde {\mathcal A}}_2$ and ends in
${\tilde {\mathcal A}}_4$ such that
$$t^*\cap \cup_i {\tilde {\mathcal B}}_i \subset {\tilde {\mathcal A}}_2
\cup {\tilde {\mathcal A}}_4.$$ 
\end{enumerate}

Let 
\begin{equation}\label{eq:m(v)}
m(v) := \min \{ m \geq 0 : S_{j_1}^m(v) \supset S(v,2^{-3} |v|_{\infty}) 
\}. 
\end{equation}
We will consider only those $m$'s which satisfy $0 \leq m \leq m(v)$, 
and therefore  ${\tilde R}_{j_1}^m(v)$  
is in distance of 
the same order as $|v|_\infty $ from the origin. 

By these definitions, as remarked in 5.3, if we choose 
$\rho , \varepsilon $ to satisfy (\ref{condition for rho and epsilon-2}),
and $\eta $ to satisfy (\ref{condition for eta}) and (\ref{eq:bound for eta}),
the statement of Lemma \ref{lem:fence}
is correct.
Further, we have analogous lemmas as in the previous section. 

\begin{Lemma}\label{lambda0-to-delta0-tilde} 
Let
$2^{j_1}<2^k<\min\{ 2^5L(h,\varepsilon_0),\frac{N}{2}\} $.
Then we have for $0\leq t \leq 1$ and $0\leq m\leq m(v)-1$,
\begin{equation}\label{eq:lambda0-to-delta0-tilde}
\mu_t^N\bigl( {\tilde \Lambda }_0({\tilde R}_{j_1}^{m+1}(v), S(2^k))\bigr) 
\leq C_4(\eta ) 
 \mu_t^N\bigl( {\tilde \Delta }_0( {\tilde R}_{j_1}^m(v), S(2^k))\bigr) , 
\end{equation}
where $C_4(\eta )$ is the same as in Lemma \ref{lem:lambda-to-delta-tilde}.
\end{Lemma}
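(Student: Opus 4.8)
The strategy is to repeat, almost verbatim, the corridor construction used to prove Lemma \ref{lem:lambda-to-delta-tilde} (which itself runs parallel to Lemma \ref{lambda-to-delta}), the only change being that the four boundary arms $r_1,r_2^*,r_3,r_4^*$ used there are replaced by the one-arm configuration underlying $\tilde\Gamma_0$: a $(+)$-arm $r_1$ issuing from $\mathbf O$, at most one $(+)$-arm $r_3$ running out to $\partial_{in}S(2^k)$, and a single $(-*)$-path $t^*$ performing the separation. The plan is first to condition on $\tilde\Lambda_0(\tilde R_{j_1}^{m+1}(v),S(2^k))$ and to reveal the crossing $(+)$- and $(-*)$-clusters in the shells $\tilde{\mathcal B}_i$ attached to $\tilde R_{j_1}^{m+1}(v)$ together with their prescribed $(\eta,j_1+m+1)$-fences. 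Each fence locates the endpoint $a$ of the corresponding crossing cluster on $\partial\tilde R_{j_1}^{m+1}(v)$ and furnishes, near $a$, a monochromatic crossing of the rectangle $T_a$ joined to that cluster within $S(a,\sqrt\eta\,2^{j_1+m+1})\setminus S(a,\eta\,2^{j_1+m+1})$. Since $t^*$ carries two such endpoints (generically lying in $\tilde{\mathcal B}_2$ and $\tilde{\mathcal B}_4$), there are at most four endpoints to process, the same count as in the four-arm setting, which is exactly why the constant $C_4(\eta)$ of Lemma \ref{lem:lambda-to-delta-tilde} can be reused without change.

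To each revealed endpoint $a$ I would then attach a corridor --- of width of order $\eta\,2^{j_1+m+1}$ (widened, as in remark $1^\circ$ after Lemma \ref{lem:lambda-to-delta-tilde}, to at most $\eta\,2^{j_1+m+2}$ when part of $T_a$ falls outside $\tilde R_{j_1}^{m+1}(v)$) and of length at most $8\cdot 2^{j_1+m+1}$ --- steering it from the monochromatic crossing of $T_a$ furnished by the fence into the appropriate narrow rectangle $\tilde{\mathcal A}_i$ of $\tilde R_{j_1}^m(v)$; by the disjointness of the fences and the smallness of $\eta$ imposed through $(\ref{condition for eta})$ and $(\ref{eq:bound for eta})$ (with $M_0\geq 1$), these corridors, together with $r_1$ and, when present, $r_3$, may be taken pairwise disjoint. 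Revealing the ``minimal'' monochromatic path in each corridor step by step --- which is how one avoids any appeal to independence or to the BK inequality --- I would apply the Connection lemma to extend each arm, through its corridor and the rectangle $\tilde{\mathcal A}_i$, to the relevant side of $\tilde R_{j_1}^m(v)$ with conditional probability at least $C_3^*\cdot(\delta_{16}/4)$. Multiplying these at most four conditional probabilities and summing over the revealed clusters, exactly as in Lemma \ref{lem:lambda-to-delta-tilde}, yields $\mu_t^N(\tilde\Lambda_0(\tilde R_{j_1}^{m+1}(v),S(2^k)))\leq (C_3^*)^{-4}(\delta_{16}/4)^{-4}\,\mu_t^N(\tilde\Delta_0(\tilde R_{j_1}^m(v),S(2^k)))$, i.e.\ the claim with the stated $C_4(\eta)$.

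The real labour here is bookkeeping, not a new idea. One has to run the argument through each geometric type of $\tilde R_{j_1}^m(v)$ --- the interior box $S_{j_1}^m(v)$ and the boundary boxes $\tilde S_{j_1}^m(v)$, $T_{j_1}^m(v)$, $\tilde T_{j_1}^m(v)$ --- to account for the relocation of $\tilde{\mathcal A}_1$ and $\tilde{\mathcal A}_2$ to the left side of $\tilde R_{j_1}^m(v)$ when $d(\tilde R_{j_1}^m(v),\{x^2=2^k\})<3\cdot 2^{j_1+m-2}$, and to check that in the degenerate cases (no $r_3$, or a missing side) one simply has fewer than four corridors, so the constant is unchanged. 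The one point needing genuine care --- already flagged in remarks $1^\circ$ and $2^\circ$ following Lemma \ref{lem:lambda-to-delta-tilde} --- is that when the smaller box $\tilde R_{j_1}^m(v)$ and the larger box $\tilde R_{j_1}^{m+1}(v)$ are of different boundary types, the corridor construction for $t^*$ must be spliced together with the straight-corridor device from the proof of Lemma \ref{lem: relation of deltas-tilde}; but since $0\leq m\leq m(v)-1$ keeps both boxes at a distance of order $|v|_\infty$ from the origin and well inside $S(2^k)$, and since $2^k<2^5L(h,\varepsilon_0)$, every corridor and every $\tilde{\mathcal A}_i$ lies in the regime where the Connection lemma and Lemma \ref{RSWbound} remain available, so no new estimate is required.
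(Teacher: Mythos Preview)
Your proposal is correct and follows essentially the same approach as the paper's own proof: reduce to the corridor construction of Lemma \ref{lem:lambda-to-delta-tilde}, replacing the pair $r_2^*,r_4^*$ by the two endpoints of the single separating path $t^*$, attach corridors of length at most $8\cdot 2^{j_1+m+1}$ from the fence-located endpoints on $\partial\tilde R_{j_1}^{m+1}(v)$ into the rectangles $\tilde{\mathcal A}_i$ of $\tilde R_{j_1}^m(v)$, and apply the Connection lemma. The paper's proof is in fact terser than yours --- it simply records the substitution $t^*\leftrightarrow(r_2^*,r_4^*)$, the target rectangles, and the length bound --- whereas you spell out the case analysis over box types and the relocation of $\tilde{\mathcal A}_1,\tilde{\mathcal A}_2$ near $\{x^2=2^k\}$ that the paper leaves implicit.
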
 
\begin{proof} 
The proof is quite similar to the proof of Lemma \ref{lem:lambda-to-delta-tilde}. 
We use $t^*$ instead of $r_2^*$ and $r_4^*$.
Let ${\tilde {\mathcal B}}'_i$ denote ${\tilde {\mathcal B}}_i$
for ${\tilde R}^{m+1}_{j_1}(v)$, and ${\tilde {\mathcal A}}_i$
denote ${\tilde {\mathcal A}}_i$ for ${\tilde R}_{j_1}^m(v)$.
Since any crossing $(+)$-cluster and any $(-*)$-cluster in
any of ${\tilde {\mathcal B}}'_i$ has an $(\eta ,j_1+m+1)$-fence,
on the event ${\tilde \Lambda }_0({\tilde R}_{j_1}^{m+1}(v), S(2^k))$,
we use corridors to connect these endpoints to the corresponding 
${\tilde {\mathcal A}}_i$ of ${\tilde R}_{j_1}^m(v)$.
Namely, they connect endpoint of $r_1$ with ${\tilde {\mathcal A}}_1$,
endpoint of $r_3$ with ${\tilde {\mathcal A}}_3$ if it exists,
endopoints of $t^*$ with ${\tilde {\mathcal A}}_2$ and
${\tilde {\mathcal A}}_4$, respectively so that these corridors do
not intersect.
The length of these corridors can be made less than $8\cdot 2^{j_1+m+1}$,
which ensures that the proof of Lemma \ref{lem:lambda-to-delta-tilde}
is valid in this case, too. 
\end{proof} 

To compare $\mu_t^N$-probabilities of 
${\tilde \Delta}_0({\tilde R}_{j_1}^{m+1}(v),S(2^k))$ and
${\tilde \Delta}_0({\tilde R}_{j_1}^{m}(v),S(2^k))$, we need a little more
care.

\begin{Lemma}\label{lem:comparison of delta0s-tilde}
Let $2^{j_1}<2^k<\min\{ 2^6L(h,\varepsilon_0), \frac{N}{2}\} $.
Then we have for $0\leq t\leq 1$ and for $0\leq m \leq m(v)-1$,
\begin{equation}\label{eq:comparison of delta0s-tilde}
\mu_t^N\bigl( {\tilde \Delta }_0({\tilde R}_{j_1}^{m+1}(v),S(2^k))\bigr)
\leq C_1 \mu_t^N\bigl( 
       {\tilde \Delta }_0({\tilde R}_{j_1}^{m}(v),S(2^k))\bigr) .
\end{equation}
\end{Lemma}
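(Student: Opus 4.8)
The plan is to follow the scheme of Lemma~\ref{lem: relation of deltas-tilde} (which itself parallels Lemma~\ref{relation of deltas}): from a configuration in $\tilde{\Delta}_0(\tilde{R}_{j_1}^{m+1}(v),S(2^k))$ we prescribe a bounded number of additional crossing events whose conditional probability is at least $C_1^{-1}$, and check that the enlarged configuration lies in $\tilde{\Delta}_0(\tilde{R}_{j_1}^{m}(v),S(2^k))$; as elsewhere in the paper each such crossing is produced by the Connection lemma (Lemma~\ref{connection lemma}) and the FKG inequality, conditioning on one region at a time, so the BK inequality is avoided. The geometric point is that $\tilde{R}_{j_1}^{m}(v)\subset\tilde{R}_{j_1}^{m+1}(v)$ and, since $0\le m\le m(v)-1$, the origin lies outside $\tilde{R}_{j_1}^{m+1}(v)$; hence the three arms $r_1$ (from $\mathbf{O}$), $r_3$ (to $\partial_{in}S(2^k)$) and $t^*$ of $\tilde{\Delta}_0(\tilde{R}_{j_1}^{m+1}(v),S(2^k))$ only have to be extended \emph{inwards}, across the shell $\tilde{R}_{j_1}^{m+1}(v)\setminus\tilde{R}_{j_1}^{m}(v)$, until they reach $\partial\tilde{R}_{j_1}^{m}(v)$ through the scale-$m$ rectangles $\tilde{\mathcal{A}}_1$, $\tilde{\mathcal{A}}_3$ and $\tilde{\mathcal{A}}_2\cup\tilde{\mathcal{A}}_4$, respectively. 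This shell carries no constraint from $\tilde{\Delta}_0(\tilde{R}_{j_1}^{m+1}(v),S(2^k))$ except within distance $\sqrt{\eta}\,2^{j_1+m+1}<2^{j_1+m}$ of $\partial\tilde{R}_{j_1}^{m+1}(v)$, which leaves room for the construction.

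First decompose $\tilde{\Delta}_0(\tilde{R}_{j_1}^{m+1}(v),S(2^k))$ into the disjoint events indexed by the shapes of the minimal crossings of the scale-$(m+1)$ rectangles — the rightmost $(+)$-crossing $\tau_1$ of $\tilde{\mathcal{A}}_1'$, the leftmost $(+)$-crossing $\tau_3$ of $\tilde{\mathcal{A}}_3'$, the lowest $(-*)$-crossing $\tau_2^*$ of $\tilde{\mathcal{A}}_2'$ and the highest $(-*)$-crossing $\tau_4^*$ of $\tilde{\mathcal{A}}_4'$ — exactly as in the proof of Lemma~\ref{relation of deltas}, and argue on each piece. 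For a fixed realization, choose a connecting rectangle $\mathcal{D}_1$ (resp.\ $\mathcal{D}_3$) of width comparable to $2^{j_1+m-2}$ and length at most $8\cdot 2^{j_1+m+1}$, lying in the shell and joining $\tilde{\mathcal{A}}_1'$ to $\tilde{\mathcal{A}}_1$ (resp.\ $\tilde{\mathcal{A}}_3'$ to $\tilde{\mathcal{A}}_3$), and likewise corridors joining $\tilde{\mathcal{A}}_2'$ to $\tilde{\mathcal{A}}_2$ and $\tilde{\mathcal{A}}_4'$ to $\tilde{\mathcal{A}}_4$; since on $\tilde{\Delta}_0(\tilde{R}_{j_1}^{m+1}(v),S(2^k))$ the four arms meet $\bigcup_i\tilde{\mathcal{B}}_i'$ only in the $\tilde{\mathcal{A}}_i'$, these corridors can be taken pairwise disjoint and in cyclic order $(1,2,3,4)$ around $\tilde{R}_{j_1}^{m}(v)$, so the $(-*)$-extension of $t^*$ still separates the $(+)$-extensions of $r_1$ and $r_3$. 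For $i=1,3$ let $D_i$ be the event that $\mathcal{D}_i$ contains a $(+)$-path from $\tau_i$ to $\partial\tilde{R}_{j_1}^{m}(v)$ crossing $\tilde{\mathcal{A}}_i$, and for $i=2,4$ let $D_i$ be the event that the $i$-th corridor contains a $(-*)$-path from $\tau_i^*$ to $\partial\tilde{R}_{j_1}^{m}(v)$ crossing $\tilde{\mathcal{A}}_i$. Then, with the $\tilde{\mathcal{A}}_i,\tilde{\mathcal{B}}_i$ read at scale $m$,
\[
\tilde{\Delta}_0(\tilde{R}_{j_1}^{m+1}(v),S(2^k))\cap\bigcap_{i=1}^{4} D_i\ \subset\ \tilde{\Delta}_0(\tilde{R}_{j_1}^{m}(v),S(2^k)).
\]

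Each $D_i$ occurs in a region disjoint from what has already been fixed, so by conditioning successively on $(\tau_1,\tau_3,\tau_2^*,\tau_4^*)$ and then on the already-built parts of the corridors, the FKG inequality and the Connection lemma yield, for each of the four steps, a conditional lower bound of the form $\delta_{8k}\delta_{16}/16$ (or $\delta_{8k}/4$) with $8k\le 80$. The Connection lemma is applied only to rectangles of half-width comparable to $2^{j_1+m-2}$, which is admissible because $2^{j_1+m-2}<8L(h,\varepsilon_0)$ is implied by $2^k<2^6L(h,\varepsilon_0)$ (cf.\ Remarks~\ref{box-size-bound} and~\ref{box-size-bound3}). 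Since $k\mapsto\delta_k$ is non-increasing and $\delta_{80}$ was chosen in~(\ref{eq:the constant C1}) precisely to dominate the worst such step, the product of the four lower bounds is at least $(\delta_{80}\delta_{16}/16)^{4}=C_1^{-1}$; together with the displayed inclusion and summation over the realizations this gives $\mu_t^N(\tilde{\Delta}_0(\tilde{R}_{j_1}^{m+1}(v),S(2^k)))\le C_1\,\mu_t^N(\tilde{\Delta}_0(\tilde{R}_{j_1}^{m}(v),S(2^k)))$.

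Finally one repeats the argument in each of the sub-cases according to which of $S_{j_1}^m(v)$, $\tilde{S}_{j_1}^m(v)$, $T_{j_1}^m(v)$, $\tilde{T}_{j_1}^m(v)$ plays the role of $\tilde{R}_{j_1}^m(v)$ and of $\tilde{R}_{j_1}^{m+1}(v)$ — including the passage $S_{j_1}^m(v)\to T_{j_1}^{m+1}(v)$ and the reduction to fewer arms near the corner $(2^k,2^k)$, just as in Lemma~\ref{lem: relation of deltas-tilde}; when $d(\tilde{R}_{j_1}^m(v),\{x^2=2^k\})<3\cdot 2^{j_1+m-2}$ one uses the left-side placement of $\tilde{\mathcal{A}}_1,\tilde{\mathcal{A}}_2$ already built into the definition of $\tilde{\Delta}_0$, routing the $r_1$-corridor and the $\tilde{\mathcal{A}}_2$-end of the $t^*$-corridor along the left of the (clipped) shell. \textbf{The main obstacle} is precisely this routing: unlike the four-arm event $\tilde{\Delta}$ of the previous section, in $\tilde{\Delta}_0$ the $(+)$-arm $r_1$ heads towards the origin while $r_3$ and the two ends of $t^*$ head the other way, so one must check, in every boundary sub-case, that the four corridors crossing the thin shell $\tilde{R}_{j_1}^{m+1}(v)\setminus\tilde{R}_{j_1}^m(v)$ can simultaneously be chosen disjoint and order-preserving; this is the ``little more care'' alluded to above, and — as in~\cite{K87scaling} — it is afforded by taking $\eta$ small enough for~(\ref{condition for eta}) and~(\ref{eq:bound for eta}) and $\rho,\varepsilon$ as in~(\ref{condition for rho and epsilon-2}).
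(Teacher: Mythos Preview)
Your proposal is correct and takes essentially the same approach as the paper. The paper's own proof is a two-sentence sketch saying ``quite similar to those of Lemmas~\ref{relation of deltas} and~\ref{lem: relation of deltas-tilde}'' and then works out one illustrative sub-case (the corner case $T_{j_1}^m(v)\ni(2^k,2^k)$, where a bent corridor of width $2^{j_1+m-1}$ and length $5\cdot 2^{j_1+m}$ produces the factor $\delta_{80}\delta_{16}/16$); your write-up simply carries out in detail exactly that template---decomposition by extremal crossings $\tau_i,\tau_i^*$, disjoint corridors from $\tilde{\mathcal A}_i'$ to $\tilde{\mathcal A}_i$, and successive applications of the Connection lemma---so there is nothing substantively different to compare.

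Two minor remarks on wording, neither affecting correctness: (i) the corridors are not contained in the shell $\tilde R_{j_1}^{m+1}(v)\setminus\tilde R_{j_1}^{m}(v)$ alone, since $\tilde{\mathcal A}_i'$ sits just \emph{outside} $\tilde R_{j_1}^{m+1}(v)$; and (ii) the $\sqrt{\eta}\,2^{j_1+m+1}$ clause is superfluous here, since $\tilde\Delta_0$ (unlike $\tilde\Lambda_0$) carries no fence constraint and the interior of $\tilde R_{j_1}^{m+1}(v)$ is genuinely free once the $\tau_i$ are fixed.
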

\begin{proof}
The proof is quite similar to those of Lemmas \ref{relation of deltas} and \ref{lem: relation of deltas-tilde}.
As before let us write ${\tilde {\mathcal A}}'_i$ for ${\tilde {\mathcal A}}_i$
corresponding to ${\tilde R}_{j_1}^{m+1}(v)$, and we use 
${\tilde {\mathcal A}}_i$ for those corresponding to ${\tilde R}_{j_1}^m(v)$.
Let us consider for example the case where ${\tilde R}_{j_1}^m(v)$ is equal to
$T_{j_1}^m(v)$ and it contains $(2^k,2^k)$.
Then of course ${\tilde R}_{j_1}^{m+1}(v)=T_{j_1}^{m+1}(v)$ also
contains $(2^k,2^k)$.
In this case in order to connect ${\tilde {\mathcal A}}'_1$ with 
${\tilde {\mathcal A}}_1$ in $T_{j_1}^{m+1}(v)$, we need a corridor of width
$2^{j_1+m-1}$ and its length is equal to $ 5\cdot 2^{j_1+m}$.
It goes straightly to the left boundary of $T_{j_1}^{m+1}(v)$ and turns down
until it hits the line $\{ x^2= y^2+2^{j_1+m-1} \} $, and again
turns to the left to cross ${\tilde {\mathcal A}}'_1$.
In this case $\delta_{80}\delta_{16}/16$ appears.
\end{proof}

\begin{Lemma} \label{lem:comparison of gamma0 and delta0-tilde} 
Let $2^{j_1+5}<2^k<\min\{ 4L(h,\varepsilon_0),\frac{N}{2}\} $, and 
assume that $|v|_\infty \geq 2^{j_1+5}$. 
Then there exists a constant $\tilde{K}_0>0$ depending only on 
$\varepsilon_0, j_1$ and $ \eta $, 
such that for $t\in [0,1]$ and for $1\leq m\leq m(v)$, 
\begin{equation}\label{eq:comparison of gamma0 and delta0-tilde}
\mu_t^N\bigl(\tilde{\Gamma}_0 ({\tilde R}_{j_1}^m (v), S(2^k)  ) \bigr) 
  \leq \tilde{K}_0 \mu_t^N\bigl( \tilde{\Delta}_0 ( R_{j_1}^m (v),S(2^k) 
)\bigr).
\end{equation} 
\end{Lemma}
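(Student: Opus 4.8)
The plan is to transpose the proof of Lemma~\ref{lem:comparison of gamma and delta-tilde} (itself modelled on Lemma~\ref{comparison of gamma and delta}) from the four-arm outward event to the one-arm outward event, using the Connection lemma in place of independence and the fence estimate of Lemma~\ref{lem:fence} to discard the configurations in which some crossing $(+)$- or $(-*)$-cluster in an annulus $\tilde{\mathcal B}_i$ of ${\tilde R}_{j_1}^m(v)$ lacks a fence. Fix $v$ with $0\le v^2\le v^1$ and $|v|_\infty\ge 2^{j_1+5}$, write $\mu=\mu_t^N$, and abbreviate $\tilde\Gamma_0^m:=\tilde\Gamma_0({\tilde R}_{j_1}^m(v),S(2^k))$, and likewise $\tilde\Lambda_0^m$ and $\tilde\Delta_0^m$. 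By (\ref{eq:m(v)}) the restriction $1\le m\le m(v)$ keeps ${\tilde R}_{j_1}^m(v)$ localised near $v$, at distance of order $|v|_\infty$ from $\mathbf{O}$, so that $\mathbf{O}$ lies outside ${\tilde R}_{j_1}^m(v)$; $m(v)$ is the last scale before the box starts to engulf $\mathbf{O}$, which is why the recursion below cannot run past it and the scale $m(v)$ must be treated as a genuine base case. At that base scale the box ${\tilde R}_{j_1}^{m(v)}(v)$ has side comparable both to $|v|_\infty$ and to its distance to $\mathbf{O}$, with $|v|_\infty<2^k<4L(h,\varepsilon_0)$, so I would prove $\mu(\tilde\Gamma_0^{m(v)})\le \tilde K_0'\,\mu(\tilde\Delta_0^{m(v)})$ directly by a single fixed-scale argument: condition step by step on the outermost pieces of $r_1,r_3,t^*$ and on the fences supplied by Lemma~\ref{lem:fence}, then force the relevant $(+)$- and $(-*)$-crossings into the designated slots $\tilde{\mathcal A}_i$ with conditional probability bounded below by the RSW-type crossing and circuit estimates (Lemmas~\ref{RSWbound}, \ref{lem:circuits}) and the Connection lemma, obtaining a constant $\tilde K_0'=\tilde K_0'(\varepsilon_0,j_1,\eta)$.

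For $1\le m\le m(v)-1$ I would run the one-step recursion of \S5.3. Peeling off one annulus, one checks the inclusion $\tilde\Gamma_0^m\subset\tilde\Gamma_0^{m+1}$, and on $\tilde\Gamma_0^{m+1}\setminus\tilde\Lambda_0^{m+1}$ the event $\tilde\Gamma_0^{m+2}$ still occurs while some crossing cluster in one of the at most four annuli $\tilde{\mathcal B}_i$ of ${\tilde R}_{j_1}^{m+1}(v)$ has no $(\eta,j_1+m+1)$-fence. After replacing $r_1,r_3,t^*$ by their truncations outside the interior of ${\tilde R}_{j_1}^{m+2}(v)$, the event $\tilde\Gamma_0^{m+2}$ depends only on a region at $\ell^\infty$-distance of order $2^{j_1+m+1}$ from those annuli and from their fence regions, so Lemma~\ref{lem:fence}, the mixing property and (\ref{eq:cond-for-j1-2}) yield
\[
\mu\bigl(\tilde\Gamma_0^{m+1}\setminus\tilde\Lambda_0^{m+1}\bigr)\le 9\delta\,\mu\bigl(\tilde\Gamma_0^{m+2}\bigr),
\]
and hence $\mu(\tilde\Gamma_0^m)\le\mu(\tilde\Lambda_0^{m+1})+9\delta\,\mu(\tilde\Gamma_0^{m+2})$. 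Here $\rho,\varepsilon,\eta$ are chosen as in \S5.3 so that (\ref{condition for rho and epsilon-2}), (\ref{condition for eta}) and (\ref{eq:bound for eta}) hold and Lemma~\ref{lem:fence} applies to this configuration of annuli.

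Finally I would iterate this inequality up to $m(v)$, bound $\mu(\tilde\Lambda_0^{\ell})\le C_4(\eta)\,\mu(\tilde\Delta_0^{\ell-1})$ by Lemma~\ref{lambda0-to-delta0-tilde}, re-express every $\tilde\Delta_0$-probability at the scale $m$ by iterating Lemma~\ref{lem:comparison of delta0s-tilde} (each step costing one factor $C_1$), and invoke the base case for the boundary term $\mu(\tilde\Gamma_0^{m(v)})$; what remains is the geometric sum $\sum_{\ell}(9\delta C_1)^{\ell}$, which converges since $9\delta C_1=\tfrac12$ by (\ref{eq:the constant C1}) and (\ref{eq:delta-1}). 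This gives $\mu(\tilde\Gamma_0^m)\le \tilde K_0\,\mu(\tilde\Delta_0^m)$ with $\tilde K_0=2C_1\bigl(C_4(\eta)+\tilde K_0'\bigr)$, the cases $m=m(v)$ and $m=m(v)-1$ reducing to the base case alone. I expect the main obstacle to be bookkeeping rather than a new idea: the base-case argument (and, less seriously, the truncations used in the recursion) must be carried out for every possible shape $S_{j_1}^{m}(v)$, $\tilde S_{j_1}^{m}(v)$, $T_{j_1}^{m}(v)$, $\tilde T_{j_1}^{m}(v)$ of ${\tilde R}_{j_1}^{m(v)}(v)$, and according to whether or not the box meets $\partial_{in}S(2^k)$ or contains the corner $(2^k,2^k)$, using in each case the corresponding families $\tilde{\mathcal B}_i$, $\tilde{\mathcal A}_i$ (including the displaced $\tilde{\mathcal A}_1,\tilde{\mathcal A}_2$ near $\{x^2=2^k\}$) constructed just above.
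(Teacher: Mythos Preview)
Your recursion for $1\le m\le m(v)-1$ is essentially the paper's: split $\tilde\Gamma_0$ into $\tilde\Lambda_0$ plus the no-fence part, use Lemma~\ref{lem:fence} with mixing and (\ref{eq:cond-for-j1-2}) to get the factor $9\delta$, iterate to $m(v)$, convert $\tilde\Lambda_0$-terms via Lemma~\ref{lambda0-to-delta0-tilde}, and push everything to scale $m$ via Lemma~\ref{lem:comparison of delta0s-tilde}. (The paper writes the step as $\mu(\tilde\Gamma_0^m)\le\mu(\tilde\Lambda_0^m)+9\delta\,\mu(\tilde\Gamma_0^{m+1})$ without your preliminary inclusion, but this is cosmetic.)

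The divergence is in the base case $m=m(v)$. Your sketch proposes to ``condition on outermost pieces and fences, then force crossings into the slots $\tilde{\mathcal A}_i$''. But the fence machinery, as set up in Lemma~\ref{lambda0-to-delta0-tilde}, goes from $\tilde\Lambda_0^{m+1}$ to $\tilde\Delta_0^{m}$, i.e.\ it drops one scale; at the terminal scale $m(v)$ there is no further level to absorb the no-fence part, and what you would obtain from $\tilde\Lambda_0^{m(v)}$ is a bound by $\mu(\tilde\Delta_0^{m(v)-1})$, not $\mu(\tilde\Delta_0^{m(v)})$. The reverse comparison $\mu(\tilde\Delta_0^{m(v)-1})\le C\,\mu(\tilde\Delta_0^{m(v)})$ is not among the stated lemmas and is not obvious, since the slot systems $\tilde{\mathcal A}_i$ change with the box.

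The paper handles $m=m(v)$ by a different, fence-free argument that exploits the special geometry there (box side $\sim|v|_\infty$, comparable to $d_\infty(\mathbf O,\tilde R_{j_1}^{m(v)}(v))$). It first enlarges $\tilde\Gamma_0^{m(v)}$ to the increasing event $\tilde G_0$ that only requires $r_1,r_3$ (dropping $t^*$); then, by FKG, intersects with $(+)$-circuits in the annuli $\mathcal H_0=S(|v|_\infty/4)\setminus S(|v|_\infty/8)$ and $\mathcal H_v=S(4|v|_\infty)\setminus S(2|v|_\infty)$ and with $(+)$-crossings of corridors $U_0,U_v$ joining these annuli to $\tilde{\mathcal A}_1,\tilde{\mathcal A}_3$. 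This forces the required $r_1,r_3$ through the correct slots, giving an event $\tilde G_0^\#$ with $\mu(\tilde G_0^\#)\ge C_1^\#\mu(\tilde G_0)$. Finally a single application of the Connection lemma adds the $(-*)$-path $t^*$ in a corridor $U^*$ linking $\tilde{\mathcal A}_2$ and $\tilde{\mathcal A}_4$ around the box, yielding $\tilde\Delta_0^{m(v)}$. The case $|v|_\infty>2^{k-2}$ drops $\mathcal H_v$ and sends $U_v$ straight to $\partial_{in}S(2^k)$. This is what makes the base case work without any scale reduction; your sketch should be replaced by this construction.
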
 
\begin{proof} 
We first prove the case where $m=m(v)$ in two steps. 

\noindent 
$1^\circ$) Assume that $|v|_\infty \leq 2^{k-2}$. 
Then $m(v)<m_1^*$ and ${\tilde R}_{j_1}^m(v)=S_{j_1}^m(v)$ for every $m\leq m(v)$. 

%% \begin{quote} 
%%
%% ----------------------------------------------------------------------- 
%%
%% \noindent 
%% \framebox{\bf NOTE ONLY FOR HTZ} 
%%
%% The definition of $m(v)$ and $S_{j_1}^m(v)$ implies that 
%% $$ 
%% 2^{-3}|v|_\infty \leq 2^{j_1+m(v)}\leq 2^{-2}|v|_\infty +2^{j_1+1}. 
%% $$ 
%% Therefore for every $x\in S_{j_1}^{m(v)+1}(v)$, 
%% \begin{align*} 
%% |x|_\infty &\leq |v|_\infty +2^{j_1}+ 2^{j_1+m(v)+1}\\ 
%%            &\leq |v|_\infty (1+2^{-1})+2^{j_1+2}(1+2^{-2}). 
%% \end{align*} 
%% So, since $|v|_\infty \leq 2^{k-2}$, and  $j_1+5<k$, 
%%  we obtain 
%% \begin{align*} 
%% |x|_\infty &\leq 2^{k-2}(1+2^{-1})+ 2^{k-4}(1+2^{-2})\\ 
%%            &\leq 2^{k-2}\bigl( (1+2^{-1})+2^{-2}(1+2^{-2})\bigr) <2^{k-1}. 
%% \end{align*} 
%% 
%% 
%% ----------------------------------------------------------------------- 
%% \end{quote} 

Let $\tilde{G}_0 (S_{j_1}^{m(v)} (v),S(2^k))$ be the event such that all the 
following conditions 
are satisfied. 
\begin{enumerate} 
\item There exists a $(+)$-path $r_1$ connecting $\mathbf{O}$ with 
$\partial S_{j_1}^{m(v)} (v)$ in $S(2^k)$. 
\item There exists another $(+)$-path $r_3$ connecting 
$\partial S_{j_1}^{m(v)} (v)$ with $\partial_{in} S (2^k)$ in $S(2^k)$. 
$r_1, r_3$ are disjoint. 
\end{enumerate} 
Note that $\tilde{G}_0 (S_{j_1}^{m(v)}(v), S(2^k)) \supset \tilde{\Gamma}_0 
(S_{j_1}^{m(v)}(v), S(2^k))$. We define the following annuli: 
\begin{align*} 
\mathcal{H}_v &:= S(4|v|_{\infty}) \setminus S(2|v|_{\infty}), \\ 
\mathcal{H}_0 &:= S(2^{-2}|v|_{\infty}) \setminus S(2^{-3}|v|_{\infty}). 
\end{align*} 
Note also that ${\mathcal H}_v$ and ${\mathcal H}_0$ are subsets of $S(2^k)$, 
and $d_\infty ({\mathcal H}_v, {\mathcal H}_0)\geq |v|_\infty $. 
Next, let $U_0$ and $U_v$ be corridors in $S(2^k)$ with the following 
properties. 
\begin{enumerate} 
\item The width of $U_0$ and $U_v$ is $2^{j_1+m(v)-2}$. 
\item $U_v$ starts from the right side of $\partial S_{j_1}^{m(v)}(v)$ in 
${\tilde {\mathcal A}}_3$ of $S_{j_1}^{m(v)}$ and it goes straightly to the 
right side of $S(4|v|_\infty)$ crossing the annulus ${\mathcal H}_v$. 
\item $U_0$ connects ${\tilde {\mathcal A}}_1$ of $S_{j_1}^{m(v)}(v)$ with 
$\partial S(2^{-3}|v|_\infty )$. 
Essentially, $U_0$ goes up from the top side of $S(2^{-3}|v|_\infty )$ 
until the height of ${\tilde{\mathcal A}}_1$ of $S_{j_1}^{m(v)}(v)$, 
and then it goes to the right until it crosses the ${\tilde{\mathcal A}}_1$. 
When the above route is impossible, then we take $U_0$ to go to the left first 
from the left side of $S(2^{-3}|v|_\infty )$ with length 
$2^{-2}|v|_\infty $, 
and then to go up until the height of the ${\tilde{\mathcal A}}_1$, and turn to 
the 
right as above. 
\end{enumerate} 

Let $E_0$ be the event that there exists a $(+)$-path in $U_0$ connecting 
shorter sides of $U_0$, and there exists a $(+)$-circuit in ${\mathcal H}_0$ 
surrounding the origin. 
Also, let $E_v$ be the event that there exists a $(+)$-path in 
$U_v$ connecting shorter sides of $U_v$, and there exists a $(+)$-circuit 
in ${\mathcal H}_v$ surrounding the origin, respectively. 
Then in the event $E_0\cap E_v\cap {\tilde G}_0(S_{j_1}^{m(v)}(v),S(2^k))$, 
all the following events occur. 
\begin{enumerate} 
\item There exists a $(+)$-path $r_1$ in $U_0\cup S(2^{-2}|v|_\infty )$ 
which connects the origin with $\partial S_{j_1}^{m(v)}(v)$, and 
$r_1\cap S(x_{j_1}(v),2^{j_1+m(v)}+2^{j_1+m(v)-2})\subset {\tilde {\mathcal 
A}}_1$.
\item There exists a $(+)$-path $r_3$ in 
$U_v\cup {\mathcal H}_v \cup S(4|v|_\infty )^c$ such that $r_3$ connects the 
right 
side of $\partial S_{j_1}^{m(v)}(v)$ with the right side of 
$S(2^k)$, and 
$$r_3\cap S(x_{j_1}(v),2^{j_1+m(v)}+2^{j_1+m(v)-2})\subset {\tilde {\mathcal 
A}}_3.
$$ 
\end{enumerate} 
Here, $ {\tilde {\mathcal A}}_i$'s correspond to $S_{j_1}^{m(v)}(v)$
Further, let ${\tilde G}_0^\#(S_{j_1}^{m(v)}(v),S(2^k))$ be the 
%subset of the 
event 
%$E_0\cap E_v\cap {\tilde G}_0(S_{j_1}^{m(v)}(v),S(2^k))$ 
such that the above 1 and 2 occur, and there are 
vertical 
$(+)$-crossings in ${\tilde{\mathcal A}}_1$ and  ${\tilde{\mathcal A}}_3$, 
respectively.  
Then by the FKG inequality, 
\begin{equation}\label{eq:6-2} 
\mu_t^N\bigl( {\tilde G}_0^\# (S_{j_1}^{m(v)}(v),S(2^k))\bigr) 
 \geq C_1^\# \mu_t^N\bigl( {\tilde G}_0(S_{j_1}^{m(v)}(v),S(2^k))\bigr) 
\end{equation} 
for some constant $C_1^\# $ which depends only on $j_1$ and $\varepsilon_0$. 

Then we prepare another corridor $U^*$ in 
$(S_{j_1}^{m(v)}(v)\cup S(2^{-2}|v|_\infty )^c$ with width 
$2^{j_1+m(v)-2}$. 
which connects the top side and the bottom 
side of $S_{j_1}^{m(v)}(v)$, and 
$$ 
U^*\cap S(x_{j_1}(v),2^{j_1+m(v)}+2^{j_1+m(v)-2})
 \subset {\tilde{\mathcal A}}_2 \cup {\tilde{\mathcal A}}_4. 
$$ 
We choose $U^*$ not to intersect $U_0$ or $U_v$, and its length less than 
$8|v|_\infty $. 
Let 
$$ 
E^*=\left\{ 
  \begin{array}{@{\,}l@{\,}}
  \mbox{there exists a $(-*)$-path $t^*$ in $U^*$ connecting the top}\\ 
  \mbox{and the bottom sides of $\partial S_{j_1}^{m(v)}(v)$, and there are} 
\\ 
  \mbox{horizontal $(-*)$-crossings in } 
   {\tilde{\mathcal A}}_2 \mbox{ and }  {\tilde{\mathcal A}}_4 
 \end{array} 
  \right\} . 
$$ 
Then by our assumptions (\ref{eq:bound for eta}) and (\ref{eq:bound for j_1}),
we can apply the Connection lemma to obtain 
\begin{equation}\label{eq:6-3} 
\mu_t^N\bigl( E^* \mid {\tilde G}_0^\# (S_{j_1}^{m(v)}(v),S(2^k)) \bigr) 
\geq C_2^\# 
\end{equation} 
for some constant $C_2^\# $ depending only on $j_1$ and $\varepsilon_0$. 
Apparently, we have 
$$ 
E^* \cap {\tilde G}_0^\# (S_{j_1}^{m(v)}(v),S(2^k)) 
 \subset {\tilde \Delta }_0(S_{j_1}^{m(v)}(v),S(2^k)), 
$$ 
and by (\ref{eq:6-2}), (\ref{eq:6-3}), and from the fact that 
$$ 
{\tilde G}_0(S_{j_1}^{m(v)}(v),S(2^k)) \supset 
{\tilde \Gamma }_0(S_{j_1}^{m(v)}(v),S(2^k)), 
$$ 
we have desired inequality. 

\medskip 
\noindent 
$2^\circ $) When $|v|_\infty \geq 2^{k-2}$, then we do not use the annulus 
${\mathcal H}_v$. Instead, $U_v$ goes to the right until it reaches 
$\partial_{in}S(2^k)$. 
Its length is not larger than $4|v|_\infty $. 
Therefore by the same argument as above we obtain the desired inequality. 
This completes the proof of the second statement of the lemma. 

\medskip 
In the case where $1\leq m\leq m(v)-1$, the proof is similar to that 
of Lemmas \ref{comparison of gamma and delta}  and \ref{lem:comparison of gamma and delta-tilde}
in the previous section. 

Suppose that $1 \leq m \leq m(v)-1$. We start with the following inequality. 
\begin{align*} 
&\mu_t^N\bigl( \tilde{\Gamma}_0 ({\tilde R}_{j_1}^m (v), S(2^k) ) \bigr)\\  
&\leq  \mu_t^N\bigl( 
\tilde{\Lambda}_0 ( {\tilde R}_{j_1}^m (v), S(2^k) ) \bigr) + 
 \mu_t^N\bigl(  
 \tilde{\Gamma}_0 ( {\tilde R}_{j_1}^m (v), S(2^k) )  \setminus  
    \tilde{\Lambda}_0 ( {\tilde R}_{j_1}^m (v), S(2^k) )\bigr). 
\end{align*} 
If $\tilde{\Gamma}_0 ( {\tilde R}_{j_1}^m (v), S(2^k) ) $ occurs but 
$\tilde{\Lambda}_0 ( {\tilde R}_{j_1}^m (v), S(2^k) ) $ 
does not occur, then we can see that 
\begin{itemize} 
\item $\tilde{\Gamma}_0 ( {\tilde R}_{j_1}^{m+1}(v), S(2^k) )$  occurs, and 
\item for at least one of $\tilde{\mathcal B}_i$'s of ${\tilde R}_{j_1}^m(v)$, 
    there is a crossing $(+)$-cluster or crossing $(-*)$-cluster, 
    connecting $\varphi_i$ with $\xi_i$ for some $i$, such that this 
    $(+)$-cluster (or $(-*)$-cluster) does not have an $(\eta, j_1+m)$-fence. 
\end{itemize} 
If $m\geq m_1^*$, we do not use ${\tilde {\mathcal B}}_3$ in the second 
statement. 
Since $S(2^k)\setminus {\tilde R}_{j_1}^{m+1}(v)$ and $\tilde{\mathcal B}_i$'s 
for ${\tilde R}_{j_1}^m(v)$ 
are of $\ell^\infty$-distance $3\cdot 2^{j_1+m-2}$, 
by the mixing property we have 
\begin{align*} 
& \mu_t^N\bigl(  \tilde{\Gamma}_0 ({\tilde R}_{j_1}^m (v), S(2^k)) \setminus  
    \tilde{\Lambda}_0 ({\tilde R}_{j_1}^m (v), S(2^k) ) \bigr) \\ 
& \leq  ( 8\delta + C\cdot 6(2^{j_1+m})^3e^{-\alpha 3\cdot 2^{j_1+m-2}}) 
     \mu_t^N \bigl(  
  \tilde{\Gamma}_0 ({\tilde R}_{j_1}^{m+1}(v), S(2^k) ) \bigr). 
\end{align*} 
By (\ref{eq:cond-for-j1-2})
we have 
\begin{align*} 
&\mu_t^N\bigl( \tilde{\Gamma}_0 ({\tilde R}_{j_1}^m (v), S(2^k) ) \bigr)\\  
&\leq  \mu_t^N\bigl( 
    \tilde{\Lambda}_0 ({\tilde R}_{j_1}^m (v), S(2^k) ) \bigr) + 
9\delta \mu_t^N \bigl( 
   \tilde{\Gamma}_0 ({\tilde R}_{j_1}^{m+1}(v), S(2^k) ) \bigr). 
\end{align*} 
Iterating this until we get to ${\tilde R}_{j_1}^{m(v)}(v)$, we have 
\begin{align*} 
&\mu_t^N\bigl( \tilde{\Gamma}_0 ({\tilde R}_{j_1}^m (v), S(2^k) ) \bigr) \\ 
&\leq  \sum_{\ell =0}^{m(v)-m-1} (9\delta )^\ell 
\mu_t^N\bigl( \tilde{\Lambda}_0 ( {\tilde R}_{j_1}^{m+\ell}(v), S(2^k)) \bigr) 
            +(9\delta )^{m(v)-m} 
            \mu_t^N\bigl( 
    \tilde{\Gamma}_0({\tilde R}_{j_1}^{m(v)}(v), S(2^k)) \bigr). 
\end{align*} 
By Lemma \ref{lambda0-to-delta0-tilde}
$$ 
\mu_t^N\bigl( \tilde{\Lambda}_0 ( {\tilde R}_{j_1}^{m+\ell}(v), S(2^k)) \bigr) 
\leq \tilde{C}_4(\eta )\mu_t^N\bigl( 
   \tilde{\Delta}_0 ( {\tilde R}_{j_1}^{m+\ell-1}(v), S(2^k))\bigr) 
$$ 
for $0\leq \ell \leq m(v)-m-1$. 
Further, we already proved that
$$ 
\mu_t^N\bigl( 
 \tilde{\Gamma}_0 ({\tilde R}_{j_1}^{m(v)}(v), S(2^k)) \bigr) \leq  
C_2^\# \mu_t^N\bigl( \tilde{\Delta}_0 ({\tilde R}_{j_1}^{m(v)} (v), S(2^k)) 
\bigr). 
$$ 
By Lemma \ref{lem:comparison of delta0s-tilde}
$$ 
\mu_t^N\big( \tilde{\Delta }_0 ({\tilde R}_{j_1}^{m+\ell}(v), S(2^k)) \bigr) 
  \leq 
C_1^\ell \mu_t^N\bigl( \tilde{\Delta}_0 ({\tilde R}_{j_1}^m (v), S(2^k))\bigr) 
$$ 
for every $0\leq \ell \leq m(v)-m-1$. 
Hence we have
$$ 
\mu_t^N\bigl(\tilde{\Gamma}_0 ({\tilde R}_{j_1}^{m}(v), S(2^k) ) \bigr) 
  \leq \tilde{K}_0 \mu_t^N\bigl( 
 \tilde{\Delta}_0 ({\tilde R}_{j_1}^{m-1} (v), S(2^k))\bigr) 
$$ 
for some positive constant $\tilde{K}_0$. 
Since ${\tilde \Gamma }_0({\tilde R}_{j_1}^m(v), S(2^k))$ is increasing in 
$m\leq m(v)$, 
this implies the desired inequality. 
\end{proof} 

\newpage
\section{Power law estimates for $k$-arm paths}\label{s:powerlaw}

In this section, we provide some power law estimate for arm events.
The restriction that $n<L(h,\varepsilon_0)$ in each lemma and each theorem
is not serious. Namely, by using narrower rectangles properly in the 
following arguments, this restriction can be easily relaxed to 
$n<2L(h,\varepsilon_0)$ or even to $n<4L(h,\varepsilon_0)$.
So, we remark here that although all the statements in this section are
restricted to $n<L(h,\varepsilon_0)$, but we can obtain similar result
for $n$'s with $n<4L(h,\varepsilon_0)$.

\subsection{Power law estimate for one-arm path}

We define
\begin{align*}
\pi_{h}(n) &:= \mu_{T,h} \bigl( \mathbf{O} \stackrel{+}{\leftrightarrow} \partial_{in} S(n) \bigr), \\
\pi_{h}^*(n) &:= \mu_{T,h} \bigl( \mathbf{O} \stackrel{-*}{\leftrightarrow} \partial_{in} S(n) \bigr).
\end{align*}
We abbreviate $\pi_{h_c(T)}(n)$ (resp. $\pi_{h_c(T)}^*(n)$) to $\pi_{\text{cr}}(n)$ (resp. $\pi_{\text{cr}}^*(n)$).

\begin{Theorem} 
There exist positive constants $C_5,\,C_6$ and 
$0<\alpha < 1$ such that for $2^{j_1}<R<n<L(h,\varepsilon_0)$,
\[ C_5 \dfrac{R}{n} \leq \mu_{T,h}  \bigl( {\cal B}(1, 0,R, n) \bigr)
\leq C_6 \left( \dfrac{R}{n}\right)^{\alpha}. \]
In particular,
\begin{equation}
C_5 n^{-1} \leq \pi_{\text{cr}} (n) \leq C_6 n^{-\alpha}. \label{K(5.1)}
\end{equation}
for every $n\geq 2^{j_1}$.
\end{Theorem}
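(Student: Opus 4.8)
\emph{Overall structure.} The bound splits into an upper and a lower estimate, proved by quite different means, and the assertion for $\pi_{\mathrm{cr}}$ follows on putting $h=h_c(T)$, where $L(h_c(T),\varepsilon_0)$ may be treated as $+\infty$. Since $\{\mathbf{O}\stackrel{+}{\leftrightarrow}\partial_{in}S(n)\}\subseteq \mathcal B(1,0,R,n)$ (a $(+)$-path from $\mathbf O$ to $\partial[-n,n]^2$ must meet $\partial[-R,R]^2$), the upper bound on $\pi_{\mathrm{cr}}(n)$ will be immediate from the upper bound on $\mathcal B(1,0,R,n)$ with $R$ the least admissible value; the lower bound $\pi_{\mathrm{cr}}(n)\ge C_5n^{-1}$ needs the lower-bound argument rerun in the special form explained below.

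\emph{Upper bound.} The plan is to block the connection by dual circuits at a geometric sequence of scales. Put $\rho_i=4^iR$ and, for $i\ge 1$ with $3\rho_i<n$, let $D_i$ be the event that there is a $(-*)$-circuit surrounding $S(4^iR-1)$ inside $S(3\rho_i)\setminus S(4^iR-1)$; then $D_i$ and $D_{i+1}$ are carried by sets at $\ell^\infty$-distance $\ge \rho_i\ge R\ge 2^{j_1}$. By the $(-*)$-version of (\ref{K(2.19)}) in Lemma~\ref{lem:circuits} together with the FKG inequality, $\mu_{T,h}(D_i)\ge \delta_3^4=:c_2>0$ for every such $i$ (the hypotheses $4n_2\le 4^iR<L(h,\varepsilon_0)$ hold since $R\ge 2^{j_1}$ and $3\rho_i<n$). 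Because a $(+)$-path from $\partial[-R,R]^2$ to $\partial[-n,n]^2$ cannot cross a $(-*)$-circuit separating these two sets, $\mathcal B(1,0,R,n)\subseteq\bigcap_{i=1}^{k}D_i^c$ with $k=\lfloor\log_4(n/(3R))\rfloor$. Conditioning from the inside out and applying the mixing property (\ref{HZ(1.3)}) across the gaps, exactly as in the proof of Lemma~\ref{Kp121}, one obtains $\mu_{T,h}\bigl(D_m^c\mid\bigcap_{i<m}D_i^c\bigr)\le 1-c_2/2$, the mixing errors being absorbed by (\ref{eq:cond-for-j1-2}) once $2^{j_1}$ is large. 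Hence $\mu_{T,h}(\mathcal B(1,0,R,n))\le(1-c_2/2)^k\le C_6(R/n)^{\alpha_0}$ with $\alpha_0=\log_4\!\bigl(1/(1-c_2/2)\bigr)>0$; since $R/n<1$ we may replace $\alpha_0$ by $\alpha:=\min(\alpha_0,\tfrac12)\in(0,1)$.

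\emph{Lower bound.} The key point is a pigeonhole argument at a single scale, which is efficient precisely because the path is allowed to start anywhere on $\partial[-R,R]^2$. We may assume $n\ge 2R$ (when $n<2R$ the bound follows from RSW applied to a fixed square inside $B(R,2R)\subseteq B(R,n)$, or from finite energy, together with $C_5R/n\le C_5$). Put $Q=[R,n]\times[-n,n]$; it contains the translate $[R,n]\times[-(n-R)/2,(n-R)/2]$ of $S\bigl((n-R)/2\bigr)$, so Lemma~\ref{RSWbound} (with $k=1$) gives $\mu_{T,h}(A_Q)\ge\delta_1(\varepsilon_0/2)=:c_1>0$, where $A_Q$ is the event of a horizontal $(+)$-crossing of $Q$. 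On $A_Q$ let $\mathcal R_Q$ be the lowest such crossing and $(R,Y_Q)$ its left endpoint; partitioning $A_Q$ according to the value of $\lfloor Y_Q/R\rfloor$ (at most $5n/R$ classes) and using pigeonhole, there is $j^*$ with $\mu_{T,h}\bigl(A_Q\cap\{j^*R\le Y_Q<(j^*+1)R\}\bigr)\ge c_1R/(5n)$. Translating this event by $(0,-j^*R)$, which preserves its probability since $\mu_{T,h}$ is translation invariant, produces an event on which there is a $(+)$-path starting at a point of $\{R\}\times[0,R)\subseteq\partial_{in}S(R)$, staying in $\{x^1\ge R\}$, and reaching $\{x^1=n\}$; its initial segment, truncated at the first time it attains $\ell^\infty$-distance $n$ from the origin, lies in $B(R,n)$ and joins $\partial[-R,R]^2$ to $\partial[-n,n]^2$. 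Thus $\mu_{T,h}(\mathcal B(1,0,R,n))\ge c_1R/(5n)$, and we take $C_5=c_1/5$. For $\pi_{\mathrm{cr}}(n)\ge C_5 n^{-1}$ one repeats this with $Q=[0,n]\times[-n,n]$ but keeps all $2n+1$ values of $Y_Q$ (after translating by $(0,-Y_Q)$ we must land exactly on $\mathbf O$), so that pigeonhole yields $\pi_{\mathrm{cr}}(n)\ge c_1/(2n+1)$.

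\emph{Main obstacle.} Neither step is deep. The only slightly delicate points I anticipate are the bookkeeping of the error terms in the iterated mixing estimate for the upper bound (routine given (\ref{eq:cond-for-j1-2})), and, in the lower bound, the careful verification that the translated crossing segment, after truncation at the first visit to $\partial_{in}S(n)$, genuinely stays in the annulus $B(R,n)$ and meets both of its boundaries — this must be spelled out precisely, handling the path's possible excursions.
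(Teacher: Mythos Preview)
Your proposal is correct and follows essentially the same approach as the paper: the upper bound via $(-*)$-circuits in geometrically spaced annuli with the mixing property supplying approximate independence, and the lower bound via a pigeonhole argument on the position of a horizontal $(+)$-crossing. The only cosmetic difference is that the paper tracks the highest $R$-box on the $y$-axis touched by the lowest crossing of $S(n)$, whereas you track the left endpoint of a crossing of $[R,n]\times[-n,n]$; both reductions give the same $R/n$ factor after translation.
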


The strategy to obtain the upper bound is well-known (see e.g. (11.90) of
\cite{Gri99}). 
A proof for the lower bound is found in Lemma 5 of \cite{Z95}.
For a better lower bound for $\pi_{\text{cr}} (n)$, see the comment after Lemma \ref{inc-7}.

%% \begin{quote} 
%% ----------------------------------------------------------------------- 
%% 
%% \noindent 
%% \framebox{\bf INTERMISSION} 

\begin{proof}We begin with the upper bound. For $j \geq 1$, let 
$$A_j := S(4^{j+1}R) \setminus S(2 \cdot 4^{j}R).$$
For each $j$, we define a random variable $X_j$ by
\[ X_j :=
\begin{cases}
1 &\mbox{if there exists a $(-*)$-circuit in $A_j$ surrounding the origin,} \\
0 &\mbox{otherwise}. \\
\end{cases}
\]
Using the mixing property, we can show the following:  There exist an integer $j^*$ and a number $\delta>0$ such that 
\[ \mu_{T,h}  (X_j = 1 \,| \, X_1,\ldots,X_{j-1}) \geq \delta \]
for every $j \geq j^*$. Now we have
\[ \mu_{T,h}  \bigl( {\cal B}(1, 0,R, n) \bigr)\leq \mu_{T,h}  \left( \bigcap_{j=j^*}^{\lfloor \log_4 (R/n) \rfloor } \{ X_j = 0 \} \right) \leq (1-\delta)^{\lfloor \log_4 (R/n) \rfloor -j+1}. \]

Now we turn to the lower bound. By the RSW-type lemma, %page 884 of Chayes-Nolin \cite{CN09}.
$\mu_{T,h}  \bigl( A^+(n,n) \bigr) \geq \delta_1$.
On $A^+(n,n)$, there exists the lowest $(+)$-crossing ${\mathcal R}$ of $S(n)$. We define
\[ H_R({\mathcal R}) := 
\max \bigl\{ y \in [-n/R,n/R] \cap \mathbf{Z}  : \bigl((0,Ry)+S(R) \bigr) \cap 
{\mathcal R} \neq \emptyset \bigr\}. \]
Then we have
\begin{align*}
&\mu_{T,h} \bigl( A^+(n,n) \bigr) \\
&= \sum_{y \in [-n/R,-n/R] \cap \mathbf{Z}} \mu_{T,h}  \bigl(
  H_R({\mathcal R}) = y \bigr) \\
&\leq \sum_{y \in [-n/R,-n/R] \cap \mathbf{Z}} \mu_{T,h}  \left( \bigl((0,Ry)+S(R)\bigr) \stackrel{+}{\leftrightarrow} \partial\bigl((0,Ry)+S(n) \bigr) \right) \\ 
&=C'\dfrac{n}{R} \mu_{T,h}  \bigl( {\cal B}(1, 0,R, n) \bigr).
\end{align*}
\end{proof}

%% ----------------------------------------------------------------------- 
%% \end{quote}

%Here we give another lower bound; unfortunately it is still weaker than the one obtained for the independent case, where the BK inequality is applicable. We remark that for the triangular lattice case, $C_1 n^{-1/2} \leq \pi_{\text{cr}} (n)$ follows from the self-duality.
%
%\begin{Proposition} For all $T > T_c$, there exists a positive constant $C_1$ such that
%\begin{equation}
%C_1 n^{-1/2} \leq \max\left\{ \pi_{\text{cr}} (n) ,  \pi_{\text{cr}}^* (n) \right\}. 
%\end{equation}
%\end{Proposition}
%
%\begin{proof} \framebox{\bf To be written later.}
%\end{proof}

\subsection{Power law estimate for two-arm paths in half space}
Let $\mathcal{E}_2(n)$ be the event such that 
\begin{enumerate}
\item there is a $(-*)$-path $r^*$ in $S(n)$ connecting $(0,n)$ with
the boundary $\partial_{in}S(n)\setminus \{ x^2=n\} $, and
\item there is a $(+)$-path $r$ in $S(n)$ connecting $(-1,n)$ with
$\partial_{in}S(n)\setminus \{ x^2=n\} $.
\end{enumerate}
Interchanging the roles of $(+)$- and $(-*)$-paths in the above 
definition, we can define
another event ${\mathcal E}_2^*(n)$. 
Also, recall that ${\mathcal B}^+(1,1,R,n)$ is the event that
there are a $(+)$-path ${\tilde r}$ and a $(-*)$-path ${\tilde r}^*$
in
$$\bigl( S(n)\setminus S((0,n-R),R) \bigr) \cap \{ x^2 \leq 0 \} ,$$
both connecting 
$\partial S((0,n-R),R)$ with $\partial_{in}S(n)\setminus \{ x^2=n\} $.
Concerning these events we are going to prove the following 
theorem in this subsection.  

\begin{Theorem}\label{2 arm bound}
There exist positive constants $C_7,\ldots,C_{10}$ and an integer $j_2>j_1$, 
depending only on
$\varepsilon_0, j_1$ and $\eta $,
such that the following estimates hold 
for $0\leq t\leq 1$.
\begin{equation}\label{eq:2 arm bound} 
C_7n^{-1} \leq \mu_t^N \bigl( \mathcal{E}_2(n) \bigr) \leq C_8n^{-1},
\end{equation} 
for  $ 2^{j_1+4}<n<\min\{ L(h, \varepsilon_0), \frac{N}{2} \} $, and
\begin{equation}\label{eq:2 arm bound for boxes}
C_9\frac{R}{n} \leq \mu_t^N \bigl({\mathcal B}^+(1,1,R,n) \bigr)
    \leq C_{10}\frac{R}{n} 
\end{equation}
for $2^{j_2}\leq R<n<\min\{ L(h, \varepsilon_0), \frac{N}{2} \} $.
The estimate (\ref{eq:2 arm bound}) is valid for ${\mathcal E}^*_2(n)$,
too.
\end{Theorem}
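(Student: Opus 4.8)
I would prove the two displays together, reducing the first to the second. First, observe that $\mathcal{E}_2(n)$ is comparable, up to a constant depending only on $\varepsilon_0,j_1,\eta$, to $\mu_t^N\bigl(\mathcal{B}^+(1,1,2^{j_1+4},n)\bigr)$: the two arms of $\mathcal{E}_2(n)$ start at the neighbouring boundary points $(0,n)$ and $(-1,n)$, so by the finite energy property and the FKG inequality one may grow them, at the cost of a bounded factor, into a $(+)$-arm and a $(-*)$-arm issuing from the portion inside $S(n)$ of a box of radius $2^{j_1+4}$ around that pair; the reverse comparison is trivial, and $(\ref{HZ(1.1)})$, $(\ref{HZ(1.3)})$ and $(\ref{eq:periodic-1})$ make everything uniform in $t\in[0,1]$ and in $N$ with $N>2n$. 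Hence it suffices to prove $C_9 R/n\le\mu_t^N\bigl(\mathcal{B}^+(1,1,R,n)\bigr)\le C_{10}R/n$ for $2^{j_2}\le R<n<\min\{L(h,\varepsilon_0),N/2\}$; here $j_2>j_1$ is chosen large enough that the mixing errors produced by Lemma \ref{connection lemma}, Lemma \ref{lem:fence} and the comparison lemmas of Section 6 are at most a fixed fraction of the main terms, and the $(-*)$-versions of those lemmas yield the $\mathcal{E}_2^*$ statement as well.

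\textbf{Lower bound.} For $\mu_t^N\bigl(\mathcal{B}^+(1,1,R,n)\bigr)\ge C_9 R/n$ I would imitate the argument used above for the lower bound on $\pi_{\text{cr}}$. By Lemma \ref{RSWbound} a half-box of bounded aspect ratio carries a horizontal $(+)$-crossing with $\mu_t^N$-probability at least $\delta_k$; let $\mathcal{R}$ be the lowest such crossing and, exactly as with $H_R(\mathcal{R})$ above, sum over the scale-$R$ window that $\mathcal{R}$ visits highest. Each term dominates a translated ``$(+)$-arm from scale $R$ to $\partial_{in}S(n)$ inside a half-plane''; moreover, conditionally on $\{\mathcal{R}=r\}$ the Markov property frees the configuration on one side of $r$, and combining this with the finite energy property, the FKG inequality and an $O(1)$ number of RSW half-annulus crossings (Lemma \ref{RSWbound}) one grows the companion $(-*)$-arm running alongside $r$ out to $\partial_{in}S(n)\setminus\{x^2=n\}$. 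This gives $\mu_t^N\bigl(\mathcal{B}^+(1,1,R,n)\bigr)\gtrsim (R/n)\,\delta_k$, hence $C_9$, uniformly in $t$ and $N$ by the mixing property.

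\textbf{Upper bound: the main obstacle.} The bound $\mu_t^N\bigl(\mathcal{B}^+(1,1,R,n)\bigr)\le C_{10}R/n$ is the hard half, because the half-plane two-arm probability decays \emph{strictly faster} than the square of the half-plane one-arm probability, so no union-bound or independence-type reasoning is lossless and the exact exponent $1$ must be extracted. The plan is: (a) build, parallel to Section 6, block events $\tilde\Gamma_0,\tilde\Lambda_0,\tilde\Delta_0$ on the boxes $\tilde R_{j_1}^m(v)$ adapted to a $(+)$-arm/$(-*)$-arm pair confined to a half-plane, with the same fence conditions, and run the chain of comparisons from Lemmas \ref{lambda0-to-delta0-tilde}--\ref{lem:comparison of gamma0 and delta0-tilde} (these use only the Connection lemma, the fence Lemma and the mixing property, never the BK inequality); this produces a quasi-multiplicative decomposition of $\mu_t^N\bigl(\mathcal{B}^+(1,1,R,n)\bigr)$ over the dyadic scales between $R$ and $n$, so $\mu_t^N\bigl(\mathcal{B}^+(1,1,R,n)\bigr)\asymp(R/n)^{\alpha}$ for a single exponent $\alpha\ge0$, and together with the lower bound this already forces $\alpha\le1$. (b) Pin $\alpha=1$: here I would use a square-root-trick argument exploiting the reflection symmetry of the half-annulus across its vertical axis, which shows that in one dyadic half-annulus the existence of a $(+)$-arm and the existence of a $(-*)$-arm are each blocked by a dual arc of one colour sealing the inner square from the outer one, these two arcs cannot coexist, and the remaining probability is split essentially evenly --- so the per-scale factor is neither faster nor slower than $\tfrac12$, which is exactly what makes the upper and lower quasi-multiplicative exponents coincide. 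Verifying this matching of the per-scale decay is the step I expect to require the most care, and it is precisely where the arm-separation estimates of Section 4 and the block-comparison lemmas of Sections 5--6 are indispensable; all constants depend only on $\varepsilon_0,j_1,\eta$ since every lemma invoked has that property and the mixing errors were arranged to be negligible.
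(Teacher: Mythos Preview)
Your reduction of (\ref{eq:2 arm bound}) to (\ref{eq:2 arm bound for boxes}) is fine, and your lower bound sketch is in the right spirit. The real problem is step~(b) of your upper bound.

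What you wrote there does not pin the exponent. The observation that in a half-annulus a $(+)$-sealing arc and a $(-*)$-sealing arc cannot coexist only gives the identity $\mu(E_+\cap E_{-*})=\mu(E_+)+\mu(E_{-*})-1$ for the single-scale event; together with RSW it yields that the per-scale two-arm probability is bounded away from $0$ and $1$, i.e.\ that $\alpha\in(0,\infty)$, but nothing more. No amount of reflection symmetry or square-root trick turns this into $\alpha=1$: the single-arm half-plane probabilities $\mu(E_+),\mu(E_{-*})$ are themselves unknown constants strictly between $0$ and $1$, and the formula above places no further constraint. So after step~(a) you know $\mu_t^N(\mathcal{B}^+(1,1,R,n))\asymp (R/n)^{\alpha}$ with $0<\alpha\le1$, and you are stuck.

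The paper's route is different and supplies exactly the missing idea: it proves (\ref{eq:2 arm bound}) \emph{first}, by a disjoint-location argument, and only then derives (\ref{eq:2 arm bound for boxes}). Let $L_n$ be the leftmost vertical $(-*)$-crossing of $S(n)$ and $(Z(L_n),n)$ its top endpoint. The events $\{Z(L_n)=x\}$ are pairwise disjoint, so $\sum_{|x|\le n/2}\mu_t^N(Z(L_n)=x)\le1$; an RSW construction gives the matching lower bound $\sum_{|x|\le n/2}\mu_t^N(Z(L_n)=x)\ge c(\varepsilon_0)$. One then builds a translation-invariant sub-event $\hat E^*(n,x)\subset\{Z(L_n)=x\}$ so that $\sum_{|x|\le n/2}\mu_t^N(\hat E^*(n,x))=n\,\mu_t^N(\hat E^*(n,0))\le1$, and finally uses the extension machinery (the half-plane analogues of Lemmas~\ref{relation of deltas}--\ref{comparison of gamma and delta}) together with the Connection lemma to show $\mu_t^N(\mathcal E_2(n))\le C\,\mu_t^N(\hat E^*(n,0))$. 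This is where the exact power $n^{-1}$ comes from: it is the $1/n$ forced by disjointness of $n$ events, not an exponent identified by symmetry. Once (\ref{eq:2 arm bound}) is in hand, (\ref{eq:2 arm bound for boxes}) follows by factoring $\mathcal E_2(n)$ through scale $R$ via mixing (lower bound) and by the outward extension events $\tilde\Gamma_2,\tilde\Lambda_2,\tilde\Delta_2$ (upper bound). Your step~(a) is essentially this outward extension, so you have the comparison lemmas; what you are missing is the disjointness input that fixes the exponent.
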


The proof of this theorem is divided into two parts;
the proof of (\ref{eq:2 arm bound}) and the proof of
(\ref{eq:2 arm bound for boxes}).

First we prove (\ref{eq:2 arm bound}).
Let $L_n$ denote the leftmost vertical $(-*)$-crossing of $S(n)$.
Also, let $( Z(L_n),n)$ denote the starting point of $L_n$ in 
$\{ x^2=n\} $. 
Then by definition there exists a $(+)$-path in $S(n)$ from
$(Z(L_n)-1,n)$ or from $ (Z(L_n), n-1) $ to the left side of $S(n)$.
Put 
\[ E^*(n,x) := \{ Z(L_{n}) = x  \}. \]
\begin{Lemma}\label{lem:7-1}
Let $2^{j_1}<n<\min\{ L(h, \varepsilon_0), \frac{N}{2} \} $.
Then we have 
\[
\frac{1}{2}\delta_8^2\cdot \delta_1 \leq \sum_{|x|\leq n/2}
 \mu_t^N\bigl( E^*(n,x)\bigr) \leq 1
\]
for $0\leq t \leq 1$.
\end{Lemma}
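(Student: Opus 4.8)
The upper bound is immediate: for each $\omega$ either $S(n)$ has no vertical $(-*)$-crossing — so $\omega$ lies in no $E^*(n,x)$ — or $L_n$ is well defined and $Z(L_n)$ takes a single value, so the events $E^*(n,x)$, $x\in\mathbf{Z}$, are pairwise disjoint and
\[
\sum_{|x|\le n/2}\mu_t^N(E^*(n,x))\le\sum_{x\in\mathbf{Z}}\mu_t^N(E^*(n,x))=\mu_t^N(S(n)\text{ has a vertical }(-*)\text{-crossing})\le 1 .
\]
The rest of the proof establishes the lower bound by exhibiting a ``good'' event $G$ with $G\subset\bigcup_{|x|\le n/2}E^*(n,x)$ and $\mu_t^N(G)\ge\tfrac12\delta_8^2\delta_1$.

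\textbf{The good event.} Let $G_1$ be the event that the thin central rectangle $C:=[-n/8,n/8]\times[-n,n]$ has a vertical $(-*)$-crossing, and on $G_1$ let $\gamma$ be the leftmost such crossing; since $C$ has aspect ratio $8$, Lemma~\ref{RSWbound} and the invariance of $\mu_t^N$ under rotation by a right angle give $\mu_t^N(G_1)\ge\delta_8$. Let $G_2$ be the event that the rectangle $[-n/2,-3n/7]\times[3n/7,n]$ has a vertical $(+)$-crossing $\rho_1$ and the square $[-n,-3n/7]\times[3n/7,n]$ has a horizontal $(+)$-crossing $\rho_2$. A horizontal crossing and a vertical crossing of the common rectangle $[-n/2,-3n/7]\times[3n/7,n]$ must intersect, so $\rho:=\rho_1\cup\rho_2$ is a $(+)$-path, contained in $W:=[-n,-3n/7]\times[3n/7,n]$, joining the top side of $S(n)$ — at a point whose first coordinate lies in $[-n/2,-3n/7]$ — to the left side of $S(n)$. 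By the FKG inequality and Lemma~\ref{RSWbound} (again with rotation invariance), $\mu_t^N(G_2)\ge\delta_8\delta_1$. Put $G:=G_1\cap G_2$.

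\textbf{$G$ forces $|Z(L_n)|\le n/2$.} On $G_1$ the leftmost vertical $(-*)$-crossing $L_n$ of $S(n)$ exists, and $\gamma$ — being a vertical $(-*)$-crossing of $S(n)$ — lies weakly to the right of $L_n$, so $Z(L_n)\le Z(\gamma)\le n/8$. On $G_2$, the $(+)$-path $\rho$ together with the boundary arc of $S(n)$ running from the top endpoint of $\rho$ leftwards to $(-n,n)$ and then down to the endpoint of $\rho$ on $\{x^1=-n\}$ bounds a region $\mathcal{R}\subset S(n)$; since $\rho\subset\{x^2\ge 3n/7\}$, the region $\mathcal{R}$ does not meet the bottom side $\{x^2=-n\}$, and since the top endpoint of $\rho$ has first coordinate $\ge-n/2$ we have $[-n,-n/2]\times\{n\}\subset\partial\mathcal{R}$. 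A $(-*)$-path starting at $(c,n)$ with $c\le-n/2$ therefore begins inside $\mathcal{R}$, and, unable to cross the $(+)$-path $\rho$ (planar duality) or the boundary of $S(n)$, it cannot reach $\{x^2=-n\}$; hence no vertical $(-*)$-crossing of $S(n)$ has top endpoint with first coordinate $\le-n/2$, i.e.\ $Z(L_n)>-n/2$. Thus $G\subset\{-n/2<Z(L_n)\le n/2\}=\bigcup_{|x|\le n/2}E^*(n,x)$.

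\textbf{Lower bound on $\mu_t^N(G)$; main obstacle.} The delicate point is that $G_1$ is decreasing in the spins while $G_2$ is increasing, so the FKG inequality does not multiply their probabilities — the recurring obstruction in this paper, circumvented by conditioning step by step. I would condition on $\{\gamma=g\}$ for each admissible path $g\subset C$: since $\{\gamma=g\}\in\mathcal{F}_{C}$ while $G_2\in\mathcal{F}_{W}$ with $W$ at $\ell^\infty$-distance of order $n$ from $C$, the mixing property (\ref{HZ(1.1)}) gives
\[
\mu_t^N(G_2\mid\{\gamma=g\})\ge\mu_t^N(G_2)-Cn^3e^{-\alpha n/8}\ge\tfrac12\delta_8\delta_1 ,
\]
the last step using that $n>2^{j_1}$, with $2^{j_1}$ large enough that all the rectangle scales above lie in the RSW regime $[n_2,L(h,\varepsilon_0))$ of Lemma~\ref{RSWbound} and the mixing error is negligible. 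Summing over $g$,
\[
\mu_t^N(G)=\sum_{g}\mu_t^N(\gamma=g)\,\mu_t^N(G_2\mid\{\gamma=g\})\ge\tfrac12\delta_8\delta_1\sum_{g}\mu_t^N(\gamma=g)=\tfrac12\delta_8\delta_1\,\mu_t^N(G_1)\ge\tfrac12\delta_8^2\delta_1 .
\]
The genuinely substantive step is the planar-duality separation argument of the previous paragraph; getting it right is exactly why the barrier $\rho$ is assembled from two glued crossings — so that its point of contact with the top of $S(n)$ is controlled — rather than from a single corner-to-corner $(+)$-path.
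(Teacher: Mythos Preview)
Your proof is correct and follows the same strategy as the paper: exhibit a good event forcing $|Z(L_n)|\le n/2$, built from a $(-*)$-crossing event (bounding $Z(L_n)$ above) intersected with a $(+)$-barrier assembled from two intersecting $(+)$-crossings (bounding it below), then combine via FKG on the $(+)$-part and mixing between the $(+)$ and $(-*)$ parts. The paper's geometry is slightly different --- it takes the $(-*)$-crossing in $[n/4,n/2]\times[-n,n]$ and the barrier as a full-height vertical $(+)$-crossing of $[-n/2,-n/4]\times[-n,n]$ together with a horizontal $(+)$-crossing of $[-n,-n/4]\times[-n,n]$ --- and it applies the mixing bound (\ref{HZ(1.3)}) directly to $\mu_t^N(A\cap B\cap C)$; your conditioning on $\{\gamma=g\}$ is an unnecessary detour, since $G_1\in\mathcal{F}_C$, $G_2\in\mathcal{F}_W$ with $d(C,W)$ of order $n$ already gives $\mu_t^N(G_1\cap G_2)\ge\bigl(\mu_t^N(G_2)-Cn^3e^{-\alpha d}\bigr)\mu_t^N(G_1)$ in one step.
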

\begin{proof}
Since $\{ E^*(n,x) : -n \leq x \leq n\} $ are disjoint, the right hand 
inequality is obvious.
Let us define events $A,B,C$ by
\begin{align*}
A&=\left\{ 
 \mbox{there is a vertical $(-*)$-crossing in 
 $ [ \frac{n}{4}, \frac{n}{2} ]\times [-n, n] $}
 \right\} , \\
B&= \left\{ 
 \mbox{there is a vertical $(+)$-crossing in
 $[ -\frac{n}{2}, -\frac{n}{4}]\times [ -n, n]$}
  \right\} , \\
C&=\left\{ 
 \mbox{there is a horizontal $(+)$-crossing in 
        $[-n, -\frac{n}{4}]\times [-n, n]$}
\right\}.
\end{align*}
Then it is clear that
$$
A\cap B\cap C \subset \left\{ -\frac{n}{2}\leq Z(L_n)\leq \frac{n}{2} \right\} .
$$
Hence we have
$$
\sum_{|x|\leq n/2} \mu_t^N( Z(L_n)=x ) \geq \mu_t^N( A\cap B\cap C).
$$ 
By the mixing property and the FKG inequality,
\begin{align*}
\mu_t^N( A\cap B\cap C) &\geq \left[ \mu_t^N( A ) 
              - C 2n\cdot \frac{n}{4}\frac{n}{2}e^{-\alpha n/2} \right]
              \mu_t^N( B\cap C ) \\
 &\geq  \left[\delta_8- C \dfrac{n^3}{4}e^{-\alpha n/2} \right]
          \delta_8\cdot \delta_1.
\end{align*}
By (\ref{eq:the constant C1}), (\ref{eq:delta-1}) and (\ref{eq:cond-for-j1-2}),
we know that 
$$
C\dfrac{n^3}{4}e^{-\alpha n/2} 
  \leq 2^{-3}\delta \leq \frac{\delta_8}{2},
$$ 
and hence $\mu_t^N( A\cap B\cap C) \geq \frac{1}{2}\delta_8^2\cdot \delta_1$.
\end{proof}

Next, let $\Hat{E}^*(n,0)$ be the event such that
\begin{enumerate}
\item there is a $(-*)$-path ${\hat r}^*$ in $[-\frac{n}{2}, \frac{n}{2}]
\times [-3n, n] $ connecting $(0,n)$ with the bottom side of $S((0,-n),2n)$, 
and
\item there is a $(+)$-path ${\hat r}$ in $[-2n, 2n]\times [-n, n]$
connecting $(-1,n)$ with the left side of $S((0,-n),2n)$.
\end{enumerate}
Then for $x\in {\mathbf Z}$, we define $\Hat{E}^*(n,x)$ as translation of 
$\Hat{E}^*(n,0)$ by $(x,0)$.
So, $\Hat{E}^*(n,0)$ is an event occurring in $S((x,-n),2n)$.
Note that  $\Hat{E}^*(n,x)$ is a subset of $E^*(n,x)$
for every $x$ with $|x|\leq n/2$.

By Lemma \ref{lem:7-1} and the translation invariance of $\mu_t^N$, we have
\[1 \geq  \sum_{x \in [-n/2,n/2]} \mu_t^N \bigl( \Hat{E}^*(n,x) \bigr) 
 =  n \mu_t^N\bigl( \Hat{E}^*(n,0) \bigr).\]

For the proof of (\ref{eq:2 arm bound}) and (\ref{eq:2 arm bound for boxes}),
we use the extension argument modified for the present setting.
Let us rewrite $\Gamma_2(S(2^j))= {\mathcal E}_2(2^j)$ to fit the extension 
argument. 
Then we put  $\Lambda_2(S(2^j))$  as the subset of $ \Gamma_2(S(2^j))$
such that any crossing $(+)$-cluster and any crossing $(-*)$-cluster
in ${\mathcal B}_i$ has an $(\eta , j)$-fence for $i=1,3,4$, where 
${\mathcal B}_i$'s correspond to $S(2^j)$.
Finally, let $\Delta_2(S(2^j))$ be the subset of $\Gamma_2(S(2^j))$ such that
the following events occur:
\begin{enumerate}
\item $r \cap ({\mathcal B}_1\cup {\mathcal B}_3\cup {\mathcal B}_4) \subset {\mathcal A}_1$, 
and $r^*\cap  ({\mathcal B}_1\cup {\mathcal B}_3\cup {\mathcal B}_4) \subset {\mathcal A}_1$.
\item There exists a vertical $(+)$-crossing in $  {\mathcal A}_1$,
and there exists a horizontal $(-*)$-crossing in $  {\mathcal A}_4$.
\end{enumerate}
Here, ${\mathcal A}_i$'s correspond to $S(2^j)$ as in section 5.
By the inwards extension argument in section 5 we have
\begin{align}
\label{eq:7-4}
\mu_t^N\bigl( \Delta_2(S(2^j)) \bigr) &\leq C_1 \mu_t^N\bigl(
\Delta_2(S(2^{j+1})) \bigr), \\
\label{eq:7-4-1}
\mu_t^N\bigl( \Delta_2(S(2^j)) \bigr) &\geq C_1^{-(j-j_1)}
        {\bigl(1+ e^{(4h_c + 8)/(\mathfrak{K}T)} \bigr) }^{-\# S(2^{j_1})}, \\
\intertext{and}
\label{eq:7-5}
\mu_t^N\bigl( \Lambda_2(S(2^j)) \bigr) &\leq C_3(\eta )\mu_t^N\bigl(
\Delta_2(S(2^{j+1})) \bigr). 
\end{align}
These are valid for $j_1\leq j<\lfloor \log_2n\rfloor $, and
\begin{equation}\label{eq:7-6}
\mu_t^N\bigl( \Gamma_2(S(2^j)) \bigr) \leq K\mu_t^N\bigl(
\Delta_2(S(2^{j})) \bigr)
\end{equation}
for $j_1+3\leq j<\lfloor \log_2n\rfloor $.
Here, the constants $C_1, C_3(\eta )$ and $K$ are the same as those
in Lemmas \ref{relation of deltas}, \ref{lambda-to-delta} and \ref{comparison of gamma and delta}.

\begin{Remark}
Proof of the above inequalities is essentially the same as those
of Lemmas \ref{relation of deltas}, \ref{lambda-to-delta} and \ref{comparison of gamma and delta}, but we list up where we modify them.
\begin{itemize}
\item  As for (\ref{eq:7-4}) and (\ref{eq:7-4-1}),
we introduce $\Delta_2(S((0,2^j),2^j))$ as
the $(0,2^j)$-translation of $\Delta_2(S(2^j))$.
Note that the top center point of $S((0,2^j),2^j)$ is $(0,2^{j+1})$
which is also the top center point of $S(2^{j+1})$.
Then we can compare $\displaystyle \mu_t^N\bigl( \Delta_2(S((0,2^j),2^j))
\bigr) $ and $\displaystyle \mu_t^N\bigl( \Delta_2(S(2^{j+1}))\bigr) $
by the same argument as in the proof of Lemma \ref{relation of deltas}.

\item
As for (\ref{eq:7-5}), we also introduce $\Lambda_2(S((0,2^j),2^j))$ as 
the $(0,2^j)$-shift of $\Lambda_2(S(2^j))$.

\item As for (\ref{eq:7-6}), we need for each $m$  the shifted events;\\ 
$\Gamma_2(S((0,2^{j}-2^{j-m}),2^{j-m}))$,
$\Lambda_2(S((0,2^{j}-2^{j-m}),2^{j-m}))$ and\\
$\Delta_2(S((0,2^{j}-2^{j-m}),2^{j-m}))$.
\end{itemize}
\end{Remark}
\begin{Lemma}\label{lem:7-2} Assume that $2^{j_1+4}\leq 2^{k-1}<n\leq 2^k<
\min\{ L(h,\varepsilon_0), \frac{N}{2}\} $. Then we have
$$
\mu_t^N\bigl( {\mathcal E}_2(n)\bigr) \leq K
 {\biggl( \frac{\delta_{128}}{16}\biggr) }^{-2}
  \   \mu_t^N\bigl( \Hat{E}^*(n,0)\bigr),
$$
which proves the upper bound in (\ref{eq:2 arm bound}),
where $K>0$ is given in (\ref{eq:7-6}).
\end{Lemma}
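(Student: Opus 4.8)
The plan is to read $\mu_t^N({\mathcal E}_2(n))$ off the inward extension estimate (\ref{eq:7-6}) at a dyadic scale just below $\log_2 n$, and then to pay for the passage from the resulting $\Delta_2$-event to $\Hat{E}^*(n,0)$ by two applications of the Connection lemma (Lemma \ref{connection lemma}), exactly as corridors replace independence in Lemmas \ref{relation of deltas}--\ref{comparison of gamma and delta}.

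First I would set $j_0:=\lfloor\log_2 n\rfloor-1$; the hypothesis $2^{j_1+4}\le 2^{k-1}<n$ gives $j_1+3\le j_0<\lfloor\log_2 n\rfloor$ and $n/4<2^{j_0}\le n/2$. Let $B:=S\bigl((0,n-2^{j_0}),2^{j_0}\bigr)$ be the dyadic box of radius $2^{j_0}$ whose top-centre is $(0,n)$; since $2^{j_0}<n$ one has $B\subset S(n)$ and in fact $B\subset[-n/2,n/2]\times[0,n]$. On ${\mathcal E}_2(n)$ the two arms issue from $(0,n)$ and $(-1,n)$ and terminate on $\partial_{in}S(n)\setminus\{x^2=n\}$, which lies at $\ell^\infty$-distance at least $n>2^{j_0}$ from $(0,n)$; hence (up to the usual adjustments at the corners of $B$) the portion of each arm inside $B$ connects the top-centre of $B$ with $\partial_{in}B$ minus its top side, so that ${\mathcal E}_2(n)\subset\Gamma_2(B)$. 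By translation invariance of $\mu_t^N$, together with (\ref{eq:7-6}) applied at scale $j_0$ and the constant $K$ of Lemma \ref{comparison of gamma and delta},
\[
\mu_t^N\bigl({\mathcal E}_2(n)\bigr)\le\mu_t^N\bigl(\Gamma_2(B)\bigr)=\mu_t^N\bigl(\Gamma_2(S(2^{j_0}))\bigr)\le K\,\mu_t^N\bigl(\Delta_2(S(2^{j_0}))\bigr)=K\,\mu_t^N\bigl(\Delta_2(B)\bigr).
\]

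It then remains to bound $\mu_t^N(\Delta_2(B))\le(\delta_{128}/16)^{-2}\,\mu_t^N(\Hat{E}^*(n,0))$. On $\Delta_2(B)$ the $(+)$-arm $r$ reaches the left side of $B$ through a vertical $(+)$-crossing of the strip ${\mathcal A}_1$ attached to $B$, the $(-*)$-arm $r^*$ reaches the bottom side of $B$ through a horizontal $(-*)$-crossing of the strip ${\mathcal A}_4$ attached to $B$, and $r\cup r^*\subset B\subset[-n/2,n/2]\times[0,n]$. I would attach to ${\mathcal A}_1$ a corridor of width comparable to $2^{j_0}$ running inside $[-2n,2n]\times[-n,n]$ leftward to the side $\{x^1=-2n\}$ of $S((0,-n),2n)$, and to ${\mathcal A}_4$ a corridor of width comparable to $2^{j_0}$ running inside $[-n/2,n/2]\times[-3n,n]$ downward to the side $\{x^2=-3n\}$; both corridors have length $O(n)$ and therefore aspect ratio below the threshold for which Lemma \ref{connection lemma} yields the constant $\delta_{128}$, and they are chosen disjoint from one another and disjoint from $B$. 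Conditioning successively on $\Delta_2(B)$ and on the realizations of the two prong-crossings, Lemma \ref{connection lemma} and the FKG inequality give a $(+)$-path crossing the first corridor and a $(-*)$-path crossing the second, each with conditional probability at least $\delta_{128}/16$. On the intersection of $\Delta_2(B)$ with these two events the concatenation of $r$ with the first corridor path is a $(+)$-path in $[-2n,2n]\times[-n,n]$ from $(-1,n)$ to the left side of $S((0,-n),2n)$, and the concatenation of $r^*$ with the second corridor path is a $(-*)$-path in $[-n/2,n/2]\times[-3n,n]$ from $(0,n)$ to the bottom side of $S((0,-n),2n)$; that is, $\Hat{E}^*(n,0)$ occurs. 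Hence $\mu_t^N(\Hat{E}^*(n,0))\ge(\delta_{128}/16)^2\,\mu_t^N(\Delta_2(B))$, and combining this with the previous display gives the stated inequality; together with the normalization $\mu_t^N(\Hat{E}^*(n,0))\le 1/n$ (a consequence of Lemma \ref{lem:7-1} and translation invariance) this yields the upper bound $\mu_t^N({\mathcal E}_2(n))\le C_8 n^{-1}$ in (\ref{eq:2 arm bound}).

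The hard part will be the construction in the last paragraph: one must lay out the two corridors so that each stays inside its prescribed rectangle in $\Hat{E}^*(n,0)$, they do not meet, their aspect ratios stay below the bound for which Lemma \ref{connection lemma} produces $\delta_{128}$, and they glue correctly onto $r$ and $r^*$ through the prong-crossings furnished by the definition of $\Delta_2$. This is the same bookkeeping already performed in Lemmas \ref{relation of deltas} and \ref{lambda-to-delta}, with corridors of controlled length used in place of independence; everything else reduces to (\ref{eq:7-6}), translation invariance of $\mu_t^N$, and the normalization from Lemma \ref{lem:7-1}.
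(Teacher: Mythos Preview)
Your argument is essentially the paper's own proof: pass from $\mathcal{E}_2(n)$ to $\Gamma_2$ of a dyadic sub-box with top-centre $(0,n)$, apply (\ref{eq:7-6}) to reach $\Delta_2$, and then use two corridors and the Connection lemma to land in $\Hat{E}^*(n,0)$. The only difference is cosmetic: the paper takes the box of radius $2^{k-1}$ (so between $n/2$ and $n$), whereas your $j_0=\lfloor\log_2 n\rfloor-1$ gives a box of radius between $n/4$ and $n/2$; with your smaller box the downward corridor to $\{x^2=-3n\}$ can have aspect ratio larger than $16$, so the exact constant $(\delta_{128}/16)^{-2}$ need not come out --- take the paper's scale $k-1$ if you want that specific constant.
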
 
\begin{proof}
Since 
$$
{\mathcal E}_2(n) \subset \Gamma_2(S((0,n-2^{k-1}),2^{k-1})),
$$
from (\ref{eq:7-6}), and the translation invariance of $\mu_t^N$, we have 
\begin{equation}\label{eq:7-7}
\mu_t^N\bigl( {\mathcal E}_2(n)\bigr) \leq 
 \mu_t^N\bigl( \Gamma_2(S(2^{k-1}))\bigr) \leq K\mu_t^N\bigl(
    \Delta_2(S(2^{k-1})) \bigr) .
\end{equation}
Let 
$$
{\mathcal D}'_1:=[-2n, -2^{k-1}+2^{k-3}]\times [n-2^{k-1}-2^{k-3},
                  n-2^{k-1}+2^{k-3}] ,
$$
which connects ${\mathcal A}_1$ of $S((0,n-2^{k-1}), 2^{k-1})$
with the left side of $S((0,-n),2n)$ in  
$[-2n,2n]\times [-n,n]$.
Also, let
$$
{\mathcal D}'_4:=[-2^{k-3}, 2^{k-3}]\times [-3n, n-2^k+2^{k-3}],
$$
which connects ${\mathcal A}_4$ of $S((0,n-2^{k-1}), 2^{k-1})$
with the bottom side of the box $S((0,-n),2n)$.
Then we define events occurring in ${\mathcal D}'_1$ and ${\mathcal D}'_4$
by
\begin{align*}
D_1'&=\{ 
\mbox{there is a horizontal $(+)$-crossing in ${\mathcal D}'_1$}
\}, \\
D_4'&=\{ 
\mbox{there is a vertical $(-*)$-crossing in ${\mathcal D}'_4$}
\}.
\end{align*}
Then 
$$
D_1'\cap D_4' \cap \Delta_2 ( S((0,n-2^{k-1}),2^{k-1}) ) 
\subset \Hat{E}^*(n,0),
$$
and the lengths of ${\mathcal D}'_1$ and ${\mathcal D}'_4$ are not longer than
$2^{k+2}$.
Therefore by the Connection lemma, we have
\begin{align*}
\mu_t^N \bigl( \Hat{E}^*(n,0) \bigr) &\geq  
\mu_t^N\bigl( D_1'\cap D_4'\cap \Delta_2( S((0,2n-2^{k-1}),2^{k-1}))
 \bigr) \\
&\geq { \biggl( \frac{\delta_{128}}{16}\biggr) }^2
 \ \mu_t^N\bigl( \Delta_2(S(2^{k-1}))\bigr) .
\end{align*}
Together with (\ref{eq:7-6}) and (\ref{eq:7-7}), this proves the lemma.
\end{proof}

For the lower bound in (\ref{eq:2 arm bound}), by the finite energy property,
there exists an absolute constant $C_1^\# >0$ such that for every $|x|\leq n/2$,
$$
\mu_t^N\bigl( E^*(n,x)\bigr) \leq C_1^\#
\mu_t^N\bigl( \Gamma_2(S((0,n-2^{k-2}), 2^{k-2}))\bigr) ,
$$
where $2^{k-1}<n\leq 2^k$.
Hence from Lemma \ref{lem:7-1}, and the translation invariance of $\mu_t^N$,
$$
\frac{1}{2}\delta_8^2\cdot \delta_1 \leq 
\sum_{|x|\leq n/2}\mu_t^N\bigl( E^*(n,x) \bigr)
 \leq C_1^\# n \mu_t^N\bigl( 
      \Gamma_2(S(2^{k-2}))\bigr) \leq 
C_1^\# n \mu_t^N\bigl( {\mathcal E}_2(n/2)\bigr) .
$$
This completes the proof of (\ref{eq:2 arm bound}).
It is clear that the above argument is applicable to ${\mathcal E}_2^*(n)$,
too.

Next, we prove (\ref{eq:2 arm bound for boxes}). 
Let $j$ be the integer such that $2^j\leq R<2^{j+1}$.
By the mixing property,
$$ %%\begin{align*}
\mu_t^N\bigl( {\mathcal E}_2(n) \bigr) %%&\leq 
%%   \mu_t^N\bigl( \Gamma_2(S(2^{j-1}))\cap {\mathcal B}^+(1,1,2^j,n)\bigr) \\
%% &
\leq  \mu_t^N\bigl( {\mathcal B}^+(1,1,2^j,n)\bigr)
  \left\{ \mu_t^N\bigl( \Gamma_2(S(2^{j-2}))\bigr) 
              + C2^{2(j-1)}\cdot 2^{j-1}e^{-\alpha 2^{j-1}} \right\} .
$$  %%\end{align*}
Since we set $\Gamma_2(S(2^{j-2}))= {\mathcal E}_2(2^{j-2})$,
by (\ref{eq:2 arm bound}) we have
$$
\mu_t^N\bigl( {\mathcal E}_2(2^{j-2}) \bigr) \geq C_7 2^{-j+2},
$$
if $j-2\geq j_1+4$.
So, if we assume that $j_2\geq j_1+6$ is sufficiently large such that 
\begin{equation}\label{eq:cond for j2}
C_72^{-j+2}\geq C2^{3(j-1)}e^{-\alpha 2^{j-1}} \quad \mbox{ for every $j\geq j_2$,}
\end{equation}
then we have 
$$
\mu_t^N\bigl( {\mathcal E}_2(n) \bigr) \leq 
2  \mu_t^N\bigl( {\mathcal B}^+(1,1,2^j,n)\bigr)
\mu_t^N\bigl( {\mathcal E}_2(2^{j-2}) \bigr) 
$$
for every $j\geq j_2$.
This, together with (\ref{eq:2 arm bound}), implies that
\begin{align*}
\mu_t^N\bigl( {\mathcal B}^+(1,1,R,n)\bigr) &\geq 
 \mu_t^N\bigl( {\mathcal B}^+(1,1,2^j ,n)\bigr) \\
&\geq  C_7C_8^{-1}\frac{2^{j-3}}{n}\geq \frac{1}{16}C_7C_8^{-1}\frac{R}{n}.
\end{align*}
This is valid if $ 2^{j_2}\leq R<n$.

To show the upper bound in (\ref{eq:2 arm bound for boxes}), we use an
outwards extension argument.
This time we consider $S((0,n-2^j),2^j)$ in $S(n)$.
First we put
$$
{\tilde {\mathcal B}}_i:= S(n)\cap \biggl[
  S_i((0,n-2^j),2^j+2^{j-2})\setminus S((0,n-2^j),2^j)\biggr] ,
$$
and 
$$
\varphi_i:= S(n)\cap \biggl[
 \partial_{in} S_i((0,n-2^j),2^j+2^{j-2})\setminus S((0,n-2^j),2^j)\biggr] ,
$$
for $i=1,3,4$, and let $\xi_1,\xi_3\xi_4$ be the left, the right and
the bottom sides of $S((0,n-2^j),2^j)$, respectively.% 3:the right,
Then we can define $(\eta ,j)$-fences. 
Note that everything should be defined in $S(n)$.

Let ${\tilde \Gamma}_2(S((0,n-2^j),2^j), S(n))$ be the event such that
\begin{enumerate}
\item there exist a $(+)$-path ${\tilde r}$ and a $(-*)$-path 
${\tilde r}^*$ in $S(n)\setminus S((0,n-2^j),2^j)$, both connecting  
$\partial S((0,n-2^j),2^j)$ with $\partial_{in}S(n)\setminus \{ x^2=n\} $,
and
\item ${\tilde r}$ is to the left of ${\tilde r}^*$.
\end{enumerate}
Also, let $ {\tilde \Gamma}^*_2(S((0,n-2^j),2^j), S(n))$ be the event 
such that the roles of ${\tilde r}$ and ${\tilde r}^*$ are interchanged
in the above definition.
Then it is clear that
$$
{\tilde \Gamma}_2(S((0,n/2-2^j),2^j), S(n/2))
\cup{\tilde \Gamma}^*_2(S((0,n/2-2^j),2^j), S(n/2))
\supset {\mathcal B}^+(1,1,2^j,n).
$$
Since the argument is the same, it is sufficient to prove the upper bound in 
(\ref{eq:2 arm bound for boxes}) for 
${\tilde \Gamma}_2(S((0,n-2^j),2^j), S(n))$ in place of 
${\tilde B}^+(1,1,R,n)$.

Define ${\tilde \Lambda }_2( S((0,n-2^j), 2^j), S(n)) $ as the 
subset of $ {\tilde \Gamma}_2(S((0,n-2^j),2^j), S(n))$ such that 
every crossing $(+)$-cluster and every crossing $(-*)$-cluster
in ${\tilde {\mathcal B}}_i$ has an $(\eta , j)$-fence, for $i=1,4$.
Then we put
\begin{align*}
{\tilde {\mathcal A}}_1&:= [-2^j-2^{j-2},-2^j]\times [n-2^j-2^{j-2},
                            n-2^j+2^{j-2}],\\
{\tilde {\mathcal A}}_4&:= [-2^{j-2},2^{j-2}]\times [n-2^{j+1}-2^{j-2},
                            n-2^{j+1}].
\end{align*}
These relative locations for $S((0,n-2^j), 2^j)$ are the same as
${\tilde A}_i$'s for $S(2^j)$.
Finally, let
${\tilde \Delta}_2(S((0,n-2^j),2^j), S(n))$ be the subset of 
$ {\tilde \Gamma}_2(S((0,n-2^j),2^j), S(n))$ such that 
\begin{enumerate}
\item ${\tilde r}^*\cap (\cup_i{\tilde {\mathcal B}}_i)\subset {\tilde {\mathcal A}}_4$,
and  ${\tilde r}\cap (\cup_i{\tilde {\mathcal B}}_i)\subset {\tilde {\mathcal A}}_1$,
and
\item there exists a vertical $(+)$-crossing in ${\tilde {\mathcal A}}_1$,
and there  exists a horizontal $(-*)$-crossing in ${\tilde {\mathcal A}}_4$.
\end{enumerate}
Then under the conditions (\ref{condition for eta}), 
(\ref{eq:bound for eta}) and (\ref{condition for rho and epsilon-2}),
the statement of Lemma \ref{lem:fence} is correct as in section 5.3.
Therefore we have as before,
\begin{align}\label{eq:delta2-tilde}
&\mu_t^N\bigl( {\tilde \Delta}_2(S((0,n-2^{j+1}),2^{j+1}), S(n)) \bigr) \\
&\leq C_1 
\mu_t^N\bigl( {\tilde \Delta}_2(S((0,n-2^{j}),2^{j}), S(n)) \bigr) \nonumber
\end{align}
for $2^{j_1}\leq 2^j< 2^{j+1}+2^{j-1}\leq n$.
Let $j^*$ be the maximum of $j$'s satisfying $2^j+2^{j-2}\leq n$.
Putting
\begin{align*}
{\mathcal D}_1&=[ -n, -2^{j^*}]\times [-2^{j^*-2},2^{j^*-2}], \\
{\mathcal D}_4&=[-2^{j^*-2},2^{j^*-2}]\times [-n, n-2^{j^*+1}],
\end{align*}
we can see that $ {\tilde \Delta}_2(S((0,n-2^{j^*}),2^{j^*}), S(n)) $
occurs if there exist 
\begin{itemize}
\item a horizontal $(+)$-crossing in ${\mathcal D}_1$ and a vertical 
$(+)$-crossing in ${\tilde {\mathcal A}}_1$, and
\item a vertical $(-*)$-crossing in ${\mathcal D}_4$ and a horizontal
$(-*)$-crossing in  ${\tilde {\mathcal A}}_4$.
\end{itemize}
Here,  ${\tilde {\mathcal A}}_1, {\tilde {\mathcal A}}_4$ correspond to
$S((0,n-2^{j^*}),2^{j^*})$.
Thus we have 
$$
\mu_t^N\bigl( {\tilde \Delta}_2(S((0,n-2^{j^*}),2^{j^*}), S(n))\bigr)
\geq \frac{\delta_{24}\delta_{16}}{16}\frac{\delta_{48}\delta_{16}}{16}=:C^\#.
$$
By this and (\ref{eq:delta2-tilde}), we have
\begin{equation}\label{eq:delta2-tilde-2}
\mu_t^N\bigl(  {\tilde \Delta}_2(S((0,n-2^{j}),2^{j}), S(n))\bigr)
\geq C_1^{-(j^*-j)}C^\#
\end{equation}
for $j_1\leq j\leq j^*$.
Also, we have
\begin{align}\label{eq:lambda2-to-delta2-tilde}
&\mu_t^N\bigl(  {\tilde \Lambda}_2(S((0,n-2^{j+1}),2^{j+1}), S(n))\bigr) \\
&\geq C_4(\eta ) 
\mu_t^N\bigl(  {\tilde \Delta}_2(S((0,n-2^{j}),2^{j}), S(n))\bigr)\nonumber
\end{align}
if $j_1\leq j<j+1\leq j_*$, where $C_4(\eta )$ is the same constant as in 
Lemma \ref{lem:lambda-to-delta-tilde}.
Finally, there exists a constant ${\tilde K}_2>0$, depending only on 
$\varepsilon_0, j_1$ and $\eta $, such that
\begin{align}\label{eq:gamma2-to-delta2-tilde}
&\mu_t^N\bigl(  {\tilde \Gamma}_2(S((0,n-2^{j}),2^{j}), S(n))\bigr) \\
& \leq {\tilde K}_2 
\mu_t^N\bigl(  {\tilde \Delta}_2(S((0,n-2^{j}),2^{j}), S(n))\bigr)\nonumber
\end{align}
if $j_1+3\leq j \leq j^*$.

Using these estimates, we will obtain the upper bound 
in (\ref{eq:2 arm bound for boxes}).
By the mixing property we have 
\begin{align}\label{eq:7-8}
&\mu_t^N\bigl( \Delta_2(S(0,n-2^j),2^j)\cap 
 {\tilde \Delta}_2(S((0,n-2^{j+1}),2^{j+1}),S(n)) \bigr) \\
&\geq \left\{ \mu_t^N\bigl( \Delta_2(S(2^j)) \bigr) 
             -C2^{2(j+1)}2^j e^{-\alpha 2^j} \right\} \nonumber \\
&\quad \times \mu_t^N\bigl( {\tilde \Delta}_2(S((0,n-2^{j+1}),2^{j+1}), S(n))\bigr) .
\nonumber
\end{align}
By (\ref{eq:gamma2-to-delta2-tilde}),
\begin{align}\label{eq:7-9}
&  \mu_t^N\bigl( {\tilde \Delta}_2(S((0,n-2^{j+1}),2^{j+1}), S(n)) \bigr) \\
& \geq {\tilde K}_2  \mu_t^N\bigl(
  {\tilde \Gamma}_2(S((0,n-2^{j+1}),2^{j+1}), S(n)) \bigr)  \nonumber \\
&\geq {\tilde K}_2\mu_t^N\bigl(
   {\tilde \Gamma}_2(S((0,n-R),R), S(n)) \bigr) .\nonumber
\end{align}
Further, by (\ref{eq:7-6}), we have
\begin{align*}
\mu_t^N\bigl( \Delta_2( S(2^j))\bigr) &\geq K^{-1}\mu_t^N\bigl( 
                          \Gamma_2( S(2^j)) \bigr) \\
         &\geq K^{-1}\mu_t^N \bigl( {\mathcal E}_2(2^j) \bigr)\\
         &\geq K^{-1}C_72^{-j}.
\end{align*}
Therefore if $j_2$ is sufficiently large such that
\begin{equation}\label{eq:bound for j2-2}
K^{-1}C_72^{-j-1}> C2^{3(j+1)}e^{-\alpha 2^j} \quad \mbox{for } j\geq j_2,
\end{equation}
then the right hand side of (\ref{eq:7-8}) is not less than 
$$
\frac{{\tilde K}_2}{2K}\mu_t^N \bigl({\mathcal E}_2(2^j) \bigr) \mu_t^N\bigl(
 {\tilde \Gamma}_2(S((0,n-R),R), S(n)) \bigr).
$$
On the other hand, by the Connection lemma, we have
\begin{align*}
&\mu_t^N\bigl( \Gamma_2(S(n))\bigr) \\
&\geq \mu_t^N\bigl(
\Delta_2(S(0,n-2^j),2^j) \cap 
{\tilde \Delta}_2(S(0,n-2^{j+1}),2^{j+1}), S(n) \bigr) \\
&\quad \times \biggl(\frac{\delta_{40}\delta_{16}}{16}\biggr)
 \biggl( \frac{\delta_{52}\delta_{16}}{16}\biggr) .
\end{align*}
Thus, we have
$$
C_8n^{-1} \geq C_1\frac{{\tilde K}_2}{2K}C_7 2^{-j}
\mu_t^N\bigl( {\tilde \Gamma}_2(S((0,n-R),R), S(n))\bigr).
$$
Since $2^j\leq R<2^{j+1}$, this implies the desired inequality.

\subsection{Power law estimate for three-arm paths in half space}
As we explained in the introduction, our estimate for 3 arm paths is
restricted to a special case:
We define the event ${\tilde {\mathcal B}}^+(2,1,R,n)$ to be the event 
such that two $(+)$-paths ${\tilde r_1}, {\tilde r_2}$ and one $(-*)$-path
${\tilde r}_3^*$ connect $\partial S(R)$ with $\partial_{in }S(n)$
in $( S(n) \setminus S(R) ) \cap \{ x^2\leq 0\} $, and that
these $(+)$-paths are separated by the $(-*)$-path ${\tilde r}_3^*$
in $( S(n) \setminus S(R) ) \cap \{ x^2\leq 0\} $.

Let  $\mathcal{E}_3(n)$ be the event that in the interior of 
$S(n)$ the following hold:
\begin{enumerate}
\item The spin value at the top center point $(0,n)$ is $+$.
\item There is a $(+)$-path $r_1$ from $(-1,n)$ to $\partial_{in} S(n)$.
\item There is a $(+)$-path $r_3$ from $(1,n)$ to $\partial_{in} S(n)$.
\item There is a $(-*)$-path $r_4^*$ from $(0,n-1)$ to 
$\partial_{in} S(n)$.
\end{enumerate}
Note that in the event  $\mathcal{E}_3(n)$, $r_1, r_3$ are disjoint and 
are separated by $r_4^*$.
Let also ${\mathcal E}_3^*(n)$ be the event such that the roles of
$(+)$- and $(-*)$-paths are interchanged in the above definition.
The aim of this subsection is to prove the following theorem.
\begin{Theorem}\label{3 arm bound}
There exist positive constants $C_{11}$--$C_{14}$ and an integer
$j_3> j_1$, depending only on $\varepsilon_0, j_1$ and $\eta $, such that
\begin{equation}\label{eq:3 arm bound}
C_{11} n^{-2}\leq \mu_t^N \bigl( \mathcal{E}_3(n) \bigr) \leq C_{12}n^{-2}
\end{equation}
for $2^{j_1+5}\leq n < \min\{ L(h,\varepsilon_0), \frac{N}{2}\} $, and
\begin{equation}\label{eq:3 arm bound for boxes}
C_{13}{\biggl( \frac{R}{n}\biggr) }^2 \leq 
 \mu_t^N\bigl( {\tilde {\mathcal B}}^+(2,1,R,n) \bigr) \leq 
C_{14}{\biggl( \frac{R}{n}\biggr) }^2
\end{equation}
for $2^{j_3}\leq R<n<2^k\leq \min\{ L(h,\varepsilon_0), \frac{N}{2} \} $.
The estimate (\ref{eq:3 arm bound}) is valid for ${\mathcal E}_3^*(n)$
in place of ${\mathcal E}_3(n)$, too.
The estimate (\ref{eq:3 arm bound for boxes}) is valid for 
${\tilde {\mathcal B}}^+(1,2,R,n)$, too.
\end{Theorem}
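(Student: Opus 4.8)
The plan is to imitate the proof of Theorem \ref{2 arm bound}, replacing the two-arm block events by their ``half-plane three-arm'' counterparts and then using the already-established two-arm estimate to pin the exponent at $2$. First I would introduce, exactly as in Sections 5 and 6 but with three prescribed arms in the cyclic order $(+,-*,+)$ instead of four, the block events $\Gamma_3(S(2^j))$, $\Lambda_3(S(2^j))$, $\Delta_3(S(2^j))$ (inwards) and $\tilde\Gamma_3,\tilde\Lambda_3,\tilde\Delta_3$ (outwards). Here $\Gamma_3(S(2^j))=\mathcal{E}_3(2^j)$ is the event that there are $(+)$-paths $r_1,r_3$ and a $(-*)$-path $r_4^*$ from the centre of $S(2^j)$ to $\partial_{in}S(2^j)$, with $r_1\setminus\{v\},r_3\setminus\{v\}$ disjoint and $r_4^*$ separating them, involving only the rectangles $\mathcal{B}_1,\mathcal{B}_3,\mathcal{B}_4$; $\Lambda_3$ requires in addition that the clusters $\mathcal{C}_1,\mathcal{C}_3,\mathcal{C}_4^*$ carrying $r_1,r_3,r_4^*$ have $(\eta,j)$-fences; and $\Delta_3\subset\Gamma_3$ localises the landings of $r_1,r_3,r_4^*$ in $\mathcal{A}_1,\mathcal{A}_3,\mathcal{A}_4$ together with a vertical $(+)$-crossing in each of $\mathcal{A}_1,\mathcal{A}_3$ and a horizontal $(-*)$-crossing in $\mathcal{A}_4$. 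Verbatim repetitions of Lemmas \ref{relation of deltas}, \ref{lambda-to-delta}, \ref{comparison of gamma and delta} and of their outward analogues, using the Connection lemma \ref{connection lemma} in place of independence and Lemma \ref{lem:fence} for the fences, then yield constants $C_1'$, $C_3'(\eta)$, $K'$ (and corresponding $\tilde{\phantom{C}}$-versions) with $\mu_t^N(\Delta_3(S(2^j)))\le C_1'\mu_t^N(\Delta_3(S(2^{j+1})))$, $\mu_t^N(\Lambda_3(S(2^j)))\le C_3'(\eta)\mu_t^N(\Delta_3(S(2^{j+1})))$ for $j_1\le j<\lfloor\log_2 n\rfloor$, and $\mu_t^N(\Gamma_3(S(2^j)))\le K'\mu_t^N(\Delta_3(S(2^j)))$ for $j\ge j_1+3$; only the subscripts of the $\delta_m$'s change, since there are three arms to carry through each corridor rather than four.

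Next I would prove (\ref{eq:3 arm bound}). For the \emph{upper} bound, fix $x$ with $|x|\le n/3$ and let $\hat E_3(n,x)$ be a localised version of $\mathcal{E}_3(n)$ near $(x,n)$, carrying the full three-arm structure inside a box of side of order $n$ centred at $(x,n)$ and pinned by a reference $(-*)$-crossing so that the events $\{\hat E_3(n,x)\}_{|x|\le n/3}$ are pairwise disjoint, with each $\hat E_3(n,x)$ contained in a translate of $\mathcal{E}_3(n)$. The key observation is that if some $\hat E_3(n,x)$ occurs, then discarding one of the two $(+)$-arms leaves a half-plane two-arm configuration joining the top edge of $S(n)$ to distance of order $n$; hence $\bigcup_{|x|\le n/3}\hat E_3(n,x)$ is contained in a half-plane two-arm event whose $\mu_t^N$-probability is $O(1/n)$ by Theorem \ref{2 arm bound}. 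Disjointness and translation invariance give $(n/2)\,\mu_t^N(\hat E_3(n,0))\le\sum_{|x|\le n/3}\mu_t^N(\hat E_3(n,x))\le C/n$, so $\mu_t^N(\hat E_3(n,0))\le C'n^{-2}$. Finally $\mathcal{E}_3(n)$ is contained in a translate of $\Gamma_3$ at scale $2^{k-1}$ with $2^{k-1}<n\le 2^k$; applying $\mu_t^N(\Gamma_3)\le K'\mu_t^N(\Delta_3)$ and then gluing short corridors from the $\mathcal{A}_i$'s of $\Delta_3$ into $\hat E_3(n,0)$ via the Connection lemma produces $\mu_t^N(\mathcal{E}_3(n))\le C_{12}n^{-2}$. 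For the \emph{lower} bound one reverses this: starting from a half-plane two-arm event (probability $\asymp 1/n$ by Theorem \ref{2 arm bound}) one grows a second $(+)$-arm near the relevant boundary point using Lemma \ref{RSWbound}, the FKG inequality and the finite-energy property, with conditional probability bounded below by a constant, so that $\mu_t^N\bigl(\bigcup_{|x|\le n/3}E_3(n,x)\bigr)\ge c/n$, where $E_3(n,x)$ is the un-localised reference event at $(x,n)$; since each $E_3(n,x)$ forces a translate of $\mathcal{E}_3(c'n)$ up to finite energy, $c/n\le\sum_{|x|\le n/3}\mu_t^N(E_3(n,x))\le C_1^{\#}\,n\,\mu_t^N(\mathcal{E}_3(c'n))$, giving $\mu_t^N(\mathcal{E}_3(n))\ge C_{11}n^{-2}$. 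The box estimate (\ref{eq:3 arm bound for boxes}) then follows from (\ref{eq:3 arm bound}) by the mixing property (\ref{HZ(1.3)}) together with the outward extension lemmas for $\tilde\Delta_3,\tilde\Lambda_3,\tilde\Gamma_3$, exactly as (\ref{eq:2 arm bound for boxes}) was obtained from (\ref{eq:2 arm bound}); the statements for $\mathcal{E}_3^*$ and for $\tilde{\mathcal{B}}^+(1,2,R,n)$ are obtained by running the same argument with the roles of $(+)$ and $(-*)$ interchanged.

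The main obstacle is the step that fixes the exponent at exactly $2$: showing that the sum over horizontal translates of the localised three-arm event is controlled by a constant multiple of the probability of a single half-plane two-arm event. This forces a careful choice of the pinning path so that the localised events are genuinely disjoint \emph{and} so that erasing one $(+)$-arm yields precisely the half-plane two-arm event to which Theorem \ref{2 arm bound} applies, and, in the reverse direction, so that a second $(+)$-arm can be grown at only constant-factor cost while remaining on the correct side of the $(-*)$-arm in the prescribed cyclic order. The subsidiary difficulty is purely bookkeeping: all the fence, connection and extension lemmas of Sections 4--6 must be re-read with three arms in the fixed cyclic order $(+,-*,+)$, and the rectangles $\mathcal{B}_i,\mathcal{A}_i$ actually used (hence the subscripts of the $\delta_m$'s in the resulting constants) must be tracked consistently.
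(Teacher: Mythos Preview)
Your treatment of the extension machinery ($\Gamma_3,\Lambda_3,\Delta_3$ and their outward versions) and of the passage from (\ref{eq:3 arm bound}) to (\ref{eq:3 arm bound for boxes}) is fine and matches the paper. The gap is in the core step that pins the exponent for (\ref{eq:3 arm bound}) itself.

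For the upper bound you use a \emph{one}-dimensional family $\{\hat E_3(n,x)\}_{|x|\le n/3}$ along the top edge and claim that $\bigcup_x \hat E_3(n,x)$ is contained in a single half-plane two-arm event of probability $O(1/n)$. This containment is not correct as stated: after you discard one $(+)$-arm from $\hat E_3(n,x)$ you obtain a two-arm configuration anchored at the \emph{variable} point $(x,n)$, so the union over $x$ is ``some point on the top edge carries a $(+)$-arm and a $(-*)$-arm to distance of order $n$'', which is an $O(1)$ event (it follows from RSW), not an $O(1/n)$ event. A genuine $\mathcal{E}_2$-type bound requires the two arms to emanate from a \emph{fixed} point, and no pinning by a reference crossing will simultaneously make the $\hat E_3(n,x)$ disjoint and force their union into a two-arm event at one prescribed site. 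The same issue undermines the lower-bound half of your argument: you cannot conclude $\mu_t^N\bigl(\bigcup_x E_3(n,x)\bigr)\ge c/n$ from the two-arm estimate, because growing a second $(+)$-arm at the (random) starting point of the reference $(-*)$-crossing costs a factor that you have not controlled.

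The paper avoids this by using a \emph{two}-dimensional family and bounding its sum by~$1$, not by a two-arm probability. Concretely, for $x\in S(n/2)$ let $H(x,n)$ be the event that $x$ is the \emph{unique central highest point} of the lowest horizontal $(+)$-crossing $\mathcal{R}$ of $S(n)$ (``central'' meaning $x\pm(1,0)\in\mathcal{R}$). These events are disjoint, so $\sum_{x\in S(n/2)}\mu_t^N(H(x,n))\le 1$. At such an $x$ one has two $(+)$-arms along $\mathcal{R}$ and a $(-*)$-arm from $x+(0,-1)$ to the bottom, which after the usual $\Gamma_3\!\leftrightarrow\!\Delta_3$ comparison and a Connection-lemma gluing yields $\tfrac{n^2}{4}\mu_t^N(\hat H(n))\le 1$ with $\hat H(n)\supset$ (a version of) $\mathcal{E}_3(n)$, hence the upper bound. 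For the lower bound one shows via RSW that with probability bounded below the highest points of $\mathcal{R}$ all lie in $S(n/2)$, then sums over those points. If you want to salvage a reduction to Theorem~\ref{2 arm bound}, you would need a second \emph{deterministic} marker (beyond the horizontal coordinate) that singles out one point per configuration; the highest point of the lowest crossing is exactly such a marker, and it lives in a two-dimensional range.
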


\begin{proof}
Proof of (\ref{eq:3 arm bound for boxes}) from (\ref{eq:3 arm bound})
is analogous to the proof of (\ref{eq:2 arm bound for boxes}) from 
(\ref{eq:2 arm bound}) by using an extension argument.
So we only prove (\ref{eq:2 arm bound}).

Let ${\mathcal R}={\mathcal R}(\omega )$ denote
the lowest horizontal $(+)$-crossing.
If ${\mathcal R}(\omega )$ exists, then we can find some point 
$v\in {\mathcal R}$ such that
\begin{enumerate}
\item $v$ is the highest point in ${\mathcal R}$, and
\item $v+(-1,0), v+(1,0)\in {\mathcal R}$.
\end{enumerate}
We call such $v$ as a central highest point of ${\mathcal R}$.
Note that if $v$ is the unique central highest point of ${\mathcal R}$, 
then since ${\mathcal R}$ is the lowest $(+)$-crossing, there is a 
$(-*)$-path from $v+(0,-1)$
to the bottom of $S(n)$.
Obviously,
\begin{equation}\label{eq:central highest point}
\mu_t^N\bigl(
\mbox{there is a unique central highest point of } {\mathcal R}
\bigr) \leq 1.
\end{equation}
For $x \in S(n/2)$, put
$$
H(x,n)=\left\{ 
\mbox{$v$ is the unique central highest point}
\right\} .
$$
Since $H(x,n)$ are disjoint, we have
$$
\sum_{x\in S(n/2)} \mu_t^N\bigl( H(x,n)\bigr) \leq 1.
$$
Let ${\hat H}(n)$ be the event such that all the following occur:
\begin{enumerate}
\item The spin value at $(0,n)$ is $+$.
\item There exist  $(+)$-paths $\gamma_1, \gamma_3$ in 
$[-2n,2n]\times [n/2, n]$ such that $\gamma_1$ connects 
$(-1,n)$ with the left side of $S(2n)$, and $\gamma_3$ 
connects $(1,n)$ with the right side of $S(2n)$.
\item $\gamma_1\setminus \{ (-1,n)\} $ does not intersect $\{ x^2=n\} $.
\item $\gamma_3\setminus \{ (1,n)\} $ does not intersect $\{ x^2=n\} $.
\item There exists a $(-*)$-path $\gamma_4^*$ in $[-n/2,n/2]\times [-n,n]$
connecting $(0,n-1)$ with the bottom side of $S(n)$.
\end{enumerate}
By definition ${\hat H}(n)$ is a subset of ${\mathcal E}_3(n)$.
Then for $x \in S(n/2)$ let ${\hat H}(x,n)$ be the $x$ translation of 
the event ${\hat H}(n)$ so that it is an event occurring in
$ [-2n,2n]\times [-n,n] +x $.
Note that for $x\in S(n/2)$, ${\hat H}(x,n)\subset H(x,n)$.
Therefore we have
\begin{equation}\label{eq:7-10}
\frac{n^2}{4}\mu_t^N \bigl( {\hat H}(n) \bigr)\leq 1.
\end{equation} 
In order to compare $\mu_t^N$-probabilities of ${\mathcal E}_3(n)$ and
${\hat H}(n)$, we use the extension argument.
Let us rewrite $\Gamma_3(S(2^j)):= {\mathcal E}_3(2^j)$ for $j\geq j_1$.
Then let $\Lambda_3(S(2^j))$ be the subset of $\Gamma_3(S(2^j))$ such that
every crossing $(+)$-cluster and every crossing $(-*)$-cluster
in  ${\mathcal B}_i$ has an $(\eta , j)$-fence for $i=1,3,4$,
where ${\mathcal B}_i$'s correspond to $S(2^j)$.
Also, let $\Delta_3(S(2^j))$ be the subset of $\Gamma_3(S(2^j))$ such that
all the following occur.
\begin{enumerate}
\item $r_1$ connects $(-1,n)$ with the left side of $S(2^j)$, and
$r_3$ connects $(1,n)$ with the right side of $S(2^j)$. 
\item $r_4^*$ connects $(0,n-1)$ with the bottom side of $S(2^j)$.
\item $r_i \cap (\cup_{j=1,3,4} {\mathcal B}_j) \subset {\mathcal A}_i$
for $i=1,3$.
\item $r_4^*\cap (\cup_{j=1,3,4} {\mathcal B}_j) \subset {\mathcal A}_4$.
\end{enumerate}
Here ${\mathcal A}_i$'s correspond to $S(2^j)$, too.
Then by the inwards extension argument we have
\begin{equation}\label{eq:7-11-a}
\mu_t^N \bigl( \Delta_3(S(2^j)) \bigr) \leq C_1 \mu_t^N( \Delta_3(S(2^{j+1})))
\end{equation}
for $2^{j_1}\leq 2^j<2^{j+1}<\min\{ L(h,\varepsilon_0), \frac{N}{2}\} $,
and as a result we have
$$
\mu_t^N \bigl( \Delta_3(S(2^j)) \bigr) \geq C_1^{-(j-j_1)}C_2,
$$
where we can take 
$$
C_2= (1+e^{8/\mathfrak{K}T})^{-\# S(2^{j_1})}.
$$
Also, we have
$$
\mu_t^N\bigl( \Lambda_3(S(2^j))\bigr)\leq C_3(\eta ) 
\mu_t^N\bigl( \Delta_3(S(2^{j+1}))\bigr)
$$
for $2^{j_1}\leq 2^j<2^{j+1}< \min\{ L(h,\varepsilon_0), \frac{N}{2}\}$,
and
\begin{equation}\label{eq:7-11-c}
\mu_t^N\bigl( \Gamma_3(S(2^j)) \bigr)\leq K
\mu_t^N\bigl( \Delta_3(S(2^j))\bigr)
\end{equation}
for $2^{j_1+3}\leq 2^j < \min\{ L(h,\varepsilon_0), \frac{N}{2}\}$.
The constants $C_1, C_3(\eta )$ and $K$ are the same constants as in
Lemmas \ref{relation of deltas}, \ref{lambda-to-delta} and \ref{comparison of gamma and delta}, respectively.
Let $n\geq 2^{j_1+5}$ and let $2^j\leq n<2^{j+1}$.
Let also $E$ be the event such that
\begin{enumerate}
\item there exists a horizontal $(+)$-crossing in the rectangle
$$ [-2n, -2^j +2^{j-2}]\times [n-2^j-2^{j-2},n-2^j+2^{j-2}],$$
and a horizontal $(+)$-crossing in 
$$
[ 2^j-2^{j-2}, 2n]\times [n-2^j-2^{j-2},n-2^j+2^{j-2} ],
$$ 
and
\item there exists a vertical $(-*)$-crossing in
$$
[ -2^{j-2}, 2^{j-2}]\times [n-2^{j+1}+2^{j-2}, -n].
$$
\end{enumerate}
Then by the Connection lemma we have
$$
\mu_t^N( E\cap \Delta_3(S(2^j) )\bigr) \geq 
{\left( \frac{\delta_{52}}{4}\right) }^2
\frac{\delta_{36}}{4} \mu_t^N\bigl( \Delta_3(S(2^j))\bigr) .
$$
Since $ E\cap \Delta_3(S(2^j))$ is a subset of ${\hat H}(n)$,
writing the constant in the right hand side in the above inequality by $C^\#$,
we obtain
\begin{equation}\label{eq:7-12}
\mu_t^N \bigl( {\hat H}(n) \bigr) \geq C^\# \mu_t^N \bigl( \Delta_3(2^j) \bigr)
\geq C^\#  K^{-1}\mu_t^N\bigl( \Gamma_3(S(2^j)) \bigr) .
\end{equation}
Since
$$
\Gamma_3(S(2^j))={\mathcal E}_3(2^j)\supset {\mathcal E}(n),
$$
from (\ref{eq:7-10}) and (\ref{eq:7-12}) we obtain the upper bound 
in (\ref{eq:3 arm bound}).

Now we turn to the proof of the lower bound in (\ref{eq:3 arm bound}).
First we show that there is a constant $C_2^\# >0$ depending only on
$\varepsilon_0$ such that
\begin{equation}\label{eq:7-13}
\mu_t^N\left(
\begin{array}{@{\,}c@{\,}}
\mbox{the highest point of the lowest $(+)$-crossing  ${\mathcal R}$}\\
\mbox{in $S(n)$ is only in $S(n/2)$}
\end{array}
\right) \geq C_2^\# .
\end{equation} 
To do this let us introduce the following rectangles:
$$
\begin{array}{ll}
T=[ -\frac{n}{8}, \frac{n}{8}]\times [-n, \frac{n}{8}], &
U_1=[-n, \frac{3n}{8}]\times [-\frac{n}{8},0], \\
U_2=[-\frac{n}{2}, -\frac{3n}{8}]\times [-\frac{n}{8},\frac{n}{2}], &
U_3=[-\frac{n}{2},\frac{n}{2}]\times [ \frac{3n}{8},\frac{n}{2}], \\
U_4=[\frac{3n}{8},\frac{n}{2}]\times  [-\frac{n}{8},\frac{n}{2}], &
U_5=[\frac{3n}{8},n]\times  [-\frac{n}{8},0].
\end{array}
$$
Then let $U$ be the corridor made up by the union of $U_1$--$U_5$.
By the Connection lemma, we have
$$
\mu_t^N \left(
\begin{array}{@{\,}c@{\,}}
\mbox{there exists a $(+)$-path in $U$ connecting}\\
\mbox{$\{ x^1=-n\} $ with $\{ x^1=n\} $, and}\\
\mbox{there exists a vertical $(-*)$-crossing in $T$}
\end{array}
\right) \geq \frac{\delta_{224}\delta_{36}}{16}=:C_2^\# .
$$
It is clear when the above event occurs, any of the highest points of ${\mathcal R}$
is in $S(n/2)$.
Let ${\tilde H}(x,n)$ be the event such that $x$ is a
central highest point
of the lowest $(+)$-crossing of $S(n)$. 
Then the above inequality says that
$$
\mu_t^N\left( \bigcup_{x \in S(n/2)}{\tilde H}(x,n) \right)\geq C_2^\# .
$$
By changing configurations in $(*)$-neighbors of $x$, we can obtain
the event $H(x+(0,1),n)$. 
By the finite energy property we have an absolute constant $C_3^\# >0$
such that
$$
\mu_t^N\bigl( {\tilde H}(x,n) \bigr) \leq C_3^\# \mu_t^N\bigl(
            H(x+(0,1)) \bigr) .
$$
Therefore we have
$$
\sum_{x\in S(n/2)}\mu_t^N\bigl( H(x+(0,1),n)\bigr)
\geq  (C_3^\# )^{-1} C_2^\# .
$$
Note that for $x\in S(n/2)$, 
$$
H(x+(0,1),n) \subset \Gamma_3(S(x+(0,1-\tfrac{n}{4}), \tfrac{n}{4})),
$$
where $\Gamma_3(S(a,r)) $ denotes the $a$-translation of $\Gamma_3(S(r))$.
Thus, by translation invariance we have
$$
\mu_t^N\bigl( H(x+(0,1),n) \bigr) \leq \mu_t^N\bigl(
   {\mathcal E}_3(n/4) \bigr) .
$$
This proves the lower bound.
\end{proof}

\newpage
\section{Ising counterpart of Russo's formula}

\subsection{Russo's formula}

In this section we present a version of Russo's formula for the Ising model.
But we have to restrict  events  for which our version of Russo's formula
can be applied, so we do not think that the present form is close to the final
form.
Still we can handle events with this version to obtain our main results.

Before stating the result, we introduce some notations.
We assume that $1<2^k<N$.
For $v\in S(2^k)$ and $\omega \in \Omega $, let $\omega^v \in \Omega$ denote the
configuration obtained from $\omega$ by flipping the spin at $x$:
$$
\omega^v(x)=\begin{cases}
       \omega (x), & x\not=v; \\
       -\omega (v), & x=v.
\end{cases}
$$

\begin{Definition}\label{teigi:pivotal sites}
Let $A\in{\mathcal F}_{S(2^k)}$ and $v\in S(2^k)$.
We say that $v$ is {\it pivotal} for $A$ in the configuration $\omega $
if 
$$1_A(\omega  )+1_A(\omega^v)=1,$$
i.e., $\omega $ and $\omega^v$ do not belong at the same time to either
$A$ or $A^c$.
\end{Definition}
Let
$$
\Delta_vA = \{\omega \in \Omega : v \mbox{ is pivotal for } A 
     \mbox{ in } \omega \} ,
$$
and
$$
\square_vA=\{ \omega \in A : v \mbox{ is not pivotal for } A 
     \mbox{ in } \omega \} .
$$
Then we extend this notation to squares $S$.
For $V\subset \mathbf{Z}^2$, and $\xi , \omega \in \Omega $ , let
$$
\xi_V\omega (x) = \begin{cases}
    \xi (x), & x\in V; \\
    \omega (x), & x\not\in V.
\end{cases}
$$ 
Then we put
$$
\Delta_S A =\left\{ \omega \in \Omega : 
 \begin{array}{@{\,}c@{\,}}
\mbox{there exist $\xi , \zeta \in\Omega $ such that}\\
\mbox{$\xi_S\omega \in A$ and $\zeta_S\omega \not\in A $}
\end{array}\right\},
$$
and
$$
\square_S A = \left\{ \omega \in A : \xi_S\omega \in A \mbox{ for any }
             \xi \in \Omega \right\} .
$$
It is clear that $A \setminus \Delta_SA = \square_SA$.
Similar to (6.1), %(\ref{eq:m(v)}),
for $1\leq j <k$, let
$$
m_j(v) :=\min\{ m\geq 1 : S_j^m(v)\supset S(v,2^{-3}|v|_\infty )
 \} ,
$$
where ${\tilde R}_{j}^m(v)$ is defined as in section 5, starting from 
$Q_j(v)$'s.
Finally, we introduce the support $ \text{Supp}(A)$ of the event $A$ by
$$
\text{Supp}(A) := \{ v \in {\mathbf Z}^2  :  \Delta_vA\not= \emptyset \} .
$$
\begin{Theorem}\label{russo formula-gen}
Let $2^k<\frac{N}{2}$ and $A \in {\mathcal F}_{S(2^k)}$.
Assume that $A$ satisfies the following condition (S).

\medskip
\noindent
Condition (S)

There exists $1\leq j_0<k-3, 0<a <\alpha /2$ and $b>0$ 
such that for every $v \in \text{Supp}(A)$,          % we can find a
  %%$v_0\in S(v,2^{j_0})\cap S(n)$ for which the following estimate holds:
\begin{equation}\label{eq:condition(S)}
\mu_t^N\bigl( \square_{{\tilde R}_{j_0}^m(v)}A\setminus
             \square_{{\tilde R}_{j_0}^{m+1}(v)}A \bigr)
\leq be^{a2^{j_0+m}}\mu_t^N\bigl( A \cap \Delta_{v}A \bigr) 
\end{equation}
for every $t\in [0,1]$ and every $2\leq m \leq m_{j_0}(v)-1$.

\medskip
\noindent
Then there exist positive constants $C_{15}$ and $C_{16}$, which depend on
$a, b$ and $j_0$ such that for every $N\geq 2^{k+1}$,
\begin{equation}\label{eq:russo formula-gen}
\left| \frac{d}{dt}\mu_t^N\bigl(A) \right| \leq \frac{|h-h_c|}{\mathfrak{K}T}
\left[ C_{15}\sum_{v\in \text{Supp}(A)}
\mu_t^N( A\cap \Delta_vA ) +C_{16}\mu_t^N(A) \right].
\end{equation}
\end{Theorem}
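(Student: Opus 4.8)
The plan is to combine the exact derivative formula for a finite Gibbs measure with a scale‑by‑scale estimate of the one‑site covariances, in which Condition~(S) serves to absorb the exponential loss coming from the mixing property. Since $\mu_t^N=\mu^N_{T,h(t)}$ is a finite Gibbs measure on the torus $S(N)$ and the only $t$‑dependence of its Hamiltonian is through the external‑field term $-h(t)\sum_{x\in S(N)}\sigma(x)$, the usual computation gives
\[
\frac{d}{dt}\mu_t^N(A)=\frac{h'(t)}{\mathfrak{K}T}\,\mathrm{Cov}_t^N\Bigl(1_A,\sum_{x\in S(N)}\sigma(x)\Bigr)
=\frac{h'(t)}{\mathfrak{K}T}\sum_{x\in S(N)}\mathrm{Cov}_t^N\bigl(1_A,\sigma(x)\bigr),
\]
with $|h'(t)|=|h-h_c(T)|$ by the definition of $h(t)$; here $\mathrm{Cov}_t^N(1_A,X)=\mu_t^N[1_AX]-\mu_t^N(A)\,\mu_t^N[X]$. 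So it suffices to dominate $\sum_{x\in S(N)}|\mathrm{Cov}_t^N(1_A,\sigma(x))|$ by the bracketed expression of~(\ref{eq:russo formula-gen}).

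The key device I would use is: for any box $S\ni v$ one has the disjoint splitting $A=\square_S A\sqcup(A\cap\Delta_S A)$, and $1_{\square_S A}$ is $\mathcal{F}_{S^c}$‑measurable (and, more generally, $1_{\square_S A\cap\Delta_{S'}A}$ is $\mathcal{F}_{S^c}$‑measurable whenever $S\subset S'$). Conditioning on $\mathcal{F}_{S^c}$ therefore yields
\[
\mathrm{Cov}_t^N(1_A,\sigma(v))=\mu_t^N\!\bigl[1_{\square_S A}\bigl(\mu_t^N[\sigma(v)\,|\,\mathcal{F}_{S^c}]-\mu_t^N[\sigma(v)]\bigr)\bigr]+\mathrm{Cov}_t^N\bigl(1_{A\cap\Delta_S A},\sigma(v)\bigr),
\]
where, by the mixing property~(\ref{HZ(1.1)}), the first term is at most $C\,d(v,S^c)e^{-\alpha d(v,S^c)}\mu_t^N(A)$ and the second is at most $2\,\mu_t^N(A\cap\Delta_S A)$; the variant with $1_{\square_S A}$ replaced by $1_{\square_S A\cap\Delta_{S'}A}$ will control the telescoping shells.

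Now I would fix $v\in\mathrm{Supp}(A)$, write $R^m={\tilde R}_{j_0}^m(v)$, $M=m_{j_0}(v)$, and note that $A\cap\Delta_{R^m}A$ increases in $m$ with successive differences $\square_{R^m}A\setminus\square_{R^{m+1}}A=\square_{R^m}A\cap\Delta_{R^{m+1}}A$ — exactly the events estimated in Condition~(S). Telescoping $1_{A\cap\Delta_{R^M}A}=1_{A\cap\Delta_{R^2}A}+\sum_{m=2}^{M-1}1_{\square_{R^m}A\cap\Delta_{R^{m+1}}A}$ and applying the device at scale $M$ (for $\square_{R^M}A$) and at each scale $m$ (for the shell) bounds $|\mathrm{Cov}_t^N(1_A,\sigma(v))|$ by three pieces: (i) $C\,d(v,(R^M)^c)e^{-\alpha d(v,(R^M)^c)}\mu_t^N(A)$; (ii) $2\,\mu_t^N(A\cap\Delta_{R^2}A)$; (iii) $\sum_{m=2}^{M-1}C\,2^{j_0+m}e^{-\alpha\,2^{j_0+m-1}}\mu_t^N(\square_{R^m}A\cap\Delta_{R^{m+1}}A)$. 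Inserting~(\ref{eq:condition(S)}) into (iii) replaces each term by $Cb\,2^{j_0+m}e^{(a-\alpha/2)2^{j_0+m}}\mu_t^N(A\cap\Delta_v A)$, and since $a<\alpha/2$ the series converges, so (iii) $\le C'(a,b,j_0)\,\mu_t^N(A\cap\Delta_v A)$. For (ii), a single‑site finite‑energy argument — walk through the box $R^2$ flipping one spin at a time — gives $\mu_t^N(A\cap\Delta_{R^2}A)\le C(T,j_0)\sum_{w\in{\tilde R}_{j_0}^2(v)}\mu_t^N(A\cap\Delta_w A)$. For (i), the definition of $m_{j_0}(v)$ forces $d(v,(R^M)^c)\ge 2^{-3}|v|_\infty$, so (i) $\le C e^{-c|v|_\infty}\mu_t^N(A)$. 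Summing over $v\in\mathrm{Supp}(A)$: the (iii)'s give $\le C'\sum_v\mu_t^N(A\cap\Delta_v A)$, the (ii)'s give $\le C''\sum_w\mu_t^N(A\cap\Delta_w A)$ (each $w$ lies in only $O(4^{j_0})$ of the boxes ${\tilde R}_{j_0}^2(v)$), and the (i)'s give $\le C'''\mu_t^N(A)$ since $\sum_{v\in\mathbf{Z}^2}e^{-c|v|_\infty}<\infty$ uniformly in $k$ and $N$. Sites $v\notin\mathrm{Supp}(A)$ that are not too far from $S(2^k)$ are handled by the same telescoping run around a nearest support site $w$ (for which $v\in{\tilde R}_{j_0}^{m_{j_0}(w)}(w)$), charging the covariance to $\mu_t^N(A\cap\Delta_w A)$ plus an $e^{-c|w|_\infty}\mu_t^N(A)$ error; sites at distance of order $2^k$ or more from $S(2^k)$ are treated directly via~(\ref{HZ(1.3)}) and sum to $O(\mu_t^N(A))$. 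Collecting the bounds yields~(\ref{eq:russo formula-gen}).

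The crux — and the reason for the precise hypothesis on $a$ — is piece (iii): Condition~(S) only controls the shell probabilities up to a factor $b\,e^{a2^{j_0+m}}$, and this must be defeated by the mixing gain $e^{-\alpha\,d(v,(R^m)^c)}\approx e^{-\alpha\,2^{j_0+m-1}}$, which works exactly because $a<\alpha/2$. The second sensitive point is the uniform‑in‑$(k,N)$ bookkeeping of the infinitely many error terms, in particular showing that the $\sim 2^k$ sites of the "boundary layer" just outside $\mathrm{Supp}(A)$ together contribute only $O(\mu_t^N(A))$; this is what forces one to charge each such site to a support site through the large box ${\tilde R}_{j_0}^{m_{j_0}(w)}(w)$, whose residual $\square$‑term is exponentially small in $|w|_\infty$, rather than through a small box around the site itself.
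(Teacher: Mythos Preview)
Your proposal is correct and follows essentially the same route as the paper: the primitive covariance formula, the telescoping decomposition $A=\square_{R^M}A\sqcup(A\cap\Delta_{R^2}A)\sqcup\bigsqcup_m(\square_{R^m}A\setminus\square_{R^{m+1}}A)$, mixing applied to the $\mathcal{F}_{(R^m)^c}$-measurable pieces, Condition~(S) to absorb the exponential loss in the shell sum (this is precisely where $a<\alpha/2$ is used), and the nearest-support-point argument for $v\notin\mathrm{Supp}(A)$. One small point where your write-up is in fact more careful than the paper's: for the base piece~(ii) you bound $\mu_t^N(A\cap\Delta_{R^2}A)$ by $C\sum_{w\in R^2}\mu_t^N(A\cap\Delta_wA)$ via a spin-by-spin walk and then use bounded overlap when summing over $v$, whereas the paper asserts one can always make the specific site $v$ pivotal by modifying the configuration in $R^2$ --- a claim that holds for the events to which the theorem is later applied but need not hold for a general $A$ satisfying~(S).
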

Before going into the proof of this theorem, we remark that
there is a primitive form of 
Ising version of Russo's formula which says that   
\begin{equation}\label{eq:primitive russo formula}
\frac{d}{dt} \mu_t^N (A) = \frac{|h-h_c|}{\mathfrak{K}T} \sum_{v \in S(N)} 
E_{\mu_t^N} [ \{ \omega(v) - E_{\mu_t^N} [\omega(v)] \} : A]
\end{equation}
for every $A\in {\mathcal F}_{S(N)}$.
This can be obtained by direct differentiation.

\begin{proof}[Proof of Theorem \ref{russo formula-gen}] 
For $v\in S(2^k)$, put
$$
z_v=E_{\mu_t^N} [ \{ \omega(v) - E_{\mu_t^N} [\omega(v)] \} : A].
$$

If $|v|_\infty \leq 2^{j_0+5}$, then we simply use the fact that 
$|\omega (v)|\leq 1$ to obtain
\begin{equation}\label{eq:8-0 crude}
|z_v| \leq 2\mu_t^N( A).
\end{equation}
If $v \in \text{Supp}(A)\setminus S(2^{j_0+5})$, then $m_0:=m_{j_0}(v)\geq 2$. 
Indeed, if $m_0=1$, then $S_{j_0}^1(v)\supset S(v,2^{-3}|v|_\infty ) $,
and this implies that $2^{j_0+1}\geq 2^{-3}|v|_\infty $, i.e.
$$ |v|_\infty \leq 2^{j_0+4},$$
which is impossible since by the  choice of $v$.%Condition (S),
$|v-v|_\infty \leq 2^{j_0}$.
Thus, we have 
\begin{align}\label{eq:8-1 division}
z_v= &E_{\mu_t^N} [ \{ \omega(v) - E_{\mu_t^N} [\omega(v)] \} : A\cap 
                               \Delta_vA ] \\
   &+E_{\mu_t^N} [ \{ \omega(v) - E_{\mu_t^N} [\omega(v)] \} :
      \square_vA \setminus \square_{{\tilde R}_{j_0}^2(v)}A ] \nonumber\\
   &+\sum_{m=2}^{m_0-1}
     E_{\mu_t^N} [ \{ \omega(v) - E_{\mu_t^N} [\omega(v)] \} : H_m(v)] 
            \nonumber \\
   &+E_{\mu_t^N} [ \{ \omega(v) - E_{\mu_t^N} [\omega(v)] \} :
      \square_{{\tilde R}_{j_0}^{m_0}(v)}A ], \nonumber
\end{align}
where we put
$$
H_m(v)=\square_{{\tilde R}_{j_0}^{m}(v)}A \setminus
       \square_{{\tilde R}_{j_0}^{m+1}(v)}A.
$$
For the first term in the right hand side of 
(\ref{eq:8-1 division}),
we also use the trivial estimate
\begin{equation}\label{eq:8-2 first term}
\left| E_{\mu_t^N} [ \{ \omega(v) - E_{\mu_t^N} [\omega(v)] \} : A\cap 
                               \Delta_vA ] \right| \leq
2\mu_t^N( A \cap  \Delta_vA).
\end{equation}
For the second term, we also have
$$
\left| E_{\mu_t^N} [ \{ \omega(v) - E_{\mu_t^N} [\omega(v)] \} :
      \square_vA \setminus \square_{{\tilde R}_{j_0}^2(v)}A ] \right|
\leq 2 \mu_t^N( A \setminus \square_{{\tilde R}_{j_0}^2(v)}A ).
$$
Note that if $\omega \in A \setminus \square_{{\tilde R}_{j_0}^2(v)}A$,
there exists a $\xi \in \Omega $ %and a $v_1\in {\tilde R}_{j_0}^2(v)$
such that $\xi_{{\tilde R}_{j_0}^2(v)}\omega \in A\cap\Delta_{v}A$
by changing configurations point by point.
Therefore by the finite energy property, we can find a constant
$C^\#_1>0$, which depends only on $j_0$, such that
\begin{equation}\label{eq:8-3 second term}
\left| E_{\mu_t^N} [ \{ \omega(v) - E_{\mu_t^N} [\omega(v)] \} :
      \square_vA \setminus \square_{{\tilde R}_{j_0}^2(v)}A ] \right|
\leq C_1^\# %\sum_{v_1\in {\tilde R}_{j_0}^2(v)} 
 \mu_t^N( A \cap \Delta_{v}A).
\end{equation}
As for the third term, we can assume that $m_0=m_{j_0}(v)\geq 3$.
By the mixing property, 
\begin{equation}\label{eq:8-4 third term-1}
|E_{\mu_t^N} [ \{ \omega(v) - E_{\mu_t^N} [\omega(v)] \} : H_m(v)] |
\leq Cde^{-\alpha d}\mu_t^N \bigl( H_m(v) \bigr),
\end{equation}
where we put $d=d_\infty (v, S(2^k)\setminus {\tilde R}_{j_0}^m(v))$.
Then by the condition (S), 
\begin{equation}\label{eq:8-4 third term-2}
\mu_t^m(H_m(v)) \leq b e^{a2^{j_0+m}}\mu_t^N(A\cap \Delta_{v}A).
\end{equation} 
Note that 
%%$
%%d=d_\infty (v , S(2^k)\setminus {\tilde R}_{j_0}^m(v))\leq 2^{j_0+m+1}$,
%%and that
%%$$
%%d\geq 2^{j_0+m}-d_\infty (v,v)-d_\infty (v,x_{j_0}(v))
%% \geq 2^{j_0+m}-2^{j_0}-2^{j_0}\geq 2^{j_0+m-1},
%%$$
$$ 
2^{j_0+m-1}<2^{j_0+m}-2^{j_0}\leq d < 2^{j_0+m+1}
$$
since $m\geq 2$.
This, together with (\ref{eq:8-4 third term-1}) and (\ref{eq:8-4 third term-2})
implies that
$$
|E_{\mu_t^N} [ \{ \omega(v) - E_{\mu_t^N} [\omega(v)] \} : H_m(v)] |
\leq 2Cb2^{j_0+m}e^{-(\alpha /2 -a)2^{j_0+m}}\mu_t^N(A\cap \Delta_{v}A).
$$
Therefore there exists a constant $C_2^\# >0$, depending only on
$j_0,a$ and $ b$, such that
\begin{equation}\label{eq:8-4 third term}
\sum_{m=2}^{m_0-1}
|E_{\mu_t^N} [ \{ \omega(v) - E_{\mu_t^N} [\omega(v)] \} : H_m(v)] | 
\leq  C_2^\# \mu_t^N( A \cap \Delta_{v}A).
\end{equation} 
Finally, as for the fourth term in (\ref{eq:8-1 division}), 
we can assume that $m_0\geq 2$ and therefore 
$$
2^{j_0+m_0+1}\geq d_\infty (v, S(2^k)\setminus {\tilde R}_{j_0}^{m_0}(v)) 
\geq 2^{j_0+m_0}-2^{j_0}\geq 2^{j_0+m_0-1}.
$$
Also by definition,
$$
2^{j_0+m_0}\geq 2^{-3}|v|_\infty \geq 2^{j_0+m_0-1}-2^{j_0}
\geq 2^{j_0+m_0-2},
$$
and hence we have 
%%$ 2^{-3}|v|_\infty \leq 2^{j_0+m_0} \leq 2^{-1}|v|_\infty $.
$ 2^{j_0+m_0+1}\leq |v|_\infty \leq 2^{j_0+m_0+4}$.
%%Since $|v|_\infty \geq 2^{j_0+5}$,
%%this implies that
%%$$
%%2^{j_0+m_0}\leq |v|_\infty -2^{j_0}\leq |v|_\infty \leq |v|_\infty +2^{j_0}
%% \leq 2^{j_0+m_0+4}.
%%$$
Therefore by the mixing property we have
\begin{align}\label{eq:8-4 fourth term}
|E_{\mu_t^N} [ \{ \omega(v) - E_{\mu_t^N} [\omega(v)] \} : \square_{{\tilde
R}_{j_0}^{m_0}(v)}A ]| &\leq 
C2^{j_0+m_0+1}e^{-\alpha 2^{j_0+m_0-1}}\mu_t^N(A) \\
&\leq C|v|_\infty e^{-\alpha 2^{-5}|v|_\infty }\mu_t^N(A). \nonumber
\end{align}
Combining (\ref{eq:8-0 crude})--(\ref{eq:8-4 fourth term}), we obtain
\begin{equation}\label{eq:8-4 sum inside}
\sum_{v\in \text{Supp}(A)}|z_v|
\leq C_3^\# \sum_{v\in \text{Supp}(A)} \mu_t^N\bigl( A \cap \Delta_vA\bigr)
+ C_4^\# \mu_t^N(A)
\end{equation}
for some positive constants $C_3^\#, C_4^\#$.
Here, $C_3^\# $ depends on $a,b$ and $j_0$, and $C_4^\#$ depends on $j_0$.

Now we turn to the sum of $z_v$'s for $v\not\in \text{Supp}(A)$.
We only treat the case where $0\leq v^2\leq v^1$, but other cases are treated in the
same way by symmetry.
%%Take arbitrarily a point $w\in \partial_{in}S(2^k)$ and let $w_0$ be the 
%%point in $S(2^k)\cap S(w,2^{j_0})$, specified by the condition (S) for
%%$w$.
%%If $w^2\not=2^k$, then we take $v\not\in S(2^k)$ with $v^2=w^2$.
%%If $w^2=2^k$, then we take $v\not\in S(2^k)$ with $v^1,v^2\geq 2^k$.
%%Then, as in (\ref{eq:8-1 division}) we have
Let $w_0=w_0(A,v)$ denote a point in $\text{Supp}(A)$, such that
$$
| w_0-v|_\infty = d_\infty (v, \text{Supp}(A)).
$$
If there are many of such points, we choose one of them in a specific way, 
say the youngest point in the lexicographic order.
Then we have
\begin{align}\label{eq:8-5 division}
z_v= & E_{\mu_t^N}[  \{ \omega(v) - E_{\mu_t^N} [\omega(v)] \} : 
           A \setminus \square_{{\tilde R}_{j_0}^2(w_0)}A ] \\
   &+ \sum_{m=2}^{m_{j_0}(w_0)-1}
E_{\mu_t^N} [ \{ \omega(v) - E_{\mu_t^N} [\omega(v)] \} : H_m(w_0)] \nonumber \\
   &+ E_{\mu_t^N}  [ \{ \omega(v) - E_{\mu_t^N} [\omega(v)] \} :
         \square_{{\tilde R}_{j_0}^{m_{j_0}(w_0)}(w_0)}A ]. \nonumber
\end{align}
By the mixing property
$$|E_{\mu_t^N}[  \{ \omega(v) - E_{\mu_t^N} [\omega(v)] \} : 
           A \setminus \square_{{\tilde R}_{j_0}^2(w_0)}A ]|
\leq Cd_ve^{-\alpha d_v}
\mu_t^N( A \setminus \square_{{\tilde R}_{j_0}^2(w_0)}A),
$$
where $d_v= d_\infty ( v, \text{Supp}(A))$.
By the finite energy property 
the right hand side of the above inequality can be replaced with
$$
C_5^\# d_v e^{-\alpha d_v}%\sum_{v_1 \in {\tilde R}_{j_0}^2(w_0)}
\mu_t^N( A\cap \Delta_{w_0}A),
$$
where $C_5^\#>0$ is %an absolute constant:
a constant depending only on $j_0$.
$$
C_5^\# =C e^{(4h_c+8)/\mathfrak{K}T\# ({\tilde R}_{j_0}^2(0))}.
$$ 
This estimates the first term in the right hand side of (\ref{eq:8-5 division}).
By the same argument as the one to obtain (\ref{eq:8-4 third term}), we have
$$
|E_{\mu_t^N} [ \{ \omega(v) - E_{\mu_t^N} [\omega(v)] \} : H_m(w_0)]|
\leq bC d_m(v)e^{-\alpha d_m(v)+a2^{j_0+m}}\mu_t^N(A\cap \Delta_{w_0}A),
$$
where we put 
\begin{align*}
d_m(v)&:= d_\infty (v, \text{Supp}(A)\setminus {\tilde R}_{j_0}^{m}(w_0) )\\
    &\geq 
 \max\{ d_v, d_\infty (w_0, \text{Supp}(A)\setminus {\tilde R}_{j_0}^m(w_0) ) -d_v\} .
\end{align*}

%%Since $m \geq 2$, we have
%%$d(w,S(2^k)\setminus {\tilde R}_{j_0}^m(w_0))\geq 2^{j_0+m}-2^{j_0+1}\geq 2^{j_0+m-1}$,
%%and
Note that $ d_m(v)\leq  d_v + 2^{j_0+m} $ and that
$$
-\alpha d_m(v) + a2^{j_0+m}\leq - \left( \frac{\alpha }{2}-a \right) \max\{ 2d_v, 2^{j_0+m}
\} + \alpha 2^{j_0-1}.
$$
Therefore we have
\begin{align*}
&|E_{\mu_t^N} [ \{ \omega(v) - E_{\mu_t^N} [\omega(v)] \} : H_m(w_0)]| \\
&\leq 2^{\alpha 2^{j_0-1}}bC ( d_v+ 2^{j_0+m}) e^{-(\frac{\alpha }{2}-a)
   \max\{ 2d_v, 2^{j_0+m} \} } \mu_t^N(A \cap \Delta_{w_0}A). 
\end{align*}  
Summing up this over $m$'s with $2\leq m \leq m_{j_0}(w_0)-1$, we can find a
constant $C_6^\# >0$ depending only on $j_0, a,b$, such that
\begin{align}\label{eq:8-5 second term}
&\sum_{m=1}^{ m_{j_0}(w_0)-1}
|E_{\mu_t^N} [ \{ \omega(v) - E_{\mu_t^N} [\omega(v)] \} : H_m(w_0)]| \\
&\leq C_6^\# d_v^2e^{-(\frac{\alpha }{2}-a)d_v}\mu_t^N( A \cap \Delta_{w_0}A).
\nonumber 
\end{align}
By the mixing property, 
\begin{equation}\label{eq:8-5 third term}
\left|E_{\mu_t^N} \left[ \{ \omega(v) - E_{\mu_t^N} [\omega(v)] \} : 
  \square_{{\tilde R}_{j_0}^{m_{j_0}(w_0)}(w_0)}A \right] \right|
\leq C \overline{d}e^{-\alpha \overline{d}}\mu_t^N(A),
\end{equation}
where $\overline{d} = d_{m_{j_0}(w_0)}(v)$.
Hence, if $|v|_\infty >2^{j_0+5}$, then we have
\begin{align*}
|z_v| &\leq  C_5^\# d_v e^{-\alpha d_v}\sum_{v_1 \in {\tilde R}_{j_0}^2(w_0)}
\mu_t^N( A\cap \Delta_{v_1}A) \\
    &\quad + C_6^\# d_v^2e^{-(\frac{\alpha }{2}-a)d_v}\mu_t^N( A \cap \Delta_{w_0}A) \\
    &\quad + C \overline{d}e^{-\alpha \overline{d}}\mu_t^N(A).
\end{align*}
Summing up this inequality over $v$'s  outside $\text{Supp}(A)$, we obtain
\begin{equation}\label{eq:8-6 sum outside}
\sum_{v \in S(N)\setminus \text{Supp}(A)} |z_v|
\leq C_7^\# \sum_{w\in \text{Supp}(A)}\mu_t^N( A \cap \Delta_{w}A)
    + C_8^\# \mu_t^N(A),
\end{equation}
where $C_7^\#>0$ depends only on $j_0, a, b$ and $C_8^\# >0$
depends only on $j_0$.

(\ref{eq:8-4 sum inside}) and (\ref{eq:8-6 sum outside}) proves the theorem.
\end{proof}

When the event $A$ is increasing we have much sharper lower bound.
\begin{Lemma} \label{RussoLower} Suppose that $2^k < N$. 
There exists a positive absolute constant $C_{17}$, 
such that for any increasing event $A \in \mathcal{F}_{S(2^k)}$,
\begin{equation}\label{eq:lower russo bound}
C_{17} \frac{|h-h_c|}{\mathfrak{K}T}
 \sum_{v\in \text{Supp}(A)}
\mu_t^N\bigl( A\cap \Delta_v A\bigr) \leq \frac{d}{dt} \mu_t^N (A). 
\end{equation}
\end{Lemma}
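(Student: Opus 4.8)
The plan is to derive the estimate directly from the primitive Russo-type identity (\ref{eq:primitive russo formula}), which already carries the prefactor $|h-h_c|/(\mathfrak{K}T)$, by proving a pointwise lower bound for each summand. Writing $z_v := E_{\mu_t^N}\bigl[\{\omega(v)-E_{\mu_t^N}[\omega(v)]\}:A\bigr] = \mathrm{Cov}_{\mu_t^N}(\omega(v),\mathbf{1}_A)$, I will show that $z_v \geq C_{17}\,\mu_t^N(A\cap\Delta_vA)$ for every $v$, with $C_{17}=2c_0(1-c_0)$, where $c_0 := \bigl(1+e^{2(4+2h_c)/\mathfrak{K}T}\bigr)^{-1}$ is the single-site finite-energy lower bound (an absolute constant since $0\le h\le 2h_c$). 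Since $\omega(v)$ and $\mathbf{1}_A$ are both nondecreasing, the FKG inequality gives $z_v\ge 0$ for all $v$, so after summing over $v\in S(N)$ one may discard on the right-hand side every site $v\notin\mathrm{Supp}(A)$ (for which $\Delta_vA=\emptyset$, hence $\mu_t^N(A\cap\Delta_vA)=0$); this yields (\ref{eq:lower russo bound}).

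To prove the pointwise bound I would condition on $\mathcal{F}_{\{v\}^c}$ and use the law of total covariance,
\[
z_v = E_{\mu_t^N}\bigl[\mathrm{Cov}_{\mu_t^N}(\omega(v),\mathbf{1}_A\mid\mathcal{F}_{\{v\}^c})\bigr] + \mathrm{Cov}_{\mu_t^N}\bigl(E_{\mu_t^N}[\omega(v)\mid\mathcal{F}_{\{v\}^c}],\ \mu_t^N(A\mid\mathcal{F}_{\{v\}^c})\bigr).
\]
The event $\Delta_vA$ is $\mathcal{F}_{\{v\}^c}$-measurable, and on $\Delta_vA^{\,c}$ the indicator $\mathbf{1}_A$ is $\mathcal{F}_{\{v\}^c}$-measurable, so the inner conditional covariance vanishes there. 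On $\Delta_vA$, since $A$ is increasing, pivotality of $v$ forces $\mathbf{1}_A=\tfrac12(1+\omega(v))$ once $\mathcal{F}_{\{v\}^c}$ is fixed, whence $\mathrm{Cov}_{\mu_t^N}(\omega(v),\mathbf{1}_A\mid\mathcal{F}_{\{v\}^c})=\tfrac12\mathrm{Var}_{\mu_t^N}(\omega(v)\mid\mathcal{F}_{\{v\}^c})=2p(1-p)$, where $p:=\mu_t^N(\omega(v)=+1\mid\mathcal{F}_{\{v\}^c})$. The finite-energy property gives $c_0\le p\le 1-c_0$, so $2p(1-p)\ge 2c_0(1-c_0)$, and hence the first term is at least $2c_0(1-c_0)\,\mu_t^N(\Delta_vA)\ge 2c_0(1-c_0)\,\mu_t^N(A\cap\Delta_vA)$.

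It remains to see that the second term is nonnegative, and here I would invoke the FKG inequality for the two $\mathcal{F}_{\{v\}^c}$-measurable functions $g:=E_{\mu_t^N}[\omega(v)\mid\mathcal{F}_{\{v\}^c}]$ and $\psi:=\mu_t^N(A\mid\mathcal{F}_{\{v\}^c})$. The function $g$ depends only on the spins at the neighbours of $v$, through the local field $h+\sum_{y\,:\,|y-v|=1}\omega(y)$, and is plainly nondecreasing in them; the function $\psi$ is nondecreasing in the configuration off $v$ because $A$ is increasing and this monotonicity passes to the single-site conditional probability. Therefore $\mathrm{Cov}_{\mu_t^N}(g,\psi)\ge 0$, and combining with the previous step gives $z_v\ge 2c_0(1-c_0)\,\mu_t^N(A\cap\Delta_vA)$, as required.

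The one point that needs a little care is the monotonicity of $\psi=\mu_t^N(A\mid\mathcal{F}_{\{v\}^c})$ in $\omega|_{\{v\}^c}$ used in the last step. This is the standard FKG/strong-monotonicity property of ferromagnetic Ising Gibbs measures (here with periodic boundary condition), the same property that underlies the stochastic comparisons used elsewhere in the paper; it is readily checked from the explicit single-site Gibbs kernel, since raising the environment both makes $\mathbf{1}_A(\cdot,\omega|_{\{v\}^c})$ larger for each value of $\omega(v)$ and shifts the conditional law of $\omega(v)$ towards $+1$, and these two effects reinforce each other. The remaining steps are routine bookkeeping: $\Delta_vA\in\mathcal{F}_{\{v\}^c}$, the identity $A\cap\Delta_vA=\Delta_vA\cap\{\omega(v)=+1\}$ for increasing $A$, and $\mu_t^N(\Delta_vA)\ge\mu_t^N(A\cap\Delta_vA)$.
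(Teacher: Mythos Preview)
Your proof is correct, but it takes a longer detour than the paper's argument. The paper simply splits $A=(A\cap\Delta_vA)\cup\square_vA$ and observes two facts: first, $\square_vA$ is itself an increasing event, so the FKG inequality gives $E_{\mu_t^N}[(\omega(v)-E_{\mu_t^N}[\omega(v)]):\square_vA]\ge 0$, and this term can be discarded; second, on $A\cap\Delta_vA$ one has $\omega(v)=+1$ deterministically, so the remaining term equals $(1-E_{\mu_t^N}[\omega(v)])\,\mu_t^N(A\cap\Delta_vA)$, and the prefactor is bounded below by $2(1+e^{(8+4h_c)/\mathfrak{K}T})^{-1}$ via the finite-energy bound on $\mu_t^N(\omega(v)=-1)$.

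Your route via the law of total covariance reaches the same conclusion but at the cost of needing the \emph{strong} FKG property (monotonicity of $\omega\mapsto\mu_t^N(A\mid\mathcal{F}_{\{v\}^c})(\omega)$ for increasing $A$), which you correctly flag and which does hold for ferromagnetic Ising. The paper's argument only needs the ordinary FKG inequality applied to the increasing event $\square_vA$, and avoids the conditional decomposition altogether by exploiting that $\omega(v)$ is constant on $A\cap\Delta_vA$. Your approach has the minor advantage that the first term already controls $\mu_t^N(\Delta_vA)$ rather than $\mu_t^N(A\cap\Delta_vA)$, giving Remark~\ref{RussoLower-2} directly without a further finite-energy step; but it is otherwise strictly more work.
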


\begin{proof} 
Since $A$ and $\square_vA$ are increasing events, for every $v \in S(N)$,
\begin{align*}
E_{\mu_t^N} [ \{ \omega(v) -E_{\mu_t^N} [\omega(v)] \} : A ]&\geq 0, \quad
\mbox{ and }\\
E_{\mu_t^N}[ \{ \omega(v) -E_{\mu_t^N} [\omega(v)] \} : \square_vA ]&\geq 0.
\end{align*}
%%Thus we have
Noting that $A=(A\cap \Delta_vA)\cup \square_vA$ for each $v$, we have
\begin{align*}
\frac{d}{dt}\mu_t^N(A)
&\geq \frac{|h-h_c|}{\mathfrak{K}T} \sum_{v \in \text{Supp}(A)}  E_{\mu_t^N} [ \{ \omega(v)
-E_{\mu_t^N} [\omega(v)] \} : A \cap \Delta_v A]  \\
&= \frac{|h-h_c|}{\mathfrak{K}T} \{ 1 -E_{\mu_t^N} [\omega(\mathbf{O})] \} 
\sum_{v\in \text{Supp}(A)}{\mu_t^N} \bigl( A\cap \Delta_vA \bigr) .
\end{align*}
Since we assume that $0<h<2h_c(T)$, we remark that for every $t \in [0,1]$,
\begin{align*}
1- E_{\mu_t^N} [\omega(\mathbf{O})] &\geq  2\mu_t^N(\omega (\mathbf{O})=-1) \\
&\geq  2{\left( 1 + e^{(8+4h_c(T))/\mathfrak{K}T}\right) }^{-1}>0.
\end{align*}
This completes the proof of (\ref{eq:lower russo bound}).
\end{proof}
\begin{Remark}\label{RussoLower-2}
By the finite energy property, from (\ref{eq:lower russo bound})
there is an absolute constant $C_{17}'>0$
such that
$$
\frac{|h-h_c|}{\mathfrak{K}T}C_{17}'\sum_{v \in \text{Supp}(A)}
\mu_t^N\bigl( \Delta_vA \bigr) 
\leq \frac{d}{dt}\mu_t^N(A)
$$
for every increasing event $A\in{\mathcal F}_{S(2^k)}$.
This may be more useful.
\end{Remark}

\subsection{Russo's formula for crossing events and the one-arm event}\label{ss:Russo1arm}
Let us give some of examples of events for which 
Theorem \ref{russo formula-gen} is applicable.
We remark that the restriction that $2^k<L(h,\varepsilon_0)$
in this section hereafter is not serious as before.

The first example is the crossing event $A^+(2^k,2^k)$, such that
there exists a horizontal $(+)$-crossing in $S(2^k)$.  
Since this event is increasing, we have:
\begin{Corollary}\label{russo crossing}
Assume that $2^{j_1}<2^k<\min\{ L(h,\varepsilon_0), \frac{N}{2}\} $.
Then there exist positive constants $C_{18},C_{19}$, depending only
on $j_1, \varepsilon_0$ and $\eta $,
\begin{align*}
&\frac{|h-h_c|}{\mathfrak{K}T}C_{17}'\sum_{v \in S(2^k)}
\mu_t^N\bigl( \Delta_vA^+(2^k,2^k) \bigr) \\
& \leq \frac{d}{dt}\mu_t^N(A^+(2^k,2^k)) \\
&\leq \frac{|h-h_c|}{\mathfrak{K}T}
\biggl[ C_{18}\sum_{v\in S(2^k)}
\mu_t^N \bigl( A^+(2^k,2^k)\cap \Delta_vA^+(2^k,2^k) \bigr)  \\
&\quad  +C_{19}\mu_t^N \bigl(A^+(2^k,2^k) \bigr) \biggr] ,
\end{align*}
where $C_{17}'$ is the same constant as in Remark \ref{RussoLower-2}.
\end{Corollary}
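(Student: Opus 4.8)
The plan is to read off both inequalities from the two Russo-type bounds already proved. The lower bound is free: $A^{+}(2^{k},2^{k})$ is an increasing event in $\mathcal F_{S(2^{k})}$, so Remark~\ref{RussoLower-2} applies verbatim and gives
$\frac{|h-h_c|}{\mathfrak K T}C_{17}'\sum_{v\in\mathrm{Supp}(A^{+}(2^{k},2^{k}))}\mu_t^N(\Delta_vA^{+}(2^{k},2^{k}))\le\frac{d}{dt}\mu_t^N(A^{+}(2^{k},2^{k}))$, and the sum over $v\in S(2^{k})$ coincides with the sum over the support since $\Delta_vA=\emptyset$ otherwise. For the upper bound I would apply Theorem~\ref{russo formula-gen} with $j_0=j_1$ (so we need $k>j_1+3$; the finitely many scales $2^{j_1}<2^k\le2^{j_1+3}$ are disposed of by the crude estimate $|z_v|\le 2\mu_t^N(A)$ from that theorem's proof, as $\#S(2^k)$ is then bounded in terms of $j_1$), a fixed $0<a<\alpha/2$, and a constant $b=b(j_1,\varepsilon_0,\eta)$; then $C_{18},C_{19}$ may be taken to be the constants $C_{15},C_{16}$ produced by the theorem. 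So the only real work is to verify Condition~(S) for $A=A^{+}(2^{k},2^{k})$.

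\textbf{Geometric reduction (the crux).} Fix $v\in S(2^k)$ and $2\le m\le m_{j_1}(v)-1$, write $\rho=2^{j_1+m}$, and analyse $H_m(v):=\square_{\tilde R_{j_1}^m(v)}A\setminus\square_{\tilde R_{j_1}^{m+1}(v)}A$. Since $A$ is increasing, $\omega\in H_m(v)$ means exactly: $\omega\in A$, the configuration $(-1)_{\tilde R_{j_1}^m(v)}\omega$ still has a horizontal $(+)$-crossing $R$ of $S(2^k)$ (necessarily avoiding $\tilde R_{j_1}^m(v)$), but $(-1)_{\tilde R_{j_1}^{m+1}(v)}\omega\notin A$, so by the $(+)/(-*)$ matching-pair duality on $\mathbf Z^2$ it has a vertical $(-*)$-crossing $Q$. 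One checks that $R$ must enter $\tilde R_{j_1}^{m+1}(v)\setminus\tilde R_{j_1}^m(v)$ (else $R$ survives in $(-1)_{\tilde R_{j_1}^{m+1}(v)}\omega$) and that $Q$ must meet $\partial\tilde R_{j_1}^{m+1}(v)$ (else $\omega$ carries a vertical $(-*)$-crossing avoiding $\tilde R_{j_1}^m(v)$, incompatible with $R$). Peeling off the forced all-minus region inside $\tilde R_{j_1}^{m+1}(v)$ then yields, in $\omega$ itself, two disjoint $(+)$-arms from a box of radius $\asymp\rho$ about $v$ to the left and right sides of $S(2^k)$ and two $(-*)$-arms to the top and bottom, alternating around $v$. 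This has to be run through the various shapes $\tilde R_{j_1}^m(v)$ can take (interior square, edge-touching $\tilde T$ or $\tilde S$, corner block), the near-boundary shapes producing a half-plane three- or two-arm landscape instead; in every case one lands in $\mu_t^N(H_m(v))\le\mu_t^N(F_{\mathrm{out}})$, where $F_{\mathrm{out}}$ is precisely one of the $\tilde\Gamma$- or $\tilde\Gamma_0$-type outer arm events of Sections 5--6 at scale $m$ about $v$.

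\textbf{Surgery and conclusion.} Next I would build a pivotal configuration: on $A\cap\Delta_vA$ one needs the four alternating arms all the way down to $v$ together with the correct local spin pattern at $v$. Split this into an inner arm event $\Delta(v,R_{j_1}^{m-1}(v))$ of Section 5 (from $v$ out to radius $\asymp\rho/2$), a gluing annulus near scale $\rho$, the outer event $F_{\mathrm{out}}$, and a bounded local modification in $Q_{j_1}(v)$; combining them by FKG, the Connection Lemma (Lemma~\ref{connection lemma}) applied step by step, and the finite-energy property gives
\[
\mu_t^N\bigl(A\cap\Delta_vA\bigr)\ \ge\ c(j_1,\varepsilon_0,\eta)\,\mu_t^N\bigl(\Delta(v,R_{j_1}^{m-1}(v))\bigr)\,\mu_t^N(F_{\mathrm{out}})\ \ge\ c(j_1,\varepsilon_0,\eta)\,C_2C_1^{-(m-1)}\,\mu_t^N\bigl(H_m(v)\bigr),
\]
using the universal lower bound $\mu_t^N(\Delta(v,R_{j_1}^{m-1}(v)))\ge C_2C_1^{-(m-1)}$ of Lemma~\ref{relation of deltas} (and its boundary analogue Lemma~\ref{lem: relation of deltas-tilde}). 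Since $C_1^{-(m-1)}$ decays only geometrically in $m$ while $\rho=2^{j_1+m}$ grows doubly exponentially, $C_2C_1^{-(m-1)}\ge b^{-1}e^{-a\rho}$ for every $m\ge1$ once $b=b(a,j_1,\varepsilon_0,\eta)$ is large enough; this is exactly the inequality required by Condition~(S), so Theorem~\ref{russo formula-gen} applies and gives the upper bound with $C_{18}=C_{15}$, $C_{19}=C_{16}$. The main obstacle is the geometric reduction in the second paragraph---extracting the alternating four-arm (or half-plane) picture from the bare box-pivotal description $\square_{\tilde R^m}A\setminus\square_{\tilde R^{m+1}}A$, with careful attention to planar duality and, more tediously, to the several block shapes and boundary positions so that one lands in the $\tilde\Gamma$/$\tilde\Gamma_0$-events already controlled in Sections 5--6; once that is in hand the surgery step is routine given the Connection Lemma and finite energy, and comparing $C_1^{-m}$ with $e^{-a2^{j_1+m}}$ is immediate.
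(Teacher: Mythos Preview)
Your approach is correct in structure and reaches the same destination, but the paper takes a shorter route at the surgery step. Both arguments begin identically: the lower bound is Remark~\ref{RussoLower-2}, and for the upper bound one checks Condition~(S) of Theorem~\ref{russo formula-gen} with $j_0=j_1$. The geometric reduction is also the same---on $H_m(v)$ one has a horizontal $(+)$-crossing avoiding $\tilde R_{j_1}^m(v)$ together with $(-*)$-arms from $\partial\tilde R_{j_1}^{m+1}(v)$ to the top and bottom of $S(2^k)$, forcing the crossing through $\tilde R_{j_1}^{m+1}(v)$ and hence $H_m(v)\subset\tilde\Gamma(\tilde R_{j_1}^{m+1}(v),S(2^k))$. (Only the $\tilde\Gamma$-events of Section~5.3 are needed; the $\tilde\Gamma_0$ of Section~6 pertains to the one-arm event, and no separate half-plane case analysis is required since the $\tilde R_{j_1}^m$ machinery already absorbs all boundary positions of $v$.)

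Where you diverge is in establishing $\mu_t^N(H_m(v))\le b\,e^{a2^{j_1+m}}\mu_t^N(A\cap\Delta_vA)$. You factor pivotality as an inner event $\Delta(v,R_{j_1}^{m-1}(v))$, a gluing annulus, and the outer event $F_{\rm out}$, and then invoke the inward lower bound $\mu_t^N(\Delta(v,R_{j_1}^{m-1}(v)))\ge C_2C_1^{-(m-1)}$ of Lemma~\ref{relation of deltas}. This works, but be aware that to glue you must first replace $F_{\rm out}=\tilde\Gamma^{m+1}$ by $\tilde\Delta^{m+1}$ (via Lemma~\ref{lem:comparison of gamma and delta-tilde}), since in $\tilde\Gamma$ the outer arms land at uncontrolled points of $\partial\tilde R_{j_1}^{m+1}(v)$ and the Connection Lemma cannot attach them to fixed corridors; you then also need a mixing step to decouple inner from outer and must check that the mixing error is $o(C_1^{-m})$. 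The paper sidesteps all of this: having passed to $\tilde\Delta^{m+1}$ by Lemma~\ref{lem:comparison of gamma and delta-tilde}, it simply iterates Lemma~\ref{lem: relation of deltas-tilde} outward-to-inward to get
\[
\mu_t^N\bigl(\tilde\Gamma(\tilde R_{j_1}^{m+1}(v),S(2^k))\bigr)\ \le\ \tilde K\,C_1^{m}\,\mu_t^N\bigl(\tilde\Delta(\tilde R_{j_1}^{1}(v),S(2^k))\bigr),
\]
and then a single finite-energy modification in the fixed-size box $\tilde R_{j_1}^1(v)$ converts $\tilde\Delta(\tilde R_{j_1}^1(v),S(2^k))$ into $\Delta_vA^+(2^k,2^k)$. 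No inward $\Delta$-events, no decoupling, no annular gluing: the factor $C_1^m$ arises purely from the outward extension, and comparing it with $e^{a2^{j_1+m}}$ is then immediate.
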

\begin{proof}
We check the condition (S).
In this case, we take $j_0=j_1$. 
Let
$$
\omega \in \square_{{\tilde R}_{j_1}^m(v)}A^+(2^k,2^k)
     \setminus \square_{{\tilde R}_{j_1}^{m+1}(v)}A^+(2^k,2^k).
$$
Then there is a horizontal $(+)$-crossing $r$ of $S(2^k)$ such that 
$r\cap {\tilde R}_{j_1}^m(v)=\emptyset $, and there are $(-*)$-paths
$r_2^*, r_4^*$ in $S(2^k)\setminus {\tilde R}_{j_1}^{m+1}(v)$ such that 
$r_2^*$ connects $\partial {\tilde R}_{j_1}^{m+1}(v)$ with the top side
of $S(2^k)$, and $r_4^*$ connects $\partial {\tilde R}_{j_1}^{m+1}(v)$
with the bottom side of $S(2^k)$.
As a result $r$ intersects ${\tilde R}_{j_1}^{m+1}(v)$.
This means that 
$$
\omega \in {\tilde \Gamma}({\tilde R}_{j_1}^{m+1}(v), S(2^k)).
$$
Therefore we have to show that there are constants $a>0,b>0$ such that
\begin{equation}\label{eq:8-7}
\mu_t^N\bigl( {\tilde \Gamma}({\tilde R}_{j_1}^{m+1}(v), S(2^k))\bigr)
\leq be^{a 2^{j_1+m}}
  \mu_t^N\bigl( A^+(2^k,2^k)\cap \Delta_v A^+(2^k,2^k)\bigr) 
\end{equation}
for every $0\leq m \leq k-j_1-2$.
By Lemmas \ref{lem: relation of deltas-tilde} and \ref{lem:comparison of gamma and delta-tilde},
we have
\begin{align}\label{eq:8-8}
&\mu_t^N\bigl( {\tilde \Gamma}({\tilde R}_{j_1}^{m+1}(v), S(2^k))\bigr)\\
&\leq {\tilde K} \mu_t^N\bigl(
{\tilde \Delta} ({\tilde R}_{j_1}^{m+1}(v), S(2^k)) \bigr) \nonumber \\
&\leq {\tilde K}C_1^m\mu_t\bigl( 
  {\tilde \Delta} ({\tilde R}_{j_1}^1(v), S(2^k))  \nonumber \\  
&\leq  {\tilde K} C_1^m 
(1+ e^{(4h_c+8)/\mathfrak{K}T})^{\#{\tilde R}_{j_1}^1(v)}
\mu_t^N\bigl( \Delta_vA^+(2^k,2^k)\bigr) .\nonumber
\end{align}
The last inequality is by the finite energy property.
(\ref{eq:8-8}) proves (\ref{eq:8-7}).
\end{proof}

The next example is the one-arm event.
Let  
\[ A(2^k):=\{ \mbox{there exists a $(+)$-path from $\mathbf{O}$ to $\partial
S(2^k)$} \}. \]
and we put for $2^k<N$,
\[ \pi_t^N (2^k) := \mu_t^N\bigl( A(2^k) \bigr) . \]
\begin{Corollary} \label{RussoOneArm} There exist positive constants 
$C_{20}, C_{21}$, which depend on $j_1, \varepsilon_0$, and $\eta $
such that 
\begin{align}\label{eq:russo-one-arm}
C'_{17}\sum_{v\in S(2^k)}\mu_t^N\bigl( \Delta_v(A(2^k)\bigr) 
\leq & \frac{d}{dt} \pi_t^N (2^k)  \\
\leq & C_{20} \pi_t^N(2^k) + C_{21} \sum_{v \in S(2^k) } \mu_t^N
\bigl( \Delta_v A(2^k)\bigr) . \nonumber 
\end{align}
\end{Corollary}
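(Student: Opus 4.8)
\textbf{Plan of proof for Corollary \ref{RussoOneArm}.}
The plan is to apply Theorem \ref{russo formula-gen} to the increasing event $A = A(2^k)$, so that the lower bound in \eqref{eq:russo-one-arm} follows immediately from Remark \ref{RussoLower-2}. For the upper bound, the whole task reduces to verifying Condition (S) for $A(2^k)$ with the choice $j_0 = j_1$, a small $a < \alpha/2$ and a suitable $b$; once this is done, \eqref{eq:russo formula-gen} gives exactly the claimed inequality with $C_{20} = C_{16}$ and $C_{21} = C_{15}$.

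First I would describe the combinatorial content of the event $\square_{{\tilde R}_{j_1}^m(v)}A(2^k) \setminus \square_{{\tilde R}_{j_1}^{m+1}(v)}A(2^k)$. If $\omega$ lies in this set, then there is a $(+)$-path from $\mathbf{O}$ to $\partial S(2^k)$ that avoids ${\tilde R}_{j_1}^m(v)$ (so that $v$ is not pivotal through that box), but every such path must meet ${\tilde R}_{j_1}^{m+1}(v)$; by the usual topological duality this forces the existence of a $(-*)$-path $t^*$ in $S(2^k)\setminus {\tilde R}_{j_1}^m(v)$ separating the portion of the $(+)$-path inside from the portion outside, i.e. exactly the configuration described by ${\tilde \Gamma}_0({\tilde R}_{j_1}^{m+1}(v), S(2^k))$ (case (i) or case (ii) of its definition according to whether the box touches a side of $S(2^k)$). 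Thus
$$
\square_{{\tilde R}_{j_1}^m(v)}A(2^k) \setminus \square_{{\tilde R}_{j_1}^{m+1}(v)}A(2^k)
\subset {\tilde \Gamma}_0({\tilde R}_{j_1}^{m+1}(v), S(2^k)),
$$
in complete analogy with the crossing case \eqref{eq:8-7}--\eqref{eq:8-8}. Here $|v|_\infty$ is of the same order as the distance of ${\tilde R}_{j_1}^{m}(v)$ from the origin, and the restriction $m \leq m_{j_1}(v)-1$ is precisely what keeps these block events well-defined and lets us stay within the regime of the Lemmas of Sections 5 and 6.

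Next I would chain the extension lemmas. By Lemma \ref{lem:comparison of gamma0 and delta0-tilde} we have
$\mu_t^N\bigl({\tilde \Gamma}_0({\tilde R}_{j_1}^{m+1}(v), S(2^k))\bigr)
\leq {\tilde K}_0\,\mu_t^N\bigl({\tilde \Delta}_0({\tilde R}_{j_1}^{m+1}(v), S(2^k))\bigr)$,
then by iterating Lemma \ref{lem:comparison of delta0s-tilde} down to $m=0$,
$\mu_t^N\bigl({\tilde \Delta}_0({\tilde R}_{j_1}^{m+1}(v), S(2^k))\bigr)
\leq C_1^{m+1}\,\mu_t^N\bigl({\tilde \Delta}_0({\tilde R}_{j_1}^{0}(v), S(2^k))\bigr)$,
and finally the finite energy property comparing ${\tilde \Delta}_0({\tilde R}_{j_1}^{0}(v),S(2^k))$ with $\Delta_v A(2^k)$ gives a factor $(1+e^{(4h_c+8)/(\mathfrak{K}T)})^{\#{\tilde R}_{j_1}^{0}(v)}$, a constant depending only on $j_1$. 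Since $C_1^{m+1} = C_1\cdot e^{(m\log C_1)}$ and $2^{j_1+m}\geq 2^{j_1}$, we obtain
$$
\mu_t^N\bigl({\tilde \Gamma}_0({\tilde R}_{j_1}^{m+1}(v), S(2^k))\bigr)
\leq b\,e^{a 2^{j_1+m}}\,\mu_t^N\bigl(\Delta_v A(2^k)\bigr)
\leq b\,e^{a 2^{j_1+m}}\,\mu_t^N\bigl(A(2^k)\cap \Delta_v A(2^k)\bigr),
$$
where the last step uses that on $\Delta_v A(2^k)$ one can flip $v$ to make $A(2^k)$ occur, so a further finite-energy factor (absorbed into $b$) relates $\mu_t^N(\Delta_v A(2^k))$ to $\mu_t^N(A(2^k)\cap\Delta_v A(2^k))$. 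One takes $a>0$ as small as one likes — in particular $a<\alpha/2$ — at the price of enlarging $j_1$ (i.e.\ making $2^{j_1}$ large enough that $m\log C_1 \le a\,2^{j_1+m}$ for all $m\ge 0$), which is compatible with the standing hypotheses \eqref{eq:bound for j_1} and \eqref{eq:cond-for-j1-2} on $j_1$. This verifies Condition (S).

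\textbf{Main obstacle.} The routine part is the bookkeeping of constants; the genuinely delicate point is the topological claim that $\square_{{\tilde R}_{j_1}^m(v)}A(2^k)\setminus \square_{{\tilde R}_{j_1}^{m+1}(v)}A(2^k) \subset {\tilde \Gamma}_0({\tilde R}_{j_1}^{m+1}(v),S(2^k))$, together with handling the various boundary positions of ${\tilde R}_{j_1}^{m}(v)$ relative to $S(2^k)$ (the cases $S_{j_1}^m$, ${\tilde S}_{j_1}^m$, $T_{j_1}^m$, ${\tilde T}_{j_1}^m$) so that the right block events ${\tilde {\mathcal B}}_i$ and the right extension lemmas apply. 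One must argue carefully that "$v$ not pivotal through the inner box but pivotal through the outer box" forces both a $(+)$-arm from $\mathbf{O}$ to $\partial {\tilde R}_{j_1}^{m+1}(v)$ and a separating dual $(-*)$-arc, exactly matching the definition of ${\tilde \Gamma}_0$; this is the one place where a picture-level duality argument, rather than a formula manipulation, is required, and it is the analogue of what was done implicitly for $A^+(2^k,2^k)$ in Corollary \ref{russo crossing}.
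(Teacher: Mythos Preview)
Your proposal is correct and follows essentially the same route as the paper: lower bound from Remark~\ref{RussoLower-2}, upper bound by verifying Condition~(S) via the inclusion $\square_{{\tilde R}_{j_1}^m(v)}A(2^k)\setminus\square_{{\tilde R}_{j_1}^{m+1}(v)}A(2^k)\subset{\tilde\Gamma}_0({\tilde R}_{j_1}^{m+1}(v),S(2^k))$, then Lemmas~\ref{lem:comparison of gamma0 and delta0-tilde} and~\ref{lem:comparison of delta0s-tilde} plus finite energy. One small correction: you do not need to enlarge $j_1$ to get $C_1^m\le b\,e^{a2^{j_1+m}}$, since $m\mapsto m\log C_1-a\,2^{j_1+m}$ is bounded above for any fixed $a>0$ and $j_1$; the supremum is simply absorbed into $b$.
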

\begin{proof}
Since $A(2^k)$ is increasing, the lower bound is a direct consequence of
Remark \ref{RussoLower-2}.
As for the  upper bound, we check the condition (S).
We put $j_0=j_1$, and $v=v$ except when $v=(\pm 2^k, \pm 2^k)$,
in which case we take $v$ as the corner point of $S(2^k-1)$ nearest to $v$.
Note that for each $v \in S(2^k)\setminus \{ (\pm 2^k, \pm 2^k)\} $,
$$
\square_{{\tilde R}_{j_1}^m(v)}A(2^k)\setminus 
   \square_{{\tilde R}_{j_1}^{m+1}(v)}A(2^k)\subset 
   {\tilde \Gamma}_0({\tilde R}_{j_1}^{m+1}(v), S(2^k)).
$$
Therefore by Lemmas \ref{lem:comparison of delta0s-tilde} and \ref{lem:comparison of gamma0 and delta0-tilde}, and the finite energy property,
\begin{align*}
&\mu_t^N\bigl( {\tilde \Gamma}_0({\tilde R}_{j_1}^{m+1}(v), S(2^k)) \bigr) \\
&  \leq {\tilde K}_0 C_1^{-m}
  { \bigl( 1+ e^{(4h_c+8)/\mathfrak{K}T}\bigr) }^{\# {\tilde R}_{j_1}^1(v)}
  \mu_t^N\bigl( \Delta_vA(2^k)\bigr) .
\end{align*}
Therefore the condition (S) is satisfied.
\end{proof}

\subsection{Russo's formula for four-arm paths} \label{ss:Russo4arm}
In this subsection we derive an estimate for the event 
$$
\Omega (2^k):= \Delta_0(A^+(2^k,2^k))
$$
with which we can check the condition (S).
We first note that $\Omega (2^k)$ can be written as the intersection of 
an increasing event $E_+$ and a decreasing event $E_{-*}$.
We first define $E_+$ to be the event such that
\begin{enumerate}
\item there exist $(+)$-paths $r_1, r_3$ in $S(2^k)\setminus \{ \mathbf{O}\} $,
$r_1$ connecting $\partial \{ \mathbf{O}\} $ with the left side of $S(2^k)$
and $r_3$ connecting $\partial \{ \mathbf{O}\} $ with the right side of $S(2^k)$,
and 
\item $r_1$ and $r_3$ are disjoint.
\end{enumerate}
Similarly, let $E_{-*}$ be the event such that
\begin{enumerate}
\item there exist $(-*)$-paths $r_2^*, r_4^*$ in $S(2^k)\setminus \{ \mathbf{O}\} $,
$r_2^*$ connecting  $\partial^* \{ \mathbf{O}\} $ with the top side of $S(2^k)$
and $r_4^*$ connecting $\partial^* \{ \mathbf{O}\} $ with the bottom side of $S(2^k)$,
and
\item $r_2^*$ and $r_4^*$ are disjoint.
\end{enumerate}
Let $v\in S(2^k)$ be fixed.
For $0\leq m\leq m_{j_1}(v)-1$, let 
${\tilde \Gamma}^{(0)}({\tilde R}_{j_1}^{m+1}(v), S(2^k)) $ be the event such that
all the following occur:
\begin{enumerate}
\item There exist paths $r_5,r_7$ in $S(2^k)\setminus {\tilde R}_{j_1}^{m+1}(v)$
such that $r_5$ connects $\partial {\tilde R}_{j_1}^{m+1}(v)$ with the left side 
of $S(2^k)$, and $r_7$ connects $\partial {\tilde R}_{j_1}^{m+1}(v)$ with 
right side of $S(2^k)$.
Further, 
$$
\omega (x) = +1 \quad \mbox{ for every $x\in (r_5\cup r_7)\setminus \{ \mathbf{O}\}$}.
$$
\item There exist $(*)$-paths $r_6^*, r_8^*$ in 
$S(2^k)\setminus {\tilde R}_{j_1}^{m+1}(v)$, such that $r_6^*$ connects 
$\partial^* {\tilde R}_{j_1}^{m+1}(v)$ with the top side of $S(2^k)$,
and $r_8^*$ connects $\partial^* {\tilde R}_{j_j}^{m+1}(v)$ with the bottom side
of $S(2^k)$.
Further, 
$$
\omega (x) = -1 \quad \mbox{ for every $x\in (r_6^*\cup r_8^*)\setminus \{ \mathbf{O}\} $}.
$$
\item At most one of $r_5,r_6^*,r_7,r_8^*$ passes through $\mathbf{O}$.
\end{enumerate}
Of course, if ${\tilde R}_{j_1}^{m+1}(v)\not=S_{j_1}^{m+1}(v)$, then
we allow $r_7$ to be an empty set, and if ${\tilde R}_{j_1}^{m+1}(v)\ni (2^k,2^k)$,
then we also allow $r_6^*$ to be an empty set.

Let us take ${\tilde {\mathcal B}}_i$'s for ${\tilde R}_{j_1}^{m+1}(v)$.
Then we define ${\tilde \Lambda }^{(0)}( {\tilde R}_{j_1}^{m+1}(v), S(2^k))$
as the subset of ${\tilde \Gamma}^{(0)}( {\tilde R}_{j_1}^{m+1}(v), S(2^k))$
such that any crossing $(+)$-cluster and any crossing $(-*)$-cluster in
one of ${\tilde {\mathcal B}}_i$'s has an $(\eta ,j_1+m+1)$-fence.
Let us take ${\tilde {\mathcal A}}_i$'s for ${\tilde R}_{j_1}^{m+1}(v)$,
too. 
Then let ${\tilde \Delta }^{(0)}({\tilde R}_{j_1}^{m+1}(v), S(2^k))$ be the 
subset of ${\tilde \Gamma}^{(0)}( {\tilde R}_{j_1}^{m+1}(v), S(2^k))$ such that
all the following occur:
\begin{enumerate}
\item $r_5 \cap \cup_j {\tilde {\mathcal B}}_j\subset {\tilde {\mathcal A}}_1$,
and $r_8^* \cap \cup_j {\tilde {\mathcal B}}_j\subset {\tilde {\mathcal A}}_4$.
\item $r_6^* \cap \cup_j {\tilde {\mathcal B}}_j\subset {\tilde {\mathcal A}}_2$ if
$r_6^*$ exists, and
$r_7 \cap \cup_j {\tilde {\mathcal B}}_j\subset {\tilde {\mathcal A}}_3$
if $r_7$ exists.
\item There exists a vertical $(+)$-crossing in ${\tilde {\mathcal A}}_1$,
and if we use ${\tilde {\mathcal A}}_3$, there exists a vertical
$(+)$-crossing in ${\tilde {\mathcal A}}_3$.
\item There exists a horizontal  $(-*)$-crossing in ${\tilde {\mathcal A}}_4$,
and if we use ${\tilde {\mathcal A}}_2$, there exists a horizontal 
$(-*)$-crossing in ${\tilde {\mathcal A}}_3$.
\end{enumerate}
With these definitions we can apply the outwards extension argument to
obtain the following lemma.

\begin{Lemma}\label{lem:delta-to-delta:four arm}
Let $2^{j_1}<2^k<\min\{ L(h,\varepsilon_0), \frac{N}{2}\} $.
Then we have for $t\in [0,1]$ and for every $1\leq m \leq m_{j_1}(v)-1$,
\begin{align}\label{eq:delta-to-delta:four arm-1}
&\mu_t^N\biggl(
\square_{{\tilde R}_{j_1}^{m+1}(v)}E_+\cap \Delta_{{\tilde R}_{j_1}^{m+1}(v)}E_{-*}
  \cap {\tilde \Delta}^{(0)}({\tilde R}_{j_1}^{m+1}(v), S(2^k))
    \biggr) \\
\leq &C_1 \mu_t^N\biggl(
 \square_{{\tilde R}_{j_1}^{m}(v)}E_+\cap \Delta_{{\tilde R}_{j_1}^{m}(v)}E_{-*}
  \cap {\tilde \Delta}^{(0)}({\tilde R}_{j_1}^{m}(v), S(2^k))
    \biggr) , \nonumber
\end{align}
and
\begin{align}\label{eq:delta-to-delta:four arm-2}
&\mu_t^N\biggl(
\Delta_{{\tilde R}_{j_1}^{m+1}(v)}E_+\cap \square_{{\tilde R}_{j_1}^{m+1}(v)}E_{-*}
  \cap {\tilde \Delta}^{(0)}({\tilde R}_{j_1}^{m+1}(v), S(2^k))
    \biggr) \\
\leq & C_1\mu_t^N\biggl(
 \Delta_{{\tilde R}_{j_1}^{m}(v)}E_+\cap \square_{{\tilde R}_{j_1}^{m}(v)}E_{-*}
  \cap {\tilde \Delta}^{(0)}({\tilde R}_{j_1}^{m}(v), S(2^k))
    \biggr) . \nonumber
\end{align}
Therefore if $A$ is either 
$$
\square_{{\tilde R}_{j_1}^{m}(v)}E_+\cap \Delta_{{\tilde R}_{j_1}^{m}(v)}E_{-*}
  \cap {\tilde \Delta}^{(0)}({\tilde R}_{j_1}^{m}(v), S(2^k))
$$
or
$$
\Delta_{{\tilde R}_{j_1}^{m}(v)}E_+\cap \square_{{\tilde R}_{j_1}^{m}(v)}E_{-*}
  \cap {\tilde \Delta}^{(0)}({\tilde R}_{j_1}^{m}(v), S(2^k)),
$$
then we have
\begin{equation}\label{eq:delta-to-delta:four arm-3}
\mu_t^N( A) \leq C_2^{-1}C_1^m,
\end{equation}
where $C_1$ and $C_2$ are the same constants as in %Lemma 16. 
Lemma \ref{relation of deltas}.
\end{Lemma}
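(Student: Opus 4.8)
The plan is to imitate the proof of Lemma \ref{lem: relation of deltas-tilde} (and of Lemma \ref{relation of deltas}), replacing the role of $\Delta$ by the two ``decorated'' events in the statement. The key point is that the events $\square_{{\tilde R}_{j_1}^{m}(v)}E_+$, $\Delta_{{\tilde R}_{j_1}^{m}(v)}E_+$, etc., are measurable with respect to the configuration \emph{outside} ${\tilde R}_{j_1}^{m}(v)$ together with (at worst) a deterministic re-routing of one arm through $\mathbf O$, so that the arms $r_5,r_6^*,r_7,r_8^*$ that matter for the extension are exactly the four arms appearing in ${\tilde \Gamma}^{(0)}$, crossing the annular region between ${\tilde R}_{j_1}^m(v)$ and ${\tilde R}_{j_1}^{m+1}(v)$. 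Thus the combinatorial structure is identical to the outwards extension argument already carried out for ${\tilde \Delta}$.

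First I would fix one of the two cases, say \eqref{eq:delta-to-delta:four arm-1}, and decompose the event $\square_{{\tilde R}_{j_1}^{m}(v)}E_+\cap \Delta_{{\tilde R}_{j_1}^{m}(v)}E_{-*}\cap {\tilde \Delta}^{(0)}({\tilde R}_{j_1}^{m}(v), S(2^k))$ according to the realizations of the extremal crossings $\tau_1,\tau_2^*,\tau_3,\tau_4^*$ of the ${\tilde{\mathcal A}}_i$'s of ${\tilde R}_{j_1}^m(v)$, exactly as in the proof of Lemma \ref{relation of deltas}; these realizations depend only on the configuration inside ${\tilde R}_{j_1}^m(v)$ (more precisely in the $\Theta(\tau_i)$'s and $S(x_{j_1}(v),\cdot)$), while the ``$\square$/$\Delta$'' constraints on $E_+,E_{-*}$ and the event ${\tilde\Delta}^{(0)}$ only further restrict the configuration in that inner region and on the arms. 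For each such realization I would then apply the Connection lemma (Lemma \ref{connection lemma}, in the form used throughout section 5) in the corridors ${\tilde{\mathcal D}}_i$ and corridors $U_i$ connecting the ${\tilde{\mathcal A}}_i$ of ${\tilde R}_{j_1}^m(v)$ to the ${\tilde{\mathcal A}}_i'$ of ${\tilde R}_{j_1}^{m+1}(v)$, together with the FKG inequality; the forced crossings turn a configuration in ${\tilde\Delta}^{(0)}({\tilde R}_{j_1}^{m}(v), S(2^k))$ into one in ${\tilde\Delta}^{(0)}({\tilde R}_{j_1}^{m+1}(v), S(2^k))$ without destroying the $\square E_+$ and $\Delta E_{-*}$ properties (since those are determined near $\mathbf O$, far from the corridors). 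Summing over realizations and using that the union is disjoint yields \eqref{eq:delta-to-delta:four arm-1} with constant $C_1$ given by \eqref{eq:the constant C1}; the various boundary cases (when ${\tilde R}_{j_1}^m(v)$ is one of ${\tilde S}_{j_1}^m(v)$, $T_{j_1}^m(v)$, ${\tilde T}_{j_1}^m(v)$, in particular when the corner $(2^k,2^k)$ is involved and the bound $(\delta_{80}\delta_{16})/16$ replaces $(\delta_{48}\delta_{16})/16$, cf.\ Lemma \ref{lem:comparison of delta0s-tilde}) are handled exactly as before, and one checks that all the arm-counts stay $\leq 80$ so the resulting constant is absorbed in $C_1$.

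The proof of \eqref{eq:delta-to-delta:four arm-2} is literally the same with the roles of $E_+$ and $E_{-*}$ (hence of $(+)$- and $(-*)$-crossings in the corridors) interchanged, using the symmetry of $\mu_t^N$ under interchanging the two types of connection together with reflection; nothing new arises. Finally, \eqref{eq:delta-to-delta:four arm-3} follows by iterating either \eqref{eq:delta-to-delta:four arm-1} or \eqref{eq:delta-to-delta:four arm-2} down from $m$ to $1$, exactly as in the last line of the proof of Lemma \ref{relation of deltas}: $\mu_t^N(A)\le C_1^{m-1}\mu_t^N(A_1)$ where $A_1$ is the corresponding event at level $1$, and $\mu_t^N(A_1)$ is bounded below by a positive constant $C_2$ depending only on $\varepsilon_0$ and $j_1$ (by the RSW-bounds of Lemma \ref{RSWbound}, the Connection lemma, and the finite energy property to force the one-arm-through-$\mathbf O$ configuration inside the small box $R_{j_1}^1(v)$), giving $\mu_t^N(A)\le C_2^{-1}C_1^m$.

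The main obstacle, as in Kesten's original argument, is bookkeeping rather than a genuinely new idea: one must verify that in \emph{every} one of the boundary geometries (and for the decorated events, where one of the four arms may be re-routed through $\mathbf O$) the corridors ${\tilde{\mathcal D}}_i$, $U_i$ can be chosen pairwise disjoint, of width of order $2^{j_1+m}$ and length $\le 8\cdot 2^{j_1+m}$ so that the Connection lemma applies with $n<L(h,\varepsilon_0)$ (or with the relaxed bounds of Remarks \ref{box-size-bound}, \ref{box-size-bound2}, \ref{box-size-bound3}), and that the conditioning ``step by step'' on the extremal crossings does not create dependence that spoils the FKG step. Since all of this was already done for ${\tilde\Delta}$ in section 5, the additional $\square/\Delta$ constraints only shrink the inner-region event and commute with the corridor surgery, so no essentially new difficulty is introduced.
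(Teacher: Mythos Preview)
Your outline has a genuine gap in the step you flag as routine. The events $\square_{{\tilde R}_{j_1}^{m}(v)}E_+$ and $\Delta_{{\tilde R}_{j_1}^{m}(v)}E_{-*}$ are \emph{not} ``determined near $\mathbf O$, far from the corridors''; they depend explicitly on the box ${\tilde R}_{j_1}^{m}(v)$. One has $\square_{{\tilde R}^{m+1}}E_+\subset\square_{{\tilde R}^{m}}E_+$ but $\square_{{\tilde R}^{m}}E_+\not\subset\square_{{\tilde R}^{m+1}}E_+$, and conversely $\Delta_{{\tilde R}^{m}}E_{-*}\subset\Delta_{{\tilde R}^{m+1}}E_{-*}$ but not the reverse. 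So in whichever direction you run the extension, one of the two constraints fails to carry over automatically, and your claim that the corridor surgery ``commutes'' with them is exactly what needs to be proved. (You also have the direction reversed: to get \eqref{eq:delta-to-delta:four arm-1} one starts from the event at level $m+1$ and extends the four arms of ${\tilde\Delta}^{(0)}$ \emph{inward} to ${\tilde R}_{j_1}^{m}(v)$, as in Lemma~\ref{lem: relation of deltas-tilde}.)

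The missing idea, which the paper supplies, is a structural analysis of how the $\square/\Delta$ constraints interact with the arms of ${\tilde\Delta}^{(0)}$. On $\square_{{\tilde R}^{m+1}}E_+$ there exist $(+)$-paths $r_1,r_3$ avoiding ${\tilde R}_{j_1}^{m+1}(v)$, so $r=r_1\cup\{\mathbf O\}\cup r_3$ is a horizontal $(+)$-barrier and the box lies (say) above it. Then the arm $r_8^*$ of ${\tilde\Delta}^{(0)}$ from ${\tilde{\mathcal A}}_4'$ to the bottom side must cross $r$, hence passes through $\mathbf O$; by the ``at most one through $\mathbf O$'' clause, the other three arms $r_5,r_6^*,r_7$ are genuine. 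Only after this identification can one check that extending $r_5,r_6^*,r_7,r_8^*$ inward to ${\tilde R}_{j_1}^{m}(v)$ produces a configuration in $\square_{{\tilde R}^{m}}E_+\cap\Delta_{{\tilde R}^{m}}E_{-*}\cap{\tilde\Delta}^{(0)}({\tilde R}_{j_1}^{m}(v),S(2^k))$; in particular, the extended $(+)$-arms together with $r$ force every putative $r_2^*$ to enter ${\tilde R}_{j_1}^{m}(v)$, which is what gives $\Delta_{{\tilde R}^{m}}E_{-*}$. Your proposal omits this step entirely. (For \eqref{eq:delta-to-delta:four arm-3}, once \eqref{eq:delta-to-delta:four arm-1} is established the iteration gives $\mu_t^N(A)\le C_1^{m-1}\mu_t^N(A_1)\le C_1^{m-1}$; you do not need, and cannot use, a \emph{lower} bound on $\mu_t^N(A_1)$ here.)
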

\begin{proof}
By iteration we have from (\ref{eq:delta-to-delta:four arm-1}) that
\begin{align*}
&\mu_t^N\biggl(
\Delta_{{\tilde R}_{j_1}^{m}(v)}E_+\cap \square_{{\tilde R}_{j_1}^{m}(v)}E_{-*}
  \cap {\tilde \Delta}^{(0)}({\tilde R}_{j_1}^{m}(v), S(2^k))
\biggr) \\
\leq & C_1^{m-1}
 \mu_t^N\biggl(
\Delta_{{\tilde R}_{j_1}^{1}(v)}E_+\cap \square_{{\tilde R}_{j_1}^{1}(v)}E_{-*}
  \cap {\tilde \Delta}^{(0)}({\tilde R}_{j_1}^{1}(v), S(2^k))
\biggr) .
\end{align*}
Then by the finite energy property we can change the configuration inside the box
${\tilde R}_{j_1}^{1}(v)$ to obtain
$$
\mu_t^N\biggl(
\Delta_{{\tilde R}_{j_1}^{1}(v)}E_+\cap \square_{{\tilde R}_{j_1}^{1}(v)}E_{-*}
  \cap {\tilde \Delta}^{(0)}({\tilde R}_{j_1}^{1}(v), S(2^k))
\biggr)
\leq C_2^{-1}C_1^m,
$$
which proves (\ref{eq:delta-to-delta:four arm-3}).
Let us prove  (\ref{eq:delta-to-delta:four arm-1}).
Assume that 
$$
\omega \in 
 \square_{{\tilde R}_{j_1}^{m+1}(v)}E_+\cap \Delta_{{\tilde R}_{j_1}^{m+1}(v)}E_{-*}
  \cap {\tilde \Delta}^{(0)}({\tilde R}_{j_1}^{m+1}(v), S(2^k)).
$$
Then, since $\omega \in \square_{{\tilde R}_{j_1}^{m+1}(v)}E_+$, there exist 
disjoint $(+)$-paths $r_1,r_3$ in $S(2^k)$ outside ${\tilde R}_{j_1}^{m+1}(v)$,
such that $r_1$ connects $\partial \{ \mathbf{O}\}$ with the left side of $S(2^k)$
and $r_3$ connects  $\partial \{ \mathbf{O}\}$ with the right side of $S(2^k)$.
Then the path $r=r_1\cup \{ \mathbf{O}\} \cup r_3$ separates $S(2^k)$ into two parts,
above $r$ and below $r$.
Since ${\tilde R}_{j_1}^{m+1}(v)$ does not intersect $r$, it is located
above $r$ or below $r$.
Without loss of generality we can assume that  ${\tilde R}_{j_1}^{m+1}(v)$ 
is above $r$.

Since $\omega \in \Delta_{{\tilde R}_{j_1}^{m+1}(v)}E_{-*}$, there exist
disjoint $(*)$-paths $r_2^*, r_4^*$, such that $r_2^*$ connects $\partial^* \{ \mathbf{O}\}$
with the top side of $S(2^k)$, and $r_4^*$ connects $\partial^* \{ \mathbf{O}\}$ with
the bottom side of $S(2^k)$, and at any point
$x \in ( r_2^*\cup r_4^*)\setminus {\tilde R}_{j_1}^{m+1}(v)$, $\omega (x) = -1$.
This means that $r_4^*$ can not cross $r$ and it is a $(-*)$-path.
Also $r_2^*$ is forced to intersect ${\tilde R}_{j_1}^{m+1}(v)$ by our 
assumption that  $\omega \in \Delta_{{\tilde R}_{j_1}^{m+1}(v)}E_{-*}$.
Therefore there is a $(-*)$-path ${r_2^*}''$ connecting $\partial^* \{ \mathbf{O}\}$ with
$\partial^*{\tilde R}_{j_1}^{m+1}(v)$ and a $(-*)$-path ${r_2^*}'$ connecting 
$\partial^*{\tilde R}_{j_1}^{m+1}(v)$ with the top side of $S(2^k)$.

Since $\omega \in {\tilde \Delta }^{(0)}({\tilde R}_{j_1}^{m+1}(v), S(2^k))$,
there is a $(*)$-path $r_8^*$ starting from ${\tilde A}_4$ of
${\tilde R}_{j_1}^{m+1}(v)$
connecting the bottom side
of $\partial^*{\tilde R}_{j_1}^{m+1}(v)$ with the bottom side of $S(2^k)$ such that
$\omega (x) =-1$ for every $x\in r_8^*\setminus \{ \mathbf{O}\} $.
This means that $r_8^*$ goes through ${\mathbf O}$, and
we can take ${r_2^*}''$ as a part of $r_8^*$.
Then $r_5, r_7, r_6^*$ can not pass through ${\mathbf O}$ by definition of 
$ {\tilde \Delta }^{(0)}({\tilde R}_{j_1}^{m+1}(v), S(2^k))$,
therefore $r_5,r_7$ are $(+)$-paths and  $r_6^*$ is a $(-*)$-path, 
starting from ${\tilde {\mathcal A}}_1,
{\tilde {\mathcal A}}_3$ and $ {\tilde {\mathcal A}}_2$, respectively.
Thus, as in the proof of Lemma \ref{lem: relation of deltas-tilde} %Lemma 19 %
by the extension argument we can obtain (\ref{eq:delta-to-delta:four arm-1}) by 
extending $r_5, r_6^*, r_7, r_8^*$ to $\partial {\tilde R}_{j_1}^m(v)$.
The resulting configuration is surely in
$$
\square_{{\tilde R}_{j_1}^{m}(v)}E_+ \cap \Delta_{{\tilde R}_{j_1}^{m}(v)}E_{-*}
\cap {\tilde \Delta }^{(0)}({\tilde R}_{j_1}^{m}(v), S(2^k)).
$$
In the same way we can prove  (\ref{eq:delta-to-delta:four arm-2}). 
\end{proof}
By the same observation as in the above proof, we can obtain the following lemma.
\begin{Lemma}\label{lem:lambda-to delta:four arm}
There exists a constant ${\tilde K}^{(0)}(\eta )>0$ depending only on 
$j_1, \varepsilon_0$ and $\eta $, such that for every $0\leq m \leq m_{j_1}(v)-1$,
\begin{align}\label{eq:lambda-to-delta:four arm}
& \mu_t^N\biggl(
{\Delta}_{{\tilde R}_{j_1}^{m+1}(v)}\Omega (2^k)\cap
{\tilde \Lambda }^{(0)}({\tilde R}_{j_1}^{m+1}(v), S(2^k) )\biggr) \\
\leq &{\tilde K}^{(0)}(\eta )\left\{ 
\mu_t^N\biggl( \square_{{\tilde R}_{j_1}^m(v)}E_+\cap 
               \Delta_{{\tilde R}_{j_1}^m(v)}E_{-*}\cap
              {\tilde \Delta}^{(0)}({\tilde R}_{j_1}^m(v), S(2^k)) 
             \biggr)\right. \nonumber  \\
&\quad + \left. \mu_t^N\biggl( \Delta_{{\tilde R}_{j_1}^m(v)}E_+\cap 
       \square_{{\tilde R}_{j_1}^m(v)}E_{-*}\cap
         {\tilde \Delta }^{(0)}({\tilde R}_{j_1}^m(v), S(2^k)) \biggr)  
     \right\}. \nonumber
\end{align}   
\end{Lemma}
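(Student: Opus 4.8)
The plan is to argue exactly as in the proof of Lemma \ref{lem:lambda-to-delta-tilde}, but starting from the arm picture extracted in the proof of Lemma \ref{lem:delta-to-delta:four arm}. First I would split the event on the left according to which of the two connections through ${\tilde R}_{j_1}^{m+1}(v)$ is pending. Since $\Omega(2^k)=E_+\cap E_{-*}$ with $E_+$ increasing and $E_{-*}$ decreasing, and since on ${\tilde \Lambda }^{(0)}({\tilde R}_{j_1}^{m+1}(v),S(2^k))\subset{\tilde \Gamma }^{(0)}({\tilde R}_{j_1}^{m+1}(v),S(2^k))$ there already are the two outer $(+)$-arms $r_5,r_7$ and the two outer $(-*)$-arms $r_6^*,r_8^*$ running from $\partial{\tilde R}_{j_1}^{m+1}(v)$ to the sides of $S(2^k)$ with at most one of them through ${\mathbf O}$, the same bookkeeping as in the proof of Lemma \ref{lem:delta-to-delta:four arm} gives
\begin{align*}
&\Delta_{{\tilde R}_{j_1}^{m+1}(v)}\Omega(2^k)\cap{\tilde \Lambda }^{(0)}({\tilde R}_{j_1}^{m+1}(v),S(2^k))\\
&\subset\Bigl[\square_{{\tilde R}_{j_1}^{m+1}(v)}E_+\cap\Delta_{{\tilde R}_{j_1}^{m+1}(v)}E_{-*}\cap{\tilde \Lambda }^{(0)}({\tilde R}_{j_1}^{m+1}(v),S(2^k))\Bigr]\\
&\quad\cup\Bigl[\Delta_{{\tilde R}_{j_1}^{m+1}(v)}E_+\cap\square_{{\tilde R}_{j_1}^{m+1}(v)}E_{-*}\cap{\tilde \Lambda }^{(0)}({\tilde R}_{j_1}^{m+1}(v),S(2^k))\Bigr].
\end{align*}

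Next I would bound the $\mu_t^N$-probability of each of the two sets on the right by the corresponding level-$m$ set with ${\tilde \Delta }^{(0)}$ in place of ${\tilde \Lambda }^{(0)}$. This is the corridor construction of Lemma \ref{lem:lambda-to-delta-tilde}: on the fence event one builds, for each outer arm $r_5,r_6^*,r_7,r_8^*$, a corridor of width at most $\eta 2^{j_1+m+1}$ and length at most $8\cdot 2^{j_1+m+1}$, pairwise disjoint (possible under (\ref{eq:bound for eta}) and $M_0\geq 1$), leading from the $(\eta,j_1+m+1)$-fence of the corresponding crossing cluster into the appropriate ${\tilde {\mathcal A}}_i$ of ${\tilde R}_{j_1}^m(v)$; conditioning step by step on the ``minimal'' pieces and applying Lemma \ref{connection lemma} together with the finite-energy property, each corridor is crossed by a path of the right type with conditional probability bounded below by the constant $C_3^*$ of (\ref{eq:the constant C3(eta)}). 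Summing over realizations and noting that the routed configuration lies in $\square_{{\tilde R}_{j_1}^m(v)}E_+\cap\Delta_{{\tilde R}_{j_1}^m(v)}E_{-*}\cap{\tilde \Delta }^{(0)}({\tilde R}_{j_1}^m(v),S(2^k))$ (respectively with $\square$ and $\Delta$ interchanged) --- the $\square$-part because ${\tilde R}_{j_1}^m(v)\subset{\tilde R}_{j_1}^{m+1}(v)$, so the certifying path still avoids ${\tilde R}_{j_1}^m(v)$, and the $\Delta$-part because the corridors carry the pending arm to $\partial{\tilde R}_{j_1}^m(v)$ --- this yields the bound with ${\tilde K}^{(0)}(\eta)$ of the same form as $C_4(\eta)$ in Lemma \ref{lem:lambda-to-delta-tilde}, up to a factor $2$ coming from the union of the two cases; it depends only on $j_1,\varepsilon_0$ and $\eta$.

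I expect the main obstacle to be the first step rather than the routing. One has to check that pivotality of the box ${\tilde R}_{j_1}^{m+1}(v)$ for the four-arm event $\Omega(2^k)$ forces precisely one of the square/delta alternatives while retaining the full ${\tilde \Gamma }^{(0)}$ picture, and that this dichotomy is stable under all the boundary shapes ${\tilde R}_{j_1}^m(v)\in\{S_{j_1}^m(v),{\tilde S}_{j_1}^m(v),T_{j_1}^m(v),{\tilde T}_{j_1}^m(v)\}$ and under the corner case where the box meets $(2^k,2^k)$; this is exactly the delicate tracking of which arm runs through ${\mathbf O}$ carried out in the proof of Lemma \ref{lem:delta-to-delta:four arm}, and it is where I would spend most of the work. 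Once that dichotomy is in place, the corridor routing --- apart from the half-corridor situation noted in remark $1^\circ)$ of the proof of Lemma \ref{lem:lambda-to-delta-tilde}, which only enlarges the width and leaves $C_3^*$ unchanged --- is essentially mechanical.
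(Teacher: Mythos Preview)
Your two-case decomposition in the first displayed inclusion is false, and this is not a technicality that can be patched by ``careful tracking''. The event $\Delta_{{\tilde R}_{j_1}^{m+1}(v)}\Omega(2^k)$ decomposes into \emph{three} cases, not two: since $\Omega(2^k)=E_+\cap E_{-*}$ and $\square_S(E_+\cap E_{-*})=\square_SE_+\cap\square_SE_{-*}$, pivotality of the box for $\Omega(2^k)$ only forces that \emph{at least one} of $E_+,E_{-*}$ is box-pivotal. The case $\Delta_{{\tilde R}_{j_1}^{m+1}(v)}E_+\cap\Delta_{{\tilde R}_{j_1}^{m+1}(v)}E_{-*}$ is perfectly compatible with ${\tilde\Lambda}^{(0)}$ --- for instance when all four arms $r_1,r_2^*,r_3,r_4^*$ from the origin are forced to pass through ${\tilde R}_{j_1}^{m+1}(v)$, while the outer arms $r_5,r_6^*,r_7,r_8^*$ of ${\tilde\Gamma}^{(0)}$ exist independently. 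The ``bookkeeping'' of Lemma~\ref{lem:delta-to-delta:four arm} does not help here, since that lemma already \emph{assumes} one of the two $\square/\Delta$ alternatives.

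The paper's proof handles your two cases exactly as you describe, by the corridor routing of Lemma~\ref{lem:lambda-to-delta-tilde}. But the bulk of its work is the third case: there one fixes a modification $\omega'=\zeta_{{\tilde R}_{j_1}^{m+1}(v)}\omega\in\Omega(2^k)$, counts how many of $r_1',r_2^{*\prime},r_3',r_4^{*\prime}$ actually meet the box (this number $q(\omega')$ can be $2$, $3$, or $4$), and for each value constructs an elaborate system of up to eight corridors --- some connecting the inner arm endpoints to the ${\tilde{\mathcal A}}_i$ of ${\tilde R}_{j_1}^m(v)$, others connecting inner endpoints to each other or to one of the outer arms $r_5,r_6^*,r_7,r_8^*$ --- with carefully prescribed intersection patterns, so that after routing one lands in one of the two target events. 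This construction, not the standard four-corridor routing, is the heart of the lemma.
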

\begin{proof}
It is sufficient to show how to choose corridors in ${\tilde R}_{j_1}^{m+1}(v)\setminus
{\tilde R}_{j_1}^{m}(v)$.

On ${ \Delta }_{{\tilde R}_{j_1}^{m+1}(v)}\Omega (2^k)$, either of the following
events occurs:
\begin{align*}
&\square_{{\tilde R}_{j_1}^{m+1}(v)}E_+ \cap 
\Delta_{{\tilde R}_{j_1}^{m+1}(v)}E_{-*}, \\
&  \Delta_{{\tilde R}_{j_1}^{m+1}(v)}E_+\cap  
\square_{{\tilde R}_{j_1}^{m+1}(v)}E_{-*}, \quad \mbox{ or }\\
&  \Delta_{{\tilde R}_{j_1}^{m+1}(v)}E_+\cap 
 \Delta_{{\tilde R}_{j_1}^{m+1}(v)}E_{-*}.
\end{align*}
In the first two events, we can extend $r_5,r_6^*,r_7,r_8^*$ to
$\partial^* {\tilde R}_{j_1}^m(v)$ as in the above proof and in the proof of
Lemma \ref{lem:lambda-to-delta-tilde} %20 
to obtain 
\begin{align*}
&\square_{{\tilde R}_{j_1}^{m}(v)}E_+\cap \Delta_{{\tilde R}_{j_1}^{m}(v)}E_{-*}
\cap {\tilde \Delta }^{(0)}({\tilde R}_{j_1}^m(v), S(2^k)), \quad \mbox{ or }\\
&  \Delta_{{\tilde R}_{j_1}^{m}(v)}E_+\cap \square_{{\tilde R}_{j_1}^{m}(v)}E_{-*}
\cap {\tilde \Delta }^{(0)}({\tilde R}_{j_1}^m(v), S(2^k)).
\end{align*}
So, the remaining case is when the third event occurs.
Let $\omega $ be an element of 
$$
{\Delta }_{{\tilde R}_{j_1}^{m+1}(v)}\Omega (2^k)\cap 
\Delta_{{\tilde R}_{j_1}^{m+1}(v)}E_+\cap \Delta_{{\tilde R}_{j_1}^{m+1}(v)}E_{-*}.
$$
Then we can find a configuration $\zeta $ such that
$\omega':= \zeta_{{\tilde R}_{j_1}^{m+1}(v)}\omega \in \Omega (2^k)$.
Let $r_1',r_3'$ be $(+)$-paths and ${r_2^*}', {r_4^*}'$ be $(-*)$-paths
in $\omega'$ specified by the definition of $\Omega (2^k)$.
Then the choice of $\omega $ implies that both $r_1'\cup r_3'$ and 
${r_2^*}'\cup {r_4^*}'$ intersect ${\tilde R}_{j_1}^{m+1}(v)$.
We choose them such that the number $q(\omega')$ of elements in
$\{ r_1',{r_2^*}',r_3',{r_4^*}'\} $ which intersect  ${\tilde R}_{j_1}^{m+1}(v)$
is minimal among all the possible choice of $\{ r_1',{r_2^*}',r_3',{r_4^*}'\} $
in $\omega'$.
Since $\omega \in \Delta_{{\tilde R}_{j_1}^{m+1}(v)}E_+\cap 
 \Delta_{{\tilde R}_{j_1}^{m+1}(v)}E_{-*}$, $q(\omega')\in \{ 2,3,4\} $.

\medskip
\noindent
Case 1: \ $q(\omega')=2$.

In this case, only one of $\{ r_1', r_3' \} $ intersects 
${\tilde R}_{j_1}^{m+1}(v)$ and only 
one of $\{ {r_2^*}', {r_4^*}'\} $ intersects ${\tilde R}_{j_1}^{m+1}(v)$.
Without loss of generality we can assume that $r_1'$ and ${r_2^*}'$ intersect 
${\tilde R}_{j_1}^{m+1}(v)$.
Then $r_3'\cup {r_4^*}'$ does not intersect  ${\tilde R}_{j_1}^{m+1}(v)$, and they are 
determined by $\omega $ outside ${\tilde R}_{j_1}^{m+1}(v)$.

Let $r_1''$ be the part of $r_1'$ connecting $\partial \{ \mathbf{O}\} $ with 
$\partial {\tilde R}_{j_1}^{m+1}(v)$ and $r_5$ be the part of $r_1'$ connecting
$\partial {\tilde R}_{j_1}^{m+1}(v)$ with the left side of $S(2^k)$.
Also, let ${r_2^*}''$ be the part of ${r_2^*}'$ connecting $\partial^*\{ \mathbf{O}\} $
with $\partial^*{\tilde R}_{j_1}^{m+1}(v)$, and $r_6^*$ be the part of  ${r_2^*}'$
connecting $\partial^*{\tilde R}_{j_1}^{m+1}(v)$ with the top side of $S(2^k)$.
Then $r_1'',{r_2^*}'', r_5, r_6^*$ are determined by $\omega $ outside
${\tilde R}_{j_1}^{m+1}(v)$.

To obtain the event
$$   
\Delta_{{\tilde R}_{j_1}^{m}(v)}E_+\cap \square_{{\tilde R}_{j_1}^{m}(v)}E_{-*}
\cap {\tilde \Delta }^{(0)}( {\tilde R}_{j_1}^{m}(v), S(2^k))
$$
by extension argument, we need one more $(-*)$-path $t^*$ which connects 
$\partial^*{\tilde R}_{j_1}^{m+1}(v)$ with the bottom side of $S(2^k)$, forcing $r_1$
to intersect ${\tilde R}_{j_1}^{m+1}(v)$.
On ${\tilde \Lambda }^{(0)}({\tilde R}_{j_1}^{m+1}(v), S(2^k) )$, every
crossing $(+)$ and $(-*)$ cluster has an $(\eta , j_1+m+1)$-fence,
$r_1'',{r_2^*}'',r_3', {r_4^*}', r_5,r_6^*$ and $t^*$ can be 
extended to $\partial {\tilde R}_{j_1}^m(v)$.

Let $a_1, a_2, a_5, a_6$ and $a^*$ be endpoints of $r_1'', {r_2^*}'', r_5, 
r_6^*$ and $t^*$ on $\partial^* {\tilde R}_{j_1}^{m+1}(v)$, respectively.
Starting from $a_1$ they are located in the clockwise direction in order of
$a_1, a^*, a_5, a_6$ and $a_2$ on  $\partial^* {\tilde R}_{j_1}^{m+1}(v)$.
We choose corridors $U_1, U_2, U_3, U_4$ and $U_5$ in the following way.
\begin{enumerate}
\item $U_1$ connects $a_5$ with the left side of ${\tilde R}_{j_1}^m(v)$.
\item $U_2$ connects $a_6$ with the top side of ${\tilde R}_{j_1}^m(v)$.
\item $U_3$ connects $a_1$ with the right side of ${\tilde R}_{j_1}^m(v)$. 
\item $U_4$ connects $a^*$ with the bottom side of ${\tilde R}_{j_1}^m(v)$.
\item $U_5$ connects $a_2$ with $U_2$.
\item $U_1,U_2,U_3,U_4$ are disjoint and 
    $U_5\cap ( U_1\cup U_3\cup U_4 \cup \cup_i {\tilde {\mathcal B}}_i )
      =\emptyset $.
\item $U_j \cap \cup_i {\tilde {\mathcal B}}_i \subset {\tilde {\mathcal A}}_j$, for
$j=1,2,3,4$.
\end{enumerate} 
Here, ${\tilde {\mathcal A}}_i$'s and ${\tilde {\mathcal B}}_i$'s correspond
to ${\tilde R}_{j_1}^m(v)$.

If ${\tilde R}_{j_1}^m(v)$ touches the right side of $S(2^k)$, then we can
not find such a corridor $U_5$ since the above conditions require $U_5$
to go through right of  ${\tilde R}_{j_1}^m(v)$.
Otherwise it is possible to find such $U_i$'s.
So assume that  ${\tilde R}_{j_1}^m(v)$ touches the right side of $S(2^k)$.
Then we choose corridors $U_1, U_2, U_4$ and $U_5$ such that
\begin{enumerate}
\item $U_1$ connects $a_5$ with the left side of ${\tilde R}_{j_1}^m(v)$.
\item $U_2$ connects $a_6$ with the top side of ${\tilde R}_{j_1}^m(v)$.
\item $U_4$ connects $a_2$ with the bottom side of ${\tilde R}_{j_1}^m(v)$.
\item $U_5$ connects $a_1$ with $U_1$.
\item $U_1,U_2,U_4$ are disjoint and 
    $U_5\cap ( U_2\cup U_4 \cup \cup_i {\tilde {\mathcal B}}_i )
      =\emptyset $.
\item $U_j \cap \cup_i {\tilde {\mathcal B}}_i \subset {\tilde {\mathcal A}}_j$, for
$j=1,2,4$.
\end{enumerate}
Then the resulting configuration is in 
$$
\square_{{\tilde R}_{j_1}^m(v)}E_+\cap \Delta_{{\tilde R}_{j_1}^m(v)}E_{-*}
\cap {\tilde \Delta}^{(0)}({\tilde R}_{j_1}^m(v), S(2^k)).
$$
Note also that when ${\tilde R}_{j_1}^{m+1}(v) \ni (2^k,2^k)$, then we do not
have $r_6^*$. 
In this case, we simply connect $a_5$ with the left side of 
${\tilde R}_{j_1}^m(v)$ by a corridor $U_1$, $a_2$ with the bottom side of 
${\tilde R}_{j_1}^m(v)$ by a corridor $U_4$, and $a_1$ with $U_1$ by a
corridor $U_5$.

If in addition ${\tilde R}_{j_1}^m(v)$ does not touch the right side of 
$S(2^k)$, then we choose corridors $U_2$ and $U_3$ to connect 
$\partial {\tilde R}_{j_1}^m(v)$ right side of $S(2^k)$, respectively.
As before we can choose $U_1,\ldots, U_4$ to be disjoint, 
$U_j\cap \cup_i{\tilde {\mathcal B}}_i \subset {\tilde {\mathcal A}}_j$
for each $j$, and 
$$
U_5\cap ( U_2\cup U_3\cup U_4\cup \cup_i{\tilde {\mathcal B}}_i)=\emptyset 
$$

In the case where  ${\tilde R}_{j_1}^m(v)$ touches the right side of
$S(2^k)$, we do not need $U_3$, and if ${\tilde R}_{j_1}^m(v)\ni (2^k,2^k)$,
we do not need $U_2$, either.

\medskip
\noindent
Case 2: $q(\omega')=3$.

For simplicity, we consider the case where 
${\tilde R}_{j_1}^{m+1}(v)=S_{j_1}^{m+1}(v)$. 
The argument for other cases are easily modified as before.

Without loss of generality we can assume that 
$r_1'\cap {\tilde R}_{j_1}^{m+1}(v) = \emptyset $.
Then any of ${r_2^*}', r_3', {r_4^*}'$ intersects ${\tilde R}_{j_1}^{m+1}(v)$.
Let ${r_2^*}''$ and ${r_4^*}''$ are defined as parts of 
${r_2^*}'$ and ${r_4^*}'$ connecting $\partial^*\{ {\mathbf O}\} $ with
$\partial^*{\tilde R}_{j_1}^{m+1}(v)$, and let $r_3''$ be the part of 
$r_3'$ connecting $\partial \{ {\mathbf O}\} $ with 
$\partial {\tilde R}_{j_1}^{m+1}(v)$.
Also, we define $r_6^*$ as the part of ${r_2^*}'$ connecting 
$\partial^*{\tilde R}_{j_1}^{m+1}(v) $ with the top side of
$S(2^k)$, $r_7$ be the part of $r_3'$ connecting 
$\partial {\tilde R}_{j_1}^{m+1}(v)$ with the right side of $S(2^k)$,
and $r_8^*$ be the part of ${r_4^*}'$ connecting 
$\partial^*{\tilde R}_{j_1}^{m+1}(v) $ with the bottom side of $S(2^k)$.

Let $ a_2, a_3,  a_4,  a_6,  a_7,  a_8 $ be the endpoints on the boundary
$\partial^*{\tilde R}_{j_1}^{m+1}(v) $, of paths
$ {r_2^*}'', r_3'', {r_4^*}'', r_6^*, r_7, r_8^*$, respectively.  
Note that they are located on this boundary in the clockwise direction
in order of $a_4, a_3, a_2, a_6, a_7, a_8$.

To connect these point s with ${\tilde {\mathcal A}}_i$'s of 
${\tilde R}_{j_1}^m(v)$, we do in the following way.
We choose a corridor $U_1$ to connect $a_2$ with ${\tilde {\mathcal A}}_1$,
a corridor $U_2$ to connect ${\tilde {\mathcal A}}_2$ with $a_6$,
$U_3$ to connect ${\tilde {\mathcal A}}_3$ with $a_7$, and
$U_4$ to connect ${\tilde {\mathcal A}}_4$ with $a_8$.
Also, we choose a corridor $U_5$ to connect $a_3$ with $U_2$, and $U_6$
to connect $a_8$ with $U_4$.
Further, we choose $U_1,\ldots,U_4$ to be disjoint, and
$$
U_5 \cap ( U_1\cup U_3\cup U_4\cup \cup_i{\tilde {\mathcal B}}_i )=\emptyset,
\quad U_6 \cap ( U_1\cup U_2\cup U_3 \cup \cup_i {\tilde {\mathcal B}}_i )
= \emptyset
$$
as before.

With this extension argument, we obtain the event
$$
\Delta_{{\tilde R}_{j_1}^m(v)}E_+\cap \square_{{\tilde R}_{j_1}^m(v)}E_{-*}
\cap \Delta^{(0)}({\tilde R}_{j_1}^m(v), S(2^k)).
$$

\noindent
Case 3: \ $ q(\omega')=4$.

In this case, from  $\partial^*{\tilde R}_{j_1}^{m+1}(v) $ there are 
$(+)$-paths $r_5,r_7$ and $(-*)$-paths $r_6^*,r_8^*$ connecting
$\partial^*{\tilde R}_{j_1}^{m+1}(v) $ with the left, right, top and the
bottom sides of $S(2^k)$, respectively.
Let $a_5, a_6, a_7$ and $a_8$ be the endpoints of $r_5, r_6^*,r_7$ and
$r_8^*$ on the boundary of ${\tilde R}_{j_1}^{m+1}(v)$.
Further, there are two $(+)$-paths $r_1'', r_3''$ connecting 
$\partial \{ {\mathbf O}\} $ with $\partial {\tilde R}_{j_1}^{m+1}(v) $
and two $(-*)$-paths ${r_2^*}'', {r_4^*}''$ connecting
$\partial^* \{ {\mathbf O}\} $ with $\partial^*{\tilde R}_{j_1}^{m+1}(v) $.
Since $r_5,r_6^*, r_7,r_8^*$ separate the region 
$S(2^k)\setminus {\tilde R}_{j_1}^{m+1}(v)$ into four parts, 
these four paths $r_1'',{r_2^*}'',r_3'',{r_4^*}''$ are located in one of 
these four parts.
Without loss of generality we can assume that they are located in the region
between $r_5$ and $r_8^*$

Let $a_1,a_2,a_3,a_4$ be the endpoints of 
$\{ r_1'', {r_2^*}'', r_3'',{r_4^*}''\} $ on the boundary of 
$\tilde R_{j_1}^{m+1}(v)$, respectively.
Note that they are located on the boundary in a cyclic permutation of the order
$\{ a_4,a_3,a_2,a_1 \} $ when we go along the boundary from 
$a_8$ to $a_5$ in the clockwise direction.
There are two cases we have to consider. 
The first endpoint from $r_8^*$ in the clockwise direction belongs to a
$(-*)$-path, or to a $(+)$-path.

First, we consider the case where this first endpoint belongs to
a $(-*)$-path, ${r_2^*}''$ or ${r_4^*}''$.
Without loss of generality we can assume that $a_2$ is the first point
from $a_8$.

Then we choose the corridors in the following way to obtain a configuration in 
\begin{equation}\label{eq:8-9}
\Delta_{{\tilde R}_{j_1}^{m+1}(v)}E_+ \cap \square_{{\tilde R}_{j_1}^{m+1}(v)}E_{-*}
\cap {\tilde \Delta }^{(0)}({\tilde R}_{j_1}^m(v), S(2^k))
\end{equation}
We choose $U_1$ to connect $a_1$ with the left side of ${\tilde R}_{j_1}^{m+1}(v)$
ending in ${\tilde {\mathcal A}}_1$, 
$U_2$ to connect $a_6$ with the top side of ${\tilde R}_{j_1}^{m+1}(v)$
ending in ${\tilde {\mathcal A}}_2$,
$U_3$ to connect $a_7$ with the right side of ${\tilde R}_{j_1}^{m+1}(v)$
ending in ${\tilde {\mathcal A}}_3$, and
$U_4$ to connect $a_8$ with the bottom side of ${\tilde R}_{j_1}^{m+1}(v)$
ending in ${\tilde {\mathcal A}}_4$.
$U_1 ,\ldots,  U_4$ are disjoint.
Also, we choose $U_5$ to connect $a_3$ with $a_5$, outside 
$\cup_i {\tilde {\mathcal B}}_i$, $U_6$ to connect $a_2$ with $U_2$  outside
$\cup_i {\tilde {\mathcal B}}_i$, and $U_7$ to connect $a_4$ with $U_4$
outside $\cup_i {\tilde {\mathcal B}}_i$.
$U_5$ is disjoint from all other $U_i$'s, $U_6$ is disjoint from all other 
$U_i$'s except $U_2$, and $U_7$ disjoint from all other $U_i$'s except
$ U_4$.
The resulting configuration of the extension through these corridors 
is easily seen to be in (\ref{eq:8-9}).

Second, we assume that the first point of $\{ a_1 ,\ldots, a_4\} $
from $a_8$ in the clockwise direction belongs to a $(+)$-path.
Without loss of generality, we can assume that $a_1$ is the first point.
In this case, the endpoints are located in the clockwise order from $a_8$
as $a_1, a_4, a_3, a_2$.
Then we can easily see that there is either a $(+)$-path $s$ or a $(-*)$-path
$t^*$ connecting a boundary point of ${\tilde R}_{j_1}^{m+1}(v)$ between
$a_8$ and $a_1$, with another boundary point of ${\tilde R}_{j_1}^{m+1}(v)$
between $a_5$ and $a_2$.

This is because in the configuration $\omega'$, if $a_1$ is connected with $a_5$ by $r_1'$ and if $r_1'$ does not contain such a $(+)$-path, then ${r_4^*}'$ 
and ${r_2^*}'$ inevitably go outside ${\tilde R}_{j_1}^{m+1}(v)$, 
and go around the origin to enter ${\tilde R}_{j_1}^{m+1}(v)$ again.
As a consequence ${r_4^*}'$ contains such a $(-*)$-path $t^*$.
Also, if $a_1$ is connected with $a_7$, then $a_3$ is connected with $a_5$,
and then ${r_4^*}'$ inevitably contains such a $(-*)$-path $t^*$.

So, without loss of generality we can assume that there is a $(-*)$-path
$t^*$.
Let $b_1, b_2$ be the endpoints of $t^*$ such that $b_1$ is between $a_1$ and $a_8$, and $b_2$ is between $a_2$ and $a_5$.
In this case, we choose corridors $U_1 ,\ldots, U_8$ in the following way.
\begin{enumerate}
\item $U_1$ connects $a_1$ with the left side of ${\tilde R}_{j_1}^{m}(v)$,
ending in ${\tilde {\mathcal A}}_1$.
\item $U_2$ connects $a_6$ with the top side of ${\tilde R}_{j_1}^{m}(v)$,
ending in ${\tilde {\mathcal A}}_2$.
\item $U_3$ connects $a_7$ with  the right side of ${\tilde R}_{j_1}^{m}(v)$,
ending in ${\tilde {\mathcal A}}_3$.
\item $U_4$ connects $a_8$ with  the bottom side of ${\tilde R}_{j_1}^{m}(v)$,
ending in ${\tilde {\mathcal A}}_4$.
\item $U_5$ connects $a_3$ with $a_5$  outside 
$\cup_i {\tilde {\mathcal B}}_i$.
\item $U_6$ connects $a_4$ with $U_2$  outside 
$\cup_i {\tilde {\mathcal B}}_i$.
\item $U_7$ connects $a_2$ with $b_2$  outside 
$\cup_i {\tilde {\mathcal B}}_i$.
\item  $U_8$ connects $b_1$ with $U_4$  outside 
$\cup_i {\tilde {\mathcal B}}_i$.
\item $U_1, \ldots, U_7$ except $U_6$ are disjoint.
\item $U_6$ is disjoint from all other corridors except $U_2$.
\item $U_8$ is disjoint from all other corridors except $U_4$.
\end{enumerate}
The procedure of choosing corridors is the same as before and the resulting
configuration by extending $(+)$-paths and $(-*)$-paths through these
corridors belongs to the event (\ref{eq:8-9}).
\end{proof}

\begin{Lemma}\label{gamma-to-delta:four arm}
Let $2^{j_1+5}<2^k<\min\{ L(h, \varepsilon_0), \frac{N}{2}\} $.
There exists a constant $K^{(0)}(\eta )$ depending only on 
$j_1, \varepsilon_0$, and $\eta $ such that for $v\in S(2^k)$ and
$0\leq m \leq m_1^*=m_{j_1}(v)$, we have 
\begin{align}
&\mu_t^N\bigl( {\tilde \Gamma }^{(0)}({\tilde R}_{j_1}^m(v), S(2^k))\cap
    \Delta_{{\tilde R}_{j_1}^m(v)}\Omega (2^k) \bigr) \label{eq:8-10} \\
\leq & K^{(0)}(\eta ) \times \left\{
   \mu_t^N\bigl(  \Delta_{{\tilde R}_{j_1}^m(v)}E_+ \cap
              \square_{{\tilde R}_{j_1}^m(v)}E_{-*}\cap
       {\tilde \Delta }^{(0)}({\tilde R}_{j_1}^m(v), S(2^k)) \bigr) \right. 
\nonumber \\
& \left. + \mu_t^N\bigl(  \square_{{\tilde R}_{j_1}^m(v)}E_+ \cap
              \Delta_{{\tilde R}_{j_1}^m(v)}E_{-*}\cap
       {\tilde \Delta }^{(0)}({\tilde R}_{j_1}^m(v), S(2^k)) \bigr) \right\} 
\nonumber
\end{align}
\end{Lemma}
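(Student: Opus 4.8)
The plan is to follow the same scheme as in the proofs of Lemma~\ref{comparison of gamma and delta} and Lemma~\ref{lem:comparison of gamma and delta-tilde}, combining the iteration over scales with the fence estimate of Lemma~\ref{lem:fence} and the connection estimates of Lemmas~\ref{lem:delta-to-delta:four arm} and \ref{lem:lambda-to delta:four arm}. Concretely, I would start from the obvious monotonicity
$$
\mu_t^N\bigl( {\tilde \Gamma }^{(0)}({\tilde R}_{j_1}^m(v), S(2^k))\cap
    \Delta_{{\tilde R}_{j_1}^m(v)}\Omega (2^k) \bigr)
\leq \mu_t^N\bigl( {\tilde \Gamma }^{(0)}({\tilde R}_{j_1}^{m-1}(v), S(2^k))\cap
    \Delta_{{\tilde R}_{j_1}^{m-1}(v)}\Omega (2^k) \bigr),
$$
and decompose the right-hand side according to whether the corresponding $\Lambda^{(0)}$-event occurs. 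If ${\tilde \Gamma }^{(0)}$ holds and ${\tilde \Lambda }^{(0)}$ fails at scale $m-1$, then (as in Lemma~\ref{comparison of gamma and delta}) the event ${\tilde \Gamma }^{(0)}$ at scale $m$ still occurs together with the existence of a crossing cluster in one of the ${\tilde {\mathcal B}}_i$'s of ${\tilde R}_{j_1}^{m-1}(v)$ without an $(\eta, j_1+m-1)$-fence; by the mixing property and the bound on $\ell^\infty$-distance $3\cdot 2^{j_1+m-2}$, together with the fence estimate $\leq\delta$ from Lemma~\ref{lem:fence} applied in the outwards setting and assumption (\ref{eq:cond-for-j1-2}), this contributes a factor $9\delta$ times the scale-$m$ probability. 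Note the event ${\tilde \Gamma }^{(0)}\cap\Delta_{\cdot}\Omega(2^k)$ is increasing in $m$ (larger ${\tilde R}$ makes the required separation easier), so the iteration is legitimate.

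Iterating this down to $m=m_{j_1}(v)$ yields
$$
\mu_t^N\bigl( {\tilde \Gamma }^{(0)}({\tilde R}_{j_1}^m(v), S(2^k))\cap
    \Delta_{{\tilde R}_{j_1}^m(v)}\Omega (2^k) \bigr)
\leq \sum_{\ell=0}^{m_{j_1}(v)-m-1}(9\delta)^\ell
\mu_t^N\bigl( {\tilde \Lambda }^{(0)}({\tilde R}_{j_1}^{m+\ell}(v), S(2^k))\cap
    \Delta_{{\tilde R}_{j_1}^{m+\ell}(v)}\Omega (2^k) \bigr)
$$
plus a top-level remainder $(9\delta)^{m_{j_1}(v)-m}$ times the probability at scale $m_{j_1}(v)$. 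For the $\Lambda^{(0)}$-terms I would invoke Lemma~\ref{lem:lambda-to delta:four arm}, which bounds
$\mu_t^N( \Delta_{{\tilde R}_{j_1}^{m+\ell}(v)}\Omega (2^k)\cap {\tilde \Lambda }^{(0)}({\tilde R}_{j_1}^{m+\ell}(v),S(2^k)))$
(here using $\Delta_{\cdot}\Omega(2^k)\supset {\tilde \Gamma}^{(0)}\cap\Delta_{\cdot}\Omega(2^k)$ on the left, since ${\tilde \Lambda}^{(0)}\subset{\tilde\Gamma}^{(0)}$) by $\tilde K^{(0)}(\eta)$ times the sum of the two $\Delta^{(0)}$-probabilities at scale $m+\ell-1$. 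Then Lemma~\ref{lem:delta-to-delta:four arm}, inequalities (\ref{eq:delta-to-delta:four arm-1}) and (\ref{eq:delta-to-delta:four arm-2}), lets me compare these back to scale $m$ at the cost of $C_1^{\ell}$, and the top-level remainder is handled by (\ref{eq:delta-to-delta:four arm-3}) together with the fact that ${\tilde \Gamma}^{(0)}$ at the maximal scale is dealt with directly (a corridor construction as in $1^\circ)$ of Lemma~\ref{lem:comparison of gamma0 and delta0-tilde}, connecting the arms to the boundary of $S(2^k)$ and to circuits in suitable annuli). Since $9\delta C_1<1$ by the choice (\ref{eq:delta-1}) of $\delta$, the geometric series $\sum_\ell (9\delta C_1)^\ell$ converges, and collecting the constants gives (\ref{eq:8-10}) with
$K^{(0)}(\eta)$ depending only on $j_1,\varepsilon_0,\eta$.

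The main obstacle I anticipate is the bookkeeping at the maximal scale $m=m_{j_1}(v)$: one must show that ${\tilde \Gamma}^{(0)}$ together with $\Delta_{\cdot}\Omega(2^k)$ at this scale can be upgraded, by a bounded-probability corridor/circuit construction, into one of the two ${\tilde\Delta}^{(0)}$-type events at the same scale. This requires being careful about the several boundary cases (${\tilde R}_{j_1}^{m}(v)$ touching a side of $S(2^k)$, or containing the corner $(2^k,2^k)$), exactly as in the case analysis ($q(\omega')\in\{2,3,4\}$) already carried out in Lemma~\ref{lem:lambda-to delta:four arm}; the point is that the extension argument producing the missing arm (the one that must re-enter ${\tilde R}_{j_1}^m(v)$) goes through without the BK inequality because the Connection lemma (Lemma~\ref{connection lemma}) provides the needed conditional lower bounds step by step. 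Everything else is a routine repetition of the extension-argument machinery of Section~5, and the restriction $2^{j_1+5}<2^k<\min\{L(h,\varepsilon_0),N/2\}$ is precisely what is needed so that Lemmas~\ref{lem:fence}, \ref{connection lemma}, \ref{lem:delta-to-delta:four arm} and \ref{lem:lambda-to delta:four arm} all apply.
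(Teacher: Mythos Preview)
Your plan matches the paper's: iterate outward from scale $m$ to $m_1^*$, feed the $\tilde\Lambda^{(0)}$-terms into Lemma~\ref{lem:lambda-to delta:four arm}, push back to scale $m$ via Lemma~\ref{lem:delta-to-delta:four arm}, and sum the geometric series in $(9\delta C_1)^\ell$. One correction: your opening ``monotonicity'' is written in the wrong direction --- both $\tilde\Gamma^{(0)}(\tilde R_{j_1}^m(v),S(2^k))$ and $\Delta_{\tilde R_{j_1}^m(v)}\Omega(2^k)$ are \emph{increasing} in $m$, so the recursion must go $m\to m+1$, with the decomposition into $\tilde\Lambda^{(0)}$ versus a fence failure carried out at scale $m$ (yielding scale $m+1$ in the complement), exactly as in the proof of Lemma~\ref{lem:comparison of gamma0 and delta0-tilde}. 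Your displayed sum is nonetheless the right one, so this looks like a write-up slip rather than a conceptual error.

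The real gap is at the top level $m=m_1^*$. You cannot import the argument of Lemma~\ref{lem:comparison of gamma0 and delta0-tilde}~$1^\circ)$ directly: that proof uses FKG on \emph{increasing} one-arm events to insert $(+)$-circuits and corridors, whereas here the event mixes $(+)$- and $(-*)$-arms and is not monotone. Nor does~(\ref{eq:delta-to-delta:four arm-3}) help --- it is an upper bound on the target, not a lower bound. The paper's device is the inclusion
\[
\tilde\Gamma^{(0)}(S_{j_1}^{m_1^*}(v),S(2^k))\cap\Delta_{S_{j_1}^{m_1^*}(v)}\Omega(2^k)
\;\subset\;\Gamma(\mathbf{O},S(2^{-3}|v|_\infty))\cap\tilde\Gamma(S(2|v|_\infty),S(2^k)),
\]
which holds because at the maximal scale the box $\tilde R_{j_1}^{m_1^*}(v)$ sits at distance $\gtrsim|v|_\infty$ from $\mathbf{O}$, so the four arms of $\Omega(2^k)$ must already appear in a neighborhood of the origin. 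One then \emph{decouples} the two factors by the mixing property (not FKG), converts each separately to a $\Delta$-type event via Lemmas~\ref{comparison of gamma and delta} and~\ref{lem:comparison of gamma and delta-tilde}, recouples again by mixing, and only then uses corridors and the Connection lemma to route the arms through the $\tilde{\mathcal A}_i$'s of $\tilde R_{j_1}^{m_1^*}(v)$, landing in $\Delta_{\tilde R_{j_1}^{m_1^*}(v)}E_+\cap\square_{\tilde R_{j_1}^{m_1^*}(v)}E_{-*}\cap\tilde\Delta^{(0)}$. The mixing errors are controlled by condition~(\ref{eq:8-13}) defining $j_3$; the cases $|v|_\infty>2^{k-1}$ (drop the outer annulus) and $|v|_\infty\leq 2^{j_3+3}$ (finite energy) require brief separate treatment.
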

\begin{proof}
First, we prove (\ref{eq:8-10}) in the case where $m=m_1^*$. 
When $|v|_\infty\leq 2^{k-1}$, we can see that ${\tilde R}_{j_1}^{m_1^*}(v)
= S_{j_1}^{m_1^*}(v)$.
%% \begin{quote}
%% ---------------------------------------------
%% 
%% \noindent \framebox{\bf INTERMISSION}
Indeed, by definition, we have 
$$
2^{j_1+m_1^*}\geq 2^{-3}|v|_\infty \geq 2^{j_1+m_1^*-1}-2^{j_1}.
$$
Therefore if $|v|_\infty \leq 2^k$,
\begin{align*}
d_\infty (0,S_{j_1}^{m_1^*}(v)) &\leq  |v|_\infty + 2^{j_1+m_1^*}+2^{j_1} \\
&\leq  (1+ 2^{-2})|v|_\infty +2^{j_1+1}+2^{j_1}\\
&\leq  2^{k-1}(1+2^{-1}+ 2^{-3}+2^{-4})= \frac{7}{8}\cdot 2^k
\end{align*}
Also, from the above inequality
$$
2^{j_1+m_1^*}< 2^{-1}|v|_\infty \leq 2^{k-2}.
$$
This implies that ${\tilde {\mathcal B}}_i$'s for $S_{j_1}^{m_1^*}(v)$ is in
$$
S \left(\frac{7}{8}\cdot 2^k+ 2^{k-4} \right)\subset S(2^k).
$$
Hence,
${\tilde R}_{j_1}^{m_1^*}(v)
= S_{j_1}^{m_1^*}(v)$.

%% ---------------------------------------------------
%% \end{quote}
Further, if $|v|_\infty \geq 2^{j_1+5}$, then we have
\begin{equation}
d_\infty (0, S_{j_1}^{m_1^*}(v))\geq \frac{5}{8}|v|_\infty ,
\end{equation}
%% \begin{quote}
%% ---------------------------------------------
%% 
%% \noindent \framebox{\bf INTERMISSION}
%%
%%This is because
because 
\begin{align*}
d_\infty (0, S_{j_1}^{m_1^*}(v))&\geq |v|_\infty -2^{j_1+m_1^*}-2^{j_1}\\
     &\geq |v|_\infty -2^{-2}|v|_\infty -2^{j_1+1}-2^{j_1}   \\
&\geq \left(\frac{3}{4} -2^{-4}-2^{-5}\right)|v|_\infty =\frac{5}{8}|v|_\infty.
\end{align*}

%% ---------------------------------------------------
%% \end{quote}

In this case, we argue in the following way.
Since
$$
\Delta_{S_{j_1}^{m_1^*}(v)}\Omega (2^k) \cap {\tilde \Gamma }^{(0)}
(S_{j_1}^{m_1^*}(v), S(2^k) )
\subset \Gamma (0, S(2^{-3}|v|_\infty ))\cap {\tilde \Gamma }
( S(2|v|_\infty )S(2^k)),
$$
we first estimate the probability of the right hand side event.
By the mixing property,
\begin{align}
& \mu_t^N\biggl( 
 \Gamma (0, S(2^{-3}|v|_\infty ))\cap {\tilde \Gamma }
( S(2|v|_\infty )S(2^k)) \biggr) \label{eq:8-11} \\
\leq &\biggl\{ \mu_t^N\bigl(  \Gamma (0, S(2^{-3}|v|_\infty ))\cap
{\tilde \Gamma } ( S(2|v|_\infty ), S(2^k))
\bigr) + C(2^{-3}|v|_\infty )^2 \frac{|v|_\infty }{2}
e^{-\alpha |v|_\infty /2} \biggr\} \nonumber \\
&\times \mu_t^N\bigl( {\tilde \Gamma }(S(2|v|_\infty ), S(2^k)\bigr) .
\nonumber 
\end{align}
From Lemma \ref{lambda-to-delta},
we have
$$
\mu_t^N\bigl( \Gamma (0, S(2^{-3}|v|_\infty )) \bigr) 
\geq \mu_t^N \bigl( \Delta (0,2^{j_1+m_1^*})\bigr)\geq C_2 C_1^{-m_1^*}.
$$
If 
\begin{equation}\label{eq:8-12}
C{\left(\frac{|v|_\infty }{2} \right) }^3 e^{-\alpha |v|_\infty /2}
\leq C_2C_1^{-m_1^*},
\end{equation}
then this implies that
\begin{align*}
& \mu_t^N\bigl(  \Gamma (0, S(2^{-3}|v|_\infty )) \bigr) -
 C(2^{-3}|v|_\infty )^2 \frac{|v|_\infty }{2}
e^{-\alpha |v|_\infty /2}\\
\geq & (1-2^{-4})\mu_t^N\bigl(  \Gamma (0, S( 2^{-3}|v|_\infty )) \bigr) .
\end{align*}
From the definition of $m_1^*$, 
$
\frac{|v|_\infty }{2}\geq 2^{m_1^*+j_1}.
$
So if we put $j_3\geq j_1$ as the smallest number satisfying
\begin{equation}\label{eq:8-13}
Cn^3e^{-\alpha n}< C_2 C_1^{-\log_2 n } \qquad \mbox{for every } n \geq 2^{j_3},
\end{equation}
then
$$
C{\biggl( \frac{|v|_\infty }{2} \biggr) }^3 e^{-\alpha (|v|_\infty /2)}
\leq C_2C_1^{-\log_2 |v|_\infty /2 }\leq C_2C_1^{- m_1^*}
$$
if 
\begin{equation}\label{eq:8-14}
m_1^*+j_1 \geq j_3.
\end{equation} 
Note that
$j_3$ depends only on $C_1$ and $C_2$, which depend only on $\varepsilon_0$
and $j_1$.
So, assume that (\ref{eq:8-14}) is true. 
Then we have from (\ref{eq:8-11}) 
\begin{align}\label{eq:8-15}
& \mu_t^N\biggl( 
 \Gamma (0, S(2^{-3}|v|_\infty ))\cap {\tilde \Gamma }
( S(2|v|_\infty ), S(2^k)) \biggr) \\
\leq & (1-2^{-4})\mu_t^N\bigl( \Gamma (0, S(2^{-3}|v|_\infty )) \bigr) 
\mu_t^N\bigl( {\tilde \Gamma }
( S(2|v|_\infty ), S(2^k)) \bigr) . \nonumber
\end{align}
By Lemmas \ref{comparison of gamma and delta} and \ref{lem:comparison of gamma and delta-tilde}
the right hand side of (\ref{eq:8-15}) is bounded from above by
$$
K{\tilde K}(1-2^{-4})^{-1}\mu_t^N\bigl( \Delta (0, S(2^{-3}|v|_\infty ))\bigr)
\mu_t^N\bigl( {\tilde \Delta }(S(2|v|_\infty ), S(2^k))\bigr) .
$$
Again by the mixing property and (\ref{eq:8-13}), this is bounded from above by $$
K{\tilde K}(1-2^{-4})^{-2}\mu_t^N\bigl( \Delta (0, S(2^{-3}|v|_\infty ))\cap
{\tilde \Delta }(S(2|v|_\infty ), S(2^k))\bigr) .
$$
if (\ref{eq:8-14}) is satisfied.
But it is easy to find corridors in the event
$$
\Delta (0, S(2^{-3}|v|_\infty ))\cap
{\tilde \Delta }(S(2|v|_\infty ), S(2^k)),
$$
to connect ${\mathcal A}_i$'s of $S(2^{-3}|v|_\infty ) $ with 
${\tilde {\mathcal A}}_i$'s  of $S_{j_1}^{m_1^*}(v)$ or 
${\tilde {\mathcal A}}_i$'s  of $S(2|v|_\infty )$ to obtain 
$$
\Delta_{{\tilde R}_{j_1}^{m_1^*}(v)}E_+\cap \square_{S_{j_1}^{m_1^*}(v)}E_{-*}\cap {\tilde \Delta }^{(0)}(S_{j_1}^{m_1^*}(v), S(2^k)).
$$
The width of these corridors is $2^{-5}|v|_\infty $, and the length of them are
not larger than $8|v|_\infty $, and the ratio is bounded by an absolute
constant.
Therefore by the connection lemma, there exists a constant $C_1^\# >0$ depending only on $\varepsilon_0$ and $j_1$, such that
\begin{align*}
&\mu_t^N\bigl( \Gamma (0, S(2^{-3}|v|_\infty ))\cap {\tilde \Gamma }(
 S(2|v|_\infty ), S(2^k) )\bigr) \\
\leq & C_1^\# K {\tilde K}(1-2^{-4})^{-2}
\mu_t^N\bigl( \Delta_{S_{j_1}^{m_1^*}(v)}E_+\cap 
              {\tilde \Delta }^{(0)}( S_{j_1}^{m_1^*}(v), S(2^k)) \bigr)
\end{align*}
under the condition that   (\ref{eq:8-14}) is satisfied.
The outward extension argument then proves the desired inequality 
in this case.
%% \begin{quote}
%% ---------------------------------------------
%% 
%% \noindent \framebox{\bf INTERMISSION}
%% 
%% For $0\leq m \leq m_1^*-1$, we have as in Lemmas \ref{comparison of gamma and delta} and \ref{lem:comparison of gamma and delta-tilde},
%%
Indeed, for  $0\leq m \leq m_1^*-1$, we have as in Lemmas \ref{comparison of gamma and delta} and \ref{lem:comparison of gamma and delta-tilde},
\begin{align*}
&\mu_t^N\bigl( {\tilde \Gamma }^{(0)}(S_{j_1}^m(v), S(2^k)) \cap
  \Delta_{S_{j_1}^m(v)}\Omega ( 2^k)\bigr) \\
\leq & \mu_t^N\bigl( {\tilde \Lambda }^{(0)}(S_{j_1}^m(v), S(2^k)) \bigr)\\
& +\delta  \mu_t^N\bigl( {\tilde \Gamma }^{(0)}(S_{j_1}^{m+1}(v), S(2^k)) \cap
  \Delta_{S_{j_1}^{m+1}(v)}\Omega ( 2^k)\bigr)\\
\leq & \sum_{\ell = 0}^{m_1^*-m-1} \delta^\ell 
\mu_t^N\bigl( {\tilde \Gamma }^{(0)}(S_{j_1}^{m+\ell }(v), S(2^k)) \cap
  \Delta_{S_{j_1}^{m+\ell }(v)}\Omega ( 2^k)\bigr)\\
&+ \delta^{m_1^*-m}\mu_t^N\bigl(
 {\tilde \Gamma }^{(0)}(S_{j_1}^{m_1^*}(v), S(2^k)) \cap
  \Delta_{S_{j_1}^{m_1^* }(v)}\Omega ( 2^k)\bigr) .
\end{align*}
By Lemma \ref{lem:lambda-to delta:four arm}, the first summation is bounded by
\begin{align*}
&\sum_{\ell =0}^{m_1^*-m-1} \delta^\ell \left\{ 
\mu_t^N\bigl( \Delta_{S_{j_1}^{m+\ell +1}(v)}E_+\cap 
  \square_{S_{j_1}^{m+\ell +1}(v)}E_{-*} \cap
  {\tilde \Delta }^{(0)}(S_{j_1}^{m+\ell +1}(v), S(2^k)) \bigr) \right. \\
&+ \left. 
\mu_t^N\bigl(\square_{S_{j_1}^{m+\ell +1}(v)}E_+\cap 
          \Delta_{S_{j_1}^{m+\ell +1}(v)}E_{-*}\cap 
            {\tilde \Delta }^{(0)}(S_{j_1}^{m+\ell +1}(v), S(2^k)) \bigr)
  \right\} .
\end{align*}
Also, the last term is bounded from above by
$$
\delta^{m_1^*-m}\mu_t^N\bigl(
\Delta_{S_{j_1}^{m_1^*}(v)}E_+\cap \square_{S_{j_1}^{m_1^*}(v)}E_{-*} \cap
  {\tilde \Delta }^{(0)}(S_{j_1}^{m_1^*}(v), S(2^k)) \bigr)
$$
under the condition that $v$ satisfies (\ref{eq:8-14}).
Then by Lemma \ref{lem:delta-to-delta:four arm}, we obtain that there
is  some constant $C_2^\#(\eta ) >0$ depending only on 
$\varepsilon_0, j_1, \eta $,
\begin{align*}
&\mu_t^N\bigl( {\tilde \Gamma }^{(0)}(
(S_{j_1}^m(v), S(2^k)) \cap
  \Delta_{S_{j_1}^m(v)}\Omega ( 2^k)\bigr) \\
\leq & \sum_{\ell =0}^{m_1^*-m-1} (\delta C_1)^{\ell +1}\left\{ 
 \mu_t^N\bigl( \Delta_{S_{j_1}^m(v)}E_+ \cap \square_{S_{j_1}^m(v)}E_{-*}
  \cap {\tilde \Delta }^{(0)}(S_{j_1}^m(v), S(2^k))\bigr)\right.\\
&+\left. \mu_t^N\bigl( \square_{S_{j_1}^m(v)}E_+ \cap 
            \Delta_{S_{j_1}^m(v)}E_{-*}
 \cap {\tilde \Delta }^{(0)}(S_{j_1}^m(v), S(2^k))\bigr) \right\} \\
&+ (\delta C_1)^{m_1^*-m} \mu_t^N\bigl(
  \Delta_{S_{j_1}^m(v)}E_+ \cap \square_{S_{j_1}^m(v)}E_{-*}
  \cap {\tilde \Delta }^{(0)}(S_{j_1}^m(v), S(2^k))\bigr) \\
=& C_2^\# (\eta )  
\left[ \mu_t^N\bigl( 
   \Delta_{S_{j_1}^m(v)}E_+ \cap \square_{S_{j_1}^m(v)}E_{-*}
  \cap {\tilde \Delta }^{(0)}(S_{j_1}^m(v), S(2^k))\bigr) \right.\\
& \left. + \mu_t^N\bigl(  \square_{S_{j_1}^m(v)}E_+ \cap 
            \Delta_{S_{j_1}^m(v)}E_{-*}
 \cap {\tilde \Delta }^{(0)}(S_{j_1}^m(v), S(2^k))\bigr) \right] .
\end{align*}

%% ---------------------------------------------------
%% \end{quote}

If  (\ref{eq:8-14}) does not hold for $v$, then $|v|_\infty \leq 2^{j_1+m_1^*+3}\leq 2^{j_3+3}$.
So, putting $j_4= \max\{ j_3+3, j_1+5 \} $ we use the finite energy property
to obtain the desired inequality when $|v|_\infty \leq 2^{j_4}$.

It still remains to consider the case where $v\in S(2^k)\setminus S(2^{k-1})$.
In this case we do not have to use $S(2|v|_\infty )$, and we argue in the 
following way.
We start with the trivial inclusion
$$
\Delta_{{\tilde R}_{j_1}(v)}\Omega (2^k) \cap 
{\tilde \Gamma }^{(0)}( {\tilde R}_{j_1}(v), S(2^k)) \subset
\Gamma (0, 2^{-3}|v|_\infty ).
$$
Since we can see that 
$$
d_\infty ({\tilde R}_{j_1}^{m_1^*}(v), S(2^{-3}|v|_\infty )) 
 \geq 2^{-2}|v|_\infty ,
$$
and $|v_\infty |\geq 2^{k-1}$, we can arrange corridors to obtain
$$
\Delta_{{\tilde R}_{j_1}^m(v)}E_+ \cap \square_{{\tilde R}_{j_1}^m(v)}E_{-*}
  \cap {\tilde \Delta }^{(0)}({\tilde R}_{j_1}^m(v), S(2^k))
$$
from the event $\Delta (0, S(2^{-3}|v|_\infty ))$.
By Lemma \ref{comparison of gamma and delta}
and the Connection lemma, we obtain the desired inequality for $m=m_1^*$
in this case, too. The rest is usual outwards extension argument.
\end{proof}
\begin{Theorem}\label{Russo-four-arm}
Assume that $2^{j_3}+5 \leq 2^k< \min\{ L(h,\varepsilon_0),\frac{N}{2}\} $,
where $j_3\geq j_1$ is given by the smallest number satisfying (\ref{eq:8-13}).
Then there exist positive constants $C_{22}, C_{23}$, depending only on
$j_1, \varepsilon_0$ and $\eta $, such that
\begin{align}
&\bigg\vert \frac{d}{dt}\mu_t^N\bigl( 
   \Omega (2^k) \bigr) \bigg\vert \\
\leq  & \frac{|h-h_c|}{\mathfrak{K}T}\left[
C_{22}\mu_t^N\bigl( \Omega (2^k) \bigr) +
 C_{23} \sum_{v \in S(2^K)}
\mu_t^N\bigl( \Delta_v\Omega (2^k) \bigr) \right] \nonumber
\end{align}
\end{Theorem}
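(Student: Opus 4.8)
The plan is to derive this estimate from the general Ising--Russo formula, Theorem~\ref{russo formula-gen}, applied to $A=\Omega(2^k)=\Delta_0(A^+(2^k,2^k))$ with $j_0=j_1$; under the stated lower bound on $2^k$ we may indeed take $j_0=j_1$, so that $1\le j_0<k-3$ and all the extension machinery of Sections~5--6 and of subsection~\ref{ss:Russo4arm} is available. Since $\Omega(2^k)\in\mathcal{F}_{S(2^k)}$ we have $\text{Supp}(\Omega(2^k))\subset S(2^k)$, and trivially $\Omega(2^k)\cap\Delta_v\Omega(2^k)\subset\Delta_v\Omega(2^k)$; hence once Condition~(S) is verified, \eqref{eq:russo formula-gen} reads
\[
\left|\frac{d}{dt}\mu_t^N(\Omega(2^k))\right|\le\frac{|h-h_c|}{\mathfrak{K}T}\left[C_{16}\,\mu_t^N(\Omega(2^k))+C_{15}\sum_{v\in S(2^k)}\mu_t^N(\Delta_v\Omega(2^k))\right],
\]
which is the assertion with $C_{22}=C_{16}$ and $C_{23}=C_{15}$. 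So the whole proof reduces to checking Condition~(S) for $\Omega(2^k)$ with $j_0=j_1$: for a suitable $b>0$ and $a=\alpha/4\in(0,\alpha/2)$,
\[
\mu_t^N\bigl(\square_{\tilde R_{j_1}^m(v)}\Omega(2^k)\setminus\square_{\tilde R_{j_1}^{m+1}(v)}\Omega(2^k)\bigr)\le b\,e^{a2^{j_1+m}}\,\mu_t^N\bigl(\Omega(2^k)\cap\Delta_v\Omega(2^k)\bigr)
\]
for every $v\in\text{Supp}(\Omega(2^k))$ and $2\le m\le m_{j_1}(v)-1$.

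\emph{Geometric step.} Writing $\Omega(2^k)=E_+\cap E_{-*}$ as in subsection~\ref{ss:Russo4arm}, with $E_+$ increasing and $E_{-*}$ decreasing, one first checks $\square_S\Omega(2^k)=\square_SE_+\cap\square_SE_{-*}$ for every box $S$. Consequently, on $\square_{\tilde R_{j_1}^m(v)}\Omega(2^k)\setminus\square_{\tilde R_{j_1}^{m+1}(v)}\Omega(2^k)$ the $(+)$-arms of $E_+$ and the $(-*)$-arms of $E_{-*}$ are all present and all avoid the inner box $\tilde R_{j_1}^m(v)$, while at least one of the two pairs cannot be rerouted so as to avoid $\tilde R_{j_1}^{m+1}(v)$; moreover $\omega\in\Omega(2^k)$ but $\omega\notin\square_{\tilde R_{j_1}^{m+1}(v)}\Omega(2^k)$, so $\omega\in\Delta_{\tilde R_{j_1}^{m+1}(v)}\Omega(2^k)$. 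Cutting the arms at $\partial\tilde R_{j_1}^{m+1}(v)$, exactly as for the crossing event in Corollary~\ref{russo crossing}, produces $(+)$-paths to the left and right sides and $(-*)$-paths to the top and bottom sides of $S(2^k)$ emanating from $\partial\tilde R_{j_1}^{m+1}(v)$, with at most one passing through $\mathbf{O}$; indeed the constraint $2\le m\le m_{j_1}(v)-1$ forces $\tilde R_{j_1}^{m+1}(v)$ to have diameter at most a fixed fraction of $|v|_\infty$ and hence to miss $\mathbf{O}$, so the origin --- the common endpoint of the $E_+$/$E_{-*}$ arms --- can lie on at most one outward portion (the ``at most one arm through $\mathbf{O}$'' clause being precisely the subtlety handled by the case analysis $q(\omega')\in\{2,3,4\}$ in Lemmas~\ref{lem:delta-to-delta:four arm}--\ref{gamma-to-delta:four arm}, including the boundary cases in which $\tilde R$ is a $T$- or $\tilde{T}$-box and $r_7$ or $r_6^*$ is absent). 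This gives
\[
\square_{\tilde R_{j_1}^m(v)}\Omega(2^k)\setminus\square_{\tilde R_{j_1}^{m+1}(v)}\Omega(2^k)\subset\tilde\Gamma^{(0)}\bigl(\tilde R_{j_1}^{m+1}(v),S(2^k)\bigr)\cap\Delta_{\tilde R_{j_1}^{m+1}(v)}\Omega(2^k).
\]

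\emph{Chaining the extension lemmas.} Since $m+1\le m_{j_1}(v)=m_1^*$, Lemma~\ref{gamma-to-delta:four arm} bounds the probability of the right-hand event by $K^{(0)}(\eta)$ times the sum of the two ``split'' scale-$(m+1)$ events $\Delta_{\tilde R_{j_1}^{m+1}(v)}E_+\cap\square_{\tilde R_{j_1}^{m+1}(v)}E_{-*}\cap\tilde\Delta^{(0)}(\tilde R_{j_1}^{m+1}(v),S(2^k))$ and $\square_{\tilde R_{j_1}^{m+1}(v)}E_+\cap\Delta_{\tilde R_{j_1}^{m+1}(v)}E_{-*}\cap\tilde\Delta^{(0)}(\tilde R_{j_1}^{m+1}(v),S(2^k))$. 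Iterating \eqref{eq:delta-to-delta:four arm-1} and \eqref{eq:delta-to-delta:four arm-2} of Lemma~\ref{lem:delta-to-delta:four arm} from scale $m+1$ down to scale $1$ multiplies each by at most $C_1^m$. Finally, on either scale-$1$ split event four arms reach $\partial\tilde R_{j_1}^1(v)$ from outside in the correct angular positions, while inside the \emph{bounded} box $\tilde R_{j_1}^1(v)$ (of side $2^{j_1+1}$) the configuration can be altered at a cost depending only on $j_1$, by the finite energy property: first make $\tilde R_{j_1}^1(v)$ all $+$ (which keeps $E_{-*}$, by $\square_{\tilde R_{j_1}^1(v)}E_{-*}$, and produces $E_+$), then install a local $(+)/(-*)$ four-arm pattern at $v$ joining the four incoming arms so that $v$ is their unique junction; the resulting configuration lies in $\Omega(2^k)\cap\Delta_v\Omega(2^k)$. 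Combining the three steps,
\[
\mu_t^N\bigl(\square_{\tilde R_{j_1}^m(v)}\Omega(2^k)\setminus\square_{\tilde R_{j_1}^{m+1}(v)}\Omega(2^k)\bigr)\le C\,C_1^m\,\mu_t^N\bigl(\Omega(2^k)\cap\Delta_v\Omega(2^k)\bigr)
\]
with $C=C(j_1,\varepsilon_0,\eta)$, and since $C_1^m\le b\,e^{\alpha2^{j_1+m}/4}$ for $b$ large enough, Condition~(S) holds with $j_0=j_1$, $a=\alpha/4$; Theorem~\ref{russo formula-gen} then completes the proof.

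The main obstacle is the finite-energy step above: one must verify that for \emph{every} relative location of $v$ in $S(2^k)$ --- in particular in the boundary cases where $\tilde R_{j_1}^1(v)$ is a $T$- or $\tilde{T}$-box and one or two incoming arms are absent --- the four arms of a scale-$1$ split event can always be joined inside the bounded box $\tilde R_{j_1}^1(v)$ into a genuine pivotal four-arm configuration for $\Omega(2^k)$ without destroying $E_{-*}$. No idea beyond those already present in Lemmas~\ref{lem:delta-to-delta:four arm}--\ref{gamma-to-delta:four arm} is required, but this bookkeeping, together with the verification of the geometric inclusion in all the boundary cases, is where the argument demands care.
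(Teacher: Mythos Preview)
Your proposal is correct and follows essentially the same route as the paper: verify Condition~(S) of Theorem~\ref{russo formula-gen} for $A=\Omega(2^k)$ with $j_0=j_1$ via the geometric inclusion into $\tilde\Gamma^{(0)}\cap\Delta_{\tilde R_{j_1}^{m+1}(v)}\Omega(2^k)$, then chain Lemma~\ref{gamma-to-delta:four arm} with the iteration of Lemma~\ref{lem:delta-to-delta:four arm} down to scale~$1$, and close with the finite energy property. The paper compresses your two chaining steps into the single display~\eqref{eq:8-16}, but the argument is identical.
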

\begin{proof}
We check the condition (S).
Note first that 
$$
\square_{{\tilde R}_{j_1}^m(v)\Omega (2^k)}\setminus
\square_{{\tilde R}_{j_1}^{m+1}(v)}\Omega (2^k)
$$
for $v\in S(2^k)$ and $1\leq m \leq m_1^*-1$.
%% 
%% \begin{quote}
%% ---------------------------------------------
%% 
%% \noindent \framebox{\bf INTERMISSION}
%% 
To see this, first we note that if 
$$
\omega \in \square_{{\tilde R}_{j_1}^m(v)}\Omega (2^k)\setminus
\square_{{\tilde R}_{j_1}^{m+1}(v)}\Omega (2^k),
$$
then  $\Omega (2^k)$ occurs, but if we change the configuration in 
${\tilde R}_{j_1}^{m+1}(v)$ suitably, then $\Omega (2^k)$ does not occur
in the resulting configuration.
Therefore we can see that 
$\omega \in \Delta_{{\tilde R}_{j_1}^{m+1}(v)}\Omega (2^k)$.
this implies that at least one of $r_1, r_2^*, r_3, r_4^*$ is forced
to go through ${\tilde R}_{j_1}^{m+1}(v)$.
without loss of generality we can assume that $r_1$ is forced to go
through  ${\tilde R}_{j_1}^{m+1}(v)$.
Then there exist $(-*)$-paths $r_6^*$ and $r_8^*$ connecting 
$\partial {\tilde R}_{j_1}^{m+1}(v)$ with the top and the bottom sides
of $S(2^k)$, respectively, such that they force $r_1$ to go through
${\tilde R}_{j_1}^{m+1}(v)$.
Then, ${\tilde \Gamma }^{(0)}( {\tilde R}_{j_1}^{m+1}(v), S(2^k))$
occurs in $\omega $.

%% ---------------------------------------------------
%% \end{quote}

Next, by Lemmas \ref{gamma-to-delta:four arm}  and \ref{lem:delta-to-delta:four arm}, we have
\begin{align}
&\mu_t^N\bigl(
\Delta_{{\tilde R}_{j_1}^{m+1}(v)}\Omega (2^k) \cap
{\tilde \Gamma }^{(0)}({\tilde R}_{j_1}^{m+1}(v), S(2^k)) \bigr) 
\label{eq:8-16} \\
& \leq {\tilde K}^{(0)}C_1^m \times \left\{ 
\mu_t^N\bigl( 
\Delta_{{\tilde R}_{j_1}^{1}(v)}E_+ \cap 
 \square_{{\tilde R}_{j_1}^{1}(v)}E_{-*} \cap {\tilde \Delta }^{(0)}
 ( {\tilde R}_{j_1}^{1}(v), S(2^k)) \bigr)\right. \nonumber \\
&\quad + \left.
  \mu_t^N\bigl( \square_{{\tilde R}_{j_1}^{1}(v)}E_+ \cap
    \Delta_{{\tilde R}_{j_1}^{1}(v)}E_{-*} \cap
 {\tilde \Delta }^{(0)} ( {\tilde R}_{j_1}^{1}(v), S(2^k))\bigr)
 \right\} . \nonumber
\end{align}
by the finite energy property we obtain from this that the right hand side of
(\ref{eq:8-16}) is bounded by
$$
{\tilde K}^{(0)} C_1^m
{\bigl( 1+ e^{(4h_c+8)/{\mathfrak{K}T}}\bigr) }^{\# R_{j_1}^1(v)} 
\times 
\mu_t^N\bigl( \Delta_v\Omega (2^k) \cap \Omega (2^k) \bigr) .
$$
Thus, the condition (S) is satisfied.
\end{proof}

\newpage
\section{Branching argument}

Here we present a variant of Lemma 7 of \cite{K87scaling} whose proof is
essentially the same as the original one. Only we have to use a lower
bound in the extension of Russo's formula and the Connection lemma instead of independence.

\begin{Lemma}[cf. \cite{K87scaling} Lemma 7] \label{KLemma7}
Let $2^{j_1+5} < 2^k < \min \{ L, \frac{N}{2} \}$. 
Then there exist constants $C_{24}>0, \zeta >0$, which depend only on $j_1,\varepsilon_0$ and $\eta$, such that
for $j_1+5\leq j <  k$, we have
\begin{align}\label{eq:klem7-1}
&\int_0^1 dt \sum_{v \in S(2^{j-2})} \frac{|h-h_c|}{\mathfrak{K}T}
   \mu_t^N\left( \Gamma (v, S(2^j))\right) \\
 &\leq C_{24} \left(  2^{-\zeta (k-j)} +  \frac{|h-h_c|}{\mathfrak{K}T}2^{5(j+1)}e^{-\alpha 2^{j+1}}
     \right) . \nonumber 
\end{align}
\end{Lemma}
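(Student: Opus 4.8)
The plan is to adapt Kesten's branching argument from \cite{K87scaling}, replacing the use of independence throughout by the Connection lemma (Lemma \ref{connection lemma}), the extension lemmas of Section~5 (Lemmas \ref{relation of deltas}--\ref{comparison of gamma and delta} and their outward counterparts), and the mixing estimates (\ref{HZ(1.1)})--(\ref{HZ(1.3)}). For $t\in[0,1]$ set $\Sigma_i(t):=\sum_{v\in S(2^{i-2})}\mu_t^N(\Gamma(v,S(2^i)))$, so that the left-hand side of (\ref{eq:klem7-1}) is $\int_0^1 \frac{|h-h_c|}{\mathfrak{K}T}\Sigma_j(t)\,dt$, and let $q(2^j,2^k)$ denote the $\mu_{\text{cr}}$-probability of the four-arm event across the annulus $S(2^k)\setminus S(2^j)$. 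I would prove \textbf{(i)} a top-scale bound $\int_0^1 \frac{|h-h_c|}{\mathfrak{K}T}\Sigma_k(t)\,dt\le C$ with $C$ absolute, and \textbf{(ii)} a multiscale comparison
\[
\Sigma_j(t)\ \le\ C\,\frac{2^{-2(k-j)}}{q(2^j,2^k)}\,\Sigma_k(t)\ +\ C\,2^{5(j+1)}e^{-\alpha 2^{j+1}},
\]
where $2^{-2(k-j)}$ is a lower bound for the ratio of the cardinalities of $S(2^{j-2})$ and $S(2^{k-2})$, and $1/q(2^j,2^k)$ is the price of extending four arms from scale $2^j$ out to scale $2^k$. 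Granting the a priori estimate that the critical four-arm probability decays strictly slower than the square of the scale, i.e.\ $q(2^j,2^k)\ge c\,2^{-(2-\zeta)(k-j)}$ for some absolute $\zeta>0$, inequalities (i)--(ii) combine --- after weighting by $\frac{|h-h_c|}{\mathfrak{K}T}$ (bounded, $T$ being fixed) and integrating in $t$ --- to give (\ref{eq:klem7-1}).

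For (i): if $v\in S(2^{k-2})$, then on $\Gamma(v,S(2^k))$ the four arms already reach distance $\ge\frac34 2^k$ from $v$; since $2^k<L(h,\varepsilon_0)$, all sub-box crossing probabilities stay bounded away from $0$ and $1$ (Lemma \ref{RSWbound}), so by the Connection lemma this configuration can be completed, with conditional probability bounded below, into one in which $v$ is pivotal for the horizontal $(+)$-crossing of $S(2^k)$ (two of the arms become a $(+)$-crossing through $v$, the other two become, upon flipping $v$, a dual crossing blocking it). Hence $\mu_t^N(\Gamma(v,S(2^k)))\le C\,\mu_t^N(A^+(2^k,2^k)\cap\Delta_vA^+(2^k,2^k))$, and summing over $v$ and applying Corollary \ref{russo crossing},
\[
\frac{|h-h_c|}{\mathfrak{K}T}\Sigma_k(t)\ \le\ C\,\frac{|h-h_c|}{\mathfrak{K}T}\sum_{v\in S(2^k)}\mu_t^N\bigl(A^+(2^k,2^k)\cap\Delta_vA^+(2^k,2^k)\bigr)\ \le\ C'\Bigl[\frac{d}{dt}\mu_t^N\bigl(A^+(2^k,2^k)\bigr)+\frac{|h-h_c|}{\mathfrak{K}T}\Bigr];
\]
integrating over $t\in[0,1]$ and using $0\le\mu_t^N(A^+(2^k,2^k))\le 1$ yields the asserted constant bound.

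For (ii) I would run the extension argument of Section~5 scale by scale: conditioning on the innermost relevant arms and pushing them outward one dyadic shell at a time with the Connection lemma gives quasi-multiplicativity of the four-arm events, and --- using once more that $2^k<L(h,\varepsilon_0)$, so that the $\mu_t^N$-probability of a four-arm event below scale $2^k$ is comparable to its $\mu_{\text{cr}}$-counterpart --- one obtains $\mu_t^N(\Gamma(v,S(2^j)))\le C\,q(2^j,2^k)^{-1}\mu_t^N(\Gamma(v,S(2^k)))$ up to an accumulated mixing error; enlarging the index set from $S(2^{j-2})$ to $S(2^{k-2})$ costs the factor $2^{-2(k-j)}$, and the $O(2^{2i})$ mixing errors incurred at scale $2^i$, each of size $\le C2^{3i}e^{-\alpha 2^i}$, decay super-exponentially in $i$ and so sum to $\le C2^{5(j+1)}e^{-\alpha 2^{j+1}}$. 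The main obstacle is precisely this multiscale comparison together with the input $q(2^j,2^k)\ge c\,2^{-(2-\zeta)(k-j)}$: one must show that the geometric \emph{growth} in the number of candidate four-arm sites is strictly beaten by the geometric \emph{decay} of the four-arm probability --- equivalently, that the branching of pivotal sites from scale $2^k$ down to scale $2^j$ is genuinely sub-critical --- and one must carry out the scale-by-scale extension without the BK inequality, which forces the step-by-step conditioning and the careful separations (hence the mixing errors) that make up most of the work; everything else is bookkeeping on top of Sections~3--7.
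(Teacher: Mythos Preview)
Your reduction is sound as far as it goes: step (i) is essentially what the paper does for $j$ close to $k$ (compare $\Gamma\to\Delta\subset\{v\text{ pivotal}\}$ via Lemma \ref{comparison of gamma and delta}, then Remark \ref{RussoLower-2} and integrate), and the quasi-multiplicativity in step (ii) is available from the extension lemmas of Section~5. The averaging step you describe --- that $\mu_t^N(\Gamma(v,S(2^k)))$ is comparable for all $v\in S(2^{k-2})$, so the sum over $S(2^{j-2})$ is $\asymp 2^{-2(k-j)}$ times the sum over $S(2^{k-2})$ --- is also justifiable by the extension argument.

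The gap is that you \emph{assume} the input $q(2^j,2^k)\ge c\,2^{-(2-\zeta)(k-j)}$. This is not available anywhere in Sections~3--8; in fact it is the essential content of Lemma \ref{KLemma7}. Your steps (i)--(ii) together show that (\ref{eq:klem7-1}) is \emph{equivalent} to the four-arm exponent bound (up to mixing errors), so granting the latter is circular. You yourself flag this as ``the main obstacle'' but do not indicate how to overcome it; nothing in Sections~3--7 supplies it (Section~\ref{s:powerlaw} handles only the one-arm and the half-plane two- and three-arm events), and the comparison of $\mu_t^N$-four-arm with $\mu_{\text{cr}}$-four-arm you invoke is Lemma \ref{KLemma8}, which in the paper is proved \emph{using} Lemma \ref{KLemma7}.

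What the paper does instead is prove the growth directly, without ever isolating a four-arm exponent bound. For fixed $v\in S(2^{j-2})$ it considers the translates $v+2^{j+2}\mathbf{n}$ with $S(2^{j+2}\mathbf{n},2^{j+1})\subset S(2^{k-3})$, and shows (this is (\ref{eq:klem7-6})) that
\[
\sum_{\mathbf{n}} \mu_t^N\bigl(\tilde\Gamma(S(2^{j+2}\mathbf{n},2^{j+1}),S(2^k))\bigr)\ \ge\ c\,2^{\zeta(k-j)}.
\]
This is done by a genuine branching construction: condition on the lowest $(+)$-crossing $\mathcal R$ of $S(2^k)$ and the leftmost $(-*)$-path $\mathcal S^*$ above it meeting $\mathcal R$ at $w$; define $Y(\mathbf{n},w,\ell,r_0,s^*)$ to be the indicator that the box $S(2^{j+2}\mathbf{n},2^{j+1})$ meets $r_0$ inside $S(w,3\cdot 2^\ell)$ and is reached by a $(-*)$-path from $s^*$ there; and set $Z(\ell)=\min E[\sum_{\mathbf n}Y]$. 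The Connection lemma (in place of independence) gives, for each annulus $A_\ell(w)$, a uniformly positive conditional probability $C_7^\#$ of finding a new $(-*)$-arm from $s^*$ to $r$ producing a second branching point $w_1$, whence $Z(\ell)\ge(1+C_7^\#)Z(\ell-4)$. Iterating yields $Z(k-6)\ge 2^{\zeta(k-j)}$ with $\zeta=\frac14\log_2(1+C_7^\#)$. This recursion is the missing idea; everything else in your outline is, as you say, bookkeeping.
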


\begin{proof}
The strategy is just the same as in \cite{K87scaling}. 
It is sufficient to show for $j\leq k-11$, $v\in S(2^{j-2})$,
$0\leq t \leq 1$, and some constant $C_1^\#$ which depends only on $j_1,\varepsilon_0$ and $\eta$, 
\begin{align}\label{eq:klem7-2}
 &2^{\zeta (k-j)}\left\{ {\hat \mu}_t^N \bigl( \Delta (v, S(2^j)) \bigr)
     - C 2^{3(j+1)}e^{-\alpha 2^{j+1}}\right\} \\
   &\leq C_1^\# {\sum_{\mathbf n}}^*  \mu_t^N\left(
   v+ 2^{j+2}{\mathbf n} \mbox{ is pivotal for } A^+(2^k,2^k) \right), 
   \nonumber 
\end{align}
where $\displaystyle {\sum_{\mathbf n}}^*$ denotes the summation over all ${\mathbf n}$'s in $\mathbf{Z}^2$ such that
\begin{equation}\label{nCond}
S( 2^{j+2}{\mathbf n}, 2^{j+1}) \subset S(2^{k-3}).
\end{equation}
Indeed, from Remark \ref{RussoLower-2}, we have
\begin{align}\label{eq:klem7-3}
&\sum_{v\in S(2^{j-2})} \frac{|h-h_c|}{\mathfrak{K}T}
 {\sum_{\mathbf n}}^* \mu_t^N\left(
   v+ 2^{j+2}{\mathbf n} \mbox{ is pivotal for }  A^+(2^k,2^k)
    \right) \\
& \leq  C'_{17} \frac{d}{dt}\mu_t^N \bigl( A^+(2^k,2^k) \bigr). \nonumber 
\end{align}
We sum up (\ref{eq:klem7-2}) over $v$'s in $S(2^{j-2})$, and then use
(\ref{eq:klem7-3}) to integrate the resulting inequality in $t\in [0,1]$.
Then we obtain
\begin{align}\label{eq:klem7-4}
&\int_0^1 dt \sum_{v \in S(2^{j-2})}2^{\zeta (k-j)}  \frac{|h-h_c|}{\mathfrak{K}T}
  \left\{ \mu_t^N\left( \Delta (v, S(2^j)) \right)  
         - C2^{3(j+1)}e^{-\alpha 2^{j+1}}\right\} \\
 &\leq  C_1^\# C'_{17}. \nonumber  
\end{align}
From Lemma \ref{comparison of gamma and delta}, we have
\begin{equation}\label{eq:delta-gamma-boxchange}
\mu_t^N\left( \Delta (v, S(2^j))\right) \geq 
  K \mu_t^N\left( \Gamma (v, S(2^j))\right)
\end{equation}
This, together with (\ref{eq:klem7-4}) implies the inequality
(\ref{eq:klem7-1}).

For the case $k-11<j<k$, the argument in \cite{K87scaling} is valid.
%% So we have nothing to add.
%% 
%% \begin{quote} 
%% ----------------------------------------------------------------------- 
%% 
%% \noindent 
%% \framebox{\bf INTERMISSION} 
%%%%%%%%%%%%%%%%%%%%%%%%%%%%%% for the sake of completeness
%%%%%%%%%%%%%%%%%%%%%%%%%%%%%% we repeat the proof in this note
%%%%%%%%%%%%%%%%%%%%%%%%%%%%%% but in the final version the following
%%%%%%%%%%%%%%%%%%%%%%%%%%%%%% argument should disappear
%% 
But for the completeness we repeat the proof.
By Lemma \ref{comparison of gamma and delta} and Remark \ref{RussoLower-2}, we have
\begin{align*}
&\sum_{v\in S(2^{j-2})}  \frac{|h-h_c|}{\mathfrak{K}T} \mu_t^N\left(
          \Gamma (v, S(2^j))\right) \\
&\leq K \sum_{v\in S(2^{j-2})}  \frac{|h-h_c|}{\mathfrak{K}T}
        \mu_t^N\left( \Delta (v, S(2^j))\right) \\
&\leq K \sum_{v\in S(2^{j-2})} \frac{|h-h_c|}{\mathfrak{K}T}\mu_t^N\left(
        v \mbox{ is pivotal for }A^+(2^j,2^j) \right) \\
&\leq C_2^\# \frac{d}{dt} \mu_t^N\left( A^+(2^j,2^j)\right),
\end{align*}
where $C_2^\#$ depends only on $j_1,\varepsilon_0,$ and $\eta$. Therefore integrating this inequality in $t$ from $0$ to $1$,
we obtain 
\begin{equation}\label{eq:klem7-5}
\int_0^1 dt \sum_{v\in S(2^{j-2})} \frac{|h-h_c|}{\mathfrak{K}T} \mu_t^N\left(
 \Gamma (v,S(2^j)) \right)
   \leq C_3^\#.
\end{equation}
Thus, taking $C_{24}$ larger than $ 2^{10\zeta }C_3^{\#}$, we have 
(\ref{eq:klem7-1}) for $k-11<j<k$.

%%%%%%%%%%%%%%%%%%%%%%%%%%%%%%%%%%%%%%%%%%%%%%%%%%%%%%%%%%%%%%%%%%%%%%%%%
%\vspace{5mm}
%\noindent
%( Remark in this note )

% By this argument we realize that we have to prove 
% Kesten-Corollary 3 for both ${\hat \mu}_t$ and 
% ${\hat \mu}_t^N$ for the same constant $K_2$.

%\noindent
%(end of the remark in this note )
%\vspace{5mm}
%%%%%%%%%%%%%%%%%%%%%%%%%%%%%%%%%%%%%%%%%%%%%%%%%%%%%%%%%%%%%%%%%%%%%%%%
%%%%%%%%%%%%%%%%%%%%%%%%%%%%%%%%%%%%%%%%%%%%%%%%%%%%%%%%%%%%%%%%%%%%%%%%
%% ----------------------------------------------------------------------- 
%% \end{quote} 

So, hereafter we assume that $j \leq  k-11$.
If we take $\mathbf n $ satisfying (\ref{nCond}),
%the inclusion $S( 2^{j+2}{\mathbf n},2^{j+1}) \subset S(2^{k-3})$,
then we have for  $v \in S(2^{j-2})$,
$$
\mu_t^N\left( v+2^{j+2}{\mathbf n} \mbox{ is pivotal for }
                A^+(2^k,2^k)\right)
\geq \mu_t^N\left( 
    \Delta (v+2^{j+2}{\mathbf n}, S(2^k)) \right).
$$
By the mixing property, the Connection lemma, Lemmas \ref{comparison of gamma and delta} and \ref{lem:comparison of gamma and delta-tilde}, %(\ref{gamma-delta-out}), 
the right hand side of the above inequality is not less than
\begin{align*}
 &C_4^\# \mu_t^N\left(
   {\tilde \Delta }(S(2^{j+2}{\mathbf n}, 2^{j+2}) ,S(2^k))\right) \\
 & \quad \times \left\{
    \mu_t^N\left(
   \Delta (v+2^{j+2}{\mathbf n}, S(2^{j+2}{\mathbf n}, 2^{j+1}) )
     \right) 
   -C2^{3(j+1)}e^{-\alpha 2^{j+1}} \right\} \\
 &\geq C_5^\# \mu_t^N\left(
   {\tilde \Gamma }( S(2^{j+2}{\mathbf n}, 2^{j+2}),S(2^k) )\right) \\
 & \quad \times \left\{
    \mu_t^N\left(
   \Gamma (v+2^{j+2}{\mathbf n}, S(2^{j+2}{\mathbf n}, 2^{j+1}) )
     \right) 
   -\tilde{C}2^{3(j+1)}e^{-\alpha 2^{j+1}} \right\}
\\
 &= C_5^\# \mu_t^N\left(
   {\tilde \Gamma }( S(2^{j+2}{\mathbf n}, 2^{j+1}),S(2^k) )\right) \\
 &\quad \times \left\{
    \mu_t^N\left(
      \Gamma (v, S(2^{j+1})) \right)
   -\tilde{C}2^{3(j+1)}e^{-\alpha 2^{j+1}}
   \right\}.
\end{align*}
The last equality is because of the translation invariance of $\mu_t^N$.
The constants $\tilde{C}$ and $C_5^\#$ depend only on $j_1,\eta$ and $\varepsilon_0$.

Summing up this inequality with respect to ${\mathbf n}$ which satisfy (\ref{nCond}),
%$$
%S( 2^{j+2}{\mathbf n},2^{j+1}) \subset S(2^{k-3}),
%$$
we obtain
\begin{align*}
&{\sum_{\mathbf n}}^* \mu_t^N\left(
  v+2^{j+2}{\mathbf n} \mbox{ is pivotal for } A^+(2^k,2^k) \right) \\
&\geq  C_5^\# \left\{  \mu_t^N\left( \Gamma (v, S(2^{j+1})) \right)
 - \tilde{C}2^{3(j+1) }e^{-\alpha 2^{j+1}}\right\} \\
& \quad \times 
  {\sum_{\mathbf n}}^* \mu_t^N\left(
   {\tilde \Gamma }(S(2^{j+2}{\mathbf n}, 2^{j+1}),S(2^k)) \right).
\end{align*}
Thus, by Remark \ref{RussoLower-2}, we only have to show that
\begin{equation}\label{eq:klem7-6}
2^{\zeta (k-j)} \leq C_6^\# {\sum_{\mathbf n}}^*  \mu_t^N\left(
   {\tilde \Gamma }(S(2^{j+2}{\mathbf n},2^{j+1}),S(2^k)) \right).
\end{equation}
The constant $C_6^\#$ depends only on $j_1,\eta$ and $\varepsilon_0$.

Let ${\mathcal R}={\mathcal R}(\omega )$ be the lowest horizontal $(+)$-crossing of $S(2^k)$.
First we show that
$$
\mu_t^N\left( {\mathcal R}\mbox{ exists in }
                  [-2^k,2^k]\times [-2^{k-4},2^{k-4}] \right)
\geq \frac{1}{2} \delta_{48}^2\delta_1>0.
$$
As in \cite{K87scaling} we have to note that the event in the right hand side 
of the above inequality occurs when
\begin{enumerate}
\item there is a horizontal $(+)$-crossing in 
  $[-2^k,2^k]\times [\frac{1}{3}2^{k-4},2^{k-4}]$, and
\item there is a horizontal $(-*)$-crossing in 
  $[-2^k,2^k]\times [-2^{k-4},-\frac{1}{3}2^{k-4}]$
 and it is connected by a $(-*)$-path with the bottom of 
  $S(2^k)$. 
\end{enumerate}
The $\mu_t^N$-probability of the first event is not less than
$\delta_{48}$, and the probability of the second event is by the FKG
inequality not less than $\delta_{48}\delta_1$.
Since the first and the second events are occuring in distance not
less than $\frac{2}{3}2^{k-4}$, the $ \mu_t^N$-probability that the both events
occur is not less than 
$$
\delta_{48}^2\delta_1 - C2^{3k}e^{-\alpha \frac{2}{3}2^{k-4}}.
$$
The conditions (\ref{eq:the constant C1}), (\ref{eq:delta-1}) and (\ref{eq:cond-for-j1-2}) assures that this is not less than $\delta_{48}^2/2$.

Next, let us fix a horizontal crossing $r_0$ of 
  $[-2^k,2^k]\times [-2^{k-4}, 2^{k-4}]$.
By the Connection lemma, the $\mu_t^N(\, \cdot \,|\, {\mathcal R}=r_0 )$-probability of the event that there is a $(-*)$-path in
$[-2^{k-4},2^{k-4}]\times [-2^k,2^k]$ connecting the top side of $S(2^k)$
with $\partial^* r_0$ is not less than 
$\frac{1}{4}\delta_{128}$.
Further, if this event occurs, we write the leftmost one of such $(-*)$-paths
by ${\mathcal S}^*$.
We condition on the event 
$$
\bigl\{ {\mathcal R}=r_0, \, {\mathcal S}^*=s^* \bigr\} .
$$
and denote by ${\tilde \mu }$ the conditional probability;
$$
{\tilde \mu } := 
\mu_t^N \bigl(\, \cdot \,\,\bigl|\, {\mathcal R}=r_0,\, {\mathcal S}^*=s^* \bigr).
$$
Denote the point by $w$ such that $s^*$ connects $\partial^* \{w\}$ with the top side of $S(2^k)$. If there are more than one such points, then we take the rightmost one in $r_0$.
Let $S'$ be the region in $S(2^k)$
which is above $r_0$ and to the right of $s^*$.
The lower boundary of $S'$ is a part of $r_0$ that connects 
$w$ with the right side of $S(2^k)$.
We write this $(+)$-path by $r$.
Let us take a $v \in S(2^{j-2})$ and a vector ${\mathbf n}$ which satisfies (\ref{nCond}) and 
$$ 
%S(2^{j+2}{\mathbf n}, 2^{j+1})\subset S(2^{k-3}) \mbox{ and }
   S(2^{j+2}{\mathbf n}, 2^{j+1})\cap r \not= \emptyset .
$$
If there exists a $(-*)$-path in $S'$ that connects $s^*$ with
$ S(2^{j+2}{\mathbf n}, 2^{j+1})$, then 
${\tilde \Gamma }(S(2^{j+2}{\mathbf n}, 2^{j+1}),S(2^k))$
occurs, since we are conditioning on the event
$$
\bigl\{ {\mathcal R}=r_0, \, {\mathcal S}^*=s^* \bigr\}.
$$
In particular, if the box $ S(2^{j+2}{\mathbf n}, 2^{j+1})$ intersects 
both $s^*$ and $r$, then the event
${\tilde \Gamma }( S(2^{j+2}{\mathbf n}, 2^{j+1}),S(2^k))$
occurs.

Let $\ell $ be an integer such that $j+5\leq \ell \leq k-6$.
For this $\ell $, note that the condition
$$
S(2^{j+2}{\mathbf n}, 2^{j+1})\cap S(w, 3\cdot 2^\ell )\not=\emptyset
$$
implies that $S(2^{j+2}{\mathbf n},2^{j+1})\subset S(2^{k-3})$.

For fixed $j\leq k-11, \ell$ with $j+5\leq \ell \leq k-6$, %$ v\in S(2^{j-2})$,
${\mathbf n}=(n_1,n_2) \in {\mathbf Z}^2$, 
and a horizontal crossing $r_0$ of $[-2^k,2^k]\times [-2^{k-4},2^{k-4}]$,
a point $w$ in $r_0$, and a $(*)$-path $s^*$ connecting $\partial^* \{ w \}$ with the
top of $S(2^k)$ above $r_0$, we define
\begin{align}\label{eq:klem7-7}
&Y({\mathbf n}, w, \ell ,r_0, s^*) \\
&= 
 \begin{cases}
    1 & \mbox{if } S(2^{j+2}{\mathbf n},2^{j+1})\mbox{ intersects both } r_0
        \mbox{ and } S(w, 3\cdot 2^\ell ),  \\
   &  \mbox{and there exists a $(-*)$-path $t^*$ connecting $S(2^{j+2}{\mathbf n},2^{j+1})$}\\
   &  \mbox{with $s^*$ in $S(w, 3\cdot 2^\ell )$ ($t^*$ can be part of $s^*$)},\\
    0 & \mbox{otherwise}.\\
\end{cases} \nonumber
\end{align}
Note that if $Y({\mathbf n},w,\ell ,r_0,s^*)=1$, then the event
$$ {\tilde \Gamma }(S(2^{j+2}{\mathbf n},2^{j+1}),S(2^k) )$$
occurs, provided that ${\mathcal R}=r_0$ and that $s^*$ is a 
$(-*)$-path.

Finally, we put
\begin{equation}\label{eq:klem7-8}
Z(\ell ) = \min E_{ \mu_t^N}\left[
 \sum_{\mathbf n} Y({\mathbf n},w,\ell , r_0, s^*) 
 \,\bigg|\, {\mathcal F}_{S(2^k)\setminus S'} \right] (\omega ) ,
\end{equation}
where  $S'$ is the region in $S(2^k)$ above $r_0$ and right to $s^*$.
Also, the minimum is taken over all of the following;
\begin{enumerate}
\item $t$ with $0\leq t \leq 1$,
\item horizontal crossing $r_0$ of $[-2^k,2^k]\times [-2^{k-4},2^{k-4}]$, 
\item $w \in r_0 $ such that $S(w, 3\cdot 2^\ell ) \subset S(2^k)$,
\item $(*)$-path $s^*$ that connects $\partial^* \{ w \}$ with the top of $S(2^k)$ above $r_0$,
%\item $v\in S(2^{j-2})$,
\item $\omega $ satisfying 
\begin{enumerate}
\item $\omega (u)=-1 $ for every $u\in s^*$, and
\item ${\mathcal R}(\omega )=r_0$.
\end{enumerate}
\end{enumerate}
%Note that 
%$
%{\tilde \mu } = {\hat \mu }_t^N( \cdot \mid {\mathcal R}=r_0,{\mathcal S}^* =s^* )
%$.
Consider an annulus $A_\ell (w)= S(w,2^{\ell +1})\setminus S(w,2^{\ell })$ 
and take 
a point $w_1\in A_\ell (w)$ that is on $r_0$ between $w$ and the right side of 
$S(2^k)$.
Then $S(w_1,3\cdot 2^{\ell -4}) \subset S(w, 3\cdot 2^\ell )$, and
$$ 
S(w,3\cdot 2^{\ell -4})\cap S(w_1,3\cdot 2^{\ell -4})=\emptyset 
$$
So, if there is a $(-*)$-path $t^*$ in $A_\ell (w)$ that connects $s^*$
with a $(*)$-adjacent point of $w_1$ above $r_0$, then we can find a 
$(-*)$-path
$s_1^*$ connecting the top of $S(2^k)$ with $w_1$ above $r_0$ by first
going down along $s^*$ until we get to the point where $t^*$ starts and then 
switching to $t^*$ until we reach $w_1$.  
Then in this case we have
\begin{align*}
&\sum_{\mathbf n}Y({\mathbf n},w,\ell ,r_0,s^*)\\
&\geq \sum_{\mathbf n}Y({\mathbf n},w,\ell -4 ,r_0,s^*) +
      \sum_{\mathbf n}Y({\mathbf n},w_1,\ell -4 ,r_0,s_1^*).
\end{align*}
Recall that $r$ denotes the part of $r_0$ that connects $w$ with the right side
of $S(2^k)$, and define the following event;
$$
E_\ell (r, s^*)=
     \left\{ 
 \begin{array}{@{\,}c@{\,}} 
  \mbox{there exists a $(-*)$-path in $S'\cap A_\ell (w) $}\\
  \mbox{connecting $s^*$ with $r$}
 \end{array}
 \right\}.
$$
Then by the Connection lemma, we have
$$
{\tilde \mu } \bigl( E_\ell (r,s^* ) \bigr) \geq \frac{1}{4} \delta_{128}=:C_7^\#.
$$
On this event, we can take $t^*$ the minimal $(-*)$-path connecting
$s^*$ with a $(*)$-neighbour of $r$ in $S'\cap A_\ell (w)$, and $w_1$ 
as a point in $r$ that is $(*)$-neighbour to $t^*$.
Thus, we have
\begin{align*}
&E_{\tilde \mu }\left[ \sum_{\mathbf n}Y({\mathbf n}, w, \ell, r_0, s^*)
           \right] \\
&\geq  E_{\tilde \mu }\left[ 
\sum_{\mathbf n}Y({\mathbf n}, w, \ell -4, r_0, s^*) \right]
 +  E_{\tilde \mu }\left[ 
\sum_{\mathbf n}Y({\mathbf n}, w_1, \ell -4, r_0, s_1^*) ; E_\ell(r,s^*)
\right] \\
&\geq \left( 1 + C_7^\# \right) Z(\ell -4 ).
\end{align*}
The above argument depends on the definition of ${\tilde \mu }$, but it is
easy to notice that we can repeat this argument for
$$ Y({\mathbf n},w_1,\ell-4, r_0,s_1^*)$$
with respect to the conditional probability
$$\mu_t^N \bigl( \, \cdot \, \bigl| \, {\mathcal F}_{S(2^k)\setminus S'_1} \bigr) (\omega ),$$
where $S'_1$ is the region in $S(2^k)$ above $r_0$ and right to $s_1^*$,
and that $\omega $ satisfies ${\mathcal R}=r_0, \, \omega (u)=-1 , u\in s_1^*$.
Then recursively we obtain for some $p\in \{ 1,2,3,4\} $,
$$
Z(\ell ) \geq (1+ C_7^\# )^{\frac{1}{4} (\ell - j-p )}Z(j+p),
$$
where $\ell -j \equiv p\, (\mbox{mod }  4)$, and by definition $Z(j+p)\geq 1$.

Thus, taking $\zeta = \frac{1}{4}\log _2(1+C_7^\#)$ and
$\ell = k-6,\, C_6^\#=2(\delta_{48}^2\delta_1)^{-1}(1+C_7^\# )^{5/2}$,
we have (\ref{eq:klem7-6}).
\end{proof}
\newpage
\section{Proof of Kesten-Theorem 1}

\begin{Theorem}[cf. \cite{K87scaling} Theorem 1] \label{KTheorem1} There exist constants $0<C_{25},C_{26}<\infty$, depending only on $j_1$ and $\varepsilon_0$, such that
\begin{equation}\label{eq:thm1-1}
  C_{25}\leq  \frac{\pi_h(n)}{\pi_{\text{cr}} (n)} \leq C_{26} \quad \mbox{for all }
   n< L(h,\varepsilon_0).
\end{equation}
\end{Theorem}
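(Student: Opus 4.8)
The plan is to follow the strategy of Kesten \cite{K87scaling}, Theorem 1, using the tools assembled in the previous sections: the extension argument in Sections~5--6, the Ising version of Russo's formula (Theorems~\ref{russo formula-gen} and \ref{Russo-four-arm}, Corollary~\ref{RussoOneArm}), and the branching argument (Lemma~\ref{KLemma7}). First I would set $f(t)=\mu_t^N\bigl(A(2^k)\bigr)=\pi_t^N(2^k)$ for a fixed $k$ with $2^{j_1+5}<2^k<\min\{L(h,\varepsilon_0),N/2\}$ and examine $\frac{d}{dt}\log f(t)$. By Corollary~\ref{RussoOneArm},
\[
\Bigl| \frac{d}{dt}\log \pi_t^N(2^k)\Bigr| \leq C_{20} + C_{21}\,
 \frac{\sum_{v\in S(2^k)}\mu_t^N(\Delta_v A(2^k))}{\pi_t^N(2^k)}.
\]
So the whole game is to bound the ratio $\sum_v \mu_t^N(\Delta_v A(2^k))/\pi_t^N(2^k)$ by a constant uniformly in $t\in[0,1]$ (and in $N$ large). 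Once that is done, integrating from $t=0$ to $t=1$ gives $|\log \pi_1^N(2^k) - \log \pi_0^N(2^k)| \leq C$, i.e. $\pi_1^N(2^k)\asymp \pi_0^N(2^k)$; then letting $N\to\infty$ and using \eqref{eq:periodic-1} (and the definition of $\mu_t^N$, which interpolates between $h_c(T)$ and $h$) converts this into $\pi_h(2^k)\asymp \pi_{\mathrm{cr}}(2^k)$. Finally a routine RSW/mixing interpolation between consecutive powers of $2$ (using Lemma~\ref{RSWbound} and Lemma~\ref{lem:circuits}, as in \eqref{K(2.20)}) upgrades this from $n=2^k$ to all $n<L(h,\varepsilon_0)$.

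The heart of the matter is therefore the \emph{a priori} bound
\[
\sum_{v\in S(2^k)} \mu_t^N\bigl(\Delta_v A(2^k)\bigr) \leq C\, \pi_t^N(2^k).
\]
Here is where the extension argument and the branching argument combine. For $v$ with $|v|_\infty$ of order $2^j$, the event $\Delta_v A(2^k)$ forces a four-arm configuration in a box of size $\sim|v|_\infty$ around $v$ (two $(+)$-arms and two $(-*)$-arms separating them), together with a one-arm $(+)$-connection from that box out to $\partial_{in}S(2^k)$; this is essentially the event $\Gamma(v,S_{j_1}^{m}(v))$ matched with $\tilde\Gamma_0$. Using Lemma~\ref{comparison of gamma and delta}, Lemma~\ref{lem:comparison of gamma and delta-tilde}, Lemma~\ref{lambda-to-delta} and the geometric decay $\mu_t^N(\Delta(v,R_{j_1}^m(v)))\asymp C_1^{-m}$ of Lemma~\ref{relation of deltas}, one reduces $\mu_t^N(\Delta_v A(2^k))$ to a product of a ``local four-arm probability at scale $|v|_\infty$'' and a ``one-arm probability from scale $|v|_\infty$ to $2^k$''. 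Summing over $v$ at scale $2^j$ produces, up to constants, a factor
\[
\sum_{v\,:\,|v|_\infty\sim 2^j} \mu_t^N(\text{4-arm at }v,\ \text{scale }2^j)\cdot \pi_t^N(2^j\to 2^k),
\]
and Lemma~\ref{KLemma7} — the branching lemma — is precisely what is needed to control $\sum_{v\in S(2^{j-2})}\frac{|h-h_c|}{\mathfrak K T}\mu_t^N(\Gamma(v,S(2^j)))$ and, after integrating in $t$, to show that the contribution of scale $j$ decays like $2^{-\zeta(k-j)}$ (plus a negligible mixing error). Summing the geometric series over $j$ from $j_1+5$ to $k$ yields the uniform constant. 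The small scales $|v|_\infty\leq 2^{j_1+5}$ contribute only $O(\pi_t^N(2^k))$ by the crude bound $|\omega(v)|\le1$, as in \eqref{eq:8-0 crude}.

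The main obstacle I anticipate is the bookkeeping in the reduction of $\Delta_v A(2^k)$ to the factorized form across scales: one must handle the full range of cases for $v$ (interior, near a side, near a corner of $S(2^k)$) using the $T_{j_1}^m(v)$ and $\tilde R_{j_1}^m(v)$ machinery of Section~5, and one must be careful that the mixing errors introduced at each scale ($\sim C 2^{3(j+1)}e^{-\alpha 2^{j+1}}$ terms, as in Lemma~\ref{KLemma7}) sum to something negligible — this is guaranteed by condition \eqref{eq:cond-for-j1-2} on $j_1$ and the fact that $2^j<2^k<2N^{-1}\cdot N$, but it requires care since the relevant scale in the error is $2^j$, not $2^k$. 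A secondary technical point is the passage $N\to\infty$: one must check that $\pi_t^N(2^k)\to\pi_{h(t)}(2^k)$ uniformly enough (via \eqref{eq:periodic-1}) that the integrated inequality survives the limit, and that at $t=0$ one genuinely recovers $\pi_{\mathrm{cr}}(2^k)$, which is where the choice $\varepsilon_0<C(1)/2$ and Lemma~\ref{RSW-cr} (allowing us to treat $L(h_c(T),\varepsilon_0)=\infty$) enter. Once these are in place, the argument is a faithful translation of Kesten's, with the Connection lemma (Lemma~\ref{connection lemma}) playing the role of the BK inequality / independence throughout.
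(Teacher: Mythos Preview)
Your overall architecture is the same as the paper's (and Kesten's): control $\frac{d}{dt}\log\pi_t^N(2^k)$ via Russo, factorize the pivotal sum scale by scale, invoke Lemma~\ref{KLemma7}, sum the resulting geometric series in $j$, and then pass from dyadic $2^k$ to general $n$ via \eqref{K(2.20)}. The later part of your proposal, where you cite Lemma~\ref{KLemma7} and speak of the contribution of scale $j$ decaying like $2^{-\zeta(k-j)}$ \emph{after integrating in $t$}, is exactly what the paper does.

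However, your opening formulation contains a genuine error. You write that the ``whole game'' is to bound the ratio $\sum_{v}\mu_t^N(\Delta_v A(2^k))/\pi_t^N(2^k)$ by a constant \emph{uniformly in $t$}. This ratio is \emph{not} uniformly bounded in $t$ and $k$: at $t$ corresponding to $h_c$, the sum $\sum_{|v|_\infty\sim 2^j}\mu_t^N(\Gamma(v,\cdot))$ is of order $2^{2j}$ times a four-arm probability, and after summing over $j\le k$ one gets a quantity that diverges with $k$. What \emph{is} bounded is the integral
\[
\int_0^1 \frac{|h-h_c|}{\mathfrak{K}T}\,\frac{\sum_{v}\mu_t^N(\Delta_v A(2^k))}{\pi_t^N(2^k)}\,dt,
\]
and this is precisely the content of Lemma~\ref{KLemma7} (note its statement is an \emph{integrated} bound with the factor $|h-h_c|$). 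So the correct logic is: first factorize pointwise in $t$ so that the log-derivative is bounded by $\frac{|h-h_c|}{\mathfrak{K}T}$ times $\sum_j\sum_v\mu_t^N(\Gamma(v,S(2^{j-3})))$ plus harmless terms; \emph{then} integrate in $t$ and apply Lemma~\ref{KLemma7} scale by scale. Your second paragraph gets this right, but your first paragraph would have you trying to prove something false.

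Two smaller points where the paper's execution differs from your sketch. First, the factorization is not into ``four-arm at scale $2^j$'' times ``one-arm from $2^j$ to $2^k$''. Rather, for $v$ with $2^j<|v|_\infty\le 2^{j+1}$ the paper observes $\Delta_v A(2^k)\subset \Gamma(v,S_{j-5}^2(v))\cap\tilde\Gamma_0(S_{j-5}^3(v),S(2^k))$, uses mixing to separate the two factors, and then bounds $\mu_t^N(\tilde\Gamma_0)\le \mu_t^N(B)\le \delta_4^{-4}\pi_t^N(2^k)$ where $B$ is the increasing event that both the origin and the box around $v$ reach $\partial_{in}S(2^k)$; one application of FKG with a surrounding $(+)$-circuit gives the full one-arm $\pi_t^N(2^k)$ directly (see \eqref{eq:thm1-7}). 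This is cleaner than telescoping annular one-arm probabilities. Second, for the boundary layer $2^{k-1}<|v|_\infty\le 2^k$ the paper does not reduce to the $\Gamma$-sum at all; instead it uses Lemmas~\ref{relation of deltas} and \ref{comparison of gamma and delta} to bound the local four-arm by $\mu_t^N(v\text{ pivotal for }A^+(2^k,2^k))$, and then the integrated contribution of that sum is controlled by Remark~\ref{RussoLower-2} since $\int_0^1\frac{|h-h_c|}{\mathfrak{K}T}\sum_v\mu_t^N(\Delta_v A^+)\,dt\le (C_{17}')^{-1}$.
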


\begin{proof} It suffices to prove (\ref{eq:thm1-1}) for $\pi_h^N (n)$ and $\pi_{\text{cr}}^N (n)$ with sufficiently large $N$. We define
$$
\pi_t^N(n) := \mu_t^N\left(
      \mathbf{O}\stackrel{+}{\leftrightarrow}  \partial_{in} S(n)
                      \right).
$$
Let $n < L(h,\varepsilon_0)$ and $k= \lfloor \log_2 n\rfloor $.
By monotonicity we have
\begin{equation}\label{eq:thm1-2}
\pi_t^N(2^{k+1})   \leq \pi_t^N(n)
 \leq  \pi_t^N(2^{k}).
\end{equation}
Also, by Lemma \ref{RSWbound} and the FKG inequality, there exists a constant $C_1^\#>0$
such that 
\begin{equation}\label{eq:thm1-3}
\pi_t^N(2^{k+1})  \geq C_1^\# \pi_t^N(2^{k}).
\end{equation}
This constant $C_1^\#$ depends only on $\varepsilon_0$.
Therefore we only have to show (\ref{eq:thm1-1}) for $2^k$ instead of $n$.

By Theorem \ref{RussoOneArm}, we have
\begin{align}\label{eq:thm1-4}
&\frac{d}{dt}\pi_t^N(2^k) \\
&\leq \frac{|h-h_c|}{\mathfrak{K}T}\biggl\{ C_2^\# \pi_t^N(2^k) \nonumber  \\
&\quad +   C_3^\# \sum_{v \in S(2^{k})\setminus S(2^{j_1+5})}
              \mu_t^N\bigl( v \mbox{ is pivotal for }
                \{ \mathbf{O}\stackrel{+}{\leftrightarrow} \partial_{in} S(2^k) \}
                      \bigr) \biggr\}. \nonumber
\end{align}
%The summation in (\ref{eq:thm1-4}) is taken over $v$'s with 
%$2^{j_1+M}<|v|_\infty \leq 2^k$.

1) First, we consider $v$'s with $ 2^{j_1+5}<|v|_\infty \leq 2^{k-1}$.
Let $j_1+5\leq j\leq k-2$, and take $v$ with $2^j<|v|_\infty \leq 2^{j+1}$.
Write 
$$
{\overline Q}_{j-5}(\ell_1,\ell_2) := [0,2^{j-5}]^2+2^{j-5}(\ell_1,\ell_2).
$$
Then $v$ belongs to at least one of ${\overline Q}_{j-5}(\ell_1,\ell_2)$
with $-2^6 \leq \ell_1,\ell_2 \leq 2^6$.
Let us assume that 
${\overline Q}_{j-5}(\ell_1,\ell_2)\supset Q_{j-5}(v)$.
Then, since $S_{j-5}^3(v)\subset S(2^k)$, we have
\begin{align}\label{eq:thm1-5}
&\mu_t^N \bigl( v \mbox{ is pivotal for }
         \{ \mathbf{O}\stackrel{+}{\leftrightarrow} \partial_{in} S(2^k) \} \bigr) \\
&\leq  \mu_t^N\bigl( \Gamma (v, S_{j-5}^2(v)) \cap
                 {\tilde \Gamma }_0( S_{j-5}^3(v),S(2^k)) \bigr). \nonumber
\end{align}
By the mixing property the right hand side of (\ref{eq:thm1-5})
is bounded from above by
\begin{equation}\label{eq:thm1-6}
\mu_t^N \bigl( {\tilde \Gamma }_0(S_{j-5}^3(v),S(2^k)) \bigr) \left\{
 \mu_t^N \bigl( \Gamma (v,S_{j-5}^2(v)) \bigr) +C2^{3(j-2)}e^{-\alpha 2^{j-3}}
  \right\}.
\end{equation}
Let $B(S_{j-5}^3(v),S(2^k))$ be the event that 
\begin{enumerate}
\item there is a $(+)$-path $r$ in $S(2^k)$ connecting the origin with 
$S_{j-5}^3(v)$, and
\item there is a $(+)$-path $r'$ in $S(2^k)$ connecting $S_{j-5}^3(v)$
with $\partial_{in}S(2^k)$.
\end{enumerate}
Note that ${\tilde \Gamma }_0(S_{j-5}^3(v),S(2^k))$ is a subset of 
$B(S_{j-5}^3(v),S(2^k))$, and by the FKG inequality, we have
\begin{equation}\label{eq:thm1-7}
\mu_t^N \bigl( B(S_{j-5}^3(v),S(2^k)) \bigr)\leq \delta_4^{-4}\pi_t^N(2^k).
\end{equation}
Thus, by (\ref{eq:thm1-5})--(\ref{eq:thm1-7}), we have
\begin{align}\label{eq:thm1-8}
&\mu_t^N \bigl( v \mbox{ is pivotal for }\{ 
             \mathbf{O}\stackrel{+}{\leftrightarrow} \partial_{in} S(2^k) \} \bigr) \\
&\leq \delta_4^{-4}\pi_t^N(2^k)\left\{
  \mu_t^N \bigl( \Gamma (v,S_{j-5}^2(v)) \bigr) + C2^{3(j-2)}e^{-\alpha 2^{j-3}}
      \right\}. \nonumber
\end{align}
Summing up (\ref{eq:thm1-8}) over $v$'s in a  
${\overline Q}_{j-5}(\ell_1,\ell_2)$, we obtain
\begin{align}\label{eq:thm1-9}
&\sum_{v\in {\overline Q}_{j-5}(\ell_1,\ell_2)}
 \mu_t^N\bigl( v \mbox{ is pivotal for }
       \{ \mathbf{O}\stackrel{+}{\leftrightarrow} \partial_{in} S(2^k) \}
        \bigr) \\
&\leq \delta_4^{-4}\pi_t^N(2^k)\left\{
   \sum_{v\in {\overline Q}_{j-5}(\ell_1,\ell_2)}
      \mu_t^N\bigl( \Gamma (v,S_{j-5}^2(v)) \bigr)
       +C2^{5(j-2)}e^{-\alpha 2^{j-3} } \right\} \nonumber
\end{align}
Since $\mu_t^N$ is translation invariant,
$$
\sum_{v\in {\overline Q}_{j-5}(\ell_1,\ell_2)}
      \mu_t^N\bigl( \Gamma (v,S_{j-5}^2(v)) \bigr)
 \leq \sum_{v\in S(2^{j-5})}
      \mu_t^N\bigl( \Gamma (v, S(2^{j-3})) \bigr).
$$
Therefore summing up (\ref{eq:thm1-9}) over $\ell_1,\ell_2$'s with
$-2^6\leq \ell_1,\ell_2\leq 2^6$, we obtain
\begin{align*}
&\sum_{2^j<|v|_\infty\leq 2^{j+1}}
     \mu_t^N\bigl( v \mbox{ is pivotal for }
        \{ \mathbf{O}\stackrel{+}{\leftrightarrow} \partial_{in} S(2^k) \}
         \bigr) \\
&\leq C_4^\# \pi_t^N(2^k) \left\{
   \sum_{v\in S(2^{j-5})}
         \mu_t^N \bigl(\Gamma (v,S(2^{j-3}) ) \bigr)+C2^{5(j-2)}e^{-\alpha 2^{j-3}}
         \right\} 
\end{align*}
for some positive constant $C_4^\#$ depending only on $j_1,\varepsilon_0$ and $\eta$.
Finally performing the summation over $j$'s we obtain
\begin{align} \label{eq:thm1-10}
&\sum_{2^{j_1+5}<|v|_\infty\leq 2^{k-1}}
  \mu_t^N\bigl( v \mbox{ is pivotal for }
          \{ \mathbf{O}\stackrel{+}{\leftrightarrow} \partial_{in} S(2^k) \}
            \bigr) \\
&\leq C_5^\# \pi_t^N(2^k) \left\{
       \sum_{j=j_1+5}^{k-1}\sum_{v\in S(2^{j-5})}
          \mu_t^N \bigl(\Gamma (v,S(2^{j-3}))\bigr) + 1
          \right\} \nonumber
\end{align}
for some constant $C_5^\#$, which depends only on $j_1,\varepsilon_0$ and $\eta$.

2) Second, let us consider the case where $2^{k-1} < |v|_\infty \leq 2^{k}$.
This time we consider $S_{k-6}^2(v)$.
If  $S_{k-6}^2(v)\subset S(2^k)$, then we also obtain
\begin{align}\label{eq:thm1-8'}
&\mu_t^N \bigl( v \mbox{ is pivotal for }\{ 
             \mathbf{O}\stackrel{+}{\leftrightarrow} \partial_{in} S(2^k) \} \bigr) \\
&\leq \delta_4^{-4}\pi_t^N(2^k)\left\{
  \mu_t^N \bigl( \Gamma (v,S_{k-6}^2(v)) \bigr) + C2^{3(k-3)}e^{-\alpha 2^{k-4}}
      \right\}. \nonumber
\end{align}
in the same way as (\ref{eq:thm1-8}).
By applying Lemma \ref{comparison of gamma and delta} similar to (\ref{eq:delta-gamma-boxchange}), we obtain
$$
\mu_t^N\bigl( \Gamma (v,S_{k-6}^2(v))\bigr) \leq
 C_6^\# \mu_t^N\bigl( \Delta (v, S_{k-6}^2(v))\bigr),
$$
and then we apply Lemma \ref{relation of deltas}, four times to obtain
\begin{align}\label{eq:thm1-11}
& \mu_t^N\bigl( \Delta (v, S_{k-6}^2(v))\bigr) \\
&\leq C_7^\# \mu_t^N\bigl( \Delta(v,S(2^k))\bigr) \nonumber \\
&\leq C_7^\# \mu_t^N\bigl( v \mbox{ is pivotal for }
                         A^+(2^k,2^k) \bigr) \nonumber
\end{align}
for some positive constant $C_7^\#$ depending only on $j_1,\varepsilon_0$ and $\eta$.
If $S_{k-6}^2(v)\not\subset S(2^k)$, then we consider
$T_{k-6}^2(v)$, and we have
\begin{align}\label{eq:tem1-12}
&\mu_t^N \bigl( v \mbox{ is pivotal for }\{ 
             \mathbf{O}\stackrel{+}{\leftrightarrow} \partial_{in} S(2^k) \} \bigr) \\
&\leq \delta_4^{-3}\pi_t^N(2^k)\left\{
  \mu_t^N \bigl( \Gamma (v,T_{k-6}^2(v)) \bigr) + C2^{3(k-3)}e^{-\alpha 2^{k-4}}
      \right\}. \nonumber
\end{align}
As above by Lemma \ref{comparison of gamma and delta} and Lemma \ref{relation of deltas}, we have
\begin{align}\label{eq:thm1-13}
& \mu_t^N\bigl( \Gamma (v, T_{k-6}^2(v))\bigr) \\
&\leq C_8^\# \mu_t^N\bigl( \Delta(v,S(2^k))\bigr) \nonumber \\
&\leq C_8^\# \mu_t^N\bigl( v \mbox{ is pivotal for }
                         A^+(2^k,2^k) \bigr). \nonumber
\end{align}
Thus, from (\ref{eq:thm1-8'})--(\ref{eq:thm1-13}),
we obtain
\begin{align}\label{eq:thm1-14}
&\sum_{2^{k-1}<|v|_\infty \leq 2^k}
 \mu_t^N \bigl( v \mbox{ is pivotal for }\{
      \mathbf{O}\stackrel{+}{\leftrightarrow} \partial_{in} S(2^k) \} \bigr) \\
&\leq  C_9^\# \pi_t^N(2^k) \left\{
  \sum_{2^{k-1}<|v|_\infty \leq 2^k}
  \mu_t^N\bigl(
     v \mbox{ is pivotal for } A^+(2^k,2^k) \bigr)
    + 1 \right\} \nonumber
\end{align}
for some positive constant $C_9^\#$ depending only on $j_1,\varepsilon_0$ and $\eta$.

Combining (\ref{eq:thm1-4}), (\ref{eq:thm1-10}) and (\ref{eq:thm1-14}), we obtain
\begin{align}\label{eq:thm1-15}
&\frac{d}{dt}\log \pi_t^N(2^k) \\
&\leq C_{10}^\# \frac{|h-h_c|}{\mathfrak{K}T} 
      \left\{
       \sum_{j=j_1+5}^{k-1}\sum_{v\in S(2^{j-5})}
          \mu_t^N\bigl(\Gamma (v,S(2^{j-3}))\bigr) + 1 \right.  \nonumber\\
&\quad \left. + \sum_{2^{k-1}<|v|_\infty \leq 2^k}
     \mu_t^N\bigl(
     v \mbox{ is pivotal for } A^+(2^k,2^k) \bigr) \right\} \nonumber
\end{align}
for some positive constant $C_{10}^\#$ depending only on $j_1,\varepsilon_0$ and $\eta$.
Integrating (\ref{eq:thm1-15}) with respect to $t\in [0,1]$, we obtain
\begin{align} \label{eq:thm1-16}
&\frac{\pi_1^N(2^k)}{\pi_0^N(2^k)} \\
&\leq  C_{11}^\# \left\{
          \sum_{j=j_1+5}^{k-1}\frac{|h-h_c|}{\mathfrak{K}T}\int_0^1dt\sum_{v\in S(2^{j-5})}
          \mu_t^N\bigl(\Gamma (v,S(2^{j-3}))\bigr) \right. \nonumber \\
&\quad +   \frac{|h-h_c|}{\mathfrak{K}T} + 1 \Biggr\}. \nonumber
\end{align}
By Lemma \ref{KLemma7}, the right hand side of (\ref{eq:thm1-16}) is bounded by
$$
C_{11}^\# \left\{ C_{24} \sum_{j=j_1+5}^{k-1}\left( 
           2^{\zeta (k-j)} + \frac{|h-h_c|}{\mathfrak{K}T} 2^{5(j+1)}e^{-\alpha 2^{j+1}}
            \right) +  \frac{|h-h_c|}{\mathfrak{K}T} + 1 \right\},
$$
which is a constant depending only on $j_1,\eta$ and $\varepsilon_0$.
\end{proof}
\newpage
\section{Proof of Kesten-Theorem 2}
\begin{Theorem}[cf. \cite{K87scaling} Theorem 2] \label{KTheorem2} For $h > h_c$, \
\begin{align*}
\pi_{\text{cr}} (L(h,\varepsilon_0)) &\leq C_{27}  \pi_h (L(h,\varepsilon_0)) \\
&\leq C_{28} \theta(h) \\
&\leq C_{28} \pi_h (L(h,\varepsilon_0)) \\
&\leq C_{29}  \pi_{\text{cr}} (L(h,\varepsilon_0)).
\end{align*}
\end{Theorem}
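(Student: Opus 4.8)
The statement is a chain of four inequalities, and the endpoints involve $\pi_{\mathrm{cr}}$ at scale $L=L(h,\varepsilon_0)$. The plan is to prove it in three pieces. First, the inequality $\pi_{\mathrm{cr}}(L)\leq C_{27}\pi_h(L)$ is immediate from Theorem \ref{KTheorem1} (Kesten-Theorem 1): take $n=L(h,\varepsilon_0)$, which is allowed since the statement there covers all $n<L(h,\varepsilon_0)$ and by monotonicity one extends to $n=L$ up to a constant factor as in the proof of Theorem \ref{KTheorem1} (using Lemma \ref{RSWbound} and FKG to compare scales $2^k$ and $2^{k+1}$). Likewise the last inequality $\pi_h(L)\leq C_{29}\pi_{\mathrm{cr}}(L)$ is the other half of Theorem \ref{KTheorem1}. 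So the genuine work is the middle: $\pi_h(L)\asymp\theta(h)$ for $h>h_c(T)$.

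For the upper bound $\theta(h)\leq C_{28}\pi_h(L)$: since $\{\#\mathbf{C}_0^+=\infty\}\subset\{\mathbf{O}\stackrel{+}{\leftrightarrow}\partial_{in}S(L)\}$, we get $\theta(h)\leq\pi_h(L)$ trivially, which already gives the desired inequality with $C_{28}\geq 1$; so that step is free. For the lower bound $\pi_h(L)\leq C_{28}\theta(h)$ (equivalently $\theta(h)\geq c\,\pi_h(L)$), I would follow Kesten's argument: condition on the one-arm event $\{\mathbf{O}\stackrel{+}{\leftrightarrow}\partial_{in}S(L)\}$, and show that with probability bounded below (uniformly in the configuration inside $S(L)$ compatible with the event), the $(+)$-cluster of $\mathbf{O}$ extends all the way to infinity. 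The mechanism: once a $(+)$-path reaches $\partial_{in}S(L)$, for $h>h_c(T)$ the measure $\mu_{T,h}$ restricted to scales beyond $L$ is in the supercritical regime, so by Lemma \ref{RSWbound}, Lemma \ref{lem:circuits} and the Corollary after Lemma \ref{Kp121} one builds, with uniformly positive probability, an infinite sequence of $(+)$-circuits in annuli $S(2^{i+1}L)\setminus S(2^iL)$ each connected to the previous one; the first circuit catches the arm emanating from $S(L)$. Here one uses that for $h>h_c(T)$ we have $\mu_{T,h}(A^+(kn,n))\geq C(k)$ for $n\geq n_2$ by Lemma \ref{RSWbound}, and that the mixing property (\ref{HZ(1.3)}) makes the circuit events in disjoint annuli essentially independent with errors summable because the annuli grow geometrically. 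The product of the (uniformly bounded below) conditional probabilities across all scales stays bounded below, giving $\theta(h)\geq c\,\mu_{T,h}(\mathbf{O}\stackrel{+}{\leftrightarrow}\partial_{in}S(L))=c\,\pi_h(L)$.

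A technical point to handle carefully is the conditioning: the event $\{\mathbf{O}\stackrel{+}{\leftrightarrow}\partial_{in}S(L)\}$ is determined inside $S(L)$, while the circuit-building happens outside; one uses the Markov property of $\mu_{T,h}$ together with FKG (the circuit events are increasing) to bound the conditional probability below by the unconditional probability of the analogous events with a $(-)$-boundary condition on $\partial S(L)$, then applies the mixing estimate to remove the dependence on that boundary condition, exactly as in the derivation of (\ref{eq:srsw-6}) and Lemma \ref{Hig93aL5.3}. The second technical point is that the whole argument is carried out for $h>h_c(T)$ with $L=L(h,\varepsilon_0)$, where by definition of the correlation length $\mu_{T,h}(A^+(n,n))\geq 1-\varepsilon_0$ for $n\geq L$, so the ACCFR-type Lemma \ref{lem:accfr} applies from scale $L$ onward and the crossing/circuit probabilities at all scales $\geq L$ are uniformly close to $1$; this is what makes the infinite product of conditional survival probabilities positive.

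\textbf{Main obstacle.} The hard part is the lower bound $\theta(h)\geq c\,\pi_h(L)$ — specifically, making the ``extend the arm to infinity with probability bounded below, uniformly in $h$'' argument rigorous: one must control the conditional probabilities through the step-by-step conditioning on the innermost $(+)$-path reaching each successive scale (to avoid BK-type independence, exactly the device used repeatedly in the Connection lemma and Lemma \ref{lem:fence}), and one must check that the accumulated mixing errors $\sum_i C(2^iL)^3 e^{-\alpha 2^iL}$ are small enough not to destroy the positive lower bound, using $L=L(h,\varepsilon_0)\to\infty$ as $h\downarrow h_c(T)$ together with the lower bound (\ref{K(5.1)}), $\pi_{\mathrm{cr}}(n)\geq C_5 n^{-1}$, to absorb them. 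Everything else in the chain reduces to Theorem \ref{KTheorem1} and the trivial inclusion $\{\#\mathbf{C}_0^+=\infty\}\subset\{\mathbf{O}\stackrel{+}{\leftrightarrow}\partial_{in}S(L)\}$.
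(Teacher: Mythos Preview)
Your proposal is correct and follows the same outline as the paper: the first and last inequalities come from Theorem \ref{KTheorem1} (extended to $n=L$ via (\ref{K(2.20)})), the third is the trivial inclusion, and the second is obtained by extending the arm to infinity through $(+)$-circuits at scales $\geq L$, whose probabilities are controlled by Lemmas \ref{RSW-gen} and \ref{lem:accfr}. The one place you work harder than necessary is the conditioning step: since both $\{\mathbf{O}\stackrel{+}{\leftrightarrow}\partial_{in}S(L)\}$ and the event ``there is a $(+)$-circuit in $S(L)\setminus S(L/2)$ that is $(+)$-connected to the infinite cluster'' are \emph{increasing}, a single application of FKG gives
\[
\theta(h)\;\geq\;\pi_h(L)\cdot\delta_4^4\delta_2\cdot\prod_{k\geq 1}\bigl(1-\lambda\theta^{2^k}\bigr)
\]
directly, with the convergent product supplied by Lemma \ref{lem:accfr} started at scale $L$ (where $\mu_{T,h}(A^+(L,L))\geq 1-\varepsilon_0$ by definition of $L$); no step-by-step conditioning, Markov property, or accumulated mixing errors are needed for this inequality.
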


\begin{proof}
Essentially, the only thing to be proved is the second inequality. 
But in order to use Theorem \ref{KTheorem1}, we have to remark
that (\ref{eq:thm1-1}) is valid for $n\leq 2L(h,\varepsilon_0)-2$
when we replace the constants $C_{25}$ and $C_{26}$ with
\begin{equation}\label{eq:thm1-1-ext}
C_{25}'=\delta_4^4\delta_2C_{25}, \quad C_{26}'=(\delta_4^4\delta_2)^{-1}C_{26}.
\end{equation}
In particular, (\ref{eq:thm1-1}) is valid for $n=L(h,\varepsilon_0)$,
too, with the constants $C_{25}'$ and $ C_{26}'$ above.
Let $L=L(h,\varepsilon_0)$. 
By a standard construction together with Lemma \ref{lem:accfr} and 
Lemma \ref{RSW-gen}, we can see that
\begin{align*}
\theta(h)  &\geq \mu_{T,h} 
\left( 
\begin{array}{@{\,}c@{\,}} 
\mbox{$ \mathbf{O}   \stackrel{+}{\leftrightarrow} \partial_{in} S(L)$, and there exists a $(+)$-circuit} \\
\mbox{surrounding $S(L/2)$ in $S(L) \setminus S(L/2)$} \\
\mbox{which is $(+)$-connected to the infinite $(+)$-cluster} \\
\end{array} 
\right) \\
&\geq \pi_{h}(L) \cdot \delta_4^4\delta_2 \cdot \prod_{n=1}^{\infty} (1-\lambda \theta^{2^n}).
\end{align*}

\end{proof}

\newpage
\section{Proof of Kesten-Corollary 1 and Kesten-Theorem 3}

The argument in this section is valid both for $(+)$- and $(-*)$-connection.
As usual we state results only for $(+)$-connection.

\subsection{One-arm event}

%We use a special case of (\ref{K(2.20)}).
%\begin{Lemma}[cf. \cite{K87scaling} (2.20)]\label{lem:12-2}
%$\pi_{\text{cr}}(n)$ is decreasing in $n$, but
%\begin{equation}  \delta_4^4 \delta_2 \pi_{\text{cr}}(n) \leq \pi_{\text{cr}} (2n) \leq \pi_{\text{cr}} (n) %\label{K(2.20)}
%\end{equation}
%holds for $n\geq 4n_2$.
%\end{Lemma}

\begin{Lemma}\label{inc-7}
Let $L_0$ be a constant such that $L_0\geq 6n_3(8,\varepsilon_0)$.
Then there exists a constant $ C_{30}>0$ depending only on 
$L_0,\varepsilon_0$,
such that for $L_0\leq n\leq  L(h,\varepsilon_0)$,
\begin{equation}\label{eq:inc-7}
( n-L_0 )\pi_h(n) \leq \sum_{k=L_0+1}^n \pi_h(k) \leq C_{30} (2n-L_0)\pi_h(n)
\end{equation}
\end{Lemma}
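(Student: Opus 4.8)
The plan is to prove the two inequalities in \eqref{eq:inc-7} separately. The left-hand inequality is immediate from monotonicity: by the obvious inclusion $\{\mathbf{O}\stackrel{+}{\leftrightarrow}\partial_{in}S(k)\}\supset\{\mathbf{O}\stackrel{+}{\leftrightarrow}\partial_{in}S(n)\}$ for $k\le n$, we have $\pi_h(k)\ge\pi_h(n)$ for every $L_0<k\le n$, and summing over $k$ from $L_0+1$ to $n$ gives $\sum_{k=L_0+1}^n\pi_h(k)\ge(n-L_0)\pi_h(n)$. So the content is entirely in the right-hand inequality, which asks for a reverse bound: the partial sums of $\pi_h$ should not be much larger than $n\pi_h(n)$, i.e. $\pi_h$ cannot decay too slowly.

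The key mechanism is a quasi-multiplicativity / slow-variation estimate for $\pi_h$. First I would establish that there is a constant $\kappa\in(0,1)$, depending only on $\varepsilon_0$, such that
\begin{equation}\label{eq:inc7-halving}
\pi_h(2m)\le\kappa\,\pi_h(m)\qquad\text{whenever }6n_3(8,\varepsilon_0)\le m\le L(h,\varepsilon_0)/2,
\end{equation}
or more precisely for $m$ up to a constant fraction of $L(h,\varepsilon_0)$; this is the analogue of Kesten's estimate and follows from \eqref{K(2.20)} of Lemma \ref{lem:circuits} together with the FKG inequality and Lemma \ref{RSWbound} (crossings of aspect-ratio boxes at scale $m$ have probability bounded below by $\delta_k$, so forcing a connection from $\partial_{in}S(m)$ to $\partial_{in}S(2m)$ through an annulus costs a factor at most $1-\delta_4^4\delta_2$, whence $\pi_h(2m)\le\pi_h(m)(1-c)$ for an absolute $c>0$). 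Because of Lemma \ref{RSWbound} and the remarks about relaxing the range to $2L$ or $4L$, the estimate \eqref{eq:inc7-halving} holds throughout $L_0\le m< L(h,\varepsilon_0)$ after adjusting constants. Next I would dyadically block the sum: write the range $(L_0,n]$ as a union of roughly $\log_2(n/L_0)$ dyadic blocks $(n/2^{i+1},n/2^i]$; on the $i$-th block each term is at most $\pi_h(\lceil n/2^{i+1}\rceil)$ and there are at most $n/2^{i+1}$ terms, so the block contributes at most $\tfrac{n}{2^{i+1}}\pi_h(n/2^{i+1})$. Iterating \eqref{eq:inc7-halving} gives $\pi_h(n/2^{i+1})\le\kappa^{-(i+1)}\pi_h(n)$, hence the $i$-th block contributes at most $n\,2^{-(i+1)}\kappa^{-(i+1)}\pi_h(n)$; this is summable in $i$ only if $2\kappa>1$, which may fail, so instead I group scales in blocks of fixed width $\ell$ (with $\kappa^{\ell}<1/2$) so that the geometric factor $2^{-\ell}\kappa^{-\ell}<1$ wins, and sum the resulting convergent geometric series. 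The total is bounded by $C_{30}\,n\,\pi_h(n)$, and absorbing the lower cutoff $L_0$ (which only removes the finitely many smallest scales, at worst shifting by a bounded factor) yields $\sum_{k=L_0+1}^n\pi_h(k)\le C_{30}(2n-L_0)\pi_h(n)$, with $C_{30}$ depending only on $L_0$ and $\varepsilon_0$.

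The main obstacle I anticipate is establishing \eqref{eq:inc7-halving} with the right range of validity, i.e. making sure the halving estimate survives all the way up to (a fixed multiple of) $L(h,\varepsilon_0)$ rather than just up to $n_2$ or $2^{j_1}$; this requires being careful that the RSW-type bounds of Lemma \ref{RSWbound} and the circuit estimate \eqref{K(2.19)}--\eqref{K(2.20)} are applied only to boxes whose widths stay below $L(h,\varepsilon_0)$, exploiting the remark that narrower rectangles extend the range to $2L$ or $4L$. A secondary, purely bookkeeping, difficulty is organizing the dyadic summation so the geometric series converges — this is resolved by the fixed-width-block grouping described above, and involves no subtle estimate, only a choice of the block width $\ell$ in terms of $\kappa$. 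Everything else is routine monotonicity and summation.
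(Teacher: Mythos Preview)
Your approach has a genuine gap at its core. The halving estimate \eqref{eq:inc7-halving}, $\pi_h(2m)\le\kappa\,\pi_h(m)$, iterates in the \emph{wrong direction} for what you need: starting from $\pi_h(n)\le\kappa\,\pi_h(n/2)\le\kappa^2\pi_h(n/4)\le\cdots$ yields $\pi_h(n/2^{i+1})\ge\kappa^{-(i+1)}\pi_h(n)$, a \emph{lower} bound. To control $\sum_k\pi_h(k)$ from above you need an \emph{upper} bound on $\pi_h$ at the smaller scales; that comes from the opposite inequality $\pi_h(2m)\ge c\,\pi_h(m)$ (this is what \eqref{K(2.20)} actually says), giving $\pi_h(n/2^{i+1})\le c^{-(i+1)}\pi_h(n)$ with $c=\delta_4^4\delta_2$. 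But then your dyadic block contributes $n(2c)^{-(i+1)}\pi_h(n)$, and the series diverges because $c<1/2$. Your block-width ``fix'' does not help: grouping $\ell$ dyadic scales replaces the ratio $(2c)^{-1}$ by $(2c)^{-\ell}$, which is still $>1$ whenever $2c<1$; your claimed implication ``$\kappa^\ell<1/2\Rightarrow 2^{-\ell}\kappa^{-\ell}<1$'' is false (take $\kappa=1/4$, $\ell=2$). In short, quasi-multiplicativity alone cannot yield this bound --- you are trying to prove that $n\pi_h(n)$ is essentially increasing, which is strictly more information.

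The paper follows Kesten's argument from \cite{K86i}, which is a first-moment counting trick and has nothing to do with dyadic summation. One looks at the strip $S^\infty(n)=\{-n\le x^1\le n\}$ and counts the number $V_n$ of points $(0,k)$, $L_0\le k\le 2n$, that are $(+)$-connected to $\{x^1=n\}$ inside the strip. By translation invariance $E[V_n]\le (2n-L_0)\pi_h(n)$. For a lower bound one conditions on the lowest horizontal $(+)$-crossing $\mathcal{R}$ of $[-n,n]\times[0,n]$; for each $\ell$ in range, the point at height $\ell$ above the top of $\mathcal{R}\cap\{x^1=0\}$ connects to $\mathcal{R}$ (and hence to $\{x^1=n\}$) with conditional probability at least a constant times $\pi_h(\ell)$, via a $(+)$-circuit in a suitable annulus around that point --- here the Connection Lemma replaces independence. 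Summing over $\ell$ gives $E[V_n]\ge c\,\mu_{T,h}(E_n^+)\sum_{\ell=L_0+1}^n\pi_h(\ell)$, and the two bounds on $E[V_n]$ combine to the desired inequality.
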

As mentioned in \cite{K86i}, this implies that $n \pi_h(n) $ is 
essentially increasing, and therefore 
$$
\pi_h(n) \geq  \frac{C_{31}'}{n}.
$$
for some constant $C_{31}'>0$ depending only on $L_0$ and $\varepsilon_0$.
Then inserting this into (\ref{eq:inc-7}) repeatedly, we obtain for 
$m\geq 1$,
$$
\pi_h(n) \geq \frac{ C_{31}''(m) (\log n)^m }{n} 
$$
for some constant $C_{31}''(m)>0$, depending on 
$L_0, m$, and $\varepsilon_0$ for every $m\geq 1$.
The proof of the Lemma \ref{inc-7} is similar to the original one
in \cite{K86i}. Only we have to use the connection lemma instead of
independence.

%% \begin{quote} 
%% ----------------------------------------------------------------------- 
%% 
%% \noindent 
%% \framebox{\bf INTERMISSION} 
%% 
%Here in this note, for the sake of completeness, we prove 
%Lemma \ref{inc-7}. 

\begin{proof} The first inequality immediately follows from the fact that $\pi_{h} (k)$ is decreasing in $k$.

Let
\[ S^{\infty} (n) := \{ (x^1,x^2) : - n \leq x_1 \leq n \} \]
and
\[ V_n := \# \left\{ (0,k) : 
\begin{array}{@{\,}c@{\,}} 
L_0 \leq k \leq 2n, \, \mbox{ such that }\\
\mbox{$(0,k) \stackrel{+}{\leftrightarrow} \{ x^1 = n \}$ in $S^{\infty} (n)$} \\
\end{array}
\right\}. \]
By the translation invariance,
\begin{align} \label{K86i(7)-1}
E_{T,h} [V_n] &= (2n-L_0) \mu_{T,h} \bigl( \mbox{$\mathbf{O} \stackrel{+}{\leftrightarrow} \{ x^1 = n \}$ in $S^{\infty} (n)$}\bigr) \\
&\leq (2n-L_0) \pi_{h} (n), \notag
\end{align}
where $E_{T,h}$ denotes the expectation with respect to $\mu_{T,h}$.
We define
\[ E_n^+ := \{ \mbox{there exists a horizontal $(+)$-crossing in $[-n,n] \times [0,n]$} \}. \]
For $\omega \in E_n^+$, let $\mathcal{R}=\mathcal{R} (\omega)$ denote the lowest horizontal $(+)$-crossing in $[-n,n] \times [0,n]$. Put
\[
\tilde{V}_n (\omega) := 
\begin{cases}
\# \left\{ (0,k) : 
\begin{array}{@{\,}c@{\,}} 
\mbox{$L_0 \leq k \leq 2n$, $(0,k)$ is above $\mathcal{R}$, and} \\
\mbox{$(0,k)\stackrel{+}{\leftrightarrow}\mathcal{R}$ in $S^{\infty} (n)$} \\
\end{array} \right\} & \mbox{on $E_n^+$}, \\
0 & \mbox{on $(E_n^+)^c$}.
\end{cases} 
\]
Note that $V_n \geq \tilde{V}_n$, and
\begin{align} \label{K86i(7)-2}
E_{T,h} [V_n] &\geq E_{T,h} \bigl[ \tilde{V}_n \bigr] =   E_{T,h} \bigl[ \tilde{V}_n : E_n^+ \bigr] =\sum_{R}  E_{T,h} \bigl[ \tilde{V}_n : \mathcal{R}=R \bigr], %\notag
\end{align}
where $\displaystyle \sum_{R}$ denotes the summation over all horizontal crossings $R$ in $[-n,n] \times [0,n]$. 
Now we fix such a crossing $R$ arbitrarily, and define
\[ v^* = v^* (R) = (0,k_0^*) \]
as the highest point of $\{ x^1=0\} \cap R$.
Then we have
\begin{align} \label{K86i(7)-3}
&E_{T,h} \bigl[ \tilde{V}_n : \mathcal{R}=R \bigr] \\
&\geq \mu_{T,h} \bigl( \mathcal{R}=R \bigr) \sum_{\ell = L_0+1}^{2n-k_0^*} \mu_{T,h} \bigl( \mbox{$(0,k_0^*+\ell) \stackrel{+}{\leftrightarrow} R$ in $S^{\infty} (n)$} \,\bigr|\,  \mathcal{R}=R \bigr), \notag
\end{align}
Since $k_0^*\leq n$, the right hand side of (\ref{K86i(7)-3}) is not
less than
$$ \mu_{T,h} \bigl( \mathcal{R}=R \bigr) \sum_{\ell = L_0+1}^{n} \mu_{T,h} \bigl( \mbox{$(0,k_0^*+\ell) \stackrel{+}{\leftrightarrow} R$ in $S^{\infty} (n)$} \,\bigr|\,  \mathcal{R}=R \bigr). $$

Now for each $\ell \leq n$, we define an annulus
\[ 
\textstyle
A(\ell) := \left\{ \left( \left[ -\frac{2\ell}{3},\frac{2\ell}{3} \right] \times [-\frac{4}{3}\ell,\frac{4}{3}\ell] \right) \setminus \left( \left[ -\frac{\ell}{3},\frac{\ell}{3} \right] \times [-\ell,\ell] \right) \right\} + (0,k_0^*+\ell). 
\]
%(Strictly speaking, $A(\ell)$ depends on $R$, but we fix $R$ now.) 
Since 
\[ d_\infty \bigl( A(\ell), S^{\infty} (n)^c \bigr) \geq \dfrac{n}{3}, \]
we can see that $A (\ell) \subset S^{\infty} (n)$. The highest point $v^*$ of $R$ divides $R$ into two parts, which are denoted by $R_1$ and $R_2$, respectively. Let $A^+(\ell)$ be a portion of $A(\ell)$ above $R$, in which $R_1$ can be connected to $R_2$; $A^+(\ell) \cup R$ can contain a circuit surrounding $(0,k_0^*+\ell)$. We consider the event
\[
F^+(\ell) := \left\{
\begin{array}{@{\,}c@{\,}} 
\mbox{there is a $(+)$-path $r$ in $A^+(\ell)$} \\
\mbox{such that $r$ connects $R_1$ with $R_2$, and} \\
\mbox{$A^+(\ell) \cup R$ contains a $(+)$-circuit surrounding $(0,k_0^*+\ell)$} \\
\end{array}
\right\}.
\] 
By the condition that $L_0\geq 6n_3(8,\varepsilon_0)$, if $\ell \geq L_0$, then
we can use the Connection lemma, and obtain
\[ \mu_{T,h} \bigl( F^+(\ell)\, \bigl| \, \mathcal{R}=R \bigr) \geq \left( \dfrac{\delta_{64} \delta_{32}}{16} \right)^2 =: C_1^\# >0.
\]
For $\omega \in F^+(\ell)$, we denote the maximal $(+)$-circuit surrounding $(0,k_0^*+\ell)$ in $A^+(\ell) \cup R$ by $\mathcal{C}$. By the Markov property, the FKG inequality and (\ref{K(2.20)}),
\begin{align*}
&\mu_{T,h} \bigl( \mbox{$(0,k_0^*+\ell) \stackrel{+}{\leftrightarrow} R$ in $S^{\infty}(n)$} \, \bigl| \, \mathcal{R}=R,\,\mathcal{C}=C \bigr)\\
&=q_{\Theta_C,T,h}^{\omega^+} \bigl( (0,k_0^*+\ell) \stackrel{+}{\leftrightarrow} C \bigr) \\
&\geq \mu_{T,h} \bigl( (0,k_0^*+\ell) \stackrel{+}{\leftrightarrow} C \bigr) \geq \pi_{h} (2\ell),
\end{align*}
where $\Theta_C$ is the interior of the circuit $C$.
Since $\ell \leq n\leq L(h,\varepsilon_0)$, the right hand side is bounded from below by
$ \delta_4^4\delta_2\pi_h(\ell )$. Thus we have
\begin{align*}
&\mu_{T,h}\bigl( \mbox{$(0,k_0^*+\ell) \stackrel{+}{\leftrightarrow} R$ in $S^{\infty}(n)$} \, \bigl| \, \mathcal{R}=R \bigr)\\
&\geq\mu_{T,h} \bigl( \{ \mbox{$(0,k_0^*+\ell) \stackrel{+}{\leftrightarrow} R$ in $S^{\infty}(n)$}\} \cap F^+(\ell)\, \bigl| \, \mathcal{R}=R \bigr) \\
&= \sum_{C}  \mu_{T,h} \bigl( \mbox{$(0,k_0^*+\ell) \stackrel{+}{\leftrightarrow} R$ in $S^{\infty}(n)$} \, \bigl| \, \mathcal{R}=R,\,\mathcal{C}=C \bigr)
\mu_{T,h} \bigl( \mathcal{C}=C \,\bigl|\,\mathcal{R}=R \bigr) \\
&\geq \delta_4^4\delta_2\pi_{h}(\ell)\mu_{T,h} \bigl( F^+(\ell) \,\bigl|\,\mathcal{R}=R \bigr) \geq C_1^\# C_2^\# \pi_{h}(\ell),
\end{align*}
where $C_2^\# := \delta_4^4 \delta_2$.
Together with (\ref{K86i(7)-3}), this implies that
\[
E_{T,h} \bigl[ \tilde{V}_n : \mathcal{R}=R \bigr] \geq \mu_{T,h} \bigl( \mathcal{R}=R \bigr) C_1^\#C_2^\# \sum_{\ell=L_0+1}^n \pi_{h} (\ell).
\]
By (\ref{K86i(7)-2}),
\[ E_{T,h} \bigl[ V_n \bigr] 
\geq \mu_{T,h} \bigl( E_n^+ \bigr) C_1^\#C_2^\# \sum_{\ell=L_0+1}^n \pi_{h} (\ell).
\]
Since $\mu_{T,h} (E_n^+) \geq \delta_2$ by (\ref{eq:crossing}), (\ref{K86i(7)-1}) implies that
\begin{align*}
(2n-L_0) \pi_{h}(n) \geq C_1^\#C_2^\#\delta_2 \sum_{\ell=L_0+1}^n \pi_{h} (\ell),
\end{align*}
which proves the desired upper bound.
\end{proof}
%% ----------------------------------------------------------------------- 
%% \end{quote}

\subsection{Connectivity function}

For $x \in \mathbf{Z}^2$, let
\[\tau_{\text{cr}}(\mathbf{O},x) := \mu_{\text{cr}} \bigl( \mathbf{O}  \stackrel{+}{\leftrightarrow} x \bigr). \]

\begin{Lemma}[cf. \cite{K87IMA} Lemma] \label{lem:K87IMA}There exist 
$C_{32}$ and $C_{33}$, depending only on $\varepsilon_0$, such that
\[ C_{32} \pi_{\text{cr}}(|x|_{\infty})^2 \leq \tau_{\text{cr}}(\mathbf{O} ,x) 
 \leq C_{33} \pi_{\text{cr}}(|x|_{\infty})^2 \]
for all $x \in \mathbf{Z}^2$. 
\end{Lemma}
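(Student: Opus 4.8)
The plan is to reduce the two-point function $\tau_{\text{cr}}(\mathbf{O},x)$ to a product of two one-arm probabilities at scale $|x|_\infty$, exactly as in Kesten's IMA note, but replacing the independence arguments there with the mixing property (\ref{HZ(1.1)}), (\ref{HZ(1.3)}) and the Connection Lemma (Lemma~\ref{connection lemma}), together with the quasi-multiplicativity of $\pi_{\text{cr}}$ that is already available. Throughout write $r=|x|_\infty$; by symmetry we may assume $x$ lies in the first coordinate octant. The two inequalities require different ideas, so I would treat them separately.

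\textbf{Upper bound.} If $\mathbf{O}\stackrel{+}{\leftrightarrow}x$, then in particular $\mathbf{O}$ is connected to $\partial_{in}S(r/2)$ by a $(+)$-path inside $S(r/2)$, \emph{and} $x$ is connected to $\partial_{in}S(x,r/2)$ by a $(+)$-path inside $S(x,r/2)$. Since $d_\infty(S(r/2),S(x,r/2))$ is of order $r$, the mixing property (\ref{HZ(1.3)}) gives
\[
\tau_{\text{cr}}(\mathbf{O},x)\leq \pi_{\text{cr}}(r/2)^2 + C r^2 e^{-\alpha r/2}.
\]
By Lemma~\ref{RSWbound} and the FKG inequality $\pi_{\text{cr}}(r/2)\leq C_1^\#\,\pi_{\text{cr}}(r)$ for an absolute constant (monotonicity plus one rescaling step as in (\ref{eq:thm1-3})), and by the lower bound $\pi_{\text{cr}}(r)\geq C_5 r^{-1}$ of the one-arm theorem the error term is negligible compared with $\pi_{\text{cr}}(r)^2$; absorbing it gives $\tau_{\text{cr}}(\mathbf{O},x)\leq C_{33}\,\pi_{\text{cr}}(r)^2$.

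\textbf{Lower bound.} This is the substantive direction. The plan is a standard gluing construction: take a $(+)$-arm from $\mathbf{O}$ to $\partial_{in}S(r/4)$, a $(+)$-arm from $x$ to $\partial_{in}S(x,r/4)$, a $(+)$-circuit in the annulus $S(r/2)\setminus S(r/4)$ surrounding $\mathbf{O}$, a $(+)$-circuit in $S(x,r/2)\setminus S(x,r/4)$ surrounding $x$, and finally a $(+)$-crossing of a tube of bounded aspect ratio joining these two circuits. Each arm event has probability $\gtrsim\pi_{\text{cr}}(r/4)$; the circuits have probability $\geq\delta_4^4$ by Lemma~\ref{lem:circuits}; the connecting tube crossing has probability bounded below by a constant via Lemma~\ref{RSWbound}, and the whole configuration forces $\mathbf{O}\stackrel{+}{\leftrightarrow}x$. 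Because all these events are increasing, the FKG inequality lets one multiply the probabilities, yielding $\tau_{\text{cr}}(\mathbf{O},x)\geq c\,\pi_{\text{cr}}(r/4)^2$, and then $\pi_{\text{cr}}(r/4)\geq c'\pi_{\text{cr}}(r)$ by the same quasi-multiplicativity as above (Lemma~\ref{RSWbound} and FKG), giving $\tau_{\text{cr}}(\mathbf{O},x)\geq C_{32}\,\pi_{\text{cr}}(r)^2$.

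\textbf{Main obstacle.} The delicate point is not any single estimate but making the lower-bound construction robust when $x$ is close to the axes or the diagonal (so that $S(r/4)$ and $S(x,r/4)$ are not disjoint, i.e.\ $r$ is small, or the two annuli overlap): for $r$ of order $1$ the claim is trivial by the finite-energy property, so one only needs $r\geq r_0$ for a fixed $r_0$, and then the annuli at radii $r/4,r/2$ around $\mathbf{O}$ and around $x$ are disjoint and the joining tube has bounded aspect ratio uniformly in the direction of $x$. A minor additional care is that the quasi-multiplicativity step $\pi_{\text{cr}}(r/4)\asymp\pi_{\text{cr}}(r)$ must hold \emph{with constants independent of $h$} at the critical point, which is exactly Lemma~\ref{RSWbound} combined with Lemma~\ref{RSW-cr}; since we are at $h=h_c(T)$ and $L(h_c(T),\varepsilon_0)=\infty$, this is available for all $r$.
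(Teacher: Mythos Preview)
Your overall strategy matches the paper's, but there are two slips worth flagging, and the paper's lower bound is organised more simply.

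For the upper bound, your choice of radius $r/2$ does not work: with $|x|_\infty=r$ the boxes $S(r/2)$ and $S(x,r/2)$ can share a face, so $d(S(r/2),S(x,r/2))=0$ and the mixing property (\ref{HZ(1.3)}) gives no decoupling. The paper takes radius $r/4$ (so the boxes are at distance $\geq r/2$), obtains $\tau_{\text{cr}}(\mathbf{O},x)\leq \pi_{\text{cr}}(r/4)^2 + C(r/4)^2\cdot(r/2)\,e^{-\alpha r/2}$, and then absorbs the error via (\ref{K(5.1)}) and (\ref{K(2.20)}) exactly as you describe. Once you shrink the radius, your argument is identical to the paper's.

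For the lower bound, your two-circuit-plus-tube construction is more elaborate than necessary, and as written it has a gap: an arm from $\mathbf{O}$ to $\partial_{in}S(r/4)$ terminates on the \emph{inner} boundary of the annulus $S(r/2)\setminus S(r/4)$, so it need not meet the circuit living in that annulus. You would have to send the arms out to $\partial_{in}S(r/2)$ so that they cross the annuli and are forced to intersect the circuits. The paper sidesteps this entirely with a one-circuit construction: since both $\mathbf{O}$ and $x$ lie in $S(r)$, it takes arms $\{\mathbf{O}\stackrel{+}{\leftrightarrow}\partial_{in}S(2r)\}$ and $\{x\stackrel{+}{\leftrightarrow}\partial_{in}S(2r)\}$ together with a single $(+)$-circuit in $S(2r)\setminus S(r)$, applies FKG, and gets $\tau_{\text{cr}}(\mathbf{O},x)\geq \pi_{\text{cr}}(2r)\cdot\pi_{\text{cr}}(3r)\cdot\delta_4^4$ directly (using $S(x,3r)\supset S(2r)$ for the second arm). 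This avoids your case analysis for overlapping annuli and the separate joining tube; the quasi-multiplicativity (\ref{K(2.20)}) then converts $\pi_{\text{cr}}(2r),\pi_{\text{cr}}(3r)$ to $\pi_{\text{cr}}(r)$ as you already noted.
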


\begin{proof} For the upper bound, note that 
\begin{align*}
\mu_{\text{cr}} \bigl( \mathbf{O}  \stackrel{+}{\leftrightarrow} x \bigr) \leq \mu_{\text{cr}} \bigl( \mathbf{O}  \stackrel{+}{\leftrightarrow} \partial_{in} S(|x|_{\infty}/4),\, x \stackrel{+}{\leftrightarrow} \partial_{in} S(x,|x|_{\infty}/4) \bigr).
\end{align*}
Using the mixing property, the translation-invariance, (\ref{K(5.1)}), and (\ref{K(2.20)}), we have
\begin{align*}
\tau_{\text{cr}} (\mathbf{O} ,x)
\leq \pi_{\text{cr}}(|x|_{\infty}/4)^2 + C \left( \frac{|x|_{\infty}}{4} \right)^2 \cdot \frac{|x|_{\infty}}{2} \cdot e^{-\alpha |x|_{\infty}/2}
\leq C_{33} \pi_{\text{cr}}(|x|_{\infty})^2.
\end{align*}

For the lower bound, note that
\begin{align*}
\mu_{\text{cr}} \bigl( \mathbf{O} \stackrel{+}{\leftrightarrow} x \bigr) 
\geq \mu_{\text{cr}} \left(
\begin{array}{@{\,}c@{\,}} 
\mathbf{O} \stackrel{+}{\leftrightarrow} \partial_{in} S(2|x|_{\infty}),\, x \stackrel{+}{\leftrightarrow} \partial_{in} S(2|x|_{\infty}), \\
\mbox{there exists a $(+)$-circuit in $S(2|x|_{\infty}) \setminus S(|x|_{\infty})$} \\
\end{array} 
\right),
\end{align*}
and 
\[ \mu_{\text{cr}} \bigl( x \stackrel{+}{\leftrightarrow} \partial_{in} S(2|x|_{\infty}) \bigr) \geq  \mu_{\text{cr}} \bigl( x \stackrel{+}{\leftrightarrow} \partial_{in} S(x,3|x|_{\infty}) \bigr) = \pi_{\text{cr}}(3|x|_{\infty}). \]
Using the FKG inequality, (\ref{eq:crossing}) and (\ref{K(2.20)}), we have
\begin{align*}
\tau_{\text{cr}} (\mathbf{O} ,x) 
\geq \pi_{\text{cr}}(2|x|_{\infty}) \cdot \pi_{\text{cr}}(3|x|_{\infty}) 
   \cdot \delta_4^4
\geq C_{32} \pi_{\text{cr}} (|x|_{\infty})^2.
\end{align*}
\end{proof}

\subsection{Radius and volume of a percolation cluster}

We introduce the radius of the $(+)$-cluster ${\mathbf C}_0^+$
containing the origin:
\[ R=R({\bf C}_0^+):= \max \{ |v|_{\infty} : v \in {\bf C}_0^+ \}.  \]
\begin{Lemma}[cf. \cite{K87IMA} (10)]\label{lem:radi-1}
Let $n_5\geq n_2$ be given by 
\begin{equation}\label{eq:bound n5}
2C n^3 e^{-\alpha n/2}< \frac{\delta_8^4}{2} \quad \mbox{ for every }
n \geq n_5.
\end{equation}
Then we have
\begin{equation}\label{Kbg(10)}
\mu_{T,h} \bigl( n \leq R < 2n \bigr) \geq 
 \frac{\delta_8^4}{2} \pi_{h}(n)
\end{equation}
for every $n_5\leq n\leq L(h,\varepsilon_0)$.
\end{Lemma}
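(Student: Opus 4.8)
The plan is to establish the lower bound $\mu_{T,h}(n \leq R < 2n) \geq \frac{\delta_8^4}{2}\pi_h(n)$ by exhibiting an explicit event that forces $n \leq R < 2n$ and whose probability is comparable to $\pi_h(n)$. First I would observe that $\{n \leq R < 2n\}$ contains the event that there is a $(+)$-path from $\mathbf{O}$ to $\partial_{in}S(n)$ (which guarantees $R \geq n$), together with the event that there is a $(-*)$-circuit surrounding $S(n)$ inside the annulus $S(2n) \setminus S(n)$ (which guarantees that $\mathbf{C}_0^+$ cannot reach $\partial_{in}S(2n)$, hence $R < 2n$). On the intersection of these two events we indeed have $n \leq R < 2n$.

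Next I would estimate the probability of this intersection from below. By the FKG inequality applied naively one runs into the difficulty that $\{\mathbf{O} \stackrel{+}{\leftrightarrow} \partial_{in}S(n)\}$ is increasing while the $(-*)$-circuit event is decreasing, so the two are negatively correlated and FKG goes the wrong way. The way around this, following the standard construction, is to use the mixing property (specifically (\ref{HZ(1.3)}) or (\ref{HZ(1.1)})): the $(+)$-connection event is measurable with respect to $\mathcal{F}_{S(n)}$, while the $(-*)$-circuit event is measurable with respect to $\mathcal{F}_{S(2n) \setminus S(n)}$ — more precisely one should take a slightly thinner inner box, say require the $(+)$-path to reach $\partial_{in}S(n - (\log n)^2)$ or simply decouple at scale $n/2$. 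Using the mixing property to decouple, the probability of the intersection is at least
\[
\pi_h(n) \cdot \Bigl[ \mu_{T,h}\bigl( \text{$(-*)$-circuit surrounding $S(n)$ in $S(2n)\setminus S(n)$} \bigr) - 2Cn^3 e^{-\alpha n/2} \Bigr],
\]
up to adjusting the box sizes slightly so that the decoupling distance is of order $n$.

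The circuit probability is bounded below by a product of four crossing probabilities via the FKG inequality: a $(-*)$-circuit in an annulus of modulus bounded away from $1$ exists whenever four suitably placed $(-*)$-crossings of rectangles (each of aspect ratio like $8$ to $1$, say in $S(2n) \setminus S(n)$) are present, and by Lemma \ref{RSWbound}, specifically (\ref{eq:*crossing}), each such crossing has probability at least $\delta_8$ when $n_2 \leq n < L(h,\varepsilon_0)$. Hence the circuit probability is at least $\delta_8^4$. Combining with the mixing correction and using the definition of $n_5$ in (\ref{eq:bound n5}), the bracketed quantity is at least $\delta_8^4 - 2Cn^3 e^{-\alpha n/2} \geq \delta_8^4/2$ for $n \geq n_5$, which yields (\ref{Kbg(10)}).

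The main obstacle, and the only genuinely delicate point, is handling the correlation between the increasing $(+)$-connection event and the decreasing $(-*)$-circuit event: one must be careful to separate the regions on which the two events depend (inserting a buffer annulus of width of order $n$, or shrinking $\partial_{in}S(n)$ to $\partial_{in}S(n')$ with $n' = n - O(\log^2 n)$ as in the proof of Lemma \ref{RSW-gen}) so that the mixing property (\ref{HZ(1.3)}) applies with an error term that is genuinely exponentially small in $n$; after that, the rest is a routine FKG-plus-RSW estimate. One should also check that the slight shrinking of the inner box does not cost more than a constant factor in $\pi_h(n)$ — this follows from (\ref{K(2.20)}) in Lemma \ref{lem:circuits} together with the RSW bounds, exactly as in (\ref{eq:thm1-3}) — so that the final constant can still be absorbed into $\delta_8^4/2$ (or, if one prefers, stated with a slightly different explicit constant).
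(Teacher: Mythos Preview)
Your proposal is correct and follows essentially the same approach as the paper. The paper avoids the decoupling complications you worry about by simply placing the $(-*)$-circuit in the thinner annulus $S(2n)\setminus S(3n/2)$ rather than $S(2n)\setminus S(n)$; this leaves a buffer of width $n/2$ between the circuit event and the one-arm event in $S(n)$, so the mixing property (\ref{HZ(1.3)}) applies directly with error $2Cn^3 e^{-\alpha n/2}$ and no adjustment to $\pi_h(n)$ is needed.
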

Note that $2^{j_1+1}\geq n_5$ by (\ref{eq:cond-for-j1-2}).
\begin{proof} Note that
\begin{align*}
\mu_{T,h} \bigl( n \leq R < 2n \bigr)
\geq \mu_{T,h} \left( 
\begin{array}{@{\,}c@{\,}} 
\mathbf{O}  \stackrel{+}{\leftrightarrow} \partial S(n), \\
\mbox{there exists a $(-*)$-circuit in $S(2n) \setminus S(3n/2)$} \\
\end{array} 
\right).
\end{align*}
By the mixing property, (\ref{K(2.19)}), and (\ref{Kbg(10)}), we have
\begin{align*}
\mu_{T,h} \bigl( n \leq R < 2n \bigr)
\geq \pi_h(n) \cdot \biggl( 
 \delta_8^4 - 2Cn^3 e^{-\alpha n/2}
         \biggr)
\geq \frac{\delta_8^4}{2} \pi_h(n).
\end{align*}
\end{proof}

The next proposition is a variant of Theorem 8 of \cite{K86i}.

\begin{Proposition}[cf. \cite{K87scaling} (3.3)]\label{inc-8}
For every $t\geq 1$, there exist constants $C_{34}(t),C_{35}(t)>0$,
depending only on $t, j_1$ and $ \varepsilon_0$, such that
\begin{align*}
C_{34}(t){\bigl( n^2\pi_h(n) \bigr) }^t
& \leq  E_{T,h}\left[ \left. {[\# ({\bf C}_0^+\cap S(n))] }^t \,\right|\, 
   \mathbf{O}  \stackrel{+}{\leftrightarrow} \partial_{in} S(n) \right] \\
&\leq C_{35}(t) {\bigl( n^2 \pi_h(n) +1 \bigr) }^t ,
\end{align*}
for every $2^{j_1+5}< n \leq L(h,\varepsilon_0)$.
\end{Proposition}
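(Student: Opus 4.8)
As in the proof of Theorem \ref{KTheorem1}, I would first reduce to $n=2^k$: by monotonicity in $n$ together with Lemma \ref{RSWbound} and the FKG inequality one has $\pi_h(2^{k+1})\asymp\pi_h(2^k)\asymp\pi_h(n)$ and $E_{T,h}[N_n^t:A(n)]\asymp E_{T,h}[N_{2^k}^t:A(2^k)]$ for $2^k\le n<2^{k+1}$, where I abbreviate $N_n=\#(\mathbf C_0^+\cap S(n))$ and $A(n)=\{\mathbf O\stackrel{+}{\leftrightarrow}\partial_{in}S(n)\}$, so that $\mu_{T,h}(A(n))=\pi_h(n)$. Two auxiliary facts are used throughout. (a) \emph{Quasi-multiplicativity of $\pi_h$}: for $2^{j_1}\le r\le n<L(h,\varepsilon_0)$ one has $\pi_h(n)\asymp\pi_h(r)\,\mu_{T,h}(\mathcal B(1,0,r,n))$, the lower bound coming from FKG together with Lemma \ref{lem:circuits} (glue an arm to $\partial_{in}S(r)$, a $(+)$-circuit at scale $r$, and an annulus crossing) and the upper bound from the mixing property (\ref{HZ(1.1)}); combined with (\ref{K(5.1)}), (\ref{K(2.20)}) and Theorem \ref{KTheorem1}, $\pi_h(r)/\pi_h(n)$ is comparable to $\mu_{T,h}(\mathcal B(1,0,r,n))^{-1}$. (b) \emph{Essential monotonicity of $m\,\pi_h(m)$}: Lemma \ref{inc-7} (and the bound $\pi_h(m)\ge C/m$) gives $m\,\pi_h(m)\lesssim n\,\pi_h(n)$ for $L_0<m\le n<L(h,\varepsilon_0)$, whence $\sum_{m\le n}m\,\pi_h(m)\asymp n^2\pi_h(n)$ and, using also the upper bound in Lemma \ref{inc-7}, $\sum_{m\le n}m\,\pi_h(m)^2\asymp n^2\pi_h(n)^2$.

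\textbf{Lower bound.} It suffices to treat $t=1$ and conclude by Jensen. Fix the good event $G\subset A(n)$ that there is a $(+)$-circuit $\mathcal C$ surrounding $S(n/2)$ in $S(2n/3)\setminus S(n/2)$, that $\mathbf O$ is $(+)$-connected to $\mathcal C$, and that $\mathcal C$ is $(+)$-connected to $\partial_{in}S(n)$; by FKG, Lemma \ref{lem:circuits}, Lemma \ref{RSWbound} and (\ref{K(2.20)}), $\mu_{T,h}(G)\gtrsim\pi_h(n)$. By a standard FKG construction (attach to $v$ a $(+)$-arm of length of order $|v|_\infty$ and a $(+)$-circuit around $v$ meeting $\mathcal C$) one gets, for every $v$ in a fixed annulus $S(n/3)\setminus S(n/4)$ of $\asymp n^2$ sites, $\mu_{T,h}(\{v\in\mathbf C_0^+\}\cap G)\gtrsim\pi_h(|v|_\infty)\,\pi_h(n)\gtrsim\pi_h(n)^2$. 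Summing over such $v$ yields $E_{T,h}[N_n:G]\gtrsim n^2\pi_h(n)^2$, hence $E_{T,h}[N_n\mid G]\gtrsim n^2\pi_h(n)$, and by Jensen $E_{T,h}[N_n^t:A(n)]\ge E_{T,h}[N_n^t:G]\ge\mu_{T,h}(G)(E_{T,h}[N_n\mid G])^t\gtrsim\pi_h(n)(n^2\pi_h(n))^t$; dividing by $\mu_{T,h}(A(n))=\pi_h(n)$ gives the lower bound.

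\textbf{Upper bound.} Expand $E_{T,h}[N_n^t:A(n)]=\sum_{v_1,\dots,v_t\in S(n)}\mu_{T,h}\bigl(A(n)\cap\bigcap_{i=1}^t\{\mathbf O\stackrel{+}{\leftrightarrow}v_i\}\bigr)$. The key single-point estimate is $\mu_{T,h}(\{v\in\mathbf C_0^+\}\cap A(n))\lesssim\pi_h(|v|_\infty)\,\pi_h(n)$ for $2^{j_1}\le|v|_\infty\le n/2$ (and $\le\pi_h(n)$ for smaller $v$, of which there are $O(1)$): with $m=|v|_\infty$, on that event there are three disjoint witnesses sitting in the pairwise far-apart regions $S(m/4)$, $S(v,m/4)$ and $S(n)\setminus S(2m)$ — a $(+)$-arm from $\mathbf O$ to $\partial_{in}S(m/4)$, a $(+)$-arm from $v$ to $\partial_{in}S(v,m/4)$, and a $(+)$-crossing of the annulus $S(n)\setminus S(2m)$ — so two applications of (\ref{HZ(1.3)}) bound the probability by $\pi_h(m/4)^2\,\mu_{T,h}(\mathcal B(1,0,2m,n))+(\text{super-polynomially small error})\lesssim\pi_h(m)\pi_h(n)$ by (a). Together with (b) this already settles $t=1$, since $\sum_v\mu_{T,h}(\{v\in\mathbf C_0^+\}\cap A(n))\lesssim\pi_h(n)\bigl(1+\sum_{m\le n}m\,\pi_h(m)\bigr)\lesssim\pi_h(n)(1+n^2\pi_h(n))$. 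For general $t$ I would follow Kesten's branching argument: the $(+)$-cluster realizing the event contains a spanning tree of $\{\mathbf O,v_1,\dots,v_t\}$ with at most $t-1$ extra branch vertices, and decomposing along the edges of this tree — extracting near each vertex disjoint arm/annulus-crossing witnesses in well-separated regions and applying (\ref{HZ(1.3)}) and the Connection Lemma \ref{connection lemma} iteratively in place of the van den Berg--Kesten inequality used in \cite{K87scaling} — bounds the $t$-point probability by a product of factors, each comparable to some $\pi_h(\cdot)$ or to some $\mu_{T,h}(\mathcal B(1,0,\cdot,\cdot))$. Summing over the positions of the $v_i$ (each summation giving, via (a), (b) and (\ref{K(5.1)}), a factor $\lesssim n^2\pi_h(n)$) and over the finitely many tree topologies (a factor depending only on $t$), one obtains $E_{T,h}[N_n^t:A(n)]\lesssim C(t)\,\pi_h(n)\,(n^2\pi_h(n)+1)^t$; dividing by $\pi_h(n)$ finishes the proof.

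\textbf{Main obstacle.} The routine ingredients are the single-point estimate and the scale sums. The delicate point is the general-$t$ upper bound: making the decoupling at the branch vertices of the spanning tree rigorous without BK — so that the $t$-point connection probability genuinely factorizes, up to constants, into one-arm and annulus-crossing factors over disjoint regions — and organizing the resulting scale sums so that exactly $t$ factors of $n^2\pi_h(n)$ are produced, with the branch-vertex corrections absorbed into the $t$-dependent constant rather than into the power of $n$. This is precisely the setting that the extension/fence machinery of Sections 5--7 and the branching argument of Section 9 (Lemma \ref{KLemma7}) are designed for, and one expects the argument to run parallel to \cite{K87scaling}, Theorem 8, with the Connection Lemma doing the work of independence; all constants then depend only on $t$, $j_1$, $\varepsilon_0$ (and $\eta$, $T$), as asserted.
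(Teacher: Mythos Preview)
Your lower bound and the $t=1$ upper bound are correct and close to the paper's argument (the paper uses the FKG--circuit trick rather than direct three-region mixing for the single-point estimate, but either works). The reduction to $n=2^k$ is harmless but unnecessary.

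The genuine divergence is in the general-$t$ upper bound. You propose a spanning-tree decomposition with branch vertices, and correctly flag the factorization at branch vertices as the main obstacle; you then gesture at the fence/extension machinery and Lemma~\ref{KLemma7}. This route is substantially harder than necessary in the Ising setting---the tree edges need not sit in spatially separated regions, so mixing alone does not give the BK-type factorization you want, and Lemma~\ref{KLemma7} concerns pivotal sites for the crossing event, not $t$-point connection probabilities, so it is not directly relevant here.

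The paper instead follows Nguyen's much simpler induction (\cite{Nguyen88}), which avoids any simultaneous factorization. Fix $v_1,\dots,v_{t-1}$ and set $k=d_\infty(v_t,\{\mathbf{O},v_1,\dots,v_{t-1}\})$. Insert, via FKG at cost $\delta_4^{-4}$, a $(+)$-circuit in $S(v_t,k)\setminus S(v_t,k/2)$; on that event the remaining event $\{v_1,\dots,v_{t-1}\stackrel{+}{\leftrightarrow}\mathbf{O}\}\cap A(n)$ is realized \emph{outside} $S(v_t,k/2)$, so a single application of mixing splits off the factor $\pi_h(k/4)$. Since at most $8tk$ sites lie at distance exactly $k$ from the set $\{\mathbf{O},v_1,\dots,v_{t-1}\}$, summing over $v_t$ and using Lemma~\ref{inc-7} gives
\[
\sum_{v_t\in S(n)}\mu_{T,h}\bigl(v_1,\dots,v_t\stackrel{+}{\leftrightarrow}\mathbf{O}\mid A(n)\bigr)
\le tC\,(n^2\pi_h(n)+1)\,\mu_{T,h}\bigl(v_1,\dots,v_{t-1}\stackrel{+}{\leftrightarrow}\mathbf{O}\mid A(n)\bigr),
\]
which is a clean induction step. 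No tree topologies, no branch-vertex sums, and no need for the Connection Lemma or the fence argument. What this buys over your approach is precisely that the ``obstacle'' you identify disappears: one never needs more than one arm decoupled at a time, and FKG plus mixing suffices.
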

%% The proof of this proposition is similar to the one in \cite{K86i} and
%% its simpler version in \cite{Nguyen88}.
%%%%%%%%%%%%%%%%%%%%%%%%%%%%%%%%%%%%%%%%%%%%%%%%%%%%%%%
%% \begin{quote} 
%% ----------------------------------------------------------------------- 
%% 
%% \noindent 
%% \framebox{\bf INTERMISSION} 
\begin{proof}
The proof of this proposition is similar to the one in \cite{K86i} and
its simpler version in \cite{Nguyen88}.
We first show the lower bound for $t=1$.
Observe that
\begin{align*}
&E_{T,h}\left[
  \# ({\bf C}_0^+\cap S(n))  \,\left|\, 
      \mathbf{O}  \stackrel{+}{\leftrightarrow} \partial_{in} S(n)
      \right. \right] \\
&= \sum_{v\in S(n)}\mu_{T,h}\left(\left.
     v \stackrel{+}{\leftrightarrow} \mathbf{O} \,\right|\, 
    \mathbf{O}  \stackrel{+}{\leftrightarrow} \partial_{in} S(n)
       \right) \\
&\geq \sum_{v \in S(n/2)} 
     \mu_{T,h}\left(\left.
      \{ v \stackrel{+}{\leftrightarrow} \mathbf{O} \}
     \cap E_n \,\right|\, 
   \mathbf{O}  \stackrel{+}{\leftrightarrow} \partial_{in} S(n)
       \right) \\
&=  \sum_{v \in S(n/2)} 
     \mu_{T,h}\left(\left.
      \{ v \stackrel{+}{\leftrightarrow} \partial_{in} S(n) \}
     \cap E_n\,\right|\, 
   \mathbf{O}  \stackrel{+}{\leftrightarrow} \partial_{in} S(n)
       \right),
\end{align*}
where $E_n$ is the event that there is a $(+)$-circuit 
in $S(n)$ surrounding $ S(n/2)$.
By the FKG inequality and Lemma \ref{RSWbound}, we have for $n\geq 4n_2$,
\begin{align*}
& \sum_{v \in S(n/2)} 
     \mu_{T,h}\left(\left.
      \{ v \stackrel{+}{\leftrightarrow} \partial_{in} S(n) \}
     \cap E_n\,\right|\, 
   \mathbf{O}  \stackrel{+}{\leftrightarrow} \partial_{in} S(n)
       \right) \\
&\geq \sum_{v \in S(n/2)}
    \mu_{T,h}\left(
       v \stackrel{+}{\leftrightarrow} \partial_{in} S(n) 
              \right)
   \mu_{T,h}( E_n ) \\
&\geq \delta_4^4 \frac{n^2}{4} \pi_h(2n) \geq \delta_4^8\delta_2 \frac{n^2}{4}\pi_h(n)\\
&=: C_1^\# n^2 \pi_h(n).
\end{align*}
This is true if $ n \leq L(h,\varepsilon_0)$.
The constant $C_1^\# >0 $ depends only on $\varepsilon_0$.
Thus, we obtain a desired lower bound for $t=1$.
The lower bound for general $t>1$ follows from this and Jensen's
inequality.% and (\ref{eq:12-1}).

Next, we show the upper bound.
We show it for an integer $t\geq 1$; 
the bound for general $t\geq 1$ follows by H\"older's inequality.
First we show it for $t=1$ as before.
$$ %\begin{eqnarray*} &&
E_{T,h}\left[
     \# ({\bf C}_0^+\cap S(n))  \,\left|\, 
      \mathbf{O} \stackrel{+}{\leftrightarrow} \partial_{in} S(n) 
       \right. \right] %\\
%&=& 
=    \sum_{v \in S(n)}\mu_{T,h}\left( \left.
      v \stackrel{+}{\leftrightarrow} \mathbf{O}\,\right|\, 
      \mathbf{O} \stackrel{+}{\leftrightarrow} \partial_{in} S(n) 
           \right) .
%\end{eqnarray*}
$$
Take a $v\in S(n)\setminus S(2^{j_1})$, and let $|v|_\infty =k$.
Let $E_k(v)$ be the event given by
$$
E_k(v)%E(v,k)
=\left\{
 \begin{array}{@{\,}c@{\,}}
 \mbox{there is a $(+)$-circuit in the annulus}\\
  \mbox{$S(v,k)\setminus S(v, k/2 )$ surrounding $v$}
 \end{array}
 \right\} .
$$
Then we have by the FKG inequality
\begin{align*}
&\mu_{T,h}\left(
     v \stackrel{+}{\leftrightarrow} \mathbf{O}, \mbox { and }
      \mathbf{O} \stackrel{+}{\leftrightarrow} \partial_{in} S(n) 
           \right) \\
& \leq  \delta_4^{-4} 
  \mu_{T,h}\left(
     \{ v \stackrel{+}{\leftrightarrow} \mathbf{O} \} \cap
     \{  \mathbf{O} \stackrel{+}{\leftrightarrow} \partial_{in} S(n) \}
     \cap
     E_k(v)%E(v,k)
     \right) \\
&\leq   \delta_4^{-4}
     \mu_{T,h}\left(
    \{ v \stackrel{+}{\leftrightarrow} 
              \partial_{in} S(v, k/4 ) \}
   \cap E_k(v)%E(v,k) 
     \cap
    \{  \mathbf{O} \stackrel{+}{\leftrightarrow} \partial_{in} S(n) \}
    \right) .
\end{align*}
Since 
\begin{align*}
E_k(v)%E(v,k) 
 \cap \{
  \mathbf{O} \stackrel{+}{\leftrightarrow} \partial_{in} S(n) \} 
&= E_k(v)%E(v,k) 
  \cap \left\{   
  \mathbf{O} \stackrel{+}{\leftrightarrow} \partial_{in} S(n)
  \mbox{ outside } S(v, k/2 )
   \right\} \\
&\subset  \left\{   
  \mathbf{O} \stackrel{+}{\leftrightarrow} \partial_{in} S(n)
  \mbox{ outside } S(v, k/2 )
   \right\} ,
\end{align*} 
by the mixing property we have
$$
\mu_{T,h}\left(
     v \stackrel{+}{\leftrightarrow} \mathbf{O}, \mbox { and }
      \mathbf{O} \stackrel{+}{\leftrightarrow} \partial_{in} S(n) 
           \right) \\
\leq \delta_4^{-4}\left( \pi_h( k/4 )
                   + C \frac{k^3}{4^2} e^{-\alpha k/4} \right) \pi_h(n).
$$       
By Lemma \ref{RSWbound} and the FKG inequality, we have
$$
\pi_h( k/4 )\leq (\delta_4^4 \delta_2)^{-2}\pi_h(k)  \quad \mbox{for }
         k\geq 16n_2.
$$
This is true when $2^{j_1+5}\leq k$,
since $n_4\geq 4n_3(8,\varepsilon_0)$ by definition, and from 
(\ref{eq:bound for j_1}), $2^{j_1+5}\geq 6n_3(8,\varepsilon_0)$.
Also, $n_3(\lceil 8\eta^{-1}\rceil ,\varepsilon_0)$ is set not less
than $n_2$, we also have $2^{j_1+5}\geq 16n_2$, and
we can use Lemma \ref{inc-7} for $k$'s with $k\geq 2^{j_1+5}$ to
obtain
\begin{align*}
&\sum_{v\in S(n)}\mu_{T,h}\left( \left.
  v \stackrel{+}{\leftrightarrow} \mathbf{O} \, \right| \,
      \mathbf{O} \stackrel{+}{\leftrightarrow} \partial_{in} S(n) 
           \right) \\
&\leq 2^{2(j_1+6)} + \delta_4^{-4}(\delta_4^4\delta_2)^{-2}
\sum_{k=2^{j_1+5}}^n 8k\left( \pi_h(k) +
            C\frac{k^3}{4^2}e^{-\alpha k/4}\right) \\
&\leq C_1^\# (  n^2\pi_h(n) + 1 )   
\end{align*}
for some constant $C_1^\# >0$, which depends on 
$j_1$ and $\varepsilon_0$.

For an integer $t\geq 1$, we prove the bound by induction
following \cite{Nguyen88}.
The argument is similar to the one for $t=1$.
Since
\begin{align*}
&E_{T,h}\left[\left. 
{\bigl[ \# ({\bf C}_0^+\cap S(n))\bigr] }^t \, \right| \,
      \mathbf{O} \stackrel{+}{\leftrightarrow} \partial_{in} S(n) 
        \right] \\
&= \sum_{v_1, \ldots , v_t\in S(n)}
  \mu_{T,h}\left( \left. v_1 , \ldots, v_t 
       \stackrel{+}{\leftrightarrow} \mathbf{O} \, \right| \,
      \mathbf{O} \stackrel{+}{\leftrightarrow} \partial_{in} S(n) 
           \right) ,
\end{align*}
we fix $v_1, \ldots , v_{t-1}\in S(n)$ and estimate the sum
$$
\sum_{v_t\in S(n)}
    \mu_{T,h}\left( \left. v_1, \ldots , v_t 
       \stackrel{+}{\leftrightarrow} \mathbf{O} \, \right| \,
      \mathbf{O} \stackrel{+}{\leftrightarrow} \partial_{in} S(n) 
           \right)
$$
from above.
This time we put 
$k= d_\infty ( v_t, \{ \mathbf{O}, v_1, \ldots , v_{t-1}\} )$.
Then we use $E_k(v_t)$ as before, and  noting that the
$$
\# \{ v \in S(n) :
   d_\infty ( v_t, \{ \mathbf{O}, v_1, \ldots , v_{t-1}\} )=k \}
 \leq 8tk,
$$
by the same argument as before, we obtain
\begin{align*}
&\sum_{v_t\in S(n)}
    \mu_{T,h}\left(\left.  v_1, \ldots , v_t 
       \stackrel{+}{\leftrightarrow} \mathbf{O} \, \right| \,
      \mathbf{O} \stackrel{+}{\leftrightarrow} \partial_{in} S(n) 
           \right) \\
&\leq t C_1^\# ( n^2\pi_h(n) +1)
    \mu_{T,h}\left( \left. v_1, \ldots , v_{t-1}
       \stackrel{+}{\leftrightarrow} \mathbf{O} \, \right| \,
      \mathbf{O} \stackrel{+}{\leftrightarrow} \partial_{in} S(n) 
           \right).
\end{align*}
This gives the desired upper bound.

\end{proof}
%% ----------------------------------------------------------------------- 
%% \end{quote}

Following Kesten \cite{K87IMA}, we can prove the next theorem.

\begin{Theorem}\label{K87IMA-th}
(i) For every $t\geq 1$, 
there exists a constant $C_{36}(t)$, depending only on $t$ and $\varepsilon_0$,
such that for every $\lambda \geq 1$, we have
\begin{equation}\label{eq:K871IMA-1}
\mu_{T,h}\left( \frac{\# {\bf C}_0^+}{R^2\pi_h(R)}\geq 
       \lambda \ \bigg\vert \ 
      n \leq R< 2n \right) \leq C_{36}(t)\lambda^{-t}
   \quad \mbox{for }\  2^{j_1+5}\leq n\leq L(h,\varepsilon_0)/2,
\end{equation}
Here, $R=R({\mathbf C}_0^+)$ is the radius of the cluster ${\mathbf C}_0^+$.

\noindent
(ii) For every $\varepsilon >0$, there exists a $\lambda >0$ and
an integer $N_1 \geq 2^{j_1+5}$ 
such that if $N_1\leq n \leq L(h,\varepsilon_0) $,
then we have
\begin{equation}\label{eq:K87IMA-2}
\mu_{T,h}\left(
\frac{\# {\bf C}_0^+ }{R^2\pi_h(R) } < \frac{1}{\lambda }
            \ \bigg\vert \ 
      n\leq R < 2n \right) \leq \varepsilon.
\end{equation}
\end{Theorem}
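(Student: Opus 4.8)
The plan is to deduce both parts from the conditional moment bounds of Proposition \ref{inc-8}, the radius estimate of Lemma \ref{lem:radi-1}, the essential monotonicity of $n\mapsto n\pi_h(n)$ from Lemma \ref{inc-7}, and the RSW/mixing/FKG tools of Section 2.

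\textbf{Part (i).} First I record the comparisons available on $\{n\le R<2n\}$: there ${\mathbf C}_0^+\subset S(2n)$, so $\#{\mathbf C}_0^+=\#({\mathbf C}_0^+\cap S(2n))$; also ${\mathbf O}\stackrel{+}{\leftrightarrow}\partial_{in}S(n)$; and by monotonicity of $\pi_h$ together with (\ref{K(2.20)}) one has $\pi_h(R)\ge\pi_h(2n)\ge\delta_4^4\delta_2\,\pi_h(n)$, hence $R^2\pi_h(R)\ge c\,n^2\pi_h(n)$. Therefore
\begin{align*}
\mu_{T,h}\bigl(\#{\mathbf C}_0^+\ge\lambda R^2\pi_h(R),\, n\le R<2n\bigr)
&\le \mu_{T,h}\bigl(\#({\mathbf C}_0^+\cap S(2n))\ge c\lambda n^2\pi_h(n),\ {\mathbf O}\stackrel{+}{\leftrightarrow}\partial_{in}S(n)\bigr)\\
&\le (c\lambda n^2\pi_h(n))^{-t}\,E_{T,h}\bigl[(\#({\mathbf C}_0^+\cap S(2n)))^t:{\mathbf O}\stackrel{+}{\leftrightarrow}\partial_{in}S(n)\bigr].
\end{align*}
The proof of the upper bound in Proposition \ref{inc-8}, run with the box $S(2n)$ in place of $S(n)$ (using $(2n)^2\pi_h(2n)\asymp n^2\pi_h(n)$ by (\ref{K(2.20)}) and the FKG inequality, so that conditioning on reaching $\partial_{in}S(n)$ rather than $\partial_{in}S(2n)$ costs only a constant), gives $E_{T,h}[(\#({\mathbf C}_0^+\cap S(2n)))^t\mid{\mathbf O}\stackrel{+}{\leftrightarrow}\partial_{in}S(n)]\le C(t)(n^2\pi_h(n)+1)^t$. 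Since $n^2\pi_h(n)\ge C_{31}' n\ge 1$ for $n\ge 2^{j_1+5}$ by the remark after Lemma \ref{inc-7}, this is $\le C'(t)(n^2\pi_h(n))^t$. Dividing by $\mu_{T,h}(n\le R<2n)\ge(\delta_8^4/2)\pi_h(n)$ from Lemma \ref{lem:radi-1} (legitimate since $2^{j_1+1}\ge n_5$) yields $\mu_{T,h}(\#{\mathbf C}_0^+\ge\lambda R^2\pi_h(R)\mid n\le R<2n)\le C_{36}(t)\lambda^{-t}$.

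\textbf{Part (ii).} Here the target is a lower tail bound, and the substance is an amplification from ``probability bounded below'' to ``probability $\ge 1-\varepsilon$''. From Proposition \ref{inc-8} with $t=1,2$ and the Paley--Zygmund inequality there are absolute constants $c_1,c_2>0$ such that, for every $m$ with $2^{j_1+5}<m\le L(h,\varepsilon_0)$,
\[ \mu_{T,h}\bigl(\#({\mathbf C}_0^+\cap S(m))\ge c_1 m^2\pi_h(m)\ \bigm|\ {\mathbf O}\stackrel{+}{\leftrightarrow}\partial_{in}S(m)\bigr)\ge c_2. \]
Fix $J\ge 1$, set $m_i=2^{-i}n$ for $0\le i\le J$, and assume $n\ge 2^{j_1+5+J}$; on $\{n\le R<2n\}$ one has ${\mathbf O}\stackrel{+}{\leftrightarrow}\partial_{in}S(m_i)$ for each $i$. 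Using the Markov property and the FKG inequality to ``restart'' the one-arm event at scale $m_i$ (conditioning on the $(+)$-favorable configuration outside $S(m_i)$ and on ${\mathbf O}\stackrel{+}{\leftrightarrow}\partial_{in}S(n)$ can only increase the increasing quantity $\#({\mathbf C}_0^+\cap(\cdot))$), together with the mixing property (\ref{HZ(1.3)}) to decouple the successive dyadic annuli, I would show that the events $B_i:=\{\#({\mathbf C}_0^+\cap(S(m_i)\setminus S(m_{i+1})))\ge c_1' m_i^2\pi_h(m_i)\}$ satisfy
\[ \mu_{T,h}\Bigl(\bigcap_{i=0}^{J-1}B_i^c\ \Bigm|\ n\le R<2n\Bigr)\le (1-c_2')^J+\varepsilon/2 \]
once $n$ is large enough that the accumulated mixing errors at scales $m_0,\dots,m_{J-1}$ are $\le\varepsilon/2$. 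On $\bigcup_{i<J}B_i$, since the annuli are disjoint and $\pi_h$ is decreasing, $\#({\mathbf C}_0^+\cap S(n))\ge c_1' m_{J-1}^2\pi_h(m_{J-1})\ge c_1' m_{J-1}^2\pi_h(n)= c_1' 2^{-2(J-1)}n^2\pi_h(n)$, while on $\{n\le R<2n\}$ one has $R^2\pi_h(R)\le 4n^2\pi_h(n)$ by monotonicity. Choosing $J=J(\varepsilon)$ with $(1-c_2')^J\le\varepsilon/2$, then $\lambda=\lambda(\varepsilon)\ge 4 c_1'^{-1}2^{2J}$, and $N_1=2^{j_1+5+J}$, gives (\ref{eq:K87IMA-2}).

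\textbf{Main obstacle.} The delicate step is the amplification in part (ii): one must check that each $B_i$ has conditional probability bounded below \emph{uniformly} in the conditioning on $\{n\le R<2n\}$ and on the exterior configuration, which requires pairing Paley--Zygmund at scale $m_i$ with a monotonicity/FKG ``restart'' argument patterned on the proof of the upper bound in Proposition \ref{inc-8}, and that the decoupling errors for the $J$ nested annuli sum to something small; since $m_{J-1}=2^{-J+1}n$ can be as small as $\sim 2^{j_1+5}$, the innermost mixing error is of order $(\log\cdot)^2 e^{-\alpha(\cdot)}$ in that scale and is absorbed by taking $N_1$ (equivalently $n$) sufficiently large relative to $J(\varepsilon)$.
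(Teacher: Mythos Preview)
Your Part (i) is correct and is essentially the paper's argument: Chebyshev against the $t$-th conditional moment from Proposition \ref{inc-8}, combined with the radius lower bound of Lemma \ref{lem:radi-1} and the comparison $R^2\pi_h(R)\asymp n^2\pi_h(n)$ on $\{n\le R<2n\}$. The paper inserts one extra step (replacing the condition $\{n\le R\}$ by $\{2n\le R\}$ via (\ref{K(2.20)}) so that Proposition \ref{inc-8} applies verbatim), but your direct variant works just as well.

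Your Part (ii) has the right overall strategy---multi-scale Paley--Zygmund amplification---but the specific implementation has a genuine gap. Your events $B_i=\{\#({\mathbf C}_0^+\cap(S(m_i)\setminus S(m_{i+1})))\ge c_1' m_i^2\pi_h(m_i)\}$ all involve connection to the origin, so they are \emph{not} localized in the annuli $S(m_i)\setminus S(m_{i+1})$: a point $v$ contributes to $\#({\mathbf C}_0^+\cap A)$ through a path that may wander anywhere. Consequently the mixing property does not give the factorization you need, and the FKG ``restart'' you sketch does not furnish a uniform lower bound on $\mu_{T,h}(B_i\mid\cap_{j<i}B_j^c,\,n\le R<2n)$, because conditioning on the decreasing events $B_j^c$ works \emph{against} the increasing event $B_i$.

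The paper circumvents this by replacing $\#({\mathbf C}_0^+\cap A)$ with a genuinely local surrogate. At each scale it counts $Y(m)=\#\{v\in A_2(m):v\stackrel{+}{\leftrightarrow}\partial_{in}S(4m)\text{ or }v\stackrel{+}{\leftrightarrow}\partial S(m)\}$ and pairs it with the circuit event $F(m)$ (existence of $(+)$-circuits in the flanking annuli $A_1(m),A_3(m)$). On $F(m)\cap\{{\mathbf O}\stackrel{+}{\leftrightarrow}\partial_{in}S(n)\}$ every point counted by $Y(m)$ is forced into ${\mathbf C}_0^+$, so a lower bound on $Y(m)$ yields a lower bound on $\#{\mathbf C}_0^+$. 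The point of this detour is that $G_\ell:=F(5^{k-\ell})\cap\{Y(5^{k-\ell})\ge\tfrac12 E[Y(5^{k-\ell})]\}$ is (essentially) $\mathcal{F}_{S(4\cdot 5^{k-\ell})\setminus S(5^{k-\ell})}$-measurable, so consecutive $G_\ell$'s occupy disjoint annuli and mixing legitimately gives the $(1-c)^j$ decay of $\mu_{T,h}(\cap_\ell G_\ell^c)$; the passage to the conditional measure given $\{n\le R<2n\}$ is then a single FKG step, since $\cap_\ell G_\ell^c$ is decreasing and $\{n\le R\}$ is increasing. Paley--Zygmund enters only to show $\mu_{T,h}(Y(m)\ge\tfrac12 E[Y(m)])$ is bounded below, using the second-moment bound as in Proposition \ref{inc-8}. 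Your argument becomes correct if you swap your $B_i$ for events of this localized-count-plus-circuit type; without that change the decoupling step fails.
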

\begin{proof}
(i) By Lemma \ref{lem:radi-1}, 
%\begin{equation}\label{eq:K87IMA-3}
%\mu_{T,h}\bigl( n\leq R < 2n\bigr) \geq \frac{\delta_8^4}{2}\pi_h(n)
%\end{equation}
(\ref{Kbg(10)}) holds 
if $n_5\leq n \leq L(h,\varepsilon_0)/2$.
We put $C_1^\# := \delta_8^4/2$ for simplicity. 
Then we have
\begin{align*}
& \mu_{T,h}\left(
  \frac{\# {\bf C}_0^+}{R^2\pi_h(R)} \geq \lambda \  \bigg\vert\ 
                n\leq R < 2n 
          \right) \\
&\leq \mu_{T,h}\left(
    \frac{\# ({\bf C}_0^+\cap S(2n) )}{n^2\pi_h(2n)} \geq \lambda 
      \ \bigg\vert \ n \leq R < 2n 
            \right) \\
&\leq \frac{1}{C_1^\# \pi_h(n)} \mu_{T,h}\left(
    \frac{\# ({\bf C}_0^+ \cap S(2n) )}{n^2\pi_h(2n) }\geq \lambda,\,
       n\leq R 
         \right).
\end{align*}
By a similar argument as the proof of (\ref{K(2.20)}), we can see that
there exists a constant $C_2^\# >0$ 
which depends only on $\varepsilon_0$, such that
\begin{align}\label{eq:K87IMA-4}
&\frac{1}{C_1^\#\pi_h(n)}\mu_{T,h}\left( 
\frac{ \# ( {\bf C}_0^+\cap S(2n) )}{n^2\pi_h(2n)} \geq \lambda
    ,\,n \leq R 
       \right) \\
&\leq   \frac{C_2^\# }{\pi_h(2n)} \mu_{T,h}\left(
 \frac{ \# ({\bf C}_0^+ \cap S(2n) )}{n^2\pi_h(2n)} \geq \lambda 
         ,\, 2n\leq R 
         \right) . \nonumber
\end{align}
By Proposition \ref{inc-8} and Chebyshev's inequality we have
\begin{align*}
&\mu_{T,h}\left(
 \frac{ \# ({\bf C}_0^+ \cap S(2n) )}{n^2\pi_h(2n)} \geq \lambda 
         \,\bigg\vert\, 2n\leq R 
         \right) \\
&\leq \frac{E_{T,h}\left[ 
           [\# ( {\bf C}_0^+\cap S(2n) ) ]^t 
           \,  \big\vert\, 2n\leq R 
             \right]}{ \lambda^t {\bigl( n^2 \pi_h(2n) \bigr)}^t } \\
&\leq  C_{35}(t) \frac{{\bigl( 4n^2\pi_h(2n) + 1\bigr) }^t}{ \lambda^t 
               { \bigl(n^2 \pi_h(2n) \bigr) }^t},
\end{align*}
if $2^{j_1+5}\leq n \leq L(h,\varepsilon_0)/2$.
By Lemma \ref{inc-7} and the comment after it,
$$
\pi_h(2n) \geq C_{31}'n^{-1},
$$
which implies that
$$
\frac{(4n^2\pi_h(2n)+1)^t}{(n^2\pi_h(2n))^t}
$$
is bounded in $n$.
Thus, we have 
$$
\mu_{T,h}\left( \left. \frac{\# ( {\mathbf C}^+_0\cap S(2n))}{n^2\pi_h(2n)}
   \geq \lambda \,\right|\, 2n \leq R \right)
\leq C_{36}(t)\lambda^{-t}
$$
for $2^{j_1+5}\leq n \leq L(h,\varepsilon_0)$ and some constant
$C_{36}(t)$, which depends on $t $ and $\varepsilon_0$.

\noindent (ii) We take $5m<n$, and define the annuli $A_i(m), i=1,2,3$ by
$$
A_i(m)= S((i+1)m)\setminus S(im).
$$
Also, we define a random variable $Y(m)$ by
$$
Y(m) = \# \left\{  v \in A_2(m) :
         v  \stackrel{+}{\leftrightarrow} \partial_{in} S(4m) \mbox{ or }
         v  \stackrel{+}{\leftrightarrow} \partial S(m)
         \right\} .
$$
Let $F(m)$ be the event given by
$$
F(m) := \left\{
    \begin{array}{@{\,}c@{\,}}
     \mbox{for $i=1,3$, there exists a $(+)$-circuit ${\mathcal C}_i$}\\
     \mbox{in the annulus $A_i(m)$, surrounding the origin}
    \end{array}
    \right\} .
$$
Then on the event
$F(m) \cap \{ {\mathbf O}\stackrel{+}{\leftrightarrow}\partial_{in} S(n) \} $,
every point $v \in A_2(m)$ contributing to the sum $Y(m)$ belongs to 
${\bf C}_0^+$.
By the definition of $Y(m)$ and (\ref{K(2.20)}), we have
for $m\geq 4n_2$,
\begin{align}\label{eq:K87IMA-5}
E_{T,h}\bigl[ Y(m)\bigr] &= \sum_{v\in A_2(m)}
           \mu_{T,h}\left( v  \stackrel{+}{\leftrightarrow} \partial_{in} S(4m),
            \mbox{ or } v \stackrel{+}{\leftrightarrow}\partial S(m)
                   \right) \\
&\geq 20m^2\pi_h(7m) \geq C_3^\# m^2\pi_h(m), \nonumber 
\end{align}
where $C_3^\# >0$ is a constant depending only on $T$ and $\varepsilon_0$.

Let $k$ be the largest integer such that $5^k\leq n$, and
for a given $\varepsilon_1>0$, let $j$ satisfy 
$\varepsilon_1 \leq \frac{1}{2} 5^{-2j-2}$.
Then for any $m$ with $5^{k-j}\leq m < 5^{k-1}$, by (\ref{eq:K87IMA-5}) we have
\begin{equation}\label{eq:K87IMA-6}
\frac{1}{2}E_{T,h}\bigl[ Y(m) \bigr]
\geq  \frac{1}{2} C_3^\# m^2\pi_h(m)
\geq  \frac{1}{2}C_3^\# 5^{2k-2j}\pi_h(n)
\geq  \varepsilon_1 C_3^\# n^2\pi_h(n). 
\end{equation}
By monotonicity we have
\begin{align*}
&\mu_{T,h}\left(
\frac{\#{\bf C}_0^+ }{R^2\pi_h(R)} \geq \frac{1}{\lambda }
 \ \bigg\vert \  n\leq R < 2n
        \right) \\
&\geq  \mu_{T,h}\left(
     \frac{\# ( {\bf C}_0^+ \cap S(n) )}{4n^2\pi_h(n)} \geq
         \frac{1}{\lambda } \ \bigg\vert \ n\leq R < 2n
                                  \right) . 
\end{align*}
For $1\leq \ell \leq j$, let
$$
G_\ell=G_\ell^{(k)}:= F(5^{k^\ell }) \cap \left\{
        Y(5^{k-\ell }) \geq \frac{1}{2}E_{T,h}\bigl[ Y(5^{k-\ell })\bigr]
       \right\} .
$$
Choose $\lambda >0$ sufficiently large so that 
$4\lambda^{-1}< \varepsilon_1C_3^\# $, and then from (\ref{eq:K87IMA-5}), 
we have 
\begin{align}\label{eq:K87IMA-7}
&\mu_{T,h}\left(
\frac{\# {\bf C}_0^+}{R^2\pi_h(R)} \geq \frac{1}{\lambda }
         \ \bigg\vert \ 
        n\leq R <2n 
        \right) \\
&\geq \mu_{T,h}\left( \left.
       \bigcup_{1\leq \ell \leq j}
                   G_\ell \, \right| \, n \leq R <2n 
         \right) . \nonumber
\end{align}
By the FKG inequality we have 
\begin{align*}
&\mu_{T,h}\left( \bigcap_{1\leq \ell \leq j}G_\ell^c \cap \{ n\leq R<2n\}
\right) \\
&\leq \mu_{T,h}\left( \bigcap_{1\leq \ell \leq j}G_\ell^c \cap \{ n\leq R \}
\right) \\
&\leq \mu_{T,h}\left( \bigcap_{1\leq \ell \leq j}G_\ell^c \right)\pi_h(n).
\end{align*}
%since $G_\ell $ is a positive event.
%On the other hand, 
%$$
%\mu_{T,h}( n\leq R < 2n) \geq \mu_{T,h}(\{ n\leq R \} \cap {\tilde E}_n^{-*}),
%$$
%where ${\tilde E}_n^{-*}$ is teh event that there exists a $(-*)$-circuit
%in $S(2n)$ surrounding $S(3n/2)$.
%Then by the mixing property
%$$
%\mu_{T,h}(\{ n\leq R \} \cap {\tilde E}_n^{-*})\geq \pi_h(n) (
%  \delta_8^4-2Cn^3e^{-\alpha n/2}).
%$$
%Assuming that 
%\begin{equation}\label{eq:bound for N1}
%\frac{\delta_8^4}{2}> 2Cn^3e^{-\alpha n/2},
%\end{equation}
%we have
%$$ 
%\mu_{T,h}( n\leq R<2n) \geq \pi_h(n)\frac{\delta_8^4}{2}.
%$$
Combining this with (\ref{Kbg(10)}), we have 
\begin{equation}\label{eq:K87IMA-9c}
\mu_{T,h}\left( \left. \bigcap_{1\leq \ell \leq j}G_\ell^c \, \right| \,
 n\leq R <2n \right) \leq 
    2\delta_8^{-4}\mu_{T,h}\left( \bigcap_{1\leq \ell \leq j}G_\ell^c \right) ,
\end{equation}
provided that (\ref{eq:bound n5}) is satisfied.
Again, for each $1\leq \ell \leq j$, by the mixing property
we have
\begin{align}\label{eq:K87IMA-9a}
\mu_{T,h}\biggl(
  G_\ell \,\bigg\vert\, {\mathcal F}_{S(4\cdot 5^{k-\ell -1})}
       \biggr) 
\geq \mu_{T,h}(G_\ell ) - C5^{3(k-\ell )}e^{-\alpha 5^{k-\ell -1}}.
%         \nonumber 
\end{align}
If $5^{k-\ell }\geq n_2$, then by Lemma \ref{RSWbound} and the FKG inequality,
\begin{align}\label{eq:K87IMA-9d}
\mu_{t,h}\left( G_\ell \right) &=\mu_{T,h}\left(
        F(5^{k-\ell })\cap \left\{ Y(5^{k-\ell })\geq 
             \frac{1}{2}E_{T,h}\bigl[ Y(5^{k-\ell })\bigr] \right\}
            \right) \\
&\geq  (\delta_8\delta_4)^4 \mu_{T,h}\biggl(
       Y(5^{k-\ell })\geq 
             \frac{1}{2}E_{T,h}\bigl[ Y(5^{k-\ell })\bigr]
            \biggr) \nonumber
\end{align}
By the one-sided analogue of Chebyshev's inequality, 
\begin{align}\label{eq:K87IMA-9b}
&\mu_{T,h}\biggl(
       Y(5^{k-\ell })\geq 
             \frac{1}{2}E_{T,h}\bigl[ Y(5^{k-\ell })\bigr]
            \biggr) \\ 
&\geq   \frac{
      \frac{1}{4}{\left( E_{T,h}[ Y(5^{k-\ell })]\right) }^2
      }{
       \frac{1}{4}{\left( E_{T,h}[ Y(5^{k-\ell })]\right) }^2
      + \text{Var}_{T,h}[Y(5^{k-\ell })]
         }. \nonumber
\end{align}
We shall prove that the right hand side is bounded by a positive constant.
By the same reason as in the proof of Proposition \ref{inc-8}, 
we can see that 
\begin{equation}\label{eq:K87IMA-10}
E_{T,h}\left[ Y(m)^2\right] \leq C_4^\# (m^2\pi_h(m) +1 )^2
\end{equation}
for some constant $C_4^\# >0$ depending on $\varepsilon_0$, if $m\geq 2^{j_1+5}$.
Inserting (\ref{eq:K87IMA-10}) and (\ref{eq:K87IMA-5}) in (\ref{eq:K87IMA-9b}),
we obtain: 
\begin{align}
&\mu_{T,h}\biggl( Y(5^{k-\ell }) \geq 
    \frac{1}{2}E_{T,h}\left[ Y(5^{k-\ell })\right]
\biggr) \label{eq:K87IMA-9e}\\
&\geq  \frac{ (C_3^\# m^2 \pi_h(m) )^2}
{(C_3^\# m^2 \pi_h(m) )^2+ C_4^\# ( m^2 \pi_h(m)+1)^2} \nonumber \\
&= \frac{( C_3^\# )^2}{( C_3^\# )^2+ C_4^\# ( 1 +( m^2 \pi_h(m) )^{-1} )^2}.
\nonumber
\end{align}
Since by Lemma \ref{inc-7}, this is bounded from below by a positive
constant $C_5^\# $ depending only on $L_0$ and $\varepsilon_0$.
(\ref{eq:K87IMA-9a}) and (\ref{eq:K87IMA-9d}) implies that
\begin{align}\label{eq:K87IMA-9f}
&\mu_{T,h}\biggl(
  G_\ell \, \bigg\vert \,  {\mathcal F}_{S(4\cdot 5^{k-\ell -1})}
       \biggr) \\
&\geq (\delta_8\delta_4)^4\cdot C_5^\# - C 5^{3(k-\ell )}e^{-\alpha 5^{k-\ell -1}}.        \nonumber 
\end{align}
If we put $n^*$ as the smallest integer such that 
$$
Cn^3 e^{-n/5}< \frac{1}{2}(\delta_8\delta_4)^4 C_5^\#
$$
for every $n\geq n^*$, then
$$
\mu_{T,h}\biggl(
  G_\ell \, \bigg\vert\,  {\mathcal F}_{S(4\cdot 5^{k-\ell -1})}
       \biggr)
\geq \frac{1}{2}(\delta_8\delta_4)^4 C_5^\#
$$
as long as $5^{k-\ell } > \max\{ 4n_2, n^*\} $.
This implies that
$$
\mu_{T,h}\biggl( \bigcap_{1\leq \ell \leq j} G_\ell^c
\, \bigg\vert \,  n\leq R < 2n \biggr) \leq 2\delta_8^{-4}
{ \left( 1 - \frac{1}{2}(\delta_8\delta_4)^4C_5^\# \right) }^j
$$
if $5^{k-j}\geq \max\{ 4n_2, n^*\} $.
Therefore for every $\varepsilon >0$, we choose $j\geq 1$ to satisfy
$$
2\delta_8^{-4}
{ \left( 1 - \frac{1}{2}(\delta_8\delta_4)^4C_5^\# \right) }^j
<\varepsilon ,
$$
and then we choose $\varepsilon_1 >0$ to satisfy
$0< \varepsilon_1 < 5^{-2j-2}$.
Then we take $n$ sufficiently large so that
$$
n\geq N_1:= 5^{j+1}\max\{ 4n_2, n^*\} .
$$
If $N_1\leq n\leq L(h,\varepsilon_0)$ and $4\lambda^{-1}<\varepsilon_1C_3^\#$,
then the inequality (\ref{eq:K87IMA-2}) holds.
\end{proof}

\begin{Theorem}[cf. \cite{K87scaling} (1.26) in Theorem 3] \label{KTheorem3a} For $t \geq 0$, we have as $h\rightarrow h_c(T)$,
\begin{equation} \label{K(1.26)}
\sum_{v \in \mathbf{Z}^2} |v|^t 
  \mu_{T,h}\{ \mathbf{O} \stackrel{+}{\leftrightarrow} v,\, 
   \# {\bf C}_0^+ < \infty \} 
 \asymp L(h,\varepsilon_0)^{t+2}\pi_{\text{cr}} (L(h,\varepsilon_0))^{2}.
\end{equation}
\end{Theorem}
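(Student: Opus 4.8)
\emph{Plan of proof.} The intention is to follow Kesten's route to \cite{K87scaling} (1.26) (see also \cite{K87IMA}) and reduce everything to a summation identity for the one-arm probability. Write $L=L(h,\varepsilon_0)$ and $R=R({\bf C}_0^+)=\max\{|v|_\infty:v\in{\bf C}_0^+\}$, and decompose the sum by dyadic scales of $R$:
\[
\sum_{v\in\mathbf{Z}^2}|v|^t\,\mu_{T,h}\{\mathbf{O}\stackrel{+}{\leftrightarrow}v,\ \#{\bf C}_0^+<\infty\}
=\sum_{j}E_{T,h}\!\Bigl[\textstyle\sum_{v\in{\bf C}_0^+}|v|^t\ ;\ 2^j\le R<2^{j+1}\Bigr]+O(1),
\]
the $O(1)$ absorbing clusters of bounded radius. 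On $\{2^j\le R<2^{j+1}\}$ one has ${\bf C}_0^+\subset S(2^{j+1})$, so this event already lies in $\{\#{\bf C}_0^+<\infty\}$ and $\sum_{v\in{\bf C}_0^+}|v|^t\le(2^{j+1})^t\#{\bf C}_0^+$; conversely, given $R\ge 2^j$ a positive fraction of the volume of ${\bf C}_0^+$ sits at $\ell^\infty$-distance of order $2^j$ from $\mathbf{O}$ (a ``fattening near a far point'' argument of the same type as the proof of Proposition~\ref{inc-8}), so each dyadic term is comparable to $(2^j)^t\,E_{T,h}[\#{\bf C}_0^+\,;\,2^j\le R<2^{j+1}]$.

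For $2^j\le L$ I would estimate this expectation with the tools already assembled: Lemma~\ref{lem:radi-1} gives $\mu_{T,h}(2^j\le R<2^{j+1})\asymp\pi_h(2^j)$; Proposition~\ref{inc-8} with $t=1$ together with the tail bound of Theorem~\ref{K87IMA-th}(i) gives $E_{T,h}[\#{\bf C}_0^+\mid 2^j\le R<2^{j+1}]\asymp(2^j)^2\pi_h(2^j)$, and Theorem~\ref{K87IMA-th}(ii) supplies the matching lower bound, so that no logarithmic slack survives. Hence the dyadic term is $\asymp(2^j)^{t+2}\pi_h(2^j)^2\asymp(2^j)^{t+2}\pi_{\text{cr}}(2^j)^2$ by Theorem~\ref{KTheorem1}. (Equivalently, since for $|v|_\infty\le L$ the restriction $\#{\bf C}_0^+<\infty$ costs only a constant --- it is automatic when $h<h_c(T)$, and for $h>h_c(T)$ one surrounds ${\bf C}_0^+$ by a $(-*)$-circuit at scale $\asymp L$ using Lemma~\ref{lem:circuits} and FKG --- the $\mu_{T,h}$-analogue of Lemma~\ref{lem:K87IMA}, valid below the correlation length, gives $\mu_{T,h}\{\mathbf{O}\stackrel{+}{\leftrightarrow}v\}\asymp\pi_{\text{cr}}(|v|_\infty)^2$ and one sums directly over the shell.)

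For $2^j>L$ I would instead use the exponential decay beyond the correlation length furnished by the Corollary following Lemma~\ref{Kp121}: a finite $(+)$-cluster that reaches radius $2^j>L$ must, besides looking critical at scales $\le L$ near $\mathbf{O}$ and near its outermost point, either cross (if $h<h_c$) or be blocked by a $(-*)$-interface through (if $h>h_c$) each of the $\asymp 2^j/L$ rings of scale $L$ between $L$ and $2^j$; a short chaining with the mixing property~(\ref{HZ(1.3)}) then yields $E_{T,h}[\#{\bf C}_0^+\,;\,2^j\le R<2^{j+1}]\lesssim(2^j)^2\,\pi_{\text{cr}}(L)^2\,e^{-c2^j/L}$ (using $\pi_h(L)\asymp\pi_{\text{cr}}(L)$ and, for the $|v|_\infty\le L$ part, the $2^j\le L$ estimate just established). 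The resulting tail $\sum_{2^j>L}(2^j)^{t+2}\pi_{\text{cr}}(L)^2e^{-c2^j/L}$ is dominated by $2^j\asymp L$ and is $\lesssim L^{t+2}\pi_{\text{cr}}(L)^2$, of the claimed order. Collecting the two ranges, the whole sum is $\asymp\sum_{2^j\le L}(2^j)^{t+2}\pi_{\text{cr}}(2^j)^2+O\bigl(L^{t+2}\pi_{\text{cr}}(L)^2\bigr)$.

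It therefore remains to prove the summation identity
\[
\sum_{2^j\le L}(2^j)^{t+2}\pi_{\text{cr}}(2^j)^2\ \asymp\ L^{t+2}\pi_{\text{cr}}(L)^2,\qquad t\ge0,
\]
and this is the main obstacle. The lower bound is immediate (keep only $2^j\asymp L$); for the upper bound one needs $(2^j)^{t+2}\pi_{\text{cr}}(2^j)^2$ to decrease geometrically as $j$ decreases from $\log_2 L$. I would get this from quasi-multiplicativity of the one-arm probability, $\pi_{\text{cr}}(2^i)\asymp\pi_{\text{cr}}(2^j)\prod_{i\le\ell<j}\pi_{\text{cr}}(2^\ell,2^{\ell+1})$ for $i\le j\le\log_2 L$ --- which is exactly what the fence/separation machinery of Section~4 and the extension arguments of Sections~5 and~6 are designed to deliver --- combined with the two-sided bound $\pi_{\text{cr}}(2^\ell,2^{\ell+1})\in[\delta,1-c]$ (the lower bound from RSW, Lemma~\ref{RSWbound}; the upper bound from the presence of blocking $(-*)$-circuits, Lemma~\ref{lem:circuits} and (\ref{K(2.20)})), the near-monotonicity of $n\mapsto n\pi_h(n)$, and the super-polylogarithmic lower bound $\pi_h(n)\gtrsim n^{-1}(\log n)^m$ from Lemma~\ref{inc-7} and the remark following it. The delicate point --- as in Kesten's treatment --- is the bookkeeping of constants: the crude term-by-term comparison gives outright geometric decay only once $t$ exceeds a threshold fixed by the RSW constants, and for small $t$ (in particular $t=0$) one must use the lower bound on $\pi_h$ to see that the relevant products of the factors $\pi_{\text{cr}}(2^\ell,2^{\ell+1})$ are summable. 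I expect most of the work to be concentrated there.
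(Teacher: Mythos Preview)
Your overall architecture is sound and can be made to work, but it diverges from the paper in one place and, more importantly, you have misjudged where the difficulty lies.

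\textbf{Difference in the upper bound.} The paper does \emph{not} decompose by the radius $R$ for the upper bound. It sums directly over $v$, using the mixing property to get the two-point bound
\[
\mu_{T,h}(\mathbf{O}\stackrel{+}{\leftrightarrow} v)\ \lesssim\ \pi_h(|v|_\infty/3)^2\qquad(|v|_\infty\le L),
\]
and then handles the shell sum $\sum_{k\le L} k\cdot k^t\,\pi_h(k)^2$ by elementary manipulation of Lemma~\ref{inc-7}. For $|v|_\infty>L$ it splits into $L<|v|_\infty\le 6L$ (same two-point bound) and $|v|_\infty>6L$ (exponential decay from Lemma~\ref{Kp121} for $h<h_c$, or a dual $(-*)$-circuit argument with FKG for $h>h_c$). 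This is more direct than your route through Proposition~\ref{inc-8} and Theorem~\ref{K87IMA-th}, and avoids having to argue that a positive fraction of the volume sits at distance $\asymp 2^j$.

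\textbf{Your ``main obstacle'' dissolves.} The summation identity
\[
\sum_{2^j\le L}(2^j)^{t+2}\pi_{\text{cr}}(2^j)^2\ \asymp\ L^{t+2}\pi_{\text{cr}}(L)^2,\qquad t\ge 0,
\]
needs neither quasi-multiplicativity nor any geometric-decay argument; it is an immediate consequence of Lemma~\ref{inc-7} alone. Write, for $k\le L$,
\[
k^{t+1}\pi_h(k)^2\ =\ k^t\cdot\bigl[k\,\pi_h(k)\bigr]\cdot\pi_h(k)\ \le\ L^t\cdot C\,L\,\pi_h(L)\cdot\pi_h(k),
\]
using only $k\le L$ and the near-monotonicity $k\,\pi_h(k)\le C\,L\,\pi_h(L)$ from Lemma~\ref{inc-7}. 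Summing over $k$ (or, equivalently, over your dyadic scales after noting $2^j\pi_h(2^j)\le 2\sum_{2^{j-1}<k\le 2^j}\pi_h(k)$) and invoking the \emph{other} inequality of Lemma~\ref{inc-7}, namely $\sum_{k\le L}\pi_h(k)\le C'\,L\,\pi_h(L)$, gives the upper bound $\lesssim L^{t+2}\pi_h(L)^2$ uniformly in $t\ge 0$. No threshold in $t$, no logarithmic loss, no bookkeeping of RSW constants. This is exactly how the paper handles it; the tools you list (separation, quasi-multiplicativity, polylog lower bounds on $\pi_h$) are not used here.
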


\begin{proof} 
First, we prove the lower bound.
Let $L=L(h,\varepsilon_0)$.% and $j_1 \geq \log_2 n_2$, where
%$n_2$ is given in Lemma \ref{lem:radi-1}.
We start with the following inequality.
\begin{align*}
&\sum_{y\in{\mathbf Z}^2} |y|^t \mu_{T,h}( 
   \mathbf{O} \stackrel{+}{\leftrightarrow } y ,\, 
        \#{\bf C}_0^+ <\infty ) \\
&\geq  \sum_{k=j_1+5}^{\log_2L}
   \sum_{|y|\leq 2^{k-1}}|y|^t
    \mu_{T,h}( 
     \mathbf{O} \stackrel{+}{\leftrightarrow } y ,\,
    2^k\leq R <2^{k+1} ).
\end{align*}
Let
$$
A_k :=  \left\{  \begin{array}{@{\,}c@{\,}} 
\mbox{ there exists a $(+)$-circuit} \\
\mbox{in $S(2^k)\setminus S(2^{k-1})$ surrounding the origin} \\
\end{array} \right\} .
$$
Then for $|y|\leq 2^{k-1}$, we have
\begin{align*}
& \mu_{T,h}( 
     \mathbf{O} \stackrel{+}{\leftrightarrow } y ,\,
    2^k\leq R <2^{k+1} ) \\
&\geq  \mu_{T,h} \bigl( 
   \{  \mathbf{O} \stackrel{+}{\leftrightarrow } y \} \cap
   A_k \cap \{ 2^k\leq R <2^{k+1} \} \bigr) \\
&= \mu_{T,h}\bigl( 
  \{  y \stackrel{+}{\leftrightarrow } \partial S(2^k) \}
  \cap A_k \cap \{ 2^k \leq R < 2^{k+1} \} \bigr).
\end{align*}
Let
$$
B_{k+1}:= \left\{
   \begin{array}{@{\,}c@{\,}} 
     \mbox{there exists a $(-*)$-circuit}  \\
    \mbox{in $S(2^{k+1})\setminus S(2^k+2^{k-1})$ surrounding the origin}
   \end{array}
     \right\} .
$$
Then since
$$
\{ 2^k\leq R<2^{k+1}\} \supset \{ R\geq 2^k \} \cap B_{k+1}
\ \mbox{ and } \ \mu_{T,h}(B_{k+1})\geq \delta_8^4,
$$
by the mixing property we have
\begin{align*}
& \mu_{T,h}\bigl(  
  \{ y \stackrel{+}{\leftrightarrow } \partial S(2^k) \}
  \cap A_k \cap \{ 2^k \leq R < 2^{k+1} \} \bigr) \\
&\geq \mu_{T,h}\bigl( 
  \{ y \stackrel{+}{\leftrightarrow } \partial S(2^k) \}
  \cap A_k \cap \{ R \geq 2^k \} \bigr) 
( \delta_8^4 - C2^{3k+1}e^{-\alpha 2^{k-1}}\bigr).
\end{align*}
By Lemma \ref{lem:radi-1}, the last term in the right hand side of the
above inequality is not less than
$\frac{1}{2}\delta_8^4$, since $2^k\geq 2^{j_1+5}$.
Thus, we have
$$
\mu_{T,h}( 
     \mathbf{O} \stackrel{+}{\leftrightarrow } y ,\,
    2^k\leq R <2^{k+1} )
  \geq \frac{1}{2}\delta_8^4\delta_4^4 \pi_h(2^{k+1})\pi_h(2^k).
$$
By Lemma \ref{RSWbound}, there is a constant $C_1^\#$ depending only on 
$\varepsilon_0$, such that
$$
\mu_{T,h}( 
     \mathbf{O} \stackrel{+}{\leftrightarrow } y ,\,
           2^k\leq R <2^{k+1} ) \geq C_1^\# \pi_h(2^k)^2.
$$
Therefore we have 
\begin{align}\label{eq:12-2}
&\sum_{j_1+5\leq k\leq \log_2 L} \sum_{|y|\leq 2^{k-1}}
    |y|^t \mu_{T,h}(
   \mathbf{O} \stackrel{+}{\leftrightarrow } y ,\,
    2^k\leq R <2^{k+1} )\\
&\geq  C_2^\# (t) \sum_{j_1+5\leq k\leq \log_2 L}2^{k(t+2)}\pi_h(2^k)^2 \nonumber\\
&\geq  C_2^\# (t)L^{t+2}\pi_h(L)^2, \nonumber
\end{align}
where $C_2^\# (t)$ depends only on $ t,  \varepsilon_0$.
%This estimate is valid if $L=L(h,\varepsilon_0)\geq 2^{j_1+5}$.
Finally, by Theorem \ref{KTheorem1}, we have
$$
\pi_h(L) \geq C_{25} \pi_{\text{cr}}(L).
$$

Next, we prove the upper bound.
We break up the summation into two parts; 
\[ {\sum}^1=\sum_{|y|_\infty \leq L} \quad \mbox{and} \quad
{\sum}^2=\sum_{|y|_{\infty} >L}. \] 
By the mixing property, we have
$$
\mu_{T,h}\bigl( {\mathbf O} \stackrel{+}{\leftrightarrow }y \bigr)
\leq \pi_h( |y|_\infty/3)\left( 
   \pi_h(|y|_\infty /3) 
      + 4C{\left( \frac{|y|_\infty}{3}\right) }^3e^{-\alpha |y|_\infty /3}  
        \right) .
$$
First we estimate $\sum^1$.
Let $L_0$ be the same number as given in Lemma \ref{inc-7}.
Then for $|y|_\infty >2L_0$, we can replace $\pi_h(|y|_\infty /3)$ with 
constant multiple of
$\pi_h(|y|_\infty )$ by Lemma \ref{RSWbound}. For $ |y|_\infty \leq 2L_0$, we simply replace
$
\mu_{T,h}\bigl( {\mathbf O} \stackrel{+}{\leftrightarrow }y \bigr)
$
with $1$.
As a result, we obtain
\begin{align*}
{\sum }^1 &\leq  \sum_{|y|_\infty \leq L} |y|^t \mu_{T,h} \bigl(
          {\mathbf O}\stackrel{+}{\leftrightarrow } y \bigr) \\
&\leq  8\sum_{k=1}^{2L_0} k^{t+1}
 + C_4^\# \sum_{2L_0< |y|_\infty \leq  L}|y|^t\pi_h(|y|_\infty )
      \left( C_4^\# \pi_h(|y|_\infty )
               + 4C\frac{|y|_\infty^3}{3^3}e^{-\alpha |y|_\infty /3}
      \right) \\
&\leq  C_3^\# (t)(L_0)^{t+2} +C_4^\#\sum_{k=2L_0+1}^L 8k^{1+t} \pi_h(k)
      \left( 
   C_4^\# \pi_h(k) + 4C\frac{k^3}{3^3}e^{-\alpha k/3}
      \right) .
\end{align*}
The constant $C_4^\# $ depends only on $\varepsilon_0$, and
$C_3^\# (t)$ depends on $t$ and $\varepsilon_0$.
Then by Lemma \ref{inc-7}, we can see that
$$
k\pi_h(k) \leq (2k-2L_0)\pi_h(k)\leq 2C_{30}(2L)\pi_h(L)
$$
holds for $k\geq 2L_0$.
Thus, we can find a constant $C_5^\# $ depending only on $\varepsilon_0$,
such that
$$
k^{t+1}\pi_h(k)\leq C_5^\# L^{t+1}\pi_h(L).
$$
Therefore we have
$$
{\sum }^1 \leq C_2(t)(L_0)^{t+2} + C_4^\# C_5^\# L^{t+1}\pi_h(L)
    \sum_{k=2L_0+1}^L\left( \pi_h(k) + 
             4C\frac{k^3}{3^3}e^{-\alpha k/3} \right) .
$$
Again by Lemma \ref{inc-7}, 
$$
\sum_{k=2L_0+1}^L \pi_h(k) \leq 2C_{30}L\pi_h(L),
$$ 
and we finally obtain the following inequality.
\begin{equation}\label{eq:12-3}
{\sum }^1 \leq C_6^\# (t) \{ L^{t+2}\pi_h(L)^2 +1\} ,
\end{equation}
where $C_6^\# (t)$ depends on $L_0, t$ and $ \varepsilon_0$.

To estimate $\sum^2$, let $\sum^{2a}$ and $\sum^{2b}$ denote summations 
over $y$'s with $L<|y|_\infty \leq 6L$ and $|y|_\infty >6L$, respectively.

For $\sum^{2a}$, we use the same argument:
$$
\mu_{T,h}( {\mathbf O}\stackrel{+}{\leftrightarrow }y )
\leq \left( \pi_h( |y|_\infty /3) +
   4C  \frac{|y|_\infty^3 }{3^3} e^{-\alpha |y|_\infty /3}
  \right) \pi_h( |y|_\infty /3) .
$$
Let $n_6\geq 2^{j_1}$ be the smallest integer such that
\begin{equation}\label{eq:bound for n5}
4Cn^3e^{-\alpha n}< (\delta_4^4\delta_2)^{\lceil \log_2n\rceil -j_1}
    \pi_h(2^{j_1})
\end{equation}
for every $n\geq n_6$.
If $L/3\geq n_6$, then we have
$$
4C\frac{|y|_\infty^3}{3^3}e^{-\alpha |y|_\infty /3}<
 (\delta_4^4\delta_2)^{\lceil \log_2 (|y|_\infty /3) \rceil -j_1}
 \pi_h( 2^{j_1})
$$
for every $y$ with $|y|_\infty >L$.
This is possible since we are considering the case where $h\rightarrow h_c(T)$,
and hence $L(h,\varepsilon_0)\rightarrow \infty $.
The right hand side of this inequality is not larger than
$\pi_h (|y|_\infty /3)$ as long as $|y|_\infty \leq 3L$ 
by Lemma \ref{lem:circuits}.
Therefore we have
$$
\mu_{T,h}( {\mathbf O}\stackrel{+}{\leftrightarrow }y )\leq
 2\pi_h(|y|_\infty /3)^2
$$
for every $y$ with $L\leq |y|_\infty \leq 3L$.
Further, by Lemma \ref{lem:circuits}, we have
$$
\mu_{T,h}(|y|_\infty /3 )\leq \delta_4^{-4}\delta_3^{-1} \mu_{T,h}(|y|_\infty )
     \leq \delta_4^{-4}\delta_3^{-1}\pi_h(L).
$$
On the other hand, if $|y|_\infty >3L$, then we still have
$$
4C \frac{|y|_\infty^3}{3^3}e^{-\alpha |y|_\infty /3}
\leq 4CL^3e^{-\alpha L}\leq \pi_h(L),
$$
and we have 
$$
\pi_h(|y|_\infty /3)\leq \pi_h(L).
$$
Since $\delta_4^{-4}\delta_3^{-1} >1$, we have in any case 
$$
\mu_{T,h}({\mathbf O}\stackrel{+}{\leftrightarrow }y ) \leq 
2 (\delta_4^{-4}\delta_3^{-1} \pi_h(L) )^2.
$$
Therefore 
\begin{equation}\label{eq:12-4a}
{\sum}^{2a} |y|^t \mu_{T,h}( {\mathbf O}\stackrel{+}\leftrightarrow y )
\leq C_7^\# (t) L^{t+2}\pi_h(L)^2,
\end{equation}
where $C_7^\# (t) >0$ is a constant depending only on $t$ and $\varepsilon_0 $.

As for $\sum^{2b}$, observe that
\begin{align}
&\mu_{T,h}( {\mathbf O}\stackrel{+}{\leftrightarrow }y )\label{eq:12-5a}\\
&\leq  {\bigl( \pi_h(L)+4CL^3e^{-\alpha L} \bigr) }^2
\mu_{T,h}\bigl( \partial S(2L) \stackrel{+}{\leftrightarrow }
\partial_{in}S((k-1)L) \bigr) \nonumber
\end{align}
for every $y$ with $kL \leq |y|_\infty < (k+1)L$.
By the same reason as before, we have
$$
4CL^3 e^{-\alpha L} < \pi_h(L).
$$
Therefore we have
\begin{align*}
&{\sum}^{2b}|y|_\infty^t 
\mu_{T,h}( {\mathbf O}\stackrel{+}{\leftrightarrow } y )\\
&\leq \sum_{k=5}^\infty 4^2((k+1)L)^{t+2}\pi_h(L)^2
\mu_{T,h}\bigl( \partial S(2L)\stackrel{+}{\leftrightarrow }
    \partial_{in}S((k-1)L) \bigr) .
\end{align*}
By the FKG inequality, it is easy to see that for $h<h_c(T)$,
\begin{align}
&\mu_{T,h}\bigl( \partial S(2L)\stackrel{+}{\leftrightarrow }
    \partial_{in}S((k-1)L) \bigr) \label{eq:12-6} \\
&\leq (\delta_4^4\delta_2)^{-1}\mu_{T,h}
 \bigl( \bigl( \partial S(L)\stackrel{+}{\leftrightarrow }
    \partial_{in}S((k-1)L) \bigr) \nonumber \\
& \leq (\delta_4^4\delta_2)^{-1}K_6\exp\{ - K_7(k-1) \} . \nonumber
\end{align}
The last inequality is by Lemma \ref{Kp121}.
Thus, combining (\ref{eq:12-3}) and (\ref{eq:12-4a})--(\ref{eq:12-6}),
we obtain 
\begin{align*}
\sum_{y\in {\mathbf Z}^2} |y|_\infty^t 
\mu_{T,h}\bigl( {\mathbf O}\stackrel{+}{\leftrightarrow } y, \# 
 {\mathbf C}_0^+ <\infty ) \leq C_8^\# (t) \{ L^{t+2}\pi_h(L)^2+1\} ,
\end{align*}
Where the constant $C_8^\# (t)$ depends only on $t, L_0$ and $\varepsilon_0$. 
When $L\rightarrow \infty $, this is bounded from above by
$ 2C_8^\# (t) L^{t+2}\pi_h(L)^2$. 

If $h>h_c(T)$, only the estimate of $\sum^{2b}$ changes.
In this case, the dual connectivity function decays exponentially.
By the FKG inequality we have
\begin{align*}
&\mu_{T,h}\bigl( {\mathbf O} \stackrel{+}{\leftrightarrow } y, 
\# {\mathbf C}_0<\infty )\\
&\leq \mu_{T,h}( {\mathbf O}\stackrel{+}{\leftrightarrow } y )
\mu_{T,h}\left( 
\begin{array}{@{\,}c@{\,}}
\hbox{there exists a $(-*)$-circuit which}\\
\hbox{surrounds both ${\mathbf O}$ and $y$}
\end{array}
\right)
\end{align*}
By the mixing property the first term is bounded from above by 
$2\pi_h(L)^2 $ as before, and
$$
{\textstyle \sum^{2b}}\, \,   |y|_\infty^t \mu_{T,h}\left(
 \begin{array}{@{\,}c@{\,}}
  \hbox{there exists a $(-*)$-circuit which}\\
\hbox{surrounds both ${\mathbf O}$ and $y$}
\end{array}
\right) \leq C_9^\# L^{t+2}
$$
for some constant $C_9^\# >0$ depending only on $t$ and $\varepsilon_0$.
\end{proof}

\subsection{Proof of Kesten-IMA-Corollary and Kesten-Corollary 1}

Let us begin with the following lemma.

\begin{Lemma}[cf. \cite{K87scaling} (3.6)] \label{lem:K(3.6)}
Let $L_0$ be the same as Lemma \ref{inc-7}. Then there exists an constant 
$C_{37}>0$ depending only on $L_0$ and $\varepsilon_0$, such that
for any $k_0>\log_2 L_0+1$,
\begin{equation} \label{K(3.6)}
\frac{\pi_{\text{cr}}(2^k)}{\pi_{\text{cr}}(2^{k_0})} \leq C_{37} 2^{k_0-k}
\end{equation}
for $2L_0\leq 2^k \leq 2^{k_0}$.
\end{Lemma}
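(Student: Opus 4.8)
The plan is to deduce this directly from Lemma~\ref{inc-7}, which encodes that $n\pi_h(n)$ is essentially increasing. At the percolation threshold we may regard $L(h_c(T),\varepsilon_0)$ as $\infty$ (the convention fixed after Lemma~\ref{RSW-cr}), so Lemma~\ref{inc-7} applies to $\mu_{\text{cr}}$ for every $n\geq L_0$ and reads
\[
(n-L_0)\pi_{\text{cr}}(n)\leq\sum_{j=L_0+1}^{n}\pi_{\text{cr}}(j)\leq C_{30}(2n-L_0)\pi_{\text{cr}}(n).
\]
First I would record the two one-sided consequences needed. Applying the lower bound at $n=2^{k}$ and using $2^{k}\geq 2L_0$ (so $2^{k}-L_0\geq 2^{k}/2$) gives $\sum_{j=L_0+1}^{2^{k}}\pi_{\text{cr}}(j)\geq\tfrac12\,2^{k}\pi_{\text{cr}}(2^{k})$. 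Applying the upper bound at $n=2^{k_0}$ and $2\cdot 2^{k_0}-L_0\leq 2\cdot 2^{k_0}$ gives $\sum_{j=L_0+1}^{2^{k_0}}\pi_{\text{cr}}(j)\leq 2C_{30}\,2^{k_0}\pi_{\text{cr}}(2^{k_0})$.

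Next, since $2^{k}\leq 2^{k_0}$ and every summand $\pi_{\text{cr}}(j)$ is nonnegative, the partial sum is monotone in its upper limit, whence
\[
\tfrac12\,2^{k}\pi_{\text{cr}}(2^{k})\leq\sum_{j=L_0+1}^{2^{k}}\pi_{\text{cr}}(j)\leq\sum_{j=L_0+1}^{2^{k_0}}\pi_{\text{cr}}(j)\leq 2C_{30}\,2^{k_0}\pi_{\text{cr}}(2^{k_0}).
\]
Dividing by $2^{k}\pi_{\text{cr}}(2^{k_0})$ and then by $2$ yields $\pi_{\text{cr}}(2^{k})/\pi_{\text{cr}}(2^{k_0})\leq 4C_{30}\,2^{k_0-k}$, so the lemma holds with $C_{37}=4C_{30}$, a constant depending only on $L_0$ and $\varepsilon_0$. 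The hypothesis $k_0>\log_2 L_0+1$ ensures $2^{k_0}\geq 2L_0\geq L_0$, while $2L_0\leq 2^{k}$ is exactly the stated range; both conditions place $2^{k}$ and $2^{k_0}$ in the region where Lemma~\ref{inc-7} is valid and where $n-L_0\geq n/2$.

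There is essentially no hard step: the only point warranting a remark is the passage from $\mu_{T,h}$ to $\mu_{\text{cr}}$ with $L(h_c(T),\varepsilon_0)=\infty$, legitimate by the choice $0<\varepsilon_0<C(1)/2$ and Lemma~\ref{RSW-cr}. The same computation in fact gives the conclusion for every integer $n$ with $2L_0\leq n\leq 2^{k_0}$, not merely $n=2^{k}$, but the dyadic form stated here is all that the later sections (e.g.\ the proof of Lemma~\ref{lem:K87IMA}) require.
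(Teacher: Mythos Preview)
Your proof is correct and essentially identical to the paper's: both apply the two sides of Lemma~\ref{inc-7} at $n=2^k$ and $n=2^{k_0}$, use $2^k\geq 2L_0$ to bound $2^k-L_0\geq 2^k/2$, compare the partial sums by monotonicity, and arrive at the same constant $C_{37}=4C_{30}$. The only cosmetic difference is that the paper writes the four inequalities as a single chain.
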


\begin{proof}
The following argument has already appeared in the proof of Theorem \ref{KTheorem3a}. By Lemma \ref{inc-7}, we have
\begin{align*}
2^k\pi_{\text{cr}}(2^k)&\leq 2(2^k-L_0)\pi_{\text{cr}}(2^k)\\
  &\leq 2 \sum_{\ell = L_0+1}^{2^k}\pi_{\text{cr}}(\ell) \\
  &\leq 2\sum_{\ell = L_0+1}^{2^{k_0}}\pi_{\text{cr}}(\ell) \\
  &\leq 4C_{30}2^{k_0}\pi_{\text{cr}}(2^{k_0}).
\end{align*}
\end{proof}

In the following we mean by $f(n) \approx n^{\zeta}$ that
\[\lim_{n \to \infty} \frac{\log f(n)}{\log n} = \zeta. \]

\begin{Corollary}[cf. \cite{K87IMA} Corollary,  \cite{K87scaling} Corollary 1] \label{K87IMACor} If one of
\begin{align}
\pi_{\text{cr}}(n)  \approx n^{-1/\delta_{\text r}} \label{K(1.18)} %=\pi_{h_c(T)} (n)
\intertext{or}
\tau_{\text{cr}}(\mathbf{O} ,(n,0)) \approx n^{-\eta} \label{K(1.7)b}
\end{align}
holds, then both statements as well as
\begin{equation}
\mu_{\text{cr}} \{ \# {\bf C}_0^+ \geq n \} \approx n^{-1/\delta} \label{K(1.6)b}
\end{equation}
hold, and
\begin{align*}
\theta (T,h) &\asymp L(h,\varepsilon_0)^{-1/\delta_{\text r}} = L(h,\varepsilon_0)^{-2/(\delta+1)}, \\
\eta &= \frac{2}{\delta_{\text r}}, \\
\delta &= 2\delta_{\text r} -  1 = \frac{4}{\eta} -1.
\end{align*}
If in addition for some $\nu >0$
\begin{align} \label{K(1.1)} 
\xi(T,h) &:=\left[ \dfrac{1}{\chi(T,h)} \sum_{v \in \mathbf{Z}^2} |v|^2 \mu_{T,h} \bigl( \mbox{$\mathbf{O} \stackrel{+}{\leftrightarrow} v$, $\# {\bf C}_0^+ <\infty$} \bigr) \right]^{1/2} \\
&\approx  \left| h-h_c(T) \right|^{-\nu} \notag
\end{align}
holds, then
\[ \beta = \frac{2\nu}{\delta+1}. \]
\end{Corollary}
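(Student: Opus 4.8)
The plan is to deduce the identities relating the exponents $\delta_{\mathrm r}$, $\eta$, $\delta$, $\nu$ and $\beta$ from the scaling results already established, exactly as in Kesten~\cite{K87IMA}, \cite{K87scaling}, using the Ising analogues proved above in place of their independent-percolation counterparts.

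\medskip
\noindent
\emph{Step 1: equivalence of (\ref{K(1.18)}) and (\ref{K(1.7)b}), and the value of $\eta$.} By Lemma~\ref{lem:K87IMA} we have $\tau_{\text{cr}}(\mathbf{O},x)\asymp\pi_{\text{cr}}(|x|_\infty)^2$. Taking $x=(n,0)$ and applying $\log(\cdot)/\log n$ to both sides, we see that $\tau_{\text{cr}}(\mathbf{O},(n,0))\approx n^{-\eta}$ holds if and only if $\pi_{\text{cr}}(n)\approx n^{-\eta/2}$. Hence (\ref{K(1.18)}) and (\ref{K(1.7)b}) are equivalent and $\eta=2/\delta_{\mathrm r}$.

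\medskip
\noindent
\emph{Step 2: equivalence with (\ref{K(1.6)b}) and the value of $\delta$.} I would use Lemma~\ref{lem:radi-1} together with Theorem~\ref{K87IMA-th}. From (\ref{Kbg(10)}) and parts (i)--(ii) of Theorem~\ref{K87IMA-th} (applied at $h=h_c(T)$, where $L(h_c(T),\varepsilon_0)=\infty$ so the hypotheses hold for all large $n$), the conditional law of $\#{\mathbf C}_0^+$ given $n\le R<2n$ is concentrated, up to constants, around $n^2\pi_{\text{cr}}(n)$. Combining this with $\mu_{\text{cr}}(n\le R<2n)\asymp\pi_{\text{cr}}(n)$ and summing over dyadic scales gives $\mu_{\text{cr}}(\#{\mathbf C}_0^+\ge m)\approx m^{-1/\delta}$ with $m\asymp n^2\pi_{\text{cr}}(n)$, i.e. if $\pi_{\text{cr}}(n)\approx n^{-1/\delta_{\mathrm r}}$ then $m\approx n^{2-1/\delta_{\mathrm r}}$ and $\mu_{\text{cr}}(R\ge n)\approx n^{-1/\delta_{\mathrm r}}$, so $1/\delta=\tfrac{1/\delta_{\mathrm r}}{2-1/\delta_{\mathrm r}}$, which rearranges to $\delta=2\delta_{\mathrm r}-1=4/\eta-1$. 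Running the implication in the reverse direction (from (\ref{K(1.6)b}) back to (\ref{K(1.18)})) uses the same two-sided bounds and Lemma~\ref{inc-7} to invert $m\mapsto n$.

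\medskip
\noindent
\emph{Step 3: the behavior of $\theta$, and then $\beta$.} For $\theta$ I would invoke Theorem~\ref{KTheorem2}, which gives $\theta(T,h)\asymp\pi_h(L(h,\varepsilon_0))$, and then Theorem~\ref{KTheorem1}, which gives $\pi_h(L)\asymp\pi_{\text{cr}}(L)$; under (\ref{K(1.18)}) this is $\asymp L(h,\varepsilon_0)^{-1/\delta_{\mathrm r}}=L(h,\varepsilon_0)^{-2/(\delta+1)}$, using $2/(\delta+1)=2/(2\delta_{\mathrm r})=1/\delta_{\mathrm r}$. Finally, for $\beta$: by definition $\chi(T,h)=\sum_v\mu_{T,h}(\mathbf{O}\leftrightarrow v,\#{\mathbf C}_0^+<\infty)$, so by Theorem~\ref{KTheorem3a} with $t=0$ we get $\chi(T,h)\asymp L(h,\varepsilon_0)^2\pi_{\text{cr}}(L(h,\varepsilon_0))^2$, and with $t=2$ we get $\sum_v|v|^2\mu_{T,h}(\mathbf{O}\leftrightarrow v,\#{\mathbf C}_0^+<\infty)\asymp L(h,\varepsilon_0)^4\pi_{\text{cr}}(L(h,\varepsilon_0))^2$; dividing, $\xi(T,h)^2\asymp L(h,\varepsilon_0)^2$, so $\xi(T,h)\asymp L(h,\varepsilon_0)$. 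Thus the hypothesis (\ref{K(1.1)}) forces $L(h,\varepsilon_0)\approx|h-h_c(T)|^{-\nu}$, and combining with $\theta(T,h)\asymp L(h,\varepsilon_0)^{-2/(\delta+1)}$ yields $\theta(T,h)\approx|h-h_c(T)|^{2\nu/(\delta+1)}$, i.e. $\beta=2\nu/(\delta+1)$.

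\medskip
\noindent
The main obstacle I expect is Step~2: extracting the power law (\ref{K(1.6)b}) from (\ref{K(1.18)}) requires carefully piecing together the upper and lower tail estimates of Theorem~\ref{K87IMA-th} at every dyadic scale and controlling the ``error'' term $+1$ appearing in Proposition~\ref{inc-8} and in Theorem~\ref{K87IMA-th}(i), which is only negligible once $n^2\pi_{\text{cr}}(n)\to\infty$; this in turn relies on the lower bound $\pi_{\text{cr}}(n)\ge C'_{31}/n$ from Lemma~\ref{inc-7}. The remaining steps are essentially bookkeeping with the two-sided bounds already in hand, following \cite{K87IMA} line by line, with the Ising-specific inputs (mixing, RSW, the extension argument) supplied by the lemmas above.
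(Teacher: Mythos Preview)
Your proposal is correct and follows essentially the same route as the paper: Lemma~\ref{lem:K87IMA} for Step~1, Theorem~\ref{K87IMA-th} with Lemma~\ref{lem:radi-1} and the a~priori bound from Lemma~\ref{inc-7} for Step~2, and Theorems~\ref{KTheorem1}, \ref{KTheorem2}, \ref{KTheorem3a} for Step~3. The paper's Step~2 is carried out as two separate liminf/limsup estimates (choosing $m=n^{\delta_{\mathrm r}/((2-\varepsilon)\delta_{\mathrm r}-1)}$ for the lower bound, and decomposing over $\{2^k\le R<2^{k+1}\}$ together with Lemma~\ref{lem:K(3.6)} for the upper bound), which is exactly the ``dyadic summation'' you sketch; note also that the reverse implication from (\ref{K(1.6)b}) to (\ref{K(1.18)}) is not actually needed for the statement as written.
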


\begin{proof}
By Lemma \ref{lem:K87IMA}, the existence of $\eta $ and $\delta_{\text r}$ is
equivalent, and we have $\eta = 2/\delta_{\text r}$.
Further, by Theorem \ref{KTheorem3a}, 
$$
\xi (T,h)\asymp L(h,\varepsilon_0)
$$
as $h\rightarrow h_c(T)$.
By Theorem \ref{KTheorem2}, we have
$$
(C_1C_2)^{-1}\pi_{\text{cr}}(L(h,\varepsilon_0)) \leq
\theta (h) \leq \pi_h(L(h,\varepsilon_0)).
$$ 
This, together with (\ref{K(1.18)}), (\ref{K(1.1)}) implies that %{K(1.6)b}
$\beta = \nu /\delta_{\text r}$. %assuming the existence of $\nu $.
So, the remaining thing is to show the equality $\delta = 2\delta_{\text r}-1$.
The argument is quite parallel to \cite{K87IMA}.
By Theorem \ref{K87IMA-th}  we can choose
$\lambda (\frac{1}{2})$  such that
$$
\mu_{\text{cr}}\biggl( 
 \frac{\#{\bf C}_0^+}{R^2\pi_{\text{cr}}(R)}
   \leq \frac{1}{\lambda(\frac{1}{2})}
   \ \bigg\vert \  n \leq R<2n \biggr) \leq \frac{1}{2}.
$$
For an arbitrarily small $\varepsilon $, 
we put $m=n^{\delta_{\text r}/((2-\varepsilon )\delta_{\text r}-1)}$.
By Lemma \ref{inc-7}, we know that $\pi_{\text{cr}}(n)\geq C_{31}'n^{-1}$.
This means that $\delta_{\text r}\geq 1$, and $(2-\varepsilon )\delta_{\text r}-1>0$
if $\varepsilon $ is small.
Since
$$
n= m^{2-\varepsilon -1/\delta_{\text r}},
$$
we can assume that $n\leq (\lambda(1/2) )^{-1}m^2\pi_{\text{cr}}(2m)$.
Therefore we have
\begin{align*}
\mu_{\text{cr}}( \#{\bf C}_0^+\geq n ) &\geq  
 \mu_{\text{cr}}\left( \# {\bf C}_0^+ \geq 
         \frac{1}{\lambda (\frac{1}{2} )}m^2\pi_{\text{cr}}(2m) \right) \\
&\geq \mu_{\text{cr}}\left(
   \frac{\# {\bf C}_0^+}{R^2\pi_{\text{cr}}(R)}
         \geq \frac{1}{\lambda ( \frac{1}{2} )}
        , \ m \leq R < 2m \right) \\
&\geq \frac{1}{2} C_1^\# \cdot \pi_{\text{cr}}(m).
\end{align*}
for some constant $C_1^\# >0$ which depends only on $\varepsilon_0$.
Dividing both sides of the above inequality by $\log n$, and 
letting $n\rightarrow \infty $, we obtain
$$
\frac{\mu_{\text{cr}}( \# {\bf C}_0^+ \geq n )}{\log n}
\geq \frac{\log \frac{C_1^\# }{2} + \log \pi_{\text{cr}}(m)}
        {(2-\varepsilon - 1 /\delta_{\text r} )\log m }
 \rightarrow -\frac{1}{(2-\varepsilon )\delta_{\text r} -1}.
$$
Finally we let $\varepsilon \rightarrow 0$, and obtain the
inequality
$$
\liminf_{n\rightarrow \infty }
\frac{\log \mu_{\text{cr}}( \# {\bf C}_0^+ \geq n )}{\log n}
   \geq - \frac{1}{2\delta_{\text r}-1}.
$$
For the converse inequality, we fix $k_0$ arbitrarily.
From Theorem \ref{K87IMA-th}, we have
\begin{align*}
\mu_{\text{cr}}( \# {\bf C}_0^+ \geq n )
&\leq  
 \sum_{k\leq k_0} \pi_{\text{cr}}(2^k)
  \mu_{\text{cr}}( \# {\bf C}_0^+ \geq n \, | \, 
           2^k \leq R < 2^{k+1} ) + \pi_{\text{cr}}(2^{k_0}) \\
&\leq \sum_{k\leq k_0} \pi_{\text{cr}} (2^k)C_{36}(1) n^{-1}4\cdot 2^{2k}\pi_{\text{cr}}(2^k)
       + \pi_{\text{cr}}(2^{k_0}) \\
&\leq \pi_{\text{cr}}(2^{k_0})\left( \frac{C_{36}(1)}{n}
           \sum_{k\leq k_0} 
         \frac{\pi_{\text{cr}}(2^k)^2 2^{2(k+1)}}{\pi_{\text{cr}}(2^{k_0})}
            +1 \right).
\end{align*}
Note that $R<2^k$ implies that $\# {\bf C}_0^+ \leq 2^{2(k+1)}$. 
Since $n$ is supposed to be large,
we can assume that the above summation is taken over $k$'s with
$2^k\geq 2L_0$.
Then by Lemma \ref{lem:K(3.6)},
$$
\frac{C_{36}(1)}{n}
           \sum_{k\leq k_0} 
         \frac{\pi_{\text{cr}}(2^k)^2 2^{2(k+1)}}{\pi_{\text{cr}}(2^{k_0})}
            +1
\leq \frac{4C_{36}(1)C_{37}\pi_{\text{cr}}(2^{k_0})2^{2k_0}k_0}{n} +1.
$$
Take $\varepsilon >0$ arbitrarily, and put
$$
k_0= \frac{(1-\varepsilon )\log_2 n}{2 -1/\delta_{\text r}}.
$$
Then the above value is bounded and we have
$$\limsup_{n\rightarrow \infty }
 \frac{\log \mu_{\text{cr}}( \# {\bf C}_0^+ \geq n )}{\log n} \leq 
  -(1-\varepsilon )\frac{1}{\delta_{\text r}}\frac{1}{2-1/\delta_{\text r}}.
$$
Since $\varepsilon >0$ is arbitrary, we obtain
$$
\limsup_{n\rightarrow \infty }
\frac{\log \mu_{\text{cr}}( \# {\bf C}_0^+ \geq n )}{\log n} 
   \leq - \frac{1}{2\delta_{\text r}-1}.
$$
\end{proof}

\subsection{Proof of Kesten-Theorem 3}

%From Theorem \ref{K87IMA-th},
%we can choose $\varepsilon_1>0$ so that
%\[ \mu_{T,h} \bigl( \# {\bf C}_0^+ \geq \varepsilon_1 L^2 \pi_{T,h}(L) \,\big\vert \, \mathbf{O}  \stackrel{+}{\leftrightarrow} \partial S(L) \bigr) \geq \frac{1}{2}. \]
%(As in \cite{K87scaling} p.141.)

\begin{Theorem}[cf. \cite{K87scaling} (1.25) in Theorem 3]\label{KTheorem3b}

\noindent
(i) For $t>1$, we have
\begin{equation} \label{K(1.25)}
E_{T,h} [ (\# {\bf C}_0^+ )^t : \# {\bf C}_0^+ < \infty ] \asymp L(h,\varepsilon_0)^{2t} \pi_{\text{cr}} (L(h,\varepsilon_0))^{t+1}
\end{equation}
as $h\rightarrow h_c(T)$.

\noindent
(ii) For $t=1$, we have the same order as above for the lower bound,
but we need an extra logarithmic factor for the upper bound.
Namely, there exist constants $C_{38},C_{39}>0$ depending on $j_1$ and 
$ \varepsilon_0$, such that
\begin{align}\label{K(1.25a)}
&C_{38}L(h,\varepsilon_0)^{2}\pi_{\text{cr}}(L(h,\varepsilon_0))^{2}\\
&\leq  E_{T,h}\left[ \# {\bf C}_0^+ :
      \# {\bf C}_0^+ <\infty \right] \nonumber \\
&\leq  C_{39} L(h,\varepsilon_0)^{2}\pi_{\text{cr}}(L(h,\varepsilon_0))^{2}
       \log_2L(h,\varepsilon_0).  \nonumber 
\end{align}
\end{Theorem}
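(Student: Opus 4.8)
The plan is to establish \eqref{K(1.25)} and \eqref{K(1.25a)} by a decomposition over the radius $R = R(\mathbf{C}_0^+)$ of the cluster. Write
\[
E_{T,h}\bigl[ (\#\mathbf{C}_0^+)^t : \#\mathbf{C}_0^+ < \infty \bigr]
= \sum_{k} E_{T,h}\bigl[ (\#\mathbf{C}_0^+)^t : 2^k \le R < 2^{k+1} \bigr],
\]
where the sum runs over $k$ from $j_1+5$ (say) up to $\log_2 L(h,\varepsilon_0)$, plus a harmless contribution from small clusters which is $O(L_0^{2t})$. The key observation is that on $\{2^k \le R < 2^{k+1}\}$ the cluster is contained in $S(2^{k+2})$, so by Proposition \ref{inc-8}, conditionally on $\{\mathbf{O} \stackrel{+}{\leftrightarrow}\partial_{in}S(2^k)\}$ (which contains $\{R \ge 2^k\}$),
\[
E_{T,h}\bigl[ (\#(\mathbf{C}_0^+\cap S(2^{k+2})))^t \bigr] \asymp (2^{2k}\pi_h(2^k))^t,
\]
using also Lemma \ref{inc-7} to absorb the $+1$ (for $t>1$; the $t=1$ case is where the logarithm will appear). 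Combined with Lemma \ref{lem:radi-1}, which gives $\mu_{T,h}(2^k \le R < 2^{k+1}) \asymp \pi_h(2^k)$ on the lower side (and $\le \pi_h(2^k)$ trivially on the upper side), each summand is $\asymp 2^{2tk}\pi_h(2^k)^{t+1}$.

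Next I would sum the geometric-type series $\sum_{k \le \log_2 L} 2^{2tk}\pi_h(2^k)^{t+1}$. By Theorem \ref{KTheorem1} and Lemma \ref{RSWbound}/FKG, $\pi_h(2^k)$ is comparable to $\pi_{\mathrm{cr}}(2^k)$ up to constants for $2^k < L$, and by the essential monotonicity $2^k\pi_h(2^k) \le C L \pi_h(L)$ from Lemma \ref{inc-7}, plus the upper bound $\pi_{\mathrm{cr}}(2^k)/\pi_{\mathrm{cr}}(2^{k_0}) \le C 2^{k_0-k}$ from Lemma \ref{lem:K(3.6)}, the terms grow at least geometrically as $k$ increases (for $t > 1$ the exponent on $2^k$ after accounting for $\pi_h(2^k)^{t+1} \ge c (2^{-k})^{t+1}\cdots$ is strictly positive since $2t - (t+1) = t-1 > 0$). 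Hence the sum is dominated, up to a constant depending only on $t, j_1, \varepsilon_0$, by its last term $L^{2t}\pi_h(L)^{t+1} \asymp L^{2t}\pi_{\mathrm{cr}}(L)^{t+1}$, where $L = L(h,\varepsilon_0)$. For the lower bound, the single term $k = \log_2 L$ already gives $\gtrsim L^{2t}\pi_{\mathrm{cr}}(L)^{t+1}$, so both bounds match and \eqref{K(1.25)} follows. I should also separately check the tail $\{R \ge L\}$: for $h < h_c$ one uses the exponential decay of $\mu_{T,h}(S(L) \stackrel{+}{\leftrightarrow} \partial_{in}S(kL))$ from Lemma \ref{Kp121} (valid for $L = L(h,\varepsilon_0)$ by the Corollary after it) together with Proposition \ref{inc-8} applied in annuli, and for $h > h_c$ one restricts to $\#\mathbf{C}_0^+ < \infty$ and uses the dual exponential decay — in both cases this tail is $O(L^{2t}\pi_{\mathrm{cr}}(L)^{t+1})$ and absorbed. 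This is exactly the scheme already used in the proof of Theorem \ref{KTheorem3a}, so most estimates transfer.

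For part (ii), $t=1$, the difference is that in the geometric sum $\sum_{k} 2^{2k}\pi_h(2^k)^2$ the exponent balance is delicate: using only $\pi_h(2^k) \le C 2^{k_0-k}\pi_h(2^{k_0})$ (Lemma \ref{lem:K(3.6)}) one gets $2^{2k}\pi_h(2^k)^2 \le C (2^{2k_0}\pi_h(2^{k_0})^2)$ uniformly in $k \le k_0$, i.e. each term is bounded by the last one but the sum of $\log_2 L$ such terms only gives the stated extra factor $\log_2 L(h,\varepsilon_0)$. The lower bound is as before (a single term). One must be careful that the $+1$ in Proposition \ref{inc-8} at $t=1$ contributes $\sum_k 2^{2k}\pi_h(2^k) \asymp \sum_k 2^k \cdot (2^k\pi_h(2^k)) \le C L\pi_h(L) \sum_k 2^k \asymp L^2\pi_h(L)$, which by $\pi_h(L)\gtrsim C_{31}'/L$ is $\lesssim L^2\pi_h(L)^2 \cdot L \pi_h(L) \le$ (bounded multiple using $L\pi_h(L) \le 1$... actually one argues it is $\le C L^2\pi_h(L)^2\log_2 L$ once more) — this bookkeeping is routine but must be done.

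The main obstacle, as usual in this circle of ideas, is not any single estimate but controlling all the $h$-dependence uniformly: one must verify that every application of Proposition \ref{inc-8}, Lemma \ref{inc-7}, Lemma \ref{lem:radi-1}, Lemma \ref{Kp121} and Theorem \ref{KTheorem1} is valid in the stated range $2^{j_1+5} < 2^k \le L(h,\varepsilon_0)$ (or up to $2L(h,\varepsilon_0)$ via the remarks on narrower rectangles), and that the constants produced depend only on $j_1,\varepsilon_0$ (and $t$), not on $h$ or $N$. The second, more subtle, obstacle is the $t=1$ logarithmic discrepancy: one has to convince oneself that the $\log$ factor is genuinely needed (it is, since $\sum_k$ of a constant over $\log L$ scales) and cannot be removed by a sharper comparison — in particular Lemma \ref{lem:K(3.6)} is essentially optimal in this regard, so the $\log_2 L(h,\varepsilon_0)$ in \eqref{K(1.25a)} is the honest answer, matching Kesten's original.
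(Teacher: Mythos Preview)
Your overall strategy matches the paper's: decompose by dyadic radius, use Proposition~\ref{inc-8} for the conditional moments, Lemma~\ref{lem:K(3.6)} to sum the resulting series (geometric for $t>1$, logarithmic loss for $t=1$), Theorem~\ref{KTheorem1} to pass from $\pi_h$ to $\pi_{\mathrm{cr}}$, and exponential decay beyond $L$ for the tail.

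Two places where your sketch is imprecise compared with the paper's execution:

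\emph{Lower bound.} You write that ``each summand is $\asymp 2^{2tk}\pi_h(2^k)^{t+1}$'', combining Proposition~\ref{inc-8} with Lemma~\ref{lem:radi-1}. But Proposition~\ref{inc-8} conditions on $\{R\ge 2^k\}$, not on $\{2^k\le R<2^{k+1}\}$, and there is no direct inequality giving
\[
E\bigl[(\#\mathbf{C}_0^+)^t : 2^k\le R<2^{k+1}\bigr] \;\gtrsim\; \mu(2^k\le R<2^{k+1})\cdot E\bigl[(\#(\mathbf{C}_0^+\cap S(2^{k+2})))^t \,\big|\, R\ge 2^k\bigr].
\]
The paper instead invokes Theorem~\ref{K87IMA-th}\,(ii) directly: given $\{L\le R<2L\}$, with probability at least $1/2$ one has $\#\mathbf{C}_0^+\ge \varepsilon_1 L^2\pi_h(L)$, and then multiplies by $\mu_{T,h}(L\le R<2L)\ge \tfrac{1}{2}\delta_8^4\,\pi_h(L)$ from \eqref{Kbg(10)}. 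This single-scale argument gives the lower bound in one step.

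\emph{Tail $R\ge L/2$.} ``Proposition~\ref{inc-8} applied in annuli'' is not quite what is done. The paper writes $\#\mathbf{C}_0^+ = \#(\mathbf{C}_0^+\cap S(L)) + \sum_{\mathbf{m}\ne\mathbf{O}}\#(\mathbf{C}_0^+\cap B(\mathbf{m}))$ with $B(\mathbf{m})=S(2L\mathbf{m},L)$, applies Minkowski's inequality to the $t$-th power of the sum, and then for each box uses the induction from the proof of Proposition~\ref{inc-8} together with the mixing property and Lemma~\ref{Kp121} (for $h<h_c$) or the FKG bound on $(-*)$-circuits (for $h>h_c$) to get an exponentially summable factor in $|\mathbf{m}|_\infty$. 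The annular picture does not immediately give control of $(\sum_{\mathbf m})^t$; the box decomposition plus Minkowski is what makes the $t$-th moment tractable.
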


\begin{proof} Let $L=L(h,\varepsilon_0)$. 
For the lower bound, we estimate as follows.
As in the proof of Theorem \ref{K87IMA-th} %49 
(ii), we can choose $\varepsilon_1$
for $\varepsilon =1/2$, such that 
$$
\mu_{T,h} \left( \frac{\# {\bf C}_0^+ \cap S(L)}{L^2\pi_h(L)}\geq \varepsilon_1 \,
\bigg\vert \, L\leq R <2L \right)
\geq \frac{1}{2}.
$$
Then we have
\begin{align*}
&E_{T,h}[ (\# {\mathbf C}_0^+ )^t : \# {\mathbf C}_0< \infty ]\\
&\geq (\varepsilon_1 L^2\pi_h(L) )^t 
\mu_{T,h}\left( 
  L\leq R <2L
 \right) \\
&\times \mu_{T,h}\left( \infty >
\# {\mathbf C}_0^+\geq \varepsilon_1 L^2 \pi_h(L)
 \, \big\vert \,
\, L\leq R < 2L \right) \\
&\geq \frac{1}{2} \frac{\delta_8^4}{2}\varepsilon_1^t L^{2t}\pi_h(L)^{t+1}.
\end{align*}
The last inequality is from (\ref{Kbg(10)}) and the choice of $\varepsilon_1$.
But Theorem \ref{KTheorem1} enables us to replace $\pi_h(L)$ with
a constant multiple of $\pi_{\text{cr}}(L)$.

Now we turn to the upper bound. 
We will show the inequality for an integer $t\geq 1$.
The general case can be obtained by H\"older's inequality.
Similarly to the proof of Theorem \ref{KTheorem3a}, we divide the 
expectation according to the size of the radius $R$.
\begin{align}\label{eq:moment-1}
&E_{T,h} [ (\# {\bf C}_0^+ )^t : \# {\bf C}_0^+ < \infty ] \\
&\leq C_1^\# (t)+ \sum_{2L_0 \leq 2^{k+1}\leq L}
       E_{T,h}\left[
     {\bigl[ \# ({\bf C}_0^+ \cap S(2^{k+1})) \bigr] }^t
         : 2^k\leq R < 2^{k+1}
              \right] \nonumber \\
&\quad + E_{T,h}\left[ { \biggl\{ \# ( {\bf C}_0^+ \cap S(L)) +
              \sum_{{\mathbf m}\not= {\mathbf O}}
            \# ( {\bf C}_0^+ \cap B({\mathbf m})) \biggr\} }^t
             : \frac{L}{2} \leq R < \infty 
             \right] , \nonumber
\end{align}
where $B({\mathbf m}) = S(2L{\mathbf m}, L)=2L{\mathbf m}+S(L)$,
and $C_1^\# (t)>0$ is a constant depending on $t,L_0$ as before.

Note that for $2^k\leq L$, by the FKG inequality
\begin{align*}
&E_{T,h}\left[
     {\bigl[ \# ({\bf C}_0^+ \cap S(2^{k+1})) \bigr] }^t
         : 2^k\leq R < 2^{k+1}
              \right] \\
&\leq E_{T,h}\left[
     {\bigl[ \# ({\bf C}_0^+ \cap S(2^{k+1})) \bigr] }^t
         : 2^k\leq R 
              \right]  \\
&\leq \delta_4^{-4}\delta_2^{-1}
     E_{T,h}\left[
     {\bigl[ \# ({\bf C}_0^+ \cap S(2^{k+1})) \bigr] }^t
         : 2^{k+1} \leq R 
              \right]   .
\end{align*}
By Proposition \ref{inc-8}, the right hand side of the above inequality is
bounded from above by
\begin{align*}
&\delta_4^{-4}\delta_2^{-1}C_{35}(t)( 2^{2(k+1)}\pi_h(2^k)+1)^t\pi_h(2^{k+1}) \\
& \leq \delta_4^{-4}\delta_2^{-1}C_{35}(t)2^t 
 ( 2^{2t(k+1)}\pi_h(2^{k+1})^{t+1} +1 ).
\end{align*}
Since the inequality $2^{k+1}\geq L \geq 2^k$ may occur,
these constants may change a little to $C_{35}'(t)$, but it depends only on 
$t$ and $\varepsilon_0$, too.
Thus, thanks to Theorem \ref{KTheorem1}, we have
\begin{align*}
&E_{T,h}\left[
     {\bigl[ \# ({\bf C}_0^+ \cap S(2^{k+1})) \bigr] }^t
         : 2^k\leq R < 2^{k+1}
              \right]    \\
&\leq  C_2^\# (t) \biggl( 2^{2t(k+1)}\pi_{\text{cr}}(2^k)^{t+1} +1 \biggr)
\end{align*}
for some constant $C_2^\# (t)$ which depends only on $t, j_1$ and 
$\varepsilon_0$.

Let $k_0$ be the largest integer such that $2^{k_0}\leq L$.
By Lemma \ref{lem:K(3.6)}, we have
\begin{align*}
&C_2^\# (t)\sum_{2L_0\leq 2^{k+1}\leq L}\left(
 2^{2t(k+1)}\pi_{\text{cr}}(2^{k+1})^{t+1}+1 \right) \\
&\leq  C_2^\# (t)k_0 + 
      C_2^\# (t) \sum_{2L_0 \leq 2^{k+1}\leq 2^{k_0}}
      2^{(k+1)(t-1)}
      {  \bigl\{ C_{37}2^{k_0}\pi_{\text{cr}}(2^{k_0}) \bigr\} }^{t+1}.
\end{align*}
The second term in the right hand side of the above inequality is 
bounded from above by
$$
C_2^\# (t)C_{37}2^{2t(k_0+1)}\pi_{\text{cr}}(2^{k_0})^{t+1} \quad \mbox{if } t>1,
$$
and by
$$
C_2^\# (t)C_{37}k_02^{2t(k_0+1)}\pi_{\text{cr}}(2^{k_0})^{t+1} \quad \mbox{if }
t=1.
$$
Therefore the second term in the right hand side of  (\ref{eq:moment-1})
has the following upper bound as $h\rightarrow h_c(T)$.
\begin{align}\label{eq:moment-2}
& \sum_{2L_0\leq 2^{k+1}\leq L} E_{T,h}\left[
     {\bigl[ \# ({\bf C}_0^+ \cap S(2^{k+1})) \bigr] }^t
         : 2^k\leq R < 2^{k+1}
              \right] \\
&\leq 
\begin{cases}
 C_3^\# (t)L^{2t}\pi_{\text{cr}}(L)^{t+1},  & \text{if } t>1,\\
 C_3^\# (t)L^2\pi_{\text{cr}}(L)^2 \log L,      & \text{if } t=1.
\end{cases} \nonumber
\end{align}
Here, $C_3^\# (t)$ is a constant depending only on $t, j_1$ and 
$\varepsilon_0$.

Next, we estimate the third term in the right hand side of (\ref{eq:moment-1}).
Since $(a+b)^t\leq 2^t(a^t+b^t)$,
\begin{align}\label{eq:moment-3}
&E_{T,h}\left[ \biggl\{ \# \bigl( {\bf C}_0^+ \cap S(L) \bigr) +
              \sum_{{\mathbf m}\not= {\mathbf O}}
            \# \bigl( {\bf C}_0^+ \cap B({\mathbf m}) \bigr) \biggr\}^t
             : \frac{L}{2} \leq R < \infty 
             \right] \\
&\leq 2^t E_{T,h}\left[ \# \bigl( {\bf C}_0^+ \cap S(L)
                          \bigr)^t
         + \biggl\{\sum_{{\mathbf m}\not= {\mathbf O}}
              \# \bigl( {\bf C}_0^+ \cap B({\mathbf m})
                          \bigr)
              \biggr\}^t 
        : \frac{L}{2}<R<\infty 
         \right] .\nonumber
\end{align}
From Proposition \ref{inc-8} and Theorem \ref{KTheorem1}, the first term in 
the right hand side of (\ref{eq:moment-3}) is bounded from above by
$$
C_3^\# (t) \pi_{\text{cr}}(L)\left( (L^2\pi_{\text{cr}}(L) )^t +1 \right) 
$$
for some constant $C_3^\# (t)$ depending on $t$ and $ \varepsilon_0$.
As for the second term in the right hand side of (\ref{eq:moment-3}),
we first use Minkowski's inequality to obtain
\begin{align}\label{eq:moment-4}
& \left( E_{T,h} \left[ \biggl\{ \sum_{{\mathbf m}\not= {\mathbf O}}
              \# \bigl( {\bf C}_0^+ \cap B({\mathbf m}) \bigr) \biggr\}^t
        : \frac{L}{2}<R<\infty \right] \right)^{1/t}\\
&\leq  \sum_{{\mathbf m}\not= {\mathbf O}}
          \left( 
              E_{T,h}\left[ 
           \bigl[ \# \bigl({\bf C}_0^+ \cap B({\mathbf m}) \bigr) \bigr]^t
          : \frac{L}{2}<R<\infty \right] \right)^{1/t}. \nonumber
\end{align}

As in the proof of Theorem \ref{KTheorem3a}, we consider two cases separately; whether $h<h_c(T)$ or $h>h_c(T)$.

\noindent
Case $1^\circ )$ $h<h_c(T)$.
We will show that
\begin{align}\label{eq:moment-5}
& \sum_{ {\mathbf m}\not= {\mathbf O}}
 \left(  E_{T,h}\left[ 
           \bigl[ \# \bigl({\bf C}_0^+ \cap B({\mathbf m}) \bigr) \bigr]^t
          : \frac{L}{2}<R<\infty \right] \right)^{1/t} \\
&\leq  C_4^\# (t)(L^2\pi_{\text{cr}}(L)+1) \left( \pi_{\text{cr}}(L)
                 + C\frac{L^3}{3}e^{-\alpha L/3} \right)^{1/t}.\nonumber
\end{align}
Here, $C_4^\# (t)$ depends only on $j_1$ and $\varepsilon_0$.
If this is true, 
combining (\ref{eq:moment-1})--(\ref{eq:moment-5}), we obtain the 
correct order in Theorem \ref{KTheorem3b}, (\ref{K(1.25)}) and 
(\ref{K(1.25a)}).

First fix ${\mathbf m}\not= {\mathbf O}$ arbitrarily.
Then since
\begin{align*}
& E_{T,h}\left[ 
           \bigl[ \# \bigl({\bf C}_0^+ \cap B({\mathbf m}) \bigr) \bigr]^t
          : \frac{L}{2}<R<\infty \right] \\
&= E_{T,h}\left[ 
           \bigl[ \# \bigl({\bf C}_0^+ \cap B({\mathbf m}) \bigr) \bigr]^t
          : L<R<\infty \right]\\
&= \sum_{v_1,\ldots ,v_t\in B({\mathbf m})}
  \mu_{T,h}\bigl(
  {\mathbf O}\stackrel{+}{\leftrightarrow } v_1, \ldots ,v_t
    ,\, L<R<\infty \bigr) ,
\end{align*}
as in the proof of Proposition \ref{inc-8}, by induction we can show that the 
right hand side of the above equality is bounded from above by
\begin{equation}\label{eq:moment-6}
C_5^\# (t)\sum_{v\in B({\mathbf m})}
  \mu_{T,h}\bigl( 
   {\mathbf O}\stackrel{+}{\leftrightarrow }v \bigr)
   ( L^2\pi_h(L)+1)^{t-1},
\end{equation}
where $C_5^\# (t)$ depends only on $t, j_1$ and $\varepsilon_0$.

When $|{\mathbf m}|_\infty \leq 3$, by the mixing property
$$
\mu_{T,h}\bigl( 
   {\mathbf O}\stackrel{+}{\leftrightarrow }v \bigr)
  \leq 
\left( \pi_h(L/3) + C \frac{L^3}{3}e^{-\alpha L/3} \right)
 \pi_h(L/3).
$$
Hence, by Lemma \ref{RSWbound} and Theorem \ref{KTheorem1},
\begin{align*}
\sum_{v\in B({\mathbf m})}
  \mu_{T,h}\bigl( 
   {\mathbf O}\stackrel{+}{\leftrightarrow }v \bigr)
& \leq  4L^2\pi_h(L/3)\left(
    \pi_h(L/3) + C\frac{L^3}{3}e^{-\alpha L/3} \right) \\
& \leq C_6^\#  L^2 \pi_{\text{cr}}(L)
         \left( \pi_{\text{cr}}(L) + C\frac{L^3}{3}e^{-\alpha L/3}
         \right) .
\end{align*}
Here, the constant $C_6^\# \geq 1$ depends only on $\varepsilon_0$.
Inserting the above inequality into (\ref{eq:moment-6}), and summing it up
over ${\mathbf m}$'s with
$|{\mathbf m}|_\infty \leq 3$, we obtain the correct order in the right hand
side of (\ref{eq:moment-5}).
When $|{\mathbf m}|_\infty >3$, by the mixing property 
\begin{align*}
& \mu_{T,h}\bigl( 
   {\mathbf O}\stackrel{+}{\leftrightarrow }v \bigr)\\
& \leq  \mu_{T,h}\left(
   {\mathbf O}\stackrel{+}{\leftrightarrow }\partial S(L/2),
 S(L)\stackrel{+}{\leftrightarrow } \partial S(2L|{\mathbf m}|_\infty -L),
   v \stackrel{+}{\leftrightarrow } \partial S(v,L/2)
     \right) \\
& \leq  \left(\pi_h(L/2)+ C \frac{L^3}{2}e^{-\alpha L/2} \right)^2
\mu_{T,h}\bigl(
    S(L)\stackrel{+}{\leftrightarrow } \partial S(2L|{\mathbf m}|_\infty -L)
 \bigr) .
\end{align*}
Therefore by Lemma \ref{Kp121} and (\ref{K(2.24)}), 
$$
\sum_{v\in B({\mathbf m})}
  \mu_{T,h}\bigl( 
   {\mathbf O}\stackrel{+}{\leftrightarrow }v \bigr)
\leq 4L^2 \left( \pi_h(L/2)+C\frac{L^3}{2}e^{-\alpha L/2} \right)^2
  \times K_6e^{-K_7(2|{\mathbf m}|_\infty -1)}.
$$
Summing up this inequality over ${\mathbf m}$'s with 
$|{\mathbf m}|_\infty >3$, we obtain by Lemma \ref{RSWbound} and
Theorem \ref{KTheorem1},
$$
\sum_{|{\mathbf m}|_\infty >3}\sum_{v \in B({\mathbf m})}
\mu_{T,h}\bigl( 
   {\mathbf O}\stackrel{+}{\leftrightarrow }v \bigr)
\leq \mbox{Const.} \times L^2 \left( \pi_{\text{cr}}(L) 
+ C\frac{L^3}{2} e^{-\alpha L/2} \right)^2,
$$
where the constant above depends only on $\varepsilon_0$.
Assuming that $|h-h_c(T)|$ is small so that $L=L(h,\varepsilon_0)$
is so large that $ CL^3 e^{-\alpha L/3}<1$, we obtain the desired
inequality (\ref{eq:moment-5}).

\noindent
Case $2^\circ )$ $h>h_c(T)$.
In this case, the only thing we have to check is that
the contribution from ${\mathbf m}$'s with $|{\mathbf m}|_\infty >3$
to the third term in the right hand side of (\ref{eq:moment-1})
can be controlled, as well.
To this end, we start with the following inequality.
\begin{align} \label{eq:moment-7}
&\mu_{T,h}\bigl( 
{\mathbf O}\stackrel{+}{\leftrightarrow } v_1, \ldots , v_t \,|\,L<R<\infty \bigr) \\
&\leq \mu_{T,h}\bigl( 
{\mathbf O}\stackrel{+}{\leftrightarrow } v_1, \ldots , v_t \bigr) \nonumber \\
&\quad  \times 
 \mu_{T,h}\left(
  \begin{array}{@{\,}c@{\,}} 
   \hbox{there exists a $(-*)$-circuit $\sigma^*$ such that}\\
   \hbox{$\sigma^*$ surrounds $v_1, \ldots, v_t$ and ${\mathbf O}$} \\
  \end{array}
      \right) \nonumber
\end{align}
For simplicity we assume that ${\mathbf m}=(m_1,m_2)$ and 
$| {\mathbf m} |_\infty = m_1>3$.
Then the above $(-*)$-circuit $\sigma^*$ must intersect 
$\cup_{k\geq 0}B((-k,0))$ and also $\cup_{\ell \geq 0} B((m_1+\ell ,m_2))$.
Thus,
%\begin{align} \label{eq:moment-8}
%& \mu_{T,h}\left(
%    \begin{array}{@{\,}c@{\,}} 
%     \hbox{there exists a $(-*)$-circuit $\sigma^*$ such that}\\
%     \hbox{$\sigma^*$ surrounds $v_1, \ldots, v_t$ and ${\mathbf O}$} \\
%    \end{array}
%        \right) \\
%&\leq  \sum_{k=0}^\infty \mu_{T,h}\left(
%         B((-k,0))\stackrel{-*}{\leftrightarrow} B({\mathbf m})
%            \right) \nonumber \\
%&\leq  \sum_{k=0}^\infty \mu_{T,h}\left(
%           S(L)\stackrel{-*}{\leftrightarrow }
%        \partial S(2(|{\mathbf m}|_\infty +k-1))
%          \right) . \nonumber
%\end{align}
applying Lemma \ref{Kp121} for $(-*)$-connection, we have
%\begin{align} \label{eq:moment-9}
%&  \sum_{k=0}^\infty \mu_{T,h}\left(
%           S(L)\stackrel{-*}{\leftrightarrow }
%        \partial S(2(|{\mathbf m}|_\infty +k-1))
%          \right) \\ %\nonumber 
%&\leq  K_6\sum_{k=0}^\infty \exp\bigl( -2K_7(|{\mathbf m}|_\infty +k-1)
%                              \bigr) \nonumber \\ 
%&\leq  \mbox{Const.} \times \exp\bigl( -2K_7(|{\mathbf m}|_\infty -1) \bigr). \nonumber
%\end{align}
%Therefore,
$$
\sum_{|{\mathbf m}|_\infty >3} \mu_{T,h}\left(
  \begin{array}{@{\,}c@{\,}} 
   \hbox{there exists a $(-*)$-circuit $\sigma^*$ such that}\\
   \hbox{$\sigma^*$ surrounds $v_1, \ldots, v_t$ and ${\mathbf O}$} \\
  \end{array}
      \right) \leq \mbox{Const.},
$$
and the constant in the right hand side depends only on $\varepsilon_0$.

On the other hand, we have by induction
$$%\begin{eqnarray*}
\sum_{v_1, \ldots , v_t\in B({\mathbf m})} \mu_{T,h}\bigl(
  {\mathbf O}\stackrel{+}{\leftrightarrow }v_1, \ldots , v_t
       \bigr) 
\leq \sum_{v \in B({\mathbf m})} 
   \mu_{T,h}\bigl(
   {\mathbf O}\stackrel{+}{\leftrightarrow }v \bigr)
   { \bigl( L^2 \pi_h(L) +1 \bigr) }^{t-1}
$$ 
and by the mixing property,
$$
\mu_{T,h}\bigl(
   {\mathbf O}\stackrel{+}{\leftrightarrow }v \bigr)
\leq \pi_h(L)\bigl( \pi_h(L) + 4CL^3 e^{-2\alpha L } \bigr) .
$$
Thus, we have
\begin{align} \label{eq:moment-10}
& \sum_{v_1, \ldots , v_t \in B({\mathbf m})}  \mu_{T,h}\bigl(
  {\mathbf O}\stackrel{+}{\leftrightarrow }v_1, \ldots , v_t
       \bigr) 
\\
&\leq (L^2\pi_{\text{cr}} (L)+1)^t(\pi_{\text{cr}} (L)+4CL^3e^{-2\alpha L}). \nonumber
\end{align}
Note that the above bound does not depend on ${\mathbf m}$. Combining (\ref{eq:moment-7})--(\ref{eq:moment-10}), we can see that the contribution of ${\mathbf m}$'s with
$|{\mathbf m}|_\infty >3$ to  the third term in the right hand side 
of (\ref{eq:moment-1})
is under a good control. 
This completes the proof. 
\end{proof}

%\subsection{Proof of Kesten-Corollary 2}

\begin{Corollary}[cf. \cite{K87scaling} Corollary 2] We have
\begin{align*}
\xi(h) &\asymp L(h,\varepsilon_0), \\
\dfrac{E_{h} [(\#{\bf C}_0^+)^{t} : \#{\bf C}_0^+<\infty]}{E_{h} [(\#{\bf C}_0^+)^{t-1} : \#{\bf C}_0^+<\infty]}
&\approx \xi(h)^2 \pi_{h_c} (\xi(h)) \\
&\quad \mbox{ for $t \geq 2$,} \\
\left[\dfrac{1}{\chi(h)} \sum_{v \in \mathbf{Z}^2} |v|^t \mu_{h} \bigl( \mbox{$\mathbf{O} \stackrel{+}{\leftrightarrow} v$, $\# {\bf C}_0^+ <\infty$} \bigr) \right]^{1/t}
&\asymp \xi(h) \\
&\quad \mbox{ for $t > 0$.}
\end{align*}
If (\ref{K(1.1)}) and (\ref{K(1.18)}) hold, then
\begin{align*}
\dfrac{E_{ h} [ (\#{\bf C}_0^+)^{k} : \# {\bf C}_0^+<\infty]}{E_{ h} [(\# {\bf C}_0^+)^{k-1} : \# {\bf C}_0^+ <\infty]} &\approx 
|h-h_c|^{-\Delta_k} \quad \mbox{for $k \geq 2$}, \\
\left[\dfrac{1}{\chi(h)} \sum_{v \in \mathbf{Z}^2} |v|^k \mu_{h} \bigl( \mbox{$\mathbf{O} \stackrel{+}{\leftrightarrow} v$, $\# {\bf C}_0^+ <\infty$} \bigr) \right]^{1/k} &\approx |h-h_c|^{-\nu_k}\quad \mbox{for $k \geq 1$,}
\end{align*}
and
\begin{align*} 
\gamma &= 2\nu \frac{\delta-1 }{\delta +1}, \\
\Delta_k &= 2\nu \frac{\delta }{\delta+1} \quad \mbox{for $k \geq 2$,} \\
%\nu &\geq \dfrac{\delta+1}{\delta}, \\
\nu_k &= \nu \quad \mbox{for $k \geq 1$.}
\end{align*}
%\lim_{h \to h_c(T)} \dfrac{-1}{\log|h-h_c(T)|} \log \left[\dfrac{1}{\chi(T,h)} \sum_{v \in \mathbf{Z}^2} |v|^k \mu_{T,h} \bigl( \mbox{$\mathbf{O} \stackrel{+}{\leftrightarrow} v$, $\# {\bf C}_0^+ <\infty$} \bigr) \right]^{1/k}
\end{Corollary}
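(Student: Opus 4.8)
The plan is to reduce each assertion to the two basic power-law estimates already packaged in Theorems~\ref{KTheorem3a}, \ref{KTheorem3b}, \ref{KTheorem2}, and Corollary~\ref{K87IMACor}, together with the correlation-length bound from Theorem~\ref{KTheorem1} and the definition of $L(h,\varepsilon_0)$. First I would establish $\xi(h)\asymp L(h,\varepsilon_0)$. By definition $\chi(h)=\mu_{T,h}[\#\mathbf{C}_0^+:\#\mathbf{C}_0^+<\infty]$ and the numerator in $\xi(h)^2$ is $\sum_v|v|^2\mu_{T,h}(\mathbf{O}\stackrel{+}{\leftrightarrow}v,\#\mathbf{C}_0^+<\infty)$; applying Theorem~\ref{KTheorem3a} with $t=2$ and with $t=0$ gives
\[
\xi(h)^2 \asymp \frac{L^{4}\pi_{\mathrm{cr}}(L)^2}{L^{2}\pi_{\mathrm{cr}}(L)^2}=L^2,
\]
where $L=L(h,\varepsilon_0)$, so $\xi(h)\asymp L$. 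The same computation with general $t>0$ in the numerator yields the third displayed relation, $\bigl[\chi(h)^{-1}\sum_v|v|^t\mu_{T,h}(\cdots)\bigr]^{1/t}\asymp \xi(h)$.

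Next I would treat the moment ratio. By Theorem~\ref{KTheorem3b}(i), for $t\ge 2$ both $E_h[(\#\mathbf{C}_0^+)^t:\#\mathbf{C}_0^+<\infty]$ and $E_h[(\#\mathbf{C}_0^+)^{t-1}:\#\mathbf{C}_0^+<\infty]$ are of order $L^{2t}\pi_{\mathrm{cr}}(L)^{t+1}$ and $L^{2(t-1)}\pi_{\mathrm{cr}}(L)^{t}$ respectively (the $t=1$ logarithmic defect in part (ii) never enters a ratio with $t\ge 2$), hence their quotient is $\asymp L^2\pi_{\mathrm{cr}}(L)$; combined with $\xi(h)\asymp L$ this gives $\approx \xi(h)^2\pi_{h_c}(\xi(h))$ in the sense of the logarithmic asymptotics $\approx$ (the $\asymp$ from Theorem~\ref{KTheorem3b} upgrades trivially to $\approx$). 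Under the additional hypotheses (\ref{K(1.1)}) and (\ref{K(1.18)}), Corollary~\ref{K87IMACor} supplies $\xi(h)\approx|h-h_c|^{-\nu}$ and $\pi_{\mathrm{cr}}(n)\approx n^{-1/\delta_{\mathrm r}}$ with $\delta=2\delta_{\mathrm r}-1$; substituting, the ratio is $\approx |h-h_c|^{-\nu(2-1/\delta_{\mathrm r})}=|h-h_c|^{-2\nu\delta/(\delta+1)}$, which identifies $\Delta_k=2\nu\delta/(\delta+1)$ for all $k\ge 2$. For $\nu_k$, the quantity inside the $1/k$-th power is $\chi(h)^{-1}L^{k+2}\pi_{\mathrm{cr}}(L)^2\asymp L^k$ by Theorem~\ref{KTheorem3a} and the formula for $\chi$, so its $1/k$-th power is $\asymp L\approx|h-h_c|^{-\nu}$, giving $\nu_k=\nu$ for every $k\ge 1$.

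Finally $\gamma$ is obtained from $\chi(h)\asymp L^2\pi_{\mathrm{cr}}(L)^2$ (Theorem~\ref{KTheorem3a} with $t=0$ divided by nothing, or directly from the $t=0$ case): under (\ref{K(1.18)}) and $\xi(h)\approx|h-h_c|^{-\nu}$ this reads $\chi(h)\approx|h-h_c|^{-2\nu+2\nu/\delta_{\mathrm r}}=|h-h_c|^{-2\nu(1-1/\delta_{\mathrm r})}=|h-h_c|^{-2\nu(\delta-1)/(\delta+1)}$, so $\gamma=2\nu(\delta-1)/(\delta+1)$. The only genuinely delicate point is bookkeeping with the two notions of asymptotic equivalence: $\asymp$ (two-sided constant bounds) versus $\approx$ (equality of $\log$-$\log$ limits). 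All the hard analytic input — the sharp $L^{2t}\pi_{\mathrm{cr}}(L)^{t+1}$ moment bounds, the ratio $\theta(h)\asymp\pi_{\mathrm{cr}}(L)$, and the near-equality $\delta=2\delta_{\mathrm r}-1$ — is already proved, so the main obstacle here is merely to check that each $\approx$-claim survives the passage through a $\chi(h)$ in the denominator or a $1/k$-th root, i.e.\ that the logarithmic corrections appearing in the $t=1$ case of Theorem~\ref{KTheorem3b} and in $\pi_{\mathrm{cr}}(n)\gtrsim (\log n)^m/n$ do not contaminate the $\log$-$\log$ limits; since they contribute only $o(\log|h-h_c|)$ to the logarithm, they drop out, but this must be verified term by term.
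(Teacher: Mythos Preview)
Your approach is correct and is exactly the derivation the paper leaves implicit (the corollary is stated without proof in the paper, as it follows by straightforward algebra from Theorems~\ref{KTheorem3a}, \ref{KTheorem3b} and Corollary~\ref{K87IMACor}). One small slip: your parenthetical ``the $t=1$ logarithmic defect in part (ii) never enters a ratio with $t\ge 2$'' is wrong for $t=2$, since then the denominator is the first moment and does carry the possible $\log L$; but you correctly recover from this at the end by noting that the claim is only $\approx$, so logarithmic factors are harmless.
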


%\begin{proof} Corollary \ref{K87IMACor} gives the results except the inequality
%\[ \nu \geq \dfrac{\delta+1}{\delta}. \]
%\framebox{\bf How to prove this inequality?}
%\end{proof}

\newpage
\section{Symmetry of the critical exponents}
\subsection{Kesten-Lemma 8}
In Section 8, we introduced the event 
$\Omega (2^k)=\Delta_0(A^+(2^k,2^k))$.
Here we introduce a similar event $\Omega (v,S(n))$ for 
%$n\leq L(h,\varepsilon_0)$ and 
$v \in S(n)$, by
$$
\Omega (v, S(n))= \Delta_v(A^+(n,n)).
$$

\begin{Lemma}[cf. \cite{K87scaling} Lemma 8] \label{KLemma8} %%Suppose that 
%%$T>T_c$ and 
%%$0<h<2h_c(T)$. 
We fix a number $\kappa \in (0,1)$ arbitrarily. There exist positive
constants $C_{40}(\kappa )$ and $C_{41}(\kappa) $  depending on $\kappa , \eta, j_1$ and $\varepsilon_0$, such that
\begin{align}\label{eq:klemma8-1}
  &C_{40}( \kappa )\leq  
\frac{\mu_{T,h}^N \bigl( \Omega(v,S(n)) \bigr)}
{\mu_{\text{cr}}^N \bigl( \Omega(\mathbf{O},S(n)) \bigr)} 
\leq C_{41}(\kappa ) \\
&\mbox{for $ 2^{j_1+3}\leq (1-\kappa )n < n \leq \min\{ 2L(h,\varepsilon_0),\frac{N}{2}\}$ and $v \in S(\kappa n)$.} \nonumber
\end{align}
\end{Lemma}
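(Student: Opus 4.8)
The plan is to obtain (\ref{eq:klemma8-1}) by chaining a purely geometric comparison, at a fixed measure, with a comparison in the external field obtained by integrating the four-arm Russo formula. Throughout, write $2^k:=2^{\lfloor\log_2 n\rfloor}$ and $m:=\lfloor(1-\kappa)n\rfloor$, so that $2^{j_1+3}\le m$ and $m$, $2^k$, $n$ are all comparable up to factors depending only on $\kappa$.

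\noindent\emph{Step A: geometric comparison, fixed $\mu\in\{\mu^N_{T,h},\mu^N_{\mathrm{cr}}\}$.} By the torus translation-invariance of $\mu$ we may rewrite $\Omega(v,S(n))=\Delta_vA^+(n,n)$ as the event that $\mathbf O$ is pivotal for the horizontal $(+)$-crossing of the box $S(-v,n)$. Since $v\in S(\kappa n)$, the origin lies at $\ell^\infty$-distance $\ge(1-\kappa)n\ge m$ from every side of $S(-v,n)$, so $S(m)\subset S(-v,n)$. On this event there are, around $\mathbf O$, two disjoint $(+)$-paths to the left and right sides and two disjoint $(-*)$-paths to the top and bottom of $S(-v,n)$, in the cyclic order $(+),(-*),(+),(-*)$; restricting these arms to $S(m)$ exhibits a four-arm configuration of $\mathbf O$ in $S(m)$, so $\mu(\Omega(v,S(n)))$ is bounded above by the four-arm probability at scale $m$. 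Conversely, starting from a four-arm configuration of $\mathbf O$ in $S(m)$ with the arms in canonical position (as in the definition of the events $\Delta(\cdot,\cdot)$) one extends the four arms across the annulus $S(-v,n)\setminus S(m)$ to the prescribed sides and adds the blocking crossings needed to make $\mathbf O$ pivotal, over rectangles of aspect ratio bounded in terms of $\kappa$ and width $\le n\le2L(h,\varepsilon_0)$; by the Connection lemma (Lemma \ref{connection lemma}) and Lemma \ref{RSWbound} this succeeds with conditional probability $\ge c(\kappa)>0$. Together with Lemmas \ref{comparison of gamma and delta}, \ref{relation of deltas} and \ref{lem: relation of deltas-tilde} (which, up to RSW manipulations, allow one to pass between the four-arm probability at scale $m$ and $\mu(\Omega(2^k))$ with absolute constants) this gives
\[
C'_{40}(\kappa)\,\mu\bigl(\Omega(2^k)\bigr)\ \le\ \mu\bigl(\Omega(v,S(n))\bigr)\ \le\ C'_{41}(\kappa)\,\mu\bigl(\Omega(2^k)\bigr),
\]
with $C'_{40}(\kappa),C'_{41}(\kappa)$ depending only on $\kappa,\eta,j_1,\varepsilon_0$ (the case $v=\mathbf O$, where $S(-v,n)=S(n)$, being included). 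It therefore remains to prove $\mu^N_{T,h}(\Omega(2^k))\asymp\mu^N_{\mathrm{cr}}(\Omega(2^k))$ with constants depending only on $j_1,\varepsilon_0,\eta$.

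\noindent\emph{Step B: stability in $h$.} Along the path $\mu^N_t=\mu^N_{T,h(t)}$, one endpoint is $\mu^N_{\mathrm{cr}}$ and the other $\mu^N_{T,h}$, so it suffices to bound $\int_0^1\bigl|\tfrac{d}{dt}\log\mu^N_t(\Omega(2^k))\bigr|\,dt$. By Theorem \ref{Russo-four-arm},
\[
\Bigl|\tfrac{d}{dt}\log\mu^N_t(\Omega(2^k))\Bigr|\ \le\ \frac{|h-h_c|}{\mathfrak{K}T}\Bigl[C_{22}+C_{23}\,\frac{1}{\mu^N_t(\Omega(2^k))}\sum_{v\in S(2^k)}\mu^N_t\bigl(\Delta_v\Omega(2^k)\bigr)\Bigr].
\]
The $C_{22}$-term integrates to a bounded quantity since $|h-h_c|\le2h_c(T)$ by (\ref{bound for the external field}). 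For the remaining sum, decompose $S(2^k)$ into the dyadic shells $\{2^j<|v|_\infty\le2^{j+1}\}$ and argue as in the proof of Theorem \ref{KTheorem1}: for $v$ in a shell with $j_1+5\le j\le k-c$, the flip of $v$ forces a four-arm configuration around $v$ at scale $\sim2^j$ together with a four-arm configuration of $\mathbf O$ whose ``outwards'' part can, via the mixing property, FKG, and the block/extension Lemmas \ref{lem:delta-to-delta:four arm} and \ref{gamma-to-delta:four arm}, be detached as a copy of $\Omega(2^k)$ at the cost of only a factor $\delta_4^{-c}$; this yields
\[
\mu^N_t\bigl(\Delta_v\Omega(2^k)\bigr)\ \le\ C\,\mu^N_t\bigl(\Omega(2^k)\bigr)\Bigl\{\mu^N_t\bigl(\Gamma(v,S(2^{j-c}))\bigr)+C\,2^{3(j+1)}e^{-\alpha2^{j-3}}\Bigr\}.
\]
Summing over $v$ in a shell by translation-invariance, dividing by $\mu^N_t(\Omega(2^k))$, integrating in $t$, and applying the branching Lemma \ref{KLemma7} to each shell, the shell-$j$ contribution is $\le C_{24}\bigl(2^{-\zeta(k-j)}+\tfrac{|h-h_c|}{\mathfrak{K}T}2^{5(j+1)}e^{-\alpha2^{j+1}}\bigr)$; since $\sum_j2^{-\zeta(k-j)}$ and $\sum_j2^{5j}e^{-\alpha2^{j+1}}$ converge, the total over $j$ is bounded by a constant depending only on $j_1,\varepsilon_0,\eta$. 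The points with $|v|_\infty\le2^{j_1+5}$ contribute $O(\mu^N_t(\Omega(2^k)))$ trivially, and those in the outermost annuli $2^{k-c}<|v|_\infty\le2^k$ are handled as in the last part of the proof of Theorem \ref{KTheorem1}, bounding their contribution to $\sum_v\mu^N_t(\Delta_v\Omega(2^k))$ by a multiple of $\mu^N_t(\Omega(2^k))\sum_v\mu^N_t(\Omega(v,S(2^k)))$ and absorbing $\sum_v\mu^N_t(\Omega(v,S(2^k)))$ into $\tfrac{d}{dt}\mu^N_t(A^+(2^k,2^k))$ via the crossing Russo formula (Corollary \ref{russo crossing}, Remark \ref{RussoLower-2}), which integrates to at most $1$. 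Hence $\int_0^1|\tfrac{d}{dt}\log\mu^N_t(\Omega(2^k))|\,dt$ is bounded, i.e.\ $\mu^N_{T,h}(\Omega(2^k))\asymp\mu^N_{\mathrm{cr}}(\Omega(2^k))$. Combining Steps A and B (applied to both measures) gives (\ref{eq:klemma8-1}); the bound $n\le2L(h,\varepsilon_0)$ rather than $L(h,\varepsilon_0)$ in the statement is accommodated, as in Sections 7--8, by replacing the squares in the Connection lemma and the extension arguments by slightly narrower rectangles.

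\noindent\emph{The main obstacle} is the factorization estimate of Step B: one must check, uniformly in $t$ and in the shell index, that flipping a point $v$ at distance $\sim2^j$ from $\mathbf O$ genuinely produces a four-arm event near $v$ that can be separated from a configuration in $\Omega(2^k)$ at the cost of only a constant factor --- in effect that $\mu^N_t(\Delta_v\Omega(2^k))$ behaves like (four-arm near $v$)\,$\times$\,(copy of $\Omega(2^k)$). This needs the full block/extension apparatus of Sections 5--8 and the finite-energy property, together with careful bookkeeping when $v$ or its surrounding box abuts $\partial S(2^k)$. Everything else --- the geometric comparison of Step A and the summation over shells via Lemma \ref{KLemma7} --- is routine given the results already established.
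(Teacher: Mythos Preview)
Your proposal is correct and follows essentially the same two-step route as the paper: a geometric reduction (the paper's Lemma \ref{lem1ofKLemma8}) from $\Omega(v,S(n))$ to $\Omega(\mathbf{O},S(2^\ell))$ at a fixed measure, followed by integration of the four-arm Russo formula (Theorem \ref{Russo-four-arm}) over a dyadic shell decomposition, with each shell controlled via Lemma \ref{KLemma7}. The only notable difference is in how the key factorization in Step B is obtained: the paper uses the explicit inclusion $\Delta_v\Omega(\mathbf{O},S(2^k))\subset\Gamma(\mathbf{O},S(2^{j-3}))\cap\Gamma(v,S_{j-5}^2(v))\cap\tilde{\Gamma}(S(2^{j+1}+2^{j-1}),S(2^k))$ together with mixing, Lemmas \ref{comparison of gamma and delta} and \ref{lem:comparison of gamma and delta-tilde}, and the Connection lemma to reassemble the inner and outer pieces into a copy of $\Omega(\mathbf{O},S(2^k))$, whereas you invoke Lemmas \ref{lem:delta-to-delta:four arm} and \ref{gamma-to-delta:four arm}; both routes yield the same bound $\mu_t^N(\Delta_v\Omega(2^k))\le C\,\mu_t^N(\Omega(2^k))\{\mu_t^N(\Gamma(v,S_{j-5}^2(v)))+C2^{3j}e^{-\alpha 2^{j-2}}\}$, and the paper's argument for the boundary region $|v|_\infty>2^{k-1}$ is in fact left implicit, so your explicit treatment of that case is a small addition rather than a divergence.
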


The proof is divided into several lemmas.

\begin{Lemma} \label{lem1ofKLemma8}
We have only to prove (\ref{eq:klemma8-1}) for $v=\mathbf{O}$ and $n=2^k$, under the condition of Lemma \ref{KLemma8}.
\end{Lemma}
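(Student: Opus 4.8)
The plan is to reduce the general statement of Lemma~\ref{KLemma8} to the special case $v=\mathbf{O}$, $n=2^k$, which is what Lemma~\ref{lem1ofKLemma8} asserts. There are two separate reductions to carry out: first, translating the pivotal site $v\in S(\kappa n)$ to the origin; second, replacing a general scale $n$ by the nearest power of $2$.

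For the first reduction, fix $v\in S(\kappa n)$ and note that $\Omega(v,S(n))=\Delta_v(A^+(n,n))$ forces the existence of two $(+)$-paths and two $(-*)$-paths emanating from a $(*)$-neighbourhood of $v$ out to the four sides of $S(n)$, with the usual separation/ordering. Since $v\in S(\kappa n)$, the box $S(v,(1-\kappa)n)$ is contained in $S(n)$ and contains a square of radius of order $(1-\kappa)n\geq 2^{j_1+3}$ centred at $v$; I would compare $\mu_{T,h}^N(\Omega(v,S(n)))$ with $\mu_{T,h}^N(\Omega(v,S(v,(1-\kappa)n)))$ from above (monotonicity of the constraint region, using that the four arms must in particular cross $S(v,(1-\kappa)n)$) and from below by the Connection lemma (Lemma~\ref{connection lemma}): condition on the innermost arm configuration in $S(v,(1-\kappa)n)$ and use corridors of bounded aspect ratio to extend the four arms from $\partial S(v,(1-\kappa)n)$ out to the four sides of $S(n)$, together with RSW crossings from Lemma~\ref{RSWbound} to realise the separating structure; this costs only a factor depending on $\kappa,\eta,j_1,\varepsilon_0$. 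By translation invariance of $\mu_{T,h}^N$ on the torus $S(N)$ (valid since $(1-\kappa)n\leq N/2$), $\mu_{T,h}^N(\Omega(v,S(v,(1-\kappa)n)))=\mu_{T,h}^N(\Omega(\mathbf{O},S((1-\kappa)n)))$. The same chain of inequalities applied at $v=\mathbf{O}$ (with $\kappa$ replaced by, say, $1/2$) shows $\mu_{\text{cr}}^N(\Omega(\mathbf{O},S(n)))$ is comparable to $\mu_{\text{cr}}^N(\Omega(\mathbf{O},S(n/2)))$ up to constants. Hence it suffices to know the comparison of $\mu_{T,h}^N(\Omega(\mathbf{O},S(m)))$ with $\mu_{\text{cr}}^N(\Omega(\mathbf{O},S(m)))$ for the single scale $m$ ranging over an interval, which via the second reduction we may take to be a power of $2$.

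For the second reduction I would use the extension estimates of Section~5 (Lemmas~\ref{relation of deltas}, \ref{comparison of gamma and delta} and their outward analogues in Section~5.3) exactly as in the proof of Theorem~\ref{KTheorem1}: for $2^k\leq m<2^{k+1}$ one sandwiches $\mu(\Omega(\mathbf{O},S(m)))$ between $\mu(\Omega(\mathbf{O},S(2^k)))$ and $\mu(\Omega(\mathbf{O},S(2^{k+1})))$ by monotonicity, and then $\mu(\Omega(\mathbf{O},S(2^{k+1})))$ and $\mu(\Omega(\mathbf{O},S(2^k)))$ differ only by the constant $C_1$ from Lemma~\ref{relation of deltas} (after passing through $\Gamma$, $\Delta$ and back via Lemma~\ref{comparison of gamma and delta}, and similarly outwards). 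Since these constants depend only on $j_1,\varepsilon_0,\eta$, the ratio $\mu_{T,h}^N(\Omega(\mathbf{O},S(m)))/\mu_{\text{cr}}^N(\Omega(\mathbf{O},S(m)))$ is comparable, with the same $\kappa$-independent constants, to $\mu_{T,h}^N(\Omega(\mathbf{O},S(2^k)))/\mu_{\text{cr}}^N(\Omega(\mathbf{O},S(2^k)))$. Putting the two reductions together yields \eqref{eq:klemma8-1} from the corresponding bound at $(\mathbf{O},2^k)$, with $C_{40}(\kappa),C_{41}(\kappa)$ absorbing all the (finitely many, $\kappa$-dependent) Connection-lemma and RSW factors.

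The main obstacle I expect is bookkeeping rather than conceptual: one must check that in the translation step the corridors connecting $\partial S(v,(1-\kappa)n)$ to the four sides of $S(n)$ can indeed be chosen pairwise disjoint and of width/length ratio bounded below (so the Connection lemma applies with a uniform constant) even when $v$ sits near the boundary of $S(\kappa n)$, i.e.\ when the distances from $v$ to the four sides of $S(n)$ are very unequal — this is the analogue of the ``near the corner'' case analysis already carried out in Lemmas~\ref{relation of deltas} and~\ref{lem: relation of deltas-tilde}, and the point is that $(1-\kappa)n$ being bounded below forces all relevant scales to be comparable. One also needs $(1-\kappa)n\geq 2^{j_1+3}$ (hypothesis of the lemma) so that Lemma~\ref{connection lemma}, Lemma~\ref{RSWbound} and the extension lemmas are all in force; the conditions $n\leq 2L(h,\varepsilon_0)$ and $n\leq N/2$ are exactly what Remarks~\ref{box-size-bound}, \ref{box-size-bound2}, \ref{box-size-bound3} allow when only rectangles of width at most $n/4$ are used, so I would be careful to route every corridor through boxes of width $\leq n/4$.
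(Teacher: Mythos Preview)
Your approach is essentially the paper's: compare $\Omega(v,S(n))$ with a four-arm event in a concentric box $S(v,r)$ of radius $r\asymp(1-\kappa)n$, translate to the origin, and pass to a dyadic scale via the extension lemmas. The paper streamlines your two reductions into one by taking $r=2^\ell$ with $\ell=\max\{p:2^p\le(1-\kappa)n\}$ from the start.

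One correction is needed. The ``monotonicity'' inclusion $\Omega(v,S(n))\subset\Omega(v,S(v,(1-\kappa)n))$ you invoke for the upper bound is false: the four arms from $v$ reach the prescribed sides of $S(n)$, but when they first exit $S(v,(1-\kappa)n)$ they need not hit the corresponding sides of the smaller box, so $v$ need not be pivotal for the crossing of $S(v,(1-\kappa)n)$. The correct intermediary is $\Gamma(v,S(v,2^\ell))$ (four arms to $\partial_{in}S(v,2^\ell)$ with no side prescription), for which the inclusion $\Omega(v,S(n))\subset\Gamma(v,S(v,2^\ell))$ does hold; one then converts $\Gamma$ to $\Delta\subset\Omega$ via Lemma~\ref{comparison of gamma and delta}, exactly as the paper does. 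Your lower bound via the Connection lemma and corridors is fine, and matches the paper's use of $\Delta(v,S(v,2^\ell))$ together with straight rectangles $U_1,\dots,U_4$ from the $\mathcal{A}_i$'s to the four sides of $S(n)$.
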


\begin{proof}
%We write $\mu^N$ for $\mu_{T,h}^N$.
Let
$$
\ell = \max \{ p \geq 1 : 2^p \leq (1-\kappa )n \} .
$$
Then $S(v,2^\ell )\subset S(n)$ for every $v\in S(\kappa n)$.
%\begin{align*}
% \ell &= \max \{ p \geq 1 : 2^p \leq d_{\infty} (v,S(n)^c) \}, \\
% k &= \max \{ m \geq 1 : 2^m \leq n\}.
%\end{align*}
%Since
%\[ 2^{\ell + 1} > d_{\infty} (v,S(n)^c) \geq n - \kappa n.
%  %\geq (1-\kappa) 2^k,
%  \]
%we have $k-\ell \leq 1 - \log_2 (1-\kappa)$.
Clearly,
$$    %%\begin{align*}
\mu_{T,h}^N \bigl( \Omega(v,S(n)) \bigr) %%&
\leq \mu_{T,h}^N \bigl( \Gamma(v,S(v,2^{\ell})) \bigr) %%\\
%%&
= \mu_{T,h}^N \bigl( \Gamma(\mathbf{O},S(2^{\ell})) \bigr).
$$  %%\end{align*}
%\framebox{By an extension argument,} 
By Lemma \ref{comparison of gamma and delta} (i), 
$$ %%\begin{align*}
\mu_{T,h}^N \bigl( \Gamma(\mathbf{O},S(2^{\ell})) \bigr) %%&
\leq K \mu_{T,h}^N \bigl( \Delta(\mathbf{O},S(2^{\ell})) \bigr) %% \\
%%&
\leq K \mu_{T,h}^N \bigl( \Omega(\mathbf{O},S(2^{\ell})) \bigr), 
$$ %%\end{align*}
where $K$ depends only on $j_1,\eta $ and $\varepsilon_0$. Thus we have
\[ \mu_{T,h}^N \bigl( \Omega(v,S(n)) \bigr) \leq K \mu_{T,h}^N \bigl( \Omega(\mathbf{O},S(2^{\ell})) \bigr).\]
For the %other direction, %\framebox{again by an extension argument,} 
converse inequality, let ${\mathcal A}_1,\ldots, {\mathcal A}_4$ be the
rectangles corresponding to $S(v,2^\ell )$, as before.
The longer side length of ${\mathcal A}_i$ is equal to $2^{\ell -1}$,
and the shorter side length is $2^{\ell -2}$.
Let $U_1$ be a rectangle connecting ${\mathcal A}_1$ with the left
side of $S(n)$. To be more precise, $U_1$ has width $2^{\ell -1}$,
its right side coincides with the right side of ${\mathcal A}_1$, and
its left side is a part of the left side of $S(n)$.
We define rectangles $U_2 ,U_3, U_4$ similarly corresponding to 
${\mathcal A}_2,{\mathcal A}_3,{\mathcal A}_4$.
Then using these rectangles $U_1,\ldots, U_4$, by the Connection lemma
we have  
%\begin{align*}
%\mu_{T,h}^N \bigl( \Omega(v,S(2^k)) \bigr) &\geq 
% C_1^\# (\kappa ) \mu_{T,h}^N \bigl( \Delta(v,S(v, 2^\ell )) \bigr) \\
%&\geq C_1^\# ( \kappa ) \mu_{T,h}^N \bigl( \Gamma(v,S(v,2^{\ell})) \bigr),\\
%&=  C_1^\# ( \kappa ) \mu_{T,h}^N \bigl( \Gamma ({\mathbf O}, S(2^\ell ))
%  \bigr)
%\end{align*}
$$
\mu_{T,h}^N \bigl( \Omega(v,S(n)) \bigr) \geq 
C_1^\# (\kappa ) \mu_{T,h}^N \bigl( \Delta(v,S(v, 2^\ell )) \bigr),
$$
where $C_1^\# (\kappa )$ depends only on $\kappa $ and $\varepsilon_0$. 
Then again by Lemma \ref{comparison of gamma and delta} (i),
we have
\begin{align*}
\mu_{T,h}^N \bigl( \Delta(v,S(v, 2^\ell )) \bigr) &\geq
K^{-1} \mu_{T,h}^N\bigl(  \Gamma(v,S(v,2^{\ell})) \bigr) \\
&=  K^{-1} \mu_{T,h}^N \bigl( \Gamma ({\mathbf O}, S(2^\ell )) \\
&\geq  K^{-1} \mu_{T,h}^N\bigl( \Omega ( \mathbf{O}, S(2^\ell )) \bigr) .
\end{align*}
Hence we have 
$$
\mu_{T,h}^N\bigl( \Omega (v, S(n)) \bigr) \geq C_1^\#(\kappa )K^{-1}
 \mu_{T,h}^N\bigl( \Omega ( \mathbf{O}, S(2^\ell )) \bigr) .
$$
This completes the proof of Lemma \ref{lem1ofKLemma8}. %(Note that $C_1$ and $C_2$ in (\ref{eq:klemma8-1}) depend on $\kappa,\eta,T$ and $\varepsilon_0$.)
\end{proof}

Next, by Theorem \ref{Russo-four-arm},
\begin{align*}
&\left| \frac{d}{dt}\mu_t^N \bigl( \Omega (\mathbf{O},S(2^k)) \bigr) \right| \\
&\leq
  \frac{|h-h_c|}{\mathfrak{K}T} \left\{
    C_{22}\sum_{v \in S(2^k)}\mu_t^N \bigl( \Delta_v( \Omega (\mathbf{O},S(2^k)) ) \bigr)
+ C_{23} \mu_t^N \bigl(\Omega (\mathbf{O},S(2^k)) \bigr)
   \right\}
\end{align*}
for $2^{j_3+5}\leq 2^k\leq \min\{ L(h,\varepsilon_0), \frac{N}{2}\} $,
where $j_3$ is given by (\ref{eq:8-13}). 
%The constants $C_6,C_7>0$ depend on $\eta,T,\varepsilon_0$ and $M$. 
As we remarked in the beginning of section \ref{ss:Russo1arm}, we have the same estimate for
$2^k\leq 2L(h,\varepsilon_0)$ with the constants $C_{22}, C_{23}$
slightly changed.
The new constants depend on $j_1, \eta $ and $\varepsilon_0$.
Recall that
\[ \Delta_v \Omega (\mathbf{O},S(2^k))
= \bigl( \square_v E_+ \cap \Delta_v E_{-*} \bigr) \cup 
\bigl( \Delta_v E_+ \cap \square_v E_{-*} \bigr), \]
where $E_+$ and $E_{-*}$ are the events defined in section \ref{ss:Russo4arm}. 
In the following we summarize what happens when 
$\omega \in \Delta_v\Omega (S(2^k))$. 
\begin{Lemma} \label{lem2ofKLemma8} 1) In $\omega \in \square_v E_+ \cap \Delta_v E_{-*}$, the following occurs:
\begin{itemize}
\item There exists a $(+)$-path $r_1$ connecting $\partial \{\mathbf{O}\}$ and the left side of $S(2^k)$ with $r_1 \not\ni v$.
\item There exists a $(+)$-path $r_3$ connecting $\partial \{\mathbf{O}\}$ and the right side of $S(2^k)$ with $r_3 \not\ni v$ and $r_1 \cap r_3 = \emptyset$.
\item $r:= r_1 \cup \{ \mathbf{O} \} \cup r_3$ is a horizontal crossing of $S(2^k)$, and $v$ is either above or below $r$.
\item There exists a $(+)$-path $r_5$ connecting $\partial \{v\}$ and the left side of $S(2^k)$ with $r_5 \not\ni \mathbf{O}$.
\item There exists a $(+)$-path $r_7$ connecting $\partial \{v\}$ and the right side of $S(2^k)$ with $r_7 \not\ni v$ and $r_5 \cap r_7 = \emptyset$. 
\end{itemize}
When $v$ is above $r$,
\begin{itemize}
\item There exists a $(-*)$-path $r_2^{*\prime \prime}$ connecting $\partial^* \{\mathbf{O}\}$ and $\partial^* \{v\}$.
\item There exists a $(-*)$-path $r_2^{*\prime}$ connecting $\partial^* \{v\}$ and the upper side of $S(2^k)$.
\item There exists a $(-*)$-path $r_4^{*}$ connecting $\partial^* \{\mathbf{O}\}$ and the lower side of $S(2^k)$.
\end{itemize}
When $v$ is below $r$,
\begin{itemize}
\item There exists a $(-*)$-path $r_2^{*}$ connecting $\partial^* \{\mathbf{O}\}$ and the upper side of $S(2^k)$.
\item There exists a $(-*)$-path $r_4^{*\prime \prime}$ connecting $\partial^* \{\mathbf{O}\}$ and $\partial^* \{v\}$.
\item There exists a $(-*)$-path $r_4^{*\prime}$ connecting $\partial^* \{v\}$ and the lower side of $S(2^k)$.
\end{itemize}

\noindent 2) In $\omega \in \Delta_v E_+ \cap \square_v E_{-*}$, the following occurs:
\begin{itemize}
\item There exists a $(-*)$-path $r_2^*$ connecting $\partial^* \{\mathbf{O}\}$ and the upper side of $S(2^k)$ with $r_2^* \not\ni v$.
\item There exists a $(-*)$-path $r_4^*$ connecting $\partial^* \{\mathbf{O}\}$ and the lower side of $S(2^k)$ with $r_4^* \not\ni v$ and $r_2^* \cap r_4^* = \emptyset$.
\item $r^*:= r_2^* \cup \{ \mathbf{O} \} \cup r_4^*$ is a vertical $(*)$-crossing of $S(2^k)$, and $v$ is either on the left or right of $r^*$.
\item There exists a $(-*)$-path $r_6^*$ connecting $\partial^* \{v\}$ and the upper side of $S(2^k)$ with $r_6^* \not\ni \mathbf{O}$.
\item There exists a $(-*)$-path $r_8^*$ connecting $\partial^* \{v\}$ and the lower side of $S(2^k)$ with $r_8^* \not\ni v$ and $r_6^* \cap r_8^* = \emptyset$. 
\end{itemize}
When $v$ is on the right of $r^*$,
\begin{itemize}
\item There exists a $(+)$-path $r_1$ connecting $\partial \{v\}$ and the left side of $S(2^k)$.
\item There exists a $(+)$-path $r_3^{\prime \prime}$ connecting $\partial \{\mathbf{O}\}$ and $\partial \{v\}$.
\item There exists a $(+)$-path $r_3^{\prime}$ connecting $\partial \{v\}$ and the right side of $S(2^k)$.
\end{itemize}
When $v$ is on the left of $r^*$,
\begin{itemize}
\item There exists a $(+)$-path $r_1^{\prime \prime}$ connecting $\partial \{\mathbf{O}\}$ and $\partial \{v\}$.
\item There exists a $(+)$-path $r_1^{\prime}$ connecting $\partial \{v\}$ and the left side of $S(2^k)$.
 \item There exists a $(+)$-path $r_3$ connecting $\partial \{\mathbf{O}\}$ and the right side of $S(2^k)$.
\end{itemize}
\end{Lemma}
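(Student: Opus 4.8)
The statement is deterministic: it concerns a single, arbitrary configuration $\omega$ lying in one of the two listed events, and merely unpacks the definitions of $E_+$, $E_{-*}$, $\square_v$, $\Delta_v$ together with planar topology. The plan is therefore a configuration-by-configuration case analysis, and it suffices to treat part~1, since part~2 is obtained from it by interchanging the roles of $(+)$- and $(-*)$-connections and of the two coordinate axes: this exchange sends $E_+\leftrightarrow E_{-*}$, sends $\square_v E_+\cap\Delta_v E_{-*}$ to $\Delta_v E_+\cap\square_v E_{-*}$, and sends the horizontal crossing $r$ to the vertical $(*)$-crossing $r^*$, so no new idea is needed once part~1 is done.

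Two elementary monotonicity observations will be used repeatedly. If $E$ is an increasing cylinder event and $\omega\in\square_v E$, then the configuration $\omega_-$ obtained from $\omega$ by setting the spin at $v$ to $-1$ still lies in $E$, so one may choose a witnessing family of $(+)$-paths for $E$ none of which visits $v$. If $E$ is a decreasing cylinder event and $\omega\in\Delta_v E$, then of the two configurations $\omega_-,\omega_+$ obtained by fixing the spin at $v$ to be $-1$, resp.\ $+1$, exactly one lies in $E$, necessarily $\omega_-$, and every witnessing family of $(-*)$-paths for $E$ in $\omega_-$ must pass through $v$ (otherwise it would witness $E$ in $\omega_+\notin E$). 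Applying the first observation to $E_+$ produces the disjoint $(+)$-paths $r_1$ from $\partial\{\mathbf O\}$ to the left side and $r_3$ from $\partial\{\mathbf O\}$ to the right side of $S(2^k)$, neither visiting $v$; then $r:=r_1\cup\{\mathbf O\}\cup r_3$ is a self-avoiding curve joining the two vertical sides, so by the Jordan curve theorem $S(2^k)\setminus r$ has an upper and a lower component, and since $v\notin r$ the vertex $v$ lies in exactly one of them, which is the dichotomy ``$v$ above/below $r$''.

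Applying the second observation to $E_{-*}$ in the configuration $\omega_-$ yields disjoint $(-*)$-paths from $\partial^*\{\mathbf O\}$ to the top and to the bottom side, at least one of which passes through $v$. Next I would invoke the primal/dual blocking fact — the self-duality ${(A^+(\,\cdot\,,\cdot\,))}^c=A^{-*}(\,\cdot\,,\cdot\,)$ already used in the proof of Lemma~\ref{RSW-cr}, which says a $(+)$-path joining the left and right sides of a rectangle blocks every $(-*)$-path joining its top and bottom sides — to conclude that the $(-*)$-arm reaching the top lies in the upper component of $S(2^k)\setminus r$ and the one reaching the bottom lies in the lower component; hence the arm forced through $v$ is the top one when $v$ is above $r$, and the bottom one when $v$ is below $r$. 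Splitting that arm at $v$ gives, in the first case, $r_2^{*\prime\prime}$ from $\partial^*\{\mathbf O\}$ to $\partial^*\{v\}$ and $r_2^{*\prime}$ from $\partial^*\{v\}$ to the top side, while the other arm is $r_4^*$ from $\partial^*\{\mathbf O\}$ to the bottom side; the ``$v$ below $r$'' items follow by exchanging top and bottom.

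It remains to produce the primal arms $r_5,r_7$ at $v$, and this is the one place where one uses that $\omega\in\Delta_v\Omega$ with $\Omega=E_+\cap E_{-*}$, not merely that $v$ is pivotal for $E_{-*}$. The point is that pivotality of $v$ for the dual connection event $E_{-*}$ is, by planar duality (Menger's theorem for the planar lattice), equivalent to $v$ carrying a complementary pair of $(+)$-arms: since $\omega_+\notin E_{-*}$ while $\omega_-\in E_{-*}$, the site $v$ set to $+1$ must block the $(-*)$-connection of $\omega_-$ that runs through $v$, and the obstruction it creates — destroyed by flipping $v$ — is a $(+)$-cut of the relevant region with $v$ as a cut vertex; its two pieces on either side of $v$ are $(+)$-paths from $\partial\{v\}$, and they cannot terminate before reaching the left and the right sides of $S(2^k)$, because the dual structure they separate runs from the $\mathbf O$-region to the top (or bottom) and out to those two vertical sides. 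Choosing these $(+)$-paths innermost makes them mutually disjoint and disjoint from $\mathbf O$, which is the remaining content of part~1. I expect this last step — converting pivotality for the $(-*)$-separation event into two explicit disjoint $(+)$-arms at $v$ that simultaneously meet all the prescribed avoidance constraints — to be the main technical obstacle; everything else is bookkeeping with the Jordan curve theorem and with the monotone extraction of witness paths.
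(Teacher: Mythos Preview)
The paper omits this proof entirely, writing only ``The proof is clear so we omit it.'' Your unpacking of the definitions is exactly the intended argument: extract $r_1,r_3$ avoiding $v$ from $\square_vE_+$ by monotonicity, extract the $(-*)$-arms in $\omega_-$ from $\Delta_vE_{-*}$, and use the horizontal $(+)$-crossing $r$ to decide which $(-*)$-arm is forced through $v$.

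For the one step you flag --- the existence of $r_5,r_7$ --- your idea is right but the justification ``the dual structure \ldots\ runs out to those two vertical sides'' is not quite the reason. The clean way is to localize to the region $U$ above $r$ (taking the case $v$ above $r$). In $\omega_+$ there is no $(-*)$-path in $U$ from the $*$-neighbours of $\mathbf O$ above $r$ to the top side, since together with $r_4^*$ (which lies below $r$, hence survives in $\omega_+$) such a path would witness $E_{-*}$ in $\omega_+$. By the planar duality you invoke, there is therefore a $(+)$-path $P$ in $\overline U$ joining the two boundary arcs of $U$ that meet at $\mathbf O$: one arc is $r_1$ together with the portion of the left side above it, the other is $r_3$ together with the portion of the right side above it. Every such $P$ must pass through $v$, for otherwise it would persist in $\omega_-$ and block $r_2^*$. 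Splitting $P$ at $v$ and, if an endpoint lands on $r_1$ (resp.\ $r_3$) rather than on the side itself, continuing along $r_1$ (resp.\ $r_3$) to the left (resp.\ right) side yields $r_5,r_7$; since $r_1\cap r_3=\emptyset$ and neither contains $\mathbf O$, the paths are disjoint and avoid $\mathbf O$.
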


The proof is clear so we omit it.

Now fix a $v \in \mathbf{Z}^2$ satisfying that $2^{j_1+6} < |v|_{\infty} \leq 2^{k-1}$. We can find a number $j$ with $2^j < |v|_{\infty} \leq 2^{j+1}$; note that $j \leq k-2$. 
%%Let
%%\[ \Bar{Q}_{j-5} (\ell_1,\ell_2) := [0,2^{j-5}] + (\ell_1,\ell_2) \cdot 2^{j-5}. \]

\begin{Lemma} 1) $S_{j-5}^2 (v) \subset S(2^{k-1}+2^{k-5})$.

\noindent 2)  $\displaystyle \Delta_v \Omega(\mathbf{O},S(2^k))$ is a
subset of
$$
\Gamma(\mathbf{O},S(2^{j-3})) \cap \Gamma(v,S_{j-5}^2(v)) \cap \tilde{\Gamma} (S(2^{j+1}+2^{j-1}),S(2^k)).
$$
%\begin{align*}
%&\Delta_v \Omega(\mathbf{O},S(2^k)) \\
%&\subset \Gamma(\mathbf{O},S(2^{j-3})) \cap \Gamma(v,S_{j-5}^2(v)) \cap \tilde{\Gamma} (S(2^{j+1}+2^{j-1}),S(2^k)).
%\end{align*}

\noindent 3) $d_{\infty} \bigl( S(2^{j-3}) \cup S(2^{j+1}+2^{j-1})^c,S_{j-5}^2(v) \bigr) \geq 2^{j-2}$.
\end{Lemma}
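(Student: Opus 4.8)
The plan is to dispatch items 1 and 3 by elementary $\ell^\infty$ geometry and item 2 by reading off Lemma~\ref{lem2ofKLemma8}. First I would record the consequences of the hypotheses on $v$. From $2^{j_1+6}<|v|_\infty\le 2^{j+1}$ one gets $j\ge j_1+6$, so the block index $j-5$ lies strictly between $1$ and $k$ and $S_{j-5}^2(v)=x_{j-5}(v)+S(2^{j-3})$ is well defined; from $2^j<|v|_\infty\le 2^{k-1}$ one gets $j\le k-2$. Since $x_{j-5}(v)$ is the lower-left corner of the cube $Q_{j-5}(v)$ of side $2^{j-5}$ containing $v$, and we are in the case $0\le v^2\le v^1$, one has $|v|_\infty-2^{j-5}<|x_{j-5}(v)|_\infty\le|v|_\infty$, hence every $y\in S_{j-5}^2(v)$ obeys
\[ |v|_\infty-2^{j-5}-2^{j-3}<|y|_\infty\le|v|_\infty+2^{j-3}. \]
For item 1, the upper bound with $|v|_\infty\le 2^{k-1}$ and $2^{j-3}\le 2^{k-5}$ (as $j\le k-2$) gives $S_{j-5}^2(v)\subset S(2^{k-1}+2^{k-5})$. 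For item 3, using $2^j<|v|_\infty$: on one side $|y|_\infty>2^j-2^{j-5}-2^{j-3}=27\cdot2^{j-5}$ while every point of $S(2^{j-3})$ has $\ell^\infty$-norm $\le 4\cdot2^{j-5}$, so the $\ell^\infty$-distance between the two sets is, by the triangle inequality, at least $23\cdot2^{j-5}\ge 2^{j-2}$; on the other side $|y|_\infty\le 2^{j+1}+2^{j-3}$, so $S_{j-5}^2(v)\subset S(2^{j+1}+2^{j-3})$ and its distance to $S(2^{j+1}+2^{j-1})^c$ is at least $2^{j-1}-2^{j-3}=3\cdot2^{j-3}\ge 2^{j-2}$, again by the triangle inequality. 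Taking the minimum yields item 3; in particular $\mathbf O\notin S_{j-5}^2(v)$, and trivially $v\notin S(2^{j-3})$ since $|v|_\infty>2^j$.

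For item 2, fix $\omega\in\Delta_v\Omega(\mathbf O,S(2^k))$. By Lemma~\ref{lem2ofKLemma8} either $\omega\in\square_vE_+\cap\Delta_vE_{-*}$ or $\omega\in\Delta_vE_+\cap\square_vE_{-*}$, and the two cases are interchanged by the $(+)\leftrightarrow(-*)$ symmetry together with a reflection, so I would treat only the first, and within it the sub-case ``$v$ above $r$'' (the sub-case ``$v$ below $r$'' is handled by the relabelling of paths already given in Lemma~\ref{lem2ofKLemma8}). In that situation Lemma~\ref{lem2ofKLemma8} furnishes disjoint $(+)$-paths $r_1,r_3$ from $\partial\{\mathbf O\}$ to the left and right sides of $S(2^k)$, disjoint $(+)$-paths $r_5,r_7$ from $\partial\{v\}$ to the left and right sides, a $(-*)$-path $r_4^*$ from $\partial^*\{\mathbf O\}$ to the bottom side, a $(-*)$-path $r_2^{*\prime}$ from $\partial^*\{v\}$ to the top side, and a $(-*)$-path $r_2^{*\prime\prime}$ joining $\partial^*\{v\}$ to $\partial^*\{\mathbf O\}$, together with the horizontal crossing $r=r_1\cup\{\mathbf O\}\cup r_3$ having $v$ above it. Truncating $r_1,r_3,r_2^{*\prime\prime},r_4^*$ to their first exits from $S(2^{j-3})$ (legitimate because $v\notin S(2^{j-3})$) produces two $(+)$-arms and two $(-*)$-arms from $\mathbf O$ to $\partial_{in}S(2^{j-3})$, alternating around the origin because $r$ separates $v$ (hence the start of $r_2^{*\prime\prime}$) from the bottom side (hence $r_4^*$); thus $\Gamma(\mathbf O,S(2^{j-3}))$ occurs. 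Truncating $r_5,r_7,r_2^{*\prime},r_2^{*\prime\prime}$ to $S_{j-5}^2(v)$ (legitimate because $\mathbf O\notin S_{j-5}^2(v)$ by item 3) produces the analogous four arms from $v$, separated by the left–right $(+)$-path through $v$ formed from $r_5,r_7$; thus $\Gamma(v,S_{j-5}^2(v))$ occurs. Finally both $\mathbf O$ and $v$ lie in $S(2^{j+1}+2^{j-1})$ (since $|v|_\infty\le 2^{j+1}$), so truncating $r_1,r_3$ and $r_2^{*\prime},r_4^*$ to the annulus $S(2^k)\setminus S(2^{j+1}+2^{j-1})$, with separation coming once more from $r$, shows $\tilde{\Gamma}(S(2^{j+1}+2^{j-1}),S(2^k))$ occurs. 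Intersecting the three, item 2 follows.

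The one point genuinely requiring care — and therefore the main obstacle — is the verification of the separation clauses in the events $\Gamma$ and $\tilde{\Gamma}$ after the arms have been cut down to the small boxes: one has to check that the planar position of each truncated $(-*)$-arm relative to the relevant $(+)$-crossing is inherited from the global picture of Lemma~\ref{lem2ofKLemma8} and yields the correct alternating cyclic order of the arm endpoints on the inner boundary. This is routine planar topology given the explicit configuration recorded in Lemma~\ref{lem2ofKLemma8}, and no new idea is needed beyond keeping track of which side of $r$ (respectively of the vertical $(*)$-crossing in the second main case) each piece lies on; the remaining arithmetic is exactly the elementary $\ell^\infty$ bookkeeping carried out in the first paragraph.
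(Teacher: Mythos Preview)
Your argument is correct and follows the same route as the paper. For parts 1 and 3 your $\ell^\infty$ bookkeeping is essentially identical to the paper's (the paper writes $d_\infty(S(2^{j-3}),S_{j-5}^2(v))\ge 2^j-2^{j-3}-2^{j-3}=3\cdot 2^{j-2}$, dropping the $2^{j-5}$ correction you retain, but both comfortably exceed $2^{j-2}$). For part 2 the paper simply writes ``follows from 1)'', leaving the reader to supply exactly what you wrote: read off the arm structure from Lemma~\ref{lem2ofKLemma8} and truncate to each of the three regions, which is legitimate once part 1 guarantees $S_{j-5}^2(v)\subset S(2^k)$ and part 3 guarantees $\mathbf O\notin S_{j-5}^2(v)$. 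Your flagging of the separation/alternation check as the only nontrivial point is appropriate; note incidentally that the outer event $\tilde\Gamma(S(2^{j+1}+2^{j-1}),S(2^k))$ as defined in \S5.3 carries no separation clause, so that piece is even easier than you indicate.
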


\begin{proof} 1) If $w \in S_{j-5}^2 (v)$, then 
\[ |w|_{\infty} \leq 2^{j+1}+2^{j-3} \leq 2^{k-1} + 2^{k-5}. \]

\noindent 2) follows from 1).

\noindent 3) Note that
\begin{align*}
d_{\infty} \bigl( S(2^{j-3}) ,S_{j-5}^2(v) \bigr) &\geq 2^j - 2^{j-3} - 2^{j-3} \\
&=2^j-2^{j-2} = 3 \cdot 2^{j-2}, \\
\intertext{and}
d_{\infty} \bigl( S_{j-5}^2(v), S(2^{j+1}+2^{j-1})^c\bigr) &\geq 2^{j+1} + 2^{j-1} - (2^{j+1}+2^{j-3}) \\
&=2^{j-1}-2^{j-3} = 2^{j-2} + 2^{j-3} > 2^{j-2}.
\end{align*}
\end{proof}
By the mixing property, from the above lemma we have
\begin{align*}
%%&
\mu_t^N \bigl( \Delta_v \Omega(\mathbf{O},S(2^k)) \bigr) %%\\
&\leq \mu_t^N \bigl(  \Gamma(\mathbf{O},S(2^{j-3})) \cap \tilde{\Gamma} (S(2^{j+1}+2^{j-1}),S(2^k)) \bigr) \\
&\quad \times \left\{ \mu_t^N \bigl( \Gamma(v,S_{j-5}^2(v)) \bigr) + C(2^{j-2})^3 e^{-\alpha 2^{j-2}} \right\} \\
&\leq \left\{ \mu_t^N \bigl(  \Gamma(\mathbf{O},S(2^{j-3})) \bigr) + C(2^{j+1})^3 e^{-\alpha 2^{j+1}} \right\} \\
&\quad \times \mu_t^N \bigr( \tilde{\Gamma} (S(2^{j+1}+2^{j-1}),S(2^k)) \bigr) \\
&\quad \times \left\{ \mu_t^N \bigl( \Gamma(v,S_{j-5}^2(v)) \bigr) + C(2^{j-2})^3 e^{-\alpha 2^{j-2}} \right\}.
\end{align*}
%\framebox{By an extension argument,}
By Lemma \ref{comparison of gamma and delta} (i),
$$
\mu_t^N \bigl( \Gamma(\mathbf{O},S(2^{j-3})) \bigr)
\leq K \mu_t^N \bigl( \Delta({\mathbf O},S(2^{j-3})) \bigr).
$$
Also, by Lemma \ref{lem:comparison of gamma and delta-tilde} and 
the Connection lemma,
we can find a constant $C_2^\# >0$, depending on $j_1, \eta $ and 
$\varepsilon_0$, such that $C_2^\# \geq K$ and
$$
\mu_t^N\bigl( {\tilde \Gamma }(S(2^{j+1}+2^{j-1}), S(2^k)) \bigr)
\leq C_2^\# \mu_t \bigl( \tilde{\Delta} (S(2^{j+1}+2^{j-1}),S(2^k)) \bigr) .
$$
%and Lemma \ref{lem:comparison of gamma and delta-tilde}, 
%\begin{align*}
%\mu_t^N\bigl( {\tilde \Gamma }(S(2^{j+1}+2^{j-1}), S(2^k)) \bigr) \\
%&\leq {\tilde K}
% \mu_t \bigl( \tilde{\Delta} (S(2^{j+1}+2^{j-1}),S(2^k)) \bigr),
%\intertext{and}
%\mu_t^N \bigl( \Gamma(\mathbf{O},S(2^{j-3})) \bigr)
%&
%\end{align*}
Substituting these into the above inequality, we have
%\framebox{\bf To be continued...}
\begin{align*}
&\mu_t^N \bigl( \Delta_v\Omega (\mathbf{O}, S(2^k))\bigr)\\
&\leq (C_2^\# )^2\left\{ \mu_t^N\bigl( 
 \Delta ({\mathbf O},S(2^{j-3}))\bigr) + C2^{3(j+1)}e^{-\alpha 2^{j+1}} 
\right\} \\
& \quad \times \mu_t^N\bigl( {\tilde \Delta }( S(2^{j+1}+2^{j-1}), S(2^k))\bigr)\\
& \quad \times \left\{
  \mu_t^N \bigl( \Gamma (v, S_{j-5}^2(v))\bigr) +
    C2^{3(j-2)}e^{-\alpha 2^{j-2}} \right\}  \\
& \leq (C_2^\# )^2 \left[ \mu_t^N\bigl( 
 \Delta ({\mathbf O},S(2^{j-3}))\bigr) 
\mu_t^N\bigl( {\tilde \Delta }( S(2^{j+1}+2^{j-1}), S(2^k))\bigr)
\mu_t^N \bigl( \Gamma (v, S_{j-5}^2(v))\bigr) \right. \\
&\quad + (2^9e^{-\alpha 2^j}+2)C 2^{3(j-2)}e^{-\alpha 2^{j-2}}\left.
 \mu_t^N\bigl( {\tilde \Delta }( S(2^{j+1}+2^{j-1}), S(2^k))\bigr) \right]  .
\end{align*}
By the mixing property,
\begin{align*}
\mu_t^N\bigl( 
 \Delta ({\mathbf O},S(2^{j-3}))\bigr) \leq &
 \mu_t^N\bigl( 
 \Delta ({\mathbf O},S(2^{j-3}))\,  \big\vert \,
   {\tilde \Delta }( S(2^{j+1}+2^{j-1}), S(2^k))\bigr) \\
&  + C2^{3(j+1)}e^{-\alpha 2^{j+1}},
\end{align*}
and hence we have
\begin{align*}
\mu_t^N \bigl( \Delta_v\Omega (\mathbf{O}, S(2^k))\bigr)
&\leq  (C_2^\# )^2 \mu_t^N \bigl(  \Delta ({\mathbf O},S(2^{j-3}))\cap
  {\tilde \Delta }( S(2^{j+1}+2^{j-1}), S(2^k)) \bigr) \\
  &\quad \times \mu_t^N\bigl(
 {\Gamma }( v, S_{j-5}^2(v)))\bigr) \\
& \quad + C_3^\# C2^{3(j-2)}e^{-\alpha 2^{j-2}}
 \mu_t^N\bigl( {\tilde \Delta }( S(2^{j+1}+2^{j-1}), S(2^k))\bigr) 
\end{align*}
for some constant $C_3^\# >0$ depending on $j_1, \eta $, and $\varepsilon_0$.
By the Connection lemma, we have
\begin{align*}
&\mu_t^N \bigl(  \Delta ({\mathbf O},S(2^{j-3}))\cap
  {\tilde \Delta }( S(2^{j+1}+2^{j-1}), S(2^k)) \bigr) \\
&\leq C_4^\# \mu_t^N\bigl( \Omega ( {\mathbf O}, S(2^k)) \bigr) ,
\end{align*}
where the constant $C_4^\# $ depends only on $\varepsilon_0$.
On the other hand, by Lemma \ref{lem: relation of deltas-tilde} and
the finite energy property
$$
\mu_t^N \bigl( {\tilde \Delta }(S(2^{j+1}+2^{j-1}), S(2^k)) \bigr)
\leq C_5^\# C_1^{j-j_1-5} \mu_t^N\bigl(
 \Omega ({\mathbf O}, S(2^k)) \bigr)
$$ 
where $C_5^\# >0$ is a constant depending only on $j_1$ and $\varepsilon_0$.
Thus, for $j_1+6\leq j\leq k-2$ and for $2^j<|v|_\infty \leq 2^{j+1}$ we have
\begin{align*}
\mu_t^N\bigl( \Delta_v\Omega ({\mathbf O}, S(2^k))\bigr)
 &\leq (C_2^\# )^2 C_4^\# \mu_t^N\bigl( \Omega ({\mathbf O}, S(2^k))\bigr)
   \mu_t^N\bigl( \Gamma (v, S_{j-5}^2(v))\bigr) \\
 &\quad + C_6^\# C_1^{j-j_1-5}2^{3(j-2)}e^{-\alpha 2^{j-2}}
     \mu_t^N\bigl( \Omega ({\mathbf O}, S(2^k)) \bigr) ,
\end{align*}
for some constant $C_6^\# >0$ depending on $j_1, \eta $ and $\varepsilon_0$.
This implies that we can find some constants $C_7^\#, C_8^\# >0$ depending
on $j_1, \eta $ and $\varepsilon_0$, such that
\begin{align*}
& \left| \frac{d}{dt}\mu_t^N\bigl( \Omega ({\mathbf O}, S(2^k))\bigr) \right|\\
&\leq \frac{|h-h_c(T)|}{\mathfrak{K}T}\biggl( C_7^\# \sum_{j=j_1+6}^{k-2}\sum_{2^j<|v|_\infty \leq 2^{j+1}}
  \mu_t^N\bigl( \Omega ({\mathbf O}, S(2^k))\bigr)
  \mu_t^N\bigl( \Gamma (v, S_{j-5}^2(v))\bigr) \\
&\quad + C_8^\#  
  \mu_t^N\bigl( \Omega ({\mathbf O}, S(2^k)) \bigr) \biggr).
\end{align*}
Note that we used a trivial estimate
$$ \mu_t^N\bigl( \Delta_v\Omega ({\mathbf O}, S(2^k))\bigr)
 \leq Const. \mu_t^N\bigl( \Omega ({\mathbf O}, S(2^k))\bigr),
$$
where the constant is an absolute constant.
Dividing both sides by $\displaystyle \mu_t^N\bigl( \Omega ({\mathbf O}, S(2^k))\bigr)$, and integrating in $t$ over the interval $[0,1]$,
we obtain from Lemma \ref{KLemma7} that
\begin{align*}
&\left| \log \frac{ \mu_{T,h}\bigl(\Omega ({\mathbf O}, S(2^k))\bigr) }{
  \mu_{\text{cr}}\bigl(\Omega ({\mathbf O}, S(2^k))\bigr) } \right| \\
&\leq C_9^\# \left( 1+ \sum_{j=j_1+5}^{k-2}\biggl( 2^{\zeta (k-j)} 
 + \frac{|h-h_c(T)|}{\mathfrak{K}T}2^{5(j+1)}e^{-\alpha 2^{j+1}} \biggr)
\right)
\end{align*}
for some constant $C_9^\# $ depending on $j_1, \eta $ and $\varepsilon_0$.
This proves Lemma \ref{KLemma8}.

\subsection{Kesten-Theorem 4}

\begin{Lemma}\label{Lem1KThm4} For any $\varepsilon > 0$, there exist $\delta_1>0$ and $\delta_2>0$ and $k_0=k_0(\kappa , \varepsilon )$ such that if $0<1 - \kappa<\delta_1$ and $|h-h_c|\leq \delta_2$, then for $2^{k_0}\leq 2^k \leq L(h,\varepsilon_0)$, we have
\[ \sum_{v \in S(2^k) \setminus S(\kappa 2^k)} \int_0^1 \dfrac{|h-h_c|}{\mathfrak{K}T} \mu_t^N \bigl( \Omega(v,S(2^k)) \bigr) dt \leq \varepsilon. \]
\end{Lemma}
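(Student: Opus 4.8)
The plan is to use the identity $\Omega(v,S(2^k))=\{v\text{ is pivotal for }A^+(2^k,2^k)\}$ and to organise the sum over the annulus $S(2^k)\setminus S(\kappa 2^k)$ according to the dyadic distance $d(v):=d(v,\partial_{in}S(2^k))$, writing $2^m\le d(v)<2^{m+1}$ with $m=m(v)$ ranging over $0\le m\le m_{\max}:=\lfloor\log_2((1-\kappa)2^k)\rfloor+O(1)$. The key step is a scale-$m$ \emph{boundary factorisation}: for $v$ in the annulus with $d(v)\asymp 2^m$ and $v$ not too close to a corner of $S(2^k)$,
\[ \mu_t^N\bigl(\Omega(v,S(2^k))\bigr)\le C\Bigl(\tfrac{2^m}{2^k}\Bigr)^2\Bigl[\mu_t^N\bigl(\Gamma(v,S(v,2^m))\bigr)+C\,2^{3m}e^{-\alpha 2^m}\Bigr]. \]
The deterministic inclusion behind this is that pivotality of $v$ forces both an alternating four-arm configuration $\Gamma(v,S(v,2^m))$ inside $S(v,2^m)\subset S(2^k)$, and — in the half-annulus around the point $p$ of $\partial_{in}S(2^k)$ nearest $v$, of inner radius $\asymp 2^m$ and outer radius a fixed fraction of the distance from $v$ to the second-nearest side — a half-plane three-arm event of type $\tilde{\mathcal B}^+(1,2,\cdot,\cdot)$ (the long $(+)$-arm to the far side separating the two $(-*)$-arms to the top and bottom sides), after rotating into the standard position. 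By Theorem \ref{3 arm bound}, \eqref{eq:3 arm bound for boxes}, this last event has probability $\le C(2^m/2^k)^2$; the two events are supported on disjoint regions with $\ell^\infty$-separation $\ge 2^m$, so the mixing bound \eqref{HZ(1.3)}, used in its $\mu_t^N(B)$-weighted form so that the error inherits the factor $(2^m/2^k)^2$, gives the displayed inequality.

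Next I sum over $v$. For fixed $m$ the scale-$m$ boundary strip is a union of four rectangles of dimensions $\asymp 2^{k+1}\times 2^m$, hence is covered by $\lesssim 2^{k-m}$ translates of a dyadic box of half-side $\asymp 2^{m-1}$, and $\Gamma(v,S(v,2^m))\subset\Gamma(v,B)$ whenever $v$ lies in the central sub-box of such a translate $B$; translation invariance plus Lemma \ref{KLemma7} (with $j=m-1$) then gives
\[ \sum_{v:\,d(v)\asymp 2^m}\int_0^1\tfrac{|h-h_c|}{\mathfrak{K}T}\mu_t^N\bigl(\Gamma(v,S(v,2^m))\bigr)\,dt\le C\,2^{k-m}\bigl(2^{-\zeta(k-m)}+\tfrac{|h-h_c|}{\mathfrak{K}T}2^{5m}e^{-\alpha 2^m}\bigr). \]
Multiplying by $(2^m/2^k)^2$ (and accounting, via the $\lesssim 2^k2^m$ sites at scale $m$, for the separate $2^{3m}e^{-\alpha 2^m}$ error), the contribution of scale $m$ from the non-corner sites is $\le C\bigl(2^{-(1+\zeta)(k-m)}+\tfrac{|h-h_c|}{\mathfrak{K}T}2^{6m-k}e^{-\alpha 2^m}\bigr)$. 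Summing over $m_0\le m\le m_{\max}$ yields a geometric series dominated by its top term, $\le C(1-\kappa)^{1+\zeta}$, together with an error $\le \tfrac{|h-h_c|}{\mathfrak{K}T}2^{-k}\sum_{m\ge m_0}2^{6m}e^{-\alpha 2^m}\le C\,2^{-k}$, since $\tfrac{|h-h_c|}{\mathfrak{K}T}\le h_c/\mathfrak{K}T$ by \eqref{bound for the external field} and the series converges.

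Two further contributions must be disposed of. First, the sites within $\ell^\infty$-distance $2^{m_0}$ of $\partial_{in}S(2^k)$: there are $\lesssim 2^k2^{m_0}$ of them, and bounding $\Omega(v,S(2^k))$ crudely by the boundary arm event from scale $2^{m_0}$ out to scale $\asymp 2^k$ gives $\mu_t^N\le C(2^{m_0}/2^k)^2$ (for non-corner $v$; for the $\lesssim 4^{m_0}$ corner sites a two-arm bound from Theorem \ref{2 arm bound} suffices), so this contribution is $\le C\,2^{3m_0}/2^k$. Second, the sites near a corner at corner-distance $\asymp 2^m$, which — once intersected with the annulus and the constraint $d(v)<2^{m+1}$ — number only $\lesssim 4^m$: for these one uses only $\Omega(v,S(2^k))\subset\Gamma(v,S(v,2^m))$, covers the corner region by $O(1)$ dyadic boxes, and applies Lemma \ref{KLemma7} to get a scale-$m$ contribution $\le C\bigl(2^{-\zeta(k-m)}+\tfrac{|h-h_c|}{\mathfrak{K}T}2^{5m}e^{-\alpha 2^m}\bigr)$, whose sum over $m_0\le m\le m_{\max}$ is $\le C(1-\kappa)^{\zeta}+C_{m_0}$ with $C_{m_0}=\tfrac{h_c}{\mathfrak{K}T}\sum_{m\ge m_0}2^{5m}e^{-\alpha 2^m}\to 0$ as $m_0\to\infty$. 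Assembling everything, the full sum is $\le C(1-\kappa)^{\zeta}+C(1-\kappa)^{1+\zeta}+C\,2^{3m_0}/2^k+C_{m_0}$, so one first chooses $m_0=m_0(\varepsilon)\ge\max\{j_1+6,j_3\}$ with $C_{m_0}\le\varepsilon/4$, then $\delta_1=\delta_1(\varepsilon)$ with $C(1-\kappa)^{\zeta}+C(1-\kappa)^{1+\zeta}\le\varepsilon/4$ for $1-\kappa<\delta_1$, then $k_0=k_0(\kappa,\varepsilon)$ so that $2^{k_0}\gtrsim 2^{m_0}/(1-\kappa)$ and $C\,2^{3m_0}/2^{k_0}\le\varepsilon/4$, and finally $\delta_2$ so that $|h-h_c|\le\delta_2$ forces $L(h,\varepsilon_0)\ge 2^{k_0}$ (possible since $L(h,\varepsilon_0)\to\infty$ as $h\to h_c$).

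The main obstacle is the boundary factorisation together with the corner bookkeeping: one must make the deterministic inclusion into $\tilde{\mathcal B}^+(1,2,\cdot,\cdot)$ precise and uniform over the annulus (handling the outer radius, which is governed by the distance to the second-nearest side), one must be careful to take the decoupling error from \eqref{HZ(1.3)} in the $\mu_t^N(B)$-weighted form so it carries the gain $(2^m/2^k)^2$ — otherwise a fatal factor $2^k$ appears after summing over the $\asymp 2^k2^m$ boundary sites — and one must isolate the corner sites into a set small enough ($\lesssim 4^m$ at corner-scale $2^m$) that the cruder bound $\Omega\subset\Gamma(v,S(v,2^m))$ fed into Lemma \ref{KLemma7} alone still sums to a quantity tending to $0$ as $\kappa\to1$.
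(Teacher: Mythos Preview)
Your overall strategy is close to the paper's, but the boundary factorisation contains a genuine gap. You yourself write that the half-plane three-arm event has ``outer radius a fixed fraction of the distance from $v$ to the second-nearest side'', yet then claim its probability is $\le C(2^m/2^k)^2$. By Theorem~\ref{3 arm bound} the bound is $\le C(2^m/r)^2$ with $r$ equal to that outer radius, i.e.\ the second-side distance, not $2^k$. These agree only when $r\asymp 2^k$; for a site with $d(v)\asymp 2^m$ and second-side distance $r\asymp 2^q$ with $m<q\ll k$ your factorisation over-shoots by a factor $2^{2(k-q)}$. Your corner clause only disposes of the case $q\asymp m$ (that is what makes the count $\lesssim 4^m$ correct), so the entire intermediate range $m<q<k$ is unaccounted for. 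Since at $(m,q)$ there are $\asymp 2^{m+q}$ such sites per side, this is not negligible.

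The paper closes this gap with a \emph{three}-scale decomposition: besides the inner four-arm event at scale $d(v)\asymp 2^p$, it uses a half-plane three-arm event from $2^p$ out to the corner-distance $\asymp 2^{p+m(v)}$ (giving $\lesssim 2^{-2m(v)}$ by \eqref{eq:3 arm bound for boxes}) \emph{and} a half-plane two-arm event from $2^{p+m(v)}$ out to $2^k$ around the corner (giving $\lesssim 2^{p+m(v)-k}$ by \eqref{eq:2 arm bound for boxes}); see \eqref{eq:13-1}--\eqref{eq:13-3}. The combined factor $2^{p-m(v)-k}$ is then summed first over the $\asymp 2^{m(v)}$ blocks at fixed $(p,m(v))$, then over $m(v)$ and $p$, yielding the bound $C(1-\kappa)^\zeta$ in \eqref{eq:13-4}. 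An alternative fix within your framework is to keep only the three-arm factor $(2^m/2^q)^2$, further stratify by $q$, and observe that $\sum_{q\ge m}2^{2(m-q)}\cdot 2^{q-m}=O(1)$; this recovers $(1-\kappa)^\zeta$ rather than your claimed $(1-\kappa)^{1+\zeta}$, which is still sufficient. But as written, the leap from ``outer radius $=$ second-side distance'' to ``probability $\le C(2^m/2^k)^2$'' is the missing step.
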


\begin{proof}
We use the estimates of arm exponents in a half plane in section \ref{s:powerlaw}.
So, again we remark that the restriction that $2^k<L(h,\varepsilon_0)$ is not
serious; we can extend  results there for $2^k\leq 2L(h,\varepsilon_0)$, too.

Let $v=(v^1,v^2)\in S(2^k)\setminus S(\kappa 2^k)$ satisfy 
$$
2^p < d_\infty (v, S(2^k)^c)\leq 2^{p+1}
$$
for some $p\geq j_3+5$, where $j_3$ is the constant given in Theorem \ref{3 arm bound}. 
For simplicity we consider the case where $0\leq v^1\leq v^2$.
Let $Q_{p-1}(v)= (\ell_12^{p-1},(\ell_1+1)2^{p-1}]\times (\ell_22^{p-1},
(\ell_2+1)2^{p-1}] $ containing $v$, and $x_{p-1}(v)$ be its lower left
corner $(\ell_12^{p-1}, \ell_22^{p-1})$.
Let 
$$ {\tilde S}_{p-1}(v):= S(x_{p-1}(v), 3\cdot 2^{p-1}).$$
Note that ${\tilde S}_{p-1}(v)\subset S(2^k)$, but $S_{p-1}^2(v)$ may not
be a subset of $S(2^k)$.
We put $m(v)$ to be the largest integer $m$ such that
${\tilde R}_{p-1}^m(v)\not\ni (2^k,2^k)$.
Then $T_{p-1}^{m(v)+1}(v)\ni (2^k,2^k)$, and we have
\begin{align}\label{eq:13-1}
&\Omega (v,S(2^k))\\
&\subset \Gamma'(\{ v\} , {\tilde S}_{p-1}(v))\cap
 \Gamma'(T_{p-1}^2(v), T_{p-1}^{m(v)}(v) )  \cap  {\tilde \Gamma }( T_{p-1}^{m(v)+1}(v),S(2^k)). \nonumber
\end{align}
Here we used a new notation:
\begin{enumerate}
\item  $\Gamma'(\{ v\} , {\tilde S}_{p-1}(v))$ is the event that there are 
$(+)$-paths $r_1,r_3$ and $(-*)$-paths $r_2^*, r_4^*$ in 
${\tilde S}_{p-1}(v)\setminus \{ v\} $ such that
\begin{enumerate}
\item $r_1$ and $r_3$ connect $\partial \{ v\} $ with 
$\partial_{in}{\tilde S}_{p-1}(v)$, $r_1\cap r_3=\emptyset $,
\item $r_2^*$ and $r_4^*$ connect $\partial^* \{ v\} $ with 
$\partial_{in}{\tilde S}_{p-1}(v)$, $r_2^*\cap r_4^*=\emptyset $,
\item $r_1\cup \{ v\} \cup r_3$ separates $r_2^*$ and $r_4^*$ in 
${\tilde S}_{p-1}(v)$.
\end{enumerate}
\item $\Gamma'(T_{p-1}^2(v), T_{p-1}^{m(v)}(v) )$ is the event that
there exist $(+)$-paths $s_1,s_3$ and a $(-*)$-path $s_4^*$ such that
\begin{enumerate}
\item $s_1,s_3$ connect $\partial T_{p-1}^2(v)$ with 
$\partial_{in}T_{p-1}^{m(v)}(v)\setminus \{ x^2=2^k\} $ in 
$T_{p-1}^{m(v)}(v)\setminus T_{p-1}^2(v)$, and
\item $s_4^*$ connects $\partial^*T_{p-1}^2(v)$ with 
$\partial_{in}T_{p-1}^{m(v)}(v)\setminus \{ x^2=2^k\} $
in $T_{p-1}^{m(v)}(v)\setminus T_{p-1}^2(v)$.
\item $s_4^*$ separates $s_1$ and $s_3$ in $T_{p-1}^{m(v)}(v)\setminus T_{p-1}^2(v)$.
\end{enumerate}
\end{enumerate}
Note that $\Gamma'(T_{p-1}^2(v), T_{p-1}^{m(v)}(v) )$ is  a subset of 
the shifted event of the 3-arm event ${\tilde {\mathcal B}}^+(2,1,2^{p+2},2^{m(v)+p-1})$
by the vector $(\ell_12^{p-1}, 2^k-2^{p-1+m(v)})$.
If $m(v)\leq 2$, then we understand that 
$$
\Gamma'(T_{p-1}^2(v), T_{p-1}^{m(v)}(v) )=\Omega .
$$
%This is the case where $x_{p-1}(v)$ is on the diagonal line $\{ x^1=x^2\} $.
Note also that ${\tilde \Gamma }(T_{p-1}^{m(v)+1}(v),S(2^k))$ is a subset of the shifted event of 2-arm event 
${\mathcal B}^+(1,1,2^{p+m(v)+1},2^{k+1})$.

By the mixing property we have from (\ref{eq:13-1}) that
\begin{align}
&\mu_t^N\bigl( \Omega (v,S(2^k)\bigr)\label{eq:13-2}\\
&\leq \mu_t^N\bigl( 
 {\tilde \Gamma }( T_{p-1}^{m(v)+1}(v),S(2^k)) \bigr) \nonumber \\
&\quad \times \left\{ \mu_t^N\bigl(  \Gamma'(T_{p-1}^2(v), T_{p-1}^{m(v)}(v) )
  \bigr) + 4C2^{3(p+m(v)-1)}e^{-\alpha 2^{p+m(v)-1}} \right\} \nonumber \\
&\quad \times\left\{ 
   \mu_t^N\bigl( \Gamma'(\{ v\} , {\tilde S}_{p-1}(v)) \bigr)
    + C( 3\cdot 2^p)^2 \cdot 2^{p-1} e^{-\alpha 2^{p-1}} \right\} . \nonumber
\end{align}
By the above remark, translation invariance and Theorem \ref{2 arm bound},
we have
\begin{align*}
\mu_t^N\bigl(  {\tilde \Gamma }( T_{p-1}^{m(v)+1}(v),S(2^k)) \bigr)
& \leq \mu_t^N\bigl( {\mathcal B}^+( 1,1,2^{p+m(v)+1}, 2^{k+1})\bigr) \\
%%&\leq C_{10}( 2^{k-p-m(v)}-1)^{-1}
&\leq C_{10}2^{-k+p+m(v)}.
\end{align*}
%%provided that $k-p-m(v)\geq 1$.
%%When $m(v)\geq k-p-1$, then we use the trivial bound
%%$$
%%\mu_t^N\bigl(  {\tilde \Gamma }( T_{p-1}^{m(v)+1}(v),S(2^k)) \bigr) \leq 1.
%%$$
Also, by translation invariance and Theorem \ref{3 arm bound},
we have
\begin{align*}
\mu_t^N\bigl( \Gamma'(T_{p-1}^2(v), T_{p-1}^{m(v)}(v) )\bigr) 
&\leq  \mu_t^N\bigl( {\tilde {\mathcal B}}^+( 2,1,2^{p+2},2^{m(v)+p-1})\bigr) \\
&\leq 2^6C_{14}2^{-2m(v)}.
\end{align*}
Therefore from (\ref{eq:13-2}), we have %for $m(v)\leq k-p-1$,
\begin{align}\label{eq:13-3}
\mu_t^N\bigl( \Omega (v,S(2^k)\bigr) \leq & 
 \left\{ 2^6C_{14}C_{10}2^{-k-m(v)+p}+ 4C2^{3(p+m(v))}e^{-\alpha 2^{p+m(v)}}
   \right\} \\
&\times \left\{ 
 \mu_t^N\bigl(  \Gamma'(\{ v\} , {\tilde S}_{p-1}(v)) \bigr)
  +  C( 3\cdot 2^p)^2 \cdot 2^{p-1} e^{-\alpha 2^{p-1}} \right\} \nonumber
\end{align}
%%If $m(v)=k-p$ or $k-p+1$, then we have
%%\begin{align}
%%&\mu_t^N\bigl( \Omega (v,S(2^k))\bigr) \label{eq:13-2-a}\\
%%&\leq 4C_{14}2^{-2m(v)}\left\{
%% \mu_t^N\bigl( \Gamma'(\{ v\} , {\tilde S}_{p-1}(v)) \bigr)
%%     + C( 6\cdot 2^p)^2 \cdot 2^p e^{-\alpha 2^p} \right\} .\nonumber
%% \end{align} 
By the finite energy property there is an absolute constant $C_1^\# >0$ such that
$$
\mu_t^N\bigl( \Gamma'(\{ v\} , {\tilde S}_{p-1}(v)) \bigr)
\leq C_1^\# 
 \mu_t^N\bigl( \Gamma ( v , {\tilde S}_{p-1}(v)) \bigr) .
$$
When $m(v)\leq 2$, we use
\begin{equation}\label{eq:13-2-a}
\mu_t^N\bigl( \Omega (v,S(2^k))\bigr) \leq \mu_t^N\bigl( \Gamma'(\{ v\} , {\tilde S}_{p-1}(v))\bigr) .
\end{equation}
Note that there are only four distinct $Q_{p-1}(v)$'s with $m(v)\leq 2$.
Since $p\geq j_3+5$, and since 
$$
S_{j_1}^{p-j_1}(v)\subset {\tilde S}_{p-1}(v) \subset S_{p-1}^2(v)
\subset S_{j_1}^{p-j_1+3}(v),
$$
by Lemma \ref{comparison of gamma and delta}(i), then by Lemma \ref{relation of deltas}, we have
%%Let $j^*$ be the maximum of integers $j\geq 1$ with
%%$$ S_{j_1}^j(v) \subset {\tilde S}_{p-1}(v).$$
%%Then since $p\geq j_3+5$, we have $j^*\geq 5$, and by Lemma \ref{comparison of%% gamma and delta} (i), then by Lemma \ref{relation of deltas}, we have
\begin{align*}
\mu_t^N\bigl( \Gamma (v, {\tilde S}_{p-1}(v) ) \bigr)
&\leq  \mu_t^N\bigl( \Gamma (v, S_{j_1}^{p-j_1}(v) )\bigr) \\
&\leq  KC_1^3 \mu_t^N\bigl( \Delta (v, S_{j_1}^{p-j_1+3}(v)) \bigr) \\
&\leq  KC_1^3 \mu_t^N\bigl( \Gamma (v, S_{p-1}^2(v)) \bigr) .
\end{align*}
%%The last inequality follows from the fact that 
%%$ S_{j_1}^{j^*}(v) \supset S_{p-1}^2(v)$.

For a fixed $3\leq m \leq k-p$, we sum up this inequality over $v$'s such that
$m(v)=m, 2^p< d_\infty (v, S(2^k)^c)\leq 2^{p+1}$, and $0\leq v^1\leq v^2$,
and then we obtain 
\begin{align*}
\sum_{ \genfrac{}{}{0pt}{2}{v: m(v)=m,\, 
   2^p<d_\infty (v,S(2^k)^c )\leq 2^{p+1}}
       {0\leq v^1\leq v^2} 
     } &\mu_t^N\bigl( 
       \Gamma (v, S_{p-1}^2(v)) \bigr) \\
&\leq 2^{m+1}\sum_{v\in S(2^{p-1})}\mu_t^N\bigl( 
  \Gamma (v, S(2^{p+1})) \bigr) .
\end{align*}
By Lemma \ref{KLemma7}, we have
\begin{align*}
\sum_{v\in S(2^{p-1})}&\int_0^1 \frac{|h-h_c(T)|}{\mathfrak{K}T}
        \mu_t^N\bigl( \Gamma (v,S(2^{p+1}))\bigr) \, dt \\
 \leq & C_{24}\left( 2^{-\zeta (k-p-1)} +
  \frac{|h-h_c(T)|}{\mathfrak{K}T} 2^{5(p+2)}e^{-\alpha 2^{p+1}}\right).
\end{align*}
Therefore for some constant $C_2^\# >0$ depending on $j_1, \eta ,\varepsilon_0$,we have
\begin{align*}
\sum_{\genfrac{}{}{0pt}{2}{2^p<d_\infty (v,S(2^k)^c)\leq 2^{p+1}}{0\leq v^1\leq v^2}} & \int_0^1 \frac{|h-h_c(T)|}{\mathfrak{K}T}
 \mu_t^N\bigl( \Omega (v, S(2^k))\bigr) \, dt\\
&\leq C_2^\#
\left( 2^{-\zeta (k-p-1)}+ \frac{|h-h_c(T)|}{\mathfrak{K}T}2^{5(p+2)}
 e^{-\alpha 2^{p-1}}\right) .
\end{align*}
Finally, we sum up this inequality over $p$'s with $2^p\leq (1-\kappa )2^k$ to
obtain
\begin{align}\label{eq:13-4}
&\sum_{ \genfrac{}{}{0pt}{2}{2^{j_3+5}<d_\infty ( v , (S(2^k))^c)}{ v \in S(2^k)\setminus S(\kappa 2^k)}}
 \int_0^1  \frac{|h-h_c(T)|}{\mathfrak{K}T}
 \mu_t^N\bigl( \Omega (v, S(2^k))\bigr) \, dt \\
&\leq C_1^\# (1+C_2^\# )
 \sum_{p=j_3+5}^{k+ \log_2(1-\kappa )}\left(
 2^{-\zeta (k-p-1)}+ \frac{|h-h_c(T)|}{\mathfrak{K}T}2^{5(p+2)}
  e^{-\alpha 2^{p-1}} \right) \nonumber \\
&\leq  C_3^\# \left( \frac{|h-h_c(T)|}{\mathfrak{K}T} + \frac{2^\zeta (1-\kappa )^\zeta }{1-2^{-\zeta }} \right) ,\nonumber 
\end{align}
where $C_3^\# >0$ depends only on $j_1, \eta $ and $\varepsilon_0$.
This goes to zero as $h\rightarrow h_c(T)$ and $\kappa \rightarrow 1$.

Now, we consider the remaining case: $p< j_3+5$.
Let 
$$
x_{j_3+5}(v)= (2^k-\ell2^{j_3+5}, 2^k-2^{j_3+5}),
$$
and for $\ell \geq 2$, let $m(v)$ be
$$
m(v):= \max\{ m\geq 1 : T_{j_3+5}^m(v)\not\ni (2^k,2^k)\} .
$$
Then as in (\ref{eq:13-2}), we have for $\ell \geq 2$,
$$
\Omega (v,S(2^k)) \subset \Gamma'(\{ v\} , T_{j_3+5}^{m(v)}(v)) \cap
{\tilde \Gamma }(T_{j_3+5}^{m(v)+1}(v), S(2^k)).
$$
From this and the mixing property, we have
\begin{align*}
\mu_t^N\bigl( \Omega (v,S(2^k))\bigr) \leq & \left\{
 \mu_t^N\bigl( \Gamma'(\{ v\} , T_{j_3+5}^{m(v)}(v)) \bigr)
   +4C 2^{3(j_3+m(v)+5)}e^{-\alpha 2^{j_3+m(v)+5}} \right\} \\
&\times \mu_t^N\bigl(   {\tilde \Gamma }(T_{j_3+5}^{m(v)+1}(v), S(2^k))
 \bigr) .
\end{align*}
As before  by ( \ref{eq:2 arm bound for boxes}) %if $m(v)\leq k-j_3-8$,  
we have
\begin{align*}
\mu_t^N\bigl(   {\tilde \Gamma }(T_{j_3+5}^{m(v)+1}(v), S(2^k)) \bigr)
&\leq \mu_t^N \bigl( {\mathcal B}^+(1,1,2^{j_3+m(v)+7},2^{k+1}) \bigr)\\
&\quad + C_{10}2^{-k+j_3+m(v)+6}.
\end{align*}
Also, by the finite energy property, changing the configuration in 
$S_{j_3+5}(v)$, we can obtain 
\begin{align*}
\mu_t^N\bigl( \Gamma'(\{ v\} , T_{j_3+5}^{m(v)}(v)) \bigr)
&\leq C_4^\# \mu_t^N\bigl({\tilde {\mathcal B}}^+(2,1,1,2^{m(v)+j_3+5})\bigr) \\
&\leq C_4^\# C_{14}2^{-2(m(v)+j_3+5)},
\end{align*}
where $C_4^\# >0$ depends only on $j_3$.
Thus, we have for $\ell \geq 2$,
\begin{align}\label{eq:13-5}
\mu_t^N\bigl(\Omega (v,S(2^k))\bigr) \leq &
C_{10}2^{-k+j_3+m(v)+6}\\
&\times \left\{ C_4^\# C_{14}2^{-2(m(v)+j_3+5)} +
 4C 2^{3(j_3+m(v)+5)}e^{-\alpha 2^{j_3+m(v)+5}}\right\} . \nonumber
\end{align}
For $\ell =1$, we just use (\ref{eq:2 arm bound}) to obtain
\begin{equation}\label{eq:13-6}
\mu_t^N\bigl( \Omega (v,S(2^k)) \bigr) \leq C_4^\# \mu_t^N\bigl( {\mathcal E}_2
(2^k) \bigr) \leq C_4^\# C_8 2^{-k}
\end{equation}
Thus, from (\ref{eq:13-5}) and (\ref{eq:13-6}) we have
%\begin{align*}
$$
\sum_{\genfrac{}{}{0pt}{2}{d_\infty (v,S(2^k)^c)<2^{j_3+5}}{0\leq v^1\leq v^2}}
 \int_0^1  \frac{|h-h_c(T)|}{\mathfrak{K}T} \mu_t^N\bigl(
 \Omega (v, S(2^k)) \bigr) \, dt \leq C_5^\# k2^{-k}
\frac{|h-h_c(T)|}{\mathfrak{K}T}
$$
for some constant $C_5^\#>0$ depending only on $j_3,\eta $ and $\varepsilon_0$.
The right hand side of the above inequality converges to zero uniformly in $k$
as $h\rightarrow h_c(T)$.
\end{proof}

\begin{Theorem}[cf. \cite{K87scaling} Theorem 4] \label{KTheorem4} 
For $\delta >0$, let
\begin{equation}\label{eq:L_0(delta )}
L_0(\delta ):= \min \left\{ n\geq 1  :  
 n^2\mu_{\text{cr}}\bigl( \Omega (\mathbf{O},S(n) )\bigr) \geq \frac{1}{\delta } \right\} .
\end{equation}
Then there exist positive constants $C_{42}, C_{43}$, depending on $C(1),j_1,
\eta $ and $\varepsilon_0$, such that for every small $\delta >0$, we have
\begin{align}\label{eq: L equiv L_0below}
C_{42}\leq \frac{L(h_c-\delta , \varepsilon_0)}{L_0(\delta )} \leq C_{43}
%\end{equation}
\intertext{and}
%\begin{equation}
\label{eq: L equiv L_0above}
C_{42}\leq \frac{L(h_c+\delta , \varepsilon_0)}{L_0(\delta )} \leq C_{43}.
\end{align}
Further, for every pair $0<\varepsilon_1, \varepsilon_2\leq \varepsilon_0$,
we have
\begin{equation}\label{L-varying epsilon}
L(h,\varepsilon_1)\asymp L(h,\varepsilon_2) \quad \hbox{ as }\,  h \rightarrow
h_c(T).
\end{equation}
\end{Theorem}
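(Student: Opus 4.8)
The plan is to establish the two--sided comparison $L(h_c(T)\pm\delta,\varepsilon_0)\asymp L_0(\delta)$ by integrating the crossing--event Russo formula of Corollary \ref{russo crossing} along the path $h(t)$ from $h_c(T)$ to $h$, and then to deduce (\ref{L-varying epsilon}) from (\ref{eq: L equiv L_0below})--(\ref{eq: L equiv L_0above}) together with the ACCFR rescaling. Write $\delta=|h-h_c(T)|$ and, for dyadic $n=2^k$, put $f^N(n):=n^2\mu_{\text{cr}}^N(\Omega(\mathbf{O},S(n)))$ and $f(n):=n^2\mu_{\text{cr}}(\Omega(\mathbf{O},S(n)))$; by (\ref{eq:periodic-1}), $f^N(n)\to f(n)$ as $N\to\infty$. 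The two structural inputs are: (a) Lemma \ref{KLemma8}, applied at any external field $h'$ lying between $h_c(T)$ and $h$ (for which $n\le 2L(h',\varepsilon_0)$ since $L(\cdot,\varepsilon_0)$ increases as $h'\to h_c(T)$, and also at $h'=h_c(T)$ where $L=\infty$), gives, after summing over $v\in S(\kappa n)$ and using $\#S(\kappa n)\asymp n^2$,
\[
\sum_{v\in S(\kappa n)}\mu_{T,h'}^N\bigl(\Omega(v,S(n))\bigr)\ \asymp\ f^N(n),
\]
with constants depending only on $\kappa,j_1,\eta,\varepsilon_0$ and, crucially, \emph{uniformly in $h'$}; and (b) Lemma \ref{Lem1KThm4}, which shows that the complementary boundary sum $\sum_{v\in S(n)\setminus S(\kappa n)}\mu_t^N(\Omega(v,S(n)))$, multiplied by $\delta/(\mathfrak{K}T)$ and integrated in $t\in[0,1]$, is at most a prescribed $\varepsilon$ once $1-\kappa$ and $\delta$ are small and $n$ is large. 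So the bulk of $\sum_{v\in S(n)}\mu_t^N(\Omega(v,S(n)))$ is pinned to $f^N(n)$ and the boundary is negligible after integration.

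First I would integrate Corollary \ref{russo crossing} over $t\in[0,1]$ for the increasing event $A^+(n,n)$, recalling $\Delta_vA^+(n,n)=\Omega(v,S(n))$ and that $\{\mu_0^N,\mu_1^N\}=\{\mu_{\text{cr}}^N,\mu_{T,h}^N\}$ (in the order fixed by the sign of $h-h_c(T)$). Combining the lower bound of Remark \ref{RussoLower-2} and the upper bound of Corollary \ref{russo crossing} with (a) and (b), and discarding the nonnegative boundary sum for the lower bound, yields, for every dyadic $n$ with $2^{j_1+5}<n\le 2L(h,\varepsilon_0)$ (the range coming from the validity of Lemmas \ref{KLemma8}, \ref{Lem1KThm4} and of the extension of Corollary \ref{russo crossing} to scale $2L$, as recorded in the excerpt),
\[
c_1\,\frac{\delta}{\mathfrak{K}T}\,f^N(n)\ \le\ \bigl|\mu_{\text{cr}}^N\bigl(A^+(n,n)\bigr)-\mu_{T,h}^N\bigl(A^+(n,n)\bigr)\bigr|\ \le\ c_2\,\frac{\delta}{\mathfrak{K}T}\,f^N(n)+c_2\Bigl(\varepsilon+\frac{\delta}{\mathfrak{K}T}\Bigr),
\]
with $c_1,c_2$ depending only on $j_1,\eta,\varepsilon_0$ and on $\kappa$. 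Letting $N\to\infty$ replaces $\mu^N$ by $\mu$ and $f^N$ by $f$.

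Next I would read off the comparison in two steps. For every dyadic $n<L(h,\varepsilon_0)$, both crossing probabilities are bounded away from $0$ and from $1$ uniformly (by planar duality and Lemma \ref{RSW-cr}, $\mu_{\text{cr}}(A^+(n,n))\in[C(1),1-C(1)]$ and $\mu_{T,h}(A^+(n,n))$ is $>\varepsilon_0$ and $<1-C(1)$ for $h<h_c(T)$, similarly for $h>h_c(T)$), so the middle term is $<1$ and hence $f(n)\le C/\delta$ for all dyadic $n<L(h,\varepsilon_0)$. Conversely, at a scale $n^\ast\asymp L(h,\varepsilon_0)$ the two crossing probabilities genuinely differ by a constant: for $h<h_c(T)$, planar duality at $n=L(h,\varepsilon_0)$ gives $\mu_{T,h}(A^{-*}(L,L))\ge1-\varepsilon_0$, Lemma \ref{RSW-gen}(ii) gives $\mu_{T,h}(A^{-*}(3L,L))\ge1-\lambda\theta$, and since $A^{-*}(3L,L)\subset A^{-*}(n^\ast,n^\ast)$ for any $n^\ast\in[L,3L]$ one gets $\mu_{T,h}(A^+(n^\ast,n^\ast))=1-\mu_{T,h}(A^{-*}(n^\ast,n^\ast))\le\lambda\theta$; choosing a dyadic $n^\ast\in[L,2L)$ and the free parameter $\theta$ small, $|\mu_{\text{cr}}(A^+(n^\ast,n^\ast))-\mu_{T,h}(A^+(n^\ast,n^\ast))|\ge C(1)-\lambda\theta\ge C(1)/2$ (the case $h>h_c(T)$ is identical with $(+)$ in place of $(-*)$). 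Feeding this into the lower bound of the sandwich, after choosing $\varepsilon$ small and then $\delta$ small, gives $f(n^\ast)\ge c/\delta$. Thus $f$ is $\lesssim\delta^{-1}$ just below $L(h,\varepsilon_0)$ and $\gtrsim\delta^{-1}$ at a scale $\asymp L(h,\varepsilon_0)$; since $n\mapsto f(n)=n^2\mu_{\text{cr}}(\Omega(\mathbf{O},S(n)))$ is essentially increasing and grows at least polynomially — this is the quasi-multiplicativity of the four-arm event, a consequence of Lemmas \ref{relation of deltas}--\ref{lem:comparison of gamma and delta-tilde} and the Connection lemma together with the five-arm estimate of \cite{KSZ98} — the threshold $L_0(\delta)$ in (\ref{eq:L_0(delta )}) is well defined for small $\delta$, is insensitive to constant-factor changes of its defining level, and satisfies $L_0(\delta)\asymp L(h,\varepsilon_0)$, which is (\ref{eq: L equiv L_0below})--(\ref{eq: L equiv L_0above}).

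Finally, for (\ref{L-varying epsilon}) it suffices to show $L(h,\varepsilon_1)\asymp L(h,\varepsilon_0)$ for every $0<\varepsilon_1\le\varepsilon_0$ and then chain through $\varepsilon_0$. One inequality is immediate from monotonicity of $L(h,\cdot)$ in its threshold; for the other, once $\mu_{T,h}(A^+(L,L))$ reaches the critical window at $L=L(h,\varepsilon_0)$, Lemma \ref{RSW-gen}(ii) and then the ACCFR rescaling (Lemma \ref{lem:accfr}) force $\mu_{T,h}(A^+(3^{k+1}L,3^kL))$ (for $h>h_c(T)$), respectively $1-\mu_{T,h}(A^+(3^{k+1}L,3^kL))$ (for $h<h_c(T)$, using $(-*)$-connection), to drop to $0$ at rate $\lambda\theta^{2^k}$; choosing $k=k(\varepsilon_1)$ with $\lambda\theta^{2^k}\le\varepsilon_1$ gives $L(h,\varepsilon_1)\le3^{k+1}L(h,\varepsilon_0)$. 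The step I expect to be the main obstacle is not any individual estimate — the heavy lifting is already done in Lemmas \ref{KLemma8} and \ref{Lem1KThm4} — but the ingredient flagged above, namely that $n\mapsto n^2\mu_{\text{cr}}(\Omega(\mathbf{O},S(n)))$ is essentially increasing with polynomial growth; it is exactly this that converts the two one-sided bounds on $f$ into the genuine two-sided comparison of $L(h,\varepsilon_0)$ with $L_0(\delta)$, and one must also be careful about the order of limits ($\kappa\to1$, then $\varepsilon\to0$, then $h\to h_c(T)$) and about invoking the lemmas up to the extended scale $2L(h,\varepsilon_0)$ rather than $L(h,\varepsilon_0)$.
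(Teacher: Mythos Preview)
Your overall strategy---integrate Corollary~\ref{russo crossing}, use Lemma~\ref{KLemma8} to pin the interior pivotal sum to $f(n)$, and use Lemma~\ref{Lem1KThm4} to discard the boundary---matches the paper and yields the inequality $L_0(c\delta)\le L(h,\varepsilon_0)$ exactly as there. The gap is in the reverse inequality. You reduce it to the claim that $n\mapsto f(n)=n^2\mu_{\text{cr}}(\Omega(\mathbf{O},S(n)))$ is essentially increasing with polynomial growth, and you justify this via ``the five-arm estimate of \cite{KSZ98}.'' But \cite{KSZ98} is for \emph{independent} percolation; no five-arm bound is proved in this paper for the Ising model, and the extension Lemmas~\ref{relation of deltas}--\ref{lem:comparison of gamma and delta-tilde} by themselves only give $f(2n)/f(n)\in[4C_1^{-1},4]$ with $C_1^{-1}$ small, which does not force $f$ to increase. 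So the step you yourself flag as ``the main obstacle'' is genuinely unresolved in your proposal.

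The paper closes this gap not with a five-arm estimate but with the \emph{branching argument}, Lemma~\ref{KLemma7}. Combining (\ref{eq:klem7-1}) with Lemma~\ref{KLemma8} (to replace $\mu_t^N(\Gamma(v,S(2^j)))$ for $v\in S(2^{j-2})$ by $\mu_{\text{cr}}^N(\Omega(\mathbf{O},S(2^j)))$) yields, for any $L_1\le L_2\le L(h,\varepsilon_0)$, the inequality (\ref{Thm4Eq**}):
\[
C_4^\#\,\frac{|h-h_c|}{\mathfrak{K}T}\,L_1^2\,\mu_{\text{cr}}\bigl(\Omega(\mathbf{O},S(L_1))\bigr)\ \le\ \Bigl(\frac{L_2}{L_1}\Bigr)^{-\zeta}.
\]
Setting $L_1=L_0(|h-h_c|/(C_2^\#\mathfrak{K}T))$ and $L_2=L(h,\varepsilon_0)$ gives $L(h,\varepsilon_0)\le CL_0$ directly, and a second application of (\ref{Thm4Eq**}) shows $L_0(\delta)\asymp L_0(c\delta)$. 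In other words, the ``polynomial growth'' input you need is encoded in the $2^{-\zeta(k-j)}$ decay of Lemma~\ref{KLemma7}, not in an arm-exponent computation. For (\ref{L-varying epsilon}), your ACCFR argument is correct but unnecessary: since $L_0(\delta)$ does not depend on $\varepsilon$, the two-sided bounds (\ref{eq: L equiv L_0below})--(\ref{eq: L equiv L_0above}), rerun with $\varepsilon_i$ in place of $\varepsilon_0$, immediately give $L(h,\varepsilon_1)\asymp L_0(\delta)\asymp L(h,\varepsilon_2)$.
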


%% The proof goes parallel to the proof of Theorem 4 in \cite{K87scaling}.
%%
%% ------------------------------------------------------------------
%%\begin{quote}
%%\framebox{\bf INTERMISSION}
%%
\begin{proof} 
The proof goes parallel to the proof of Theorem 4 in \cite{K87scaling}.
Since the argument is the same we prove only (\ref{eq: L equiv L_0below}).
By the definition of $L(h,\varepsilon_0)$, and the fact that $0<\varepsilon_0 < \frac{1}{2}C(1)$, we can see that
\[ \left| \mu_h^N \bigl( A^+(L(h,\varepsilon_0),L(h,\varepsilon_0)) \bigr) - \mu_{\text{cr}}^N \bigl( A^+(L(h,\varepsilon_0),L(h,\varepsilon_0)) \bigr)  \right| \geq \frac{1}{2} C(1). \]
if we take $N$ sufficiently large. By Corollary \ref{RussoOneArm}, 
\begin{align*}
\frac{1}{2} C(1) &\leq \int_0^1dt  \dfrac{|h-h_c|}{\mathfrak{K}T} \biggl\{ C_{18} \sum_{v \in S(L(h,\varepsilon_0))} \mu_t^N \bigl( \Omega(v,S(L(h,\varepsilon_0))) \bigr) \\
&\quad  + C_{19} \mu_t^N \bigl( A^+(L(h,\varepsilon_0),L(h,\varepsilon_0)) \bigr)\biggr\} .
\end{align*}
If 
\[ \dfrac{|h-h_c|}{\mathfrak{K}T} C_{19}\leq \dfrac{1}{8} C(1), \]
then we have
\[ \frac{3}{8} C(1) \leq \int_0^1 dt 
\dfrac{|h-h_c|}{\mathfrak{K}T} \biggl\{ 
 C_{18} \sum_{v \in S(L(h,\varepsilon_0))} 
   \mu_t^N \bigl( \Omega(v,S(L(h,\varepsilon_0))) \bigr)\biggr\} . \]
%%\framebox{By an extension argument,}
Let $k$ be the integer such that $2^k<L(h, \varepsilon_0)\leq 2^{k+1}$
and $a=L(h,\varepsilon_0)-2^k$. Then we have
$$
S(L(h,\varepsilon_0)) \subset \bigcup_{x \in {\{ -1, +1\} }^2} S(ax, 2^k).
$$
If $ v \in S(ax, 2^k)$ for some $x\in {\{ -1, +1\} }^2$, then 
$$
\Omega (v, S(L(h,\varepsilon_0)) \subset \Gamma (v, S(ax, 2^k)),
$$
and by translation invariance together with Lemma \ref{comparison of gamma and delta} we have
$$
\mu_t^N \bigl( \Omega(v,S(L(h,\varepsilon_0))) \bigr) \leq 
K\mu_t^N \bigl(  \Omega(v-ax,S(2^k)) \bigr).
$$
Therefore we have
$$
\sum_{v \in S(L(h,\varepsilon_0))}\mu_t^N 
  \bigl( \Omega(v,S(L(h,\varepsilon_0))) \bigr) \leq
4K \sum_{v\in S(2^k)}\mu_t^N\bigl( \Omega (v,S(2^k))\bigr) .
$$
%%
%%\begin{align*}
%%\sum_{v \in S(L(h,\varepsilon_0))} \mu_t^N \bigl( \Omega(v,S(L(h,\varepsilon_0))) \bigr)
%%\leq C_3 \sum_{v \in S(2^k)} \mu_t^N \bigl( \Omega(v,S(2^k)) \bigr).
%%\end{align*}
Choose an $\varepsilon>0$ such that $4C_{18}K \varepsilon \leq C(1)/8$. 
By Lemma \ref{Lem1KThm4}, we can find $\delta_1,\delta_2>0$ such that if $1-\kappa < \delta_1$ and $|h-h_c| < \delta_2$, then
\[ \int_0^1 dt \dfrac{|h-h_c|}{\mathfrak{K}T} \sum_{v \in S(2^k) \setminus S(\kappa 2^k)}  \mu_t^N \bigl( \Omega( v,S(2^k)) \bigr) \leq \varepsilon. \]
Thus we have
$$
\frac{1}{4} C(1) \leq \int_0^1 dt \dfrac{|h-h_c|}{\mathfrak{K}T} 
 \biggl\{ 4C_{18}K \sum_{v \in S(\kappa 2^k)} \mu_t^N \bigl( \Omega(v,S(2^k)) \bigr)\biggr\} dt.
$$

%% \\
%% &\quad + C_1C_3 \varepsilon,
%%\end{align*}
%%which implies that
%%\[  \frac{1}{4} C(1) \leq \int_0^1 \dfrac{|h-h_c|}{\mathfrak{K}T} \biggl\{ C_1C_3 \sum_{v \in S(\kappa 2^k)} \mu_t^N \bigl( \Omega(v,S(2^k)) \bigr)\biggr\} dt. \]
By Lemma \ref{KLemma8}, the right hand side is not more than 
\begin{equation}
C_1^\# \dfrac{|h-h_c|}{\mathfrak{K}T}L(h,\varepsilon_0)^2 \mu_{\text{cr}}^N \bigl( \Omega(\mathbf{O},S(L(h,\varepsilon_0))) \bigr) ,
\end{equation}
where $C_1^\# >0$ depends on $j_1, \eta $ and $\varepsilon_0$.
Therefore 
\begin{align*}
C_2^\# \left( \dfrac{|h-h_c|}{\mathfrak{K}T} \right)^{-1} &\leq L(h,\varepsilon_0)^2 \mu_{\text{cr}}^N \bigl( \Omega (\mathbf{O},S(L(h,\varepsilon_0))) \bigr),\\
& \rightarrow L(h,\varepsilon_0)^2 
 \mu_{\text{cr}}\bigl( \Omega (\mathbf{O},S(L(h,\varepsilon_0))) \bigr)
\end{align*}
as $N\rightarrow \infty $, where $C_2^\# >0$ depends only on $j_1, \eta $ and
$\varepsilon_0$. 
This means that
\begin{equation} \label{Thm4Eq****}
L_0 \left( \dfrac{|h-h_c|}{C_2^\# \mathfrak{K}T} \right) 
\leq L(h,\varepsilon_0).
\end{equation}
On the other hand by Lemma \ref{KLemma7}, for 
$2^{j_1+5} \leq 2^j \leq 2^\ell \leq L(h,\varepsilon_0)$,  we have
(\ref{eq:klem7-1}):
\begin{align*}
&\int_0^1 dt \sum_{v \in S(2^{j-2})} \dfrac{|h-h_c|}{\mathfrak{K}T} \mu_t^N \bigl( \Omega(v,S(2^j)) \bigr) \\
&\leq C_{24} \left( 2^{-\zeta (\ell -j)} +  \dfrac{|h-h_c|}{\mathfrak{K}T} C2^{5(j+1)}e^{-\alpha 2^{j+1}} \right).
\end{align*}
Take $L_1 \leq L_2 \leq L(h,\varepsilon_0)$ and $j, \ell $ with
\[ 2^{j-1} < L_1 \leq 2^{j}, \quad 2^{\ell -1} < L_2 \leq 2^\ell . \]
%%For $0 \leq t \leq 1$ and $v \in S(2^{k-1}+2^{k-3})$, by Lemmas \ref{comparison of gamma and delta} and \ref{KLemma8},
Then by Lemmas \ref{relation of deltas}, \ref{comparison of gamma and delta} and 
\ref{KLemma8}, for $0\leq t\leq 1$ and for $v \in S(2^{j-2})$, we have
\begin{align*}
\mu_t^N\bigl( \Omega (v, S(2^j))\bigr) 
& \geq C_{40}\left(\frac{1}{4}\right)\mu_{\text{cr}}^N\bigl(
 \Omega (\mathbf{O}, S(2^j))\bigr) \\
& \geq C_{40}\left(\frac{1}{4}\right)\mu_{\text{cr}}^N\bigl(
 \Delta (\mathbf{O}, S(2^j))\bigr) \\
& \geq C_{40}\left(\frac{1}{4}\right)C_1^{-1}\mu_{\text{cr}}^N\bigl(
 \Delta (\mathbf{O}, S(2^{j-1}))\bigr)  \\
& \geq C_{40}\left(\frac{1}{4}\right)C_1^{-1}K^{-1}\mu_{\text{cr}}^N\bigl(
 \Gamma (\mathbf{O}, S(2^{j-1}))\bigr) \\  
& \geq C_{40}\left(\frac{1}{4}\right)C_1^{-1}K^{-1}\mu_{\text{cr}}^N\bigl(
 \Omega (\mathbf{O}, S(L_1)) \bigr) .
\end{align*}
%%
%%
%%
%%
%%\begin{align*}
%%\mu_t^N \bigl( \Gamma (v,S(2^j)) \bigr) &\leq \mu_t^N \bigl( \Gamma (v,S(L_1)) \bigr) \\
%% &\leq K \mu_t^N \bigl( \Delta (v,S(L_1)) \bigr) \\
%% &\leq K \mu_t^N \bigl( \Omega (v,S(L_1)) \bigr) \\
%% &\leq KC_{41}(\kappa ) \mu_{\text{cr}} \bigl( \Omega (\mathbf{O},S(L_1)) \bigr).
%%\end{align*}
Thus we have 
\begin{align}\label{eq:13-7}
&C_3^\# \dfrac{|h-h_c|}{\mathfrak{K}T} (L_1)^2 \mu_{\text{cr}}^N \bigl( \Omega (\mathbf{O},S(L_1)) \bigr) \\
&\leq C_{24} \left( 2^{-\zeta (\ell -j)} +  \dfrac{|h-h_c|}{\mathfrak{K}T} C2^{5(j+1)}e^{-\alpha 2^{j+1}} \right) , \nonumber
\end{align}
where we put $C_3^\#= 2^{-2}C_{40}(\frac{1}{4})C_1^{-1}K^{-1}$.
Letting $N\rightarrow \infty $, we obtain that
\begin{align*}
&C_3^\# \frac{|h-h_c|}{\mathfrak{K}T}L_1^2 \mu_{\text{cr}} \bigl( 
 \Omega (\mathbf{O},S(L_1))\bigr) \\
&\leq C_{24}\left(
 2^{\zeta (\ell -j)}+ \frac{|h-h_c|}{\mathfrak{K}T}2^{5(j+1)}e^{-\alpha 2^{j+1}}
 \right) .
\end{align*}
We show that the second term in the right hand side of the above inequality is small
compared with the left hand side.
Indeed, by Lemma \ref{relation of deltas}, and 
letting $N \rightarrow \infty $, we have
\begin{align*}
\mu_{\text{cr}} \bigl( \Omega (\mathbf{O},S(n)) \bigr) &\geq \mu_{\text{cr}} \bigl( \Delta (\mathbf{O},S(n)) \bigr) \\
&\geq C_2 C_1^{-\lfloor \log_2 n \rfloor}
\end{align*}
for every $n\geq 2^{j_1}$.
Hence we can find an integer $n_0$ such that for all $n \geq n_0$,
\[ \mu_{\text{cr}} \bigl( \Omega (\mathbf{O},S(n)) \bigr)
  \geq 2^{11}\frac{C_{24}}{C_3^\# } n^3 e^{-\alpha n}. \]
If $n_0 \leq L_1 \leq L_2 \leq L(h,\varepsilon_0)$, then from (\ref{eq:13-7}),
\begin{equation}\label{Thm4Eq**}
C_4^\# \dfrac{|h-h_c|}{\mathfrak{K}T} (L_1)^2 \mu_{\text{cr}} \bigl( \Omega (\mathbf{O},S(L_1)) \bigr) \leq \left( \dfrac{L_2}{L_1} \right)^{-\zeta},
\end{equation}
where we put $C_4^\# = C_3^\# / (2^{1+\zeta }C_{24})$.
%%By Lemma \ref{Lem1KThm4},
%%\begin{align*}
%%\dfrac{1}{4} C(1) C_1 C^{-1} \left( \dfrac{|h-h_c|}{\mathfrak{K}T} \right)^{-1} &\leq L(h,\varepsilon_0)^2 \mu_{\text{cr}}^N \bigl( \Omega (\mathbf{O},S(L(h,\varepsilon_0))) \bigr),
%%\end{align*}
%%provided that $|h-h_c|(<\delta_2)$ is sufficiently small. Letting $N \to \infty$,
%%\begin{align*}
%%C_5 \left( \dfrac{|h-h_c|}{\mathfrak{K}T} \right)^{-1} &\leq L(h,\varepsilon_0)^2 \mu_{\text{cr}} \bigl( \Omega (\mathbf{O},S(L(h,\varepsilon_0))) \bigr)
%%\end{align*}
%%This means that
%%\begin{equation} \label{Thm4Eq****}
%% L_0 \left( C_5^{-1}\dfrac{|h-h_c|}{\mathfrak{K}T} \right) \leq L(h,\varepsilon_0).
%%\end{equation}
We set 
\[ L_1 =  L_0 \left( \dfrac{|h-h_c|}{C_2^\# \mathfrak{K}T} \right) \quad \mbox{and} \quad L_2 = L(h,\varepsilon_0)  \]
in (\ref{Thm4Eq**}).
%%\[ C_4 \dfrac{|h-h_c|}{\mathfrak{K}T} (L_1)^2 \mu_{\text{cr}}^N \bigl( \Omega (\mathbf{O},S(L_1)) \bigr) \leq \left( \dfrac{L_2}{L_1} \right)^{-\zeta}.
%%\]
By definition of $L_0(\delta )$, the left hand side of (\ref{Thm4Eq**})
is not less than $C_4^\# C_2^\# $. Hence  we have
\[ C_4^\# C_2^\# \left( \dfrac{L_2}{L_1} \right)^{\zeta} \leq 1, \]
which is equivalent to
\begin{equation}\label{Thm4Eq*****}
L(h,\varepsilon_0) \leq (C_4^\# C_2^\# )^{-1/\zeta} L_0 
\left( \dfrac{|h-h_c|}{C_2^\# \mathfrak{K}T} \right).
\end{equation}
By (\ref{Thm4Eq****}) and (\ref{Thm4Eq*****}), 
\[ L(h,\varepsilon_0) \asymp L_0 \left( \dfrac{|h-h_c|}{C_2^\# \mathfrak{K}T} \right). \]

Finally, we shall show that $L_0(\delta) \asymp L_0((C_2^\# )^{-1}\delta)$ as $\delta \to 0$. We can assume that $C_2^\# >1$; otherwise we replace $\delta$ with 
$(C_2^\#)^{-1} \delta$. 
Note that $L_0 (\delta)$ is non-increasing in $\delta$.
%% Indeed, if $\delta_1 \geq \delta_2$, then 
%%\[ n^2  \mu_{\text{cr}} \bigl( \Omega (\mathbf{O},S(n)) \bigr) \geq \delta_2^{-1}
%%\quad\mbox{implies that}\quad
%%n^2  \mu_{\text{cr}} \bigl( \Omega (\mathbf{O},S(n)) \bigr) \geq \delta_1^{-1}. \]
%%So $n \geq L_0(\delta_2)$ implies that $n \geq L_0(\delta_1)$. This means $L_0(\delta_2) \geq L_0(\delta_1)$. 
So, the inequality 
$L_0(\delta) \leq L_0((C_2^\# )^{-1}\delta)$ follows. 
For the other direction, note that
\[ L_0 ((C_2^\# )^{-1} \delta) \leq L(h,\varepsilon_0) \quad \mbox{if } \delta \geq \dfrac{|h-h_c|}{\mathfrak{K}T}. \]
We choose an $h$ such that 
\[ \delta \geq \dfrac{|h-h_c|}{\mathfrak{K}T} \geq \dfrac{\delta}{2}. \]
If we set 
\[ L_1 =  L_0 (\delta) (\leq L_0 ( (C_2^\# )^{-1}\delta )) \quad \mbox{and} \quad L_2 = L(h,\varepsilon_0)  \]
in (\ref{Thm4Eq**}), then we have
\[ \dfrac{C_4^\# }{2} \leq 
C_4^\# \dfrac{|h-h_c|}{\mathfrak{K}T} (L_0(\delta))^2 \mu_{\text{cr}}^N \bigl( \Omega (\mathbf{O},S(L_0(\delta ))) \bigr) \leq \left( \dfrac{L(h,\varepsilon_0) }{L_0(\delta)} \right)^{-\zeta}.
\]
This shows that
\[  \dfrac{L(h,\varepsilon_0) }{L_0(\delta)} \leq \left( \dfrac{2}{C_4^\# } \right)^{1/\zeta}, \]
which is equivalent to 
\[ L(h,\varepsilon_0) \leq L_0(\delta) \left( \dfrac{2}{C_4^\# } \right)^{1/\zeta}. \]
Combining the above inequalities, we have
\[ \left( \dfrac{C_4^\#}{2} \right)^{1/\zeta}L_0 ( (C_2^\# )^{-1}\delta ) \leq L_0(\delta) \leq L_0 ((C_2^\# )^{-1} \delta). \]
This completes the proof of (\ref{eq: L equiv L_0below}) and (\ref{eq: L equiv L_0above}).
As for (\ref{L-varying epsilon}), all the constants change depending on $\varepsilon_i$ in place of $\varepsilon_0$. 
But we have
$$
C_{42}(\varepsilon_i)\leq \frac{L(h_c-\delta , \varepsilon_i)}{L_0(\delta )} \leq 
C_{43}(\varepsilon_i) .
$$
as above for each $i=1,2$. 
Since $L_0$ does not depend on the choice of $\varepsilon_i$, this proves
(\ref{L-varying epsilon}).
\end{proof}
%% \end{quote}

\newpage
\noindent {\bf Acknowledgement.} This work was supported by JSPS KAKENHI Nos. 17340029, 22244007 and 21740087, and OECU Foundation for Global Partnership.

%\newpage
\begin{flushleft}
Yasunari Higuchi

Department of Mathematics,\\
Kobe University,\\
1-1 Rokko, Kobe 657-8501, Japan.

E-mail: {\tt higuchi@math.kobe-u.ac.jp}
\vspace{2mm}

Masato Takei

Department of Engineering Science, Faculty of Engineering, \\
Osaka Electro-Communication University,\\
Neyagawa, Osaka 572-8530, Japan.

E-mail: {\tt takei@isc.osakac.ac.jp}
\vspace{2mm}

Yu Zhang

Department of Mathematics,\\
University of Colorado,\\
Colorado Springs, CO80933, USA.

E-mail: {\tt yzhang3@uccs.edu}
\end{flushleft}

\end{document}